   \def\MR#1{}
\numberwithin{equation}{section}
\newtheorem{theorem}{Theorem}[section]
\newtheorem{lemma}[theorem]{Lemma}
\newtheorem{proposition}[theorem]{Proposition}
\newtheorem{corollary}[theorem]{Corollary}
\newtheorem{example}[theorem]{Example}
\theoremstyle{definition}
\newtheorem{remark}[theorem]{Remark}
\newtheorem{definition}[theorem]{Definition}
\DeclareMathOperator{\cb}{cb}
\DeclareMathOperator{\mb}{mb}
\DeclareMathOperator{\jcb}{jcb}
\DeclareMathOperator{\CB}{CB}
\DeclareMathOperator{\MB}{MB}
\DeclareMathOperator{\JCB}{JCB}
\DeclareMathOperator{\tr}{tr}
\DeclareMathOperator{\Max}{Max}
\DeclareMathOperator{\proj}{proj}
\DeclareMathOperator{\im}{im}
\DeclareMathOperator{\dual}{dual}
\DeclareMathOperator{\sur}{sur}
\DeclareMathOperator{\inj}{inj}
\DeclareMathOperator{\nuc}{nuc}
\DeclareMathOperator{\eh}{eh}
\DeclareMathOperator{\nh}{\sigma h}
\DeclareMathOperator{\ONORM}{ONORM}
\DeclareMathOperator{\OFIN}{OFIN}
\DeclareMathOperator{\OCOFIN}{OCOFIN}
\DeclareMathOperator{\OLOC}{OLOC}
\DeclareMathOperator{\COFIN}{COFIN}
\DeclareMathOperator{\OBAN}{OBAN}
\newcommand{\n}[1]{ \left\|#1\right\| }
\newcommand{\N}{{\mathbb{N}}}
\newcommand{\C}{{\mathbb{C}}}
\newcommand{\bl}{B^\lambda}
\newcommand{\pair}[2]{{\langle #1, #2 \rangle}}
\newcommand{\mpair}[2]{{\langle\langle #1, #2 \rangle\rangle}}
\newcommand{\eps}{{\varepsilon}}
\newcommand{\on}{{\quad\text{on}\quad}}
\newcommand\restr[2]{{
  \left.\kern-\nulldelimiterspace 
  #1 
  \right|_{#2} 
  }}
\begin{document}

\title{\textbf{Operator space tensor norms}}
\author[J.A. Ch\'avez-Dom\'inguez]{Javier Alejandro Ch\'avez-Dom\'inguez}
\address{Department of Mathematics, University of Oklahoma, Norman, OK 73019-3103,
USA} \email{jachavezd@ou.edu}

\author[V. Dimant]{Ver\'onica Dimant}
\address{Departamento de Matem\'{a}tica y Ciencias, Universidad de San
Andr\'{e}s, Vito Dumas 284, (B1644BID) Victoria, Buenos Aires,
Argentina and CONICET} \email{vero@udesa.edu.ar}

\author[D. Galicer] {Daniel Galicer}
\address{Departamento de Matem\'{a}tica, Facultad de Ciencias Exactas y Naturales, Universidad de Buenos Aires, (1428) Buenos Aires,
Argentina and CONICET} \email{dgalicer@dm.uba.ar}
\thanks{The first-named author was partially supported by NSF grant DMS-1900985. The second author was partially supported by CONICET PIP 11220200101609CO and ANPCyT PICT 2018-04104. The third author was partially supported by CONICET-PIP 11220200102366CO
 and ANPCyT PICT 2018-4250.}

\makeatletter
\@namedef{subjclassname@2020}{\textup{2020} Mathematics Subject Classification}
\makeatother

\subjclass[2020]{Primary: 47L25, 46M05, 46B28. Secondary: 46L06.  }

\keywords{Operator spaces, tensor norms, mapping ideals}

\begin{abstract}
The use of a tensor product perspective has enriched functional analysis and other important areas of mathematics and physics. The context of operator spaces is clearly no exception.	
The aim of this manuscript 
is to kick off the development of a \emph{systematic} theory of tensor products and tensor norms for operator spaces and its interplay with their associated  mapping ideals. 
Based on the theory of tensor products in Banach spaces, we provide the corresponding natural definitions in the operator space framework. 
The theory is not a mere translation of what is known in the classical setting and new insights, techniques, ideas or hypotheses are required in many cases.  As a consequence, notable differences in the theory appear when compared to the classical one.
\end{abstract}

\maketitle


\tableofcontents
\markboth{}{}

\section{Introduction}
In the thirties von Neumann in collaboration with Murray  began a ``mathematical quantization program'', aiming to formulate an operator version of integration theory.
They were successful, with the key idea being to replace function spaces by $*$-algebras of bounded operators on Hilbert spaces.
This was the seed that started a long road of ``noncommutative analysis'' which lead to what is now known as the theory of operator spaces.
Along this road and prior to the formalization of this theory, many concepts which are now typical in this framework (e.g., completely bounded mappings) were introduced in the context of the $C^*$-algebras. 

An operator space is a Banach space $\textbf X$ with an extra ``matricial norm structure'':
in addition to the usual norm on $\textbf X$, we have norms on all the spaces $M_n(\textbf X)$ of $n \times n$ matrices with entries from $\textbf X$ (often called the matrix levels), where these norms must satisfy certain consistency requirements.
The connection with operator algebras comes from the fact that when $\textbf X$ is a subspace of an algebra $\mathcal{B}(H)$ of bounded linear operators on a Hilbert space $H$, there is a canonical norm on $M_n(\textbf X)$ coming from the identification between $M_n(\mathcal{B}(H))$ and $\mathcal{B}(H^n)$.
The natural morphisms in this category are no longer just bounded maps, but the \emph{completely bounded ones}: they are required to be uniformly bounded on all the matrix levels.
Operator spaces are thus a quantized or noncommutative version of Banach spaces, giving rise to a theory that not only is mathematically attractive but it is also naturally well-positioned to have applications to quantum physics.
The systematic study of operator spaces begun with Ruan's work, and
was deepened mainly by Effros and Ruan, Blecher and Paulsen, Pisier and Junge (see the monographs \cite{Junge-Habilitationschrift,Pisier-Asterisque-98,Blecher-LeMerdy-book,Effros-Ruan-book,Pisier-Operator-Space-Theory} and the references therein).
Operator spaces have often provided an appropriate framework to develop many areas of non-commutative analysis.

It is quite natural to investigate to what extent the classical theory of Banach spaces can be translated to the  noncommutative context, and this has been extensively studied throughout the years. Though some properties do carry over, many do not and these differences are one of the reasons making the new theory so interesting and rich. And even for those properties that do admit a generalization, the proofs are often different and require the development of new tools.
Along these lines, the main goal of this monograph is to take the first step towards the systematic development of a complete theory of tensor products and tensor norms in the operator space setting and its connections with the parallel theory of their associated mapping ideals.

It should be noted that both ideals of operators and tensor norms have inspired important developments in the operator space setting.
Noncommutative versions of nuclear, integral, summing, and other ideals of operators have played significant roles in, e.g.,
\cite{Effros-Ruan-book, Pisier-Asterisque-98,Junge-Habilitationschrift,Effros-Junge-Ruan,Junge-Parcet-Maurey-factorization},
whereas various specific tensor norms for operator spaces (most notably the Haagerup one) have also played a significant role (see \cite[Sec. 1.5]{Blecher-LeMerdy-book} for a good summary).

Blecher and Paulsen \cite{Blecher-Paulsen-Tensors} have showed that part of elementary and basic theory of tensor norms of Banach spaces carries over to operator spaces, initiating a ``tensor norm program'' for operator spaces further developed in \cite{blecher1991tensor}.
However, many of the abstract tools of operators ideals and tensor norms in the sense of \cite{Defant-Floret} do not appear to have been treated yet.
Moreover, while it is clear to specialists that there are obstacles making it impossible to fully translate the classical tensor product theory into the operator space setting, 
it has not been highlighted exactly which portions do translate and which ones do not.
Hence, we will focus on seeing to what extent the classical theory can be transferred. In many situations, this is not the case and additional assumptions about the structure of the operator spaces or the tensor norms involved are really necessary.
A good example is local reflexivity: since all Banach spaces have this property but not all operator spaces do, it is well known that operator space versions of classical results will often need local reflexivity as an additional assumption.
However, when it comes to tensor norms it turns out that in some cases we can still get operator space versions of some classical results without requiring local reflexivity, as long as the tensor norm satisfies an extra condition.

Although the theory of operator spaces has its own peculiarities, the classical theory will be our reference and roadmap to obtain and develop non-commutative versions of the theory of tensor norms.
Therefore, let us now recall briefly summarize the history of the classical theory's development .
Tensor products burst into the world of functional analysis in the late thirties with the works of Murray, von Neumann and Schatten,  but it was Grothendieck who first truly revealed the richness of properties for tensor products.
His famous article  \emph{``R\'{e}sum\'{e} de la th\'{e}orie m\'{e}trique  des produits tensoriels
topologiques''} \cite{grothendieck1956resume} established the basis of what we know today as `local theory' --- that is, the study of Banach spaces in terms of their finite-dimensional subspaces --- and showed the relevance of the use of tensor products in the theory of normed spaces.  
All this helped him to establish a very fruitful theory of duality. 

While nowadays the \emph{``R\'{e}sum\'{e}''} is considered foundational work in the area, this masterpiece remained mostly unnoticed for many years. 
It was not until the end of the sixties when it started to be better valued by the scientific community.  Lindenstrauss and Pe{\l}czy\'nski \cite{lindenstrauss1968absolutely}
presented some important connections and applications to the theory of absolutely $p$-summing operators, translating results written in terms of certain tensor norms by Grothendieck into properties of operator ideals. 
On the other hand, almost simultaneously, a general theory of operator ideals on the class of Banach spaces was developed by  Pietsch (and his school in Jena) but without the use --- and the language --- of tensor norms. Pietsch's book \emph{``Operator Ideals''} \cite{pietsch1978operator}, which appeared at the end of the seventies, led this theory to  become one of the central themes of study in Banach space theory.

At the time functional analysts generally chose the language of operator ideals over that of tensor products, so the former received more attention.
The theory of tensor products in Banach spaces  became stronger and more ubiquitous during the eighties with the seminal work of Pisier \cite{pisier1983counterexamples,pisier1986factorization}, which showed the usefulness of these tools.

With time, the theory of tensor norms became an interesting field in its own right.
Defant and Floret, in their famous monograph \emph{``Tensor Norms and Operator Ideals''} \cite{Defant-Floret}, made it clear that the theory of tensor products and the  theory of operator ideals are two sides of the same coin. Their work had a tremendous legacy: nowadays authors use indistinctly both languages. This same approach has also been developed in various other books on tensor products of Banach spaces \cite{Ryan,diestel2008metric}.

As stated in \cite{Defant-Floret}, ``both theories, the theory of tensor norms and of norm operator ideals, are more easily understood and also richer if one works with both simultaneously''.
With this in mind,  certain problems are more easily handled using either the \emph{categorical perspective} due to Pietsch, or Grothendieck's \emph{cycle of ideas on tensor products}.
With the present work, it is our intent to bring the benefits of the aforementioned parallel perspectives to the operator space setting.
As mentioned above both the approaches of ideals and tensor norms have been repeatedly used in the operator space literature, but there has not been a methodical exploration of the connections between the two.
It is worth mentioning that this work does not pretend to be an extensive monograph like \cite{Defant-Floret}. Our purpose is much more modest: initiate a program that, over time, 
can be enriched with the contributions of the operator space community. Thus, numerous topics covered in \cite{Defant-Floret} have been left out.
Some expected results that do not appear in the literature can be found in this monograph, and also some new insights that do not have a counterpart or differ from the theory of Banach spaces.

\bigskip
The manuscript is organized as follows. Section \ref{Basic definitions} is devoted to background material. We set some notation, give basic definitions that will be needed throughout the manuscript (in particular what we mean by an operator space tensor norm, or o.s. tensor norm), and present some known examples of operator space tensor norms.
Also, we introduce a new family of Haagerup-style tensor norms which are examples of
 $\lambda$-tensor products (developed in \cite{Defant-Wiesner, Wiesner}).
 Additionally, we emphasize the main relevant differences between the operator space and the Banach space frameworks which have an impact in the study of tensor norms in both settings.
 
In Section \ref{hulls} we deal with the notions of finitely and cofinitely generated o.s. tensor norms. The former essentially means that the tensor norm can be approximated using finite-dimensional subspaces, while the latter means we can approximate it using quotients over subspaces of cofinite dimension.
We also define the finite and cofinite hull of a given o.s. tensor norm.

In Section \ref{fivebasic} we present the “Five Basic Lemmas” (see Section 13 in \cite{Defant-Floret}) for the operator space setting. These are the \emph{Approximation Lemma},
the \emph{Extension Lemma}, the \emph{Embedding Lemma}, the \emph{Density Lemma} and the $\mathcal L_p$-\emph{Technique Lemma}. 
To prove some of these results, unlike the classical case, we additionally need certain strong hypotheses such as local reflexivity.
However, in the case of $\lambda$-o.s. tensor norms this extra assumption is often not necessary.

In Section \ref{dual section} we deal with dual tensor norms and develop an operator space version of the \emph{Duality Theorem}. We also introduce right and left-accessible tensor norms and prove that any $\lambda$-o.s. tensor norm is accessible. Further, in order to prove 
some duality relations we define the notion of local accessibility, a weaker version of accessibility which involves locally reflexive operator spaces. 

In Section \ref{complete bap} we discuss completely metric and completely bounded approximation properties and relate these properties with the weak density of a multiple of $ B_{E' \otimes_{\min} F}$ in $B_{\CB(E,F)}$. Also, we consider a weak completely bounded approximation property which provides an accurate characterization in terms of  the duality between the $\proj$ and  the $\min$ o.s. tensor norms.

In Section \ref{mapping ideals}  we discuss the notion of mapping ideal (defined in \cite[Sec. 12.2]{Effros-Ruan-book}) and  present some examples already studied in the literature.  To maintain the core of the Banach space concept and allow the natural relationship  with tensor norms, we have to slightly strengthen the definition of mapping ideal from \cite[Sec. 12.2]{Effros-Ruan-book} and add an extra condition. Also, we introduce some typical procedures:  injective/surjective hulls of a given mapping ideal, the dual mapping ideal, and present some properties. 

Sections \ref{represent sec}  and \ref{minimal sec} deal with maximal and minimal mapping ideals and their \emph{Representation theorems}. 
As the names indicate, maximal and minimal ideals associated to a given tensor norms are the largest and smallest with respect to the inclusion which coincides over finite dimensional spaces. 
The representation theorem for maximal mapping ideals shows how the ideal can be seen as a dual of a tensor product. Again, the hypothesis of local reflexivity is needed here in order to obtain a full characterization. As a consequence, we prove an \textsl{Embedding Theorem} (which gives a natural completely isometric inclusion from a tensor product into the mapping ideal) and show how dual and adjoint ideals behave in terms of their associated tensor norms. 
For minimal ideals, the representation theorem provides a natural completely quotient mapping from a tensor product onto the mapping ideal. 
Therefore, in a sense, both  theorems (for maximal and minimal ideals) relate a mapping ideal with a tensor product. 

In Section \ref{capsulas inyectivas}, we give the definitions of right and left completely injective and completely projective hulls of an o.s. tensor
norm. We examine some of their interesting properties and see when they are finitely generated. Also, we define (both sides) completely injective and completely projective hulls showing that in both cases left and right procedures commute.  

We establish in Section \ref{mapping accessibility sec} that, unlike what happens in the Banach space setting,  accessibility is a required property of a tensor norm in order to obtain the usual relations between duality and injective/projective hulls. Accessibility is also needed to associate a left (or right) injective hull of a norm to the surjective (respectively, injective) hull of a mapping ideal.
Based on the classical theory, we also provide a definition of accessibility for mapping ideals. As expected, right-accessible finitely-generated tensor norms are associated with right-accessible mapping ideals but, surprisingly, the left version of this result does not hold. We see that a left-accessible o.s. tensor norm might have an associated mapping ideal which is not left-accessible (although we show in Proposition \ref{prop-equivalences-accessibility} that, in this case, the mapping ideal does have a weaker version of left-accessibility). 

Section \ref{normasnaturales sec} deals with the structure of \emph{natural} norms, in the sense of Grothendieck. 
We prove that some results from the Banach space theory about equivalences or inequalities between injective and projective hulls remain valid in this new context but the whole picture is undoubtedly substantially different. In the classical theory Grothendieck's inequality (which for instance implies that there is not possible for a tensor norm to be both injective and projective) has a strong impact in the description of natural tensor norms. The existence of an injective and projective Haagerup tensor norm shows us that the situation in the operator space setting could not be at all a translation from the classical case.
We use the term \textsl{$\proj$ family} for the set of norms obtained from the norm  $\proj$ after taking left or right injective/projective hulls finitely many times. Analogously, we define the \textsl{$\min$ family}. Unlike the classical setting, these families are not intertwined: each member of the $\proj$ family dominates any member of the $\min$ family. We present incomplete pictures of both of these families, proving some dominations and non-equivalences and leaving a large list of open questions about them.
On the other hand, we completely describe the list of all natural norms that come from applying to $\min$ or $\proj$ two-sided hull operations (injective or projective). Precisely, it consists of six o.s. tensor norms. Again, this differs from the Banach space case where there are actually four (see Section \ref{twosided}).

\section{Basic definitions} \label{Basic definitions}

We only assume familiarity with the basic theory of operator spaces, although we recall some elementary definitions. The books \cite{Pisier-Operator-Space-Theory} and \cite{Effros-Ruan-book} are excellent references on this topic.
Our notation follows closely that from \cite{Pisier-Asterisque-98, Pisier-Operator-Space-Theory}.
For technical reasons, unlike the general literature in the subject, we will not assume that our operator spaces are complete. Thus, to avoid confusion, we  choose to call them  
 \emph{normed operator spaces} (when completeness is not assumed) and   \emph{Banach operator spaces} (when completeness is required).
The letters $E$, $F$ and $G$ will always denote normed operator spaces, that is, normed vector spaces with an additional structure at the matricial level that we now explain.

As usual, $M_n(E)$ will stand for the set of $n\times n$-matrices of elements in $E$ and for $a\in M_{n, m}$ ($n\times m$-matrices of complex scalars), $\|a\|$ denotes its norm as an operator from $\ell_2^m$ to $\ell_2^n$.

Given $x=(x_{i,j})\in M_n(E)$ and $y=(y_{k,l})\in M_m(E)$,  we consider $x\oplus y\in M_{n+m}(E)$ to be the matrix
$
x\oplus y=\left(
            \begin{array}{cc}
              (x_{i,j}) & 0 \\
              0 & (y_{k,l}) \\
            \end{array}
          \right).
$ 

\begin{definition}
$E$ is a  \textit{normed operator space} if, for each $n$, there is  a norm $\|\cdot\|_n$ on $M_n(E)$ satisfying:
\begin{enumerate}
\item[\textbf{M1}] $\|x\oplus y\|_{M_{n+m}(E)}=\max\big\{\|x\|_{M_n(E)},\|y\|_{M_m(E)}\big\}$, for all $x\in M_n(E)$ and $y\in M_m(E)$. 
\item[\textbf{M2}] $\|a x b\|_{M_n(E)}\leq \|a\|\cdot \|x\|_{M_m(E)} \cdot \|b\|$, for all $x\in M_m(E)$, $a\in M_{n, m}$ and $b\in M_{m, n}$.
\end{enumerate}
\end{definition}

If $H$ is a Hilbert space and $E\subset \mathcal B(H)$ is a subspace, there is a natural norm in $M_n(E)$ given through the identification
$M_n(\mathcal B(H))=\mathcal B(H^n)$, which endows
$E$ with a normed operator space structure.
On  the other hand, by Ruan's representation theorem for any  operator space $E$ there exist a Hilbert space $H$ and an inclusion  $E\subset\mathcal B(H)$ which is an isometry for all the matrix levels.

Every  linear mapping $T:E\to F$ induces, for each $n\in\mathbb{N}$,  a linear mapping  $T_n: M_n(E)\to M_n(F)$ (called the \emph{amplification of $T$}) given by
$$
T_n(x)=\left(T(x_{ij})\right), \textrm{ for all }x=(x_{ij})\in M_n(E).
$$

The \textit{completely bounded norm} of $T$ is defined by
$$
\|T\|_{\cb}=\sup_{n\in\mathbb{N}}\|T_n\|=\sup_{n\in\mathbb{N}} \big\{\|T_n(x)\|_{M_n(F)}:\, \|x\|_{M_n(E)}\le 1\big\}.
$$

We say that $T$ is \textit{completely bounded} if $\|T\|_{\cb}$ is finite and we denote by $\CB(E,F)$ the space of completely bounded mappings from $E$ to $F$. This is an operator space with the structure given by the identification $M_n(\CB(E,F))=\CB(E,M_n(F))$.

We say that $T$ is a \textit{complete isometry} if each $T_n:M_n(E)\to M_n(F)$ is an isometry
and we write $E = F$ to indicate that $E$ and $F$ are completely isometrically isomorphic.

The operator space structure of a dual space $E'$ comes from the identification $M_n(E')=\CB(E,M_n)$. The norm of a matrix in $M_n(E')$ can also be computed through the \textit{matrix pairing} $\langle\langle \cdot, \cdot\rangle\rangle: M_n(E)\times M_n(E')\to M_{n^2}$,  where $\langle\langle (x_{ij}),(\varphi_{kl})\rangle\rangle =(\varphi_{kl}(x_{ij}))$. Precisely,
$$
\|(\varphi_{kl})\|_{M_n(E')}=\sup\big\{\|\langle\langle (x_{ij}),(\varphi_{kl})\rangle\rangle\|_{M_{n^2}}: \|(x_{ij})\|_{M_n(E)}\le 1\big\}.
$$

Given a bilinear mapping $\phi:E\times F\to G$, we denote by $\phi_n:M_n(E)\times M_n(F)\to M_{n^2}(G)$ the mapping defined, for each $n\in \mathbb{N}$, as follows:
$$
\phi_n(v,w)=\left(\phi(v_{ij},w_{kl})\right), \textrm{ for all }v=(v_{ij})\in M_n(E), w=(w_{kl})\in M_n(F).
$$

We say that $\phi$ is \textit{jointly completely bounded} when
$$
\|\phi\|_{\jcb}\equiv\sup_{n\in\mathbb{N}}\|\phi_n\|<\infty.
$$
 The space $\JCB(E\times F,G)$ of all  jointly completely bounded bilinear mappings from $E\times F$ to $G$ has an operator space structure given by the identification
$$M_n\left(\JCB(E\times F,G)\right)= \JCB\left(E\times F,M_n(G)\right).$$

Another natural associate to $\phi$ is the bilinear mapping  $\phi_{(n)}:M_n(E)\times M_n(F)\to M_n(G)$ related with the matrix product and given by
$$
\phi_{(n)}(v,w)=\left(\sum_{k=1}^n\phi(v_{ik},w_{kl})\right), \textrm{ for all }v=(v_{ij})\in M_n(E), w=(w_{kl})\in M_n(F).
$$

The bilinear mapping $\phi$ is \textit{multiplicatively bounded} if
$$
\|\phi\|_{\mb}=\sup_{n\in\mathbb{N}}\|\phi_{(n)}\|<\infty.
$$

Again, for the space $\MB(E\times F,G)$ of  all  multiplicatively bounded bilinear mappings from $E\times F$ to $G$ an operator space structure is provided via the identification
$$
M_n\left(\MB(E\times F,G)\right) = \MB\left(E\times F,M_n(G)\right).
$$

\subsection{Definition of o.s. tensor norm}
We begin by recalling the definitions of the three most common tensor norms in operator space theory: the minimal, projective, and Haagerup tensor norms. 

The \emph{operator space injective tensor norm} (also known as \emph{minimal tensor norm}) of $u \in M_n(E \otimes F)$ is defined as 
\begin{equation}\label{eqn-injective-tensor-norm}
    \min(u) = \sup \{ \|(f \otimes g)_n(u)\| : f\in M_p(E'), g \in M_q(F'), \Vert f \Vert \leq 1, \Vert g \Vert \leq 1 \}
\end{equation}

The \emph{operator space projective tensor norm} of $u \in M_n(E \otimes F)$ is defined as 
\begin{equation}\label{eqn-projective-tensor-norm}
    \proj(u) = \inf \{ \Vert a \Vert \cdot \Vert x\Vert \cdot \Vert y\Vert \cdot \Vert b\Vert \},
\end{equation}
where the infimum is taken over all the representations of $u= a(x \otimes y)b$ with $x \in M_p(E)$, $y \in M_q(F)$, $a \in M_{n,p \cdot q}$, $b \in M_{p\cdot q,n}$ for some $p,q\in \mathbb{N}.$

The \emph{Haagerup tensor norm} of $u \in M_n(E \otimes F)$ is defined as 
\begin{equation}\label{eqn-Haagerup-tensor}
    h(u) = \inf \{ \Vert x \Vert \cdot \Vert y\Vert : u = x \odot y, x \in M_{n,r}(E), y \in M_{r,n}(F), r\in \mathbb{N} \},
\end{equation}
where $\odot$ denotes the standard  matrix  product. Precisely,
$$
x \odot y = \left(\sum_{k=1}^r x_{ik} \otimes y_{kj}\right)_{i,j}.
$$

The three aforementioned well-known tensor norms can be framed into the following concept.

An \emph{operator space cross-norm} $\alpha$, on the class $\ONORM$ of all  normed operator spaces,
is an assignment of a normed operator space $E \otimes_\alpha F$ to each pair $(E,F)$ of normed operator spaces, in such a way that $E \otimes_\alpha F$ is the algebraic tensor product $E \otimes F$ together with a matricial norm structure on $E \otimes F$, that we write as $\alpha_n$ or $\n{\cdot}_{\alpha_n}$, and such that

\begin{equation}  \label{norma producto matrices}
    \alpha_{nm}(x \otimes y) = \n{x}_{M_n(E)} \cdot \n{y}_{M_m(F)} \text{ for every } x \in M_n(E), y \in M_m(F).
\end{equation}

This implies that the identity map $E \otimes_{\proj} F \to E \otimes_{\alpha} F$ is completely contractive \cite[Thm. 5.5]{Blecher-Paulsen-Tensors}.
If in addition the identity map $E \otimes_\alpha F \to E \otimes_{\min} F$ is also completely contractive, we say that $\alpha$ is  \emph{reasonable}. Note that if both mappings $E \otimes_{\proj} F \to E \otimes_{\alpha} F$ and $E \otimes_\alpha F \to E \otimes_{\min} F$ are completely contractive then we obviously obtain \eqref{norma producto matrices}.

Moreover, an operator space cross-norm $\alpha$ is called \emph{uniform} if additionally
it satisfies the \emph{complete metric mapping property}:
if $S \in \CB(E_1,E_2)$ and $T \in \CB(F_1,F_2)$, then
$$
\left\| S \otimes T \colon  E_1\otimes_\alpha F_1 \to E_2\otimes_\alpha F_2 \right\|_{\cb} \le \n{S}_{\cb}\n{T}_{\cb}.
$$

An operator space cross-norm on $\ONORM$ that is both reasonable and uniform will be called an \emph{operator space tensor norm}, or \emph{o.s. tensor norm} for simplicity.

\begin{definition}
An operator tensor norm $\alpha$ is an assignment  to each  $E,F \in \ONORM$  of a matricial norm in $E \otimes F$  such that the following two conditions are satisfied:
\begin{enumerate}
    \item $\alpha$ is reasonable: The mappings $E\otimes_{\proj} F\to E \otimes_\alpha F \to E \otimes_{\min} F$ are completely contractive.
    \item $\alpha$ is uniform: If $S \in \CB(E_1,E_2)$ and $T \in \CB(F_1,F_2)$, then
$$
\left\| S \otimes T \colon  E_1\otimes_\alpha F_1 \to E_2\otimes_\alpha F_2 \right\|_{\cb} \le \n{S}_{\cb}\n{T}_{\cb}.
$$
\end{enumerate}
\end{definition}

A degree of caution is required when consulting different works dealing with operator space tensor products, since the term ``tensor norm'' is not always taken to have the exact same meaning.
We are using the same definition as  \cite{Effros-Ruan-book,dimant2015biduals},
which is slightly different than the one from \cite{Blecher-Paulsen-Tensors,blecher1991tensor};
we refer to \cite[Sec. 6.1]{Wittstock} for  an explicit comparison.
It is clear that analogous definitions for operator space tensor norms can also be made on a subclass of operator spaces,
for example the class $\OFIN$ of all finite-dimensional operator spaces,
or the class of dual operator spaces.

\subsection{Basic properties of tensor norms}
An o.s. tensor norm $\alpha$ is \emph{symmetric} if $E \otimes_{\alpha}F$ and $F \otimes_{\alpha}E$ are canonically completely isometric (via the transposition map). A norm $\alpha$ is \emph{associative} if $(E \otimes_{\alpha}F)\otimes_{\alpha}G$ and $E \otimes_{\alpha}(F\otimes_{\alpha}G)$ are canonically completely isometric (via the identity map).
It is known that $\min$ and $\proj$ are both symmetric and associative, while the Haagerup o.s. tensor norm $h$ is associative but not symmetric \cite[Prop. 9.3.4]{Effros-Ruan-book}. 

For operator space tensor norms $\alpha$ and $\beta$ and a constant $c$, we write ``$\alpha \le c \beta$ on $E\otimes F$'' to indicate that the identity map $E \otimes_\beta F \to E \otimes_\alpha F$ has $\cb$-norm at most $c$.
If no reference to spaces is made, we mean that the inequality holds for any pair of normed operator spaces.

\subsection{The Schatten spaces and their vector-valued versions}\label{Schatten-spaces}
Recall that for a Hilbert space $H$ the Schatten class $S_p(H)$ is defined for $1 \le p < \infty$ as the space of all compact operators $T$ on $H$ such that $\tr\big( |T|^p\big) < \infty$ equipped with the norm $\n{T}_{S_p} = \big( \tr\big( |T|^p \big) \big)^{1/p}$; 
in the case $p=\infty$, we denote by $S_\infty(H)$ the space of all compact operators on $H$ (also denoted by $\mathcal K(H)$) endowed with the operator norm.
To endow the spaces $S_p(H)$ with an operator space structure, we follow Pisier's approach \cite{Pisier-Asterisque-98}:
since $S_\infty(H)$ is a $C^*$-algebra it has a canonical operator space structure \cite[p. 21]{Effros-Ruan-book}, by the duality $S_1(H)'= B(H)$ we also get a natural operator space structure on $S_1(H)$, 
and by complex interpolation for operator spaces \cite[Sec. 2.7]{Pisier-Operator-Space-Theory}
we get an operator space structure for each of the intermediate $S_p(H)$ spaces.
In the special case $H=\ell_2$, we simply write $S_p$ instead of $S_p(\ell_2)$.
For an operator space $E$ and $1\le p \le \infty$ we can similarly obtain an operator space which is an $E$-valued version of $S_p$ and will be denoted by $S_p[E]$: we define
$S_\infty[E] = S_\infty \otimes_{\min} E$ and
$S_1[E] = S_1 \otimes_{\proj} E$,
and once again in the case $1 < p < \infty$ we define $S_p[E]$ via complex interpolation between $S_\infty[E]$ and $S_1[E]$.

Replacing $S_1$ by the space $S_1^n$ of $n\times n$ matrices with the trace norm, and $S_\infty$ by the space $M_n$, the same procedure analogously yields operator spaces $S_p^n$ and $S_p^n[E]$.
We refer the reader to \cite{Pisier-Asterisque-98} for a detailed study of these spaces.

\subsection{Direct sums}\label{direct-sums} \cite[Sec. 2.6]{Pisier-Operator-Space-Theory}
Given a family $\{E_\gamma\}_{\gamma \in \Gamma}$ of operator spaces, its $\ell_\infty$ direct sum 
 which we will denote by $\ell_\infty( \{E_\gamma \;:\; \gamma \in \Gamma\} )$ is the direct sum $E = \bigoplus_{\gamma \in \Gamma} E_\gamma$ with the operator space structure given as follows: 
for $u = (u_\gamma)_{\gamma \in \Gamma} \in  M_n(E)$,
$$
\|u\|_{\ell_\infty( \{E_\gamma \;:\; \gamma \in \Gamma\}) } = \sup \big\{ \|u_\gamma\|_{M_n(E_\gamma)} \;:\; \gamma \in \Gamma \}.
$$
When we have only two summands, we will often use the notation $E_1 \oplus_\infty E_2$ instead.

The $\ell_1$-sum of $\{E_\gamma\}_{\gamma \in \Gamma}$ will be similarly denoted by $\ell_1( \{E_\gamma \;:\; \gamma \in \Gamma\} )$; for simplicity, below we only give the full details in the case 
$E_1 \oplus_1 E_2$ where we have two summands. For $u  \in M_n(E_1 \oplus E_2)$,
$$
\|u\|_{M_n(E_1 \oplus_1 E_2)} = \sup\big\{ \n{(T_1 \oplus T_2)_n(u)}_{M_n(\mathcal{B}(H))} \;:\; \n{T_j : E_j \to \mathcal{B}(H)}_{\cb} \le 1 \text{ for } j=1,2\big\}
$$
where $T_1 \oplus T_2 : E_1 \oplus E_2 \to \mathcal{B}(H)$ is the mapping $(T_1\oplus T_2)(x,y) = x+y$.

\subsection{Injections and projections}\label{injections-and-projections}

A linear map $q : E \to F$ between normed operator spaces is called a \emph{complete quotient} or \emph{complete metric surjection} or \emph{complete projection}  if it is onto and the associated map from $E/\ker(q)$ to $F$ is a completely isometric isomorphism.
In \cite[Sec. 2.4]{Pisier-Operator-Space-Theory},  it is proved that
a linear map $q:E \to F$ is a complete quotient if and only if its adjoint $q' : F' \to E'$ is a completely isometric embedding.
Note that if a linear map $q : E \to F$ between normed operator spaces is a complete quotient, then for every $x \in M_n(F)$ we have
$$
\n{x}_{M_n(F)} = \inf \left\{ \n{y}_{M_n(E)} \; : \; y \in M_n(E), \;  q_n(y) = x   \right\}.
$$

A completely isometric embedding $T : E \to F$ will be called a \emph{complete injection} for short.

An o.s. tensor norm $\alpha$ is called \emph{completely left projective} (resp.  \emph{completely left injective}) if for any normed operator space $G$ and any complete projection (resp. complete injection) $T : E \to F$, the map $T \otimes id_G : E \otimes_\alpha G \to F \otimes_\alpha G$ is a complete projection (resp. complete injection) as well.
Completely right projective and completely right injective operator space tensor norms are defined analogously, and an o.s. tensor norm will be called completely projective (resp. completely injective) when it is both completely left and right projective (resp. injective).

A normed operator space $E$ is said to be \emph{completely injective} if whenever $F \subseteq G$ are operator spaces and $T : F \to E$ is a completely bounded linear map, there exists an extension $\widetilde{T} : G \to E$ with $\n{T}_{\cb} = \|\widetilde{T}\|_{\cb}$.
By the Arveson extension theorem, $\mathcal{B}(H)$ is completely injective for any Hilbert space $H$.
Let us now observe that any completely injective normed operator space $E$ has to be complete. Indeed, consider the diagram
$$
\xymatrix{
\widetilde{E} \ar[dr]^T &\\
E \ar[u] \ar[r]_{id_E} & E}
$$
where the vertical arrow is simply the inclusion of $E$ in its completion $\widetilde{E}$, and the operator $T$ is given by the injectivity of $E$.
If we take a Cauchy sequence $(v_n)_n$ in $E$, and we take its limit $v \in \widetilde{E}$, then $T(v) \in E$ is the limit of $(v_n)_n$ since
$$
T(v) = T\Big( \lim_{n\to\infty} v_n \Big) = \lim_{n\to\infty} T(v_n) = \lim_{n\to\infty}  v_n.
$$

A normed operator space $E$ is said to be \emph{completely projective} if for any completely bounded linear map $T : E \to G/F$ into a quotient space and any $\varepsilon>0$ there exists a lifting $\widetilde{T} : E \to G$ of $T$ with $\|\widetilde{T}\|_{\cb} \le (1+\varepsilon) \n{T}_{\cb}$.
The basic examples of projective operator spaces are the spaces $S_1^n$ for any $n\in\N$, and we will also use the fact that an $\ell_1$-sum of projective operator spaces is again projective.

Every Banach operator space $E$ can be seen as a quotient of a completely projective space. Indeed, there is a set $I$ and a family $(n_i)_{i \in I} \subset \mathbb{N}$ such that $E$ is the quotient of $\ell_1(\{S_1^{n_i} : i \in I \})$ (see e.g., \cite[Prop. 2.12.2]{Pisier-Operator-Space-Theory}). We denote the latter space by $Z_E$ and $q_E : Z_E \twoheadrightarrow E$ the corresponding complete quotient mapping. The fact that $Z_E$ is completely projective can be tracked in \cite[Chap. 24]{Pisier-Operator-Space-Theory}.

See \cite[Chap. 24]{Pisier-Operator-Space-Theory} for more on completely injective and completely projective operator spaces.

\subsection{Examples of o.s. tensor norms} \label{Subsection: Examples}

In addition to the three fundamental o.s. tensor norms defined above (injective, projective, Haagerup), a number of other examples have appeared in the literature and we list some of them below.
The first three are discussed in \cite{Effros-Ruan-Hopf}, where it is shown that they satisfy the complete metric mapping property.
To conclude that they are o.s. tensor norms, we then just need to check that they are between $\min$ and $\proj$.

\subsubsection{The nuclear tensor product}
$E \otimes_{\nuc} F$ is defined as the quotient of $E \otimes_{\proj} F$ by the kernel of the canonical identity map $E \otimes_{\proj} F \to E \otimes_{\min} F$.
From the definition it is clear that $\min \le \nuc \le \proj$, so $\nuc$ is reasonable.

\subsubsection{The extended Haagerup tensor product \cite{Effros-Kishimoto}}
\label{subsubsection-extended-Haagerup}
$E \otimes_{\eh} F$ is the space of maps $u : E' \times F' \to \C$ which are normal (i.e. weak$^*$ continuous in each variable) and multiplicatively bounded.
If we denote the space of such maps by $\MB^\sigma(E'\times F', \C)$,
the matrix norms on $E \otimes_{\eh} F$ are given by the identification ${M_n\big( \MB^\sigma(E'\times F', \C) \big)} = \MB^\sigma(E'\times F', M_n)$.
It then follows from \eqref{eqn-injective-tensor-norm} that $\min \le \eh$.

The operator space structure on $E \otimes_{\eh} F$ can be described in the same way as \eqref{eqn-Haagerup-tensor} but replacing $r\in\N$ with an arbitrary set \cite[Eqn. 5.10]{Effros-Ruan-Hopf}, hence the \emph{extended} in the name.
This in particular shows $\eh \le h$, and thus $\eh \le \proj$.
The extended Haagerup tensor product is associative \cite[p. 149]{Effros-Ruan-Hopf}, and completely injective but not completely projective \cite[Lemma 5.4 and Prop. 5.5]{Effros-Ruan-Hopf}.

\subsubsection{The normal Haagerup tensor product \cite{Effros-Kishimoto}}
This is only defined for dual operator spaces, by $E' \otimes_{\nh} F' = (E \otimes_{\eh} F)'$.
Since the identity $E \otimes_{\proj} F \to E\otimes_{\eh} F$ is a complete contraction, dualizing yields a contraction
$$
E' \otimes_{\nh} F' = (E \otimes_{\eh} F)' \to (E \otimes_{\proj} F)' = \CB(E,F'),
$$
which implies that the identity $E' \otimes_{\nh} F' \to E' \otimes_{\min} F'$ is a complete contraction.
The fact that $\nh \le \proj$ is a complete contraction, i.e. the identity $E' \otimes_{\proj} F' \to (E \otimes_{\eh} F)'$ is a complete contraction, follows easily from \eqref{eqn-projective-tensor-norm} and the description of $E \otimes_{\eh} F$ as normal multiplicatively bounded mappings (see   \ref{subsubsection-extended-Haagerup}).

Recall from above that the
extended Haagerup tensor product is not completely projective, that is, there exist complete projections $\pi_j : E_j \to F_j$, $j=1,2$ such that 
$\pi_1 \otimes \pi_2 : E_1 \otimes_{\eh} E_2 \to F_1 \otimes_{\eh} F_2$ is not a complete projection.
By dualizing, we have complete injections $\pi_j' : F_j' \to E_j'$, $j=1,2$
such that $\pi_1' \otimes \pi_2' : F_1' \otimes_{\nh} F_2' \to E_1' \otimes_{\nh} E_2'$
is not a complete injection, so 
the normal Haagerup tensor
product is not completely injective.

\subsubsection{The symmetrized Haagerup tensor norms}
Given normed operator spaces $E$ and $F$ and $u \in M_n(E \otimes F)$, we define
$$
(h \cap h^t)_n(u;E,F) = \max\big\{ h_n(u; E,F), h^t_n(u; E,F) \big\}.
$$
It is easy to see that $h \cap h^t$ is a symmetric o.s. tensor norm, and from the definition it is obvious that there is a completely isometric embedding
$$
E \otimes_{ h \cap h^t} F \hookrightarrow (E \otimes_h F) \oplus_\infty (E \otimes_{h_t} F)
$$
given by $u \mapsto (u,u)$.
Since $h$ and $h^t$ are completely injective, it is clear that $h \cap h^t$ is completely injective as well.
Notice that if $\alpha$ is a symmetric o.s. tensor norm such that $h \le \alpha$, by transposing we have $ h^t \le \alpha$ and therefore $h \cap h^t \le \alpha$.
Thus, we call $h \cap h^t$ the minimal symmetrized Haagerup tensor norm.
This tensor norm has appeared in \cite{pisier2002grothendieck,haagerup2008effros,dimant2015bilinear}.

On the other hand, for normed operator spaces $E$ and $F$ and $u \in M_n(E \otimes F)$, we define
$$
(h+h^t)_n(u;E,F) = \inf\big\{ \|(v,w)\|_{M_n\big( (E \otimes_h F)\oplus_1(E \otimes_{h^t} F) \big)} \;:\; u = v+w \big\}.
$$
That is, by definition the mapping
\[
q : (E \otimes_h F) \oplus_1 (E \otimes_{h^t} E) \to E \otimes_{h+h^t} F
\]
given by $q(u,v) = u + v$ is a complete quotient.
Once again, it is easy to check that this defines a symmetric o.s. tensor norm which is completely projective because so is $h$.
The o.s. tensor norm $h+h^t$ was introduced in \cite{Oikhberg-Pisier}, were it was denoted by $\mu$ and it was shown that it is neither associative nor completely injective.
We will call $h+h^t$ the maximal symmetrized Haagerup tensor norm because of the following property. Suppose that $\beta$ is a symmetric tensor norm with $\beta \le h$. Then $\beta \le h^t$, so that the formal identity maps
$$
E \otimes_h F \to E \otimes_\beta F, \quad E \otimes_{h^t} F \to E \otimes_\beta F
$$
are complete contractions. Therefore, the map $(v,w) \mapsto v+w$ is a complete contraction
$$
(E \otimes_h F) \oplus_1 (E \otimes_{h^t} E) \to E \otimes_\beta F
$$
and by the standard properties of quotients \cite[Prop. 2.4.1]{Pisier-Operator-Space-Theory}, the identity map $E \otimes_{h+h^t} F \to E \otimes_\beta F$ is a complete contraction as well, that is, $\beta \le {h+h^t}$.

\subsubsection{The Chevet-Saphar tensor products}

Inspired by the definition of the Banach space case,  for $1 \leq p \leq \infty$, the right and left $p$-Chevet-Saphar o.s. tensor norms, $d_p $ and $g_p $, are defined in  \cite{CD-Chevet-Saphar-OS}   in following way: given normed operator spaces $E$ and $F$, for $u \in E \otimes F$,
\begin{align}\label{dp-os}
d_p (u) = \inf \left\{\Vert (x_{ij}) \Vert_{S^n_{p'}\otimes_{\min} E} \; \Vert(y_{ij})\Vert_{S^n_p[F]} : u = \sum_{i,j=1}^n x_{ij} \otimes y_{ij} \right\}, \\
g_p (u) = \inf \left\{ \Vert(x_{ij})\Vert_{S^n_p[E]} \; \Vert (y_{ij}) \Vert_{S^n_{p'}\otimes_{\min} F}   : u = \sum_{i,j=1}^n x_{ij} \otimes y_{ij} \right\}
\end{align}
where the infimum runs over  all the possible ways in which the tensor $u$ can be written as a finite sum as above.

Note that the above expressions only define norms on $E \otimes F$, the full operator space structure of the tensor products $E \widehat \otimes_{d_p } F$ and  $E \widehat \otimes_{g_p } F$ are given by the following quotient mappings  (see \cite[Sec. 3]{CD-Chevet-Saphar-OS})
\begin{align}
 q_{d_p} : (S_{p'} \widehat{\otimes}_{\min} E) \widehat{\otimes}_{\proj} S_p[F] \to E \widehat{\otimes}_{d _p} F, \\
 q_{g_p} : S_p[E] \widehat{\otimes}_{\proj} (S_{p'} \widehat{\otimes}_{\min} F) \to E \widehat{\otimes}_{g _p} F,
 \end{align}
where $\widehat \otimes $ means the completion of the corresponding tensor product.
In \cite[Prop. 3.5.]{CD-Chevet-Saphar-OS} it is shown that both $d_p $ and $g_p $ are o.s. tensor norms.

\subsubsection{$\lambda$-tensor products \cite{Wiesner,Defant-Wiesner}} 
For each $k \in \N$, let $$\bl_k: M_k\times M_k \to M_{\tau(k)}$$ (where $\tau(k) \in \N$ is a natural number only depending on $k$) be a bilinear map;
we will denote the sequence $(\bl_k)_k$ by $\lambda$.
Two basic examples to keep in mind are the tensor product $\otimes$ (i.e. when $\bl_k(x,y) = x \otimes y$ for all $k$) and the matrix product $\odot$ (i.e. when $\bl_k(x,y) = x \odot y$ for all $k$).
We point out that the notation above is slightly different from that of \cite{Wiesner,Defant-Wiesner}, in order to keep consistency in the notation of the present work.
For $u \in M_k(E \otimes F)$, define
\begin{equation}\label{lnorm}
\lambda_k(u)=\inf\{\|a\|\|v_1\|\|v_2\|\|b\|\}
\end{equation}
where  the infimum is taken over arbitrary decompositions $u = a \otimes_{\bl_j} (v_1,v_2) b$ with $a\in M_{k, \tau(j)}$, $b\in M_{\tau(j), k}$, $v_1\in M_j(E)$, $v_2\in M_j(F)$, where $\otimes_{\bl_j} : M_j(E) \times M_j(F) \to M_{\tau(j)}(E \otimes F)$ is the bilinear map given by $(a_1 \otimes x, a_2 \otimes y) \mapsto \bl_j(a_1,a_2) \otimes x \otimes y$.
Observe that the case $\lambda=\otimes$ corresponds to the projective tensor norm, and $\lambda=\odot$ yields the Haagerup tensor norm.

In order to guarantee that the norms defined above give an o.s. tensor norm, we will need for $\lambda$ to satisfy certain technical conditions. First, some notation.
 We let $\varepsilon_{i,j}:=\varepsilon^{[k,l]}_{i,j}\in M_{k,l}$ denote the matrix which is 1 in the $(i,j)$-th entry and zero elsewhere, $\varepsilon^{[k]}_{i,j}:=\varepsilon^{[k,k]}_{i,j},\; \varepsilon_i:=\varepsilon_{i,i}, \;\varepsilon^{[k]}_i:=\varepsilon^{[k,k]}_{i,i}$ and $\varepsilon^{[k,l]}_{i,j}=0$ if $(i,j) \notin \{1,\ldots,k\} \times \{1,\ldots,l\}$. As usual we will denote by $I_n$ the identity $n\times n$-matrix.

\begin{proposition}[{\cite[Prop. 4.1]{Defant-Wiesner} and \cite[Prop. 6.1]{Wiesner}}]
For any operator spaces $E$ and $F$ the sequence $\lambda_k(\cdot)$ defined above gives an operator space structure on $E \otimes F$, whenever $\lambda$ satisfies the following conditions:
\begin{itemize}
\item[(E1)] For all $k \in \mathbb{N}$ there exist $p \in \N$ and matrices $S \in M_{k,\tau (p)}$, $T \in M_{\tau (p),k}$, $a_1 , \cdots , a_k \in M_p$ such that for all $ j_1 , j_2
 \in \{1, \cdots, k\}$:
$$
S \bl_p(a_{j_1},a_{j_2}) T= \left\{
        \begin{array}{ll}
         \varepsilon_j^{[k]} & \qquad \text{if}\;\; j_1=j_2=j, \\
          0 & \qquad \text{otherwise.}
          \end{array}
    \right.
$$ 
\item[(E2)] For all $ r, s \in  \mathbb{N}$ there exist matrices  $P \in M_{\tau (r)+\tau (s), \tau(r+s)}$, with $\|P\|\leq 1$ such that for all $ (i_k , j_k ) \in 
 \{1, \cdots , r\}^2 \cup \{r + 1, \cdots , r + s\}^2$ with $k=1,2$:
 
\begin{align}
\begin{split}
& P \bl_{r+s}( \varepsilon_{i_1, j_1}^{[r+s]} , \varepsilon_{i_2 ,j_2 }^{[r+s]})P^* \\
&= \mathrm{diag}\Big( \bl_r (\varepsilon_{i_1 ,j_1}^{[r]} ,  \varepsilon_{i_2,j_2}^{[r]}), \bl_{s}(\varepsilon_{i_1-r ,j_1-r}^{[s]} , \varepsilon_{i_2-r,j_2-r}^{[s]})\Big).\notag
\end{split}
\end{align}
\item[(E3)] $\bl_1(1,1)=1$ and $\sup_{k\in \N}\|\bl_k\| <\infty$.
\end{itemize}
In this case, we denote by $E \otimes_\lambda F$ the corresponding operator space.
Moreover, the complete metric mapping property is satisfied.
\end{proposition}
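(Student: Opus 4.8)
We must verify that each $\lambda_k$ is a norm on $M_k(E\otimes F)$, that the matricial axioms \textbf{M1} and \textbf{M2} hold, and that the assignment $(E,F)\mapsto E\otimes_\lambda F$ has the complete metric mapping property; the plan is to adapt \cite[Prop. 4.1]{Defant-Wiesner} and \cite[Prop. 6.1]{Wiesner} to the notation above. Almost everything is bookkeeping with the defining decompositions $u=a\otimes_{\bl_j}(v_1,v_2)b$. Homogeneity of $\lambda_k$ is clear (absorb scalars into $a$), and \textbf{M2} is immediate: if $u=a'\otimes_{\bl_j}(v_1,v_2)b'$ at level $m$ and $a\in M_{n,m}$, $b\in M_{m,n}$, then $aub=(aa')\otimes_{\bl_j}(v_1,v_2)(b'b)$, so $\lambda_n(aub)\le\n a\,\lambda_m(u)\,\n b$. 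Using $\bl_1(1,1)=1$ from (E3) one also gets $\lambda_k\big(\varepsilon^{[k]}_{m,n}\otimes(x\otimes y)\big)\le\n x\,\n y$ (take $a=\varepsilon^{[k,1]}_{m,1}$ and $b=\varepsilon^{[1,k]}_{1,n}$), which — once subadditivity is established — shows that $\lambda_k$ is finite everywhere on $M_k(E\otimes F)$.

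The heart of the proof is the triangle inequality. Given decompositions $u_i=a_i\otimes_{\bl_{j_i}}(v^{(i)}_1,v^{(i)}_2)b_i$ ($i=1,2$) at level $k$, the first step is to \emph{pad} them to a common inner level $j=\max\{j_1,j_2\}$: iterating condition (E2) (with $r$ the smaller inner level and $s$ the gap) and using \textbf{M1} for $E$ and $F$, one replaces each $v^{(i)}_l$ by $v^{(i)}_l\oplus 0$ and absorbs the contractive matrices of (E2) into $a_i$ and $b_i$, reaching inner level $j$ without increasing the product of the four norms, which we now call $\rho_i$. The second step is to \emph{merge}: applying (E2) with $r=s=j$ produces a contraction $P\in M_{2\tau(j),\tau(2j)}$ with
\[
\mathrm{diag}\big(\otimes_{\bl_j}(v^{(1)}_1,v^{(1)}_2),\,\otimes_{\bl_j}(v^{(2)}_1,v^{(2)}_2)\big)=P\,\otimes_{\bl_{2j}}(V_1,V_2)\,P^{*},\qquad V_l:=v^{(1)}_l\oplus v^{(2)}_l,
\]
so that $u_1+u_2=A\,\otimes_{\bl_{2j}}(V_1,V_2)\,B$ with $A=[\,a_1\;a_2\,]P$ and $B=P^{*}[\,b_1\,;\,b_2\,]$. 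Since $\otimes_{\bl_j}$ is bilinear, each decomposition may be rescaled so that $\n{v^{(i)}_1}=\n{v^{(i)}_2}=1$ and $\n{a_i}=\n{b_i}=\rho_i^{1/2}$ (this does not change $\rho_i$); then $\n A\le\n{[\,a_1\;a_2\,]}\le(\rho_1+\rho_2)^{1/2}$, likewise $\n B\le(\rho_1+\rho_2)^{1/2}$, and $\n{V_l}=1$, whence $\lambda_k(u_1+u_2)\le\rho_1+\rho_2$. Passing to the infimum gives $\lambda_k(u_1+u_2)\le\lambda_k(u_1)+\lambda_k(u_2)$.

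Axiom \textbf{M1} is then within reach: $\lambda_{n+m}(x\oplus y)\ge\max\{\lambda_n(x),\lambda_m(y)\}$ follows from \textbf{M2} by compressing $x\oplus y$ to its two diagonal blocks with $[\,I_n\;0\,]$ and its adjoint, while for the opposite inequality one reruns the merging construction on $x$ and $y$ but assembling the block-diagonal matrices $\mathrm{diag}(a_1,a_2)$ and $\mathrm{diag}(b_1,b_2)$ in place of $[\,a_1\;a_2\,]$ and $[\,b_1\,;\,b_2\,]$ (so $\n A$ and $\n B$ are now bounded by $\max\{\n{a_1},\n{a_2}\}$ and $\max\{\n{b_1},\n{b_2}\}$), and rescaling each decomposition so that all four of its norms equal $\rho_i^{1/4}$; the estimate then reads $\lambda_{n+m}(x\oplus y)\le\max\{\rho_1^{1/4},\rho_2^{1/4}\}^{4}=\max\{\rho_1,\rho_2\}$. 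That $\lambda_k$ separates points uses (E3) once more: with $C:=\sup_k\n{\bl_k}<\infty$, a short computation gives $(\phi\otimes\psi)_k\big(a\otimes_{\bl_j}(v_1,v_2)b\big)=a\,\bl_j\big(\phi_j(v_1),\psi_j(v_2)\big)b$ for $\phi\in E'$ and $\psi\in F'$, where $\phi_j$ applies $\phi$ entrywise and $\n{\phi_j(v_1)}\le\n\phi\,\n{v_1}$; hence $\n{(\phi\otimes\psi)_k(u)}\le C\,\n\phi\,\n\psi\,\lambda_k(u)$, so $\lambda_k(u)=0$ kills every entry of $u$ against all elementary functionals $\phi\otimes\psi$, forcing $u=0$. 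Finally, the complete metric mapping property is immediate, since $(S\otimes T)_k\big(a\otimes_{\bl_j}(v_1,v_2)b\big)=a\otimes_{\bl_j}\big(S_j(v_1),T_j(v_2)\big)b$ while $\n{S_j(v_1)}\le\n S_{\cb}\n{v_1}$ and $\n{T_j(v_2)}\le\n T_{\cb}\n{v_2}$.

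I expect the padding/merging step (and its \textbf{M1} twin) to be the only genuine obstacle. It is exactly there that condition (E2) is indispensable: its contractive matrices are what allow one both to change the inner matrix level of a decomposition and to fuse two decompositions of equal inner level into one, at no cost in the norm estimate. It is also there that the rescaling freedom afforded by bilinearity of $\otimes_{\bl_j}$ is essential, in order to convert the crude bound $\n{[\,a_1\;a_2\,]}\le(\n{a_1}^2+\n{a_2}^2)^{1/2}$ into the sharp values $\rho_1+\rho_2$ (for subadditivity) and $\max\{\rho_1,\rho_2\}$ (for \textbf{M1}). Condition (E3) is used only for finiteness ($\bl_1(1,1)=1$) and for the separation property just described ($\sup_k\n{\bl_k}<\infty$); condition (E1) does not appear to be needed for the present statement, and is instead used separately when verifying that $\lambda$ is a reasonable cross norm, i.e.\ that $\lambda_{nm}(x\otimes y)=\n x\,\n y$.
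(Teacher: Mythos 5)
The paper does not actually prove this proposition---it is imported verbatim from \cite[Prop. 4.1]{Defant-Wiesner} and \cite[Prop. 6.1]{Wiesner}---so there is no in-paper argument to compare against; your reconstruction is correct and follows the same standard route as those sources: padding and merging decompositions through the contractions supplied by (E2) to get subadditivity and the upper half of \textbf{M1}, absorbing scalar matrices for \textbf{M2} and the lower half of \textbf{M1}, separating points by pairing with functionals $\phi\otimes\psi$ (which only uses boundedness of the $\bl_j$ from (E3)), and reading off the complete metric mapping property from the identity $(S\otimes T)_k\big(a\otimes_{\bl_j}(v_1,v_2)b\big)=a\otimes_{\bl_j}\big(S_j(v_1),T_j(v_2)\big)b$. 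Your closing remark that (E1) is not used for this particular statement but only for the cross-norm/reasonableness estimates is plausible and, in any case, harmless here since (E1) is part of the standing hypotheses.
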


Note that, in an abuse of notation, we are using the symbol $\lambda$ to denote both the sequence of bilinear maps $M_k \times M_k \to M_{\tau(k)}$ and the operator space structure induced on the tensor product.

Just as for the projective and Haagerup tensor products, the dual of a $\lambda$-tensor product can be identified with a certain space of bilinear forms.
For any bilinear map $\phi : E \times F \to W$ where $E$, $F$, $W$ are operator spaces, define the bilinear maps
$\phi_{\bl_k} : M_k(E) \times M_k(F) \to M_{\tau(k)}(W)$ given on elementary tensors by
$$
\phi_{\bl_k}\big( a_1 \otimes v_1, a_2 \otimes v_2 \big) = \bl_k(a_1,a_2) \otimes \phi(v_1,v_2), \qquad a_1, a_2 \in M_k, v_1 \in E, v_2 \in F.
$$
We also define
$$
\n{\phi}_{\cb,\lambda} = \sup_{k\in\N} \left\{\n{\phi_{\bl_k}(x,y)}_{M_{\tau(k)}(W)} \;:\; \n{x}_{M_k(E)} \le 1, \n{y}_{M_k(F)} \le 1 \right\}
$$
and
$$
\CB_\lambda(E \times F; W) = \left\{ \phi : E \times F \to W \text{ bilinear} \;:\; \n{\phi}_{\cb,\lambda}<\infty \right\}.
$$
This space is an operator space with the identification
$$
M_k\big(\CB_\lambda(E \times F; W)\big) = \CB_\lambda\big(E \times F; M_k(W)\big).
$$
Note that for the cases  $\lambda=\otimes$ (projective tensor product) and $\lambda=\odot $ (Haagerup tensor product) we recover the usual dual spaces $\CB_\otimes=\JCB$ and $\CB_\odot=\MB$.

\begin{theorem}[{\cite[Thm. 6.2]{Defant-Wiesner}}]\label{duality-lambda-tensor}
If $\lambda$ satisfies (E1), (E2), and (E3), then
the natural identification yields a canonical complete isometry
$$
\CB( E\otimes_{\lambda} F, W ) = \CB_\lambda(E \times F;W)
$$
so in particular
$$
(E\otimes_{\lambda} F)' = \CB_\lambda(E \times F;\C) =: \CB_\lambda(E \times F).
$$
\end{theorem}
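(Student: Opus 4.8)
The argument follows the template of the projective ($\lambda = \otimes$) and Haagerup ($\lambda = \odot$) cases. The universal property of the algebraic tensor product $E \otimes F$ gives, for every vector space $W$, a linear bijection between linear maps $\Phi : E \otimes F \to W$ and bilinear maps $\phi : E \times F \to W$, characterized by $\phi(x,y) = \Phi(x \otimes y)$; the whole content of the theorem is that, for an operator space $W$, such a $\Phi$ is completely bounded as a map out of $E \otimes_\lambda F$ if and only if $\phi \in \CB_\lambda(E \times F; W)$, and then $\n{\Phi}_{\cb} = \n{\phi}_{\cb,\lambda}$. Since this correspondence is natural in $W$ (post-composition with a completely bounded $\psi : W \to W'$ sends $\Phi$ to $\psi \circ \Phi$ and $\phi$ to $\psi \circ \phi$), the isometric statement for all $W$, applied with $W$ replaced by $M_k(W)$ and combined with the defining identifications $M_k(\CB(E \otimes_\lambda F, W)) = \CB(E \otimes_\lambda F, M_k(W))$ and $M_k(\CB_\lambda(E \times F; W)) = \CB_\lambda(E \times F; M_k(W))$, upgrades it to the asserted complete isometry. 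The plan thus reduces to the two norm inequalities.

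Two elementary identities drive both of them. First, amplifications of linear maps are module maps over scalar matrices: $\Phi_k(a\,w\,b) = a\,\Phi_{\tau(j)}(w)\,b$ for $w \in M_{\tau(j)}(E \otimes F)$, $a \in M_{k,\tau(j)}$, $b \in M_{\tau(j),k}$, which is immediate from the entrywise definition of the amplification and the linearity of $\Phi$. Second, $\Phi_{\tau(k)} \circ \otimes_{\bl_k} = \phi_{\bl_k}$ as maps $M_k(E) \times M_k(F) \to M_{\tau(k)}(W)$: both sides are bilinear in their two arguments (the left side because $\otimes_{\bl_k}$ is bilinear by definition and $\Phi_{\tau(k)}$ is linear), and on elementary tensors $v_1 = a_1 \otimes x$, $v_2 = a_2 \otimes y$ both equal $\bl_k(a_1,a_2) \otimes \phi(x,y)$; since elementary tensors span, the identity follows.

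For $\n{\Phi}_{\cb} \le \n{\phi}_{\cb,\lambda}$, fix $k$ and $u \in M_k(E \otimes F)$ with $\lambda_k(u) < 1$, and pick a decomposition $u = a \otimes_{\bl_j}(v_1,v_2) b$ with $\n{a}\,\n{v_1}\,\n{v_2}\,\n{b} < 1$ as in \eqref{lnorm}. By the two identities, $\Phi_k(u) = a\,\phi_{\bl_j}(v_1,v_2)\,b$, so condition \textbf{M2} (for the operator space $W$) and the definition of $\n{\phi}_{\cb,\lambda}$ give
$$
\n{\Phi_k(u)}_{M_k(W)} \le \n{a}\,\n{\phi_{\bl_j}(v_1,v_2)}_{M_{\tau(j)}(W)}\,\n{b} \le \n{\phi}_{\cb,\lambda}\,\n{a}\,\n{v_1}\,\n{v_2}\,\n{b} < \n{\phi}_{\cb,\lambda}.
$$
Passing to the infimum over decompositions and then the supremum over such $u$ and over $k$ gives the bound. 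For the reverse inequality, for $x \in M_k(E)$ and $y \in M_k(F)$ the tensor $\otimes_{\bl_k}(x,y) \in M_{\tau(k)}(E \otimes F)$ admits the trivial decomposition $\otimes_{\bl_k}(x,y) = I_{\tau(k)}\,\otimes_{\bl_k}(x,y)\,I_{\tau(k)}$ (with $j=k$), whence $\lambda_{\tau(k)}\big(\otimes_{\bl_k}(x,y)\big) \le \n{x}_{M_k(E)}\,\n{y}_{M_k(F)}$; therefore $\n{\phi_{\bl_k}(x,y)}_{M_{\tau(k)}(W)} = \n{\Phi_{\tau(k)}\big(\otimes_{\bl_k}(x,y)\big)}_{M_{\tau(k)}(W)} \le \n{\Phi}_{\cb}\,\n{x}_{M_k(E)}\,\n{y}_{M_k(F)}$, and a supremum over $x$, $y$ and $k$ yields $\n{\phi}_{\cb,\lambda} \le \n{\Phi}_{\cb}$.

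The only genuinely delicate points are the two compatibility identities of the second paragraph — in particular the interaction of the amplification $\Phi_{\tau(j)}$ with the bilinear operation $\otimes_{\bl_j}$, which has to be checked on elementary tensors and then extended by bilinearity — and, on the categorical side, confirming that the isometric identification just built genuinely respects the prescribed matricial structures, so that the reduction (replacing $W$ by $M_k(W)$) really does promote it to a complete isometry. Everything else is formal bookkeeping, and this last step is a routine diagram chase once naturality in $W$ has been noted; the conditions (E1)--(E3) on $\lambda$ are used only insofar as they guarantee, via the cited Proposition, that $\lambda_k(\cdot)$ is an honest matricial norm making $E \otimes_\lambda F$ a normed operator space.
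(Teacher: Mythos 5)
Your proposal is correct: the two norm inequalities, obtained from the module identity $\Phi_k(a\,w\,b)=a\,\Phi_{\tau(j)}(w)\,b$ and the compatibility $\Phi_{\tau(k)}\circ\otimes_{\bl_k}=\phi_{\bl_k}$ (checked on elementary tensors), plus the bootstrap replacing $W$ by $M_k(W)$ under the defining identifications of the matricial structures, constitute the standard linearization argument for this duality. Note that the paper itself offers no proof — the theorem is quoted from Defant--Wiesner (Thm.\ 6.2) — and your argument is essentially the one given in that source.
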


Let us denote by $\odot^t : M_k \times M_k \to M_k$ the transposition of the usual matrix product, that is, $A \odot^t B = B \odot A$ for $A, B \in M_k$.
It is clear that taking $\lambda=\odot^t$, the associated operator space structure on $E \otimes F$ is precisely $E \otimes_{h^t} F$, the transposition of the Haagerup tensor product.
We now introduce a new family of Haagerup-style o.s. tensor products which are $\lambda$-tensor products, and which can be understood as a sort of  ``interpolation'' between $h$ and $h^t$.
For any $\theta \in [0,1]$ and $k\in\N$, we let
$\odot^\theta_k : M_k \times M_k \to M_k$ be $(1-\theta) \odot + \theta \odot^t$, that is, $\odot^\theta_k(A,B) = (1-\theta)AB+\theta BA$ for $A,B\in M_k$. It is clear that each $\odot^\theta_k$ is bilinear, and let us now show that they all induce operator space structures on the tensor product.

\begin{proposition}\label{htheta-satisfies-Es}
For each $\theta\in[0,1]$, $\odot^\theta$ satisfies (E1), (E2) and (E3).
\end{proposition}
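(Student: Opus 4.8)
The strategy is to verify each of the three conditions (E1), (E2), (E3) for the bilinear maps $\odot^\theta_k(A,B) = (1-\theta)AB + \theta BA$ directly, exploiting the fact that $\tau(k) = k$ here (since $\odot^\theta_k$ maps into $M_k$), and that $\odot^\theta$ is a convex combination of the matrix product $\odot$ and its transpose $\odot^t$, both of which are already known to satisfy (E1)--(E3) (as $\odot$ yields the Haagerup norm and $\odot^t$ yields $h^t$). The point is that (E1) and (E2) are ``algebraic identity''-type conditions that behave well under the symmetry swapping $A \leftrightarrow B$, so that once we find the witnessing matrices for $\odot$, symmetrizing them appropriately (or handling the two halves of a block decomposition separately) will produce witnesses for $\odot^\theta$.

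\textbf{Condition (E3).} This is the easy one: $\odot^\theta_1(1,1) = (1-\theta)\cdot 1 + \theta \cdot 1 = 1$, and $\|\odot^\theta_k\| \le (1-\theta)\|\odot_k\| + \theta \|\odot^t_k\| = (1-\theta) + \theta = 1$ for all $k$ (the matrix product $M_k \times M_k \to M_k$ has norm $1$, since $\|AB\| \le \|A\|\,\|B\|$, and transposition is an isometry on $M_k$), so $\sup_k \|\odot^\theta_k\| \le 1 < \infty$.

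\textbf{Condition (E1).} Fix $k$. Recall that for the ordinary matrix product, one can choose $p$, matrices $S, T$, and $a_1, \dots, a_k \in M_p$ so that $S(a_{j_1} a_{j_2})T = \varepsilon^{[k]}_j$ if $j_1 = j_2 = j$ and $0$ otherwise; the natural choice is to take $p = k$, $a_j = \varepsilon^{[k]}_j$ (the diagonal matrix units), so that $a_{j_1}a_{j_2} = \delta_{j_1 j_2}\varepsilon^{[k]}_j$, and $S = T = I_k$. The key observation is that for these same diagonal idempotents $a_{j_1}a_{j_2} = a_{j_2}a_{j_1}$, so $\odot^\theta_k(a_{j_1}, a_{j_2}) = (1-\theta)a_{j_1}a_{j_2} + \theta a_{j_2}a_{j_1} = a_{j_1}a_{j_2}$, and the \emph{exact same} $S, T, a_j$ work verbatim for $\odot^\theta$. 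So (E1) holds with no extra work.

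\textbf{Condition (E2).} This is the step I expect to be the main obstacle, though still routine. We need, for all $r, s$, a contraction $P \in M_{r+s, \, r+s}$ (using $\tau(r) = r$, $\tau(s) = s$, $\tau(r+s) = r+s$) with
$$
P\, \odot^\theta_{r+s}\big(\varepsilon^{[r+s]}_{i_1,j_1}, \varepsilon^{[r+s]}_{i_2,j_2}\big) P^* = \mathrm{diag}\Big(\odot^\theta_r(\varepsilon^{[r]}_{i_1,j_1}, \varepsilon^{[r]}_{i_2,j_2}),\ \odot^\theta_s(\varepsilon^{[s]}_{i_1-r,j_1-r}, \varepsilon^{[s]}_{i_2-r,j_2-r})\Big)
$$
whenever the indices lie in $\{1,\dots,r\}^2 \cup \{r+1,\dots,r+s\}^2$. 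The natural candidate is $P = I_{r+s}$ (the identity), which is the standard choice for $\odot$: then $P \odot^\theta_{r+s}(\cdot,\cdot)P^*$ is just $\odot^\theta_{r+s}$ applied to the matrix units. The verification reduces to a case check on whether $(i_1,j_1)$ and $(i_2,j_2)$ both lie in the ``first block'' $\{1,\dots,r\}$ or both in the ``second block'' $\{r+1,\dots,r+s\}$: in the first case $\varepsilon^{[r+s]}_{i_1,j_1}\varepsilon^{[r+s]}_{i_2,j_2} = \delta_{j_1 i_2}\varepsilon^{[r+s]}_{i_1,j_2}$ is supported in the top-left $r \times r$ block and agrees with $\varepsilon^{[r]}_{i_1,j_1}\varepsilon^{[r]}_{i_2,j_2}$ there, while the bottom-right block vanishes (matching $\odot^\theta_s$ of matrix units whose \emph{indices}, being in the top block, fall outside $\{1,\dots,s\}$ after shifting, hence are zero by the convention $\varepsilon^{[k,l]}_{i,j}=0$ off-range); symmetrically for the second case. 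Since $\varepsilon^{[r+s]}_{i_2,j_2}\varepsilon^{[r+s]}_{i_1,j_1}$ behaves the same way block-wise, the convex combination defining $\odot^\theta$ is handled coordinatewise and (E2) follows with $P = I_{r+s}$. The only delicate point is bookkeeping the index-shift conventions and the vanishing-off-range convention for the $\varepsilon$'s, but no genuine difficulty arises. This completes all three conditions, so by the cited proposition of Defant--Wiesner / Wiesner, $\odot^\theta$ induces an o.s.\ tensor structure and satisfies the complete metric mapping property.
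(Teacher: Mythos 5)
Your proof is correct and follows essentially the same route as the paper: (E3) by convex combination of the norm-one bounds for $\odot$ and $\odot^t$, (E1) with the same witnesses $p=k$, $S=T=I_k$, $a_j=\varepsilon^{[k]}_j$ (using that diagonal matrix units commute), and (E2) with $P=I_{r+s}$, where the paper simply cites Wiesner's argument for $\odot$ and notes the same choices pass to $\odot^\theta$ by taking convex combinations. Your explicit block-wise check of (E2) is just a spelled-out version of that citation, so there is no substantive difference.
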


\begin{proof}
Clearly $1 \odot 1 = 1 \odot^t 1 = 1$, and $\n{\odot_k} = \n{\odot^t_k} = 1$ (since $\n{A\odot B} \le \n{A} \n{B}$ for $A,B\in M_k$). Taking a convex combination yields (E3) for $\odot^\theta$.

As in the proof of \cite[Prop. 7.6]{Wiesner}, (E1) is satisfied for $\odot$ by taking $p=k$, $S=T=I_k$, and $a_{j_i} = \varepsilon^{[k]}_{j_i}$. Since all the matrices involved are diagonal all the products commute, and it follows that the same choices yield (E1) for $\odot^t$. Taking convex combinations, the same choices once again yield (E1) for $\odot^\theta$.
The exact same type of argument works for (E2): this is proved for $\odot$ in \cite[Prop. 7.6]{Wiesner} using $P=I_{r+s}$.
\end{proof}

\begin{proposition}\label{htheta-is-cross}
For each $\theta\in[0,1]$, $\odot^\theta$ is a cross-norm.
\end{proposition}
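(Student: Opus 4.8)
The plan is to verify the one remaining requirement for $\odot^\theta$ to be a cross-norm, namely the identity~\eqref{norma producto matrices}, $\odot^\theta_{nm}(x\otimes y)=\n{x}_{M_n(E)}\n{y}_{M_m(F)}$ for all $x\in M_n(E)$, $y\in M_m(F)$; that $\odot^\theta$ equips $E\otimes F$ with an operator space structure at all is already covered by Proposition~\ref{htheta-satisfies-Es} and the $\lambda$-tensor product construction of \cite{Defant-Wiesner,Wiesner} recalled above. The whole argument rests on a single elementary observation: since $\odot^\theta_j=(1-\theta)\odot+\theta\odot^t$ and a bilinear map is determined by its values on elementary tensors, one has
$$
\otimes_{\odot^\theta_j}=(1-\theta)\,\otimes_{\odot_j}+\theta\,\otimes_{\odot^t_j}
$$
as bilinear maps $M_j(E)\times M_j(F)\to M_j(E\otimes F)$; thus every $\odot^\theta$-estimate can be obtained by convexity from the corresponding $\odot$- and $\odot^t$-estimates.

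For ``$\le$'' I would plug into~\eqref{lnorm} the decomposition $x\otimes y=I_{nm}\,\big(\otimes_{\odot^\theta_{nm}}(x\otimes I_m,\,I_n\otimes y)\big)\,I_{nm}$, where $x\otimes I_m$ and $I_n\otimes y$ are viewed as elements of $M_{nm}(E)$ and $M_{nm}(F)$ via $M_n(E)\otimes M_m\cong M_{nm}(E)$. A direct entrywise computation (the same one behind the cross-norm property of the Haagerup norm) shows $\otimes_{\odot_{nm}}(x\otimes I_m,I_n\otimes y)=x\otimes y$, and likewise $\otimes_{\odot^t_{nm}}(x\otimes I_m,I_n\otimes y)=x\otimes y$; these agree because the ``matrix parts'' $x\otimes I_m$ and $I_n\otimes y$ are carried by complementary legs of $M_n\otimes M_m$, so the order in which they are multiplied is immaterial. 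By the splitting above, $\otimes_{\odot^\theta_{nm}}(x\otimes I_m,I_n\otimes y)=x\otimes y$, whence $\odot^\theta_{nm}(x\otimes y)\le\n{x\otimes I_m}_{M_{nm}(E)}\n{I_n\otimes y}_{M_{nm}(F)}=\n{x}_{M_n(E)}\n{y}_{M_m(F)}$; the last equality holds because, up to conjugation by a permutation matrix (a complete isometry by \textbf{M2}), $x\otimes I_m$ is the $m$-fold direct sum $x\oplus\cdots\oplus x$, whose norm is $\n{x}_{M_n(E)}$ by \textbf{M1}, and similarly for $I_n\otimes y$.

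For the reverse inequality I would establish the stronger statement $\min\le\odot^\theta$ on $\ONORM$ and then use that $\min$ is a cross-norm. Fix $u\in M_k(E\otimes F)$, a decomposition $u=a\,\big(\otimes_{\odot^\theta_j}(v_1,v_2)\big)\,b$ as in~\eqref{lnorm}, and $f\in M_p(E')$, $g\in M_q(F')$ with $\n{f}\le1$, $\n{g}\le1$. Since $a,b$ are scalar matrices, $(f\otimes g)_k(u)=(a\otimes I_{pq})\,(f\otimes g)_j\!\big(\otimes_{\odot^\theta_j}(v_1,v_2)\big)\,(b\otimes I_{pq})$; applying the splitting and the multiplicativity of amplifications, the middle factor equals, inside $M_j(M_{pq})=M_j\otimes M_p\otimes M_q$, the convex combination
$$
(1-\theta)\,\big(f_j(v_1)\otimes I_q\big)\,\widetilde{g_j(v_2)}+\theta\,\widetilde{g_j(v_2)}\,\big(f_j(v_1)\otimes I_q\big),
$$
where $\widetilde{g_j(v_2)}$ denotes $g_j(v_2)\in M_j\otimes M_q$ re-embedded in $M_j\otimes M_p\otimes M_q$ by inserting $I_p$ in the middle leg. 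As $\n{f_j(v_1)}\le\n{v_1}$ and $\n{g_j(v_2)}\le\n{v_2}$, each of the two products, hence their convex combination, has norm at most $\n{v_1}\n{v_2}$, so $\n{(f\otimes g)_k(u)}\le\n{a}\n{v_1}\n{v_2}\n{b}$. Taking the supremum over such $f,g$ (cf.~\eqref{eqn-injective-tensor-norm}) and the infimum over decompositions of $u$ gives $\min_k(u)\le\odot^\theta_k(u)$; applying this to $u=x\otimes y$ yields $\odot^\theta_{nm}(x\otimes y)\ge\min_{nm}(x\otimes y)=\n{x}_{M_n(E)}\n{y}_{M_m(F)}$, which completes the verification of~\eqref{norma producto matrices}. (Feeding an arbitrary projective representation of $u$ into~\eqref{lnorm} via the same decomposition as in ``$\le$'' also shows $\odot^\theta\le\proj$, so in fact $\odot^\theta$ is reasonable.)

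The only real work here is organizational: keeping track of the matrix sizes and of the identification $M_j(M_{pq})=M_j\otimes M_p\otimes M_q$, and carefully checking the two block-indexed identities $\otimes_{\odot_{nm}}(x\otimes I_m,I_n\otimes y)=x\otimes y=\otimes_{\odot^t_{nm}}(x\otimes I_m,I_n\otimes y)$. There is no conceptual obstacle beyond that---the convex-combination splitting reduces every step to the (known) Haagerup case.
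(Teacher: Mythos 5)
Your argument is correct, and its first half coincides with the paper's proof: the paper establishes $\odot^\theta_{mn}(x\otimes y)\le\n{x}\,\n{y}$ from exactly the same identity $x\otimes y=(x\otimes I)\odot(I\otimes y)=(x\otimes I)\odot^t(I\otimes y)$, observing that the two matrix factors live on complementary legs and hence commute, and then feeding this decomposition into \eqref{lnorm}. Where you diverge is in the reverse inequality: the paper's proof of Proposition \ref{htheta-is-cross} stops at the upper bound and leaves the lower bound to be recovered later, from the separate and more general Proposition \ref{min-less-than-lambda} (which shows $\min\le\lambda$ for any $\lambda$ with $\n{\bl_k}_{\jcb}\le 1$, using the duality $(E\otimes_\lambda F)'=\CB_\lambda(E\times F)$ of Theorem \ref{duality-lambda-tensor}), combined with the remark after \eqref{norma producto matrices} that contractivity of $\proj\to\alpha\to\min$ forces the cross-norm equality. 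Your direct pairing computation -- writing $(f\otimes g)_j\big(\otimes_{\odot^\theta_j}(v_1,v_2)\big)$ as the convex combination $(1-\theta)\,(f_j(v_1)\otimes I_q)\,\widetilde{g_j(v_2)}+\theta\,\widetilde{g_j(v_2)}\,(f_j(v_1)\otimes I_q)$ and estimating each factor -- is a sound, self-contained specialization of that argument to $\odot^\theta$: it buys independence from Theorem \ref{duality-lambda-tensor} and proves the full equality \eqref{norma producto matrices} within a single proof, at the cost of redoing by hand, for one family, what the paper obtains once for all $\lambda$-tensor norms with jointly completely contractive $\bl_k$.
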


\begin{proof}
Let $E$, $F$ be operator spaces, $x \in M_m(E)$, $y \in M_n(F)$.
It is well-known that $x\otimes y = (x\otimes I_n)\odot (I_m \otimes y)$, see e.g. \cite[Eqn. (9.1.10)]{Effros-Ruan-book}. The exact same calculation shows $x\otimes y = (x\otimes I_n)\odot^t (I_m \otimes y)$, and therefore $x\otimes y = \odot^\theta_{mn}(x\otimes I_n, I_m \otimes y)$. Thus, by the definition of $\odot^\theta_{mn}$,
$$
\odot^\theta_{mn}( x\otimes y ) \le \n{x\otimes I_n}_{M_{mn}(E)} \n{ I_m \otimes y }_{M_{mn}(F)} =  \n{x}_{M_m(E)} \cdot \n{y}_{M_n(F)}.
$$
\end{proof}

In order to conclude that $\odot^\theta$ is an o.s. tensor norm, the only missing ingredient is to check that $\min \le \odot^\theta$.
We prove a more general result.

\begin{proposition}\label{min-less-than-lambda}
Suppose that $\lambda$ satisfies (E1), (E2), (E3), and $\n{\bl_k}_{\jcb} \le 1$ for each $k\in\N$.
Then the identity mapping $E\otimes_\lambda F \to E \otimes_{\min} F$ is a complete contraction.
\end{proposition}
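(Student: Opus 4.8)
The plan is to prove directly that $\min_k(u)\le\lambda_k(u)$ for every $k$ and every $u\in M_k(E\otimes F)$; since the map in question is, at each matrix level, the identity on $M_k(E\otimes F)$, this is precisely the assertion that it is a complete contraction. So fix such a $u$ together with a decomposition $u=a\otimes_{\bl_j}(v_1,v_2)\,b$ (with $a\in M_{k,\tau(j)}$, $b\in M_{\tau(j),k}$, $v_1\in M_j(E)$, $v_2\in M_j(F)$) that nearly realizes the infimum defining $\lambda_k(u)$. Writing $z:=\otimes_{\bl_j}(v_1,v_2)\in M_{\tau(j)}(E\otimes F)$ we have $u=azb$, so the axiom \textbf{M2}, applied in the normed operator space $E\otimes_{\min}F$, gives $\min_k(u)\le\n a\,\min_{\tau(j)}(z)\,\n b$. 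Hence the proposition follows once we prove the estimate
\begin{equation*}
\min_{\tau(j)}\!\big(\otimes_{\bl_j}(v_1,v_2)\big)\ \le\ \n{\bl_j}_{\jcb}\,\n{v_1}_{M_j(E)}\,\n{v_2}_{M_j(F)}
\end{equation*}
for all $j$ and all $v_1\in M_j(E)$, $v_2\in M_j(F)$; the hypothesis $\n{\bl_j}_{\jcb}\le 1$ then finishes the argument after letting the approximation parameter tend to zero.

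To prove this estimate, unwind the definition \eqref{eqn-injective-tensor-norm} of $\min$: take $f\in M_p(E')$ and $g\in M_q(F')$ with $\n f,\n g\le 1$, regarded as complete contractions $\widehat f:E\to M_p$ and $\widehat g:F\to M_q$ through $M_p(E')=\CB(E,M_p)$ and $M_q(F')=\CB(F,M_q)$. Since $f\otimes g$ is the linearization of the bilinear map $(s,t)\mapsto\widehat f(s)\otimes\widehat g(t)=\mu(\widehat f(s),\widehat g(t))$, a direct check on elementary tensors (using only bilinearity and the definitions of the amplifications) shows that
\begin{equation*}
(f\otimes g)_{\tau(j)}\big(\otimes_{\bl_j}(v_1,v_2)\big)\ =\ \mu_{\bl_j}\big(\widehat f_j(v_1),\,\widehat g_j(v_2)\big)\ \in\ M_{\tau(j)}(M_{pq}),
\end{equation*}
where $\mu:M_p\times M_q\to M_{pq}$ is the bilinear map $\mu(A,B)=A\otimes B$ and $\widehat f_j,\widehat g_j$ are the amplifications (so $\n{\widehat f_j(v_1)}_{M_j(M_p)}\le\n{v_1}_{M_j(E)}$, and similarly for $\widehat g_j$). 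Taking the supremum over $f,g$, the whole matter reduces to the ``level-one'' bound
\begin{equation*}
\n{\mu_{\bl_j}(V,W)}_{M_{\tau(j)}(M_{pq})}\ \le\ \n{\bl_j}_{\jcb}\,\n V_{M_j(M_p)}\,\n W_{M_j(M_q)},\qquad V\in M_j(M_p),\ W\in M_j(M_q).
\end{equation*}

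The crux is this last inequality, and the key point is that the target $M_{\tau(j)}(M_{pq})=M_{\tau(j)}\otimes M_p\otimes M_q$ has only matrix algebras as tensor factors, so they may be permuted by a $*$-isomorphism and hence completely isometrically. Apply the flip $M_{\tau(j)}\otimes M_p\otimes M_q\cong M_p\otimes M_q\otimes M_{\tau(j)}=M_{pq}(M_{\tau(j)})$ and expand $V$ and $W$ in matrix units: then $\mu_{\bl_j}(V,W)$ becomes the matrix $\big(\bl_j(\bar V_{\alpha\beta},\bar W_{\gamma\delta})\big)_{(\alpha,\gamma),(\beta,\delta)}$, where $\bar V\in M_p(M_j)$ and $\bar W\in M_q(M_j)$ are the images of $V$ and $W$ under the flips $M_j(M_p)\cong M_p(M_j)$ and $M_j(M_q)\cong M_q(M_j)$ (so $\n{\bar V}=\n V$ and $\n{\bar W}=\n W$). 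After padding $\bar V$ and $\bar W$ with zeros up to the common size $P:=\max(p,q)$, this matrix is exactly a compression of the $\jcb$-amplification $(\bl_j)_P(\bar V\oplus 0,\bar W\oplus 0)\in M_{P^2}(M_{\tau(j)})$, whence its norm is at most $\n{(\bl_j)_P}\,\n{\bar V}\,\n{\bar W}\le\n{\bl_j}_{\jcb}\,\n V\,\n W$, which is the desired bound.

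I expect the only genuinely delicate point to be the index bookkeeping in the last step: verifying that the tensor-factor flip really carries $\mu_{\bl_j}(V,W)$ to the stated matrix of $\bl_j$-values, and that this matrix sits inside $(\bl_j)_P(\bar V\oplus 0,\bar W\oplus 0)$ as a compression onto a sub-set of the (multi-)indices. Everything else is formal — bilinearity, the definitions of $\otimes_{\bl_j}$, $\mu_{\bl_j}$ and the amplifications, the axiom \textbf{M2}, and the definition \eqref{eqn-injective-tensor-norm}.
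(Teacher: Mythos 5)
Your proof is correct, and it takes a somewhat different (more self-contained) route than the paper. The paper reduces the statement, via \cite[Thm. 5.1]{Blecher-Paulsen-Tensors}, to bounding the pairing of $u$ against $\phi\otimes\psi$, and then invokes the duality $(E\otimes_{\lambda}F)'=\CB_\lambda(E\times F)$ of Theorem \ref{duality-lambda-tensor} so that everything comes down to the estimate $\n{\phi\otimes\psi}_{\cb,\lambda}\le\n{\phi}\,\n{\psi}$. You instead prove $\min_k(u)\le\lambda_k(u)$ directly: you take a near-optimal factorization $u=a\,\otimes_{\bl_j}(v_1,v_2)\,b$ from the infimum \eqref{lnorm}, apply axiom \textbf{M2} in $E\otimes_{\min}F$, and then bound $\min_{\tau(j)}\big(\otimes_{\bl_j}(v_1,v_2)\big)$ by unwinding \eqref{eqn-injective-tensor-norm}. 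The computational heart is the same in both arguments: pairing the $\bl_j$-tensor against $f\otimes g$ produces exactly the $\bl_j$-amplification evaluated at $(I_j\otimes\widehat f)(v_1)$ and $(I_j\otimes\widehat g)(v_2)$ (this is the identity $\varphi_{\bl_k}(u,v)=\otimes_{\bl_k}\big((I_k\otimes\phi)u,(I_k\otimes\psi)v\big)$ in the paper, seen from the other side), after which the hypothesis $\n{\bl_j}_{\jcb}\le1$ gives the bound. What your route buys is that it needs neither the Blecher--Paulsen criterion nor Theorem \ref{duality-lambda-tensor}, only the infimum formula for $\lambda_k$ and the cross-norm axioms of $\min$; what it costs is that you must carry out explicitly (flip of matrix tensor factors, which is a $*$-isomorphism and hence completely isometric; padding to a common square size, harmless by \textbf{M1}; and a compression, which is norm-nonincreasing) the identification that the paper dispatches in one sentence, namely that $\n{\bl_k}_{\jcb}\le1$ means $\n{\otimes_{\bl_k}(A,B)}\le\n{A}_{M_k(M_p)}\n{B}_{M_k(M_q)}$. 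Your bookkeeping in that step is right, so there is no gap.
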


\begin{proof}
By \cite[Thm. 5.1]{Blecher-Paulsen-Tensors}, it suffices to show that for every $U \in M_k(E \otimes F)$, $\phi \in M_m(E')$, $\psi \in M_n(F')$ we have
$$
\n{\mpair{\phi \otimes \psi}{U}}_{M_{mnk}} \le \n{\phi}_{M_m(E')} \n{\psi}_{M_n(F')}\lambda_k(U).
$$
Now, by the identification $(E\otimes_{\lambda} F)' = \CB_\lambda(E \times F)$ we already know that
$$
\n{\mpair{\phi \otimes \psi}{U}}_{M_{mnk}} \le \n{\phi \otimes \psi}_{M_{mn}(\CB_\lambda(E \times F))}\lambda_k(U).
$$
Recalling that $M_{mn}\big(\CB_\lambda(E \times F)\big) = \CB_\lambda\big(E \times F; M_{mn}\big)$, all we need is to show that
$$
\n{\varphi}_{\lambda,\cb} \le \n{\phi}_{M_m(E')} \n{\psi}_{M_n(F')}
$$
where $\varphi =\phi \otimes \psi : E \times F \to M_{mn}$; note that here we are interpreting $\phi \in M_m(E') = \CB(E,M_m)$ and $\psi \in M_n(F') = \CB(F,M_n)$.

To calculate the norm of $\varphi$ in $\CB_\lambda(E \times F ; M_{mn})$, take $u \in M_k(E)$ and $v \in M_k(F)$ with $\n{u}_{M_k(E)}, \n{v}_{M_k(F)}\le 1$.
Represent $u = \sum_i a_i \otimes u_i$, $v = \sum_j b_j \otimes v_j$ where $a_i,b_j \in M_k$, $u_i \in E$, $v_j \in F$.
Then, by the definition of $\varphi_{\bl_k}$,
\begin{multline*}
   \varphi_{\bl_k}(u,v) = \sum_{i,j} \bl_k(a_i,b_j) \otimes \varphi(u_i,v_j) =  \sum_{i,j} \bl_k(a_i,b_j) \otimes \phi(u_i) \otimes \psi(v_j) \\
   = \otimes_{\bl_k} \Big(\sum_i a_i \otimes \phi(u_i),   \sum_j b_j \otimes \psi(v_j) \Big) 
   = \otimes_{\bl_k} \big( (I_k \otimes \phi)u, (I_k\otimes \phi)v \big).
\end{multline*}
Note that the assumption $\n{\bl_k}_{\jcb} \le 1$ precisely means that for any $A \in M_k(M_m)$ and $B \in M_k(M_n)$ we have
$$
\n{ \otimes_{\bl_k}(A,B) }_{M_{\tau(k)mn}} \le \n{A}_{M_k(M_m)} \n{B}_{M_k(M_n)},
$$
so
\begin{multline*}
\n{\varphi_{\bl_k}(u,v)} \le \n{(I_k \otimes \phi)u}_{M_k(M_m)} \n{(I_k\otimes \phi)v}_{M_k(M_n)} \\
\le \n{I_k \otimes \phi} \n{u} \n{I_k\otimes \phi} \n{v} \le \n{\phi} \n{\phi}    
\end{multline*}
which yields the desired conclusion.
\end{proof}

\begin{theorem}
For each $\theta \in [0,1]$, $\odot^\theta$ is an o.s. tensor norm. 
\end{theorem}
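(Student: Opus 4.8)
The plan is to assemble the three preceding propositions. By Proposition~\ref{htheta-satisfies-Es}, $\odot^\theta$ satisfies (E1), (E2) and (E3), so by \cite[Prop.~4.1]{Defant-Wiesner} (equivalently \cite[Prop.~6.1]{Wiesner}) the formulas \eqref{lnorm} with $\lambda=\odot^\theta$ endow $E\otimes F$ with an operator space structure for which the complete metric mapping property holds, for every pair of operator spaces $E,F$. In particular $\odot^\theta$ is uniform, so it remains only to check that it is reasonable, i.e. that the identity maps $E\otimes_{\proj}F\to E\otimes_{\odot^\theta}F\to E\otimes_{\min}F$ are completely contractive.

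For the $\min$ side I would invoke Proposition~\ref{min-less-than-lambda} with $\lambda=\odot^\theta$; the only hypothesis there not already established is $\n{\odot^\theta_k}_{\jcb}\le 1$ for all $k$, which --- as spelled out in the proof of that proposition --- means $\n{\otimes_{\odot^\theta_k}(A,B)}\le\n{A}\,\n{B}$ for all $m,n$ and all $A\in M_k(M_m)$, $B\in M_k(M_n)$. Since $\odot^\theta_k=(1-\theta)\odot_k+\theta\odot^t_k$, the map $\otimes_{\odot^\theta_k}$ is the convex combination $(1-\theta)\otimes_{\odot_k}+\theta\otimes_{\odot^t_k}$, so by the triangle inequality it suffices to treat $\odot$ and $\odot^t$, and these are interchanged by transposition. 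For $\odot$, writing $A=\sum_i a_i\otimes x_i$ and $B=\sum_j b_j\otimes y_j$ with $a_i,b_j\in M_k$, $x_i\in M_m$, $y_j\in M_n$, a leg-notation computation in $M_k\otimes M_m\otimes M_n$ gives $\otimes_{\odot_k}(A,B)=\sum_{i,j}(a_ib_j)\otimes x_i\otimes y_j=A_{12}\,B_{13}$, whence $\n{\otimes_{\odot_k}(A,B)}\le\n{A_{12}}\,\n{B_{13}}=\n{A}\,\n{B}$ since amplification by an identity matrix preserves the norm. (This is essentially the classical relation $\min\le h$.) Proposition~\ref{min-less-than-lambda} then yields that $E\otimes_{\odot^\theta}F\to E\otimes_{\min}F$ is a complete contraction.

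For the $\proj$ side, I would combine the inequality $\odot^\theta_{mn}(x\otimes y)\le\n{x}_{M_m(E)}\,\n{y}_{M_n(F)}$ of Proposition~\ref{htheta-is-cross} with the $\min$-contraction just obtained --- which forces $\odot^\theta_{mn}(x\otimes y)\ge\min_{mn}(x\otimes y)=\n{x}_{M_m(E)}\,\n{y}_{M_n(F)}$ --- to conclude that \eqref{norma producto matrices} holds; hence $\odot^\theta$ is an operator space cross-norm, and $E\otimes_{\proj}F\to E\otimes_{\odot^\theta}F$ is completely contractive by \cite[Thm.~5.5]{Blecher-Paulsen-Tensors}. (Alternatively, this contraction follows directly from property \textbf{M2}: for $u=a(x\otimes y)b$ one has $\odot^\theta_m(u)\le\n{a}\,\odot^\theta_{pq}(x\otimes y)\,\n{b}\le\n{a}\,\n{x}_{M_p(E)}\,\n{y}_{M_q(F)}\,\n{b}$, and one takes the infimum over all such representations.) This shows $\odot^\theta$ is reasonable, and together with uniformity that it is an o.s. tensor norm. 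I do not expect a genuine obstacle here: all the mathematical content already sits in Propositions~\ref{htheta-satisfies-Es}, \ref{htheta-is-cross} and \ref{min-less-than-lambda}, and the one point that needs a little care is unwinding the hypothesis $\n{\odot^\theta_k}_{\jcb}\le 1$ into a concrete inequality and reducing it, by convexity of $\odot^\theta_k$ in terms of $\odot_k$ and $\odot^t_k$, to the matrix-product case.
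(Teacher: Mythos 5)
Your argument is correct and follows exactly the paper's route: the theorem is obtained by combining Propositions \ref{htheta-satisfies-Es}, \ref{htheta-is-cross} and \ref{min-less-than-lambda}, with the hypothesis of the latter verified via $\n{\odot}_{\jcb}\le 1$ (and its transpose) together with convexity of $\odot^\theta_k$. Your explicit leg-notation verification of $\n{\otimes_{\odot_k}(A,B)}\le\n{A}\n{B}$ and the unwinding of the $\proj$-side contraction are just spelled-out details of what the paper leaves implicit.
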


\begin{proof}
This is an immediate consequence of Propositions \ref{htheta-satisfies-Es}, \ref{htheta-is-cross} and \ref{min-less-than-lambda}, where for the latter we use the fact that $\n{\odot}_{\jcb} \le 1$ and therefore $\n{\odot^\theta_k}_{\jcb} \le 1$.
\end{proof}

Throughout the rest of this paper, whenever we consider any $\lambda$-tensor norm
we will always be assuming that it satisfies (E1), (E2), (E3) and additionally it is an o.s. tensor norm. To emphasize this point, we will call them \emph{$\lambda$-o.s. tensor norms}.

\begin{remark}
We have previously pointed out that our notion of uniformity in the definition of o.s. tensor norm is weaker than that of \cite{Blecher-Paulsen-Tensors}.
This has the advantage of allowing the $\odot^\theta$ to be covered by the theory: the only assumptions we are aware of which imply that a $\lambda$-tensor norm satisfies the uniformity condition in \cite{Blecher-Paulsen-Tensors} are conditions (W1) and (W2) in \cite[Prop. 12.2]{Wiesner}, but $\odot^\theta$ only satisfies (W2) in the extreme cases $\theta=0,1$.
\end{remark}

\subsection{Some usual notation} \label{usual notation}
For a normed operator space $E$, we denote by $\OFIN(E)$ (resp. $\OCOFIN(E)$) the set of all finite-dimensional (resp. finite-codimensional) subspaces of $E$.
Given $L \in \OCOFIN(E)$, let $q^E_L \colon E \to E/L$ be the canonical projection
and given a subspace $F$ of $E$ let $i^E_F \colon  F \to E$ be the canonical injection. To avoid an overload of notation, when the inclusion is the typical complete isometry for operator spaces $E\subset \mathcal{B}(H)$ we just denote the canonical injection by $i_E:E\to \mathcal B(H)$.
Also, following the usual Banach space notation, for a normed operator space $E$, $\kappa_E \colon  E \to E''$ denotes the canonical injection into the bidual.

Given vector spaces $E$ and $F$, we identify a linear map $T : E \to F'$ with a bilinear map $\beta_T : E \times F \to \C$ via $\beta_T(x,y) = (Tx)(y)$; note that $\beta_T$ can also be identified with an element of $(E \otimes F)'$.

A normed operator space $E$ is an $\mathscr{OS}_{p,C}$ space \cite[Sec. 2]{Junge-Nielsen-Ruan-Xu}
 if there is a family $(F_i)_{i\in I}$ of finite-dimensional subspaces of $E$ whose union is dense in $E$ and such that for every index $i$ there is a natural number $n_i$ such that $d_{\cb}(S_p^{n_i}, F_i) \leq C$ where $d_{\cb}$ stands for the completely bounded Banach-Mazur distance, i.e., 
 $$
 d_{\cb}(E, F):= \inf\big\{\Vert T \Vert_{cb} \Vert T^{-1} \Vert_{cb} : T \in \CB(E,F) \mbox{ is a complete isomorphism} \big\}.
 $$
If $E$ is an $\mathscr{OS}_{p,C'}$ space for every $C'>C$, we say that $E$ is an $\mathscr{OS}_{p,C+}$ space.

If $W$ is a Banach space, its maximal operator space structure $\Max(W)$ is defined by, for $A \in M_n(W)$ 
$$
\n{A}_{M_n(\Max(W))} = \big\{ \n{  u_n(A) }_{M_n(B(H))} \;:\; \n{ u : W \to B(H) } \le 1 \big\}.
$$
See \cite[Chap. 3]{Pisier-Operator-Space-Theory} for more details.

\subsection{Operator approximation property}
A normed operator space $E$ is said to have the \emph{operator approximation property} (OAP)
\cite[Sec. 11.2]{Effros-Ruan-book} if there exists a net of finite rank mappings $T_\eta \in\CB(E,E)$ such that the net $id_\mathcal{K} \otimes T_\eta$ converges pointwise to the identity in $K_\infty(E)=\mathcal{K} \otimes_{\min} E$, where $\mathcal{K}$ denotes the compact operators on $\ell_2$.
Similarly, $E$ is said to have the 
\emph{$C$-completely bounded approximation property} ($C$-CBAP) \cite[p. 205]{Effros-Ruan-book} if there exists a net of finite rank mappings $T_\eta \in\CB(E,E)$ such that $\n{T_\eta}_{\cb} \le C$ and $\n{T_\eta x - x} \to 0$ for every $x \in E$.
Note that by \cite[Thm. 11.3.3]{Effros-Ruan-book}, we also have $\n{(T_\eta)_n x - x} \to 0$ for every $x \in M_n(E)$.
In the particular case $C=1$, we say that $E$ has the \emph{completely metric approximation property} (CMAP).

\subsection{Relevant differences with the Banach space setting} \label{Relevant differences}
It is well-known that the theory of Banach spaces cannot be painlessly transferred to the operator space framework, and the theory of tensor norms is no exception. Let us point out three important differences in this context. 

\subsubsection{Local reflexivity} \label{locally reflexive}

A normed operator space $E$ is said to be  \emph{locally reflexive} \cite{Effros-Junge-Ruan}  if for each finite-dimensional operator space $F$, any complete contraction $\varphi: F \to E''$ may be approximated in the point-weak$^*$ topology by a net of complete contractions $\varphi_\eta : F \to E$.
The class of all  locally reflexive normed operator spaces is denoted by OLOC.

Since local reflexivity is relevant to prove many properties related to Banach space tensor norms, it is often an additional hypothesis in the operator space versions of those results.
Moreover, in some cases this is known to be necessary: \cite[Thm. 14.3.1]{Effros-Ruan-book} is an example of an identity involving tensor norms whose validity turns out to characterize local reflexivity.
Nevertheless, there are some particular situations where we can omit this assumption (for instance, for $\lambda$-o.s. tensor norms, see Lemma \ref{extension-lemma} and Lemma \ref{embedding-lemma}). 

We point out, for future reference, that given a complete operator space $E$, the completely projective space $Z_E$ introduced in Section~\ref{injections-and-projections} is in fact locally reflexive. Indeed,   its dual  $\ell_\infty(\{M_{n_i} : i \in I \})$ is clearly a $C^*$-algebra and thus a  von Neumann algebra, and preduals of von Neumann algebras are locally reflexive  (see e.g., \cite[Theorem 18.7]{Pisier-Operator-Space-Theory}).

\subsubsection{Exactness}
Every finite-dimensional Banach space embeds $(1+\varepsilon)$-isomorphically into a space of the form $\ell_\infty^n$, but the analogous property in the operator space setting does not hold in general:
an operator space $E$ is said to be \emph{$C$-exact} if for any $\varepsilon>0$ every finite-dimensional subspace of $E$ is $(C+\varepsilon)$-completely isomorphic to a subspace of an $M_n$ space.
When an operator space is $1$-exact we simply say that it is exact.

Remark~\ref{rmk: bad news}  gives an unsurprising example where this causes a significant difference between the classical tensor norm theory and the one for operator spaces.

Another useful  property (for Banach tensor products arguments) which is not transferable to operator spaces due to exactness issues is the one proved in \cite[Lemma 2.2.2]{diestel2008metric} about finite dimensional pieces of a quotient:

\begin{theorem}
The following statement is false:
\begin{quote}
    For every complete quotient between Banach
    operator spaces $q : E \twoheadrightarrow F$, every $F_0 \in \OFIN(F)$, and every $\varepsilon>0$, there exists $E_0 \in \OFIN(E)$ such that $q(E_0)=F_0$ and for every $y \in M_n(F_0)$ there exists $x \in M_n(E_0)$ such that $q_n(x)=y$ and $\n{x}_{M_n(E_0)} \le (1+\varepsilon) \n{y}_{M_n(F_0)}$.
\end{quote}
\end{theorem}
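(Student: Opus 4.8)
\section*{Proof proposal}

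The plan is to \emph{disprove} the bracketed statement by producing a counterexample. The obstruction at play is \emph{exactness}: in the Banach space setting the cited Lemma~2.2.2 of \cite{diestel2008metric} holds unconditionally, essentially because one can assemble, by a compactness/iteration argument, a single finite-dimensional $X_0 \subseteq X$ that lifts the whole unit ball of $Y_0$; in the operator space setting one would need the same $E_0$ to work simultaneously on \emph{all} matrix levels $M_n(F_0)$, and the mechanism that fails is precisely that the local matricial structure of an operator space need not be inherited by its quotients.

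Concretely, suppose the bracketed statement were true, and apply it to a complete quotient $q\colon E \twoheadrightarrow F$ between Banach operator spaces chosen so that $E$ is $1$-exact while $F$ is not --- more precisely, so that $F$ contains a finite-dimensional subspace $F_0$ with exactness constant $\mathrm{ex}(F_0) > 4$. Applying the statement to $q$, this $F_0$, and $\varepsilon = 1$, we obtain $E_0 \in \OFIN(E)$ with $q(E_0) = F_0$ such that every $y \in M_n(F_0)$ admits a lift $x \in M_n(E_0)$ with $q_n(x) = y$ and $\n{x}_{M_n(E_0)} \le 2\,\n{y}_{M_n(F_0)}$. Since $q|_{E_0}\colon E_0 \to F_0$ is a restriction of the completely contractive map $q$, it is completely contractive, so this lifting estimate says exactly that $q|_{E_0}$ is a complete metric surjection up to the factor $2$; hence $F_0$ is $2$-completely isomorphic to the quotient $E_0/\ker(q|_{E_0})$ of the finite-dimensional operator space $E_0$. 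Because $E$ is $1$-exact, for each $\delta > 0$ the space $E_0$ is $(1+\delta)$-completely isomorphic to a subspace of some matrix algebra $M_N$; composing, $F_0$ is $2(1+\delta)$-completely isomorphic to a quotient of a subspace of $M_N$. Using that quotients of subspaces of matrix algebras are exact with a universally controlled constant and letting $\delta \to 0$, this bounds $\mathrm{ex}(F_0)$ by a constant close to $2$, contradicting the choice of $F_0$.

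The substantive point, and what I expect to be the main obstacle, is the existence of such a quotient $q\colon E \twoheadrightarrow F$ from an exact operator space onto a non-exact one --- equivalently, onto one with finite-dimensional subspaces of arbitrarily large exactness constant. This is the well-known failure of stability of exactness under operator-space quotients by arbitrary subspaces (as opposed to $C^*$-quotients by ideals, under which exactness does persist): I would either invoke a known instance of this phenomenon, or build one explicitly by assembling a family of complete quotients $q_k\colon E_k \twoheadrightarrow F_k$ with $\sup_k \mathrm{ex}(E_k) < \infty$ but $\sup_k \mathrm{ex}(F_k) = \infty$ into a single $\ell_\infty$-sum $q = \bigoplus_\infty q_k \colon \bigoplus_\infty E_k \twoheadrightarrow \bigoplus_\infty F_k$. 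A secondary technical point is the auxiliary fact invoked above that quotients of subspaces of matrix algebras have uniformly bounded exactness constant; to keep everything clean and to avoid any circularity with the Lemma being disproved, I would arrange the counterexample so that $E$ is itself a matrix algebra (or a subspace of one), so that $E_0$ lies literally inside some $M_N$ and $F_0$ is a genuine quotient of such a subspace, and then appeal to the known exactness properties of that concrete class. (An alternative, and perhaps slicker, way to extract the final contradiction is to phrase it in terms of the definition of exactness via the minimal tensor norm --- that $X$ is exact iff $X \otimes_{\min} -$ preserves $C^*$-quotient sequences --- and show that the Lemma would force $\min$ to behave well under the relevant quotients; I would try both routes and keep the shorter one.)
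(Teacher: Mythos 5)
Your opening reduction is fine and matches the paper's first observation: granting the quoted statement, $F_0$ is $(1+\varepsilon)$-completely isomorphic to $E_0/\ker(q|_{E_0})$, and if $E$ is $1$-exact this exhibits $F_0$, up to a controlled constant, as a quotient of a subspace of some $M_N$. But from there the argument has two genuine gaps, and the second is fatal as written. First, the starting object — a complete quotient $q:E\twoheadrightarrow F$ with $E$ $1$-exact and $F$ containing finite-dimensional subspaces of arbitrarily large exactness constant — is never produced; you defer it to ``invoke a known instance or assemble a family $q_k$,'' but exhibiting such a family is precisely the nontrivial content here (the standard quotient-universal spaces, such as $S_1$ or $\ell_1$-sums of $S_1^{n}$'s, are badly non-exact, so no off-the-shelf example is available to cite). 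Second, and more seriously, the contradiction step rests on the assertion that quotients of subspaces of matrix algebras are exact with a universal constant. This is not a known fact and you give no proof or reference; since QS and SQ coincide for operator spaces, it amounts to claiming that every quotient $M_N/T$ (equivalently, the dual of every subspace of $S_1^N$) has uniformly bounded exactness constant, a strong quantitative statement of exactly the kind that operator-space quotients are known to violate in spirit (exactness does not pass to quotients by non-ideal subspaces). Note also the structural tension: your ingredient (1) needs quotients to destroy exactness badly, while ingredient (2) needs quotients of subspaces of $M_N$ to preserve it uniformly; these are only compatible because finite-dimensional lifting fails — the very statement being disproved — so neither can be waved through.

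The paper's proof closes precisely where yours does not, by never leaving a setting with a concrete, citable obstruction. It takes $E=S_1$ (so the quotient $q:S_1\twoheadrightarrow F_0$ exists for \emph{every} finite-dimensional $F_0$, with no exactness hypothesis on the domain), uses a perturbation argument to enlarge $E_0$ to a subspace $G_0\subseteq S_1$ that is $(1+\varepsilon)$-completely isomorphic to some $S_1^N$, and observes that $G_0$ inherits the lifting property. Thus the hypothetical statement would make $F_0$ a $(1+\varepsilon)^2$-quotient of $S_1^N$ itself — not merely of a subspace of a matrix algebra — and this dualizes to a $(1+\varepsilon)^2$-complete embedding of $F_0'$ into $M_N$, which is contradicted by Pisier's theorem that any complete isomorphism of $\Max(\ell_1^n)$ onto a subspace of an $M_N$ has constant at least $n/(2\sqrt{n-1})$. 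In your configuration, ``quotient of a subspace of $M_N$'' does not dualize to an embedding into a matrix algebra, so you lose access to that known lower bound; to repair the argument you would either have to prove your uniform-exactness claim for QS of matrix algebras (doubtful) or reroute the contradiction through a duality step as the paper does.
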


\begin{proof}
First let us remark that if $E_0 \in \OFIN(E)$ satisfies the conditions in the statement, then it is clear that $F_0$ is $(1+\varepsilon)$-completely isomorphic to $E_0/\ker(\restr{q}{E_0})$ via the canonical map $E_0/\ker(\restr{q}{E_0}) \to F_0$ induced by $\restr{q}{E_0}$.

Let $F_0$ be a finite-dimensional operator space.
By \cite[Cor. 2.12.3]{Pisier-Operator-Space-Theory}, there exists a complete quotient $q : S_1 \twoheadrightarrow F_0$.
Given $\varepsilon>0$, suppose that there exists $E_0 \in \OFIN(S_1)$
satisfying the conditions in the statement.

Since $E_0 \in \OFIN(S_1)$, we can find $G_0 \in \OFIN(S_1)$ such that $E_0 \subseteq G_0$ and $G_0$ is $(1+\varepsilon)$-completely isomorphic to $S_1^N$ for some $N\in\N$ using perturbation arguments as in \cite[Sec. 2.13]{Pisier-Operator-Space-Theory}.
First, take a basis of $E_0$ and approximate it by finitely supported matrices.
This yields a subspace $E_1 \subset S_1$ which is completely isomorphic to $E_0$ and naturally sits inside a subspace $G_1 \subset S_1$ that is completely isometric to $S_1^N$ for some $N$. Perturbing $G_1$ to ``put $E_1$ back onto $E_0$'' yields the desired $G_0$.

Because $G_0 \supseteq E_0$, it is clear that $G_0$ also satisfies the conditions in the statement.
This gives that $F$ is $(1+\varepsilon)^2$-completely isomorphic to a quotient of $S_1^N$, and therefore $F'$ is $(1+\varepsilon)^2$-completely isomorphic to a subspace of $M_N$.

But this is, in general, false: from \cite[Thm. 7]{pisier1995exact}, for any $n \ge 2$ any complete isomorphism from $\Max(\ell_1^n)$ onto a subspace of an $M_N$ space has constant at least $\frac{n}{2\sqrt{n-1}}$.
\end{proof}

As one would expect, this idea of approximating quotients using finite-dimensional pieces is useful when investigating projectivity of Banach space tensor norms.
For example, it is used in \cite[Prop 7.5]{Ryan} to show that a tensor norm is projective when its dual tensor norm is injective.
Not only does this argument not work in the operator space setting, but the corresponding result is in fact not true: see Remark \ref{consecuencias min} below, where we also point out that a related duality result for completely injective and completely projective hulls claimed without proof in \cite{blecher1991tensor} does not hold.

\subsubsection{Approximation properties of completely  injective spaces}
Each Banach space $E$  isometrically embeds into $\ell_\infty(B_{E'})$, which is an injective space in the Banach space setting. The fact that $\ell_\infty(B_{E'})$ has the approximation property plays a relevant role in many tensor product results in the Banach space framework, especially those related with the Approximation Lemma and completely injective tensor norms.
In contrast, for an operator space $E$ the standard complete isometry into a completely injective space is the one into $\mathcal B(H)$ for some Hilbert space $H$. The lack of the approximation property in $\mathcal B(H)$
is another important dissimilarity between both frameworks.

\section{Finite and cofinite hulls} \label{hulls}

Given an o.s. tensor norm $\alpha$ on $\OFIN$, we can use the following two procedures to extend it to the class of all operator spaces.

\begin{definition}\label{def-finite-and-cofinite-hulls}
Given operator spaces $E$ and $F$, and $u \in M_n(E \otimes F)$, let
\begin{align*}
\overrightarrow{\alpha}_n(u ; E, F) &= \inf\left\{\alpha_n(u; E_0,F_0) \colon E_0 \in \OFIN(E), F_0 \in\OFIN(F), u \in M_n(E_0 \otimes F_0)\right\}
\end{align*}
and
\begin{align*}
\overleftarrow{\alpha}_n(u ; E, F) = \sup\left\{\alpha_n\big(  (q_K^E \otimes q_L^F)_n(u); E/K,F/L\big) \colon K \in \OCOFIN(E), L \in\OCOFIN(F) \right\}
\end{align*}
An o.s. tensor norm $\alpha$ on $\ONORM$ is called
\emph{finitely-generated} if $\alpha = \overrightarrow{\alpha}$,
and \emph{cofinitely-generated} if $\alpha = \overleftarrow{\alpha}$.
\end{definition}
Clearly, if $\alpha \le c \beta$ it follows that $\overleftarrow{\alpha} \le c \overleftarrow{\beta}$
and
$\overrightarrow{\alpha} \le c \overrightarrow{\beta}$.
Since $\min$ and $h$ are completely injective, $\min = \overrightarrow{\min}$ and $h = \overrightarrow{h}$.
Observe also that the description of the projective norm as an infimum gives $\proj = \overrightarrow{\proj}$.

For the same reason, any $\lambda$-o.s. tensor norm is finitely generated.

\begin{proposition}\label{prop-order-of-hulls}
Let  $\alpha$ be an o.s. tensor norm on $\OFIN$. Then the \emph{finite hull} $\overrightarrow{\alpha}$ of $\alpha$
 and the \emph{cofinite hull} $\overleftarrow{\alpha}$ of $\alpha$ are o.s. tensor norms on $\ONORM$ with
 $$
 \min \le \overleftarrow{\alpha} \le \overrightarrow{\alpha} \le \proj, \qquad  \overleftarrow{\alpha}\vert_{\OFIN} = \overrightarrow{\alpha}\vert_{\OFIN} = \alpha.
 $$
 If $\alpha$ is defined on $\ONORM$ (and not just on $\OFIN$) then
 $$
 \overleftarrow{\alpha} \le \alpha \le \overrightarrow{\alpha}.
 $$
\end{proposition}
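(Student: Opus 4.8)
The plan is to check the required properties in a convenient order: first that $\overrightarrow{\alpha}$ and $\overleftarrow{\alpha}$ are matricial norm structures on $E \otimes F$; then the ordering $\min \le \overleftarrow{\alpha} \le \overrightarrow{\alpha} \le \proj$ (which yields, in particular, that both are reasonable, and also positive-definiteness); then uniformity; and finally the two identities on $\OFIN$ and the comparison with $\alpha$ itself. Two facts are used repeatedly: (i) a finite-dimensional subspace inclusion $i^E_{E_0}\colon E_0 \hookrightarrow E$ is a complete isometry, so by uniformity of $\alpha$ on $\OFIN$ the number $\alpha_n(u;\cdot,\cdot)$ decreases as the finite-dimensional pair carrying $u$ grows; and (ii) a quotient map $q^E_L \colon E \to E/L$ with $L \in \OCOFIN(E)$ is a complete metric surjection, hence a complete contraction. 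Note that $E/L$ and $F/M$ are finite-dimensional, so $\alpha$ is indeed defined on them.

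For the matricial-norm axioms: for $\overrightarrow{\alpha}$ one fixes, for a prescribed $\varepsilon$, finite-dimensional $E_0,F_0$ carrying the relevant tensor(s) and nearly achieving the infimum, enlarges to a common pair $E_0+E_0'$, $F_0+F_0'$ when two tensors are involved, applies the corresponding property (\textbf{M1}, \textbf{M2}, triangle inequality) of $\alpha$ there, and lets $\varepsilon\to 0$; the $\ge$ half of \textbf{M1} follows from \textbf{M2} by compressing $x\oplus y$ to its diagonal corners with coordinate (co)projections of norm $1$. For $\overleftarrow{\alpha}$ these are even more direct: amplifications commute with $q^E_L\otimes q^F_M$, with direct sums, and with multiplication by scalar matrices, so the property of $\alpha$ on each quotient passes to the supremum, using that a supremum of maxima is the maximum of the suprema and a supremum of sums is at most the sum of the suprema. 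Positive-definiteness of both is deferred until $\min$ is placed below them.

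For the ordering, the central inequality is $\overleftarrow{\alpha}\le\overrightarrow{\alpha}$: given $E_0\in\OFIN(E)$, $F_0\in\OFIN(F)$ carrying $u$, and $K\in\OCOFIN(E)$, $L\in\OCOFIN(F)$, the composition $E_0\hookrightarrow E\twoheadrightarrow E/K$ (and similarly on the $F$ side) is a complete contraction between finite-dimensional operator spaces carrying $u$ to $(q^E_K\otimes q^F_L)_n(u)$, so uniformity of $\alpha$ on $\OFIN$ gives $\alpha_n\big((q^E_K\otimes q^F_L)_n(u);E/K,F/L\big)\le\alpha_n(u;E_0,F_0)$; taking the supremum over $K,L$ on the left and the infimum over $E_0,F_0$ on the right yields $\overleftarrow{\alpha}\le\overrightarrow{\alpha}$ (in particular $\overleftarrow{\alpha}$ is finite). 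The bound $\overrightarrow{\alpha}\le\proj$ comes from $\alpha\le\proj$ on $\OFIN$, the monotonicity of $\overrightarrow{(\cdot)}$ noted before the statement, and $\overrightarrow{\proj}=\proj$. For $\min\le\overleftarrow{\alpha}$ one uses the standard factorization for $\min$: any $f\in M_p(E')$, $g\in M_q(F')$ of norm $\le 1$ annihilate a finite-codimensional subspace, hence factor completely isometrically as $f=\tilde f\circ q^E_K$, $g=\tilde g\circ q^F_L$ with $\|\tilde f\|,\|\tilde g\|\le 1$; thus $\n{(f\otimes g)_n(u)}=\n{(\tilde f\otimes\tilde g)_n\big((q^E_K\otimes q^F_L)_n(u)\big)}\le\min_n\big((q^E_K\otimes q^F_L)_n(u);E/K,F/L\big)\le\alpha_n\big((q^E_K\otimes q^F_L)_n(u);E/K,F/L\big)\le\overleftarrow{\alpha}_n(u)$, and one takes the supremum over $f,g$. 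This gives $\min\le\overleftarrow{\alpha}\le\overrightarrow{\alpha}\le\proj$; both hulls are therefore reasonable, and positive-definiteness follows because $\min$ is a norm.

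The step I expect to require the most care is uniformity, specifically for $\overleftarrow{\alpha}$. For $\overrightarrow{\alpha}$, given $S\in\CB(E_1,E_2)$, $T\in\CB(F_1,F_2)$ and $u\in M_n(E_1\otimes F_1)$, choose finite-dimensional $E_0,F_0$ nearly achieving $\overrightarrow{\alpha}_n(u)$, push them forward to the finite-dimensional subspaces $S(E_0)\subseteq E_2$, $T(F_0)\subseteq F_2$ — whose inclusions being complete isometries means the corestrictions of $S,T$ have cb-norm at most $\n{S}_{\cb}$, $\n{T}_{\cb}$ — apply uniformity of $\alpha$ on $\OFIN$, and take infima. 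For $\overleftarrow{\alpha}$ one must instead run the argument backwards through preimages: for $K_2\in\OCOFIN(E_2)$, $L_2\in\OCOFIN(F_2)$, set $K_1:=S^{-1}(K_2)\in\OCOFIN(E_1)$, $L_1:=T^{-1}(L_2)\in\OCOFIN(F_1)$ and use the induced maps $\bar S\colon E_1/K_1\to E_2/K_2$, $\bar T\colon F_1/L_1\to F_2/L_2$, which satisfy $\bar S\,q^{E_1}_{K_1}=q^{E_2}_{K_2}\,S$, $\bar T\,q^{F_1}_{L_1}=q^{F_2}_{L_2}\,T$ and hence have cb-norm at most $\n{S}_{\cb}$, $\n{T}_{\cb}$ because $q^{E_1}_{K_1}$, $q^{F_1}_{L_1}$ are complete metric surjections; uniformity of $\alpha$ on $\OFIN$ applied to $\bar S\otimes\bar T$ then gives the estimate before taking the supremum over $K_2,L_2$. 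Finally, $\overleftarrow{\alpha}|_{\OFIN}=\overrightarrow{\alpha}|_{\OFIN}=\alpha$ follows by feeding in the admissible choices $E_0=E$, $F_0=F$ (respectively $K=0$, $L=0$) on one side and invoking (i), (ii) on the other; and when $\alpha$ is defined on all of $\ONORM$, $\overleftarrow{\alpha}\le\alpha\le\overrightarrow{\alpha}$ is immediate from (i) (inclusions are complete isometries, so $\alpha_n(u;E,F)\le\alpha_n(u;E_0,F_0)$) and (ii) (quotients are complete contractions, so $\alpha_n((q^E_K\otimes q^F_L)_n(u);E/K,F/L)\le\alpha_n(u;E,F)$), upon taking the relevant infimum or supremum.
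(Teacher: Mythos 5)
Your proof is correct and follows essentially the same route as the paper: the substance in both is the verification of the complete metric mapping property, done for $\overrightarrow{\alpha}$ by pushing finite-dimensional subspaces forward to $S(E_0)$, $T(F_0)$, and for $\overleftarrow{\alpha}$ by pulling cofinite-dimensional subspaces back to $S^{-1}(K_2)$, $T^{-1}(L_2)$ and using the induced maps on quotients, whose cb-norms are controlled because the quotient maps are complete metric surjections. You merely spell out details the paper treats as immediate (the Ruan axioms for the hulls, the inequality $\overleftarrow{\alpha}\le\overrightarrow{\alpha}$ via the complete contractions $E_0\hookrightarrow E\twoheadrightarrow E/K$, and $\min\le\overleftarrow{\alpha}$ by factoring matrix functionals through finite-codimensional quotients), and these details are all correct.
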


\begin{proof}
The complete metric mapping property of $\alpha$ yields $\overleftarrow{\alpha} \le \alpha \le \overrightarrow{\alpha}$;
the remarks before the statement of the Proposition then show $\min \le \overleftarrow{\alpha} \le \overrightarrow{\alpha} \le \proj$.
Since it is obvious that $\alpha$ agrees with $\overleftarrow{\alpha}$ and $\overrightarrow{\alpha}$ on finite-dimensional spaces, the only thing left to prove is that both hulls are in fact o.s. tensor norms.
But we have already proved that they are between $\min$ and $\proj$, so we just have to check the complete metric mapping property.

To that effect, let $S \in \CB(E_1,E_2)$, $T \in \CB(F_1,F_2)$ and $u \in M_n(E_1 \otimes F_1)$.
Then, using the complete metric mapping property of $\alpha$,
\begin{multline*}
\overrightarrow{\alpha}_n\big( (S\otimes T)_n u ; E_2, F_2\big) \\
= 
 \begin{aligned}[t] 
\inf\Big\{\alpha_n\big((S\otimes T)_n u; E^2_0,F^2_0\big) \colon  E^2_0 \in \OFIN(E_2), F^2_0 \in\OFIN(F_2), (S\otimes T)_nu \in M_n(E^2_0 \otimes F^2_0)\Big\} 
\end{aligned}
\\
\le 
 \begin{aligned}[t] 
 \inf\Big\{\alpha_n\big((S\otimes T)_nu&; SE^1_0,TF^1_0\big) \colon \\ &E^1_0 \in \OFIN(E_1), F^1_0 \in\OFIN(F_1), u \in M_n(E^1_0 \otimes F^1_0)\Big\}
\end{aligned}
\\
\le \n{S}_{\cb}\n{T}_{\cb} 
 \begin{aligned}[t] 
\inf\Big\{\alpha_n\big(u; &E^1_0,F^1_0\big) \colon \\&E^1_0 \in \OFIN(E_1), F^1_0 \in\OFIN(F_1), u \in M_n(E^1_0 \otimes F^1_0)\Big\} 
\end{aligned}
\\
= \n{S}_{\cb}\n{T}_{\cb} \overrightarrow{\alpha}_n\big(  u ; E_1, F_1\big),
\end{multline*}
which shows the complete metric mapping property for $\overrightarrow{\alpha}$.

Now, consider $K^2_0 \in \OCOFIN(E_2)$ and $L^2_0 \in\OCOFIN(F_2)$.
Observe that $S^{-1}K^2_0 \in \OCOFIN(E_1)$ and $T^{-1}L^2_0 \in\OCOFIN(F_1)$.
Moreover, by the basic properties of quotients there exist maps
$S_{K^2_0} : E_1/S^{-1}K^2_0 \to E_2/K^2_0$ and $T_{L^2_0} : F_1/T^{-1}L^2_0 \to F_2/L^2_0$
making the following diagrams commutative
$$
	\xymatrix{
	E_1 \ar[d]_{ q^{E_1}_{S^{-1}K^2_0} } \ar[r]^{S}  & E_2 \ar[d]^{ q^{E_2}_{K^2_0} }\\
	E_1/S^{-1}K^2_0 \ar[r]_{S_{K^2_0}}  & E_2/K^2_0
	}
	\qquad \qquad
	\xymatrix{
	F_1 \ar[d]_{ q^{F_1}_{T^{-1}L^2_0} } \ar[r]^{T}  & F_2 \ar[d]^{ q^{F_2}_{L^2_0} }\\
	F_1/T^{-1}L^2_0 \ar[r]_{T_{L^2_0}}  & F_2/L^2_0
	}
$$
and moreover we have 
that $\n{S_{K^2_0}}_{\cb} = \n{ q^{E_2}_{K^2_0} S }_{\cb} \le \n{S}_{\cb}$ and $\n{T_{L^2_0}}_{\cb} = \n{ q^{F_2}_{L^2_0} T }_{\cb} \le \n{T}_{\cb}$.
Therefore, once again using the complete metric mapping property of $\alpha$,
\begin{multline*}
  \overleftarrow{\alpha}_n\big( (S\otimes T)_n u ; E_2, F_2\big) \\
  =
  \begin{aligned}[t]
  \sup\Big\{\alpha_n\big(  \big(q^{E_2}_{K^2_0} \otimes q^{F_2}_{L^2_0}\big)_n (S\otimes T)_n (u) &; E_2/K^2_0,F_2/L^2_0\big) \colon \\ 
  &K^2_0 \in \OCOFIN(E_2), L^2_0 \in\OCOFIN(F_2) \Big\}
  \end{aligned}
  \\
  = 
  \begin{aligned}[t] 
  \sup\Big\{\alpha_n\big( (S_{K^2_0}\otimes T_{L^2_0})_n \big(q^{E_1}_{S^{-1}K^2_0} \otimes &q^{F_1}_{T^{-1}L^2_0} \big)_n  (u); E_2/K^2_0,F_2/L^2_0\big) \\
  &\colon K^2_0 \in \OCOFIN(E_2), L^2_0 \in\OCOFIN(F_2) \Big\} 
  \end{aligned}
  \\
  \le \n{S}_{\cb}\n{T}_{\cb} 
   \begin{aligned}[t] 
  \sup\Big\{\alpha_n\big( \big(q^{E_1}_{K^1_0} \otimes q^{F_1}_{L^1_0} \big)_n  &(u); E_1/K^1_0,F_1/L^1_0\big) \colon\\ 
  &K^1_0 \in \OCOFIN(E_1), L^1_0 \in\OCOFIN(F_1) \Big\}
  \end{aligned}
  \\
  = \n{S}_{\cb}\n{T}_{\cb}  \overleftarrow{\alpha}_n\big( u ; E_1, F_1\big),
\end{multline*}
which shows the complete metric mapping property for $\overleftarrow{\alpha}$.
\end{proof}

As a consequence of the previous proposition, since we already observed that $\min = \overrightarrow{\min}$ it follows that $\min = \overrightarrow{\min}= \overleftarrow{\min}$.

We will see also in Remark \ref{h cofin} that the Haagerup o.s. tensor norm satisfies $h = \overrightarrow{h}= \overleftarrow{h}$.

Next we show that, just as in the classical theory, the projective norm is not cofinitely-generated.

\begin{proposition} \label{proj no es cofinitamente generada}
\begin{enumerate}[(a)]
    \item Let $W$ be a Banach space and $F$ a normed operator space. Then ${\Max(W) \otimes_{\overleftarrow{\proj}} F}$ and $W \otimes_{\overleftarrow{\pi}} F$ are isometrically isomorphic as Banach spaces.
    \item $\overleftarrow{\proj} \neq \proj$.
\end{enumerate}
\end{proposition}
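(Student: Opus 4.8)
The plan is to prove the isometric identification (a) and then deduce (b) from it together with the classical fact that the Banach-space projective norm $\pi$ is not cofinitely generated. Since (a) concerns Banach spaces, only the $n=1$ level of $\overleftarrow{\proj}$ on $\Max(W)\otimes F$ is relevant, and I would compute it in three moves. First, I would show that quotients of maximal operator spaces are maximal: $\Max(W)/K=\Max(W/K)$ completely isometrically for every closed subspace $K\subseteq W$. This follows from the universal property of $\Max$: for any operator space $Y$ and linear $u\colon\Max(W)/K\to Y$, precomposition with the complete metric surjection $q\colon\Max(W)\to\Max(W)/K$ gives $\n{u}_{\cb}=\n{uq}_{\cb}=\n{uq}=\n{u}$, using in turn that $q$ is a complete metric surjection, that $\Max(W)$ is maximal, and that $q$ is a metric surjection of Banach spaces. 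Consequently the members of $\OCOFIN(\Max(W))$ are exactly the finite-codimensional closed subspaces of $W$, with quotients $\Max(W/K)$.

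Second, I would show that for every Banach space $W$ and operator space $G$, the identity on $W\otimes G$ is an isometry $W\otimes_\pi G\to\Max(W)\otimes_{\proj}G$ at the Banach level. The route is duality: the operator-space projective duality $(E\otimes_{\proj}F)'=\CB(E,F')$ together with the defining property $\CB(\Max(W),Y)=\mathcal B(W,Y)$ (isometrically, for every operator space $Y$) yields $(\Max(W)\otimes_{\proj}G)'=\mathcal B(W,G')$, with a functional $\varphi$ corresponding to $T$ through $\varphi(w\otimes g)=(Tw)(g)$; the classical duality $(W\otimes_\pi G)'=\mathcal B(W,G')$ uses the same pairing. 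Hence, by Hahn--Banach, both $\pi(u)$ and $\proj_1(u;\Max(W),G)$ equal $\sup\{|\langle u,T\rangle|:T\in\mathcal B(W,G'),\ \n{T}\le1\}$ for every $u\in W\otimes G$.

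Third, I would assemble these facts. In $\overleftarrow{\proj}_1(u;\Max(W),F)=\sup\,\proj_1\big((q_K\otimes q_L)(u);\Max(W)/K,F/L\big)$, the supremum over $K\in\OCOFIN(\Max(W))$ and $L\in\OCOFIN(F)$, the first move replaces $\Max(W)/K$ by $\Max(W/K)$ and lets $K$ range over all finite-codimensional closed subspaces of $W$; the second move replaces each inner $\proj_1(\cdot;\Max(W/K),F/L)$ by $\pi(\cdot;W/K,F/L)$; and since every closed subspace of $F$ is automatically an operator subspace, $L$ ranges over all finite-codimensional closed subspaces of $F$. The resulting supremum is exactly $\overleftarrow{\pi}_1(u;W,F)=\overleftarrow{\pi}(u;W,F)$, which is (a). For (b): if $\overleftarrow{\proj}=\proj$ they agree at $n=1$; picking Banach spaces $W,V$ with $W\otimes_\pi V\neq W\otimes_{\overleftarrow{\pi}}V$ (which exist because $\pi$ is not cofinitely generated, see \cite{Defant-Floret}) and setting $F=\Max(V)$, part (a) gives $\Max(W)\otimes_{\overleftarrow{\proj}}F\cong W\otimes_{\overleftarrow{\pi}}V$ isometrically, while the second move gives that $\Max(W)\otimes_{\proj}F$ has underlying Banach space $W\otimes_\pi V$; thus $\overleftarrow{\proj}=\proj$ would force $W\otimes_\pi V=W\otimes_{\overleftarrow{\pi}}V$, a contradiction.

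The step I expect to be the main obstacle is the second one: trying to verify $W\otimes_\pi G=\Max(W)\otimes_{\proj}G$ by manipulating the matrix-level definition of $\proj$ directly is awkward, and the duality route instead requires checking that the classical $\pi$-duality and the operator-space $\proj$-duality are literally the same pairing on $W\otimes G$, after which Hahn--Banach recovers the norms. A secondary bookkeeping issue, settled in the first and third moves, is the matching of the index sets of the two cofinite hulls, which hinges on quotients of $\Max$ staying maximal and on closed subspaces of operator spaces being operator subspaces.
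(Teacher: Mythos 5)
Your proposal is correct and follows essentially the same route as the paper: compare the two cofinite-hull suprema term by term using that quotients of $\Max(W)$ are $\Max$ of the Banach quotient and that $\proj$ agrees with $\pi$ at the Banach level when one factor is maximal, then deduce (b) from $\overleftarrow{\pi}\neq\pi$ by taking $F=\Max(V)$. The only difference is that you prove these two ingredients yourself (via the universal property of $\Max$ and via the duality $(\Max(W)\otimes_{\proj}G)'=\mathcal{B}(W,G')=(W\otimes_\pi G)'$), whereas the paper simply cites them as \cite[Prop. 3.3]{Pisier-Operator-Space-Theory} and \cite[Prop. 1.5.12.(1)]{Blecher-LeMerdy-book}.
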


\begin{proof}
$(a)$ 
Let $u \in W \otimes F$. From the definitions, 
\begin{multline*}
\overleftarrow{\proj}(u ; \Max(W), F) = \sup\big\{\proj\big(  (q_K^W \otimes q_L^F)(u); \Max(W)/K,F/L\big) \colon \\ K \in \OCOFIN(\Max(W)), L \in\OCOFIN(F) \big\}
\end{multline*}
and
\begin{align*}
\overleftarrow{\pi}(u ; W, F) =
\sup\left\{\pi\big(  (q_K^W \otimes q_L^F)(u); W/K,F/L\big) \colon K \in \COFIN(W), L \in \COFIN(F) \right\}.
\end{align*}
From \cite[Prop. 3.3]{Pisier-Operator-Space-Theory} we have that $\Max(W/K) = \Max(W)/K$.
Therefore, it follows from \cite[Prop. 1.5.12.(1)]{Blecher-LeMerdy-book} that
$$
\proj\big(  (q_K^W \otimes q_L^F)(u); \Max(W)/K,F/L\big) = \pi\big(  (q_K^W \otimes q_L^F)(u); W/K,F/L\big)
$$
and thus $\overleftarrow{\proj}(u ; \Max(W), F) = \overleftarrow{\pi}(u ; W, F)$.

$(b)$
Since $\overleftarrow{\pi} \neq \pi$, there exist Banach spaces $W$ and $V$ such that $W \otimes_\pi V$ and $W \otimes_{\overleftarrow{\pi}} V$ are different; it then follows from part $(a)$ and \cite[Prop. 1.5.12.(1)]{Blecher-LeMerdy-book} that
${\Max(W) \otimes_{\overleftarrow{\proj}} \Max(V)}$ and 
${\Max(W) \otimes_{\proj} \Max(V)}$ are different (even just as Banach spaces).
\end{proof}

\begin{remark}
The right-finite hull is presented in Definition~\ref{def:right-finite hull}. The left-finite hull can be defined analogously.
\end{remark}

\begin{proposition}\label{prop-dp-and-gp-finitely-generated}
For $1\le p \le \infty$, the o.s. tensor norms $d_p $ and $g_p $ are finitely generated.
\end{proposition}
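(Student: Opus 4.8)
The plan is to establish the reverse inequality $\overrightarrow{d_p} \le d_p$ --- the inequality $d_p \le \overrightarrow{d_p}$ is immediate from Proposition~\ref{prop-order-of-hulls} --- and to treat only $d_p$, since $g_p$ is handled by the very same argument with the two factors interchanged (so that the non-injective vector-valued Schatten factor sits on the $E$-side instead of the $F$-side). Unwinding the definition of $\overrightarrow{d_p}$, what must be shown is: given operator spaces $E,F$, a level $k\in\N$, a tensor $u\in M_k(E\otimes F)$ and $\varepsilon>0$, there exist $E_0\in\OFIN(E)$ and $F_0\in\OFIN(F)$ with $u\in M_k(E_0\otimes F_0)$ and $d_p(u;E_0,F_0)$ arbitrarily close to $d_p(u;E,F)$.

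The heart of the matter is already visible at the first matrix level, where $d_p(u;E,F)$ is the infimum of the products $\n{(x_{ij})}_{S^n_{p'}\otimes_{\min}E}\cdot\n{(y_{ij})}_{S^n_p[F]}$ over representations $u=\sum_{i,j=1}^n x_{ij}\otimes y_{ij}$. Two facts drive the argument. First, $\min$ is completely injective, so for any subspace $G_0\subseteq G$ both $S^n_{p'}\otimes_{\min}G_0\hookrightarrow S^n_{p'}\otimes_{\min}G$ and $M_n(G_0)=M_n\otimes_{\min}G_0\hookrightarrow M_n\otimes_{\min}G=M_n(G)$ are completely isometric; hence the $E$-factor of a representation is localized for free by setting $E_0=\spa\{x_{ij}\}$, which leaves $\n{(x_{ij})}_{S^n_{p'}\otimes_{\min}E}$ unchanged. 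Second --- and this is where an input with no classical analogue is needed --- the functor $G\mapsto S^n_p[G]$ is \emph{not} injective for $p<\infty$, so the $F$-factor cannot simply be restricted to the span of its entries. The remedy is Pisier's factorization of vector-valued Schatten norms \cite[Sec. 1]{Pisier-Asterisque-98}:
\[
\n{z}_{S^n_p[G]}=\inf\big\{\,\n{a}_{S^n_{2p}}\,\n{w}_{M_n(G)}\,\n{b}_{S^n_{2p}}\;:\;z=a\,w\,b,\ a,b\in M_n,\ w\in M_n(G)\,\big\}.
\]
Picking a near-optimal factorization $(y_{ij})=a\,w\,b$ and setting $F_0:=\spa\{w_{k\ell}\}\in\OFIN(F)$ --- which already contains every $y_{ij}=\sum_{k,\ell}a_{ik}w_{k\ell}b_{\ell j}$, hence every $F$-entry of $u$ --- functoriality of $S^n_p[\cdot]$ and the complete isometry $M_n(F_0)\hookrightarrow M_n(F)$ yield $\n{(y_{ij})}_{S^n_p[F_0]}\le\n{a}_{S^n_{2p}}\n{w}_{M_n(F)}\n{b}_{S^n_{2p}}\le(1+\varepsilon)\n{(y_{ij})}_{S^n_p[F]}$. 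Thus, starting from a representation of $u$ that realizes $d_p(u;E,F)$ up to a factor $1+\varepsilon$ and localizing both factors as above, one gets $u\in E_0\otimes F_0$ and $d_p(u;E_0,F_0)\le\n{(x_{ij})}_{S^n_{p'}\otimes_{\min}E_0}\,\n{(y_{ij})}_{S^n_p[F_0]}\le(1+\varepsilon)^2\,d_p(u;E,F)$; letting $\varepsilon\to 0$ settles the level-$1$ case.

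For a general level $k$ I would run exactly this localization on the matrix-level description of $d_p$ from \cite{CD-Chevet-Saphar-OS}, which retains the same structure: a $\min$-type factor (completely injective, hence localizing for free) paired against a vector-valued-Schatten factor (localized via Pisier's factorization), possibly sandwiched between scalar amplification matrices whose norms are untouched by the passage to subspaces. I expect the only genuine obstacle to be precisely the non-injectivity of the vector-valued Schatten functors: it is what prevents one from arguing directly with the complete-quotient presentation $q_{d_p}\colon(S_{p'}\widehat\otimes_{\min}E)\widehat\otimes_{\proj}S_p[F]\to E\widehat\otimes_{d_p}F$ --- even though $\proj$ is itself finitely generated, $S_p[\cdot]$ does not carry finite-dimensional subspaces of $F$ completely isometrically into $S_p[F]$, so localizing a lift of $u$ would move the projective norm in the wrong direction --- and it is exactly Pisier's factorization that repairs the $F$-side (respectively, for $g_p$, the $E$-side). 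With the matrix levels handled one concludes $\overrightarrow{d_p}=d_p$, and likewise $\overrightarrow{g_p}=g_p$.
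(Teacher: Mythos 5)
Your level-$1$ argument is fine, but the proposition is a statement about all matrix levels, and that is exactly where your proof stops being a proof. The operator space structure of $E\widehat{\otimes}_{d_p}F$ is \emph{defined} in \cite{CD-Chevet-Saphar-OS} through the complete quotient $q_{d_p}\colon (S_{p'}\widehat{\otimes}_{\min}E)\widehat{\otimes}_{\proj}S_p[F]\to E\widehat{\otimes}_{d_p}F$; there is no off-the-shelf ``matrix-level description of $d_p$ with the same structure'' (finite-sum representations with finite Schatten blocks, sandwiched by scalar matrices) that you can simply invoke --- producing such a description is the crux. Concretely, for $u\in M_n(E\otimes F)$ a near-optimal lift $w$ lives in the \emph{completed} projective tensor product of two infinite-dimensional spaces; applying the finite generation of $\proj$ to $w$ yields finite-dimensional subspaces of $S_{p'}\widehat{\otimes}_{\min}E$ and of $S_p[F]$, and a finite-dimensional subspace of $S_p[F]$ is in general not contained in any $S_p^k\otimes F_0$ with $k\in\N$, $F_0\in\OFIN(F)$ (its elements are infinite sums), so nothing localizes back to $E$ and $F$. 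The paper's proof is devoted precisely to this point: it first establishes \eqref{eqn-dp-and-gp-finitely-generated}, i.e.\ that $(d_p)_n(u;E,F)$ equals the infimum of norms of algebraic lifts in $M_n\big((S_{p'}^k\otimes_{\min}E)\otimes_{\proj}S_p^k[F]\big)$ over finite $k$, by dualizing $q_{d_p}$ to get a completely isometric embedding $(E\otimes_{d_p}F)'\hookrightarrow \CB(S_{p'}\widehat{\otimes}_{\min}E,\,S_{p'}[F'])$ and using density of $\bigcup_k S_{p'}^k\otimes_{\min}E$; only then does the localization you describe go through. Your proposal assumes this reduction rather than proving it, so the case $n\ge 2$ --- hence the proposition --- is not established.

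A secondary but pervasive problem is the premise you build the narrative on: the claim that $G\mapsto S_p^n[G]$ fails to preserve complete isometries for $p<\infty$, and that this is why the quotient presentation is unusable and Pisier's factorization is needed as a ``repair''. This is false: by \cite[Cor. 1.2]{Pisier-Asterisque-98} (cited elsewhere in this paper), $F_0\subseteq F$ completely isometrically implies $S_p[F_0]\subseteq S_p[F]$ completely isometrically, and your own use of the factorization $\n{z}_{S_p^n[G]}=\inf\big\{\n{a}_{S_{2p}^n}\n{w}_{M_n(G)}\n{b}_{S_{2p}^n} : z=awb\big\}$ is in effect a proof of (an $\varepsilon$-version of) this. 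So your level-$1$ computation is correct, but the diagnosis of the obstruction is not: what blocks a direct argument through $q_{d_p}$ is the infinite Schatten class together with the completion, not any non-injectivity of $S_p[\cdot]$. Indeed the paper's proof localizes a lift using the finite generation of $\proj$ --- exactly the route you rule out --- after the reduction to finite Schatten blocks, and its ``enlarge the subspaces to $S_{p'}^k\otimes_{\min}E_0$ and $S_p^k[F_0]$'' step implicitly rests on the very complete isometry you deny.
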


\begin{proof}
By symmetry, it suffices to prove it for $d_p $.
Observe that the result will follow immediately if we can prove that for any $u \in M_n(E \otimes F)$ we have
\begin{equation}\label{eqn-dp-and-gp-finitely-generated}
(d_p )_n(u; E ,F) = \inf\{ \n{w}_{M_n( (S_{p'}^k \otimes_{\min}E) \otimes_{\proj} S_p^k[F]  )} \;:\; k\in \N, (q^{p,k})_nw = u \}
\end{equation}
where $q^{p,k} : (S_{p'}^k \otimes_{\min}E) \otimes_{\proj} S_p^k[F] \to E \otimes F$ is the tensor contraction.
Indeed, let $\varepsilon>0$ be given and choose $k\in\N$, $w \in M_n( (S_{p'}^k \otimes_{\min}E) \otimes_{\proj} S_p^k[F]  )$ such that $(q^{p,k})_nw = u$ and
$\n{w}_{M_n( (S_{p'}^k \otimes_{\min}E) \otimes_{\proj} S_p^k[F]  )}\le (1+\varepsilon)(d_p )_n(u; E ,F)$.
Since $\proj$ is finitely generated, we can $1+\varepsilon$ approximate the norm of $w$ by passing to finite-dimensional subspaces of $S_{p'}^k \otimes_{\min}E$ and $S_p^k[F]$. Without loss of generality, by enlarging the subspaces if necessary, we can assume that the aforementioned finite-dimensional subspaces are of the form $S_{p'}^k \otimes_{\min}E_0$ and $S_p^k[F_0]$ for some $E_0 \in\OFIN(E), F_0\in\OFIN(F)$, and also that $u \in M_n(E_0\otimes F_0)$.
It now follows that
$$
(d_p )_n(u; E_0 ,F_0 ) \le (1+\varepsilon)^2 (d_p )_n(u; E ,F ),
$$
showing that $d_p $ is finitely generated.

Let us now prove \eqref{eqn-dp-and-gp-finitely-generated}.
Recall that the tensor contraction 
${q}^p : \big( S_{p'}\widehat{\otimes}_{\min} E \big) \widehat{\otimes}_{\proj} S_p[F] \to E \widehat{\otimes}_{d_p } F$ is a complete 1-quotient.
By dualizing, there is a completely isometric embedding
$(E \otimes_{d_p } F)' \hookrightarrow \CB(S_{p'}\widehat{\otimes}_{\min} E , S_{p'}[F'])$.
Since the union of the spaces $S_{p'}^k \otimes_{\min} E$ is dense in $S_{p'}\widehat{\otimes}_{\min} E$, norms in $\CB(S_{p'}\widehat{\otimes}_{\min} E , S_{p'}[F'])$ (and more generally, norms of matrices over $\CB(S_{p'}\widehat{\otimes}_{\min} E , S_{p'}[F'])$) can be calculated by taking the supremum over the restrictions to $\CB(S_{p'}^k\otimes_{\min} E , S_{p'}^k[F'])$, as $k$ ranges over $\N$.
Thus, the norm of $u$ in $M_n(E \otimes_{d_p } F)$ can be calculated as the supremum of the norms of all pairings of $u$ against a $\varphi$ which is a matrix of norm at most one in $\CB(S_{p'}\widehat{\otimes}_{\min} E , S_{p'}[F'])$, and by the above we can consider instead  $\varphi$ to be a matrix of norm at most one in $\CB(S_{p'}^k \otimes_{\min} E , S_{p'}^k[F'])$, but this is exactly the same as the right-hand side of  \eqref{eqn-dp-and-gp-finitely-generated}.
\end{proof}

\begin{proposition}\label{prop-symmetrized-Haagerup-finitely-generated}
The symmetrized Haagerup o.s. tensor norms $h \cap h^t$ and $h + h^t$ are finitely generated.
\end{proposition}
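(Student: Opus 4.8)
The plan is to reduce the statement about $h \cap h^t$ and $h+h^t$ to the fact that the Haagerup tensor norm $h$ itself is finitely generated (which follows from $h = \overrightarrow{h}$, as already observed in the excerpt, since $h$ is completely injective and the description \eqref{eqn-Haagerup-tensor} is an infimum). The same remark gives that $h^t$ is finitely generated. The two cases then require dual arguments: $h \cap h^t$ is built as a ``$\cap$''-type (supremum/intersection) construction, while $h + h^t$ is built as a quotient, so I expect $h \cap h^t$ to be the easy one and $h+h^t$ to be where a genuine argument is needed.

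First I would handle $h \cap h^t$. Given operator spaces $E, F$ and $u \in M_n(E \otimes F)$, by definition $(h\cap h^t)_n(u;E,F) = \max\{h_n(u;E,F), h^t_n(u;E,F)\}$. Given $\varepsilon>0$, since $h$ is finitely generated there exist $E_1 \in \OFIN(E)$, $F_1 \in \OFIN(F)$ with $u \in M_n(E_1 \otimes F_1)$ and $h_n(u;E_1,F_1) \le (1+\varepsilon) h_n(u;E,F)$; similarly for $h^t$ there exist $E_2, F_2$. Taking $E_0 = E_1 + E_2$ and $F_0 = F_1 + F_2$ (finite-dimensional, containing $u$), and using that passing to a larger finite-dimensional subspace only decreases the Haagerup norm, we get $(h \cap h^t)_n(u; E_0, F_0) \le (1+\varepsilon)(h\cap h^t)_n(u;E,F)$. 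Since the reverse inequality $\overrightarrow{(h\cap h^t)} \ge h \cap h^t$ is automatic (it's an o.s. tensor norm, Proposition~\ref{prop-order-of-hulls}), we are done.

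For $h+h^t$, the natural approach is to dualize. By definition the map $q : (E \otimes_h F) \oplus_1 (E \otimes_{h^t} F) \to E \otimes_{h+h^t} F$, $(v,w) \mapsto v+w$, is a complete quotient; dualizing gives a complete isometric embedding $(E \otimes_{h+h^t} F)' \hookrightarrow (E \otimes_h F)' \oplus_\infty (E \otimes_{h^t} F)'$, and more generally the matrix norms over $(E\otimes_{h+h^t}F)'$ are computed as a supremum over restrictions. Since $h$ and $h^t$ are finitely generated, the norm of a functional (or matrix of functionals) on $E \otimes_h F$ can be computed as a supremum over restrictions to $E_0 \otimes_h F_0$ with $E_0 \in \OFIN(E)$, $F_0 \in \OFIN(F)$, and likewise for $h^t$. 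Hence the norm of $u \in M_n(E \otimes_{h+h^t} F)$ — being the supremum of pairings of $u$ against norm-one matrices of functionals on the $\oplus_\infty$ above — can be computed as a supremum over pairings against norm-one matrices of functionals on $(E_0 \otimes_h F_0) \oplus_\infty (E_0 \otimes_{h^t} F_0) = (E_0 \otimes_{h+h^t} F_0)'$ (using that the $h+h^t$ construction commutes with passing to subspaces, which is again just the quotient-dual description applied on $E_0, F_0$). This gives $(h+h^t)_n(u;E,F) = \inf\{(h+h^t)_n(u;E_0,F_0) : E_0 \in \OFIN(E), F_0 \in \OFIN(F), u \in M_n(E_0 \otimes F_0)\}$, as desired. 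This is structurally the same argument as in the proof of Proposition~\ref{prop-dp-and-gp-finitely-generated}.

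The main obstacle is the bookkeeping in the $h+h^t$ case: one has to be careful that ``computing the norm on $E \otimes_{h+h^t} F$ via functionals, then restricting those functionals to finite-dimensional subspaces'' genuinely produces the norm of $u$ in $M_n(E_0 \otimes_{h+h^t} F_0)$ rather than some a priori smaller quantity. The key point making this work is that the quotient-dual description of $h+h^t$ is compatible with the inclusion $M_n(E_0 \otimes_{h+h^t} F_0) \hookrightarrow M_n(E \otimes_{h+h^t} F)$ (equivalently, the dual side restricts correctly), together with the finite generation of $h$ and $h^t$ used to push the functionals down to finite dimensions. Once that compatibility is spelled out — exactly as in the Chevet--Saphar argument — the proof closes.
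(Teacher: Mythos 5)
Your treatment of $h\cap h^t$ is correct, and it is essentially the paper's argument in a slightly different dress (the paper simply notes that $h\cap h^t$ inherits complete injectivity from $h$ and $h^t$ and is therefore finitely generated; your version via finite generation of $h$, $h^t$ and enlarging the subspaces is equally valid).

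The $h+h^t$ part, however, has a genuine gap, and it sits exactly at the point you flag and then wave away. Dualizing the quotient only yields the trivial direction: restricting a matrix of functionals from $E\otimes_{h+h^t}F$ to $E_0\otimes_{h+h^t}F_0$ is a complete contraction and commutes with pairing against $u$, so the supremum over restricted functionals dominates the supremum over functionals on the big space; this gives $(h+h^t)_n(u;E,F)\le (h+h^t)_n(u;E_0,F_0)$, which already follows from the mapping property. Finite generation needs the reverse estimate $(h+h^t)_n(u;E_0,F_0)\le(1+\varepsilon)(h+h^t)_n(u;E,F)$, and in your dual picture that means: every norm-one $\Psi\in M_m\big((E_0\otimes_{h+h^t}F_0)'\big)$ --- i.e.\ a matrix of bilinear forms on $E_0\times F_0$ whose norms in $M_m\big((E_0\otimes_hF_0)'\big)$ and in $M_m\big((E_0\otimes_{h^t}F_0)'\big)$ are \emph{both} at most $1$ (note the dual of the quotient is only this diagonal subspace of the $\oplus_\infty$, not all of it) --- must extend to $E\times F$ with both norms at most $1+\varepsilon$. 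Finite generation of $h$ and $h^t$ gives nothing of the sort: it controls restrictions of a fixed functional on the big space, which is the opposite direction. Each of the two norms can be extended separately (complete injectivity of $h$, $h^t$ plus Arveson--Wittstock), but a \emph{simultaneous} extension with constants tending to $1$, valid for all finite-dimensional subspaces, would force every inclusion $E_0\otimes_{h+h^t}F_0\hookrightarrow E\otimes_{h+h^t}F$ to be completely isometric and hence would make $h+h^t$ completely injective --- contradicting the Oikhberg--Pisier result \cite{Oikhberg-Pisier} quoted in the paper. So the ``compatibility'' you defer to cannot be spelled out; your argument, if it worked, would prove too much. The analogy with Proposition~\ref{prop-dp-and-gp-finitely-generated} is also misleading: there the dual-side restriction happens along the dense increasing union $S_{p'}^k\otimes_{\min}E$ inside the Schatten factor, while the passage to $E_0,F_0$ is carried out on the \emph{primal} side, by lifting $u$ to an element $w$ of the projective tensor product and applying finite generation of $\proj$ to $w$.

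The correct route for $h+h^t$ is precisely that primal analogue, and it is what the paper does: choose a decomposition $u=v+w$ with $\|(v,w)\|_{M_n\big((E\otimes_hF)\oplus_1(E\otimes_{h^t}F)\big)}<(h+h^t)_n(u;E,F)+\varepsilon$, take $E_0\in\OFIN(E)$, $F_0\in\OFIN(F)$ containing the entries of $v$ and $w$, and use that $(E_0\otimes_hF_0)\oplus_1(E_0\otimes_{h^t}F_0)\hookrightarrow(E\otimes_hF)\oplus_1(E\otimes_{h^t}F)$ is completely isometric (complete injectivity of $h$ and $h^t$, together with the fact that $\ell_1$-sums preserve complete isometries, via the injectivity of $\mathcal{B}(H)$ and the description in Section~\ref{direct-sums}); then $(h+h^t)_n(u;E_0,F_0)\le\|(v,w)\|$ closes the argument. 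Note also that the isometric, not just isomorphic, nature of the extension problem matters here: finite generation is an exact equality of norms, so no argument with a uniform constant can substitute for it.
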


\begin{proof}
Since $h$ and $h^t$ are completely injective, it is clear that so is $h\cap h^t$ and therefore the latter is finitely generated.

Consider operator spaces $E$ and $F$, $u \in M_n(E \otimes F)$, and $\varepsilon>0$.
Let $v,w \in M_n(E \otimes F)$
such that $u = v+w$ and 
$$ \|(v,w)\|_{M_n\big( (E \otimes_h F)\oplus_1(E \otimes_{h^t} F) \big)} < (h+h^t)_n(u; E,F) + \varepsilon.$$
Let $E_0 \in \OFIN(E)$ and $F_0 \in\OFIN(F)$ such that $v, w \in M_n(E_0 \otimes F_0)$.
Since $h$ and $h^t$ are both completely injective,
and so are $\ell_1$-sums (this is clear from the description of $\ell_1$-sums in Section \ref{direct-sums} and the injectivity of $\mathcal{B}(H)$), 
we have that
$$
\|(v,w)\|_{M_n\big( (E_0 \otimes_h F_0)\oplus_1(E_0 \otimes_{h^t} F_0) \big)} = \|(v,w)\|_{M_n\big( (E \otimes_h F)\oplus_1(E \otimes_{h^t} F) \big)}.
$$
Since 
$(h+h^t)_n(u; E_0, F_0) \le \|(v,w)\|_{M_n\big( (E_0 \otimes_h F_0)\oplus_1(E_0 \otimes_{h^t} F_0) \big)}$, combining the inequalities gives
$(h+h^t)_n(u; E_0, F_0) < (h+h^t)_n(u; E,F) + \varepsilon$ which implies that $h+h^t$ is finitely generated.

\end{proof}

\section{The five basic lemmas} \label{fivebasic}

In the theory of tensor products  of normed spaces, “The Five Basic Lemmas” (see Section 13 in Defant and Floret’s book \cite{Defant-Floret}) are rather simple results which turn out to be “basic for the
understanding and use of tensor norms”. Namely, they are the \textsl{Approximation Lemma}, the
\textsl{Extension Lemma}, the \textsl{Embedding Lemma}, the \textsl{Density Lemma} and the \textsl{Local Technique
Lemma}.  We present here the
analogous results for the operator space setting and also exhibit some applications as example of
their potential. Our presentation follows the
lines of \cite{Defant-Floret}. Although the proofs are similar to the Banach space case, the operator space nature of
our tensor products introduces some difficulties, as we can see, for example, in our
version of the Extension Lemma \ref{extension-lemma}, whose proof involves additional hypothesis of local reflexivity.
\subsection{The approximation lemma}

\begin{lemma}\label{approximation-lemma}
	Let $\alpha$ and $\beta$ be o.s. tensor norms (on $\ONORM$), $E$ and $F$ normed operator spaces, $c \ge 1$ and
	$$
	\alpha \le c \beta \on E \otimes N
	$$
	for cofinally many $N \in \OFIN(F)$.
	If $F$ has the completely bounded approximation property with constant $C \ge 1$, then
	$$
	\alpha \le  C c\beta \text{ on } E \otimes F.
	$$
\end{lemma}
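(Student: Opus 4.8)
The plan is to reduce the statement on $E \otimes F$ to the hypothesis on $E \otimes N$ for finite-dimensional $N$, using the net of finite-rank completely bounded maps witnessing the CBAP. First I would fix a tensor $u \in M_n(E \otimes F)$ and a target: it suffices to show that $\alpha_n(u; E, F) \le Cc\,\beta_n(u; E, F)$. Let $(T_\eta)$ be the net of finite-rank maps in $\CB(F,F)$ with $\n{T_\eta}_{\cb} \le C$ and $(T_\eta)_n x \to x$ for every $x \in M_n(F)$ (the latter guaranteed by \cite[Thm. 11.3.3]{Effros-Ruan-book}). For each $\eta$, write $N_\eta = T_\eta(F) \in \OFIN(F)$ and consider the map $\mathrm{id}_E \otimes T_\eta$, which we can factor as $E \otimes F \xrightarrow{\mathrm{id}_E \otimes T_\eta} E \otimes N_\eta \hookrightarrow E \otimes F$.

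The key computation runs as follows. Since $N_\eta$ is finite-dimensional, by cofinality of the hypothesis we may enlarge it (inside $F$) to some $N \in \OFIN(F)$ with $N_\eta \subseteq N$ on which $\alpha \le c\beta$ holds; here I must be a little careful, since enlarging $N_\eta$ to $N$ changes the ambient space for the tensor, but because $\alpha$ and $\beta$ are o.s. tensor norms (hence satisfy the complete metric mapping property, so in particular the inclusion $N_\eta \hookrightarrow N$ is a complete contraction in both) the estimate transfers to $E \otimes N_\eta$. Now estimate, using the uniformity (complete metric mapping property) of both $\alpha$ and $\beta$ together with $\n{T_\eta}_{\cb} \le C$:
\begin{align*}
\alpha_n\big( (\mathrm{id}_E \otimes T_\eta)_n u; E, F \big)
&\le \alpha_n\big( (\mathrm{id}_E \otimes T_\eta)_n u; E, N_\eta \big) \\
&\le c\,\beta_n\big( (\mathrm{id}_E \otimes T_\eta)_n u; E, N_\eta \big) \\
&\le c\,\beta_n\big( (\mathrm{id}_E \otimes T_\eta)_n u; E, F \big)
\le cC\,\beta_n(u; E, F),
\end{align*}
where the first inequality uses that the inclusion $N_\eta \hookrightarrow F$ is completely contractive for $\alpha$, the second is the hypothesis on $E \otimes N_\eta$ (via the enlargement argument above), the third uses that $\mathrm{id}_E \otimes T_\eta : E \otimes_\beta N_\eta \to E \otimes_\beta F$ has $\cb$-norm at most one once we view $T_\eta$ as mapping into $N_\eta$, and the last uses the uniformity of $\beta$ with $\n{\mathrm{id}_E}_{\cb} \n{T_\eta}_{\cb} \le C$.

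Finally I would pass to the limit. Since $(T_\eta)_n u \to u$ in $M_n(E \otimes F)$ — where convergence is in any of the (equivalent up to constants, on this algebraic tensor product... actually one should be careful here) norms, and in particular in $\min$, hence weakly enough — and since $\alpha$ dominates $\min$ while the right-hand side $cC\,\beta_n(u;E,F)$ does not depend on $\eta$, a lower-semicontinuity argument for $\alpha_n$ with respect to this convergence gives $\alpha_n(u; E, F) \le cC\,\beta_n(u; E, F)$, as desired. \textbf{The main obstacle} I anticipate is precisely this last semicontinuity step: $\alpha_n(\cdot; E, F)$ need not be weakly lower semicontinuous in general, so one likely has to argue via the finitely generated hull or via testing against elements of $(E \otimes_\alpha F)'$, using that $(T_\eta)_n u \to u$ pointwise against a fixed functional and that the functionals of norm $\le 1$ on $E \otimes_\alpha F$ separate enough. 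A clean way is: for any $\varphi \in B_{M_m((E\otimes_\alpha F)')}$, $\n{\mpair{\varphi}{u}} = \lim_\eta \n{\mpair{\varphi}{(T_\eta)_n u}} \le \sup_\eta \alpha_n((\mathrm{id}\otimes T_\eta)_n u; E,F) \le cC\beta_n(u;E,F)$, and then take the supremum over $\varphi$. The secondary technical point to handle with care is the enlargement of $N_\eta$ to a space $N$ on which the hypothesis holds while keeping track of the constants, which the complete metric mapping property makes routine.
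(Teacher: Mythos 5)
Your overall strategy coincides with the paper's: use the CBAP net $(T_\eta)$ to push the fixed tensor into a finite-dimensional subspace of $F$ where the hypothesis applies, use uniformity of $\beta$ to absorb the constant $C$, and then pass to the limit. Two steps in your write-up need repair, and your final step is more complicated than necessary.

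First, the claim that the estimate $\alpha \le c\beta$ on $E\otimes N$ ``transfers'' to $E\otimes N_\eta$ for the subspace $N_\eta = T_\eta(F)\subseteq N$ is not justified: o.s.\ tensor norms do not respect subspaces, and the complete contractivity of the inclusions only gives $\alpha_n(w;E,N)\le\alpha_n(w;E,N_\eta)$ and $\beta_n(w;E,N)\le\beta_n(w;E,N_\eta)$, which point the wrong way for such a transfer. Fortunately you do not need it: simply run your chain with $N$ itself (a member of the cofinal family containing $T_\eta(F)$) in place of $N_\eta$, viewing $T_\eta$ as a map $F\to N$ of $\cb$-norm at most $C$; this is exactly what the paper does. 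Relatedly, the third inequality in your display, $\beta_n(w;E,N_\eta)\le\beta_n(w;E,F)$, also goes the wrong way (the inclusion $E\otimes_\beta N_\eta\to E\otimes_\beta F$ is completely contractive, giving the reverse inequality), and its stated justification is internally inconsistent; it is in any case superfluous, since uniformity of $\beta$ applied to $T_\eta:F\to N$ yields $\beta_n\big((\mathrm{id}_E\otimes T_\eta)_n u;E,N\big)\le C\,\beta_n(u;E,F)$ in one stroke.

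Second, the semicontinuity obstacle you anticipate at the end does not arise. Since $u$ is a fixed element of the algebraic tensor product and $(T_\eta)_n$ converges to the identity in point-norm with $\n{T_\eta}_{\cb}\le C$, one has $(\mathrm{id}_E\otimes T_\eta)_n u\to u$ in the projective norm, hence in $\alpha$ (because $\alpha\le\proj$); a triangle inequality $\alpha_n(u;E,F)\le \alpha_n\big(u-(\mathrm{id}_E\otimes T_\eta)_n u;E,F\big)+\alpha_n\big((\mathrm{id}_E\otimes T_\eta)_n u;E,F\big)$ then finishes the proof, which is the paper's route. Your duality workaround, pairing against $\varphi\in B_{M_m((E\otimes_\alpha F)')}$ and taking suprema, is valid but unnecessary.
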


\begin{proof} 	
	Fix $n\in\N$ and $z\in M_n(E \otimes F)$, and take $\varepsilon >0$.
	Since $F$ has the $C$-completely bounded approximation property,
	 there is a net $(T_{\eta})_\eta$ of finite rank operators in $\CB(F,F)$ with $\cb$-norm bounded by $C$ such that
	 $\n{(T_\eta)_n x - x} \to 0$ for every $x \in M_n(E)$.
	Therefore $(id_E \otimes T_\eta)_n(z)$ converges to $z$ in the projective norm, and thus also in the  $\alpha$ norm.
	Hence we have $\alpha_n(z-(id_E \otimes T_\eta)_n(z) ; E,F )<\varepsilon$ for some $\eta$ large enough. 	
	If we take a subspace $N$ containing $T_\eta(E)$ satisfying the hypothesis of the lemma, by the complete metric mapping property of the o.s. tensor norm $\beta$ we have
	\begin{align*}
	\alpha_n(z;E,F) & \leq  \alpha_n(z-(id_E \otimes T_\eta)_n(z) ; E,F ) + \alpha_n((id_E \otimes T_\eta)_n(z) ; E,F ) \\
	& \leq \varepsilon + \alpha_n((id_E \otimes T_\eta)_n(z) ; E,N ) \\
	& \leq \varepsilon + c \beta_n((id_E \otimes T_\eta)_n(z) ; E,N ) \\
	& \leq \varepsilon + c \n{T_\eta: E \to N }_{\cb} \beta_n(z; E,F) \\
	& \leq \varepsilon + C c  \beta_n(z; E,F).
	\end{align*}
	Since this holds for every $\varepsilon > 0$,  we have $\alpha_n(z;E,F) \leq  C c  \beta_n(z; E,F).$
\end{proof}

By Proposition \ref{prop-order-of-hulls} and the previous lemma we obtain the following.

\begin{corollary}\label{cor-cbap-implies-equivalence-of-hulls} 
	Let $\alpha$ be an o.s. tensor norm (on $\OFIN$), $E$ and $F$ normed operator spaces with the completely bounded approximation property with constants $C_E$ and $C_F$, respectively. Then
	$$
	 \overleftarrow{\alpha} \le  \overrightarrow{\alpha} \le C_E C_F \overleftarrow{\alpha} \on E \otimes F.
	$$
	In particular, if $E$ and $F$ both have the completely metric approximation property then $\overleftarrow{\alpha} =  \overrightarrow{\alpha} $ on $E\otimes F$.
\end{corollary}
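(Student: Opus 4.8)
The plan is to split the statement into its two inequalities. The left-hand inequality $\overleftarrow{\alpha} \le \overrightarrow{\alpha}$ on $E \otimes F$ requires no work at all: it is part of the chain $\min \le \overleftarrow{\alpha} \le \overrightarrow{\alpha} \le \proj$ recorded in Proposition \ref{prop-order-of-hulls}, valid for arbitrary normed operator spaces. So everything reduces to proving $\overrightarrow{\alpha} \le C_E C_F \overleftarrow{\alpha}$ on $E \otimes F$; the ``in particular'' clause is then the case $C_E = C_F = 1$, combined with the reverse inequality just mentioned.

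For the nontrivial bound the idea is to invoke the Approximation Lemma twice, once to tame each tensor factor, using as a seed the fact (again from Proposition \ref{prop-order-of-hulls}) that $\overrightarrow{\alpha}$ and $\overleftarrow{\alpha}$ both restrict to $\alpha$ on $\OFIN$. First I would note that Lemma \ref{approximation-lemma} has a completely symmetric left-handed companion: if $\alpha' \le c\beta'$ on $M \otimes F$ for cofinally many $M \in \OFIN(E)$ and $E$ has the $C$-completely bounded approximation property, then $\alpha' \le Cc\beta'$ on $E \otimes F$. Its proof is the verbatim transpose of the given one, tensoring the approximating net $T_\eta \in \CB(E,E)$ on the left as $T_\eta \otimes id_F$ rather than on the right; I would state this explicitly since the lemma as written only reduces the second factor.

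Now fix $N \in \OFIN(F)$. For every $M \in \OFIN(E)$, Proposition \ref{prop-order-of-hulls} gives $\overrightarrow{\alpha}(\,\cdot\,; M, N) = \alpha(\,\cdot\,; M, N) = \overleftarrow{\alpha}(\,\cdot\,; M, N)$, so in particular $\overrightarrow{\alpha} \le \overleftarrow{\alpha}$ on $M \otimes N$ for cofinally many $M \in \OFIN(E)$. Applying the left-handed Approximation Lemma with $\alpha' = \overrightarrow{\alpha}$, $\beta' = \overleftarrow{\alpha}$, $c = 1$, and the $C_E$-CBAP of $E$, we obtain $\overrightarrow{\alpha} \le C_E \overleftarrow{\alpha}$ on $E \otimes N$. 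As $N \in \OFIN(F)$ was arbitrary, this holds on $E \otimes N$ for cofinally many $N \in \OFIN(F)$, so a second application, now of Lemma \ref{approximation-lemma} itself with $\alpha = \overrightarrow{\alpha}$, $\beta = \overleftarrow{\alpha}$, $c = C_E$, and the $C_F$-CBAP of $F$, yields $\overrightarrow{\alpha} \le C_E C_F \overleftarrow{\alpha}$ on $E \otimes F$.

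The only genuine subtlety — and the step I would be most careful to spell out — is precisely the need for the left-handed version of the Approximation Lemma: one cannot get by with a single application, and the base case $\overrightarrow{\alpha}|_{\OFIN} = \overleftarrow{\alpha}|_{\OFIN} = \alpha$ is exactly what supplies the hypothesis ``$c = 1$'' needed to start the induction on factors. The rest is bookkeeping with constants, all of which are $\ge 1$, so the hypotheses of the Approximation Lemma are satisfied at each stage.
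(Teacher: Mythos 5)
Your proof is correct and is essentially the argument the paper intends: the corollary is stated there as an immediate consequence of Proposition \ref{prop-order-of-hulls} (which gives $\overleftarrow{\alpha}\le\overrightarrow{\alpha}$ and the agreement of both hulls with $\alpha$ on $\OFIN$) together with a double application of the Approximation Lemma, one per tensor factor. Your explicit remark that a left-handed (transposed) version of Lemma \ref{approximation-lemma} is needed for the first factor is exactly the point the paper leaves implicit, and your handling of it is fine.
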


\subsection{The extension lemma}

Every $\varphi \in (E\otimes_{\proj} F)' = \CB(E, F')$ has a canonical extension $\varphi^\wedge \in (E \otimes_{\proj} F'')' = \CB(E, F'')$ which satisfies for every $x\in E$ and $y''\in F''$ the relation
$$
\pair{\varphi^\wedge}{x \otimes y''} = \pair{L_\varphi x}{y''}_{F',F''},
$$
where $L_\varphi \in \CB(E,F')$ is the linear map associated with $\varphi$.
The extension lemma deals with this situation for more general o.s. tensor norms.
Unlike in the Banach space case, local reflexivity does not come for free and we are forced to require it. The item $(a)$ of the following statement previously appeared  in \cite{dimant2015biduals}. We include the proof here for the reader's benefit. Also,  the proof of item $(b)$ is inspired by \cite[Proposition 3.1]{dimant2015biduals}) which presents the same argument just for the norm $\proj$.

\begin{lemma}\label{extension-lemma} (Right extension lemma)
	Let $\alpha$ be a  o.s. tensor norm (on $\ONORM$), $E$ and $F$  be normed operator spaces and let $\varphi \in (E\otimes_{\proj} F)'$. Then
	$$
	\varphi \in (E\otimes_{\alpha} F)' \text{ if and only if } \varphi^\wedge \in (E \otimes_\alpha F'')'
	$$
	and the map $\varphi \mapsto \varphi^\wedge$ is a complete isometry from $(E\otimes_{\alpha} F)'$ to $(E \otimes_{\alpha} F'')'$ in the following cases:
	\begin{enumerate}[(a)]
		\item $\alpha$ is finitely-generated and $F$ is locally reflexive.
        \item $\alpha$ is a $\lambda$-o.s. tensor norm. 
	\end{enumerate}
\end{lemma}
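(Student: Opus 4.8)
The plan is to separate out the part of the equivalence that holds for every o.s.\ tensor norm, reduce the statement to a single norm inequality, and then establish that inequality separately under each of the two hypotheses.

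\emph{Reduction.} On elementary tensors $\varphi^\wedge\big((id_E\otimes\kappa_F)(x\otimes y)\big)=\langle L_\varphi x,\kappa_F(y)\rangle=(L_\varphi x)(y)=\varphi(x\otimes y)$, so $\varphi=\varphi^\wedge\circ(id_E\otimes\kappa_F)$ on $E\otimes F$, and the same identity holds at every matrix level with $\Phi=(\varphi_{pq})\in M_n\big((E\otimes_\alpha F)'\big)$ and $\Phi^\wedge=(\varphi_{pq}^\wedge)$. Since $\kappa_F$ is a complete isometry, reasonableness and uniformity make $id_E\otimes\kappa_F:E\otimes_\alpha F\to E\otimes_\alpha F''$ a complete contraction; hence $\Phi^\wedge\in M_n\big((E\otimes_\alpha F'')'\big)$ always implies $\Phi\in M_n\big((E\otimes_\alpha F)'\big)$ with $\n{\Phi}_{\cb}\le\n{\Phi^\wedge}_{\cb}$. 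Everything therefore reduces to the reverse inequality: for each $n$ and each cb map $\Phi:E\otimes_\alpha F\to M_n$, with associated $L_\Phi:E\to M_n(F')=\CB(F,M_n)$ and canonical extension $\Phi^\wedge:E\otimes F''\to M_n$ given by $\Phi^\wedge(x\otimes y'')=\langle L_\Phi x,y''\rangle$, one has $\n{\Phi^\wedge:E\otimes_\alpha F''\to M_n}_{\cb}\le\n{\Phi}_{\cb}$. Fixing $m$ and $w\in M_m(E\otimes F'')$, it suffices to bound $\n{\Phi^\wedge_m(w)}_{M_{mn}}$ by $\n{\Phi}_{\cb}\,\alpha_m(w;E,F'')$ and then take suprema.

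\emph{Case (a).} Using $\alpha=\overrightarrow{\alpha}$, pick $E_0\in\OFIN(E)$ and $G_0\in\OFIN(F'')$ with $w\in M_m(E_0\otimes G_0)$ and $\alpha_m(w;E_0,G_0)\le(1+\varepsilon)\alpha_m(w;E,F'')$. Local reflexivity of $F$, applied to the inclusion $G_0\hookrightarrow F''$, yields a net of complete contractions $\psi_\eta:G_0\to F$ converging to that inclusion in the point-weak$^*$ topology. By uniformity, $id_{E_0}\otimes\psi_\eta:E_0\otimes_\alpha G_0\to E_0\otimes_\alpha F$ is a complete contraction, so $\n{\Phi_m\big((id_{E_0}\otimes\psi_\eta)_m w\big)}\le\n{\Phi}_{\cb}\,\alpha_m(w;E_0,G_0)$. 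Writing $w$ entrywise as a finite sum of elementary tensors and using that each entry of $L_\Phi x$ is a weak$^*$-continuous functional on $F''$, one checks entrywise that $\Phi_m\big((id_{E_0}\otimes\psi_\eta)_m w\big)\to\Phi^\wedge_m(w)$ in the finite-dimensional space $M_{mn}$. Passing to the limit and letting $\varepsilon\to 0$ gives the bound.

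\emph{Case (b).} Here local reflexivity is unavailable, so I instead exploit the defining decompositions of the $\lambda$-norm. Fix a representation $w=a\otimes_{\bl_j}(v_1,v_2'')\,b$ with $v_1\in M_j(E)$, $v_2''\in M_j(F'')$, and $\n{a}\,\n{v_1}\,\n{v_2''}\,\n{b}\le(1+\varepsilon)\,\lambda_m(w;E,F'')$. Since $M_j(F'')=M_j(F)''$ isometrically, Goldstine's theorem furnishes a net $v_{2,\eta}\in M_j(F)$ with $\n{v_{2,\eta}}\le\n{v_2''}$ converging weak$^*$ to $v_2''$; set $w_\eta=a\otimes_{\bl_j}(v_1,v_{2,\eta})\,b\in M_m(E\otimes F)$, so $\lambda_m(w_\eta;E,F)\le\n{a}\,\n{v_1}\,\n{v_2''}\,\n{b}$ and hence $\n{\Phi_m(w_\eta)}\le(1+\varepsilon)\n{\Phi}_{\cb}\,\lambda_m(w;E,F'')$. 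Let $\phi\in\CB_\lambda(E\times F;M_n)$ correspond to $\Phi$ via Theorem~\ref{duality-lambda-tensor} (so $\n{\phi}_{\cb,\lambda}=\n{\Phi}_{\cb}$), and let $\phi^\wedge:E\times F''\to M_n$ be $\phi^\wedge(x,y'')=\langle L_\Phi x,y''\rangle$; a direct computation gives $\Phi_m\big(a\otimes_{\bl_j}(v_1,v)\,b\big)=(a\otimes I_n)\,\phi_{\bl_j}(v_1,v)\,(b\otimes I_n)$ for $v$ in $M_j(F)$ or $M_j(F'')$. The decisive point is that, for fixed $v_1=\sum_p c_p\otimes x_p$, the map $v''\mapsto\phi^\wedge_{\bl_j}(v_1,v'')$ factors as $\sum_p(\bl_j(c_p,\cdot)\otimes id_{M_n})\circ(id_{M_j}\otimes\widetilde{L_\Phi x_p})$, where $\widetilde{L_\Phi x_p}:F''\to M_n$ is the weak$^*$-continuous extension of $L_\Phi x_p\in\CB(F,M_n)$; hence $v''\mapsto\phi^\wedge_{\bl_j}(v_1,v'')$ is weak$^*$-continuous from $M_j(F'')$ into the finite-dimensional space $M_{\tau(j)n}$. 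Since $\phi_{\bl_j}(v_1,v_{2,\eta})=\phi^\wedge_{\bl_j}(v_1,v_{2,\eta})$, we get $\Phi_m(w_\eta)\to\Phi^\wedge_m(w)$, and passing to the limit and then letting $\varepsilon\to 0$ concludes.

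I expect the real difficulty to lie in case (b): recognizing that the $\lambda$-norm's decompositions $w=a\otimes_{\bl_j}(v_1,v_2)\,b$ let one work with a single element $v_2''\in M_j(F'')$ rather than with a whole finite-dimensional subspace of $F''$—this is exactly what makes a Goldstine argument substitute for local reflexivity—together with verifying the weak$^*$-continuity of $v''\mapsto\phi^\wedge_{\bl_j}(v_1,v'')$ and the bookkeeping identification $M_j(F'')=M_j(F)''$ needed to keep $\n{v_{2,\eta}}$ under control. Case (a), by contrast, is essentially the classical argument with local reflexivity inserted where a quotient/embedding lemma would otherwise be used.
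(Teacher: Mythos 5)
Your proposal is correct and follows essentially the same route as the paper: reduce everything to the single inequality $\n{\Phi^\wedge}_{\cb}\le\n{\Phi}_{\cb}$ via the complete contraction $id_E\otimes\kappa_F$, then use finite generation plus local reflexivity in case (a), and Goldstine's theorem together with weak$^*$-continuity of the relevant pairing into a finite-dimensional matrix space in case (b). The only cosmetic differences are that in (a) the paper invokes the exact-reproduction form of local reflexivity (\cite[Lemma 14.3.3]{Effros-Ruan-book}), which produces a single map $R\in\CB(N,F)$ reproducing the pairings $\pair{y''}{L_{\varphi_{ij}}x}$ exactly and so avoids your limiting net, and in (b) the paper runs the Goldstine argument directly on the formula for $\n{\varphi^\wedge}_{\cb,\lambda}$ via the identification of Theorem \ref{duality-lambda-tensor}, whereas you run it on near-optimal $\lambda$-decompositions of tensors; both hinge on the same two facts and yield the same estimates.
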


\begin{proof}
	The complete metric mapping property implies that $id_E\otimes \kappa_F:E \otimes_\alpha F \hookrightarrow E \otimes_\alpha F''$ is a complete contraction,
	and hence so is the restriction map $(id_E\otimes \kappa_F)':(E \otimes_{\alpha} F'')' \to (E\otimes_{\alpha} F)'$.
	This shows that for any $(\varphi_{ij}) \in M_m\big( (E\otimes_{\proj} F)' \big)$,
	$$
	\n{ (\varphi_{ij}) }_{ M_m( (E\otimes_{\alpha} F)' ) } \le \n{ (\varphi^\wedge_{ij}) }_{ M_m( (E\otimes_{\alpha} F'')' ) }.
	$$
	
$(a)$	Now consider $n\in\N$, $z_0 \in M_n(E \otimes F'')$, $M \in \OFIN(E)$ and $N \in \OFIN(F'')$ such that $z_0 \in M_n(M \otimes N)$.
	Since $F$ is locally reflexive, by \cite[Lemma 14.3.3]{Effros-Ruan-book}
	given $\varepsilon > 0$ there exists $R \in \CB(N,F)$ with $\n{R}_{\cb} \le 1 + \varepsilon$ such that for every $y'' \in N$, $x \in M$ and $1 \le i, j \le m$
	$$
	\pair{y''}{L_{\varphi_{ij}} x}_{F'',F'} = \pair{Ry''}{L_{\varphi_{ij}} x}_{F,F'}.
	$$
	This means that
	$$
	\pair{\varphi_{ij}^\wedge}{x \otimes y''} = \pair{\varphi_{ij}}{(id_E \otimes R)(x \otimes y'')},
	$$
	thus
	$$
	\mpair{(\varphi_{ij}^\wedge)}{z_0} = \mpair{(\varphi_{ij})}{(id_E \otimes R)z_0}
	$$
	and hence
	\begin{align*}
	\n{ \mpair{(\varphi_{ij}^\wedge)}{z_0} }_{M_{mn}} &\le \n{ (\varphi_{ij}) }_{ M_m( (E\otimes_{\alpha} F)' ) } \alpha_n((id_E \otimes R)z_0; E,  F) \\
	&\le
	 \n{ (\varphi_{ij}) }_{ M_m( (E\otimes_{\alpha} F)' ) }
	 \n{id_E \otimes R}_{\cb} \alpha_n(z_0 ; E, N) \\
	&\le
	 \n{ (\varphi_{ij}) }_{ M_m( (E\otimes_{\alpha} F)' ) }
	 \n{R}_{\cb} \alpha_n(z_0 ; E, N) \\
	&\le
	\n{ (\varphi_{ij}) }_{ M_m( (E\otimes_{\alpha} F)' ) }
	(1+\varepsilon) \alpha_n(z_0 ; E, N)
	\end{align*}
	which implies the result since $\alpha$ is finitely-generated.
	
$(b)$ Recall that by Theorem \ref{duality-lambda-tensor} we can identify $(E\otimes_{\lambda} F)' = \CB_\lambda(E \times F)$, so
all we need to show is that the map $\varphi \mapsto \varphi^\wedge$ gives a completely isometric embedding $\CB_\lambda(E \times F) \to \CB_\lambda(E \times F'')$. Notice that at the beginning of the proof it has already been shown that this is a complete contraction. That we get a complete isometry is then clear from the definition of $\n{\cdot}_{\cb,\lambda}$ and the fact that the unit ball of $M_k(F)$ is weak$^*$-dense in that of $M_k(F'')=M_k(F)''$ by Goldstine's theorem.
\end{proof}

In a similar way a left extension for each $\varphi \in (E\otimes_{\proj} F)'$ is defined: let  $^\wedge\varphi \in (E'' \otimes_\alpha F)'$  given by, for $x'' \in E''$ and $y\in F$,
$$
\pair{^\wedge\varphi}{x'' \otimes y} = \pair{x''} { R_\varphi y}_{E'',E^{'}},
$$ where $R_\varphi\in\CB(F,E') $ is the mapping $R_\varphi (y)(x)=\langle\varphi,x\otimes y\rangle$.

\begin{lemma}\label{left-extension-lemma} (Left extension lemma)
Let $\alpha$ be a  o.s. tensor norm (on $\ONORM$), $E$ and $F$  be normed operator spaces and let $\varphi \in (E\otimes_{\proj} F)'$. Then
	$$
	\varphi \in (E\otimes_{\alpha} F)' \text{ if and only if } \ ^\wedge\varphi \in (E'' \otimes_\alpha F)'.
	$$ 
	and the map $\varphi \mapsto \ ^\wedge\varphi$ is a complete isometry from $(E\otimes_{\alpha} F)'$ to $(E'' \otimes_{\alpha} F)'$ in the following cases:
	\begin{enumerate}[(a)]
		\item $\alpha$ is finitely-generated and $E$ is locally reflexive.
        \item $\alpha$ is a $\lambda$-o.s. tensor norm. 
	\end{enumerate}
 \end{lemma}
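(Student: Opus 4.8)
The plan is to run the proof of Lemma \ref{extension-lemma} with the two tensor factors interchanged; since a $\lambda$-o.s. tensor norm need not be symmetric this cannot be quoted directly from the right-hand version, but each step has an obvious left counterpart. First I would record the inequality valid for an arbitrary o.s. tensor norm $\alpha$: the complete metric mapping property makes $\kappa_E \otimes id_F : E \otimes_\alpha F \to E'' \otimes_\alpha F$ a complete contraction, and the defining relations for ${}^\wedge\varphi$ and $R_\varphi$ give $\pair{{}^\wedge\varphi}{\kappa_E(x)\otimes y} = \pair{R_\varphi y}{x} = \pair{\varphi}{x\otimes y}$, i.e. ${}^\wedge\varphi\circ(\kappa_E\otimes id_F)=\varphi$. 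Dualizing, the completely contractive restriction map $(\kappa_E\otimes id_F)'$ sends $({}^\wedge\varphi_{ij})$ to $(\varphi_{ij})$, so
$$
\n{(\varphi_{ij})}_{M_m((E\otimes_\alpha F)')} \le \n{({}^\wedge\varphi_{ij})}_{M_m((E''\otimes_\alpha F)')}.
$$
This yields the implication ``${}^\wedge\varphi\in(E''\otimes_\alpha F)'\Rightarrow\varphi\in(E\otimes_\alpha F)'$'' and one of the two inequalities for the claimed complete isometry; it remains to prove the reverse inequality in cases $(a)$ and $(b)$, each of which then also delivers the full ``if and only if''.

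For $(a)$, fix $n$ and $z_0\in M_n(E''\otimes F)$, and choose $M\in\OFIN(E'')$, $N\in\OFIN(F)$ with $z_0\in M_n(M\otimes N)$. Applying \cite[Lemma 14.3.3]{Effros-Ruan-book} to the locally reflexive space $E$, the finite-dimensional subspace $M\subseteq E''$, and the finitely many functionals $R_{\varphi_{ij}}(y)\in E'$ obtained as $y$ ranges over a basis of $N$, one gets for each $\varepsilon>0$ a map $R\in\CB(M,E)$ with $\n{R}_{\cb}\le1+\varepsilon$ and $\pair{x''}{R_{\varphi_{ij}}y}=\pair{Rx''}{R_{\varphi_{ij}}y}$ for all $x''\in M$, $y\in N$; equivalently $\pair{{}^\wedge\varphi_{ij}}{x''\otimes y}=\pair{\varphi_{ij}}{(R\otimes id_F)(x''\otimes y)}$. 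Hence $\mpair{({}^\wedge\varphi_{ij})}{z_0}=\mpair{(\varphi_{ij})}{(R\otimes id_F)_n z_0}$, and estimating exactly as in Lemma \ref{extension-lemma} gives $\n{\mpair{({}^\wedge\varphi_{ij})}{z_0}}_{M_{mn}}\le(1+\varepsilon)\n{(\varphi_{ij})}_{M_m((E\otimes_\alpha F)')}\,\alpha_n(z_0;M,N)$. Taking the infimum over $M$ and $N$ and using that $\alpha$ is finitely generated replaces $\alpha_n(z_0;M,N)$ by $\alpha_n(z_0;E'',F)$; letting $\varepsilon\to0$ and then taking the supremum over $z_0$ closes the argument.

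For $(b)$, Theorem \ref{duality-lambda-tensor} identifies $(E\otimes_\lambda F)'=\CB_\lambda(E\times F)$ and $(E''\otimes_\lambda F)'=\CB_\lambda(E''\times F)$, so it suffices to show $\varphi\mapsto{}^\wedge\varphi$ is a complete isometry $\CB_\lambda(E\times F)\to\CB_\lambda(E''\times F)$; the contraction is the inequality above, and the computation in paragraph one shows $({}^\wedge\varphi)_{\bl_k}$ restricts to $\varphi_{\bl_k}$ on $M_k(E)\times M_k(F)$. For fixed $y\in M_k(F)$, the map $x\mapsto({}^\wedge\varphi)_{\bl_k}(x,y)$ from $M_k(E'')=M_k(E)''$ to the finite-dimensional space $M_{\tau(k)}$ is weak$^*$-continuous — each of its entries is, up to fixed scalar-matrix coefficients coming from $\bl_k$ and from $y$, a finite sum of evaluations of the matrix entries of $x$ against the fixed functionals $R_\varphi y_l\in E'$, and on $M_{\tau(k)}$ the weak$^*$ and norm topologies agree. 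Since the unit ball of $M_k(E)$ is weak$^*$-dense in that of $M_k(E'')$ by Goldstine, the supremum defining $\n{{}^\wedge\varphi}_{\cb,\lambda}$ over $\n{x}_{M_k(E'')}\le1$ cannot exceed the one over $\n{x}_{M_k(E)}\le1$, that is $\n{{}^\wedge\varphi}_{\cb,\lambda}\le\n{\varphi}_{\cb,\lambda}$; running the same computation at every matrix level of $\CB_\lambda$ gives the complete isometry.

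The only points that require care are exactly those where the symmetry between the two factors is broken: in $(a)$, making sure \cite[Lemma 14.3.3]{Effros-Ruan-book} is applied to the locally reflexive factor $E$ with the functionals prescribed on $E'$ (rather than on $F'$), and in $(b)$, checking the weak$^*$-continuity in the correct — first — variable so that Goldstine applies to $E$ rather than to $F$. Neither is a genuine obstacle; apart from this bookkeeping the proof is a transcription of that of Lemma \ref{extension-lemma}.
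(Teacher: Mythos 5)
Your proposal is correct and is essentially the paper's intended argument: the paper states the left version without proof as the mirror image of Lemma \ref{extension-lemma}, and your write-up is exactly that transcription, with the local reflexivity of $E$ (via \cite[Lemma 14.3.3]{Effros-Ruan-book}) in case $(a)$ and Goldstine applied to $M_k(E)$ in case $(b)$. The only addition is that you spell out the weak$^*$-continuity of $x\mapsto({}^\wedge\varphi)_{\bl_k}(x,y)$, which the paper's right-hand proof dismisses as clear; that detail is correct and harmless.
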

 
 Now, both procedures can be applied to a given $\varphi \in (E\otimes_{\alpha} F)'$ to obtain two (possibly different) extensions to $(E''\otimes_{\alpha} F'')'$.
 
\begin{corollary}\label{cor-extension}
Let $\alpha$ be a  o.s. tensor norm (on $\ONORM$), $E$ and $F$  be normed operator spaces and let $\varphi \in (E\otimes_{\proj} F)'$. Then the following maps from $ (E\otimes_{\alpha} F)'$ to $ (E''\otimes_{\alpha} F'')'$ are complete isometries:
$$
\varphi\mapsto \ ^\wedge(\varphi^\wedge)\quad \textrm{ and } \quad \varphi\mapsto (^\wedge\varphi)^\wedge
$$ in the following cases:
\begin{enumerate}[(a)]
		\item $\alpha$ is finitely-generated and $E$ and $F$ are locally reflexive.
        \item $\alpha$ is a $\lambda$-o.s. tensor norm. 
	\end{enumerate}
\end{corollary}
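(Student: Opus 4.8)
The plan is to obtain both maps by composing the one-sided extension lemmas already at our disposal. For $\varphi \mapsto {}^\wedge(\varphi^\wedge)$, I would first invoke the right extension lemma (Lemma~\ref{extension-lemma}) for the pair $(E,F)$: under either hypothesis it gives a complete isometry $\varphi \mapsto \varphi^\wedge$ from $(E\otimes_{\alpha} F)'$ into $(E\otimes_{\alpha} F'')'$. Then I would apply the left extension lemma (Lemma~\ref{left-extension-lemma}) to the pair $(E,F'')$ (with $F''$, which is again a normed operator space, in the role of the second space), getting a complete isometry $\psi \mapsto {}^\wedge\psi$ from $(E\otimes_{\alpha} F'')'$ into $(E''\otimes_{\alpha} F'')'$. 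Composing, $\varphi \mapsto {}^\wedge(\varphi^\wedge)$ is a complete isometry $(E\otimes_{\alpha} F)' \to (E''\otimes_{\alpha} F'')'$, since a composition of complete isometries is again a complete isometry. The map $\varphi \mapsto ({}^\wedge\varphi)^\wedge$ is handled symmetrically: apply Lemma~\ref{left-extension-lemma} to $(E,F)$ and then Lemma~\ref{extension-lemma} to $(E'',F)$. Chaining the ``if and only if'' clauses of the two lemmas also shows that $\varphi \in (E\otimes_{\alpha} F)'$ exactly when the relevant extension lies in $(E''\otimes_{\alpha} F'')'$, so the maps are genuinely defined on all of $(E\otimes_{\alpha} F)'$.

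Two routine checks are needed before composing. First, the second extension must be defined on the output of the first: a priori $\varphi^\wedge$ only belongs to $(E\otimes_{\alpha} F'')'$, while Lemma~\ref{left-extension-lemma} is phrased for elements of $(E\otimes_{\proj} F'')'$; but $\alpha$ is reasonable, so the identity $E\otimes_{\proj} F'' \to E\otimes_{\alpha} F''$ is completely contractive and hence $(E\otimes_{\alpha} F'')'\subseteq (E\otimes_{\proj} F'')'$, and symmetrically for ${}^\wedge\varphi$. Second, the hypotheses of the two lemmas must hold for the shifted pairs. In case (b) there is nothing to check, since being a $\lambda$-o.s. tensor norm is a property of $\alpha$ alone (and such norms are finitely generated and reasonable). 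In case (a), $\alpha$ is finitely-generated throughout, and one needs local reflexivity of $E$ precisely when applying the left extension lemma (to $(E,F)$ or $(E,F'')$) and of $F$ precisely when applying the right extension lemma (to $(E,F)$ or $(E'',F)$); note that neither $E''$ nor $F''$ is ever required to be locally reflexive, so the hypothesis ``$E$ and $F$ locally reflexive'' is exactly what the argument consumes.

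There is no genuine difficulty here; all the weight rests on Lemmas~\ref{extension-lemma} and~\ref{left-extension-lemma}. The only point worth a moment's care is the bookkeeping above --- in particular, observing that it does not matter in which order the two one-sided extensions are performed (both orders produce a complete isometry into $(E''\otimes_{\alpha} F'')'$), even though the two resulting functionals ${}^\wedge(\varphi^\wedge)$ and $({}^\wedge\varphi)^\wedge$ need not be equal.
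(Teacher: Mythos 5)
Your proof is correct and is essentially the argument the paper intends: the corollary is stated as an immediate consequence of Lemmas~\ref{extension-lemma} and~\ref{left-extension-lemma}, obtained exactly by composing the two one-sided extensions in either order, with the bookkeeping of hypotheses (local reflexivity of $E$ for the left extension, of $F$ for the right one, nothing for $\lambda$-o.s. tensor norms) done just as you describe.
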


As we have already mentioned, the fact that we do not need local reflexivity in the extension lemmas if the o.s. tensor norm involved is $\proj$ was proved in \cite[Proposition 3.1]{dimant2015biduals}.
In the same article this fact is also shown  for  the Haagerup o.s. tensor norm $h$ using that it satisfies the following property: for any operator spaces $E$ and $F$ and $\varphi \in (E \otimes_h F)'$, there is a unique extension $\widetilde\varphi \in (E'' \otimes_h F'')'$ which is separately $w^*$-continuous, see \cite[1.6.7]{Blecher-LeMerdy-book}. 

A word can be said about the equality/non-equality of the two extensions. Repeating the arguments of \cite[Cor. 1.9]{Defant-Floret} we obtain the analogous result in our setting. That is, given $\varphi \in (E\otimes_{\proj} F)'$, we have $ ^\wedge(\varphi^\wedge)=(^\wedge\varphi)^\wedge$ if and only if $L_\varphi:E\to F'$ is weakly compact. This is also equivalent to the existence of an extension of $\varphi$ to $ (E''\otimes_{\proj} F'')'$ that is separately $w^*$-continuous. Note that, by the comment above, if $\varphi \in (E \otimes_h F)'$ then $L_\varphi$ is weakly compact.

 Note that we have the separating duality pair $\big\langle E \otimes F'', (E\otimes_{\alpha}F)' \big\rangle$ given by
 $$ \langle z, \varphi \rangle = \varphi^{\wedge}(z).$$

Using the bipolar theorem it is easy to see that, in the Banach space realm, the unit ball
$B_{E\otimes_{\alpha}F}$ is $\sigma(E\otimes F'',(E\otimes_{\alpha}F)')$-dense in the unit ball $B_{E \otimes_\alpha F''}$, whenever $\alpha$ is a finitely generated tensor norm.
In other words, given $z \in B_{E \otimes\alpha F''}$ there is a net $(u_{\eta})_{\eta} \subset B_{E\otimes_{\alpha}F}$ such that for every $\varphi \in (E\otimes_{\alpha}F)'$, 
$$\varphi(u_{\eta}) \to \varphi^{\wedge}(z).$$

In our setting, we can consider for each $n \in \mathbb{N}$, the following duality pair:

$$ \big\langle M_n(E \otimes F''),S_1^n[(E\otimes_{\alpha}F)'] \big\rangle$$ defined as
$$ \langle (z_{ij}),(\varphi_{ij}) \rangle = \sum_{i,j} \varphi_{ij}^{\wedge}(z_{ij}).$$

It is easy to see that this dual pairing system is separating.
Moreover, we have the following density result.

\begin{corollary}
	Let $\alpha$ be a  o.s. tensor norm (on $\ONORM$), $E$ and $F$  be normed operator spaces.
	
	For every $n\in \mathbb{N}$, the unit ball $B_{M_n(E\otimes_{\alpha} F)}$ is $\sigma(M_n(E \otimes F''),S_1^n[(E\otimes_{\alpha}F)'])$-dense in the unit ball $B_{M_n(E\otimes_{\alpha} F'')}$
	in the following cases:
	\begin{enumerate}[(a)]
		\item $\alpha$ is finitely-generated and $F$ is locally reflexive.
        \item $\alpha$ is a $\lambda$-o.s. tensor norm.
        \end{enumerate}
\end{corollary}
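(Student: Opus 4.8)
The argument is a matrix-level version of the classical bipolar computation, with the Extension Lemma \ref{extension-lemma} supplying the crucial isometry. Fix $n$; all polars below are taken with respect to the separating pairing $\langle (z_{ij}),(\varphi_{ij})\rangle = \sum_{i,j}\varphi_{ij}^\wedge(z_{ij})$ between $M_n(E\otimes F'')$ and $S_1^n[(E\otimes_\alpha F)']$ introduced just above the statement. First I would compute the polar of $B_{M_n(E\otimes_\alpha F)}$. Since $M_n(X)' = S_1^n[X']$ for any operator space $X$ (see \cite{Pisier-Asterisque-98}), applying this with $X = E\otimes_\alpha F$ gives, for $u\in M_n(E\otimes F)$,
$$
\n{u}_{M_n(E\otimes_\alpha F)} = \sup\Big\{ \Big| \textstyle\sum_{i,j}\varphi_{ij}(u_{ij}) \Big| \;:\; (\varphi_{ij})\in S_1^n[(E\otimes_\alpha F)'],\ \n{(\varphi_{ij})}\le 1 \Big\},
$$
and on $M_n(E\otimes F)$ the pairing above reduces to $\sum_{i,j}\varphi_{ij}(u_{ij})$ because $\varphi^\wedge$ extends $\varphi$. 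Hence the polar of $B_{M_n(E\otimes_\alpha F)}$ inside $S_1^n[(E\otimes_\alpha F)']$ is exactly its unit ball, so by the bipolar theorem the $\sigma(M_n(E\otimes F''),S_1^n[(E\otimes_\alpha F)'])$-closure of $B_{M_n(E\otimes_\alpha F)}$ equals the prepolar of $B_{S_1^n[(E\otimes_\alpha F)']}$, i.e. the set of $z\in M_n(E\otimes F'')$ with $|\sum_{i,j}\varphi_{ij}^\wedge(z_{ij})|\le 1$ for all $(\varphi_{ij})$ in that ball. Since $id_E\otimes\kappa_F$ is a complete contraction we have $B_{M_n(E\otimes_\alpha F)}\subseteq B_{M_n(E\otimes_\alpha F'')}$, so it remains only to show that $B_{M_n(E\otimes_\alpha F'')}$ is contained in that prepolar.

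This is where the Extension Lemma enters. By Lemma \ref{extension-lemma}, in either case (a) or (b) the map $\varphi\mapsto\varphi^\wedge$ is a complete isometry $(E\otimes_\alpha F)'\hookrightarrow(E\otimes_\alpha F'')'$. I would then use that the functor $X\mapsto S_1^n[X]$ preserves complete isometric embeddings: a complete isometric embedding dualizes to a complete quotient, $M_n(\cdot)$ preserves complete quotients by the matrix-norm quotient formula recalled in Section \ref{injections-and-projections}, and redualizing via $S_1^n[X]'=M_n(X')$ gives the claim. Consequently $(\varphi_{ij})\mapsto(\varphi_{ij}^\wedge)$ is an isometric embedding of $S_1^n[(E\otimes_\alpha F)']$ into $S_1^n[(E\otimes_\alpha F'')'] = M_n(E\otimes_\alpha F'')'$.

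To finish, take $z\in B_{M_n(E\otimes_\alpha F'')}$ and any $(\varphi_{ij})\in B_{S_1^n[(E\otimes_\alpha F)']}$. By the previous paragraph $\n{(\varphi_{ij}^\wedge)}_{M_n(E\otimes_\alpha F'')'}\le 1$, so
$$
\Big| \textstyle\sum_{i,j}\varphi_{ij}^\wedge(z_{ij}) \Big| = \big| \langle z,(\varphi_{ij}^\wedge)\rangle \big| \le \n{z}_{M_n(E\otimes_\alpha F'')}\,\n{(\varphi_{ij}^\wedge)}_{M_n(E\otimes_\alpha F'')'} \le 1,
$$
so $z$ lies in the prepolar, hence in the $\sigma$-closure of $B_{M_n(E\otimes_\alpha F)}$, which is the assertion. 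The only step requiring some care is the functoriality of $S_1^n[\cdot]$ on complete isometries: since $\proj$ is not completely injective one cannot read it off directly from $S_1^n[X] = S_1^n\widehat{\otimes}_{\proj} X$ and must route through duality as indicated; everything else is the routine bipolar argument combined with the Extension Lemma.
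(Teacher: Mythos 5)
Your proof is correct and follows essentially the same route as the paper: identify the polar of $B_{M_n(E\otimes_{\alpha}F)}$ with $B_{S_1^n[(E\otimes_{\alpha}F)']}$, apply the bipolar theorem, and use the Extension Lemma together with the fact that $S_1^n[\cdot]$ turns the complete isometry $\varphi\mapsto\varphi^{\wedge}$ into an isometry so that $(\varphi_{ij}^{\wedge})\in B_{S_1^n[(E\otimes_{\alpha}F'')']}=B_{(M_n(E\otimes_{\alpha}F''))'}$. The only difference is cosmetic: where the paper cites \cite[Thm. 4.1.8]{Effros-Ruan-book} for that functoriality of $S_1^n[\cdot]$, you rederive it by the standard duality argument (complete isometry dualizes to a complete quotient, $M_n$ preserves quotients, redualize), which is fine.
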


\begin{proof}
Since $\left(M_n(E\otimes_{\alpha} F)\right)'$ is $S_1^n[(E\otimes_{\alpha} F)']$ isometrically, it is clear that the polar  $\left(B_{M_n(E\otimes_{\alpha} F)}\right)^{\circ}$ (in this pairing system) is $B_{S_1^n[(E\otimes_{\alpha} F)']}.$
By the bipolar theorem it is enough to prove that $B_{M_n(E\otimes_{\alpha} F'')} \subset \left(B_{S_1^n[(E\otimes_{\alpha} F)']}\right)^{\circ}.$
Indeed, given $(z_{ij}) \in B_{M_n(E\otimes_{\alpha} F'')}$ and $(\varphi_{ij})\in B_{S_1^n[(E\otimes_{\alpha} F)']}$; by the Extension Lemma \ref{extension-lemma} along with \cite[Thm. 4.1.8]{Effros-Ruan-book} we know that $(\varphi_{ij}^{\wedge})\in B_{S_1^n[(E\otimes_{\alpha} F'')']}$. Using again 
$\left(M_n(E\otimes_{\alpha} F'')\right)'=S_1^n[(E\otimes_{\alpha} F'')']$ isometrically, we derive that that 
$$ \left|\langle (z_{ij}),(\varphi_{ij}) \rangle\right| = \left|\sum_{i,j} \varphi_{ij}^{\wedge}(z_{ij})\right| \leq 1.$$ This concludes the proof.
\end{proof}

\subsection{The embedding lemma}

Operator space tensor norms generally do not respect subspaces, but the embedding into the bidual is respected under certain conditions.

\begin{lemma}\label{embedding-lemma}
	Let $\alpha$ be an o.s. tensor norm (on $\ONORM$).
	The mapping
		$$
	id_E \otimes \kappa_F : E \otimes_\alpha F \to E \otimes_\alpha F''
	$$
	is a complete isometry in the following cases:
	\begin{enumerate}[(a)]
		\item $\alpha$ is finitely-generated and $F$ is locally reflexive.
		\item $\alpha$ is cofinitely-generated.
        \item $\alpha$ is a $\lambda$-o.s. tensor norm. 
	\end{enumerate}
\end{lemma}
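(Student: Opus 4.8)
## Proof proposal for the Embedding Lemma

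The plan is to reduce the embedding statement to the duality results already established. The key observation is that $id_E \otimes \kappa_F : E \otimes_\alpha F \to E \otimes_\alpha F''$ is always a complete contraction by the complete metric mapping property, so the content is showing it is a \emph{complete isometry}, i.e. that the amplified maps are isometric on each $M_n$. For a fixed $n$ and $u \in M_n(E \otimes F)$, one has
$$
\alpha_n(u; E, F) = \sup\big\{ \n{\mpair{\Phi}{u}}_{M_{mn}} : m \in \N,\ \Phi \in B_{M_m((E\otimes_\alpha F)')} \big\},
$$
and similarly for $\alpha_n\big( (id_E \otimes \kappa_F)_n(u); E, F''\big)$ with test functionals ranging over $B_{M_m((E\otimes_\alpha F'')')}$. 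So the statement reduces to: every $\Phi \in B_{M_m((E\otimes_\alpha F)')}$ arises (up to the pairing with $u$) from some $\Psi \in B_{M_m((E\otimes_\alpha F'')')}$, and conversely. This is precisely what the Right Extension Lemma \ref{extension-lemma} delivers: the map $\varphi \mapsto \varphi^\wedge$ is a complete isometry from $(E\otimes_\alpha F)'$ onto a subspace of $(E\otimes_\alpha F'')'$, and the defining relation $\pair{\varphi^\wedge}{x \otimes \kappa_F(y)} = \pair{\varphi}{x\otimes y}$ means exactly that $\mpair{\Phi^\wedge}{(id_E\otimes\kappa_F)_n(u)} = \mpair{\Phi}{u}$ for $u \in M_n(E\otimes F)$.

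Concretely, here is how I would carry it out. First I would note the complete contraction direction is free. For the reverse inequality, fix $n$ and $u \in M_n(E\otimes F)$; I would pick $m$ and $\Phi=(\varphi_{ij}) \in B_{M_m((E\otimes_\alpha F)')}$ nearly attaining $\alpha_n(u;E,F)$. By Lemma \ref{extension-lemma}, under hypothesis (a) or (c) the matrix $\Phi^\wedge=(\varphi_{ij}^\wedge)$ lies in $B_{M_m((E\otimes_\alpha F'')')}$, and since $\pair{\varphi_{ij}^\wedge}{x\otimes\kappa_F(y)} = \pair{\varphi_{ij}}{x\otimes y}$ on elementary tensors, linearity gives $\mpair{\Phi^\wedge}{(id_E\otimes\kappa_F)_n(u)} = \mpair{\Phi}{u}$. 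Hence
$$
\alpha_n\big((id_E\otimes\kappa_F)_n(u); E, F''\big) \ge \n{\mpair{\Phi^\wedge}{(id_E\otimes\kappa_F)_n(u)}}_{M_{mn}} = \n{\mpair{\Phi}{u}}_{M_{mn}},
$$
and taking the supremum over $\Phi$ yields $\alpha_n\big((id_E\otimes\kappa_F)_n(u); E, F''\big) \ge \alpha_n(u; E, F)$. This handles cases (a) and (c) at once.

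For case (b), when $\alpha$ is cofinitely-generated, I would argue directly from the definition of $\overleftarrow{\alpha}$ rather than through duality. Given $u \in M_n(E\otimes F)$, since $\alpha = \overleftarrow{\alpha}$ we compute $\alpha_n((id_E\otimes\kappa_F)_n(u); E, F'')$ as a supremum over $K \in \OCOFIN(E)$ and cofinite-dimensional subspaces of $F''$; the point is that for such a quotient $F''/L$, composing $q_L^{F''}\circ\kappa_F : F \to F''/L$ is a complete metric surjection onto a quotient of $F$ by a cofinite-dimensional subspace (one checks $L_0 := (q_L^{F''}\circ\kappa_F)^{-1}(0) \cap F$ works and that $F/L_0 \hookrightarrow F''/L$ is a complete isometry because the quotient is finite-dimensional, using that $\kappa_F$ is a complete isometry together with a standard finite-dimensional perturbation/lifting argument). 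Matching up the cofinite quotients of $F$ with those of $F''$ in this way shows the two suprema agree, giving the isometry at level $n$. The main obstacle throughout is precisely this matching of quotients for case (b) — verifying that the cofinite-dimensional quotients of $F''$ that appear are, up to complete isometry, exactly the cofinite-dimensional quotients of $F$, which is where one must be careful that completeness of $F$ is not available and that the relevant spaces are finite-dimensional so no pathology intervenes; cases (a) and (c) are, by contrast, immediate consequences of the Extension Lemma.
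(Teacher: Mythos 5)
Your treatment of cases (a) and (c) is correct and is essentially the paper's own proof: compute $\alpha_n(u;E,F)$ by duality, push the test matrix $\Phi$ to $\Phi^\wedge$ via the Extension Lemma, and use $\mpair{\Phi^\wedge}{(id_E\otimes\kappa_F)_n(u)}=\mpair{\Phi}{u}$ together with the contraction direction.

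Case (b), however, has a genuine gap, on two counts. First, the claimed complete isometry $F/L_0\hookrightarrow F''/L$ for an \emph{arbitrary} cofinite-dimensional $L\subseteq F''$ (with $L_0=\kappa_F^{-1}(L)$) is false, and finite-dimensionality of the quotient does not rescue it. For example, take $F=c_0$ (say with its minimal o.s. structure) and $L=\ker(e_1^*+\sigma)\subseteq \ell_\infty$, where $e_1^*$ is evaluation at the first coordinate and $\sigma$ is a norm-one functional vanishing on $c_0$; then $L_0=\{x\in c_0: x_1=0\}$, so $\n{x+L_0}_{F/L_0}=|x_1|$, while $\n{\kappa_F x+L}_{F''/L}=|(e_1^*+\sigma)(x)|/\n{e_1^*+\sigma}=|x_1|/2$. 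So the induced map $F/L_0\to F''/L$ can strictly decrease norms; your ``standard finite-dimensional perturbation/lifting argument'' cannot repair this. Second, even granting that isometry, your matching runs in the wrong direction for the inequality that actually needs proving. Sending $L\in\OCOFIN(F'')$ to $L_0\in\OCOFIN(F)$ and using the (completely contractive) inclusion $F/L_0\to F''/L$ only bounds each term of the supremum defining $\alpha_n\big((id_E\otimes\kappa_F)_n(u);E,F''\big)$ by a term of the supremum defining $\alpha_n(u;E,F)$ --- that is the trivial (contraction) direction. Since $\alpha$ is not assumed completely injective, an isometric inclusion of the target spaces gives no lower bound on $\alpha$-norms, so ``the two suprema agree'' does not follow.

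The correct argument (which is the paper's) goes the other way: given $L_0\in\OCOFIN(F)$, choose the specific cofinite-dimensional subspace $L_0^{00}\subseteq F''$. Because $F/L_0$ is finite-dimensional, $\kappa_{F/L_0}:F/L_0\to (F/L_0)''=F''/L_0^{00}$ is a completely isometric \emph{isomorphism}, and $q^{F''}_{L_0^{00}}\circ\kappa_F=\kappa_{F/L_0}\circ q^F_{L_0}$. Hence each term $\alpha_n\big((q^E_K\otimes q^F_{L_0})_n u;E/K,F/L_0\big)$ equals the corresponding term over $F''/L_0^{00}$, and taking suprema gives $\overleftarrow{\alpha}_n(u;E,F)\le \overleftarrow{\alpha}_n\big((id_E\otimes\kappa_F)_n u;E,F''\big)$, which with $\alpha=\overleftarrow{\alpha}$ and the free contraction direction finishes the proof. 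In short: you need surjectivity of the induced map onto the chosen quotient of $F''$ (available exactly for $L=L_0^{00}$), not an isometric embedding into an arbitrary one.
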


\begin{proof}
	 Let $n\in\N$ and $z \in M_n(E \otimes_\alpha F)$.
	By the complete metric mapping property
	$$
	id_E \otimes \kappa_F : E \otimes_\alpha F \to E \otimes_\alpha F''
	$$
	is always a complete contraction, so
	$$
	\alpha_n( z; E, F'') \le \alpha_n( z; E, F).
	$$
	$(a)$ By the extension lemma,
		\begin{align*}
			\alpha_n(z ; E, F) &= \sup \big\{ \n{\mpair{z}{ \Phi }}_{M_{nm}} \; : \; \Phi \in M_m((E \otimes_\alpha F )'),  \n{\Phi}_{M_m( (E \otimes_\alpha F )')} \le 1 \big\} \\
			&= \sup \big\{ \n{\mpair{z}{ \Phi^\wedge }}_{M_{nm}} \; : \; \Phi \in M_m((E \otimes_\alpha F )'),  \n{\Phi}_{M_m( (E \otimes_\alpha F )')} \le 1 \big\} \\
			&\le \sup \big\{ \n{\mpair{z}{ \Psi }}_{M_{nm}} \; : \; \Psi \in M_m((E \otimes_\alpha F'' )'),  \n{\Psi}_{M_m( (E \otimes_\alpha F'' )')} \le 1 \big\} \\
			&= \alpha_n(z ; E ,F''),
		\end{align*}
		which gives the reverse inequality.

$(b)$ Let $L \in \OCOFIN(F)$, then $L^{00}$ (formed in $F''$) is in $\OCOFIN(F'')$ and the map
		$$
		\kappa_{F/L} : F/L \to (F/L)'' = F''/L^{00}
		$$
		is completely isometric and surjective; moreover, $q^{F''}_{L^{00}} \circ \kappa_F = \kappa_{F/L} \circ q^F_L$.
		Therefore,
		\begin{align*}
			\alpha_n\big((id_E \otimes q_L^F)_n(z); E,F/L\big)
			&= \alpha_n\big((id_E \otimes (\kappa_{F/L} \circ q^F_L) )_n(z); E,(F/L)''\big) \\
			&= \alpha_n\big((id_E \otimes q^{F''}_{L^{00}}) \circ (id_E \otimes \kappa_F) )_n(z); E, F''/L^{00} \big) \\
			&\le \overleftarrow{\alpha}_n( (id_E \otimes \kappa_F) )_n(z); E, F'' ) \\
			&=  \overleftarrow{\alpha}_n( z; E, F'' ) = \alpha_n(z ; E, F'')
		\end{align*}
		The same argument with $E/K$ instead of $E$ allows us to conclude $\alpha_n(z ; E, F) \le \alpha_n(z ; E, F'')$.

$(c)$ The same proof as in the case $(a)$ works here because a $\lambda$-o.s. tensor norm is finitely-generated and the hypothesis of local reflexivity of $F$ is not needed for the extension lemma with this norm. 

\end{proof}

We point out that part (c) above is a  generalization of \cite[Lemma 11]{Janson-Kumar-spectra}, where the spaces are assumed to be $C^*$-algebras and the embedding is only shown to be an isometry instead of a complete isometry.

Note that the completion of an operator space $F$ has the same bidual as $F$. Therefore, as a consequence of the previous result we have:

\begin{corollary} \label{Cor:extender al completado}
Let $\alpha$ be an o.s. tensor norm (on $\ONORM$) and let $\widetilde F$ the completion of a normed operator space $F$.
	The mapping
	$$
	id_E \otimes \iota_F : E \otimes_\alpha F \to E \otimes_\alpha \widetilde F,
	$$
	where $\iota_F : F \to \widetilde F$ is the canonical inclusion, 
	is a complete isometry with dense range in the following cases:
	\begin{enumerate}[(a)]
		\item $\alpha$ is finitely-generated and $F$ is locally reflexive.
		\item $\alpha$ is cofinitely-generated.
        \item $\alpha$ is a $\lambda$-o.s. tensor norm. 
	\end{enumerate}
\end{corollary}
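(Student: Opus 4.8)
The plan is to obtain this as a direct consequence of the Embedding Lemma \ref{embedding-lemma}, using the standard fact that passing to the completion does not change the bidual. First I would record the relevant identifications: the canonical inclusion $\iota_F : F \to \widetilde F$ induces an isometric (indeed completely isometric, at all matrix levels) isomorphism $(\widetilde F)' \cong F'$, since every (matrix of) bounded functional(s) on $F$ extends uniquely to $\widetilde F$; dualizing once more yields a canonical complete isometry $(\widetilde F)'' = F''$, under which $\kappa_{\widetilde F} \circ \iota_F = \kappa_F$. By uniformity of $\alpha$, this identification upgrades to a complete isometry $E \otimes_\alpha (\widetilde F)'' = E \otimes_\alpha F''$, and the factorization
$$
id_E \otimes \kappa_F = (id_E \otimes \kappa_{\widetilde F}) \circ (id_E \otimes \iota_F) : E \otimes F \to E \otimes F''
$$
holds. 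I also need the hypotheses of Lemma \ref{embedding-lemma} to be available for the pair $(E,\widetilde F)$: for cases (b) and (c) there is nothing to check since those conditions concern $\alpha$ alone, and for case (a) one notes that $\widetilde F$ is locally reflexive whenever $F$ is, because a complete contraction $G \to (\widetilde F)'' = F''$ out of a finite-dimensional $G$ is point-weak$^*$ approximable by complete contractions into $F \subseteq \widetilde F$.

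Next I would apply Lemma \ref{embedding-lemma} twice — to $(E,F)$ and to $(E,\widetilde F)$ — to conclude that both
$$
id_E \otimes \kappa_F : E \otimes_\alpha F \to E \otimes_\alpha F'' \qquad\text{and}\qquad id_E \otimes \kappa_{\widetilde F} : E \otimes_\alpha \widetilde F \to E \otimes_\alpha F''
$$
are complete isometries. Since in the factorization above the composite $id_E \otimes \kappa_F$ and the second factor $id_E \otimes \kappa_{\widetilde F}$ are both complete isometries, and a composition $B \circ C$ in which $B$ and $B \circ C$ are complete isometries forces $\n{C x} = \n{B C x} = \n{x}$ at every matrix level, it follows that $id_E \otimes \iota_F$ is a complete isometry in each of the three cases.

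For the dense range, I would argue that since $\alpha$ is reasonable, the identity $E \otimes_{\proj} \widetilde F \to E \otimes_\alpha \widetilde F$ is completely contractive, so it suffices to show that the image of $E \otimes F$ is dense in $E \otimes_{\proj} \widetilde F$; this is immediate because elementary tensors span $E \otimes \widetilde F$ and any $e \otimes \widetilde f$ is approximated in projective norm by $e \otimes f_k$ with $f_k \to \widetilde f$ in $F$, using $\proj(e \otimes (\widetilde f - f_k)) = \n{e}\,\n{\widetilde f - f_k}$. I do not expect a serious obstacle here; the only mildly delicate point is making the matricial identification $(\widetilde F)'' = F''$ precise (together with its consequence for the $\alpha$-tensor products) and checking the transfer of local reflexivity in case (a), after which the result is a formal consequence of Lemma \ref{embedding-lemma}.
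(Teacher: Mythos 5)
Your argument is correct and matches the paper's intended proof: the corollary is stated there as an immediate consequence of the Embedding Lemma together with the observation that $F$ and $\widetilde F$ have the same bidual, which is exactly what you spell out (including the cross-norm density argument for the range). The only remark is that your second application of the Embedding Lemma, to $(E,\widetilde F)$, which forces you to transfer local reflexivity to $\widetilde F$ in case (a), can be avoided: once $id_E \otimes \kappa_F$ is known to be a complete isometry, it is enough that $id_E \otimes \kappa_{\widetilde F}$ be a complete contraction (which follows from uniformity alone) to conclude that the first factor $id_E \otimes \iota_F$ is completely isometric — though your transfer argument is itself correct, so nothing is lost.
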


\subsection{The density lemma}

We now state the so-called density lemma, which asserts that a given completely bounded mapping defined on the tensor product of certain normed operator spaces can be extended to a completely bounded mapping defined on the tensor product of the completions of the spaces. As usual, we need a local reflexivity hypothesis for the finitely-generated case. The proof easily follows from Corollary \ref{Cor:extender al completado}.

\begin{lemma}
Let $\alpha$ be an o.s. tensor norm  (on $\ONORM$), $E, F$ and $G$ normed operator spaces, $E_0$ and $F_0$ dense subspaces of $E$ and $F$ respectively. Suppose $T \in CB(E \otimes_{\proj}F; G)$ such that $T|_{E_0 \otimes_{\alpha}F_0} \in CB(E_0 \otimes_{\alpha}F_0; G)$.  Then $T \in CB(E \otimes_{\alpha}F; G)$ and
$$
\Vert T \Vert_{ CB(E_0 \otimes_{\alpha}F_0; G)}=\Vert T \Vert_{ CB(E \otimes_{\alpha}F; G)},
$$
in the following cases:
\begin{enumerate}[(a)]
		\item $\alpha$ is finitely-generated and $F$ is locally reflexive.
		\item $\alpha$ is cofinitely-generated.
		\item $\alpha$ is a $\lambda$-o.s. tensor norm.
	\end{enumerate}
\end{lemma}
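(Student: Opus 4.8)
The plan is to prove that $\|T\|_{\CB(E\otimes_\alpha F;G)}\le c$, where $c:=\|T\|_{\CB(E_0\otimes_\alpha F_0;G)}$; this simultaneously yields that $T$ is completely bounded on $E\otimes_\alpha F$ and that the two norms agree, since conversely $c\le\|T\|_{\CB(E\otimes_\alpha F;G)}$ is immediate from the complete metric mapping property (the inclusions $E_0\hookrightarrow E$ and $F_0\hookrightarrow F$ are complete isometries, so the natural map $E_0\otimes_\alpha F_0\to E\otimes_\alpha F$ is a complete contraction restricting $T$). Unwinding the definition of the $\cb$-norm, what remains is to show that $\|T_n(z)\|_{M_n(G)}\le c\,\alpha_n(z;E,F)$ for every $n\in\N$ and every $z\in M_n(E\otimes F)$.

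The first step is to pass to tensors supported on the dense subspaces. Every $z\in M_n(E\otimes F)$ involves only finitely many elementary tensors, so approximating those (their $E$-parts by elements of $E_0$ and their $F$-parts by elements of $F_0$) yields $z^{(m)}\in M_n(E_0\otimes F_0)$ with $z^{(m)}\to z$ in $M_n(E\otimes_\proj F)$; since $\alpha\le\proj$, we also get $z^{(m)}\to z$ in $M_n(E\otimes_\alpha F)$. As $T\in\CB(E\otimes_\proj F;G)$, it follows that $T_n(z^{(m)})\to T_n(z)$ in $M_n(G)$, so it suffices to bound $\|T_n(z^{(m)})\|_{M_n(G)}$ and let $m\to\infty$.

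The heart of the argument is then the identification $\alpha_n(w;E_0,F_0)=\alpha_n(w;E,F)$ for every $w\in M_n(E_0\otimes F_0)$: granting it, $\|T_n(z^{(m)})\|_{M_n(G)}\le c\,\alpha_n(z^{(m)};E_0,F_0)=c\,\alpha_n(z^{(m)};E,F)\to c\,\alpha_n(z;E,F)$, which concludes the proof. The inequality ``$\ge$'' here is once more the complete metric mapping property. For ``$\le$'', I would use that a dense subspace has the same completion, so $\widetilde{E_0}=\widetilde E$ and $\widetilde{F_0}=\widetilde F$, and then apply Corollary~\ref{Cor:extender al completado} (to complete the second factor, $F_0\hookrightarrow\widetilde{F_0}$) followed by its left-hand analogue (to complete the first factor, $E_0\hookrightarrow\widetilde{E_0}$) to conclude that $\iota_{E_0}\otimes\iota_{F_0}\colon E_0\otimes_\alpha F_0\to\widetilde E\otimes_\alpha\widetilde F$ is a complete isometry. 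This map factors as $E_0\otimes_\alpha F_0\to E\otimes_\alpha F\to\widetilde E\otimes_\alpha\widetilde F$, a composition of complete contractions, and such a composition can be a complete isometry only if each factor is a complete isometry on the relevant range; hence $E_0\otimes_\alpha F_0\to E\otimes_\alpha F$ is a complete isometry, which is exactly ``$\le$''.

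The main obstacle is precisely this identification — that enlarging the factors to dense superspaces does not inflate the $\alpha$-norm on $E_0\otimes F_0$; everything else is formal. In cases (b) and (c) the relevant completion statements hold with no side conditions (Corollary~\ref{Cor:extender al completado} together with its left counterpart, obtained by transposition), so the identification is automatic. In the finitely generated case (a), completing the second factor uses only that $F$ — hence its dense subspace $F_0$ — is locally reflexive; completing the first factor would nominally invoke the left extension and embedding lemmas (e.g.\ Lemma~\ref{left-extension-lemma}), which ask for local reflexivity of $E$. Since in this lemma only finite-dimensional supports of $w$ ever intervene, that first-factor step can instead be carried out by a small perturbation of a finite-dimensional subspace of $E$ inside $E_0$, which by the standard perturbation results for finite-dimensional operator subspaces produces a complete isomorphism with $\cb$-distortion arbitrarily close to $1$ and requires no local reflexivity of $E$, consistently with the hypotheses as stated.
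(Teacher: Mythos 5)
Your overall route is the paper's: the lemma is reduced, exactly as in the text, to the fact that $E_0\otimes_\alpha F_0\to E\otimes_\alpha F$ is a complete isometry with dense range, which you extract from Corollary \ref{Cor:extender al completado} (and its left analogue) via the factorization trick; the approximation step using the $\proj$-continuity of $T$ is correct, and cases (b) and (c) go through without further comment. So in substance this is the same proof the paper intends.

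Two points in case (a) need more than you give them. First, the right-factor step uses the Corollary for the pair $(E_0,F_0)$, so you need $F_0$ locally reflexive, not just $F$; this is true (approximate the approximating complete contractions $M\to F$ by maps into the dense subspace $F_0$, with a small rescaling, and note that norm-small perturbations do not disturb point-weak$^*$ convergence), but it should be said. Second, and more seriously, the left-factor step: you correctly observe that the left analogue of the Corollary would demand local reflexivity of $E$, which is not hypothesized, and you propose a perturbation fix, but as written (``a complete isomorphism with $\cb$-distortion close to $1$'') it is not yet a proof: if the perturbation merely moves a finite-dimensional subspace of $E$ close to one inside $E_0$, the tensor itself moves, and you would again need a projective-norm error estimate and the $\proj$-continuity of $T$ to absorb it. The clean way to implement your idea is: for $w\in M_n(E_0\otimes F_0)$ use that $\alpha$ is finitely generated to find $M\in\OFIN(E)$, $N\in\OFIN(F_0)$ with $w\in M_n(M\otimes N)$ and $\alpha_n(w;M,N)\le(1+\varepsilon)\alpha_n(w;E,F_0)$; then apply Lemma \ref{lem: perturbation step} (with the dense subspace $E_0\subset\widetilde{E_0}\supseteq E$) to get $R:M\to E_0$ with $\n{R}_{\cb}\le1+\varepsilon$ and $Rx=x$ for $x\in M\cap E_0$. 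Since the left support of $w$ is contained in $M\cap E_0$, one has $(R\otimes id_N)_n w=w$, so the complete metric mapping property gives $\alpha_n(w;E_0,F_0)\le\n{R}_{\cb}\,\alpha_n(w;M,N)\le(1+\varepsilon)^2\alpha_n(w;E,F_0)$, which is exactly the missing inequality. Note that this argument (applied symmetrically to the second factor as well) in fact proves the case of a finitely generated $\alpha$ with no local reflexivity assumption whatsoever, so the hypothesis on $F$ is only used if one insists on quoting Corollary \ref{Cor:extender al completado}; with that step made precise, your proof is complete.
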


\subsection{The $\mathscr{OS}_{p}$-local technique lemma}
In the Banach space setting the $\mathscr L_p$-local technique lemma allow us to transfer results from tensor products of $\ell_p^n$ spaces to general $\mathscr L_p$-spaces (which, loosely speaking, are space whose finite dimensional subspaces ``uniformly look''  like $\ell_p^n$'s).
In the operator space setting, one of the possible extension of the $\mathscr L_p$-spaces are the $\mathscr{OS}_{p}$-spaces  (see Section~\ref{usual notation} for the definition).
For this extension, we have the following local technique lemma.

\begin{lemma}\label{local-technique-lemma}
	Let $\alpha$ and $\beta$ be o.s. tensor norms, $c \ge 0$ and $E$ a normed operator space such that for all $k \in \N$,
	$$
	\alpha \le c \beta \on E \otimes S_p^k.
	$$
	Then $\alpha \le C c \overrightarrow{\beta} \on E \otimes F$
	for every $\mathscr{OS}_{p,C}$-space $F$.
\end{lemma}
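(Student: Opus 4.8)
The plan is to run the classical $\mathscr{L}_p$-local technique argument: use that $\overrightarrow{\beta}$ is a finite hull to pass to a finite-dimensional subspace $F_0\subseteq F$, then conjugate the relevant tensor into $E\otimes S_p^k$ by a complete isomorphism supplied by the $\mathscr{OS}_{p,C}$-structure, apply the hypothesis there, and conjugate back, repeatedly invoking the complete metric mapping property of $\alpha$ and of $\beta$.

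First I would fix $n\in\N$, $u\in M_n(E\otimes F)$ and $\eps>0$. By the definition of the finite hull there are $E_0\in\OFIN(E)$ and $F_0\in\OFIN(F)$ with $u\in M_n(E_0\otimes F_0)$ and $\beta_n(u;E_0,F_0)\le(1+\eps)\,\overrightarrow{\beta}_n(u;E,F)$; applying the complete metric mapping property of $\beta$ to the complete isometry $E_0\hookrightarrow E$ already gives $\beta_n(u;E,F_0)\le(1+\eps)\,\overrightarrow{\beta}_n(u;E,F)$. Next, since $F$ is an $\mathscr{OS}_{p,C}$-space, a perturbation argument (as in the classical $\mathscr{L}_p$ setting, starting from the density of the union of the defining family) produces $k\in\N$, a subspace $G$ with $F_0\subseteq G\subseteq F$, and a complete isomorphism $v\colon S_p^k\to G$ with $\n{v}_{\cb}\n{v^{-1}}_{\cb}<C+\eps$. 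Viewing $u$ as an element of $M_n(E\otimes G)$, put $\tilde u=(id_E\otimes v^{-1})_n(u)\in M_n(E\otimes S_p^k)$.

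Now the hypothesis on $E\otimes S_p^k$ yields $\alpha_n(\tilde u;E,S_p^k)\le c\,\beta_n(\tilde u;E,S_p^k)$, while two applications of the complete metric mapping property of $\beta$ — to $v^{-1}\colon G\to S_p^k$ and to the inclusion $F_0\hookrightarrow G$ — give $\beta_n(\tilde u;E,S_p^k)\le\n{v^{-1}}_{\cb}\,\beta_n(u;E,F_0)$. On the other side, $(id_E\otimes v)_n(\tilde u)=u$ in $M_n(E\otimes G)$, so the complete metric mapping property of $\alpha$ applied to $v\colon S_p^k\to G$ and to the inclusion $G\hookrightarrow F$ gives
\[
\alpha_n(u;E,F)\le\alpha_n(u;E,G)\le\n{v}_{\cb}\,\alpha_n(\tilde u;E,S_p^k).
\]
Chaining these inequalities,
\[
\alpha_n(u;E,F)\le\n{v}_{\cb}\n{v^{-1}}_{\cb}\,c\,\beta_n(u;E,F_0)\le(C+\eps)(1+\eps)\,c\,\overrightarrow{\beta}_n(u;E,F),
\]
and letting $\eps\to0$ gives $\alpha_n(u;E,F)\le Cc\,\overrightarrow{\beta}_n(u;E,F)$, that is, $\alpha\le Cc\,\overrightarrow{\beta}$ on $E\otimes F$.

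I expect the one genuinely delicate point to be the step that extracts, from the definition of an $\mathscr{OS}_{p,C}$-space, a single finite-dimensional subspace $G\supseteq F_0$ which is $(C+\eps)$-completely isomorphic to some $S_p^k$: this is the operator-space enlargement/perturbation lemma that upgrades the given family of subspaces with dense union to a family cofinal in $\OFIN(F)$, and it is where the matricial (rather than merely Banach) nature of the norms has to be handled with care. Everything after that is routine bookkeeping with complete contractions.
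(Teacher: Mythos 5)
The chain of metric-mapping-property estimates in your argument is fine, but the step you yourself flag as delicate is a genuine gap, not a routine perturbation. The definition of an $\mathscr{OS}_{p,C}$-space used here only provides \emph{some} family $(F_i)$ of finite-dimensional subspaces, each $C$-completely isomorphic to an $S_p^{n_i}$, whose union is dense; it is not assumed to be directed by inclusion, let alone cofinal in $\OFIN(F)$. Density of the union lets you approximate each basis vector of $F_0$ by an element of \emph{some} $F_i$, but there need not be a single member of the family approximately containing all of $F_0$, so there is nothing to perturb into a subspace $G$ with $F_0 \subseteq G \subseteq F$ and $d_{\cb}(G,S_p^k) < C+\eps$. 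Moreover, this kind of enlargement statement is exactly the sort of local claim that does not transfer softly to operator spaces: even for $S_1(H)$, where the natural family is increasing, the enlargement is a nontrivial theorem of Effros--Ruan (quoted as Proposition \ref{subespacios-S1}), and Section 2.6.2 of the paper shows that ``every finite-dimensional piece sits inside a nice copy'' arguments can fail outright because of exactness obstructions. So as written, the existence of $G$ is unproved, and it does not follow from the stated definition.

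The paper's proof avoids the issue entirely: by \cite[Lemma 2.1]{Junge-Nielsen-Ruan-Xu}, for every $N \in \OFIN(F)$ and $\eps>0$ the \emph{inclusion} $N \hookrightarrow F$ factors as $S \circ R$ with $R : N \to S_p^k$, $S : S_p^k \to F$ and $\n{R}_{\cb}\n{S}_{\cb} \le C(1+\eps)$; no containment and no isomorphism onto a subspace of $F$ is needed. With this factorization your chain of inequalities goes through verbatim (replace $v^{-1}$ and $v$ by $R$ and $S$, and estimate $\alpha_n(z;E,F) \le \alpha_n\big((id_E\otimes(S\circ R))_n z; E, S(S_p^k)\big) \le \n{S}_{\cb}\, c\, \n{R}_{\cb}\, \beta_n(z;E,N)$), and taking the infimum over $N$ gives $\alpha \le Cc\,\overrightarrow{\beta}$. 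This is also why the result extends to any $F$ with the $\gamma_p$-AP, as remarked after the lemma. To repair your proof you should either invoke such a factorization lemma or strengthen the hypothesis on $F$ to the cofinal (``every finite-dimensional subspace is contained in a $(C+\eps)$-copy of some $S_p^k$'') form; the bookkeeping that follows is correct.
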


\begin{proof}
	For $N \in \OFIN(F)$, by \cite[Lemma 2.1]{Junge-Nielsen-Ruan-Xu} we can take a factorization
	$$
	\xymatrix{
	N \ar[rr]^{id_N} \ar[dr]_{R} & &F\\
	& S_p^k \ar[ru]_{S} &
	}
	$$
	with $\n{R}_{\cb} \n{S}_{\cb} \le C(1+\varepsilon)$. Let $M := S(S_p^k)$.
	Then for every $z \in M_n(E \otimes N)$,
	\begin{multline*}
		\alpha_n(z ; E, M) = \alpha_n( (id_E \otimes (S \circ R))_n z; E , M )
		\le \n{S}_{\cb} \alpha_n( (id_E \otimes R)_nz ; E, S_p^k ) \\
		\le \n{S}_{\cb} c \beta_n( (id_E \otimes R)_n z ; E , S_p^k )
		\le \n{S}_{\cb} \n{R}_{\cb} c \beta_n(z ; E , N).
	\end{multline*}
	The statement follows.
\end{proof}

\begin{remark}
Note that more generally the proof above applies whenever $F$ has the $\gamma_p$-AP \cite[Sec. 1]{Junge-Nielsen-Ruan-Xu}, thanks to \cite[Lemma 1.4]{Junge-Nielsen-Ruan-Xu}.
	Also note that the same argument yields versions of the local technique lemma for other operator space versions of the $\mathscr{L}_p$ spaces, e.g. the $\mathscr{OL}_p$ spaces from \cite{Junge-Nielsen-Ruan-Xu}.
\end{remark}

\section{Dual o.s. tensor norms}\label{dual section}

In a nutshell, the dual o.s. tensor norm  $\alpha'$ for the o.s. tensor norm  $\alpha$ is the one that makes
$$
E \otimes_{\alpha'} F = (E' \otimes_\alpha F')'
$$
into a complete isometry for any $E,F \in \OFIN$.
More explicitly, when $E,F \in\OFIN$ for $z \in M_n( E \otimes F)$ we define
$$
\alpha'_n(z ; E ,F) := \sup \big\{  \n{ \mpair{u}{z} }_{M_{mn}}  \mid \alpha_m(u ; E' , F') \le 1 \big\}.
$$

Let us check that, for example, the minimal and projective o.s. tensor norms  are in duality with each other.
Consider finite-dimensional operator spaces $E$ and $F$. The operator space structure on $E \otimes_{\min} F$ is, by definition, the one induced by the embedding $E \otimes_{\min} F \to\CB(E',F)$. Now, $(E' \otimes_{\proj} F')'$ is completely isometric to $\CB(E', F'') = \CB(E',F)$, from where we conclude that the canonical identification
$$
E \otimes_{\min} F = (E' \otimes_{\proj} F')'
$$
is a complete isometry. Taking duals and exchanging the roles of $E,F$ and $E',F'$ we also have a complete isometry
$$
E \otimes_{\proj} F = (E' \otimes_{\min} F')'.
$$

\begin{proposition}
If $\alpha$ is an o.s. tensor norm on $\OFIN$, then so is $\alpha'$.
\end{proposition}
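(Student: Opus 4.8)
The plan is to use that, by construction, the operator space structure on $E\otimes_{\alpha'}F$ is precisely the dual operator space structure obtained by transporting the structure of $E'\otimes_{\alpha}F'$ along the canonical algebraic identification $E\otimes F=(E'\otimes F')'$ (legitimate since $E,F$ are finite-dimensional); that is, $E\otimes_{\alpha'}F=(E'\otimes_{\alpha}F')'$ completely isometrically. The three requirements for $\alpha'$ to be an o.s. tensor norm on $\OFIN$ — that it assigns a genuine matricial norm, that it is reasonable, and that it satisfies the complete metric mapping property — will then each be obtained by dualizing the corresponding property of $\alpha$. For the first: $E'\otimes_{\alpha}F'$ is a normed operator space (as $\alpha$ is an o.s. tensor norm and $E',F'\in\OFIN$), and since the dual of any normed operator space is again a normed operator space, the assignment $z\mapsto\alpha'_n(z;E,F)$ does define a matricial norm structure on $E\otimes F$; indeed the displayed formula $\alpha'_n(z;E,F)=\sup\{\n{\mpair{u}{z}}_{M_{mn}}:\alpha_m(u;E',F')\le1\}$ is exactly the description of the matrix norms of a dual operator space computed through the matrix pairing.

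For reasonability I would dualize the chain $\min\le\alpha\le\proj$ applied to the pair $(E',F')$. Since the adjoint of a complete contraction is a complete contraction, from the complete contractivity of the identity $E'\otimes_{\alpha}F'\to E'\otimes_{\min}F'$ we get that $(E'\otimes_{\min}F')'\to(E'\otimes_{\alpha}F')'$ is completely contractive; invoking the complete isometries $E\otimes_{\proj}F=(E'\otimes_{\min}F')'$ and $E\otimes_{\alpha'}F=(E'\otimes_{\alpha}F')'$ recorded just before the statement, this is exactly $\alpha'\le\proj$. Symmetrically, dualizing the complete contractivity of $E'\otimes_{\proj}F'\to E'\otimes_{\alpha}F'$ and using $E\otimes_{\min}F=(E'\otimes_{\proj}F')'$ yields $\min\le\alpha'$. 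Hence $\alpha'$ is reasonable, and in particular the cross-norm identity \eqref{norma producto matrices} holds automatically, as remarked after the definition of o.s. tensor norm.

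Finally, for the complete metric mapping property, given $S\in\CB(E_1,E_2)$ and $T\in\CB(F_1,F_2)$ with all spaces in $\OFIN$, I would pass to the adjoints $S'\in\CB(E_2',E_1')$ and $T'\in\CB(F_2',F_1')$, which satisfy $\n{S'}_{\cb}=\n{S}_{\cb}$ and $\n{T'}_{\cb}=\n{T}_{\cb}$. The complete metric mapping property of $\alpha$ gives $\n{S'\otimes T':E_2'\otimes_{\alpha}F_2'\to E_1'\otimes_{\alpha}F_1'}_{\cb}\le\n{S}_{\cb}\n{T}_{\cb}$; dualizing this and checking on elementary tensors $\varphi\otimes\psi$ that $(S'\otimes T')'$ agrees with $S\otimes T$ under the identifications $(E_i'\otimes_{\alpha}F_i')'=E_i\otimes_{\alpha'}F_i$, one obtains $\n{S\otimes T:E_1\otimes_{\alpha'}F_1\to E_2\otimes_{\alpha'}F_2}_{\cb}\le\n{S}_{\cb}\n{T}_{\cb}$, which is the desired inequality. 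The whole argument is a chain of routine dualizations and carries no genuine obstacle; the only points requiring a little care are the bookkeeping of the canonical identifications — in particular verifying $(S'\otimes T')'=S\otimes T$ by evaluating against elementary tensors — together with the standard facts that adjunction preserves $\cb$-norms and sends complete contractions to complete contractions, which present no difficulty here since everything takes place in finite dimensions.
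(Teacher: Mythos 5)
Your argument is correct and is essentially the paper's own proof: the paper also obtains $\min=\proj'\le\alpha'\le\min'=\proj$ by dualizing the chain $\min\le\alpha\le\proj$ on $E'\otimes F'$ (using the duality $\min\leftrightarrow\proj$ established just before the statement) and notes that the same duality argument gives uniformity. You have simply spelled out the routine dualization details (the dual matricial structure, $\cb$-norm preservation under adjoints, and the identification $(S'\otimes T')'=S\otimes T$) that the paper leaves implicit.
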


\begin{proof}
Let $E, F$ be finite-dimensional operator spaces.
It is clear that $\alpha'$ is an operator space structure on $E \otimes F$.
From $\min \le \alpha \le \proj$, taking duals we conclude $\min = \proj' \le \alpha' \le \min'=\proj$ (for finite-dimensional spaces).
The same sort of duality argument shows that $\alpha'$ is uniform.
\end{proof}

The finite hull $\overrightarrow{\alpha'}$ of $\alpha'$ will be called the \emph{dual o.s. tensor norm} $\alpha'$ (on $\ONORM$) of the o.s. tensor norm  $\alpha$ (on $\OFIN$ or $\ONORM$).

The following properties are easy to check:
\begin{proposition}\label{prop-properties-dual-cross-norm}
\begin{enumerate}[(a)]
\item If $\alpha \le c \beta$, then $\beta' \le c \alpha'$.
\item $\alpha = \alpha''$ on $\OFIN$, and $\overrightarrow{\alpha}= \alpha''$.
\item $\alpha = \alpha''$ on $\ONORM$ if and only if $\alpha$ is finitely-generated.
\end{enumerate}
\end{proposition}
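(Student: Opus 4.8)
The strategy is to handle the three items separately; each reduces to a standard duality manipulation, the only real work being to keep the finite-hull conventions straight and to make sure every identification in play is a \emph{complete} isometry (or contraction), not merely isometric.

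\emph{Item (a).} I would first argue on $\OFIN$. If $\alpha \le c\beta$, then replacing $E,F$ by $E',F'$ the identity $E' \otimes_\beta F' \to E' \otimes_\alpha F'$ has $\cb$-norm at most $c$; dualizing (the adjoint of a completely bounded map does not increase the $\cb$-norm) gives that the identity $(E' \otimes_\alpha F')' \to (E' \otimes_\beta F')'$ has $\cb$-norm at most $c$. By the very definition of the dual norm on $\OFIN$, namely $E \otimes_{\alpha'} F = (E' \otimes_\alpha F')'$ completely isometrically, this says precisely $\beta' \le c\alpha'$ on $\OFIN$. The passage to $\ONORM$ is then immediate from the monotonicity of the finite hull recorded right after Definition~\ref{def-finite-and-cofinite-hulls}: $\overrightarrow{\beta'} \le c\overrightarrow{\alpha'}$, which is $\beta' \le c\alpha'$ on $\ONORM$ by definition of the dual there.

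\emph{Item (b).} For the first claim I would unwind $E \otimes_{\alpha''} F$ for $E,F \in \OFIN$ by applying the definition of the dual norm twice. Applying it to the pair $E',F'$ gives $E' \otimes_{\alpha'} F' = (E'' \otimes_\alpha F'')'$, and since $E''=E$, $F''=F$ completely isometrically in finite dimensions this equals $(E \otimes_\alpha F)'$; dualizing once more and using that the finite-dimensional operator space $E \otimes_\alpha F$ is completely isometrically reflexive yields $E \otimes_{\alpha''} F = (E \otimes_\alpha F)'' = E \otimes_\alpha F$. Hence $\alpha = \alpha''$ on $\OFIN$. For the second claim, that $\overrightarrow{\alpha} = \alpha''$ on $\ONORM$, I would chase the conventions: by Proposition~\ref{prop-order-of-hulls} the dual norm $\alpha'$ on $\ONORM$ restricts on $\OFIN$ to the dual norm on $\OFIN$, so dualizing it again on $\OFIN$ gives $\alpha''|_{\OFIN}=\alpha|_{\OFIN}$ by the first claim, and taking the finite hull of this is by definition $\overrightarrow{\alpha}$ on $\ONORM$.

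\emph{Item (c).} This is now immediate: by (b) we have $\alpha'' = \overrightarrow{\alpha}$ on $\ONORM$, so $\alpha = \alpha''$ on $\ONORM$ is literally the condition $\alpha = \overrightarrow{\alpha}$, i.e. $\alpha$ is finitely-generated in the sense of Definition~\ref{def-finite-and-cofinite-hulls}. The step I expect to demand the most care is not a computation but bookkeeping: ensuring each of the identifications used ($E \otimes_{\alpha'} F = (E'\otimes_\alpha F')'$, $E''=E$, reflexivity of finite-dimensional operator spaces, and the non-increase of $\cb$-norm under adjoints) is genuinely a complete isometry or complete contraction, and keeping track of whether a given symbol among $\alpha$, $\alpha'$, $\alpha''$ refers to its avatar on $\OFIN$ or to its finite hull on $\ONORM$.
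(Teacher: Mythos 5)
Your argument is correct, and it is precisely the routine verification the paper omits (the proposition is stated there with only the remark that the properties are ``easy to check''): dualize the comparison map on $\OFIN$ for (a), apply the definition of the dual norm twice together with finite-dimensional complete reflexivity for (b), and read off (c) from $\alpha''=\overrightarrow{\alpha}$. Your bookkeeping about which identifications are complete isometries and whether each symbol lives on $\OFIN$ or is a finite hull on $\ONORM$ is exactly the care the statement requires, so there is nothing to add.
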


Since $\min$ and $\proj$ are finitely-generated, $\min'=\proj$ and $\proj'=\min$. Also, $h'=h$ (see, for instance, \cite[Chapter 9]{Effros-Ruan-book}).
Additionally, the symmetrized Haagerup o.s. tensor norms are in duality with each other.

\begin{proposition}\label{prop-duality-symmetrized-Haagerup}
$(h \cap h^t)' = h + h^t$ and $(h+h^t)' = h \cap h^t$.
\end{proposition}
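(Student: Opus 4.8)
The plan is to prove both identities first on $\OFIN$, where dual o.s. tensor norms are literally defined, and then promote them to $\ONORM$ via finite generation. Throughout I will use two standard operator space duality facts: the dual of an $\ell_\infty$-sum is the $\ell_1$-sum of the duals and vice versa (Section~\ref{direct-sums}), and the dual of a completely isometric embedding is a complete quotient while the dual of a complete quotient is a completely isometric embedding (Section~\ref{injections-and-projections}); on $\OFIN$ these carry no reflexivity caveats. I will also use that $h' = h$ on $\OFIN$ (recorded above), hence $(h^t)' = h^t$ by transposition, and that $h$, $h^t$, $h\cap h^t$, $h+h^t$ are all finitely generated (Proposition~\ref{prop-symmetrized-Haagerup-finitely-generated}).

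First I would compute $(h\cap h^t)'$ on $\OFIN$. Fix finite-dimensional $E,F$. By definition the map $u\mapsto(u,u)$ is a complete isometry $E'\otimes_{h\cap h^t}F'\hookrightarrow (E'\otimes_h F')\oplus_\infty(E'\otimes_{h^t}F')$. Dualizing turns this into a complete quotient $\left[(E'\otimes_h F')\oplus_\infty(E'\otimes_{h^t}F')\right]'\twoheadrightarrow (E'\otimes_{h\cap h^t}F')' = E\otimes_{(h\cap h^t)'}F$. The domain is completely isometric to $(E'\otimes_h F')'\oplus_1(E'\otimes_{h^t}F')' = (E\otimes_h F)\oplus_1(E\otimes_{h^t}F)$, using $h'=h$ and $(h^t)'=h^t$. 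I would then track the pairing to see that the resulting quotient map $(E\otimes_h F)\oplus_1(E\otimes_{h^t}F)\to E\otimes F$ is exactly $(v,w)\mapsto v+w$; comparing with the definition of $h+h^t$ gives $(h\cap h^t)' = h+h^t$ on $\OFIN$.

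Next I would pass to $\ONORM$. The dual o.s. tensor norm on $\ONORM$ is by definition the finite hull of the dual computed on $\OFIN$, so $(h\cap h^t)' = \overrightarrow{(h+h^t)|_{\OFIN}}$, which equals $h+h^t$ since $h+h^t$ is finitely generated. For the second identity I would simply invoke biduality: $(h+h^t)' = (h\cap h^t)'' = h\cap h^t$, the last equality using finite generation of $h\cap h^t$ together with Proposition~\ref{prop-properties-dual-cross-norm}(c). Alternatively, one can redo the computation directly: dualizing the defining complete quotient $(E'\otimes_h F')\oplus_1(E'\otimes_{h^t}F')\twoheadrightarrow E'\otimes_{h+h^t}F'$ yields a complete isometric embedding $E\otimes_{(h+h^t)'}F\hookrightarrow (E\otimes_h F)\oplus_\infty(E\otimes_{h^t}F)$, and identifying it with the diagonal $z\mapsto(z,z)$ recovers $h\cap h^t$.

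The only genuinely delicate point is the bookkeeping in the middle step: verifying that, after applying the duality functor and the identifications $(E'\otimes_h F')'=E\otimes_h F$, etc., the abstract quotient (resp. embedding) produced is \emph{literally} the sum map $(v,w)\mapsto v+w$ (resp. the diagonal $z\mapsto(z,z)$), rather than merely \emph{some} complete quotient (resp. embedding). Once that is pinned down, everything else is a formal consequence of $h'=h$, the $\ell_\infty$/$\ell_1$ duality of direct sums, and finite generation of the two symmetrized Haagerup norms.
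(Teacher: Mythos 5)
Your proposal is correct and follows essentially the same route as the paper: reduce to $\OFIN$ via finite generation (Proposition~\ref{prop-symmetrized-Haagerup-finitely-generated}), dualize the canonical complete isometry defining $h\cap h^t$, and use self-duality of $h$ and $h^t$ together with the $\ell_\infty/\ell_1$ duality of direct sums to identify the resulting complete quotient with the one defining $h+h^t$. The second identity via biduality on $\OFIN$ is exactly the (implicit) step the paper relies on as well, and the "delicate bookkeeping" you flag is glossed over in the paper in the same way.
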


\begin{proof}
By Proposition \ref{prop-symmetrized-Haagerup-finitely-generated} both $h \cap h^t$ and $h+h^t$ are finitely-generated, so it suffices to check that they are in duality for finite-dimensional spaces. Let $E, F \in \OFIN$. By taking the adjoint of the canonical complete isometry $E \otimes_{h \cap h^t} F \hookrightarrow (E \otimes_h F) \oplus_\infty (E \otimes_{h^t} F)$ we get a canonical complete quotient 
$(E \otimes_h F)' \oplus_1 (E \otimes_{h^t} F)' \twoheadrightarrow (E \otimes_{h \cap h^t} F)'$.
But the self-duality of $h$ and $h^t$ implies $(E \otimes_h F)' = E' \otimes_h F'$ and $(E \otimes_{h^t} F)' = E' \otimes_{h^t} F'$, so we get a complete quotient 
$(E' \otimes_h F') \oplus_1 (E' \otimes_{h^t} F') \twoheadrightarrow (E \otimes_{h \cap h^t} F)'$. This is exactly the same quotient giving the o.s. structure to $E' \otimes_{h+h^t} F'$, and thus $E' \otimes_{h+h^t} F' = (E \otimes_{h \cap h^t} F)'$ as claimed.
\end{proof}

\begin{definition}
Given an o.s. tensor norm  $\alpha$, its \emph{adjoint or contragradient} o.s. tensor norm is defined as $\alpha^* = (\alpha^t)' = (\alpha')^t$.
\end{definition}

As in the classical case, the following theorem tells us in what sense the completely isometric embedding
$$
E \otimes_{\alpha'} F \to (E' \otimes_\alpha F')',
$$
valid for finite-dimensional spaces, extends to infinite-dimensional ones.

\begin{theorem}[The duality theorem]\label{duality-theorem}
Let $\alpha$ be an o.s. tensor norm  (on $\OFIN$) and let $E$, $F$ be normed operator spaces. The following natural mappings are complete isometries:
\begin{align}
\label{eqn-duality-3}E \otimes_{\overleftarrow{\alpha}} F  &\hookrightarrow (E' \otimes_{\alpha'} F')'
\\
\label{eqn-duality-2}E' \otimes_{\overleftarrow{\alpha}} F  &\hookrightarrow (E \otimes_{\alpha'} F')' \qquad \textrm{ whenever } E \textrm{ is locally reflexive}\\
\label{eqn-duality-1}E' \otimes_{\overleftarrow{\alpha}} F'  &\hookrightarrow (E \otimes_{\alpha'} F)' \qquad \textrm{ whenever } E \textrm{ and } F \textrm{ are locally reflexive}
\end{align}
\end{theorem}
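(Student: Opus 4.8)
The plan is to establish \eqref{eqn-duality-3} directly and then derive \eqref{eqn-duality-2} and \eqref{eqn-duality-1} from it via the extension lemmas. Two structural facts drive everything: $\overleftarrow{\alpha}$ is cofinitely generated by construction (so its norm is a supremum over finite-codimensional quotients), while $\alpha'$ on $\ONORM$ is finitely generated by definition (so its norm is an infimum over finite-dimensional subspaces of the duals). The bridge between the two mechanisms is the completely isometric quotient duality $(E/K)' = K^\perp \subseteq E'$ for $K \in \OCOFIN(E)$, the fact that every $S \in \OFIN(E')$ equals $K^\perp$ for $K = {}^\perp S \in \OCOFIN(E)$, and the finite-dimensional duality $\alpha = \alpha''$ on $\OFIN$ (Proposition \ref{prop-properties-dual-cross-norm}), which yields a complete isometry $(E/K) \otimes_\alpha (F/L) = \big( (E/K)' \otimes_{\alpha'} (F/L)' \big)'$. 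Throughout we use the elementary identity $\mpair{w}{z} = \mpair{w}{(q^E_K \otimes q^F_L)_n z}$, valid for $w \in M_m(K^\perp \otimes L^\perp)$ and $z \in M_n(E \otimes F)$.

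For \eqref{eqn-duality-3}, fix $z \in M_n(E \otimes F)$. To see $\n{z}_{M_n((E'\otimes_{\alpha'}F')')} \le \overleftarrow{\alpha}_n(z; E, F)$, take $w$ in the unit ball of $M_m(E' \otimes_{\alpha'} F')$ and $\varepsilon>0$; finite generation of $\alpha'$ produces $S \in \OFIN(E')$, $T \in \OFIN(F')$ with $w \in M_m(S \otimes T)$ and $\alpha'_m(w; S, T) < 1+\varepsilon$, and writing $S = K^\perp = (E/K)'$, $T = L^\perp = (F/L)'$ the finite-dimensional duality together with the pairing identity gives $\n{\mpair{w}{z}}_{M_{mn}} = \n{\mpair{w}{(q^E_K\otimes q^F_L)_n z}}_{M_{mn}} \le \alpha'_m(w;S,T)\,\alpha_n\big((q^E_K\otimes q^F_L)_n z; E/K, F/L\big) < (1+\varepsilon)\,\overleftarrow{\alpha}_n(z; E,F)$. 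For the reverse inequality, fix $K \in \OCOFIN(E)$ and $L \in \OCOFIN(F)$, put $v = (q^E_K\otimes q^F_L)_n z$, and use the finite-dimensional duality to write $\alpha_n(v; E/K, F/L)$ as the supremum of $\n{\mpair{w}{v}}_{M_{mn}}$ over $w$ in the unit ball of $M_m\big((E/K)' \otimes_{\alpha'} (F/L)'\big)$; each such $w$ also lies in the unit ball of $M_m(E'\otimes_{\alpha'}F')$, since the inclusion $(E/K)'\otimes_{\alpha'}(F/L)' \to E'\otimes_{\alpha'}F'$ is completely contractive by uniformity of $\alpha'$, and $\mpair{w}{v} = \mpair{w}{z}$, so $\alpha_n(v;E/K,F/L) \le \n{z}_{M_n((E'\otimes_{\alpha'}F')')}$; taking the supremum over $K$ and $L$ finishes \eqref{eqn-duality-3}.

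To obtain \eqref{eqn-duality-2}, apply \eqref{eqn-duality-3} with $(E,F)$ replaced by $(E',F)$: this gives a complete isometry $E'\otimes_{\overleftarrow{\alpha}} F \hookrightarrow (E''\otimes_{\alpha'}F')'$, and unwinding the pairings shows it sends $w$ to ${}^\wedge\varphi_w$, where $\varphi_w \in (E\otimes_{\alpha'}F')'$ is the functional determined by $w \in E'\otimes F$. Since $\alpha'$ is finitely generated and $E$ is locally reflexive, the Left Extension Lemma \ref{left-extension-lemma}(a) says $\varphi \mapsto {}^\wedge\varphi$ is a complete isometry $(E\otimes_{\alpha'}F')' \to (E''\otimes_{\alpha'}F')'$, and composing with it identifies $E'\otimes_{\overleftarrow{\alpha}}F$ completely isometrically with $(E\otimes_{\alpha'}F')'$, which is \eqref{eqn-duality-2}. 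For \eqref{eqn-duality-1}, \eqref{eqn-duality-3} applied to $(E',F')$ gives a complete isometry $E'\otimes_{\overleftarrow{\alpha}}F' \hookrightarrow (E''\otimes_{\alpha'}F'')'$, this time sending $w \mapsto {}^\wedge(\varphi_w^\wedge)$ with $\varphi_w \in (E\otimes_{\alpha'}F)'$; since $\alpha'$ is finitely generated and $E, F$ are locally reflexive, Corollary \ref{cor-extension}(a) identifies $(E\otimes_{\alpha'}F)'$ completely isometrically with $(E''\otimes_{\alpha'}F'')'$ via the same assignment, and composing yields \eqref{eqn-duality-1}.

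The main obstacle is \eqref{eqn-duality-3}: one has to match the supremum-over-quotients description of $\overleftarrow{\alpha}$ against the infimum-over-finite-dimensional-subspaces description of $\alpha'$ through the complete isometry $(E/K)' = K^\perp$, and keep the matricial pairings $\mpair{\cdot}{\cdot}$ and their $M_{mn}$-norms straight across these identifications; both inequalities hinge on this bookkeeping and on invoking the finite-dimensional duality $\alpha = \alpha''$ exactly on the finite-dimensional pieces $E/K$, $F/L$. For \eqref{eqn-duality-2} and \eqref{eqn-duality-1} the only additional content beyond \eqref{eqn-duality-3} is the verification that the canonical embedding of the Duality Theorem is precisely the map inverted by the canonical bidual extensions $\varphi \mapsto {}^\wedge\varphi$ and $\varphi \mapsto {}^\wedge(\varphi^\wedge)$ of the extension lemmas — a routine unwinding of definitions — and this is exactly where local reflexivity enters.
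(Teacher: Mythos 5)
Your proposal is correct and follows essentially the same route as the paper: \eqref{eqn-duality-3} is obtained by matching the cofinite hull against the finite hull of $\alpha'$ through the identification $\OFIN(E')=\{K^{0}:K\in\OCOFIN(E)\}$ and the finite-dimensional duality $\alpha=\alpha''$, and \eqref{eqn-duality-2}, \eqref{eqn-duality-1} then follow from the extension lemmas exactly as in the paper's commutative diagrams. The only (immaterial) deviation is that you derive \eqref{eqn-duality-1} from \eqref{eqn-duality-3} applied to $(E',F')$ together with Corollary \ref{cor-extension}, whereas the paper derives it from \eqref{eqn-duality-2} plus one further application of the Extension Lemma — the same argument, since Corollary \ref{cor-extension} is just the composition of the two extensions.
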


\begin{proof}
To prove \eqref{eqn-duality-3}, observe first that
$$
\OFIN(E') = \{ K^0 \mid K \in \OCOFIN(E) \}
$$
and that for $(K,L) \in \OCOFIN(E) \times \OCOFIN(F)$, $z \in M_n(E \otimes F)$ and $u \in M_m(K^0 \otimes L^0 ) \subset M_m(E' \otimes F' )$,
we have
$$
\mpair{z}{u} = \mpair{ (q_K^E \otimes q_L^F)(z) }{u}
$$
Now, by the valid duality relation for finite-dimensional spaces,
\begin{align*}
\overleftarrow{\alpha}_n(z ; E, F) &= \sup_{K,L}\left\{\alpha_n\big(  (q_K^E \otimes q_L^F)(z); E/K,F/L\big)  \right\}\\
&= \sup_{K,L} \sup \left\{ \n{  \mpair{(q_K^E \otimes q_L^F)(z)}{ u} }_{M_{mn}} \mid \alpha'_m(u ; K^0, L^0) <1 \right\} \\
&= \sup \left\{ \n{  \mpair{z}{ u} }_{M_{mn}} \mid \overrightarrow{\alpha'}_m(u ; E', F') <1 \right\} \\
\end{align*}
and this is precisely \eqref{eqn-duality-3}.

Consider now the commutative diagram
$$
\xymatrix{
	E' \otimes_{\overleftarrow{\alpha}} F\ar[r]^{\phi_1} \ar[dr]  & (E'' \otimes_{\alpha'} F')'\\
	& (E \otimes_{\alpha'} F')' \ar[u]_{\phi_2}  \\
}
$$
Note that $\phi_1$ is a complete isometry by \eqref{eqn-duality-3}, whereas $\phi_2$ is a complete isometry by
the Extension Lemma \ref{extension-lemma}; \eqref{eqn-duality-2} follows.
In the same way, in the diagram
$$
\xymatrix{
	E' \otimes_{\overleftarrow{\alpha}} F' \ar[r]^{\psi_1} \ar[dr]  & (E \otimes_{\alpha'} F'')'\\
	& (E \otimes_{\alpha'} F)' \ar[u]_{\psi_2}  \\
}
$$
$\psi_1$ is a complete isometry by \eqref{eqn-duality-2} and $\psi_2$ is a complete isometry the Extension Lemma \ref{extension-lemma}; \eqref{eqn-duality-1} follows.
\end{proof}

\begin{remark}\label{extension-sin-loc-refl}
The Extension Lemma \ref{extension-lemma} is valid without the hypothesis of local reflexivity for $\lambda$-o.s. tensor norms.
 Hence, in equations \eqref{eqn-duality-2} and \eqref{eqn-duality-1} of the Duality Theorem, if $\alpha'$ is a $\lambda$-tensor norm the local reflexivity of the involved spaces is not needed. 
 
 In particular, for $\alpha'=\proj$, recalling that $\overleftarrow{\min}=\min$  we obtain for any normed operator spaces $E$ and $F$  the following known completely isometric embeddings
$$
E' \otimes_{\min} F  \hookrightarrow (E \otimes_{\proj} F')'\quad \textrm{ and }\quad E' \otimes_{\min} F'  \hookrightarrow (E \otimes_{\proj} F)'.
$$
\end{remark}

One nice consequence of the duality theorem is the following:

\begin{corollary}
If $E$ and $F$ are normed operator spaces with the completely metric approximation property,
then the natural mappings
$$
E \otimes_\alpha F \to E\otimes_{\overleftarrow{\alpha}} F \qquad \text{and} \qquad E \otimes_{\alpha} F \to (E' \otimes_{\alpha'} F')'
$$
are complete isometries.
\end{corollary}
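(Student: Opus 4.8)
The plan is to deduce both statements from two results already in hand: Corollary~\ref{cor-cbap-implies-equivalence-of-hulls}, which says that the completely bounded approximation property collapses the finite and cofinite hulls, and the Duality Theorem~\ref{duality-theorem}. There is essentially no new content beyond assembling these.

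First I would treat the map $E \otimes_\alpha F \to E \otimes_{\overleftarrow{\alpha}} F$. By Proposition~\ref{prop-order-of-hulls} we always have $\overleftarrow{\alpha} \le \alpha \le \overrightarrow{\alpha}$ on $E \otimes F$, so this identity map is automatically a complete contraction; what remains is the reverse inequality $\alpha \le \overleftarrow{\alpha}$ on $E \otimes F$. Here is where the hypothesis enters: the completely metric approximation property is precisely the $C$-CBAP with $C=1$, so Corollary~\ref{cor-cbap-implies-equivalence-of-hulls} applies with $C_E = C_F = 1$ and gives $\overrightarrow{\alpha} = \overleftarrow{\alpha}$ on $E \otimes F$ (i.e. the two identity maps between these operator spaces are mutually inverse complete isometries). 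Chaining $\alpha \le \overrightarrow{\alpha} = \overleftarrow{\alpha}$ yields the missing inequality, so $\alpha = \overleftarrow{\alpha}$ on $E \otimes F$ and the identity map $E \otimes_\alpha F \to E \otimes_{\overleftarrow{\alpha}} F$ is a complete isometry.

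For the second map I would simply compose: the natural map $E \otimes_\alpha F \to (E' \otimes_{\alpha'} F')'$ factors as $E \otimes_\alpha F \to E \otimes_{\overleftarrow{\alpha}} F \hookrightarrow (E' \otimes_{\alpha'} F')'$, the first arrow being the complete isometry just obtained and the second being the complete isometry \eqref{eqn-duality-3} of the Duality Theorem, which requires no hypotheses on $E$ and $F$. A composition of complete isometries is a complete isometry, and the argument is complete. I do not expect a genuine obstacle; the only points requiring care are to invoke Corollary~\ref{cor-cbap-implies-equivalence-of-hulls} in the right direction — it is the bound $\overrightarrow{\alpha} \le C_E C_F \overleftarrow{\alpha}$, specialized to $C_E = C_F = 1$, that carries the weight, the inequality $\overleftarrow{\alpha} \le \overrightarrow{\alpha}$ being trivial — and to note that the approximation hypothesis is used only in the first step, the passage to the bidual in \eqref{eqn-duality-3} being free.
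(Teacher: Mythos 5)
Your proposal is correct and follows exactly the paper's own argument: Proposition~\ref{prop-order-of-hulls} together with Corollary~\ref{cor-cbap-implies-equivalence-of-hulls} (with $C_E=C_F=1$) gives $\alpha=\overleftarrow{\alpha}$ on $E\otimes F$, and composing with the complete isometry \eqref{eqn-duality-3} from the Duality Theorem gives the second map. The only difference is that you spell out the chain $\alpha\le\overrightarrow{\alpha}=\overleftarrow{\alpha}\le\alpha$ explicitly, which the paper leaves implicit.
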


\begin{proof}
Proposition \ref{prop-order-of-hulls} and Corollary \ref{cor-cbap-implies-equivalence-of-hulls} imply the complete isometry between  $E \otimes_\alpha F$ and $E\otimes_{\overleftarrow{\alpha}} F$, now  \eqref{eqn-duality-3} implies the rest.
\end{proof}

Observe that if we replace the CMAP property by the CBAP property the above identifications become complete isomorphisms (with norms controlled by the CBAP constant).

We now show that the Haagerup o.s. tensor norm  is both finitely and cofinitely-generated.

\begin{remark} \label{h cofin}
Let $h$ be the Haagerup o.s. tensor norm. Then,
$$h=\overleftarrow{h}=\overrightarrow{h}.$$
\end{remark}

\begin{proof}
The fact that $h=\overrightarrow{h}$ easily follows since the Haagerup o.s. tensor norm respects complete isometries \cite[Proposition 9.25]{Effros-Ruan-book}.

To see that $h=\overleftarrow{h}$ notice that by the Duality Theorem \ref{duality-theorem} the canonical embedding below 
is a complete isometry
\begin{equation}\label{inclusion h1}
E \otimes_{\overleftarrow{h}} F  \hookrightarrow (E' \otimes_{h'} F')'=(E' \otimes_{h} F')',
\end{equation}
where we have used in the last equality the self-dual property of the o.s. tensor norm  $h$  \cite[Corollary 9.4.8]{Effros-Ruan-book}. 

On the other hand,  by \cite[Theorem 9.4.7.]{Effros-Ruan-book} and the Embedding Lemma \ref{embedding-lemma} we have that
\begin{equation}\label{inclusion h2}
E \otimes_{h} F  \hookrightarrow  E'' \otimes_{h} F'' \hookrightarrow (E' \otimes_{h} F')',
\end{equation}
is a complete isometry.

Since both inclusions in equations \eqref{inclusion h1}, \eqref{inclusion h2} are complete isometries, $h$ and $\overleftarrow{h}$ must coincide as o.s. tensor norms.
\end{proof}

In general, o.s. tensor norms are not both finitely and cofinitely-generated, as we have just seen for the Haagerup o.s. tensor norm and the minimal o.s. tensor norm.
For many purposes it is often enough that the equality $\overrightarrow{\alpha}=\overleftarrow{\alpha}$
holds just for finite dimensional spaces or locally reflexive spaces. This motivates the following definitions that we will use in the sequel.

\begin{definition}
An o.s. tensor norm  $\alpha$ is called \emph{right-accessible} if
$$
\overrightarrow{\alpha}_n(\cdot ; M, F) = \overleftarrow{\alpha}_n(\cdot ; M, F)  \qquad \text{for all }  M \in \OFIN, F \in \ONORM, n\in \N
$$
and \emph{left-accessible} if
$$
\overrightarrow{\alpha}_n(\cdot ; E, M) = \overleftarrow{\alpha}_n(\cdot ; E, M)  \qquad \text{for all }  M \in \OFIN, E \in \ONORM, n \in \N.
$$
An o.s. tensor norm that is both left and right-accessible is called \emph{accessible}, and \emph{totally accessible} means that $\overrightarrow{\alpha} = \overleftarrow{\alpha}$.
\end{definition}

In many cases, local reflexivity  naturally appears when dealing with accessibility. Based on this we introduce the following:

\begin{definition}
An o.s. tensor norm $\alpha$ is called \emph{locally right-accessible if}
$$
\overrightarrow{\alpha}_n(\cdot ; M, F) = \overleftarrow{\alpha}_n(\cdot ; M, F)  \qquad \text{for all }  M \in \OFIN, F \in \OLOC, n\in \N
$$
and \emph{locally left-accessible} if
$$
\overrightarrow{\alpha}_n(\cdot ; E, M) = \overleftarrow{\alpha}_n(\cdot ; E, M)  \qquad \text{for all }  M \in \OFIN,E \in \OLOC, n \in \N.
$$
An o.s. tensor norm that is both locally left and locally right-accessible is called \emph{locally accessible}, and \emph{locally totally accessible} means that $\overrightarrow{\alpha} = \overleftarrow{\alpha}$ in $\OLOC \otimes \OLOC$.
\end{definition}

We have already seen that the minimal and the Haagerup o.s. tensor norms are both finitely and cofinitely-generated, or in other words they are totally accessible. 
Recall that, as we saw in Proposition \ref{proj no es cofinitamente generada}, the projective o.s. tensor norm  is not cofinitely-generated, hence it is not totally accessible. Nevertheless, as in the Banach space framework, $\proj$ is indeed accessible.
To see this, let  $E\in \ONORM$ and $M \in \OFIN$. We always have that the natural mapping 
\begin{equation} \label{inclusion natural proj}
E \otimes_{\proj}M \to (E' \otimes_{\min}M')'
\end{equation}
is a complete isometry (see \cite[Theorem 2.2]{effros1990approximation} or Remark \ref{isometria-proj-min}). On the other hand, as a consequence of the Duality Theorem \ref{duality-theorem} we also have that the mapping

$$
E \otimes_{\overleftarrow{\proj}}M \to (E' \otimes_{\min}M')'
$$
 is a complete isometry. Thus, both o.s. tensor norms, $\overrightarrow{\proj}=\proj$ and $\overleftarrow{\proj}$, must coincide on $E \otimes M$. This shows the left-accessibility of $\proj$. Analogously, we have that $\proj$ is also right-accessible and therefore accessible.

More generally, a similar argument will show that any $\lambda$-o.s. tensor norm is accessible. 
Let us start with a formula for calculating the dual of a $\lambda$-o.s. tensor norm, which is essentially \cite[Prop. 5.3]{Wiesner}.

\begin{proposition}
Let $E$ and $F$ be operator spaces, and $u \in M_n(E \otimes F)$. For any $\lambda$-o.s. tensor norm,
$$
\lambda'_n(u; E,F) = \sup\Big\{ \n{ \big( \otimes_{\bl_k}(\phi,\psi)\big)_n(u) }_{M_{n\tau(k)}} \;:\; k\in\N,  \n{\phi}_{M_k(E')}, \n{\psi}_{M_k(F')} \le 1 \Big\}
$$
\end{proposition}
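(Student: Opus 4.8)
The plan is to derive this formula directly from the definition of the dual tensor norm together with the duality identification $(E \otimes_\lambda F)' = \CB_\lambda(E \times F)$ from Theorem \ref{duality-lambda-tensor}. Recall that for $M, N \in \OFIN$ we have $\lambda'_m(z; M, N) = \sup\{ \n{\mpair{u}{z}}_{M_{mk}} : \lambda_k(u; M', N') \le 1\}$, and then $\lambda'$ on $\ONORM$ is obtained as the finite hull $\overrightarrow{\lambda'}$. First I would recall that, since $\lambda$ is a $\lambda$-o.s. tensor norm, Theorem \ref{duality-lambda-tensor} gives $(E' \otimes_\lambda F')' = \CB_\lambda(E' \times F')$ completely isometrically, and more generally $M_m((E' \otimes_\lambda F')') = \CB_\lambda(E' \times F'; M_m)$. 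The key observation is then the standard ``bilinear = matrix'' identification: an element $\phi \in M_k(E')$ can be regarded as a map in $\CB(E', M_k)$ — wait, more precisely the pairing should go the other way. Let me reorganize: an element $\phi \in M_k(E')$ is the same as a completely bounded map $E \to M_k$, equivalently (since $E \subseteq E''$) it gives rise to a functional on $E'$ when paired appropriately; the cleanest route is to observe that $\phi \in M_k(E') = \CB(E, M_k)$ and $\psi \in M_k(F') = \CB(F, M_k)$, and the tensor $\phi \otimes \psi$ together with the matrix multiplication $\bl_k$ produces a bilinear map $E \times F \to M_{\tau(k)}$, namely $(\phi \otimes \psi)_{\bl_k} = \otimes_{\bl_k} \circ (\phi \times \psi)$ in the notation introduced before Theorem \ref{duality-lambda-tensor}.

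The main steps I would carry out are: (1) Reduce to finite-dimensional spaces. Since any $\lambda$-o.s. tensor norm is finitely generated (noted in Section \ref{hulls}), $\lambda' = \overrightarrow{\lambda'}$, so it suffices to prove the formula when $E, F \in \OFIN$ — and then check that both sides of the claimed formula pass to the finite hull in a compatible way (the right-hand side is visibly an increasing supremum over $k$ and is monotone under restriction to subspaces, hence is already ``finitely generated''). (2) For $E, F \in \OFIN$, use $\lambda'_n(u; E, F) = \sup\{ \n{\mpair{z}{u}}_{M_{mn}} : z \in M_m(E' \otimes F'), \lambda_m(z; E', F') \le 1 \}$. (3) Identify the supremum over $z$ in the unit ball of $M_m(E' \otimes_\lambda F')$ with a supremum over $\Phi$ in the unit ball of $M_m((E' \otimes_\lambda F')')' $, using reflexivity in finite dimensions — actually the more direct approach: the pairing $\n{\mpair{z}{u}}_{M_{mn}}$ with $z$ running over the unit ball of $M_m(E' \otimes_\lambda F')$ computes exactly the norm of $u$ viewed as an element of $M_n\big( (E'\otimes_\lambda F')'\big) = \CB_\lambda(E' \times F'; M_n)$ — no wait, $u \in M_n(E \otimes F) = M_n(E'' \otimes F'')$ in finite dimensions, so $u$ determines a functional-valued object on $E' \otimes_\lambda F'$. (4) Now invoke Theorem \ref{duality-lambda-tensor} applied to $E'$ and $F'$: $\n{u}_{M_n((E'\otimes_\lambda F')')} = \n{u}_{\CB_\lambda(E'\times F'; M_n)} = \sup_k\{ \n{u_{\bl_k}(\phi, \psi)}_{M_{\tau(k)}(M_n)} : \n{\phi}_{M_k(E'')} \le 1, \n{\psi}_{M_k(F'')} \le 1\}$ where $u_{\bl_k}$ is the amplification defined before Theorem \ref{duality-lambda-tensor}. (5) Finally, since $E, F$ are finite-dimensional, $M_k(E'') = M_k(E)$ isometrically and the action of $u$ paired against $\phi \in M_k(E'') = M_k(E)$, $\psi \in M_k(F'') = M_k(F)$ via $u_{\bl_k}$ is, after unwinding definitions, precisely $\big(\otimes_{\bl_k}(\phi, \psi)\big)_n(u)$ — this is just a matter of checking the formula on elementary tensors and extending bilinearly. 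A reordering of the tensor legs $M_{\tau(k)} \otimes M_n \cong M_{n\tau(k)}$ gives the stated form.

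The main obstacle, as I see it, is step (5): carefully matching the two combinatorial bookkeeping conventions. On one side we have $u_{\bl_k}$, the amplification of $u$ regarded as a $\CB_\lambda$-element paired against matrices $\phi \in M_k(E')$, $\psi \in M_k(F')$; on the other side we have the map $\otimes_{\bl_k} : M_k(E) \times M_k(F) \to M_{\tau(k)}(E \otimes F)$ followed by the matrix amplification $(\cdot)_n$ and then evaluation. One must verify that $\n{(\otimes_{\bl_k}(\phi,\psi))_n(u)}_{M_{n\tau(k)}}$ equals $\n{\mpair{\otimes_{\bl_k}(\phi \otimes \psi)}{u}}$ under the correct shuffle of indices, which amounts to confirming that the matrix pairing $\mpair{\cdot}{\cdot}$ is compatible with $\bl_k$ in exactly the way encoded in the definition of $\CB_\lambda$. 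Since this is essentially the content of \cite[Prop. 5.3]{Wiesner}, I would check the elementary-tensor case $u = a_1 a_2^t \otimes x \otimes y$ explicitly, observe that both sides reduce to the same expression $\bl_k(\phi(x) \cdot \,, \psi(y) \cdot\,)$-type matrix, and then invoke bilinearity and continuity; alternatively one can simply cite \cite[Prop. 5.3]{Wiesner} directly and note that our formula is a restatement of it in the present notation, with the passage to $\ONORM$ justified by finite generation. Everything else is routine duality in finite dimensions.
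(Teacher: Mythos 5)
Your finite-dimensional core is essentially the paper's argument: identify $E\otimes_{\lambda'}F=(E'\otimes_\lambda F')'=\CB_\lambda(E'\times F')$ via Theorem \ref{duality-lambda-tensor} and unwind the pairing. However, two points need repair. First, in steps (4)--(5) you take the supremum over $\phi\in M_k(E'')$, $\psi\in M_k(F'')$ and then identify these with $M_k(E)$, $M_k(F)$. That is not the $\n{\cdot}_{\cb,\lambda}$-norm of the bilinear map $\Phi_u\colon E'\times F'\to M_n$: by definition that norm is a supremum over matrices in $M_k(E')$ and $M_k(F')$, i.e.\ over the \emph{domain} spaces. As literally written, your step (5) ends with $u$ paired against matrices over $E$ and $F$, which is not the stated formula (and $\otimes_{\bl_k}(\phi,\psi)$ cannot even be evaluated on $u\in M_n(E\otimes F)$ unless $\phi,\psi$ are matrices of functionals). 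Your ``obstacle'' paragraph uses the correct spaces, so this is presumably a slip, but it must be corrected for the finite-dimensional identity to come out as stated.

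Second, and more substantively, the passage from $\OFIN$ to arbitrary $E,F$ is not ``visible monotonicity.'' Since $\lambda'$ is by definition a finite hull (that is how the dual o.s.\ tensor norm is defined, independently of $\lambda$ being finitely generated), you need the right-hand side $R(E,F)$ to satisfy $R(E_0,F_0)=R(E,F)$ whenever $E_0\in\OFIN(E)$, $F_0\in\OFIN(F)$ contain $u$. Restricting $\phi\in M_k(E')$ to $E_0$ only gives $R(E,F)\le R(E_0,F_0)$, so the monotonicity you invoke goes the wrong way for finite generation; the inequality you actually need, $R(E_0,F_0)\le R(E,F)$, requires extending a norm-one $\phi_0\in M_k(E_0')=\CB(E_0,M_k)$ to a norm-one element of $M_k(E')=\CB(E,M_k)$ without changing its action on $u$. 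The relevant norm is the cb-norm of a map into $M_k$, so entrywise Hahn--Banach is not enough; one needs the injectivity of $M_k$, i.e.\ the Arveson--Wittstock extension theorem, which is exactly how the paper completes this step. With that quoted, your argument closes; without it you obtain only one of the two inequalities between $\lambda'_n(u;E,F)$ and the supremum.
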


\begin{proof}
Assume first that $E$ and $F$ are finite-dimensional.
By the definition of $\lambda'$ and Theorem \ref{duality-lambda-tensor},
$$
E \otimes_{\lambda'}F = (E' \otimes_\lambda F')' = \CB_\lambda(E'\times F').
$$
Therefore, $\lambda'_n(u; E,F)$ is the norm $\n{\Phi_u}_{\cb,\lambda}$ of the bilinear map $\Phi_u : E' \times F' \to M_n$ associated to $u$.
By definition, said norm is
$$
\n{\Phi_u}_{\cb,\lambda} = \sup_{k\in\N} \big\{\n{(\Phi_u)_{\bl_k}(\phi,\psi)}_{M_{n\tau(k)}} \;:\; \n{\phi}_{M_k(E')}, \n{\psi}_{M_k(F')} \le 1 \},
$$
so it suffices to check
$$
\big( \otimes_{\bl_k}(\phi,\psi)\big)_n(u) = (\Phi_u)_{\bl_k}(\phi,\psi)
$$
which follows easily from the definitions.

Let us now consider the case of arbitrary $E$ and $F$.
Let $E_0 \in \OFIN(E)$ and $F_0 \in \OFIN(F)$ such that
$u \in M_n(E_0 \otimes F_0)$.
By the previous argument,
$$
\lambda'_n(u; E_0,F_0) = \sup\Big\{ \n{ \big( \otimes_{\bl_k}(\phi_0,\psi_0)\big)_n(u) }_{M_{n\tau(k)}} \;:\; k\in\N,  \n{\phi_0}_{M_k(E_0')}, \n{\psi_0}_{M_k(F_0')} \le 1 \Big\}.
$$
By the Arveson extension theorem, $\phi_0 \in M_k(E_0') = \CB(E_0,M_k)$ and $\psi_0 \in M_k(F_0') = \CB(F_0,M_k)$
admit respective norm-preserving extensions $\phi \in \CB(E,M_k)$ and $\psi \in \CB(F,M_k)$.
Since clearly
$$
\big( \otimes_{\bl_k}(\phi,\psi)\big)_n(u) = \big( \otimes_{\bl_k}(\phi_0,\psi_0)\big)_n(u),
$$
it follows that
$$
\lambda'_n(u; E_0,F_0) = \sup\Big\{ \n{ \big( \otimes_{\bl_k}(\phi,\psi)\big)_n(u) }_{M_{n\tau(k)}} \;:\; k\in\N,  \n{\phi}_{M_k(E')}, \n{\psi}_{M_k(F')} \le 1 \Big\},
$$
and therefore $\lambda'_n(u; E,F)$ is also equal to the same quantity.
\end{proof}

As an immediate consequence, we have the following result (which in \cite[Def. 5.1]{Wiesner} is taken as the definition for the dual of $\lambda$).

\begin{corollary}
Let $E$ and $F$ be operator spaces. For any $\lambda$-o.s. tensor norm, the natural map
$$
E \otimes_{\lambda'} F \hookrightarrow \CB_\lambda(E' \times F')
$$
is a complete isometry.
\end{corollary}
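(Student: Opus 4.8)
The plan is to read this corollary off almost immediately from the preceding Proposition, once the matricial norm on $\CB_\lambda(E'\times F')$ is unpacked. First I would describe the natural map explicitly: at the $n$-th level it sends $u \in M_n(E\otimes F)$ to the bilinear map $\Phi_u : E' \times F' \to M_n$ associated to $u$ (the same $\Phi_u$ that appears in the proof of the Proposition, now allowing $E$ and $F$ to be arbitrary), and under the identification $M_n\big(\CB_\lambda(E'\times F')\big) = \CB_\lambda\big(E'\times F'; M_n\big)$ this is precisely an element of $M_n\big(\CB_\lambda(E'\times F')\big)$, whose norm is by definition $\n{\Phi_u}_{\cb,\lambda}$. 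So the whole statement comes down to the identity
$$
\n{\Phi_u}_{\cb,\lambda} \;=\; \lambda'_n(u; E,F) \qquad \text{for all } n\in\N,\ u \in M_n(E\otimes F);
$$
this simultaneously shows the map is well defined (its image lies in $\CB_\lambda(E'\times F')$, i.e.\ has finite $\cb,\lambda$-norm, which also follows from $\overrightarrow{\lambda'} \le \proj$) and that it is a complete isometry, hence in particular injective.

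Next I would expand the left-hand side using the definition of $\n{\cdot}_{\cb,\lambda}$ for the $M_n$-valued bilinear map $\Phi_u$, getting
$$
\n{\Phi_u}_{\cb,\lambda} = \sup_{k\in\N}\Big\{ \n{ (\Phi_u)_{\bl_k}(\phi,\psi) }_{M_{\tau(k)}(M_n)} \;:\; \n{\phi}_{M_k(E')}\le 1,\ \n{\psi}_{M_k(F')}\le 1 \Big\},
$$
and then invoke the purely algebraic identity $(\Phi_u)_{\bl_k}(\phi,\psi) = \big(\otimes_{\bl_k}(\phi,\psi)\big)_n(u)$ (under the canonical identification $M_{\tau(k)}(M_n) = M_{n\tau(k)}$). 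This is exactly the identity used in the proof of the Proposition in the finite-dimensional case; since it is formal — it suffices to check it on elementary tensors $u = a\otimes x\otimes y$ and extend by linearity — it holds verbatim for arbitrary operator spaces $E$ and $F$. Comparing with the formula for $\lambda'_n(u;E,F)$ furnished by the Proposition,
$$
\lambda'_n(u;E,F) = \sup_{k\in\N}\Big\{ \n{ \big(\otimes_{\bl_k}(\phi,\psi)\big)_n(u) }_{M_{n\tau(k)}} \;:\; \n{\phi}_{M_k(E')}\le 1,\ \n{\psi}_{M_k(F')}\le 1 \Big\},
$$
yields the displayed identity $\n{\Phi_u}_{\cb,\lambda} = \lambda'_n(u;E,F)$, and we are done.

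I do not expect a genuine obstacle here — the statement is essentially a repackaging of the previous Proposition. The only points requiring care are bookkeeping ones: the canonical shuffle identifying $M_{\tau(k)}(M_n)$ with $M_{n\tau(k)}$, and the observation that the argument proving the Proposition (which was carried out for the scalar-valued form $\Phi_u : E' \times F' \to \C$) goes through unchanged with $M_n$ as target, since nothing in it used one-dimensionality of the target space.
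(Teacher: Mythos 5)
Your argument is correct and is exactly how the paper obtains this statement: the paper treats the corollary as an immediate consequence of the preceding Proposition, via the identification $M_n\big(\CB_\lambda(E'\times F')\big)=\CB_\lambda(E'\times F';M_n)$ and the formal identity $(\Phi_u)_{\bl_k}(\phi,\psi)=\big(\otimes_{\bl_k}(\phi,\psi)\big)_n(u)$, which is precisely what you spell out. Your write-up just makes explicit the bookkeeping the paper leaves implicit.
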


\begin{corollary}
Let $E$ and $F$ be operator spaces. For any $\lambda$-o.s. tensor norm, the natural map
$$
E' \otimes_{\lambda'} F' \hookrightarrow \CB_\lambda(E \times F)
$$
is a complete isometry.
\end{corollary}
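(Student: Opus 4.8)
The plan is to deduce this from the previous Corollary by substituting $E'$ and $F'$ for $E$ and $F$, and then restricting bilinear forms from biduals back to the original spaces. Applying the previous result to $E'$ and $F'$ gives a complete isometry $E' \otimes_{\lambda'} F' \hookrightarrow \CB_\lambda(E'' \times F'')$. On the other hand, restriction along the canonical embeddings $\kappa_E\colon E \to E''$ and $\kappa_F\colon F \to F''$ defines a map $R\colon \CB_\lambda(E'' \times F'') \to \CB_\lambda(E \times F)$, $\Phi \mapsto \Phi\circ(\kappa_E \times \kappa_F)$; directly from the definition of $\n{\cdot}_{\cb,\lambda}$, after noting that the amplifications of a restricted form are the restrictions of the amplifications, $R$ is a complete contraction at every matrix level (it is in fact the adjoint of $\kappa_E \otimes \kappa_F$ under Theorem \ref{duality-lambda-tensor}). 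Since the natural map $E' \otimes_{\lambda'} F' \to \CB_\lambda(E \times F)$ is exactly $R$ composed with the embedding above, it is a complete contraction.

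The content of the statement is therefore the reverse inequality, i.e.\ that $R$ is isometric on the image of $E' \otimes_{\lambda'} F'$, and here I would argue exactly as in the proof of Lemma \ref{extension-lemma}(b). Fix $n\in\N$ and $u \in M_n(E' \otimes F')$, and let $\Phi_u\colon E'' \times F'' \to M_n$ be the bilinear map associated with $u$ under $M_n(\CB_\lambda(E'' \times F'')) = \CB_\lambda(E'' \times F'';M_n)$; by the embedding of the first paragraph, $\lambda'_n(u;E',F') = \n{\Phi_u}_{\cb,\lambda}$, while the norm of the image of $u$ in $M_n(\CB_\lambda(E \times F))$ is $\n{\Phi_u\circ(\kappa_E\times\kappa_F)}_{\cb,\lambda}$, so it suffices to prove $\n{\Phi_u}_{\cb,\lambda} \le \n{\Phi_u\circ(\kappa_E\times\kappa_F)}_{\cb,\lambda}$. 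Since $u$ is a finite algebraic tensor, $\Phi_u$ is a finite combination of products of functionals from $E'$ and $F'$, hence separately weak$^*$-continuous; therefore, for each $k$, the amplification $(\Phi_u)_{\bl_k}\colon M_k(E'')\times M_k(F'') \to M_{\tau(k)}(M_n)$ is separately weak$^*$-continuous, using $M_k(E'')=M_k(E)''$ and $M_k(F'')=M_k(F)''$ completely isometrically \cite[Thm.\ 4.1.8]{Effros-Ruan-book}, that $\bl_k$ is a fixed bilinear map of scalar matrices, and that the target is finite-dimensional. By Goldstine's theorem the unit ball of $M_k(E)$ is weak$^*$-dense in that of $M_k(E'')$, and similarly for $F$; so the supremum defining $\n{\Phi_u}_{\cb,\lambda}$ (over contractions in $M_k(E'')$ and $M_k(F'')$) equals the one over contractions in $M_k(E)$ and $M_k(F)$, which is precisely $\n{\Phi_u\circ(\kappa_E\times\kappa_F)}_{\cb,\lambda}$. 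Combining the two paragraphs yields that the natural map is a complete isometry.

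The routine points — that amplifications of the restriction are the restrictions of the amplifications, and that $(\Phi_u)_{\bl_k}$ is separately weak$^*$-continuous — are straightforward unwindings of the definitions; the only mild subtlety is the bookkeeping with iterated weak$^*$-limits when transporting the supremum from the bidual unit balls down to the original ones, but this is the same maneuver already carried out for the $\lambda$-case of the Extension Lemma, so I do not expect a genuine obstacle. As an alternative route, one could instead combine the previous Corollary applied to $E',F'$ with Corollary \ref{cor-extension}(b): the image of $E'\otimes_{\lambda'}F'$ inside $(E''\otimes_\lambda F'')' = \CB_\lambda(E''\times F'')$ consists of separately weak$^*$-continuous (finite-rank) forms, which coincide with their own bi-extensions, hence this image lies inside the completely isometric copy of $(E\otimes_\lambda F)'$ furnished by the bi-extension map, and a diagram chase finishes the argument.
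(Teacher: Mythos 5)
Your proof is correct and takes essentially the same route as the paper: apply the preceding corollary to $E'$ and $F'$, and then use the Goldstine/weak$^*$-density argument from the $\lambda$-case of the Extension Lemma \ref{extension-lemma} to see that, inside $\CB_\lambda(E''\times F'')$, the image of $E'\otimes_{\lambda'}F'$ sits in the completely isometric copy of $\CB_\lambda(E\times F)$. Indeed, the ``alternative route'' you sketch at the end is exactly the paper's two-line proof, and your main argument simply unpacks the completely isometric embedding $\CB_\lambda(E\times F)\hookrightarrow\CB_\lambda(E''\times F'')$ that the paper quotes.
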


\begin{proof}
From the previous result, the natural map
\begin{equation}\label{eqn-embedding-lambda-prime}
  E' \otimes_{\lambda'} F' \hookrightarrow \CB_\lambda(E'' \times F'')  
\end{equation}
is a complete isometry.
As we have already observed (see the proof of the Extension Lemma \ref{extension-lemma}), we also have a canonical completely isometric embedding
\begin{equation}\label{eqn-embedding-lambda-prime-2}
\CB_\lambda(E \times F) \hookrightarrow \CB_\lambda(E'' \times F'').
\end{equation}
It is clear that the image of the map in \eqref{eqn-embedding-lambda-prime} is contained in that of the map in \eqref{eqn-embedding-lambda-prime-2}, which gives the desired result.
\end{proof}

\begin{proposition}
Let $E$ and $F$ be operator spaces, with $E$ finite-dimensional. For any $\lambda$-o.s. tensor norm, the natural map
$$
E \otimes_{\lambda} F \hookrightarrow (E' \otimes_{\lambda'} F')'
$$
is a complete isometry.
\end{proposition}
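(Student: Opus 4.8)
The plan is to leverage the preceding corollary together with the finite-dimensionality of $E$ to identify $E' \otimes_{\lambda'} F'$ with the \emph{full} dual $(E \otimes_\lambda F)'$, and then to recognize the map in the statement as the canonical embedding of $E \otimes_\lambda F$ into its bidual.

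The first step is to show that, when $E$ is finite-dimensional, $\CB_\lambda(E \times F)$ and $E' \otimes F'$ agree as spaces of bilinear forms on $E \times F$. Indeed, any $\phi \in \CB_\lambda(E \times F)$ is in particular a bounded bilinear form: taking $k = 1$ in the definition of $\n{\cdot}_{\cb,\lambda}$ and using $\bl_1(1,1) = 1$ from (E3), one has $\phi_{\bl_1}(v_1, v_2) = \phi(v_1, v_2)$, so $|\phi(v_1,v_2)| \le \n{\phi}_{\cb,\lambda}\, \n{v_1}\, \n{v_2}$. Fixing a basis $(e_i)_i$ of $E$ with dual basis $(e_i^*)_i \subset E'$, we can then write $\phi = \sum_i e_i^* \otimes \phi(e_i, \cdot)$ with each $\phi(e_i, \cdot) \in F'$, so $\phi \in E' \otimes F'$. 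The reverse inclusion $E' \otimes F' \subseteq \CB_\lambda(E \times F)$ is implicit in the preceding corollary, whose embedding is defined on all of $E' \otimes F'$. Hence the natural complete isometry $E' \otimes_{\lambda'} F' \hookrightarrow \CB_\lambda(E \times F)$ from that corollary is onto, i.e.\ a completely isometric isomorphism; combining it with Theorem~\ref{duality-lambda-tensor} yields a complete isometry
\[
E' \otimes_{\lambda'} F' = \CB_\lambda(E \times F) = (E \otimes_\lambda F)'.
\]

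It then remains to take adjoints: $(E' \otimes_{\lambda'} F')' = (E \otimes_\lambda F)''$, and a routine chase through these identifications shows that the natural map of the statement corresponds to the canonical embedding $\kappa_{E \otimes_\lambda F} : E \otimes_\lambda F \to (E \otimes_\lambda F)''$, which is always a complete isometry. I do not expect a genuine obstacle here; the single place where the hypothesis is really used is the surjectivity of $E' \otimes_{\lambda'} F' \hookrightarrow \CB_\lambda(E \times F)$ (which breaks down for infinite-dimensional $E$, since then $\CB_\lambda(E \times F)$ contains bilinear forms that are not of finite rank in the first variable), and the only thing needing a little bookkeeping is verifying that the map in the statement is indeed $\kappa_{E \otimes_\lambda F}$ after the identifications.
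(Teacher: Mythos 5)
Your proof is correct, but it is not the route the paper takes. The paper works with $\CB_\lambda(E''\times F'')$: it invokes the completely isometric embedding $E'\otimes_{\lambda'}F'\hookrightarrow \CB_\lambda(E''\times F'')$, asserts that this map is onto because $E$ is finite-dimensional, dualizes (via Theorem~\ref{duality-lambda-tensor}) to get $(E'\otimes_{\lambda'}F')'=(E\otimes_\lambda F'')''$, and then concludes by composing the Embedding Lemma~\ref{embedding-lemma}(c), $E\otimes_\lambda F\hookrightarrow E\otimes_\lambda F''$, with the canonical embedding into the bidual. You instead stay at the level of $E\times F$: using the corollary $E'\otimes_{\lambda'}F'\hookrightarrow\CB_\lambda(E\times F)$ and the decomposition $\phi=\sum_i e_i^*\otimes\phi(e_i,\cdot)$ with $\phi(e_i,\cdot)\in F'$, you get $E'\otimes_{\lambda'}F'=\CB_\lambda(E\times F)=(E\otimes_\lambda F)'$ completely isometrically, after which the map in the statement is just $\kappa_{E\otimes_\lambda F}$. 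Your route buys two things. First, the surjectivity step—the only place finite-dimensionality is used—is transparent in your setting, since a bounded bilinear form on $E\times F$ with $E$ finite-dimensional automatically has its second-variable components in $F'$; in the paper's setting the forms live on $E''\times F''$, so their components a priori lie in $F'''$, and the surjectivity claim there is genuinely delicate (taken literally, for $E=\C$ it would identify $F'$ with $F'''$), so your identification is the more robust one. Second, you avoid the Embedding Lemma altogether, needing only Theorem~\ref{duality-lambda-tensor}, the corollary, and the standard fact that $\kappa$ into the bidual is a complete isometry. The remaining bookkeeping you mention—that a complete isometry which is onto at the first matrix level is a completely isometric isomorphism, and that the natural map really is $\kappa$ under the identifications—is routine.
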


\begin{proof}
From the previous result, the natural map
$$
  E' \otimes_{\lambda'} F' \hookrightarrow \CB_\lambda(E'' \times F'')  
$$
is a complete isometry, and since $E$ is finite-dimensional it follows that it is surjective and therefore 
$E' \otimes_{\lambda'} F' = \CB_\lambda(E'' \times F'')$.
Taking duals and using Theorem \ref{duality-lambda-tensor}  we have
$$
(E \otimes_\lambda F'')''=\CB_\lambda(E'' \times F'')'=(E' \otimes_{\lambda'} F')',
$$
which, together with  the Embedding Lemma \ref{embedding-lemma}, the desired result follows.
\end{proof}

\begin{theorem}
 Any $\lambda$-o.s. tensor norm is accessible.
\end{theorem}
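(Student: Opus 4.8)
The plan is to read accessibility off from the completely isometric identification furnished by the proposition that directly precedes the theorem (the one stating that $E\otimes_\lambda F\hookrightarrow(E'\otimes_{\lambda'}F')'$ is a complete isometry when $E$ is finite-dimensional), together with the Duality Theorem \ref{duality-theorem}. Right-accessibility will come out directly, and left-accessibility will be reduced to it by passing to the transpose tensor norm.

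First, for right-accessibility, I would fix $M\in\OFIN$, $F\in\ONORM$ and $n\in\N$. Since any $\lambda$-o.s. tensor norm is finitely generated, $\overrightarrow{\lambda}=\lambda$, so it is enough to show $\lambda_n(\cdot;M,F)=\overleftarrow{\lambda}_n(\cdot;M,F)$. The preceding proposition, applied with the finite-dimensional space $M$ in the first slot, gives that the canonical map $M\otimes_\lambda F\hookrightarrow(M'\otimes_{\lambda'}F')'$ is a complete isometry, where $\lambda'$ is the dual o.s. tensor norm. On the other hand, equation \eqref{eqn-duality-3} of the Duality Theorem \ref{duality-theorem}, applied with $\alpha=\lambda$ and with $M$ in place of $E$, says that the \emph{same} canonical map $M\otimes_{\overleftarrow{\lambda}}F\hookrightarrow(M'\otimes_{\lambda'}F')'$ is also a complete isometry. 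As the two statements describe the same linear map into the same operator space, the matrix norms $\lambda_n(z;M,F)$ and $\overleftarrow{\lambda}_n(z;M,F)$ must agree for every $z\in M_n(M\otimes F)$; hence $\lambda$ is right-accessible.

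For left-accessibility, the plan is to transpose. Given a $\lambda$-o.s. tensor norm with defining bilinear maps $\bl_k$, let $\lambda^{t}$ be the $\lambda$-type tensor norm obtained by replacing each $\bl_k$ by $(A,B)\mapsto\bl_k(B,A)$. Interchanging the two arguments throughout conditions (E1), (E2) and (E3) shows at once that $\lambda^{t}$ satisfies them as well, and inspecting \eqref{lnorm} shows that the flip $x\otimes y\mapsto y\otimes x$ induces, at every matrix level, a complete isometry between $E\otimes_{\lambda^{t}}F$ and $F\otimes_\lambda E$: a decomposition realizing $\lambda^{t}_k(u)$ transports under the flip to a decomposition realizing $\lambda_k$ of the flipped tensor, with the same four norm factors. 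Together with the symmetry of $\min$ and $\proj$ and the inequalities $\min\le\lambda\le\proj$, this flip identity gives $\min\le\lambda^{t}\le\proj$, so $\lambda^{t}$ is again a $\lambda$-o.s. tensor norm. Now, for $E\in\ONORM$, $M\in\OFIN$, $n\in\N$ and $u\in M_n(E\otimes M)$, applying the flip to the relevant quotient maps and using that both $\lambda$ and $\lambda^{t}$ are finitely generated, one gets $\overrightarrow{\lambda}_n(u;E,M)=\overrightarrow{\lambda^{t}}_n(\widetilde{u};M,E)$ and $\overleftarrow{\lambda}_n(u;E,M)=\overleftarrow{\lambda^{t}}_n(\widetilde{u};M,E)$, where $\widetilde{u}$ is the flip of $u$. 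Consequently $\lambda$ is left-accessible if and only if $\lambda^{t}$ is right-accessible, and the first part of the argument applied to the $\lambda$-o.s. tensor norm $\lambda^{t}$ finishes the proof.

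I expect the main obstacle to be exactly the asymmetry that forces this detour: the proposition above identifies $E\otimes_\lambda F$ with a subspace of $(E'\otimes_{\lambda'}F')'$ only when the \emph{first} tensor factor is finite-dimensional, so left-accessibility cannot be obtained by the same one-line comparison. The transposition reduction removes that obstruction, at the price of the routine bookkeeping needed to confirm that $\lambda^{t}$ is again a $\lambda$-o.s. tensor norm and that the flip is a \emph{complete} isometry — not merely an isometry — on each matrix level.
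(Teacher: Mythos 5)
Your argument is correct, and for right-accessibility it is exactly the paper's proof: you compare the complete isometry $M\otimes_\lambda F\hookrightarrow(M'\otimes_{\lambda'}F')'$ from the preceding proposition with the complete isometry \eqref{eqn-duality-3} of the Duality Theorem \ref{duality-theorem} and conclude $\lambda=\overleftarrow{\lambda}$ on $\OFIN\otimes\ONORM$, using that $\lambda$ is finitely generated. Where you diverge is left-accessibility: the paper simply declares it ``analogous,'' which implicitly means rerunning the chain of preceding results with the finite-dimensional factor in the second slot, whereas you reduce it to right-accessibility of the transposed norm $\lambda^t$ obtained by flipping the arguments of the maps $\bl_k$. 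That reduction is sound: (E1)--(E3) are stable under the flip (the conditions are symmetric in the two index slots, so the same witnesses $S,T,a_j,P$ work), the flip of a decomposition realizing $\lambda^t_k(u)$ is a decomposition realizing $\lambda_k$ of the flipped tensor with the same four factors, so $E\otimes_{\lambda^t}F=F\otimes_\lambda E$ completely isometrically, $\min\le\lambda^t\le\proj$ follows from the symmetry of $\min$ and $\proj$, and the finite/cofinite hulls transform as you state; this is consistent with the paper's own observation that $\odot^t$ yields $h^t$ as a $\lambda$-tensor norm. The trade-off is minor: your route adds the bookkeeping of verifying $\lambda^t$ is again a $\lambda$-o.s. tensor norm but avoids restating and reproving the mirror-image version of the proposition preceding the theorem, so either way the substance of the proof is the same.
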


\begin{proof}
Let $E$ and $F$ be operator spaces, with $E$ finite-dimensional. From the previous result the natural map
$$
E \otimes_{\lambda} F \hookrightarrow (E' \otimes_{\lambda'} F')'
$$
is a complete isometry, and from the Duality Theorem \ref{duality-theorem} so is
$$
E \otimes_{\overleftarrow{\lambda}} F  \hookrightarrow (E' \otimes_{\lambda'} F')',
$$
which means that  $E \otimes_{\lambda} F = E \otimes_{\overleftarrow{\lambda}} F$ and therefore $\lambda$ is right-accessible.
The proof for left-accessibility is analogous.
\end{proof}

Left-accessibility of an o.s. tensor norm  $\alpha$ provides a complete isometry analogous to the one that appears (for the  o.s. tensor norm $\proj$) in Equation \eqref{inclusion natural proj}. This fact combined with a CMAP hypothesis imply the following result.
\begin{corollary}
\begin{enumerate}[(a)]
\item Let $E  \in \ONORM$, $F$ a normed operator space with the CMAP and $\alpha$ a left-accessible o.s. tensor norm. Then,
    $$
E \otimes_{\overrightarrow{\alpha}} F \to E\otimes_{\overleftarrow{\alpha}} F \qquad \text{and} \qquad E \otimes_{\overrightarrow{\alpha}} F \to (E' \otimes_{\alpha'} F')'
$$ are complete isometries.

\item Let $E  \in \OLOC$, $F$ a normed operator space with the CMAP and $\alpha$ a locally left-accessible o.s. tensor norm.  Then,
$$
E \otimes_{\overrightarrow{\alpha}} F \to E\otimes_{\overleftarrow{\alpha}} F  \qquad \text{and}  \qquad E \otimes_{\overrightarrow{\alpha}} F \to (E' \otimes_{\alpha'} F')'
$$ are complete isometries.
\item The analogous results hold with right-accessibility/local right-accesibility.
\end{enumerate}
\end{corollary}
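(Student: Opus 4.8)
The plan is to reduce everything to a single equality of o.s.\ tensor norms, namely $\overrightarrow{\alpha}=\overleftarrow{\alpha}$ on $E\otimes F$. Once this is established the first natural map $E\otimes_{\overrightarrow{\alpha}}F\to E\otimes_{\overleftarrow{\alpha}}F$ is literally the identity, hence a complete isometry, and the second one, $E\otimes_{\overrightarrow{\alpha}}F\to(E'\otimes_{\alpha'}F')'$, follows by composing with the complete isometry $E\otimes_{\overleftarrow{\alpha}}F\hookrightarrow(E'\otimes_{\alpha'}F')'$ furnished by \eqref{eqn-duality-3} of the Duality Theorem \ref{duality-theorem} (note that this particular part of that theorem carries no local reflexivity hypothesis). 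Since $\overleftarrow{\alpha}\le\overrightarrow{\alpha}$ always holds by Proposition \ref{prop-order-of-hulls}, the only thing to prove is the reverse inequality $\overrightarrow{\alpha}\le\overleftarrow{\alpha}$ on $E\otimes F$.

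For part $(a)$ I would apply the Approximation Lemma \ref{approximation-lemma} to the pair $(\overrightarrow{\alpha},\overleftarrow{\alpha})$, both of which are o.s.\ tensor norms on $\ONORM$ by Proposition \ref{prop-order-of-hulls}. By the very definition of left-accessibility, $\overrightarrow{\alpha}_n(\cdot;E,N)=\overleftarrow{\alpha}_n(\cdot;E,N)$ for every finite-dimensional $N$ and every $E\in\ONORM$; in particular $\overrightarrow{\alpha}\le\overleftarrow{\alpha}$ on $E\otimes N$ for all $N\in\OFIN(F)$, hence for cofinally many. As $F$ has the CMAP, i.e.\ the CBAP with constant $1$, the Approximation Lemma yields $\overrightarrow{\alpha}\le\overleftarrow{\alpha}$ on $E\otimes F$, as desired. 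Part $(b)$ is the same argument: the hypothesis $E\in\OLOC$ is precisely what is needed to invoke local left-accessibility in place of left-accessibility, which again gives $\overrightarrow{\alpha}_n(\cdot;E,N)=\overleftarrow{\alpha}_n(\cdot;E,N)$ for all $N\in\OFIN(F)$, and nothing else changes (in particular \eqref{eqn-duality-3} is used exactly as before). Part $(c)$ is the transpose of this: one runs the Approximation Lemma on the first tensor factor --- its proof is unchanged upon replacing $id_E\otimes T_\eta$ by $S_\eta\otimes id_F$ and using the symmetry of $\proj$, when $E$ has the CBAP and one approximates through cofinal $M\in\OFIN(E)$ --- and feeds it the equality $\overrightarrow{\alpha}_n(\cdot;M,F)=\overleftarrow{\alpha}_n(\cdot;M,F)$ coming from right-accessibility (respectively local right-accessibility, when $F\in\OLOC$).

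I do not anticipate a genuine obstacle: the substance has been packaged into the Approximation Lemma and the Duality Theorem, and this corollary is just their combination. The point worth a word of explanation is why accessibility is doing real work here: Corollary \ref{cor-cbap-implies-equivalence-of-hulls} already gives $\overrightarrow{\alpha}=\overleftarrow{\alpha}$ when \emph{both} $E$ and $F$ have the CMAP, whereas in the present statement the CMAP is assumed on only one factor, and left-/right-accessibility is exactly the extra input that lets the Approximation Lemma absorb the other factor. A second, purely bookkeeping remark: throughout, $\alpha'$ denotes the dual o.s.\ tensor norm on $\ONORM$, that is $\overrightarrow{\alpha'}$, which coincides with the $\alpha'$ appearing in the statement of \eqref{eqn-duality-3}, so no inconsistency arises when we cite it; and for part $(c)$ one should state explicitly that the Approximation Lemma remains valid with the two tensor factors interchanged, or record the one-line modification of its proof.
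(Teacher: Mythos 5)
Your proof is correct and follows essentially the same route as the paper: accessibility gives $\overrightarrow{\alpha}=\overleftarrow{\alpha}$ on $E\otimes N$ for $N\in\OFIN(F)$, the Approximation Lemma with the CMAP upgrades this to $E\otimes F$, and the Duality Theorem (its first embedding, which needs no local reflexivity) yields the second complete isometry. Your extra remarks — the transposed form of the Approximation Lemma for part (c) and the role of accessibility as a substitute for CMAP on the second factor — are accurate refinements of what the paper leaves implicit under ``the proofs of the other statements are similar''.
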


\begin{proof}
We will only prove $(a)$ since the proofs of the  other statements are similar. Note that, by the left-accesibility of the o.s. tensor norm we have
\begin{equation}
    \overrightarrow{\alpha} = \overleftarrow{\alpha} \qquad  \text{on} \qquad  E \otimes M
\end{equation} 
for every $M \in \OFIN(F).$ Now by the Approximation Lemma \ref{approximation-lemma}, since $F$ has the CMAP then $\overrightarrow{\alpha}$ and $\overleftarrow{\alpha}$ coincide on $E \otimes F$, and therefore 
$$
E \otimes_{\overrightarrow{\alpha}} F \to E\otimes_{\overleftarrow{\alpha}} F
$$
is a complete isometry.
The fact that $E \otimes_{\overrightarrow{\alpha}} F \to (E' \otimes_{\alpha'} F')'$ is also  a complete isometry follows from the previous identification and the Duality Theorem \ref{duality-theorem}.
\end{proof}

Note that by Proposition \ref{prop-order-of-hulls} in the statement of the previous corollary we can change $\overrightarrow{\alpha}$ by $\alpha$ and, of course, everything remains the same. 
Let us now consider the issue of the accessibility of dual o.s. tensor norms.

\begin{proposition}\label{prop-accessibility-for-dual-cross-norms} 
Let $\alpha$ be an o.s. tensor norm on $\ONORM$.
\begin{enumerate}[(a)]
\item $\alpha$ is right-accessible (resp.  left-accessible,  accessible) then $\alpha'$ is locally right-accessible (resp. locally left-accessible, locally accessible).
\item If $\alpha$ is accessible then the transposed o.s. tensor norm $\alpha^t$  is accessible and the adjoint o.s. tensor norm $\alpha^*$ is locally accessible.
\end{enumerate}
\end{proposition}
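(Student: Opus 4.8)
The plan is to deduce (b) from (a) together with the elementary behaviour of accessibility under transposition, and to prove (a) by computing the dual of $M\otimes_{\alpha'}F$ completely isometrically whenever $M$ is finite-dimensional and $F$ is locally reflexive.

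For (a) I would treat the right-accessible case in detail; the left-accessible case then follows by transposing, using that $\alpha$ is left-accessible if and only if $\alpha^t$ is right-accessible (since $E\otimes_{\alpha^t}F$ is $F\otimes_\alpha E$ read through the flip, so the finite and cofinite hulls transform accordingly) together with $(\alpha^t)'=(\alpha')^t$, and the accessible case is immediate. So fix $M\in\OFIN$ and $F\in\OLOC$. Since always $\overleftarrow{\alpha'}\le\alpha'=\overrightarrow{\alpha'}$, it suffices to prove $\alpha'\le\overleftarrow{\alpha'}$ on $M\otimes F$. The crucial step is the identification
$$(M\otimes_{\alpha'}F)' = M'\otimes_\alpha F'\qquad\text{completely isometrically}.$$
To obtain it, apply the Duality Theorem \ref{duality-theorem}, equation \eqref{eqn-duality-3}, with the o.s. tensor norm $\alpha$ and the spaces $M'$ and $F'$: this gives a complete isometry $M'\otimes_{\overleftarrow\alpha}F'\hookrightarrow(M''\otimes_{\alpha'}F'')'=(M\otimes_{\alpha'}F'')'$. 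Composing with the inverse of the complete isometry $\varphi\mapsto\varphi^\wedge$ of the Extension Lemma \ref{extension-lemma}(a) — which applies because $\alpha'$ is finitely generated and $F$ is locally reflexive — and checking that the resulting embedding $M'\otimes_{\overleftarrow\alpha}F'\hookrightarrow(M\otimes_{\alpha'}F)'$ is the natural pairing map, one concludes as follows: since $M$ is finite-dimensional the underlying vector space of $(M\otimes_{\alpha'}F)'$ is $M'\otimes F'$ (it is squeezed between $(M\otimes_{\min}F)'$ and $(M\otimes_{\proj}F)'=\CB(M,F')$), so that embedding is onto, hence $(M\otimes_{\alpha'}F)'=M'\otimes_{\overleftarrow\alpha}F'$; finally right-accessibility of $\alpha$, with $M'$ finite-dimensional, gives $\overleftarrow\alpha=\alpha$ on $M'\otimes F'$, yielding the displayed identification.

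With that in hand, the canonical complete isometry of a normed operator space into its bidual produces a complete isometry
$$M\otimes_{\alpha'}F\hookrightarrow(M\otimes_{\alpha'}F)''=(M'\otimes_\alpha F')',$$
which on the underlying tensor product is again the natural pairing map. On the other hand, the Duality Theorem \eqref{eqn-duality-3} applied to $\alpha'$, together with $\overrightarrow\alpha=\alpha$ on $M'\otimes F'$ (once more by right-accessibility), gives a complete isometry $M\otimes_{\overleftarrow{\alpha'}}F\hookrightarrow(M'\otimes_\alpha F')'$, also realized by the natural pairing map. Thus the natural complete contraction $M\otimes_{\alpha'}F\to M\otimes_{\overleftarrow{\alpha'}}F$, followed by this last complete isometry, equals the complete isometry of the display; since a complete contraction whose composition with a complete contraction is a complete isometry must itself be a complete isometry, we conclude $\alpha'=\overleftarrow{\alpha'}$ on $M\otimes F$, i.e.\ $\alpha'$ is locally right-accessible.

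For (b): by the transposition remarks above, if $\alpha$ is accessible then $\alpha^t$ is accessible, and then $\alpha^*=(\alpha^t)'$ is locally accessible by applying part (a) to $\alpha^t$. The step I expect to demand the most care is the identification $(M\otimes_{\alpha'}F)'=M'\otimes_\alpha F'$: one must chain the Duality Theorem with the Extension Lemma — and it is precisely here that local reflexivity of $F$ is unavoidable — and verify that the maps furnished by these two lemmas are the expected natural pairing maps, so that the surjectivity coming from $M$ being finite-dimensional can be invoked; everything else is bookkeeping with complete contractions and complete isometries.
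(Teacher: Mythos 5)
Your argument is correct and follows essentially the same route as the paper's proof: you identify $(M\otimes_{\alpha'}F)'$ with $M'\otimes_{\alpha}F'$ by combining the Duality Theorem with the Extension Lemma (which is exactly how the paper obtains \eqref{eqn-duality-1}, so you are re-deriving that instance rather than citing it) and right-accessibility, and then compare the bidual embedding of $M\otimes_{\alpha'}F$ with the isometry from \eqref{eqn-duality-3} applied to $\alpha'$ to conclude $\alpha'=\overleftarrow{\alpha'}$ on $\OFIN\otimes\OLOC$. Your handling of the left case and of (b) by transposition matches the paper's (implicit) treatment, so there is nothing to correct.
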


\begin{proof}
Assume that $\alpha$ is  right-accessible. Then for all $M \in \OFIN$ and $F \in \OLOC$ we have complete isometries
$$
M' \otimes_{\alpha''} F' = M' \otimes_{\overrightarrow{\alpha}} F' = M' \otimes_{\overleftarrow{\alpha}} F' = (M \otimes_{\alpha'} F)'
$$
where the first equality follows from Proposition \ref{prop-properties-dual-cross-norm},
the second one from right-accessibility, and the third one from the Duality Theorem \ref{duality-theorem}.
Therefore we also have complete isometries
$$
M \otimes_{\alpha'} F \hookrightarrow (M \otimes_{\alpha'} F)'' = (M' \otimes_{\alpha''} F')',
$$
which means that $\overleftarrow{\alpha'} = \alpha' = \overrightarrow{\alpha'}$ (again by the Duality Theorem) in $\OFIN \otimes \OLOC$.
This shows that $\alpha'$ is locally right-accessible.
Part (b) follows trivially.
\end{proof}

In the Banach space setting the dual of a right(left)-accessible tensor norm is again right(left)-accessible. For operator spaces we obtain in the previous proposition a weaker statement since we have proved that the dual norm is  \textsl{locally} right(left)-accessible. However, we do not know whether this notion is truly weaker since we do not have any example of an o.s. tensor norm which is locally right(left)-accessible but not right(left)-accessible.

\section{The completely bounded approximation property} \label{complete bap}

Recall that given $C \ge 1$, a normed operator space is said to have the \emph{$C$-completely bounded approximation property} ($C$-CBAP for short)  if there exists a net of finite-rank mappings $\phi_i : E \to E$ such that $\n{\phi_i}_{\cb} \le C$ for all $i$ and for every $x \in E$, $\n{\phi_i(x) -  x} \to 0$.
One of the goals in this section is to study the CBAP via various conditions involving tensor products, in the spirit of the characterizations for the operator space approximation property appearing in \cite[Sec. 11.2]{Effros-Ruan-book}. We also deal with a weaker version, called W*CBAP, which carries over many of the interesting equivalences that appear in the classical theory of tensor products \cite[Sect. 16]{Defant-Floret}.

For a normed operator space $E$, we use the notation $K_\infty(E) = \mathcal{K} \otimes_{\min} E$ to emphasize that we think of this space as consisting of infinite $E$-valued matrices; see \cite[Sec. 10.1]{Effros-Ruan-book} for the specific details of how to interpret $K_\infty(E)$ as a completion of the union of the spaces $M_n(E)$ of $E$-valued matrices. 
Given normed operator spaces $E$ and $F$,
recall that a net $(\varphi^\alpha)$ in $\CB(E,F)$ converges to $\varphi$ in the \emph{stable point-norm topology} if  for every $x \in K_\infty(E)$ we have that
$
\varphi^\alpha_\infty(x) \to \varphi_\infty(x)
$
in the norm topology of $K_\infty(F)$
 \cite[Sec. 11.2]{Effros-Ruan-book}.

Our first characterization of CBAP (Proposition \ref{prop-characterization-CBAP-with-tau} below) corresponds to \cite[Lemma 11.2.1]{Effros-Ruan-book}, but instead of the stable point-norm topology we simply use the point-norm topology (which we denote by $\tau$).
The reason is clarified in the following remark.

\begin{remark}\label{Remark-stable-point-norm-topology}
Observe that for a bounded net in $\CB(E,F)$, convergence in the stable point-norm topology coincides with convergence in the point-norm topology. Indeed, suppose $\varphi^\alpha, \varphi \in \CB(E,F)$ and $x \in K_\infty(E)$. If $P^m : K_\infty(E) \to K_\infty(E)$ denotes the truncation operator,
\begin{multline*}
\n{ \varphi^\alpha_\infty(x) - \varphi_\infty(x) }_{K_\infty(F)}
\le 
\n{\varphi^\alpha}_{\cb}\n{ x - P^m(x) }_{K_\infty(E)} \\+
\n{ \varphi^\alpha_m( P^m(x) ) - \varphi_m( P^m(x) ) }_{M_m(F)}
+ \n{\varphi}_{\cb} \n{ P^m(x) - x }_{K_\infty(E)},
\end{multline*}
which shows that when $\varphi^\alpha$ is bounded, convergence in the point-norm topology implies convergence in the stable point-norm topology (the other implication is trivial and holds in general).
\end{remark}


\begin{proposition}\label{prop-characterization-CBAP-with-tau}
Suppose that $E$ is a normed operator space and $C \ge 1$. The following are equivalent:
\begin{enumerate}[(a)]
\item $E$ has the $C$-CBAP.
\item $C\cdot B_{E' \otimes_{\min} E}$ is  $\tau$-dense in $B_{\CB(E,E)}$.
\item For any normed operator space $F$, $C\cdot B_{E' \otimes_{\min} F}$ is  $\tau$-dense in $B_{\CB(E,F)}$.
\item For any normed operator space $F$, $C\cdot B_{F' \otimes_{\min} E}$ is  $\tau$-dense in $B_{\CB(F,E)}$.
\end{enumerate}
\end{proposition}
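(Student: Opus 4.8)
The plan is to reduce the whole proposition to one standard operator space fact: for normed operator spaces $E$ and $F$, the algebraic tensor product $E'\otimes F$ equipped with the minimal o.s. tensor norm is completely isometric, via $\varphi\otimes y\mapsto(x\mapsto\varphi(x)y)$, to the subspace of finite-rank maps in $\CB(E,F)$; this is standard for finite-dimensional $F$ (transpose and use $\CB(F',E')=F\otimes_{\min}E'$), and the general case follows from the complete injectivity of $\min$. Granting this, $C\cdot B_{E'\otimes_{\min}F}$ is exactly the set of finite-rank operators belonging to $C\cdot B_{\CB(E,F)}$, and the rest is a routine adaptation of the classical argument, cf.\ \cite[Sect.~16]{Defant-Floret}. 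I would establish the implications $(a)\Rightarrow(c)\Rightarrow(b)\Rightarrow(a)$ together with $(a)\Rightarrow(d)\Rightarrow(b)$.

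For $(a)\Rightarrow(c)$ and $(a)\Rightarrow(d)$, I would fix a net $(\phi_i)$ of finite-rank maps in $\CB(E,E)$ with $\n{\phi_i}_{\cb}\le C$ and $\phi_i\to id_E$ in the point-norm topology $\tau$. Given a normed operator space $F$ and $T\in B_{\CB(E,F)}$, one checks that $T\circ\phi_i$ is finite-rank with $\n{T\circ\phi_i}_{\cb}\le\n{T}_{\cb}\n{\phi_i}_{\cb}\le C$ and $T\circ\phi_i\to T$ in $\tau$, since $\n{T\phi_i(x)-Tx}\le\n{T}_{\cb}\n{\phi_i(x)-x}\to 0$ for every $x\in E$; this yields $(c)$. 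Symmetrically, for $T\in B_{\CB(F,E)}$ the maps $\phi_i\circ T$ are finite-rank with $\cb$-norm at most $C$ and converge to $T$ in $\tau$, because $\n{\phi_i(Tx)-Tx}\to 0$ for every $x\in F$ --- here one needs only that $\phi_i\to id_E$ pointwise on $E$, which contains $T(F)$; this yields $(d)$.

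The implications $(c)\Rightarrow(b)$ and $(d)\Rightarrow(b)$ are immediate by specializing to $F=E$. For $(b)\Rightarrow(a)$: since $\n{id_E}_{\cb}=1$ we have $id_E\in B_{\CB(E,E)}$, so $(b)$ furnishes a net $(u_i)$ in $C\cdot B_{E'\otimes_{\min}E}$ with $u_i\to id_E$ in $\tau$; by the identification above each $u_i$ is automatically finite-rank with $\n{u_i}_{\cb}\le C$, and $\n{u_i(x)-x}\to 0$ for every $x\in E$, which is precisely the definition of the $C$-CBAP.

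I do not expect a genuine obstacle here: the argument uses neither completeness nor local reflexivity of any of the spaces involved. The one point that must be handled carefully is the identification of $E'\otimes_{\min}F$ with the finite-rank part of $\CB(E,F)$, which is what makes the two unit balls in each equivalence literally comparable. A minor but worthwhile observation is the asymmetry between $(a)\Rightarrow(c)$ and $(a)\Rightarrow(d)$: in the former the approximating net sits on the source space, so one composes on the right and uses $\n{T}_{\cb}<\infty$ to push the approximation error through $T$, whereas in the latter it sits on the target space $E$, one composes on the left, and no more than pointwise convergence of $(\phi_i)$ on $E$ is needed.
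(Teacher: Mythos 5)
Your proof is correct and follows essentially the same route as the paper: both arguments rest on identifying $C\cdot B_{E'\otimes_{\min}F}$ with the finite-rank maps of $\cb$-norm at most $C$ in $\CB(E,F)$ and then composing a CBAP net $(\phi_i)$ with an arbitrary contraction on the appropriate side, the only difference being that you route the implications as $(a)\Rightarrow(c),(d)\Rightarrow(b)\Rightarrow(a)$ while the paper uses $(b)$ as the hub via $(a)\Leftrightarrow(b)$ and $(b)\Rightarrow(c),(d)$. Your extra care about why $E'\otimes_{\min}F$ sits completely isometrically as the finite-rank part of $\CB(E,F)$ is a sensible addition, since the paper uses this identification without comment.
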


\begin{proof}
$(a) \Rightarrow (b)$:
Suppose that $E$ has $C$-CBAP, so that there exists a net of finite-rank mappings $\phi_i : E \to E$ such that $\n{\phi_i}_{\cb} \le C$ for all $i$ and for every $x \in E$, $\n{x - \phi_i x} \to 0$. This precisely means that $\phi_i \xrightarrow{\tau} id_E$, and
note that each $\phi_i$ can be identified with an element of $C\cdot B_{E' \otimes_{\min} E}$.
For any other $\varphi \in B_{\CB(E,E)}$, the net $(\phi_i \circ \varphi)_i$ does the job.
Reversing the argument gives $(b) \Rightarrow (a)$.

Now, if $(\phi_i)$ is a net in $C\cdot B_{E' \otimes_{\min} E}$ converging to $id_E$ in the $\tau$ topology, then for any $\psi \in B_{\CB(E,F)}$ the net $\psi \circ \phi_i \in C\cdot B_{E' \otimes_{\min} F}$ converges to $\psi$ in the $\tau$ topology.
Similarly, for $\psi \in B_{\CB(F,E)}$ the net $\phi_i \circ \psi \in C\cdot B_{F' \otimes_{\min} E}$ converges to $\psi$ in the $\tau$ topology. This proves $(b) \Rightarrow (c)$ and $(b) \Rightarrow (d)$.
The implications $(c) \Rightarrow (b)$ and $(d) \Rightarrow (b)$ are clear by specializing to $F=E$.
\end{proof}

It should be noted that although completeness plays an important role in  \cite[Sec. 11.2]{Effros-Ruan-book}, we do not assume it.
This was to be expected, as the same is true of the Banach space case:
note that in \cite{Defant-Floret} the section on BAP deals with normed spaces, whereas the one on AP deals with Banach spaces.
In this regard we point out the following easy lemma, which shows that when it comes to the CBAP it makes no difference to work with a space or with its completion. 
As usual, $\widetilde{E}$ denotes the completion of $E$.
Moreover, given $\varphi \in \CB(E,F)$ we denote by $\widetilde{\varphi}$ its unique extension to a map in $\CB(\widetilde{E},\widetilde{F})$.

\begin{lemma}\label{CBAP-and-completion}
$E$ has $C$-CBAP if and only if $\widetilde{E}$ has $C$-CBAP.
\end{lemma}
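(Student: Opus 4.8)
The plan is to treat the two implications separately; the forward implication is routine. If $E$ has $C$-CBAP via a net $(T_\eta)$ of finite-rank maps with $\n{T_\eta}_{\cb} \le C$ and $\n{T_\eta x - x} \to 0$ for all $x \in E$, then the unique extensions $\widetilde{T_\eta} \in \CB(\widetilde E, \widetilde E)$ are still finite-rank (their range equals $T_\eta(E)$), still satisfy $\n{\widetilde{T_\eta}}_{\cb} = \n{T_\eta}_{\cb} \le C$, and converge to $id_{\widetilde E}$ in the point-norm topology: given $z \in \widetilde E$ and $\eps>0$, pick $x \in E$ with $\n{z-x}<\eps$ and bound $\n{\widetilde{T_\eta} z - z} \le \n{\widetilde{T_\eta}}_{\cb}\n{z-x} + \n{T_\eta x - x} + \n{x-z}$, which is $< (C+2)\eps$ for $\eta$ large. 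Hence $\widetilde E$ has $C$-CBAP.

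For the converse, assume $\widetilde E$ has $C$-CBAP, witnessed by a net $(\psi_j)$ of finite-rank maps on $\widetilde E$ with $\n{\psi_j}_{\cb} \le C$ and $\psi_j \to id_{\widetilde E}$ in the point-norm topology. The obstruction is that the range of $\psi_j$ need not lie in $E$, and the plan is to produce, for each finite set $F_0 \subseteq E$ and each $\delta > 0$, a single finite-rank map $\Phi_{F_0,\delta} : E \to E$ with $\n{\Phi_{F_0,\delta}}_{\cb} \le C$ and $\n{\Phi_{F_0,\delta} x - x} < \delta$ for every $x \in F_0$; the net indexed by the directed set of such pairs $(F_0,\delta)$ (ordered by inclusion of finite sets and reverse order of $\delta$) will then witness $C$-CBAP of $E$. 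To build $\Phi_{F_0,\delta}$, I would first choose $j$ with $\n{\psi_j x - x}$ small for all $x \in F_0$; write $\psi_j = \sum_{k=1}^{r} f_k \otimes y_k$ with $f_k \in (\widetilde E)'$ and $y_k \in \widetilde E$; and, using the density of $E$ in $\widetilde E$, replace each $y_k$ by a vector $y_k' \in E$ with $\n{y_k - y_k'}$ so small (relative to the finitely many numbers $\n{f_k}$) that the finite-rank map $\psi_j' := \sum_{k=1}^{r}(f_k|_E)\otimes y_k' : E \to E$ is within a prescribed $\cb$-distance $\eps$ of $\psi_j|_E$, where one uses $\n{\sum_k g_k \otimes z_k}_{\cb} \le \sum_k \n{g_k}\n{z_k}$ for rank-one operators. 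In particular $\n{\psi_j'}_{\cb} \le C + \eps$.

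The remaining subtlety — and the only step requiring genuine care — is that the definition of $C$-CBAP asks for the uniform bound $\n{\cdot}_{\cb}\le C$, whereas $\psi_j'$ only satisfies $\n{\psi_j'}_{\cb} \le C + \eps$. I would absorb this by rescaling: set $\Phi_{F_0,\delta} := \tfrac{C}{C+\eps}\,\psi_j'$, a finite-rank map $E\to E$ with $\n{\Phi_{F_0,\delta}}_{\cb}\le C$; a short estimate via $\n{\Phi_{F_0,\delta} x - x} \le \n{\Phi_{F_0,\delta} x - \psi_j' x} + \n{\psi_j' x - \psi_j x} + \n{\psi_j x - x}$ shows this is at most $2\eps\,\max_{x\in F_0}\n{x} + \max_{x\in F_0}\n{\psi_j x - x}$ for $x\in F_0$, so choosing $\eps$ and $j$ appropriately (relative to $F_0$ and $\delta$) makes it $<\delta$ on $F_0$. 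This completes the construction, hence the proof. The crux of the whole argument is this rescaling device, which keeps the approximation constant exactly $C$ rather than letting it drift to $C+\eps$.
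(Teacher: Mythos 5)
Your proof is correct and follows essentially the same route as the paper's: the forward direction is the same extension-plus-triangle-inequality argument, and for the converse both proofs perturb the finite-rank approximants so their ranges land in $E$ (your replacement of the vectors $y_k$ by nearby $y_k'\in E$ is exactly how the paper's near-identity map $R_\delta$ on the finite-dimensional range is built) and then rescale ($\tfrac{C}{C+\eps}$ versus the paper's $\tfrac{1}{1+\delta}$) to keep the constant exactly $C$. Re-indexing the net by pairs $(F_0,\delta)$ instead of keeping the original index together with $\delta$ is only a cosmetic difference.
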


\begin{proof}
$(\Rightarrow)$ Let $(\varphi^\alpha)$ be a net of finite-rank maps in $\CB(E,E)$ of $\cb$-norm at most $C$ converging to the identity of $E$ in the point-norm topology.
Let $\widetilde{x} \in \widetilde{E}$. By a triangle inequality argument, for $x\in E$
$$
\n{\widetilde{\varphi}^\alpha (\widetilde{x}) - \widetilde{x}} \le
\n{\widetilde{\varphi}^\alpha} \n{\widetilde{x}-x} + \n{ \varphi^\alpha(x) - x } + \n{ x - \widetilde{x} },
$$
which shows $(\widetilde{\varphi}^\alpha)$ is a net of finite-rank maps of $\cb$-norm at most $C$ converging to the identity of $\widetilde{E}$ in the point-norm topology.

$(\Leftarrow)$ Let $(\psi^\alpha)$ be a net of finite-rank maps in $\CB(\widetilde{E},\widetilde{E})$ of $\cb$-norm at most $C$ converging to the identity of $\widetilde{E}$ in the point-norm topology.
For each $\alpha$ let $E_\alpha$ be the range of $\psi^\alpha$, and for each $\delta>0$
use perturbation to find a linear map $R_\delta : E_\alpha \to E$ with
 $\n{R_\delta-Id_{E_\alpha}}_{\cb}< \delta$.
 Let $\varphi^{\alpha,\delta}$ be the restriction to $E$ of $\frac{1}{1+\delta}R_\delta \circ \psi^\alpha$.
 Then for any $x \in E$,
$$
\n{\varphi^{\alpha,\delta}(x)-x} \le \frac{1}{1+\delta}\n{R_\delta \psi^\alpha(x) - \psi^\alpha(x)} + \frac{\delta}{1+\delta}\n{\psi^\alpha(x)} +  \n{\psi^\alpha(x)-x},
$$
so  $(\varphi^{\alpha,\delta})$ is a net of finite-rank maps in $\CB(E,E)$ of $\cb$-norm at most $C$ converging to the identity of $E$ in the point-norm topology.
\end{proof}

For normed operator spaces $E$ and $F$,
recall that a net $(\varphi^\alpha)$ in $\CB(E,F)$ converges to $\varphi$ in the \emph{stable point-weak topology} (denoted $\tau_w$) if  for every $x \in K_\infty(E)$ we have that
$
\varphi^\alpha_\infty(x) \to \varphi_\infty(x)
$
in the weak topology of $K_\infty(F)$
\cite[Sec. 11.2]{Effros-Ruan-book}.
Let us remark that although \cite[Sec. 11.2]{Effros-Ruan-book} works with Banach operator spaces, the proof of \cite[Prop. 11.2.2]{Effros-Ruan-book} applies more generally to normed operator spaces.
Therefore, the proof of \cite[Cor. 11.2.3]{Effros-Ruan-book} together with Proposition \ref{prop-characterization-CBAP-with-tau} yield the following result.

\begin{corollary}\label{cor-characterization-CBAP-with-tau-weak}
Suppose that $E$ is a normed operator space and $C \ge 1$. The following are equivalent:
\begin{enumerate}[(a)]
\item $E$ has the $C$-CBAP.
\item $ C \cdot B_{E' \otimes_{\min} E}$ is  $\tau_w$-dense in $B_{\CB(E,E)}$.
\item For any operator space $F$, $C \cdot  B_{E' \otimes_{\min} F}$ is  $\tau_w$-dense in $B_{\CB(E,F)}$.
\item For any operator space $F$,  $C \cdot B_{F' \otimes_{\min} E}$ is  $\tau_w$-dense in $B_{\CB(F,E)}$.
\end{enumerate}
\end{corollary}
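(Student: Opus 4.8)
The plan is to follow the proof of \cite[Cor. 11.2.3]{Effros-Ruan-book}, substituting Proposition \ref{prop-characterization-CBAP-with-tau} for the completeness-dependent \cite[Lemma 11.2.1]{Effros-Ruan-book}. As in that argument, the key ingredient is a convexity principle --- the normed-operator-space version of \cite[Prop. 11.2.2]{Effros-Ruan-book} --- which lets one trade the point-norm topology for the point-weak topology when dealing with convex sets of completely bounded maps.

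The implications $(a) \Rightarrow (b), (c), (d)$ are immediate. The point-norm topology $\tau$ is finer than the point-weak topology $\tau_w$, so a set that is $\tau$-dense in a given set is a fortiori $\tau_w$-dense in it; hence these follow from the corresponding implications in Proposition \ref{prop-characterization-CBAP-with-tau}. (One uses here that on bounded subsets of $\CB(E,F)$ the stable point-weak topology agrees with the ordinary point-weak topology; this is the weak counterpart of Remark \ref{Remark-stable-point-norm-topology} and is proved in the same way, reducing to weak convergence at finite matrix levels via truncation.) The implications $(c) \Rightarrow (b)$ and $(d) \Rightarrow (b)$ are obtained for free by specializing to $F = E$.

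The substance lies in the converse $(b) \Rightarrow (a)$; the arguments for $(c) \Rightarrow (a)$ and $(d) \Rightarrow (a)$ are identical after replacing $\CB(E,E)$ by $\CB(E,F)$ or $\CB(F,E)$. The set $C \cdot B_{E' \otimes_{\min} E}$, viewed as a bounded convex subset of $\CB(E,E)$ as in Proposition \ref{prop-characterization-CBAP-with-tau}, has the same closure in the point-weak topology as in the point-norm topology: both are the subspace topologies induced from a product of copies of $E$ carrying, respectively, the product of the weak topologies and the product of the norm topologies, and these two locally convex topologies on the product share the same continuous dual, since the norm-continuous and weakly continuous linear functionals on a normed space coincide. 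This is precisely \cite[Prop. 11.2.2]{Effros-Ruan-book}, whose proof is a Hahn--Banach separation argument that never invokes completeness and therefore transfers verbatim to normed operator spaces. Granting it, the $\tau_w$-density of the convex set $C \cdot B_{E' \otimes_{\min} E}$ in $B_{\CB(E,E)}$ upgrades to $\tau$-density, and Proposition \ref{prop-characterization-CBAP-with-tau} then yields that $E$ has the $C$-CBAP.

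The one point that requires checking rather than quoting is that \cite[Prop. 11.2.2]{Effros-Ruan-book} remains valid without the completeness assumption, together with the bookkeeping that matches its stable formulation to the unstable statement of Proposition \ref{prop-characterization-CBAP-with-tau} on bounded sets. I expect no genuine obstacle here: the first is a purely topological-algebraic separation argument carried out coordinatewise in a product, and the second follows from the weak analogue of Remark \ref{Remark-stable-point-norm-topology} noted above.
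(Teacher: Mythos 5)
Your proposal is correct and is essentially the paper's own argument: the paper likewise observes that \cite[Prop. 11.2.2]{Effros-Ruan-book} (the Hahn--Banach/Mazur convexity principle) carries over to normed operator spaces and then runs the proof of \cite[Cor. 11.2.3]{Effros-Ruan-book} with Proposition \ref{prop-characterization-CBAP-with-tau} in place of the completeness-dependent lemma. Your handling of the stable versus ordinary point-weak topologies on bounded sets is exactly the bookkeeping the paper leaves implicit (cf. Remark \ref{Remark-stable-point-norm-topology}), so nothing further is needed.
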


Consider now the natural embedding
$$
\CB(E,F) \subseteq \CB(E, F'') = (E \otimes_{\proj} F')'.
$$
In  \cite[Lemma 11.2.4]{Effros-Ruan-book} it is proved that  when $E$ and $F$ are complete,
the stable point-weak topology on $\CB(E,F)$ is just the relative weak$^*$ topology determined by $E \otimes_{\proj} F'$, and thus each stable point-weakly continuous functional on $\CB(E,F)$ is determined by an element of $E \widehat{\otimes}_{\proj} F'$.
Let us once again remark that the proof of \cite[Lemma 11.2.4]{Effros-Ruan-book} applies more generally to normed operator spaces.

The bounded approximation property, in the context of normed spaces, has many important equivalences in the tensor product realm. Indeed, a Banach space $E$ has the $C$-approximation property if and only if $E\otimes_{\pi}F \to (E'\otimes_{\varepsilon}F')'$ is a $C$-isomorphism onto its image. Moreover, these properties are also equivalent to the following inequality:
$\pi \leq C \overleftarrow{\pi}$ on $E\otimes F$ (for all normed spaces $F$).
At first glance the CBAP seems to be a nice extension of the bounded approximation property to operator spaces, and the literature shows a number of examples where the analogy works very well. 
However, the lack of local reflexivity does not directly allow the equivalence between CBAP and the natural operator space versions of the mentioned results related to tensor products. Nevertheless, we will see that a weak version of CBAP introduced in \cite{effros1990approximation} will allow us to translate those approximation properties relative to tensor products of normed spaces into this context.

The following is the obvious adaptation of the notion of W*MAP defined in \cite{effros1990approximation}.

\begin{definition}
    An operator space $E$ has the $C$-W*CBAP if there exists a net of finite-rank maps $\phi_i : E \to E''$ such that $\n{\phi_i}_{\cb} \le C$ for all $i$ and for every $x \in E$, $\phi_i (x) \to \kappa_E(x)$ in the weak* topology.
\end{definition}

Note that clearly the $C$-CBAP implies the $C$-W*CBAP, as convergence in norm implies convergence in the weak* topology.
The same argument as in \cite{effros1990approximation}  shows that if $E$ has the $C$-W*CBAP, then so does $M_n(E)$.

\begin{remark}
Similarly as in \cite{effros1990approximation} it is easy to prove (taking adjoints) that $E$ has the $C$-CBAP if and only if there is a net of weak* continuous finite-rank maps $\psi_i:E' \to E'$ such that $\Vert \psi_i \Vert_{cb}\le C$ for all $i$ and $\psi_i(x')\to x'$ in the weak* topology for every $x'\in E'$. Instead, $E$ has the $C$-W*CBAP if and only if the same condition as above holds with the exception that the weak* continuity of the mappings $\psi_i$ is not assumed/required.
\end{remark}

We now prove a variant of Corollary \ref{cor-characterization-CBAP-with-tau-weak}.

\begin{proposition}\label{prop-characterization-W*CBAP}
Suppose that $E$ is a normed operator space and $C \ge 1$. The following are equivalent:
\begin{enumerate}[(a)]
\item $E$ has the $C$-W*CBAP.
\item $ C \cdot B_{E' \otimes_{\min} E''}$ is  point-weak*-dense in $B_{\CB(E,E'')}$.
\item For any operator space $F$, $C \cdot  B_{E' \otimes_{\min} F'}$ is  point-weak*-dense in $B_{\CB(E,F')}$.
\end{enumerate}
\end{proposition}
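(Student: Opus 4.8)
The plan is to establish the cycle $(a)\Rightarrow(c)\Rightarrow(b)\Rightarrow(a)$, following the template of Proposition~\ref{prop-characterization-CBAP-with-tau} but replacing ``composition with the identity'' by composition with the canonical bidual maps, since the approximating finite-rank maps now take values in $E''$ (resp.\ $F'$) rather than in $E$.

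For $(a)\Rightarrow(c)$ I would start from a net of finite-rank maps $\phi_i\colon E\to E''$ with $\n{\phi_i}_{\cb}\le C$ and $\phi_i(x)\to\kappa_E(x)$ weak$^*$ for all $x\in E$, witnessing the $C$-W*CBAP. Fixing an operator space $F$ and $\psi\in B_{\CB(E,F')}$, the key step is to pass to the ``canonical extension'' $\overline\psi:=(\kappa_F)'\circ\psi''\colon E''\to F'$ (where $\psi''\colon E''\to F'''$ is the second adjoint of $\psi$ and $(\kappa_F)'\colon F'''\to F'$ the adjoint of $\kappa_F$). This $\overline\psi$ is a complete contraction (a composition of an adjoint of $\psi$ with an adjoint of a complete isometry), is weak$^*$-to-weak$^*$ continuous (composition of adjoint maps), and satisfies $\overline\psi\circ\kappa_E=\psi$ via the naturality identity $\psi''\circ\kappa_E=\kappa_{F'}\circ\psi$ together with the standard relation $(\kappa_F)'\circ\kappa_{F'}=id_{F'}$. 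Then $\eta_i:=\overline\psi\circ\phi_i\colon E\to F'$ is finite-rank (it factors through the finite-dimensional range of $\phi_i$) with $\n{\eta_i}_{\cb}\le C$, hence can be viewed as an element of $C\cdot B_{E'\otimes_{\min}F'}$, and weak$^*$-continuity of $\overline\psi$ gives $\eta_i(x)=\overline\psi(\phi_i(x))\to\overline\psi(\kappa_E(x))=\psi(x)$ weak$^*$ for every $x\in E$, i.e.\ $\eta_i\to\psi$ in the point-weak$^*$ topology.

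The implication $(c)\Rightarrow(b)$ is just the special case $F=E'$. For $(b)\Rightarrow(a)$, note that $\kappa_E$ is a complete isometry, hence lies in $B_{\CB(E,E'')}$; applying the density hypothesis to $\kappa_E$ produces a net $(\phi_i)$ in $C\cdot B_{E'\otimes_{\min}E''}$ converging to $\kappa_E$ point-weak$^*$, and since the elements of $E'\otimes_{\min}E''$ are exactly the finite-rank maps $E\to E''$ while membership in $C\cdot B$ means $\cb$-norm $\le C$, this is precisely a net witnessing the $C$-W*CBAP. (If one reads $\otimes_{\min}$ as the completed tensor product, a routine perturbation replaces each $\phi_i$ by a nearby finite-rank map of $\cb$-norm $\le C$ without disturbing point-weak$^*$ convergence.)

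All the estimates are soft; the only delicate point is the bookkeeping among the iterated duals, so that $\overline\psi$ is correctly recognized as a weak$^*$-to-weak$^*$ continuous extension of $\psi$ and not confused with some other lift. I do not anticipate a genuine obstacle here: unlike in the CBAP case (where one needs a convexity/Mazur argument to upgrade point-weak convergence to point-norm convergence), the W*CBAP is defined through weak$^*$ convergence and therefore matches the density statements verbatim.
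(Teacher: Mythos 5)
Your proof is correct and follows essentially the same route as the paper: the key step in $(a)\Rightarrow(c)$ is exactly the paper's composition $\kappa_F'\circ\psi''\circ\phi_i$ (your $\overline\psi\circ\phi_i$), using weak$^*$-continuity of the adjoint maps and the identity $\overline\psi\circ\kappa_E=\psi$, while $(c)\Rightarrow(b)$ is the specialization $F=E'$ and $(b)\Rightarrow(a)$ is just reading off the definition at $\kappa_E$. Your extra bookkeeping (naturality and $(\kappa_F)'\circ\kappa_{F'}=id_{F'}$) only makes explicit what the paper leaves implicit.
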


\begin{proof}
The implication $(a) \Rightarrow (c)$ is implicit in the proof of \cite[Thm. 2.2]{effros1990approximation}. Suppose that $T : E \to F'$ is a complete contraction. Consider the commutative diagram
$$
\xymatrix{
E'' \ar[r]^{T''} &F''' \ar[d]^{\kappa_F'}\\
E \ar[u]^{\kappa_E} \ar[r]_{T} &F'
}
$$
Note that both $T''$ and $\kappa_F'$ are weak* continuous.
Therefore, if $\phi_i : E \to E''$ is the net of maps in $C \cdot B_{E' \otimes_{\min} E}$ converging to $\kappa_E$ in the point-weak* topology given by the definition of $C$-W*CBAP, it is clear that $\kappa_F'T''\phi_i : E \to F'$ converges to $T$ in the point-weak* topology.

 $(c)\Rightarrow (b)$  is trivial.

Finally, $(b)$ implies that the canonical inclusion $\kappa_E : E \to E''$ can be approximated in the point-weak* topology by a net of finite-rank maps whose cb-norms are at most $C$, which is exactly the definition of $C$-W*CBAP, yielding $(a)$.

\end{proof}

The following result is, in a sense, a quantitative version of \cite[Thm. 2.2]{effros1990approximation}.

\begin{theorem} \label{thm: equivalencias WCBAP}
Let $E$ be an operator space and $C \ge 1$. The following are equivalent:
\begin{enumerate}[(a)]
\item For any operator space $F$, the natural map $E \otimes_{\proj} F \to (E' \otimes_{\min} F')'$ is a complete $C$-isomorphism onto its image.
\item For any operator space $F$, the natural map $E \otimes_{\proj} F \to (E' \otimes_{\min} F')'$ is a $C$-isomorphism onto its image.
\item $E$ has the $C$-W*CBAP.
\item For any complete contraction $T: E \to E''$ there exists a net of finite-rank maps $\phi_i : E \to E''$ such that $\n{\phi_i}_{\cb} \le C$ for all $i$ and for every $x \in E$, $\phi_i (x) \to Tx$ in the weak* topology.
\item For any operator space $F$ and any complete contraction $T: E \to F'$, there is a net of finite-rank maps $\psi_i : E \to F'$ such that $\n{\psi_i}_{\cb} \le C$ for all $i$ and for every $x \in E$, $\psi_i x \to Tx$ in the weak* topology.
\item The natural map $E \otimes_{\proj} E' \to (E' \otimes_{\min} E'')'$ is a complete $C$-isomorphism onto its image.
\item The natural map $E \otimes_{\proj} E' \to (E' \otimes_{\min} E'')'$ is a $C$-isomorphism onto its image.
\end{enumerate}
\end{theorem}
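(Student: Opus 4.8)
The plan is to read this theorem as a repackaging — via the bipolar theorem — of Proposition~\ref{prop-characterization-W*CBAP}, supplemented by routine manipulations of approximating nets for $(d)$ and $(e)$; the non-quantitative version of the statement is \cite[Thm. 2.2]{effros1990approximation}, so most of the work is in tracking the constant $C$ and in passing from the merely bounded to the completely bounded versions.

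First I would pin down the natural map. By Remark~\ref{extension-sin-loc-refl}, $E'\otimes_{\min}F'\hookrightarrow (E\otimes_{\proj}F)'$ is a complete isometry; dualizing and precomposing with $\kappa_{E\otimes_{\proj}F}$ yields the natural complete contraction $j_F\colon E\otimes_{\proj}F\to (E'\otimes_{\min}F')'$ of items $(a)$, $(b)$, $(f)$, $(g)$. Using $(E\otimes_{\proj}F)'=\CB(E,F')$ one has $\n{u}_{E\otimes_{\proj}F}=\sup\{|\langle u,\psi\rangle|:\psi\in B_{\CB(E,F')}\}$ and $\n{j_F(u)}=\sup\{|\langle u,\phi\rangle|:\phi\in B_{E'\otimes_{\min}F'}\}$, so by homogeneity ``$j_F$ is a $C$-isomorphism onto its image'' is the same as $\n{u}_{E\otimes_{\proj}F}\le C\,\n{j_F(u)}$ for all $u$, which the bipolar theorem in the duality $\langle E\otimes F,\CB(E,F')\rangle$ converts into: $B_{\CB(E,F')}$ lies in the $\sigma(\CB(E,F'),E\otimes F)$-closure of $C\cdot B_{E'\otimes_{\min}F'}$. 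Since $\sigma(\CB(E,F'),E\otimes F)$ is exactly the point-weak$^*$ topology, this last statement is the density condition appearing in Proposition~\ref{prop-characterization-W*CBAP}. Thus $(b)$ for a fixed $F$ is equivalent to Proposition~\ref{prop-characterization-W*CBAP}$(c)$ for that $F$, and $(g)$ is equivalent to Proposition~\ref{prop-characterization-W*CBAP}$(b)$ (the instance $F=E'$, so $F'=E''$). Consequently $(b)\Rightarrow(g)$ (specialize $F=E'$), $(g)\Leftrightarrow(c)$ by Proposition~\ref{prop-characterization-W*CBAP}, and $(c)\Rightarrow(b)$ since $C$-W*CBAP supplies the density condition simultaneously for every operator space $F$.

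For the completely bounded refinements $(a)$ and $(f)$: $(a)\Rightarrow(b)$ and $(a)\Rightarrow(f)\Rightarrow(g)$ are immediate, so it suffices to prove $(b)\Rightarrow(a)$. Here I would apply $(b)$ to $S_1^n[F]=S_1^n\otimes_{\proj}F$ in place of $F$ and use the canonical identifications $E\otimes_{\proj}S_1^n[F]\cong S_1^n[E\otimes_{\proj}F]$, $M_n(F')\cong (S_1^n[F])'$, $M_n(X\otimes_{\min}Y)\cong X\otimes_{\min}M_n(Y)$, and $M_n(Z)'\cong S_1^n[Z']$ to check that $j_{S_1^n[F]}$ is, under these identifications, precisely $\mathrm{id}_{S_1^n}\otimes j_F$. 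Since $\n{T}_{\cb}=\sup_n\n{\mathrm{id}_{S_1^n}\otimes T}$ for any linear map $T$ (a consequence of $\n{T}_{\cb}=\n{T'}_{\cb}$ and $M_n(X)'=S_1^n[X']$), the validity of $(b)$ for all the $S_1^n[F]$ says exactly that $j_F$ is a \emph{complete} $C$-isomorphism onto its image, i.e.\ $(a)$.

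Finally, $(e)\Rightarrow(d)$ is the case $F=E'$ and $(d)\Rightarrow(c)$ is the case $T=\kappa_E$, while for $(c)\Rightarrow(e)$ I would recycle the argument in the proof of Proposition~\ref{prop-characterization-W*CBAP}: given a complete contraction $T\colon E\to F'$ and the W*CBAP net $\phi_i\colon E\to E''$, put $\psi_i=\kappa_F'\circ T''\circ\phi_i$; these are finite rank with $\n{\psi_i}_{\cb}\le C$, the maps $T''$ and $\kappa_F'$ are weak$^*$-continuous, and $\kappa_F'\circ T''\circ\kappa_E=T$, so $\phi_i x\to\kappa_E x$ weak$^*$ forces $\psi_i x\to Tx$ weak$^*$. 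I do not anticipate any deep obstacle here; the only genuinely delicate point is the bookkeeping in $(b)\Rightarrow(a)$ — checking that amplification by $S_1^n$ faithfully reproduces the natural map associated with $S_1^n[F]$ — together with the verification that the use of algebraic (rather than completed) tensor products causes no difficulty in the bipolar argument.
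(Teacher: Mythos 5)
Your proposal is correct and follows essentially the same route as the paper: the isomorphism statements are converted by the bipolar theorem in the pairing $\langle E\otimes F,\CB(E,F')\rangle$ into the point-weak$^*$ density conditions of Proposition \ref{prop-characterization-W*CBAP}, items $(c)$--$(e)$ are exactly that proposition, and the upgrade from $C$-isomorphism to complete $C$-isomorphism is obtained by replacing $F$ with $S_1^n[F]$. The only (harmless) difference is bookkeeping: the paper dualizes and shows the $M_n$-amplifications of the adjoint are $C$-quotients, while you amplify $j_F$ itself by $S_1^n$ and invoke $\n{T}_{\cb}=\sup_n\n{\mathrm{id}_{S_1^n}\otimes T}$ together with the fact that $S_1^n[\cdot]$ respects complete isometries (used elsewhere in the paper via \cite[Cor. 1.2]{Pisier-Asterisque-98}) -- a dual formulation of the same step.
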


\begin{proof}
$(a) \Rightarrow (b)$ is obvious. Let us assume $(b)$, and let 
$F$ be an operator space. In order to show that the natural map $E \otimes_{\proj} F \to (E' \otimes_{\min} F')'$ is a complete $C$-isomorphism onto its image, it suffices to show that its adjoint $(E' \otimes_{\min} F')'' \to (E \otimes_{\proj} F)' = \CB(E,F')$ is a complete $C$-quotient. In turn, this will follow from proving that for any $n$ the amplification $M_n((E' \otimes_{\min} F')'') \to M_n(\CB(E,F'))$ is a $C$-quotient. Considering the standard identifications
$M_n((E' \otimes_{\min} F')'') = (M_n(E' \otimes_{\min} F'))'' = (E' \otimes_{\min} M_n(F'))''= (E' \otimes_{\min} S_1^n[F]')''$ and
$\CB(E,M_n(F')) = \CB(E,S_1^n[F]')$, we need to show that the natural map
$(E' \otimes_{\min} (S_1^n[F])')'' \to \CB(E,S_1^n[F]')$ is a $C$-quotient. This follows from $(b)$ with $S_1^n[F]$ in place of $F$ and taking adjoints. Thus, $(a) \Leftrightarrow (b)$.

Now, $(c) \Leftrightarrow (d) \Leftrightarrow (e)$ is just a restatement of Proposition \ref{prop-characterization-W*CBAP}.

Let us now prove that $(b) \Leftrightarrow (e)$.
First observe that the natural map $E \otimes_{\proj} F \to (E' \otimes_{\min} F')'$ always has cb-norm one.
Therefore, $(b)$ is equivalent to having
$$
(E \otimes F) \cap B_{( E' \otimes_{\min} F' )'} \subset C \cdot B_{E \otimes_{\proj} F} 
$$
By the bipolar theorem, this in turn is equivalent to
$$
B_{\CB(E,F')} \subset \overline{C \cdot B_{ E' \otimes_{\min} F' }}^{w^*}.
$$
where the closure is in the weak* topology associated with the identification $\CB(E,F') = (E \otimes_{\proj} F)'$. And this is clearly equivalent to the statement of $(e)$.

Finally, it is evident that $(a)\Rightarrow (f)\Rightarrow (g)$ and arguing as in $(b) \Leftrightarrow (e)$ we easily obtain that $(g)\Leftrightarrow (d)$.
\end{proof}

\begin{remark}\label{alpha CBAP}
    In view of our preceding results, it seems plausible to extend the usual concept of the bounded approximation property with respect to a tensor norm (see e.g. \cite[21.7]{Defant-Floret}) to our operator space setting. Namely, a natural definition to consider should be the following: an operator space $E$ has the $\alpha$-W*CBAP (with constant $C>0$) if the natural map 
$$
F\otimes_{\alpha}E \to (F'\otimes_{\alpha'}E')'$$ is a complete $C$-isomorphism onto its image for every operator space $F$.
Nevertheless,  we do not tackle this here and leave it for future research.

\end{remark}

The following two results follow similarly as \cite[Prop. 2.3]{effros1990approximation} and \cite[Prop. 2.4]{effros1990approximation}. We include somewhat alternative proofs for completeness.

\begin{proposition}\label{prop W*CBAP and locally reflexive}
    If an operator space $E$ has the $C$-W*CBAP and is locally reflexive, then $E$ has the $C$-CBAP.
\end{proposition}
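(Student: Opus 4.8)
The plan is to use local reflexivity to ``pull back'' the approximating net $\phi_i \colon E \to E''$ provided by the $C$-W*CBAP to a net of finite-rank maps valued in $E$ itself, retaining the bound $C$ on the completely bounded norms; this produces a net converging to $id_E$ in the point-weak topology, and then a truncation argument together with Corollary \ref{cor-characterization-CBAP-with-tau-weak} finishes the proof.

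First I would fix a net $(\phi_i)$ of finite-rank maps $\phi_i \colon E \to E''$ with $\n{\phi_i}_{\cb} \le C$ and $\phi_i(x) \to \kappa_E(x)$ in the weak* topology for every $x \in E$, coming from the $C$-W*CBAP. For each $i$, set $R_i := \phi_i(E) \in \OFIN(E'')$, let $\iota_i \colon R_i \hookrightarrow E''$ be the (completely isometric) inclusion, and let $\phi_i^{R_i} \colon E \to R_i$ be the corestriction of $\phi_i$, which has the same $\cb$-norm since $R_i \subseteq E''$ completely isometrically. Applying local reflexivity to the complete contraction $\iota_i$ from the finite-dimensional operator space $R_i$ into $E''$ yields a net $(r^i_\eta)_\eta$ of complete contractions $r^i_\eta \colon R_i \to E$ with $\kappa_E \circ r^i_\eta \to \iota_i$ in the point-weak* topology. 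Then the maps $\theta_{i,\eta} := r^i_\eta \circ \phi_i^{R_i} \colon E \to E$ are finite-rank with $\n{\theta_{i,\eta}}_{\cb} \le C$, and for fixed $i$ one has $\kappa_E \circ \theta_{i,\eta} \to \phi_i$ in the point-weak* topology as $\eta$ varies. Since in addition $\phi_i \to \kappa_E$ in the point-weak* topology and the point-weak* closure in the space of linear maps $E \to E''$ (each copy of $E''$ carrying its weak* topology) is idempotent, it follows that $\kappa_E$ lies in the point-weak* closure of $\{\kappa_E \circ \theta : \theta \colon E \to E \text{ finite-rank},\ \n{\theta}_{\cb} \le C\}$. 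Hence there is a net $(\theta_j)$ of finite-rank maps $E \to E$ with $\n{\theta_j}_{\cb} \le C$ and $\theta_j \to id_E$ in the point-weak topology of $E$.

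Next I would promote this to convergence in the stable point-weak topology $\tau_w$. Because a functional on $M_m(E)$ is a finite matrix of functionals on $E$ (using $(M_m(E))' = S_1^m[E']$), the point-weak convergence $\theta_j \to id_E$ on $E$ automatically yields $(\theta_j)_m \to id_{M_m(E)}$ in the point-weak topology of $M_m(E)$ for every $m$; then, given $x \in K_\infty(E)$ and $\Lambda \in K_\infty(E)'$, approximating $x$ in norm by a truncation $P^m(x) \in M_m(E)$ and using the uniform bound $\n{\theta_j}_{\cb} \le C$ exactly as in Remark \ref{Remark-stable-point-norm-topology} gives $\Lambda\big((\theta_j)_\infty(x)\big) \to \Lambda(x)$. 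Thus $\theta_j \to id_E$ in $\tau_w$; since each $\theta_j$, being a finite-rank map of $\cb$-norm at most $C$, is identified with an element of $C\cdot B_{E'\otimes_{\min}E}$, composing with an arbitrary $\varphi \in B_{\CB(E,E)}$ shows that $C\cdot B_{E'\otimes_{\min}E}$ is $\tau_w$-dense in $B_{\CB(E,E)}$. By Corollary \ref{cor-characterization-CBAP-with-tau-weak}, $E$ has the $C$-CBAP.

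The step I expect to be most delicate is the pull-back via local reflexivity: one must apply it to the correct finite-dimensional operator space, namely the range $R_i$ with its inherited operator space structure, and check that the resulting complete contractions $r^i_\eta \colon R_i \to E$ compose with $\phi_i$ with no loss in completely bounded norm — this is precisely what keeps the constant at exactly $C$ rather than $C(1+\varepsilon)$. The subsequent passage from point-weak to stable point-weak convergence is routine once the uniform $\cb$-bound is in hand.
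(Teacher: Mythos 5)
Your proof is correct and follows essentially the same route as the paper's: both apply local reflexivity to the finite-dimensional ranges of the W*CBAP net $\phi_i\colon E\to E''$, compose with the corestricted $\phi_i$ to obtain finite-rank self-maps of $E$ with $\cb$-norm at most $C$ converging to $id_E$ in the point-weak topology. The only (harmless) divergence is the concluding step: the paper upgrades point-weak to point-norm convergence by a direct convex-combinations argument, whereas you upgrade to $\tau_w$-density of $C\cdot B_{E'\otimes_{\min}E}$ in $B_{\CB(E,E)}$ and invoke Corollary \ref{cor-characterization-CBAP-with-tau-weak}, which packages the same convexity argument.
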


\begin{proof}
If $E$ has the $C$-W*CBAP  there is a net of finite-rank maps $\phi_i : E \to E''$ such that $\n{\phi_i}_{\cb} \le C$ for all $i$ and for every $x \in E$, $\phi_i (x) \to \kappa_E(x)$ in the weak* topology. For each $i$, let us denote the range of the mapping $\phi_i$ by $F_i\in \OFIN(E'')$. Since $E$ is locally reflexive for each $i$, there is a net of complete contractions $(\psi_\eta^i)_\eta\subset\CB(F_i,E)$ approximating the inclusion $F_i\hookrightarrow E''$ in the point-weak* topology. By composing we obtain a net of finite rank mappings $(\psi_\eta^i\circ \phi_i)\subset\CB(E,E)$ which converges in the point-weak topology to the identity $id:E\to E$ satisfying $\n{\psi_\eta^i\circ\phi_i}_{\cb} \le C$. A classical convex combinations argument allow us to obtain a net with the same properties but which converges in the point-norm topology to the identity; thus concluding that $E$ has the C-CBAP.
\end{proof}

\begin{proposition}\label{prop W*CBAP from dual}
If $E$ is an operator space such that $E'$ has the $C$-W*CBAP, then so does $E$.    
\end{proposition}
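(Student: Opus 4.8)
The plan is to push the approximating net for $E'$ down to $E$ using the canonical norm-one projection of the third dual onto the first dual. First I would unpack the hypothesis: since $E'$ has the $C$-W*CBAP, there is a net of finite-rank maps $\phi_i : E' \to E'''$ with $\n{\phi_i}_{\cb} \le C$ for all $i$ and with $\phi_i(x') \to \kappa_{E'}(x')$ in the weak* topology of $E''' = (E'')'$ for every $x' \in E'$.

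The key object is $P := (\kappa_E)' : E''' \to E'$, the adjoint of the (completely isometric) canonical embedding $\kappa_E : E \to E''$. I would record three properties. First, $\n{P}_{\cb} = \n{\kappa_E}_{\cb} = 1$. Second, $P$ is continuous from $\sigma(E''', E'')$ to $\sigma(E', E)$: for each $x \in E$, the functional $\xi' \mapsto \pair{P\xi'}{x} = \pair{\xi'}{\kappa_E x}$ is weak*-continuous on $E'''$ since $\kappa_E x \in E''$. Third, $P \circ \kappa_{E'} = id_{E'}$, which is the one-line check $\pair{P\kappa_{E'}(x')}{x} = \pair{\kappa_{E'}(x')}{\kappa_E x} = \pair{x'}{x}$.

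Then I would set $\psi_i := (P \circ \phi_i)' \circ \kappa_E : E \to E''$ and verify that this net witnesses the $C$-W*CBAP of $E$. Each $\psi_i$ has finite rank, since it is built from the finite-rank map $\phi_i$; and $\n{\psi_i}_{\cb} \le \n{(P\circ\phi_i)'}_{\cb}\,\n{\kappa_E}_{\cb} = \n{P\circ\phi_i}_{\cb} \le \n{P}_{\cb}\n{\phi_i}_{\cb} \le C$. For the convergence, for $x \in E$ and $x' \in E'$ one computes
$$
\pair{\psi_i(x)}{x'} = \pair{\kappa_E x}{P\phi_i(x')} = \pair{P\phi_i(x')}{x} \longrightarrow \pair{P\kappa_{E'}(x')}{x} = \pair{x'}{x} = \pair{\kappa_E x}{x'},
$$
where the limit uses $\phi_i(x') \to \kappa_{E'}(x')$ weak* together with the weak*-to-weak* continuity of $P$. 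Thus $\psi_i(x) \to \kappa_E(x)$ in $\sigma(E'', E')$ for every $x \in E$, which is precisely the definition of $E$ having the $C$-W*CBAP. (Equivalently, one could note that $(P\circ\phi_i)_i$ is a net of finite-rank maps on $E'$ of $\cb$-norm at most $C$ converging to $id_{E'}$ in the point-weak* topology, and then invoke the reformulation of the $C$-W*CBAP recorded in the Remark above.)

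There is no genuinely hard step in this argument; the only points needing care are keeping track of which dual each map lives on, and checking that $P$ is weak*-to-weak* continuous — this continuity is exactly what allows the approximation on $E'$ to descend to $E$, and is the ingredient whose failure would break the proof.
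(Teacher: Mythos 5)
Your proof is correct, but it follows a genuinely different route from the paper's. The paper deduces the result from the tensor-product characterization of the $C$-W*CBAP: it applies item $(a)$ of Theorem \ref{thm: equivalencias WCBAP} to $E'$ with $F=E$, uses the symmetry of $\proj$ and $\min$ to transpose, and lands exactly on item $(f)$ of that theorem for $E$. You instead argue directly with nets: starting from finite-rank maps $\phi_i:E'\to E'''$ with $\|\phi_i\|_{\cb}\le C$ converging point-weak* to $\kappa_{E'}$, you compress by the canonical complete contraction $P=(\kappa_E)':E'''\to E'$, which is weak*-to-weak* continuous and satisfies $P\circ\kappa_{E'}=id_{E'}$, so that $P\circ\phi_i$ is a bounded net of finite-rank maps on $E'$ converging point-weak* to $id_{E'}$; taking adjoints and restricting along $\kappa_E$ (or invoking the reformulation in the Remark following the definition of W*CBAP) yields the $C$-W*CBAP of $E$. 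All the steps check out: $\|(P\circ\phi_i)'\|_{\cb}=\|P\circ\phi_i\|_{\cb}\le C$, the maps are finite rank, and the weak*-continuity of $P$ is precisely what lets the approximation descend — and, as you note, the crucial point is that only weak* convergence is required, which is why no local reflexivity is needed. What each approach buys: yours is self-contained and elementary (it in fact supplies a proof of the ``if'' direction of the paper's unproved Remark), whereas the paper's argument is shorter given the machinery of Theorem \ref{thm: equivalencias WCBAP} already in place and makes transparent that the statement is just the transposition symmetry of the pairing $E\otimes_{\proj}F\to(E'\otimes_{\min}F')'$.
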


\begin{proof}
    If $E'$ has the $C$-W*CBAP, by Theorem \ref{thm: equivalencias WCBAP} $(a)$ (applied to $F=E$) we obtain that  the map $E' \otimes_{\proj} E \to (E'' \otimes_{\min} E')'$ is a complete $C$-isomorphism with its image. Since both $\proj$ and $\min$ are symmetric we can flip the spaces (by transposition) to obtain that $E \otimes_{\proj} E' \to (E' \otimes_{\min} E'')'$ is a complete $C$-isomorphism with its image. This is exactly what is stated in Theorem \ref{thm: equivalencias WCBAP} $(f)$. Hence, $E$ has the $C$-W*CBAP.
\end{proof}

As a corollary we see that under local reflexivity the $C$-CBAP can be transferred from the dual $E'$ to $E$.
\begin{corollary}
Let $E$ be an operator space which is locally reflexive. If $E'$ has the $C$-CBAP, then so does $E$.
(in fact, if $E'$ has the $C$-W*CBAP then $E$ has $C$-CBAP).
\end{corollary}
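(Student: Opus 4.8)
The plan is to simply chain together the two preceding propositions. The statement to prove is: if $E$ is locally reflexive and $E'$ has the $C$-CBAP, then $E$ has the $C$-CBAP; and moreover the same conclusion holds under the weaker hypothesis that $E'$ has the $C$-W*CBAP. First I would observe that the $C$-CBAP trivially implies the $C$-W*CBAP (this is noted right after the definition of W*CBAP), so the parenthetical ``stronger'' version actually \emph{contains} the first assertion, and it suffices to prove that one. So assume $E'$ has the $C$-W*CBAP.

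Next I would invoke Proposition \ref{prop W*CBAP from dual}, which says precisely that if $E'$ has the $C$-W*CBAP then $E$ has the $C$-W*CBAP. Now $E$ is locally reflexive by hypothesis, so Proposition \ref{prop W*CBAP and locally reflexive} applies and yields that $E$ has the $C$-CBAP. That completes the argument. There is essentially no obstacle here: the corollary is a formal two-line consequence of results already established in this subsection, and the only thing worth spelling out is the reduction of the first statement to the parenthetical one via the CBAP $\Rightarrow$ W*CBAP implication.

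If one preferred a route that does not quote Proposition \ref{prop W*CBAP from dual} as a black box, one could instead argue through Theorem \ref{thm: equivalencias WCBAP}: the $C$-W*CBAP of $E'$ gives, via part $(a)$ of that theorem applied with $F = E$, that $E' \otimes_{\proj} E \to (E'' \otimes_{\min} E')'$ is a complete $C$-isomorphism onto its image; symmetry of $\proj$ and $\min$ lets one transpose the tensor factors to get that $E \otimes_{\proj} E' \to (E' \otimes_{\min} E'')'$ is a complete $C$-isomorphism onto its image, which is part $(f)$ of the theorem and hence $E$ has the $C$-W*CBAP; then local reflexivity upgrades this to $C$-CBAP exactly as in the proof of Proposition \ref{prop W*CBAP and locally reflexive} (pulling the finite-rank maps $\phi_i : E \to E''$ back into $E$ using complete contractions $F_i \to E$ approximating the inclusion in the point-weak* topology, and passing to convex combinations to convert point-weak* convergence into point-norm convergence). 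Either way, the substantive content has already been done; this corollary is pure bookkeeping.
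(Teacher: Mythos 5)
Your proof is correct and is exactly the argument the paper intends: weaken $C$-CBAP of $E'$ to $C$-W*CBAP, apply Proposition \ref{prop W*CBAP from dual} to transfer the W*CBAP to $E$, and then use local reflexivity via Proposition \ref{prop W*CBAP and locally reflexive} to upgrade to $C$-CBAP. The alternative route you sketch is just an unpacking of the same two propositions, so there is nothing further to add.
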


As a consequence of Theorem \ref{thm: equivalencias WCBAP} we obtain the following analogue of \cite[Thm. 16.2]{Defant-Floret}:

\begin{theorem}\label{thm:equivalencias WCBAP cofinitamente generada}
Let $E$ be an operator space and $C \ge 1$. The following are equivalent:
\begin{enumerate}[(a)]
\item $E$ has the $C$-W*CBAP.
\item For every operator space $F$ (or only $F=E'$), the  identity $E \otimes_{\overleftarrow{\proj}} F \to E \otimes_{\proj} F$ has $\cb$-norm at most $C$.
\item For every operator space $F$, the  identity $E \otimes_{\overleftarrow{\proj}} F \to E \otimes_{\proj} F$ has norm at most $C$.
\end{enumerate}
In particular, $E$ has the W*CMAP if and only if
$$
\proj = \overleftarrow{\proj} \quad \text{ on } E \otimes F
$$
for every operator space $F$ (or only $F=E'$).
\end{theorem}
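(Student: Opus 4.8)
The plan is to deduce Theorem \ref{thm:equivalencias WCBAP cofinitamente generada} from Theorem \ref{thm: equivalencias WCBAP} together with the Duality Theorem \ref{duality-theorem}. The key observation is that the natural map $E \otimes_{\proj} F \to (E' \otimes_{\min} F')'$ factors (up to the canonical identifications) through $E \otimes_{\overleftarrow{\proj}} F$: recalling that $\proj' = \min$ and that $\min = \overleftarrow{\min}$, equation \eqref{eqn-duality-3} of the Duality Theorem gives a \emph{complete isometry} $E \otimes_{\overleftarrow{\proj}} F \hookrightarrow (E' \otimes_{\min} F')'$, and moreover the formal identity $E \otimes_{\overleftarrow{\proj}} F \to E \otimes_{\proj} F$ is always a complete contraction (by Proposition \ref{prop-order-of-hulls}, since $\overleftarrow{\proj} \le \proj$). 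Composing, we see that the natural map $E \otimes_{\proj} F \to (E' \otimes_{\min} F')'$ has the same cb-norm (and the same norm) behaviour as the formal identity $E \otimes_{\overleftarrow{\proj}} F \to E \otimes_{\proj} F$; more precisely, the former is a complete $C$-isomorphism onto its image if and only if the latter has cb-norm at most $C$.

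First I would make that last equivalence precise. Denote by $\iota : E \otimes_{\overleftarrow{\proj}} F \to (E' \otimes_{\min} F')'$ the complete isometry from \eqref{eqn-duality-3}, by $j : E \otimes_{\overleftarrow{\proj}} F \to E \otimes_{\proj} F$ the (complete contraction) formal identity, and by $\rho : E \otimes_{\proj} F \to (E' \otimes_{\min} F')'$ the natural map. Then $\rho \circ j = \iota$ as maps on the algebraic tensor product. Since $\iota$ is a complete isometry, for $z \in M_n(E\otimes F)$ we have $\overleftarrow{\proj}_n(z) = \n{\iota_n(z)}$, so that $\rho$ is a complete $C$-isomorphism onto its image — i.e. $\overleftarrow{\proj}_n(z) \le \n{\rho_n(z)} \le \proj_n(z)$ fails only in the first inequality up to a factor $C$, equivalently $\proj_n(z) \le C\,\n{\rho_n(z)} = C\, \overleftarrow{\proj}_n(z)$ — precisely when $j$ has cb-norm at most $C$. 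The non-complete version works identically at the level $n=1$. This identifies condition $(a)$ of Theorem \ref{thm: equivalencias WCBAP} with condition $(b)$ here, and condition $(b)$ of Theorem \ref{thm: equivalencias WCBAP} with condition $(c)$ here (taking $F = E'$ for the ``only $F=E'$'' clause uses the corresponding ``only $F=E'$'' clauses $(f)$, $(g)$ of Theorem \ref{thm: equivalencias WCBAP}, again via \eqref{eqn-duality-3} applied with the spaces $E$ and $E'$).

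Assembling: $(a) \Leftrightarrow$ [$E$ has $C$-W*CBAP] $\Leftrightarrow$ [Theorem \ref{thm: equivalencias WCBAP}(a), resp. (f)] $\Leftrightarrow$ $(b)$, and $(a) \Leftrightarrow$ [Theorem \ref{thm: equivalencias WCBAP}(b), resp. (g)] $\Leftrightarrow$ $(c)$, where in each case the middle equivalence is Theorem \ref{thm: equivalencias WCBAP} and the outer one is the factorization $\rho \circ j = \iota$ through the complete isometry \eqref{eqn-duality-3}. The final ``in particular'' is the case $C=1$: $E$ has the W*CMAP iff the formal identity $E \otimes_{\overleftarrow{\proj}} F \to E \otimes_{\proj} F$ is a complete contraction for every $F$ (or only $F=E'$), which combined with the always-valid reverse contraction from Proposition \ref{prop-order-of-hulls} says exactly $\proj = \overleftarrow{\proj}$ on $E \otimes F$.

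The only mild subtlety — and the step I would be most careful about — is bookkeeping the canonical identifications so that $\rho \circ j$ really equals $\iota$ on the nose (not merely up to an isomorphism): one must check that the ``natural map'' $E \otimes_{\proj} F \to (E' \otimes_{\min} F')'$ used in Theorem \ref{thm: equivalencias WCBAP} is the same bilinear pairing (via $\langle z, u\otimes v\rangle$ evaluation) as the embedding produced by the Duality Theorem applied to $\alpha = \proj$, $\alpha' = \min$. This is routine since both are induced by the trace pairing $\langle E\otimes F, E'\otimes F'\rangle$, but it is the one place where a sign/transposition error could creep in. Everything else is a direct quotation of the two cited theorems.
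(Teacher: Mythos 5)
Your proof is correct and follows essentially the same route as the paper's: the complete isometry $E \otimes_{\overleftarrow{\proj}} F \hookrightarrow (E' \otimes_{\min} F')'$ from the Duality Theorem (with $\proj' = \min$), combined with $\overleftarrow{\proj} \le \proj$, converts the norm-comparison conditions into the statement that the natural map $E \otimes_{\proj} F \to (E' \otimes_{\min} F')'$ is a (complete) $C$-isomorphism onto its image, and then Theorem \ref{thm: equivalencias WCBAP} (items (a), (b), (f), (g)) finishes the argument. Your extra care about the factorization $\rho \circ j = \iota$ through the trace pairing is exactly the bookkeeping the paper leaves implicit.
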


\begin{proof}
By the Duality Theorem \ref{duality-theorem} we have the complete isometry $E \otimes_{\overleftarrow{\proj}} F \hookrightarrow (E' \otimes_{\min} F')'$. Also, we know that $\overleftarrow{\proj} \leq \proj$. Therefore, 
 $E \otimes_{\overleftarrow{\proj}} F \to E \otimes_{\proj} F$ has $\cb$-norm at most $C$ (resp. norm at most $C$)
if and only if the mapping 
$E \otimes_{\proj} F \to (E' \otimes_{\min} F')'$
is a complete $C$-isomorphism (resp. $C$-isomorphism) onto its image. Now,  Theorem \ref{thm: equivalencias WCBAP} gives the result.
\end{proof}

Now we present an operator space version of \cite[Lem. 16.2]{Defant-Floret} which enables us to prove the subsequent corollary.

\begin{lemma} \label{lem:norma cofinitamente generada equivalente}
    Let $\alpha$ be an o.s. tensor norm, $E$ and $F$ be normed operator spaces, and $n\in\N$. Then $\alpha_n \le C \overleftarrow{\alpha}_n$ on $E \otimes F$ if and only if 
$$
B_{M_n\big((E \otimes_\alpha F)'\big)} \subset C\cdot \overline{ B_{M_n( E' \otimes_{\alpha'} F' )} }^{\tau_w} \subset M_n\big(\CB(E,F') \big) = \CB\big(E , M_n(F')\big).
$$
\end{lemma}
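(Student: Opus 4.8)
The plan is to unwind the statement $\alpha_n \le C \overleftarrow{\alpha}_n$ on $E \otimes F$ into a polar/bipolar statement in the pairing between $M_n(E \otimes F)$ and $M_n\big((E \otimes_\alpha F)'\big) = S_1^n[(E\otimes_\alpha F)']$, and then to identify the unit ball of $\overleftarrow{\alpha}$ using the Duality Theorem. First I would recall that by the Duality Theorem \ref{duality-theorem}, equation \eqref{eqn-duality-3}, we have a complete isometry $E \otimes_{\overleftarrow{\alpha}} F \hookrightarrow (E' \otimes_{\alpha'} F')'$, so that $\overleftarrow{\alpha}_n(z;E,F)$ equals the norm of $z$ as a functional on $M_m(E' \otimes_{\alpha'} F')$, i.e.
\[
\overleftarrow{\alpha}_n(z;E,F) = \sup\big\{ \n{\mpair{z}{u}}_{M_{mn}} \;:\; m\in\N,\ u \in M_m(E'\otimes F'),\ \overrightarrow{\alpha'}_m(u;E',F') \le 1 \big\}.
\]
Since $\overrightarrow{\alpha'}$ is the finite hull, the supremum may equivalently be taken over all $u$ in $B_{M_m(E'\otimes_{\alpha'}F')}$ with $E',F'$ replaced by finite-dimensional subspaces — but the key point I would isolate is that $\overleftarrow{\alpha}_n \le 1$ on a matrix $z$ is equivalent to $z$ lying in the polar (inside $M_n(E\otimes F)$, in the pairing $\mpair{\cdot}{\cdot}$) of the set $\bigcup_m B_{M_m(E'\otimes_{\alpha'}F')}$, suitably interpreted through the matrix pairing.

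Next I would set up the correct bilinear duality. Using the identification $M_n\big((E \otimes_\alpha F)'\big) = \CB(E\otimes_\alpha F, M_n) = \CB(E, M_n(F'))$ and $\big(M_n(E\otimes_\alpha F)\big)' = S_1^n[(E\otimes_\alpha F)']$ (cf.\ \cite[Thm. 4.1.8]{Effros-Ruan-book}), the statement ``$\alpha_n \le C\overleftarrow{\alpha}_n$ on $E\otimes F$'' says exactly that the formal identity $E\otimes_{\overleftarrow\alpha} F \to E\otimes_\alpha F$ has $\cb$-norm (at level $n$, but one checks it is enough to control level $n$ here since we are fixing $n$) at most $C$; dualizing, this is equivalent to the adjoint map $\big(M_n(E\otimes_\alpha F)\big)' \to \big(M_n(E\otimes_{\overleftarrow\alpha} F)\big)'$ having norm at most $C$, i.e.\
\[
B_{ (M_n(E\otimes_\alpha F))' } \subseteq C \cdot B_{ (M_n(E\otimes_{\overleftarrow\alpha} F))' }.
\]
Now I would feed in the Duality Theorem again: since $E\otimes_{\overleftarrow\alpha} F \hookrightarrow (E'\otimes_{\alpha'}F')'$ completely isometrically, and $M_n$ of this is $(E'\otimes_{\alpha'}S_1^n[F]')'$ type expression, the dual ball $B_{(M_n(E\otimes_{\overleftarrow\alpha}F))'}$ is precisely the $\tau_w$-closure (i.e.\ weak* closure with respect to the predual $M_n(E\otimes_{\proj}F)$, which on bounded sets of $\CB(E,M_n(F'))$ is the stable point-weak = point-weak* topology) of $B_{M_n(E'\otimes_{\alpha'}F')}$ inside $\CB(E,M_n(F'))$. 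This is the heart of the matter: one has to verify that the weak* topology on $(M_n(E\otimes_{\overleftarrow\alpha}F))'$ coming from the predual, restricted to the relevant balls, is exactly $\tau_w$, and that the canonical image of $B_{M_n(E'\otimes_{\alpha'}F')}$ is weak*-dense in $B_{(M_n(E\otimes_{\overleftarrow\alpha}F))'}$ — this last density being a bipolar-theorem argument using that $\overleftarrow{\alpha'} $ computes $\overleftarrow\alpha$ via the pairing.

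Putting these together gives the claimed equivalence: $\alpha_n \le C\overleftarrow\alpha_n$ on $E\otimes F$ iff $B_{M_n((E\otimes_\alpha F)')} \subseteq C\cdot \overline{B_{M_n(E'\otimes_{\alpha'}F')}}^{\tau_w}$ inside $\CB(E,M_n(F'))$. Concretely the argument is: ($\Leftarrow$) if every norm-one $\Phi \in M_n((E\otimes_\alpha F)')$ is a $\tau_w$-limit of a net from $C\cdot B_{M_n(E'\otimes_{\alpha'}F')}$, then for any $z\in M_n(E\otimes F)$ the pairing $\mpair{z}{\Phi}$ is approximated by pairings against elements of $C\cdot B_{M_n(E'\otimes_{\alpha'}F')}$, each of which is bounded by $C\,\overleftarrow\alpha_n(z)$ by the Duality Theorem; taking sup over $\Phi$ gives $\alpha_n(z) \le C\overleftarrow\alpha_n(z)$. ($\Rightarrow$) conversely, if $\alpha_n\le C\overleftarrow\alpha_n$, then by the dual-ball inclusion above and the weak*-density, we get the stated inclusion. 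I expect the main obstacle to be the careful bookkeeping identifying the weak* topology on $(M_n(E\otimes_{\overleftarrow\alpha}F))'$ with $\tau_w$ on bounded sets — in particular checking that the predual is (a quotient of, or equal to) $M_n(E\otimes_{\proj}F)$ in a way compatible with the inclusion $M_n((E\otimes_\alpha F)') \subseteq \CB(E,M_n(F')) = (M_n(E\otimes_{\proj}F))'$ — together with the density step, which is where a bipolar argument relative to the pairing $\big\langle M_n(E\otimes F), S_1^n[(E\otimes_\alpha F)']\big\rangle$ must be invoked; the algebraic identities among the various $M_n$, $S_1^n[\cdot]$ and $\CB$ identifications are routine but must be tracked consistently.
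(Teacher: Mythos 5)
Your overall strategy (the Duality Theorem plus a polar/bipolar argument) is the same as the paper's, and your closing $(\Leftarrow)$ paragraph is essentially the paper's easy direction and is fine. The gap is in the hard direction, at the step where you dualize the level-$n$ identity as Banach spaces and then assert that $B_{(M_n(E\otimes_{\overleftarrow{\alpha}}F))'}$ ``is precisely the $\tau_w$-closure of $B_{M_n(E'\otimes_{\alpha'}F')}$ inside $\CB(E,M_n(F'))$''. This conflates the two different matrix duals: $(M_n(X))'=S_1^n[X']$ carries the trace-class-type norm, whereas the sets appearing in the lemma, $B_{M_n((E\otimes_\alpha F)')}$ and $B_{M_n(E'\otimes_{\alpha'}F')}$, live in $M_n(X')=\CB(X,M_n)\subset M_n(\CB(E,F'))=\CB(E,M_n(F'))$ with the completely bounded norm. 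These are the same vector space of matrices of functionals but with genuinely different unit balls; already for $E=F=\C$ your identification would say $B_{S_1^n}=\overline{B_{M_n}}$, which fails for $n\ge 2$. Consequently the inclusion you extract from the adjoint of $id_n$, namely $B_{(M_n(E\otimes_\alpha F))'}\subseteq C\,B_{(M_n(E\otimes_{\overleftarrow{\alpha}}F))'}$, is not the inclusion in the statement, and the proposed bridge between the two does not hold as written.

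The repair is to stay with the $M_n$-duals and the matrix pairing throughout, as the paper does. By the Duality Theorem, $\overleftarrow{\alpha}_n(z;E,F)=\sup\big\{\n{\mpair{T}{z}}_{M_{n^2}} : T\in B_{M_n(E'\otimes_{\alpha'}F')}\big\}$, and since cb-norms of maps into $M_n$ are attained at the $n$-th level (Smith's lemma, via $M_n(X')=\CB(X,M_n)$), one also has $\alpha_n(z;E,F)=\sup\big\{\n{\mpair{T}{z}}_{M_{n^2}} : T\in B_{M_n((E\otimes_\alpha F)')}\big\}$; in particular $\alpha_n\le C\overleftarrow{\alpha}_n$ is equivalent to $B_{M_n((E\otimes_\alpha F)')}\subseteq C\,B_{M_n((E\otimes_{\overleftarrow{\alpha}}F)')}$, an inclusion of $M_n$-balls, not of $(M_n(\cdot))'$-balls. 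The remaining content --- recovering the latter ball from $\overline{B_{M_n(E'\otimes_{\alpha'}F')}}^{\tau_w}$ --- is precisely the bipolar/Goldstine step (the one-line ``by the bipolar theorem'' in the paper), run in the pairing of $M_n(\CB(E,F'))$ with matrices over $E\otimes F$. Your instinct that this is the heart of the matter is right, but it must be carried out with the $M_n$-balls; as proposed, the key identification is false and the $(\Rightarrow)$ direction does not go through.
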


\begin{proof}
Let $z \in M_n(E \otimes F)$. By the Duality Theorem \ref{duality-theorem},
$$
\overleftarrow{\alpha}_n(z; E, F) =  \sup \big\{  \n{ \mpair{T}{z} }_{M_{nm}} \mid  T \in B_{M_m( E' \otimes_{\alpha'} F' )} \big\},
$$
whereas clearly
$$
\alpha_n(z; E, F) = \sup  \big\{  \n{ \mpair{T}{z} }_{M_{nm}} \mid  T \in B_{M_m(( E \otimes_{\alpha} F )')} \big\}
$$
By the bipolar theorem, the inclusion
$$
B_{M_n\big((E \otimes_\alpha F)'\big)} \subset C \cdot\overline{ B_{M_n( E' \otimes_{\alpha'} F' )} }^{\tau_w}
$$
is then equivalent to $\alpha_n \le C \overleftarrow{\alpha}_n$ on $M_n(E \otimes F)$.
\end{proof}

From Theorem \ref{thm:equivalencias WCBAP cofinitamente generada} and Lemma \ref{lem:norma cofinitamente generada equivalente} we derive the following corollary. Compare to Proposition \ref{prop-characterization-W*CBAP}.


\begin{corollary}
Suppose that $E$ is a normed operator space and $C \ge 1$. The following are equivalent:
\begin{enumerate}[(a)]
\item $E$ has the $C$-W*CBAP.
\item $ C \cdot B_{E' \otimes_{\min} E''}$ is  $\tau_w$-dense in $B_{\CB(E,E'')}$.
\item For any operator space $F$, $C \cdot  B_{E' \otimes_{\min} F'}$ is $\tau_w$-dense in $B_{\CB(E,F')}$.
\end{enumerate}
\end{corollary}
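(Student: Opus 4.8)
The plan is to obtain all three statements as a formal consequence of Theorem~\ref{thm:equivalencias WCBAP cofinitamente generada} and Lemma~\ref{lem:norma cofinitamente generada equivalente}, specialized to $\alpha=\proj$ (so that $\alpha'=\min$), once one has set up the right translation dictionary. The key identifications are: $(E\otimes_{\proj}F)'=\CB(E,F')$ completely isometrically, hence $M_n\big((E\otimes_{\proj}F)'\big)=\CB(E,M_n(F'))$ and $M_n(E'\otimes_{\min}F')=E'\otimes_{\min}M_n(F')$; and $M_n(F')=(S_1^n[F])'$ completely isometrically (the last identification already appears in the proof of Theorem~\ref{thm: equivalencias WCBAP}). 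Since the topology $\tau_w$ on $\CB(E,W)$ depends only on the operator space $W$ (through the weak topology of $K_\infty(W)$), the $\tau_w$-closure taken in $M_n\big(\CB(E,F')\big)=\CB(E,M_n(F'))$ in Lemma~\ref{lem:norma cofinitamente generada equivalente} is literally the $\tau_w$-closure in $\CB(E,(S_1^n[F])')$; and as $F$ ranges over all operator spaces and $n$ over $\N$ (with $n=1$ allowed, where $S_1^1[F]=F$), the spaces $M_n(F')$ range over all dual operator spaces.

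With this in hand I would argue (a)$\Leftrightarrow$(c) as follows. By Theorem~\ref{thm:equivalencias WCBAP cofinitamente generada}(b), $E$ has the $C$-W*CBAP if and only if for every operator space $F$ the identity $E\otimes_{\overleftarrow{\proj}}F\to E\otimes_{\proj}F$ has $\cb$-norm at most $C$, which by the definition of $\cb$-norm means $\proj_n\le C\,\overleftarrow{\proj}_n$ on $E\otimes F$ for every $n$. By Lemma~\ref{lem:norma cofinitamente generada equivalente} (with $\alpha=\proj$), this is in turn equivalent to $B_{M_n((E\otimes_{\proj}F)')}\subset C\cdot\overline{B_{M_n(E'\otimes_{\min}F')}}^{\tau_w}$ for all $F$ and all $n$; using the dictionary above, the family of all these inclusions says exactly that $C\cdot B_{E'\otimes_{\min}G'}$ is $\tau_w$-dense in $B_{\CB(E,G')}$ for every operator space $G$, which is (c). Then (c)$\Rightarrow$(b) is the special case $G=E'$.

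For (b)$\Rightarrow$(a) I would apply Lemma~\ref{lem:norma cofinitamente generada equivalente} at level $n=1$ with $F=E'$: condition (b) is then equivalent to the identity $E\otimes_{\overleftarrow{\proj}}E'\to E\otimes_{\proj}E'$ having norm at most $C$, and the complete isometry $E\otimes_{\overleftarrow{\proj}}E'\hookrightarrow(E'\otimes_{\min}E'')'$ from the Duality Theorem~\ref{duality-theorem} turns this into the assertion that $E\otimes_{\proj}E'\to(E'\otimes_{\min}E'')'$ is a $C$-isomorphism onto its image, which is condition (g) of Theorem~\ref{thm: equivalencias WCBAP} and hence equivalent to the $C$-W*CBAP. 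This closes the cycle (a)$\Rightarrow$(c)$\Rightarrow$(b)$\Rightarrow$(a). The step I expect to demand the most care is not any individual implication but the translation dictionary of the first paragraph: one must check that "$\cb$-norm of the identity $\le C$" unwinds correctly to the whole family $\{\proj_n\le C\,\overleftarrow{\proj}_n\}_{n\in\N}$, that $M_n$ commutes with the tensor-product identifications used, and above all that the $\tau_w$ topology used at the $n$-th matrix level in Lemma~\ref{lem:norma cofinitamente generada equivalente} coincides with the $\tau_w$ topology on $\CB(E,(S_1^n[F])')$. Granting this bookkeeping, the corollary is purely formal.
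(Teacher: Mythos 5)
Your argument is correct, but it is not the route the paper takes, and part of your machinery is heavier than necessary. For $(a)\Leftrightarrow(c)$ the paper simply combines part $(c)$ of Theorem~\ref{thm:equivalencias WCBAP cofinitamente generada} (the Banach-level statement ``norm at most $C$ for every $F$'') with the $n=1$ case of Lemma~\ref{lem:norma cofinitamente generada equivalente}; no matrix-level dictionary is needed at all, so your identifications $M_n(F')=(S_1^n[F])'$, $M_n(E'\otimes_{\min}F')=E'\otimes_{\min}M_n(F')$ and the compatibility of $\tau_w$ at higher matrix levels, while correct, are doing work that the norm-level statement already does for free. The genuine divergence is in $(b)\Rightarrow(a)$: you re-enter the tensor-norm formalism (Lemma~\ref{lem:norma cofinitamente generada equivalente} at $n=1$ with $F=E'$, the Duality Theorem embedding $E\otimes_{\overleftarrow{\proj}}E'\hookrightarrow(E'\otimes_{\min}E'')'$, and condition $(g)$ of Theorem~\ref{thm: equivalencias WCBAP}), which is a valid closed loop and has the virtue of never comparing topologies; the paper instead argues topologically, noting that a net in $C\cdot B_{E'\otimes_{\min}E''}$ is bounded, so $\tau_w$-convergence implies point-weak convergence (by the argument of Remark~\ref{Remark-stable-point-norm-topology}) and hence point-weak$^*$ convergence, after which Proposition~\ref{prop-characterization-W*CBAP}$(b)$ gives the $C$-W*CBAP directly. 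Your approach buys a purely duality-theoretic proof with no topology-comparison step; the paper's buys brevity and avoids all the matrix-level bookkeeping. Either way the corollary stands.
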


\begin{proof}
The equivalence between $(a)$ and $(c)$ follows from Theorem \ref{thm:equivalencias WCBAP cofinitamente generada} $(c)$ and Lemma \ref{lem:norma cofinitamente generada equivalente}.
Clearly, $(c)$ implies $(b)$.
Assuming $(b)$, any $T \in B_{\CB(E,E'')}$ is the $\tau_w$-limit of a net in $ C \cdot B_{E' \otimes_{\min} E''}$, which means $T$ is also the point-weak limit of the same net (by an argument analogous to that of Remark \ref{Remark-stable-point-norm-topology}), and hence also the point-weak* limit. Now Proposition  \ref{prop-characterization-W*CBAP} implies $(a)$.
\end{proof}

\section{Mapping ideals} \label{mapping ideals}

We introduce now the definition of a mapping ideal, the operator space version of Banach operator ideals defined by Pietsch and also widely developed by Defant and Floret.
The notion in our framework comes from \cite[Sec. 12.2]{Effros-Ruan-book}, though we have to strengthen Effros and Ruan's definition in order to keep the essence of the traditional Banach space concept and allow the natural relationship  with tensor norms.
As in \cite{Defant-Floret} the definition of ideal is restricted to the category of complete spaces. Let us  denote by $\OBAN$ the class of all Banach operator spaces.

\begin{definition}
A \emph{(normed) mapping ideal} $(\mathfrak{A},\mathbf{A})$ is an assignment, for each pair of operator spaces $E, F \in \OBAN$, of a linear space $\mathfrak{A}(E,F) \subseteq \CB(E,F)$  together with an operator space structure $\mathbf{A}$ on $\mathfrak{A}(E,F)$ such that
\begin{enumerate}[(a)]
\item The identity map $\mathfrak{A}(E,F) \to \CB(E,F)$ is a complete contraction.
\item For every $x'\in M_n(E')$ and $y\in M_m(F)$ the mapping $x'\otimes y$ belongs to $M_{nm}(\mathfrak{A}(E,F))$ and $\mathbf{A}_{nm}(x'\otimes y)=\|x'\|_{M_n(E')} \|y\|_{M_m(F)}$.
\item The ideal property: whenever $T \in M_n(\mathfrak{A}(E,F))$, $r \in \CB(E_0,E)$ and $s \in \CB(F,F_0)$, it follows that $s_n \circ T \circ r$ belongs to $ M_n(\mathfrak{A}(E_0,F_0))$ with
$$
\mathbf{A}_n( s_n \circ T \circ r ) \le \n{s}_{\cb} \mathbf{A}_n(T) \n{r}_{\cb}.
$$

\end{enumerate}
\end{definition}

\begin{remark} \label{OFIN-OBAN}
Item (b) of the previous definition is the \textsl{new} requirement, not considered in \cite[Sec. 12.2]{Effros-Ruan-book}. 
In the Banach space realm, the definition of normed operator ideal includes that the ideal norm of the identity $id_{\mathbb  C}:  \mathbb C \to \mathbb C$  is one \cite[Definition 6.1.1.]{Pietsch-67}. This requisite is in fact the key to relate normed ideals and finitely generated  tensor norms.
Unfortunately there is no  immediate  translation of this requirement into the operator space framework, but nevertheless there is a way to tackle this issue.
By \cite[Proposition 6.1.5]{Pietsch-Operator-Ideals} the aforementioned condition about the norm of the identity (together with the other properties of normed operator ideals) implies that ---and in fact it is equivalent to--- the ideal norm of any elementary tensor $x'\otimes y$ (for $x' \in E'$ and $y \in F$) is exactly $\Vert x' \Vert \Vert y \Vert$. This has inspired  item (b) on our definition. Note the similarity between this statement and equation \eqref{norma producto matrices} in the definition of o.s. tensor norms.
 
Let us show now that (b) can be somehow relaxed.

First, note that since $\|x'\otimes y\|_{cb}=\|x'\|_{M_n(E')} \|y\|_{M_m(F)}$, condition (a) implies that $\mathbf{A}_{nm}(x'\otimes y)\ge\|x'\|_{M_n(E')} \|y\|_{M_m(F)}$. Hence, we can replace in (b) the symbol $=$ by $\le$. 

Also, it is enough that this inequality is satisfied for every $E,F\in\OFIN$. Indeed, let us show that the inequality is valid  for every Banach operator space when it is so for every finite-dimensional operator space. Let $x'=(x'_{ij})\in M_n(E')$ and $y=(y_{ij})\in M_m(F)$ with $E,F\in\OBAN$. Consider $L=\bigcap_{i,j}\ker(x'_{ij})\in\OCOFIN(E)$ and $u'\in M_n((E/L)')$ such that $x'=(q_L^E)'_n(u')$. Since $(q_L^E)'$ is a complete isometry, we have $\|u'\|_{M_n((E/L)')}=\|x'\|_{M_n(E')}$. Choosing $N\in\OFIN(F)$ such that $y\in M_m(N)$ by means of condition (c) of the definition we obtain
$$
\mathbf{A}_{nm}(x'\otimes y;E,F)\le \mathbf{A}_{nm}(u'\otimes y; E/L,N)\le
\|u'\|_{M_n((E/L)')}\|y\|_{M_m(N)}=\|x'\|_{M_n(E')}\|y\|_{M_m(F)}.
$$

Therefore, we can exchange condition (b) of the previous definition by the following:

\begin{itemize}
    \item[(b')] For every $E,F\in\OFIN$, $x'\in M_n(E')$ and $y\in M_m(F)$ the mapping $x'\otimes y$ belongs to $M_{nm}(\mathfrak{A}(E,F))$ and $\mathbf{A}_{nm}(x'\otimes y)\le\|x'\|_{M_n(E')} \|y\|_{M_m(F)}$.
\end{itemize}

\end{remark}

Of course, the class of completely bounded mappings $\CB$ is obviously a mapping ideal.

Let us recall the definitions of some other mapping ideals that have already appeared in the literature. We explain in each case (whenever it is not obvious) why condition (b') is satisfied. Let $E$ and $F$ be Banach  operator spaces.
\begin{enumerate}[(i)]
\item \emph{Finite rank mappings} $\mathcal F$: 
 the space $\mathcal{F}(E,F)$ consists of finite rank mappings from $E$ to $F$. This is a mapping ideal with the $\cb$-norm. 
 \item \emph{Completely approximable mappings} $\mathcal{A}$:  the space $\mathcal{A}(E,F)$ is the closure of $\mathcal F(E,F)$ inside $\CB(E,F).$
\item \emph{Completely nuclear mappings} $\mathcal N$ \cite[12.2]{Effros-Ruan-book}:  the space $\mathcal{N}(E,F)$ is the image of the mapping
\begin{equation}\label{def nucleares}
    \Phi: E'\widehat{\otimes}_{\proj}F \to E'\widehat{\otimes}_{\min}F \subset \CB(E,F)
\end{equation}
with the quotient operator space structure given by the canonical identification 
$$
\mathcal{N}(E,F) =\frac{ E' \widehat{\otimes}_{\proj} F}{\ker  \Phi}.
$$
The nuclear operator space norm is denoted by $\nu$. Equation \eqref{norma producto matrices} for the $\proj$ norm shows that (b') is satisfied for $\mathcal{N}$.

\item \emph{Completely integral mappings}  $\mathcal I$ \cite[12.3]{Effros-Ruan-book}:  the space $\mathcal I(E,F)$ is formed by those mappings such that
\begin{equation*}
    \iota(T)=\sup \{(\nu(T|_M):M \in \OFIN(E) \}<\infty. 
\end{equation*}
Given $T=(T_{ij}) \in M_n(\mathcal I(E,F))$ it is defined
$$\iota_n(T)=\sup \{(\nu_n(T|_M):M \in \OFIN(E) \}.$$
This gives an operator space structure on $\mathcal I(E,F)$. Since $\mathcal{I}$ and $\mathcal{N}$ coincide on $\OFIN$, condition (b') is satisfied for $\mathcal{I}$.
\item \emph{Completely $p$-summing $\Pi_p$} \cite[Chapter 5]{Pisier-Asterisque-98}: For $1 \leq p < \infty$, a linear map $T:E \to F$ belongs to $\Pi_p(E,F)$ if the mapping
\begin{equation*}
    i_{S_p} \otimes T: S_p \otimes_{\min}E \to S_p[F]
\end{equation*}
is bounded (equivalently, completely bounded), and we denote its norm by $\pi_p(T)$. The operator space structure of $\Pi_p(E,F)$ is inherited from $\CB( S_p \otimes_{\min}E, S_p[F])$. 
As a consequence of  \cite[Thm. 3.1]{CD-Chevet-Saphar-OS} for any $E,F\in\OFIN$ we have the following complete isometries:
$$\Pi_p(E,F)= (E\otimes_{d_{p'} }F')'= E'\otimes_{(d_{p'} )'}F.$$ 
Equation \eqref{norma producto matrices} for the $(d_{p'} )'$ norm shows that (b') is satisfied for $\Pi_p$.
\item \emph{Completely right $p$-nuclear $\mathcal N^p$ \cite[Definition 2.1]{chavez2019operator}}: We say that a linear mapping $T:E \to F$ belongs to $\mathcal N^p(E,F)$ if it is in the range of the canonical inclusion
\begin{equation*}
J^p: E' \widehat{\otimes}_{d_p }F \to E' \widehat{\otimes}_{\min}F \subset \CB(E,F).    
\end{equation*}
We endow $\mathcal N^p(E,F)$ with the operator space quotient structure $E' \widehat{\otimes}_{d_p }F /Ker J^p.$ Equation \eqref{norma producto matrices} for the $d_p $ norm shows that (b') is satisfied for $\mathcal{N}^p$.
\item \emph{Operator $p$-compact mappings $\mathcal{K}_p $ \cite[Definition 3.2]{chavez2019operator}}: A mapping $T \in \mathcal{K}_p (E,F)$ if there exist $G\in\OBAN$, a completely right $p$-nuclear mapping $\Theta \in \mathcal{N}^p(G,F)$ and a completely bounded mapping $R \in \CB(E,G/\ker\Theta)$ with $\Vert R \Vert_{\cb} \leq 1$ such that the following diagram commutes

\begin{equation}\label{p-compact o.s.}
\xymatrix{
E \ar[r]^T  \ar[rd]_R    & F  & \ar[l]_{\Theta } \ar@{->>}[ld]^{\pi}  G \\
 &   G/\ker\Theta  \ar[u]^{\widetilde{\Theta}},  &
}
\end{equation}

where $\pi$ and $\widetilde{\Theta}$  stand for the natural quotient mapping and the natural monomorphism associated to $\Theta$ respectively.

 For $T \in  \mathcal{K}_p (E,F)$, we also define
$$\kappa_p (T):= \inf \{ \nu^p(\Theta) \},$$ where the infimum runs over all possible completely right $p$-nuclear mappings $\Theta \in \mathcal{N}^p(G,F)$  as in  \eqref{p-compact o.s.}.
 It should be mentioned \cite[Prop. 3.8.]{chavez2019operator} that $\mathcal{K}_p $ is exactly the surjective hull of $\mathcal N^p$ (see Proposition \ref{surjective hull} below). This provides the  operator space structure of $\mathcal{K}_p $ and shows that condition (b') is satisfied for this class. 

\item \emph{Completely $p$-nuclear mappings $\mathcal{N}_p$ \cite{Junge-Habilitationschrift}:}
A linear map $u : E \to F$ is \emph{completely $p$-nuclear} (denoted $u \in \mathcal{N}_p(E,F)$) if there exists a factorization of $u$ as
$$
\xymatrix{
E \ar[r]^{\alpha} &S_\infty \ar[r]^{M(a,b)} &S_p \ar[r]^{\beta} &F
}
$$
where  $a,b \in S_{2p}$, $M(a,b)$ is the multiplication operator $x \mapsto a x b$, and $\alpha, \beta$ completely bounded maps. The completely $p$-nuclear norm of $u$ is defined as
$$
\nu_p(u) = \inf \big\{ \n{\alpha}_{\cb} \n{a}_{S_{2p}} \n{b}_{S_{2p}} \n{\beta}_{\cb} \big\}
$$
where the infimum is taken over all factorizations of $u$ as above.
From \cite[Cor. 3.1.3.9 ]{Junge-Habilitationschrift},
when $E,F\in\OBAN$
and $1 \le p < \infty$, trace duality yields an isometric isomorphism between $\mathcal{N}_p(E,F)'$ and $\Pi_{p'}(F,E'')$.
This allows us to endow $\mathcal{N}_p(E,F)$ with an operator space structure. Also, condition (b') holds since for $E,F\in\OFIN$,
$\mathcal{N}_p(E,F)=F\otimes_{d_p }E'$.

\item \emph{Completely $(q,p)$-mixing mappings $\mathfrak{M}_{q,p}$ \cite{Yew-08,CD-completely-q-p-mixing}}:
Let $1\le p \le q < \infty$. A mapping $u: E \to F$ is said to be \emph{completely $(q,p)$-mixing} (denoted $u\in\mathfrak{M}_{q,p}(E,F)$) if there exists a constant $K$ such that for any $G \in \OBAN$ and any completely $q$-summing map $v : F \to G$, the composition $v \circ u$ is a completely $p$-summing map and $\pi_p(v\circ u) \le K \pi_q (v)$. The \emph{completely $(q,p)$-mixing norm} of $u$ is the smallest such $K$ and is denoted by $\mathfrak{m}_{q,p}(u)$.
By \cite[Thm. 4.3]{CD-completely-q-p-mixing}, when $F \subseteq B(H)$ we have that $u\in\mathfrak{M}_{q,p}(E,F)$ if and only if 
for all $n$ and all $(x_{ij})$ in $M_n(E)$ we have
$$
\sup \left\{ \n{ \big(a(ux_{ij})b \big) }_{S_p^n[S_q(H)]} \;:\; a,b \in B_{S_{2q}(H)} ,\; a,b \ge 0 \right\}\le C \n{(x_{ij})}_{S_p^n\otimes_{\min} E},
$$
so in this way $\mathfrak{M}_{q,p}(E,F)$ inherits an operator space structure from $\CB( S_p\otimes_{\min} E, \ell_\infty(S_p[S_q(H)]))$. 
To verify condition (b'), suppose $E,F\in\OFIN$, $x'\in M_n(E')$ and $y\in M_m(F)$.
Let $(x_{ij}) \in M_k(E)$, and consider $a,b \in B_{S_{2q}(H)}$.
Observe that for each $i,j$ we have $a\big( (x'\otimes y)x_{ij}\big)b = (x'\otimes ayb)(x_{ij})$.
Now, since we have already observed that condition (b') is satisfied for the completely $p$-summing maps, the norm of $x'\otimes ayb$ in $M_{nm}( \CB(S_p \otimes_{\min} E, S_p[S_q(H)]))$ is at most $\n{x'}_{M_n(E')} \n{ayb}_{M_m(S_q(H))}$.
Since the multiplication map $w \mapsto awb$ from $\mathcal{B}(H)$ to $S_q(H)$ has $\cb$-norm at most $\n{a}_{S_{2q}}\n{b}_{S_{2q}}$ \cite[Prop. 5.6]{Pisier-Asterisque-98}, taking the supremum over $a,b$ yields condition (b') for $\mathfrak{M}_{q,p}$.
\end{enumerate}

Naturally, the list above is not meant to be exhaustive.
Without going into the details, we point out that \cite{Junge-Habilitationschrift} defines a number of other potential examples of mapping ideals including:
completely $p$-integral mappings (Def. 3.1.3.2)
the completely injective hull of the completely $p$-nuclear mappings (Def. 3.1.3.5),
completely $(p,k)$-mappings and $(p,k)$-integral maps (Def. 3.1.3.6),
mappings admitting completely bounded factorizations through diagonal operators from $\ell_p$ to $\ell_q$ and their noncommutative analogue, namely two-sided multiplication operators from $S_p$ to $S_q$ (Sec. 3.1.4).
While initially only a norm is defined, characterizations are proved that allow us to obtain o.s. structures.
Observe that by Proposition \ref{prop-comp-injective-hull}, the aforementioned completely injective hull of the completely $p$-nuclear mappings is indeed a mapping ideal.
Note that this is an operator space version of the classical ideal of quasi $p$-nuclear operators (often denoted $\mathcal{QN}_p$) introduced in \cite{Persson-Pietsch} and further studied in e.g. \cite{Reinov-APp,delgado2010operators}.
For all the other aforementioned examples from \cite{Junge-Habilitationschrift} we have not verified that condition (b') is satisfied, but expect that to be the case.

The following definition concerns the completeness of $\mathfrak{A}(E,F)$.
 
\begin{definition}
A mapping ideal $\mathfrak{A}$ is called a \textit{Banach mapping ideal} if $\mathfrak{A}(E,F)$ is a Banach space, for every $E,F \in \OBAN$.
\end{definition}

Note that all the examples listed above are Banach mapping ideals, except for the ideal of finite rank mappings.

As in the Banach space setting, each o.s. tensor norm allows us to construct a mapping ideal ``dual'' to it.
\begin{example} \label{U-alpha}
Given an o.s. tensor norm $\alpha$, for $E,F \in \OBAN$ let
$$
\mathfrak{A}_\alpha(E,F):= (E\otimes_\alpha F')' \cap \CB(E,F).
$$ Then, $\mathfrak{A}_\alpha$ is a mapping ideal.
\end{example}

Indeed,  the complete contraction $E\otimes_{\proj} F'\hookrightarrow E\otimes_\alpha F'$ implies that the identity map $\mathfrak{A}_\alpha(E,F)\subset \CB(E,F)$ is completely contractive too. Also, for $E,F\in\OFIN$, $\mathfrak{A}_\alpha(E,F)= E'\otimes_{\alpha'} F$ and therefore condition (b') of the definition holds. On the other hand, the metric mapping property of $\alpha$ readily implies the ideal property of $\mathfrak{A}_\alpha$. 

We now consider ideals that ``do not change when followed by a complete isometry''. To be precise:

\begin{definition}
A mapping ideal $\mathfrak{A}$ is said to be \emph{completely injective} if for each complete isometry $i : F \to G$ and $T \in M_n(\CB(E,F))$ the following equivalence holds: $T \in M_n(\mathfrak{A}(E,F))$ if and only if $i_n \circ T \in M_n(\mathfrak{A}(E,G))$, with $\mathbf{A}_n(T) = \mathbf{A}_n(i_n \circ T) $.
\end{definition}

It straightforward that the mapping ideal $\CB$ is completely injective. As a consequence we also have that $\Pi_p$ is completely injective. Indeed, let $T \in M_n(\CB(E,F))$ and $i: F \to G$ a complete isometry such that $i_n \circ T \in  M_n(\Pi_p (E,G))$. 
Since $$ M_n(\Pi_p (E,G)) \hookrightarrow M_n(\CB(S_p \otimes_{\min}E, S_p[G]))$$ and $i$ induces a complete isometry from $S_p[F]$ to $S_p[G]$ \cite[Cor. 1.2]{Pisier-Asterisque-98}, we obtain that $T \in  M_n(\Pi_p (E,F))$.

\bigskip

\begin{remark}
Observe that to prove that a mapping ideal $\mathfrak{A}$ is completely injective it is enough to check the following two conditions:
\begin{enumerate}[(a)]
    \item for each complete isometry $i : F \to G$ and $T \in \CB(E,F)$ such that $i \circ T \in \mathfrak{A}(E,G)$ we have that $ T \in \mathfrak{A}(E,F)$.
    \item $\mathbf{A}_n(T) \leq \mathbf{A}_n(i_n \circ T) $ for every complete isometry $i : F \to G$ and $T \in M_n(\mathfrak{A}(E,F))$. 
    \end{enumerate}
\end{remark}

A normed operator space $E$ has the \emph{complete $C$-extension property} if for every $T \in \CB(E,F)$ and a complete injection $i:E \to G$ there is $\widetilde T \in \CB(G,F)$ such that the following diagram commutes
$$
\xymatrix{
G  \ar@{.>}[rd]^{\widetilde T} &\\
E \ar@{^{(}->}[u]^{i} \ar[r]_{T} & F
}	
$$
with $\Vert \widetilde T \Vert_{\cb} \leq C \Vert T \Vert_{\cb}.$

Recall that $B(H)$ has the complete $1$-extension property \cite[Thm. 1.6]{Pisier-Operator-Space-Theory}.

\bigskip
Given a mapping ideal $(\mathfrak{A},\mathbf{A})$, the following proposition allows us to talk about the smallest completely injective ideal that contains it. 

\begin{proposition}\label{prop-comp-injective-hull}
Let $(\mathfrak{A},\mathbf{A})$ be a mapping ideal. Then there is a (unique) smallest completely injective mapping ideal $\mathfrak{A}^{\inj}$ which contains $\mathfrak{A}$ (called the \emph{completely injective hull} of $\mathfrak{A}$). Moreover, if $V$ is a completely injective operator space and $i : F \to V$ is a complete isometry, then $T \in M_n(\CB(E,F))$ is in $M_n(\mathfrak{A}^{\inj}(E,F))$ if and only if $i_n \circ T \in M_n(\mathfrak{A}(E,V))$ with
$$
\mathbf{A}^{\inj}_n(T) = \mathbf{A}_n(i_n \circ T).
$$

\end{proposition}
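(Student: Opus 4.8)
The plan is to construct $\mathfrak{A}^{\inj}$ explicitly through embeddings into completely injective spaces and then check all the required properties. Recall that, by Ruan's representation theorem, every Banach operator space $F$ admits a complete isometry into some $\mathcal{B}(H)$, and $\mathcal{B}(H)$ is completely injective by the Arveson extension theorem; in particular every $F \in \OBAN$ embeds completely isometrically into a completely injective Banach operator space. For $E, F \in \OBAN$ and $T \in M_n(\CB(E,F))$ I would declare $T$ to belong to $M_n(\mathfrak{A}^{\inj}(E,F))$ precisely when $i_n \circ T \in M_n(\mathfrak{A}(E,V))$ for some (equivalently, every — see below) complete isometry $i : F \to V$ into a completely injective Banach operator space $V$, and set $\mathbf{A}^{\inj}_n(T) := \mathbf{A}_n(i_n \circ T)$. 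Since $i$ is injective, the amplifications of $S \mapsto i\circ S$ pull back the operator space structure $\mathbf{A}$ on $\mathfrak{A}(E,V)$ to a genuine operator space structure on $\mathfrak{A}^{\inj}(E,F)$.

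The key first step, and the main obstacle, is to show that this definition does not depend on the choice of $(V,i)$; this also makes the \emph{moreover} assertion true by construction. Given complete isometries $i : F \to V$ and $j : F \to W$ with $V$, $W$ completely injective, use the complete injectivity of $W$ to extend $j \circ i^{-1} : i(F) \to W$ to a complete contraction $\widetilde{\jmath} : V \to W$ with $\widetilde{\jmath} \circ i = j$, and symmetrically obtain $\widetilde{\imath} : W \to V$ with $\widetilde{\imath}\circ j = i$. Then $j_n \circ T = \widetilde{\jmath}_n \circ i_n \circ T$, so by the ideal property of $\mathfrak{A}$ one gets $i_n\circ T \in M_n(\mathfrak{A}(E,V))$ if and only if $j_n \circ T \in M_n(\mathfrak{A}(E,W))$, and in that case $\mathbf{A}_n(j_n \circ T) \le \n{\widetilde{\jmath}}_{\cb}\,\mathbf{A}_n(i_n \circ T) \le \mathbf{A}_n(i_n\circ T)$, with the reverse inequality following symmetrically; hence equality. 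This is precisely where completeness of the spaces and the extension property of completely injective operator spaces are used essentially.

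Next I would verify that $(\mathfrak{A}^{\inj},\mathbf{A}^{\inj})$ is a mapping ideal. Condition (a) holds because $i$ a complete isometry gives $\n{T}_{M_n(\CB(E,F))} = \n{i_n\circ T}_{M_n(\CB(E,V))} \le \mathbf{A}_n(i_n\circ T) = \mathbf{A}^{\inj}_n(T)$, using that $\mathfrak{A}(E,V) \to \CB(E,V)$ is a complete contraction. For condition (b), note that $i_{nm}\circ(x'\otimes y) = x'\otimes i_m(y)$ and $i$ preserves matrix norms, so $x'\otimes y \in M_{nm}(\mathfrak{A}^{\inj}(E,F))$ and $\mathbf{A}^{\inj}_{nm}(x'\otimes y) = \mathbf{A}_{nm}(x'\otimes i_m(y)) = \n{x'}_{M_n(E')}\n{y}_{M_m(F)}$. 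For the ideal property (c), given $r \in \CB(E_0,E)$, $s \in \CB(F,F_0)$ and $T \in M_n(\mathfrak{A}^{\inj}(E,F))$, pick a complete isometry $j : F_0 \to W$ into a completely injective space and extend $j\circ s : F \to W$ (along $i : F \hookrightarrow V$) to $\widetilde{js} : V \to W$ with $\n{\widetilde{js}}_{\cb} \le \n{s}_{\cb}$; then $j_n \circ s_n \circ T \circ r = (\widetilde{js})_n \circ (i_n\circ T)\circ r$, and the ideal property of $\mathfrak{A}$ yields $s_n\circ T\circ r \in M_n(\mathfrak{A}^{\inj}(E_0,F_0))$ with $\mathbf{A}^{\inj}_n(s_n\circ T\circ r) \le \n{s}_{\cb}\,\mathbf{A}^{\inj}_n(T)\,\n{r}_{\cb}$. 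That $\mathfrak{A} \subseteq \mathfrak{A}^{\inj}$ with $\mathbf{A}^{\inj}_n \le \mathbf{A}_n$ is immediate from the ideal property of $\mathfrak{A}$ applied to $i_n\circ T$.

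Finally, $\mathfrak{A}^{\inj}$ is completely injective: for a complete isometry $k : F \to G$, choosing a complete isometry $j : G \to W$ into a completely injective $W$ makes $j\circ k$ a complete isometry of $F$ into $W$, so computing $\mathbf{A}^{\inj}_n(k_n\circ T)$ via $j$ gives $\mathbf{A}_n((j\circ k)_n\circ T)$, which is $\mathbf{A}^{\inj}_n(T)$ computed via $j\circ k$ — and membership transfers in the same way. For minimality, if $\mathfrak{B}$ is any completely injective mapping ideal containing $\mathfrak{A}$ and $T \in M_n(\mathfrak{A}^{\inj}(E,F))$, then $i_n\circ T \in M_n(\mathfrak{A}(E,V)) \subseteq M_n(\mathfrak{B}(E,V))$ with $\mathbf{B}_n(i_n\circ T) \le \mathbf{A}_n(i_n\circ T) = \mathbf{A}^{\inj}_n(T)$; complete injectivity of $\mathfrak{B}$ then gives $T \in M_n(\mathfrak{B}(E,F))$ with $\mathbf{B}_n(T) = \mathbf{B}_n(i_n\circ T) \le \mathbf{A}^{\inj}_n(T)$, so $\mathfrak{A}^{\inj} \subseteq \mathfrak{B}$; uniqueness of the smallest such ideal is then formal. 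The only genuinely delicate point is the independence-of-embedding step in the second paragraph.
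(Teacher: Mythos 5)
Your proof is correct and follows essentially the same route as the paper: define $\mathfrak{A}^{\inj}$ through an embedding of $F$ into a completely injective space (the paper fixes the canonical $i_F : F \to B(H_F)$, you allow any such embedding), prove independence of the embedding via the extension property together with the ideal property, and then deduce the mapping-ideal axioms, complete injectivity, the norm formula, and minimality exactly as the paper does. The only difference is organizational, not mathematical.
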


\begin{proof}
Given $F\in\OBAN$  we denote by $i_F: F \hookrightarrow B(H_F)$ any  embedding that provides $F$ the operator space structure. We define
$$\mathfrak{A}^{\inj}(E,F):=\{T \in \CB(E,F) : i_F \circ T \in \mathfrak{A}(E,B(H_F))\},$$ endowed with the operator space norm  defined for  $T \in M_n(\CB(E,F))$ as 
$$
\mathbf{A}^{\inj}_n(T) := \mathbf{A}_n((i_F)_n \circ T).
$$
To see that this is well defined, let us prove that if $i: F \to V$ is any complete isometry of $F$ into a completely  injective space
$V$ (which has to be complete as explained in Section \ref{injections-and-projections}) then 
for every $n \in \mathbb N$ and $T \in M_n(\CB(E,F))$ such that  $i_n \circ T \in M_n(\mathfrak{A}(E,V))$ we have the equality
\begin{equation} \label{igualdad normas}
\mathbf{A}_n(i_n \circ T)=\mathbf{A}_n((i_F)_n \circ T).
\end{equation}
Indeed, by the  injectivity of $B(H_F)$ there is a mapping $\widetilde i_F: V \to B(H_F)$ such that $ i_F = \widetilde i_F \circ i $ with $\Vert \widetilde i_F \Vert_{\cb}=1$. 
Then, by the ideal property
$$
\mathbf{A}_n((i_F)_n \circ T) = \mathbf{A}_n((\widetilde i_F )_n \circ  i_n \circ T) \leq \mathbf{A}_n(i_n \circ T).
$$
On the other hand, by the the  injectivity of $V$, we can argue as before to show the existence of a mapping  $\widetilde i: B(H_F) \to V$ such that 
$i=\widetilde i \circ i_F$  with $\Vert \widetilde i \Vert_{\cb}=1$ and therefore
$$
\mathbf{A}_n(i_n \circ T) = \mathbf{A}_n((\widetilde i )_n \circ  (i_F)_n \circ T) \leq \mathbf{A}_n((i_F)_n \circ T).
$$
This shows Equation \eqref{igualdad normas}.

To see that $\mathfrak{A}^{\inj}$ is a mapping ideal, we will only prove the ideal property since the other conditions easily follow. 
Let $T \in M_n(\mathfrak{A}^{\inj}(E,F))$, $r \in \CB(E_0,E)$ and $s \in \CB(F,F_0)$. 
Then,
$$
\mathbf{A}^{\inj}_n(s_n\circ T \circ r) = \mathbf{A}_n((i_{F_0})_n \circ s_n \circ T \circ r) \leq \mathbf{A}_n((i_{F_0} \circ s)_n \circ T ) \Vert r \Vert_{\cb}
$$
Once again the   injectivity of $B(H_{F_0})$ provides us with a mapping $\widetilde s \in \CB(B(H_F), B(H_{F_0}))$ satisfying
$i_{F_0}\circ s = \widetilde s \circ i_F$, with $\Vert \widetilde s \Vert_{\cb} \leq \Vert s \Vert_{\cb}$. Thus,
$$
\mathbf{A}^{\inj}_n(s_n\circ T \circ r) \leq \mathbf{A}_n((\widetilde s \circ i_F)_n \circ T ) \Vert r \Vert_{\cb} \leq \Vert s \Vert_{\cb} \mathbf{A}_n( (i_F)_n \circ T ) \Vert r \Vert_{\cb} = \Vert s \Vert_{\cb} \mathbf{A}_n^{\inj}( T ) \Vert r \Vert_{\cb}.
$$
We now see that $\mathfrak{A}^{\inj}$ is completely injective. Indeed, let $i : F \to G$ a complete isometry, $T \in \CB(E,F)$ and suppose that $i \circ T \in \mathfrak{A}^{\inj}(E,G)$. If $i_G : G \to B(H_G)$ is a completely isometric embedding,  the extension property of $B(H_G)$ shows the existence of a mapping $\widetilde{i_F}$ such that  
\begin{equation}
   \widetilde{i_F} \circ (i_G \circ i) = i_F. 
\end{equation}
Now, the definition of the ideal implies $i_G \circ  i \circ T \in  \mathfrak{A}(E,B(H_G))$. Then,  $\widetilde{i_F} \circ (i_G \circ i) \circ T = i_F \circ T$ belongs to $\mathfrak{A}(E,B(H_F))$. Hence, $T \in \mathfrak{A}^{\inj}(E,F).$

Now, if $T \in M_n(\mathfrak{A}^{\inj}(E,F))$ and $i : F \to G$ is a complete isometry, we have
\begin{align*}
   \mathbf{A}_n^{\inj}(T) & = \mathbf{A}_n((i_F)_n \circ T)= \mathbf{A}_n((\widetilde{i_F} \circ i_G \circ i)_n \circ T)  \\ 
    & \leq \mathbf{A}_n( (i_G)_n \circ(i_n \circ T))   = \mathbf{A}_n^{\inj}(i_n \circ T) .
\end{align*}
That $\mathfrak{A}^{\inj}$ is the smallest completely injective ideal which contains $\mathfrak{A}$ is trivial by definition of $\mathfrak{A}^{\inj}$. The last part of the statement follows from what was done at the beginning of the proof to show that $\mathfrak{A}^{\inj}(T)$ was well defined.
\end{proof}

\begin{example}
For the mapping ideal $\mathcal I$ of completely integral mappings,  the previous proposition says that $T\in\mathcal I^{\inj}(E,F)$ if and only if $i_F\circ T\in\mathcal I(E,B(H_F)) $. By \cite[Prop. 15.5.1]{Effros-Ruan-book} $\mathcal I(E,B(H_F))=\Pi_1 (E,B(H_F))$.
Since we have already explained that $\Pi_1 $ is a completely injective mapping ideal we derive that $\mathcal{I}^{\inj}=\Pi_1 $ (as it happens in the Banach space framework).
\end{example}

The following simple result shows that the $\inj$-procedure respects completeness.

\begin{proposition} \label{Uinj completo}
Let  $(\mathfrak{A},\mathbf{A})$  be a Banach mapping ideal then $(\mathfrak{A}^{\inj},\mathbf{A}^{\inj})$ is also a Banach mapping ideal.
\end{proposition}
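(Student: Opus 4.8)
The plan is to show, for each pair $E,F \in \OBAN$, that $\mathfrak{A}^{\inj}(E,F)$ is complete by identifying it \emph{isometrically} with a closed subspace of the Banach space $\mathfrak{A}(E,B(H_F))$. First I would recall from the construction in Proposition \ref{prop-comp-injective-hull} that, with $i_F : F \hookrightarrow B(H_F)$ the fixed complete isometry defining the operator space structure of $F$, the assignment $\Psi : T \mapsto i_F \circ T$ satisfies $\mathbf{A}^{\inj}(T) = \mathbf{A}(i_F \circ T)$ for every $T \in \mathfrak{A}^{\inj}(E,F)$; that is, $\Psi$ is a linear isometry from $\mathfrak{A}^{\inj}(E,F)$ into $\mathfrak{A}(E,B(H_F))$. (Since the Banach mapping ideal condition only concerns the norm $\mathbf{A}=\mathbf{A}_1$, it suffices to work at the level $n=1$.)

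Next I would identify the range of $\Psi$. Because $i_F$ is a complete isometry onto $i_F(F)$ and condition (a) of the definition gives $\mathfrak{A}(E,B(H_F)) \subseteq \CB(E,B(H_F))$, an element $S \in \mathfrak{A}(E,B(H_F))$ lies in the range of $\Psi$ if and only if $S(E) \subseteq i_F(F)$, in which case $\Psi^{-1}(S) = i_F^{-1}\circ S \in \CB(E,F)$ and this map lies in $\mathfrak{A}^{\inj}(E,F)$ by the very definition of the injective hull. So it remains to prove that $\mathcal{R} := \{ S \in \mathfrak{A}(E,B(H_F)) : S(E) \subseteq i_F(F) \}$ is closed in $\mathfrak{A}(E,B(H_F))$. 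Since the identity map $\mathfrak{A}(E,B(H_F)) \to \CB(E,B(H_F))$ is a contraction (condition (a)), hence continuous, it is enough to check that $\{ S \in \CB(E,B(H_F)) : S(E) \subseteq i_F(F) \}$ is closed in $\CB(E,B(H_F))$: this is where completeness of $F$ is used, as $F \in \OBAN$ and $i_F$ isometric force $i_F(F)$ to be a norm-closed subspace of $B(H_F)$, and a $\CB$-norm limit $S$ of maps $S_k$ with $S_k(E) \subseteq i_F(F)$ satisfies $S(x) = \lim_k S_k(x) \in i_F(F)$ for every $x \in E$. Therefore $\mathcal{R}$ is the preimage of a closed set under a continuous map, hence closed, hence a Banach space; via the isometry $\Psi$ we conclude that $\mathfrak{A}^{\inj}(E,F)$ is a Banach space.

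I do not expect a genuine obstacle: the only point requiring care is the verification that $i_F(F)$ is norm-closed in $B(H_F)$, which relies on the standing assumption $F \in \OBAN$, and everything else is a routine transfer of completeness through an isometry. One should perhaps also remark explicitly that $\Psi$ being a surjective isometry onto the \emph{complete} space $\mathcal{R}$ is exactly what makes $\mathfrak{A}^{\inj}(E,F)$ complete, so that $(\mathfrak{A}^{\inj},\mathbf{A}^{\inj})$ satisfies the defining property of a Banach mapping ideal for every $E,F \in \OBAN$.
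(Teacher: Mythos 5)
Your proof is correct and takes essentially the same route as the paper: completeness of $\mathfrak{A}^{\inj}(E,F)$ is transferred through the isometry $T \mapsto i_F\circ T$ into the Banach space $\mathfrak{A}(E,B(H_F))$, with pointwise convergence plus the completeness of $F$ (closedness of $i_F(F)$) guaranteeing that the limit factors through $i_F$. The paper phrases this with an explicit Cauchy sequence instead of your ``isometric onto a closed subspace'' packaging, but the underlying argument is identical.
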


\begin{proof}
Let $E,F\in \OBAN$, we only have to check that $\mathfrak{A}^{\inj}(E,F)$ is complete.
Let $(T_k)_{k \in \mathbb{N}}$ be a Cauchy sequence in $\mathfrak{A}^{\inj}(E,F)$. Note that $(T_k)_{k \in \mathbb{N}}$ is also a Cauchy sequence in $\CB(E,F)$ and then it converges (in the $cb$-norm) to an operator $T \in \CB(E,F)$.
On the other hand,  $(i_F \circ T_k)_{k \in \mathbb{N}}$ is a Cauchy sequence in $\mathfrak{A}(E,B(H_F))$ which converges to an operator $S \in \mathfrak{A}(E,B(H_F))$. To finish, it remains to prove that $S = i_F \circ T$. Indeed,  note that  $T_k \to T$ pointwise and therefore $i_F \circ T_k \to i_F \circ T$ pointwise as well. Now the result follows by the uniqueness of the limit. 

\end{proof}

As a corollary we have an analogous version of \cite[Corollary 9.7]{Defant-Floret}
for the operator space setting. 
\begin{corollary}
Let $F \in \OBAN$ with the complete $C$-extension property and $(\mathfrak{A},\mathbf{A})$ be a mapping ideal. Then for every $n \in \mathbb N$ and every $E \in \OBAN$ we have
$M_n(\mathfrak{A}(E,F))= M_n(\mathfrak{A}^{\inj}(E,F))$. Moreover, $$\mathbf{A}_n(T) \leq C \mathbf{A}_n^{\inj}(T),$$
for all $T \in M_n(\mathfrak{A}^{\inj}(E,F))$.
\end{corollary}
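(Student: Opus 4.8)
The plan is to deduce everything from the characterization of the completely injective hull in Proposition~\ref{prop-comp-injective-hull} together with one application of the extension property. First I would dispose of the easy direction: for $T \in M_n(\mathfrak{A}(E,F))$ the ideal property applied to the complete contraction $i_F : F \hookrightarrow B(H_F)$ shows $(i_F)_n \circ T \in M_n(\mathfrak{A}(E,B(H_F)))$, so $T \in M_n(\mathfrak{A}^{\inj}(E,F))$ with $\mathbf{A}^{\inj}_n(T) = \mathbf{A}_n((i_F)_n \circ T) \le \mathbf{A}_n(T)$. Thus $M_n(\mathfrak{A}(E,F)) \subseteq M_n(\mathfrak{A}^{\inj}(E,F))$ always, and all the content of the statement lies in the reverse inclusion and the bound $\mathbf{A}_n(T) \le C\, \mathbf{A}^{\inj}_n(T)$.

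Next I would construct a completely bounded projection of $B(H_F)$ onto $F$. Since $F$ has the complete $C$-extension property, applying it to the identity map $id_F \in \CB(F,F)$ and to the complete injection $i_F : F \hookrightarrow B(H_F)$ (the canonical embedding defining the operator space structure of $F$) yields a map $P \in \CB(B(H_F),F)$ with $P \circ i_F = id_F$ and $\n{P}_{\cb} \le C\, \n{id_F}_{\cb} = C$. Then, given $T \in M_n(\mathfrak{A}^{\inj}(E,F))$, Proposition~\ref{prop-comp-injective-hull} (applied with the completely injective space $B(H_F)$ and the embedding $i_F$) gives $(i_F)_n \circ T \in M_n(\mathfrak{A}(E,B(H_F)))$ with $\mathbf{A}^{\inj}_n(T) = \mathbf{A}_n((i_F)_n \circ T)$. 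Applying the ideal property of $\mathfrak{A}$ with $P$ on the left and the identity on the right, $P_n \circ (i_F)_n \circ T \in M_n(\mathfrak{A}(E,F))$ with
$$
\mathbf{A}_n\big(P_n \circ (i_F)_n \circ T\big) \le \n{P}_{\cb}\, \mathbf{A}_n\big((i_F)_n \circ T\big) \le C\, \mathbf{A}^{\inj}_n(T).
$$
Since $P_n \circ (i_F)_n = (P \circ i_F)_n = (id_F)_n$ is the identity on $M_n(F)$, this map equals $T$, so $T \in M_n(\mathfrak{A}(E,F))$ and $\mathbf{A}_n(T) \le C\, \mathbf{A}^{\inj}_n(T)$, which finishes both claims.

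I do not expect a genuine obstacle here; the argument is a direct transcription of \cite[Cor.~9.7]{Defant-Floret}. The only points needing slight care are matching the quantifiers in the definition of the complete $C$-extension property — the space possessing the property must be taken as the common domain of the map being extended and of the complete injection, which is why we feed it $id_F$ and $i_F$ rather than anything involving $E$ — and keeping track of amplifications so that $P_n \circ (i_F)_n$ is literally the identity of $M_n(F)$.
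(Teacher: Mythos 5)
Your argument is correct and is exactly the proof the paper leaves implicit: apply the complete $C$-extension property to $id_F$ along $i_F$ to get $P\in\CB(B(H_F),F)$ with $P\circ i_F=id_F$ and $\n{P}_{\cb}\le C$, then combine the description of $\mathbf{A}^{\inj}_n$ from Proposition \ref{prop-comp-injective-hull} with the ideal property of $(\mathfrak{A},\mathbf{A})$. The quantifier check you flag (that $F$, the space with the extension property, is the common domain of $id_F$ and $i_F$) and the bookkeeping $P_n\circ(i_F)_n=(id_F)_n$ are indeed the only delicate points, and you handle both correctly.
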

Similarly, we can consider ideals that behave well when composed with complete metric surjections.

\begin{definition}
A mapping ideal $(\mathfrak{A},\mathbf{A})$ is said to be \emph{completely surjective} if for each complete metric surjection $q : G \twoheadrightarrow E$ and $T \in M_n(\CB(E,F))$ the following equivalence holds: $T \in M_n(\mathfrak{A}(E,F))$ if and only if $ T \circ q \in M_n(\mathfrak{A}(G,F))$, with $\mathbf{A}_n(T) = \mathbf{A}_n(T \circ q) $.
\end{definition}

It is immediate to see that the mapping ideal $\CB$ is completely surjective. 

\bigskip

Given a mapping ideal $(\mathfrak{A},\mathbf{A})$, the following proposition defines the smallest completely surjective ideal that contains it. 
This was already proved in \cite[Prop. 3.6]{chavez2019operator}, we include a detailed proof for completeness. 



\begin{proposition} \label{surjective hull}
Let $(\mathfrak{A},\mathbf{A})$ be a mapping ideal. Then there is a (unique) smallest completely surjective mapping ideal $\mathfrak{A}^{\sur}$ which contains $\mathfrak{A}$ (called the \emph{completely surjective hull} of $\mathfrak{A}$). Moreover: if $G$ is a completely projective Banach operator space, $E \in \OBAN$ and
$q : G \twoheadrightarrow E$ is a complete metric surjection, then $T \in M_n(\CB(E,F))$ is in $M_n(\mathfrak{A}^{\sur}(E,F))$ if and only if $T \circ q \in M_n(\mathfrak{A}(G,F))$ with
$$
\mathbf{A}^{\sur}_n(T) = \mathbf{A}_n(T \circ q).
$$
\end{proposition}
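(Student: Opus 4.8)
The plan is to mimic the classical construction of the surjective hull of a Banach operator ideal (see \cite[Sec. 8.5]{Defant-Floret}), using the completely projective space $Z_E$ and quotient map $q_E : Z_E \twoheadrightarrow E$ introduced in Section \ref{injections-and-projections}. First I would \emph{define} $\mathfrak{A}^{\sur}(E,F)$ to be the set of those $T \in \CB(E,F)$ such that $T \circ q_E \in \mathfrak{A}(Z_E,F)$, with the operator space structure given by declaring, for $T \in M_n(\CB(E,F))$,
$$
\mathbf{A}^{\sur}_n(T) := \mathbf{A}_n(T \circ q_E).
$$
Since $\mathfrak{A}(Z_E,F)$ carries an operator space structure and the assignment $T \mapsto T \circ q_E$ is linear and injective (as $q_E$ is onto), this does put a matricial norm on $\mathfrak{A}^{\sur}(E,F)$; I would note that by the ideal property (c) it has cb-norm at most $\mathbf{A}_n$ of the original, and the definition makes sense because $q_E$ is a complete metric surjection.

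The first substantive step is a \textbf{well-definedness / independence lemma}: if $G$ is \emph{any} completely projective Banach operator space and $q : G \twoheadrightarrow E$ is any complete metric surjection, then for $T \in M_n(\CB(E,F))$ one has $T \circ q \in M_n(\mathfrak{A}(G,F))$ if and only if $T \circ q_E \in M_n(\mathfrak{A}(Z_E,F))$, and in that case $\mathbf{A}_n(T\circ q) = \mathbf{A}_n(T \circ q_E)$. This is the surjective analogue of Equation \eqref{igualdad normas} in the proof of Proposition \ref{prop-comp-injective-hull}, and it is proved symmetrically: by complete projectivity of $Z_E$ applied to the complete metric surjection $q : G \twoheadrightarrow E$ (and an $\varepsilon$-perturbation, which one then removes by a standard limiting argument since everything below is an infimum/equality), there is a lifting $\widetilde{q} : Z_E \to G$ with $\n{\widetilde q}_{\cb} \le 1$ and $q \circ \widetilde q = q_E$; the ideal property then gives $\mathbf{A}_n(T \circ q_E) = \mathbf{A}_n((T\circ q) \circ \widetilde q) \le \mathbf{A}_n(T \circ q)$. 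Using complete projectivity of $G$ with the quotient $q_E : Z_E \twoheadrightarrow E$ to produce a lifting $\widetilde{q_E} : G \to Z_E$ with $q_E \circ \widetilde{q_E} = q$ and $\n{\widetilde{q_E}}_{\cb}\le 1$ gives the reverse inequality. I expect the management of the $(1+\varepsilon)$ factors appearing in the definition of completely projective (unlike completely injective, where the extension is isometric) to be the main technical nuisance — this is the step most likely to require care, and the standard fix is to observe that the resulting inequalities hold up to $(1+\varepsilon)$ for every $\varepsilon>0$, hence as equalities.

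With the independence lemma in hand, the rest is routine verification of the three axioms of a mapping ideal for $\mathfrak{A}^{\sur}$, modeled verbatim on the injective-hull argument. For axiom (a) and (b') one uses that $\mathfrak{A}$ satisfies them together with the fact that $q_E$ has cb-norm one. For the ideal property (c), given $T \in M_n(\mathfrak{A}^{\sur}(E,F))$, $r \in \CB(E_0,E)$, $s \in \CB(F,F_0)$: to handle precomposition with $r$ one forms $r \circ q_{E_0} : Z_{E_0} \to E$, lifts it through $q_E : Z_E \twoheadrightarrow E$ to $\widetilde r : Z_{E_0} \to Z_E$ with $\n{\widetilde r}_{\cb} \le \n{r}_{\cb}$ (again complete projectivity of $Z_{E_0}$, with the $\varepsilon$-trick), so that $(s_n \circ T \circ r)\circ q_{E_0} = s_n \circ (T \circ q_E) \circ \widetilde r$, and apply the ideal property of $\mathfrak{A}$. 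Then I would check $\mathfrak{A}^{\sur}$ is completely surjective: given a complete metric surjection $q : G \twoheadrightarrow E$ with $G \in \OBAN$, composing $q$ with $q_G : Z_G \twoheadrightarrow G$ yields a complete metric surjection $Z_G \twoheadrightarrow E$ from a completely projective space, and applying the independence lemma twice (once for $E$, once for $G$) gives $T \in M_n(\mathfrak{A}^{\sur}(E,F))$ iff $T \circ q \in M_n(\mathfrak{A}^{\sur}(G,F))$ with equal norms. Finally, minimality: if $\mathfrak{B}$ is any completely surjective mapping ideal containing $\mathfrak{A}$ and $T \in \mathfrak{A}^{\sur}(E,F)$, then $T \circ q_E \in \mathfrak{A}(Z_E,F) \subseteq \mathfrak{B}(Z_E,F)$, and complete surjectivity of $\mathfrak{B}$ forces $T \in \mathfrak{B}(E,F)$ with $\mathbf{B}_n(T) \le \mathbf{A}^{\sur}_n(T)$; the last sentence of the statement is exactly the independence lemma. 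I would also remark, paralleling Proposition \ref{Uinj completo}, that if $\mathfrak{A}$ is a Banach mapping ideal then so is $\mathfrak{A}^{\sur}$, since $T \mapsto T \circ q_E$ identifies $\mathfrak{A}^{\sur}(E,F)$ with a closed subspace of $\mathfrak{A}(Z_E,F)$ (closedness following because convergence there implies pointwise convergence of the $T$'s, and $\CB(E,F)$ is complete).
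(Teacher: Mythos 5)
Your proposal is correct and follows essentially the same route as the paper's proof: the same definition via $Z_E$ and $q_E$, the same independence lemma proved by lifting through complete projectivity (with the $(1+\varepsilon)$ factors handled exactly as you indicate), and the same verifications of the ideal property, complete surjectivity, and minimality. The only cosmetic difference is that you obtain complete surjectivity by invoking the independence lemma twice rather than constructing the lifting $Z_E \to Z_G$ directly as the paper does, but since the lemma itself is proved by such liftings this is the same argument repackaged.
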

\begin{proof}
 We define $$\mathfrak{A}^{\sur}(E,F) = \{ T \in \CB(E,F) : T  \circ q_E \in \mathfrak{A}(Z_E,F)\},$$
 endowed with the operator space structure defined in the statement.
First we see that it is well defined.  To see this, suppose that $G$ is a completely projective Banach operator space and
$q : G \twoheadrightarrow E$ is a complete metric surjection  and $T \in M_n(\CB(E,F))$ such that $T \circ q \in M_n(\mathfrak{A}(G,F))$, we have to see that
\begin{equation} \label{igualdad sur}
\mathbf{A}_n(T \circ q_E)= \mathbf{A}_n(T \circ q).
\end{equation}

By the complete projectivity of $Z_E$ there is a mapping $\widetilde q_E: Z_E \to G$ such  that $q_E = q \circ \widetilde q_E$ with $\Vert \widetilde q_E \Vert_{\cb} \leq 1$. Then,
$$
\mathbf{A}_n(T \circ q_E) = \mathbf{A}_n(T \circ q \circ \widetilde q_E) \leq \mathbf{A}_n(T \circ q).
$$
On the other hand, by the projectivity of $G$ there is a mapping $\widetilde q: G \to Z_E$ such  that $q = q_E \circ \widetilde q$ with $\Vert \widetilde q \Vert_{\cb} \leq 1.$ As before,
$$
\mathbf{A}_n(T \circ q)  = \mathbf{A}_n(T \circ q_E \circ \widetilde q) \leq \mathbf{A}_n(T \circ q_E).
$$

This shows Equation \eqref{igualdad sur}.

To see that $\mathfrak{A}^{\sur}$ is a mapping ideal, we will only prove the ideal property since the other conditions easily follow. 

Let $T \in M_n(\mathfrak{A}^{\sur}(E,F))$, $r \in \CB(E_0,E)$ and $s \in \CB(F,F_0)$. 
Since $Z_{E_0}$ is completely projective, given $\varepsilon >0$, there is a lifting $L_{\varepsilon} \in \CB(Z_{E_0},Z_E)$ of $r \circ q_{E_0}$ with $\Vert L_{\varepsilon} \Vert_{\cb} \leq (1+\varepsilon) \Vert r \Vert_{\cb}$ such that  the following diagram commutes

$$
\xymatrix{
E_0   \ar[r]^r    & E   \ar[r]^{T_{ij} }   & F  \ar[r]^s & F_0 \\
Z_{E_0}  \ar@{->>}[u]^{q_{E_0}}  \ar@{.>}[r]_{L_{\varepsilon}}  & Z_E   \ar@{->>}[u]_{q_{E}} &  &
}.
$$

Then, for every $1 \leq i, j \leq n$ we have $s \circ T_{ij} \circ r \circ q_{E_0} = s \circ T_{ij} \circ q_{E} \circ L_{\varepsilon} \in \mathfrak{A}(Z_{E_0},F_0)$ and hence  $ s \circ T_{ij}\circ  r \in \mathfrak{A}^{\sur}(E_0,F_0)$.
Moreover, by the ideal property of $\mathfrak{A}$,
\begin{align*}
 \mathbf{A}_n^{\sur}( s_n \circ T \circ r ) & = \mathbf{A}_n( s_n \circ  T \circ  r \circ q_{E_0} ) =  \mathbf{A}_n( s_n \circ  T \circ  q_{E} \circ  L_{\varepsilon} ) \\
 & \leq  \mathbf{A}_n( s_n \circ  T \circ  q_{E}  ) (1+\varepsilon) \Vert r \Vert_{\cb}  \\
 & \leq (1+\varepsilon) \Vert s \Vert_{\cb} \mathbf{A}_n^{\sur}( T ) \Vert r \Vert_{\cb}.
\end{align*}
We now prove that the mapping ideal $\mathfrak{A}^{\sur}$ is surjective. Let $q : G \twoheadrightarrow E$ be a complete quotient mapping and $T \in \CB(E,F)$ such that $T\circ q \in \mathfrak{A}^{\sur}(G,F)$.
Since $Z_E$ is completely projective and $q\circ q_G : Z_G \twoheadrightarrow E$ is a complete quotient mapping, given $\varepsilon >0$, there is a lifting $L_{\varepsilon}: Z_E \to Z_G$ of $q_E$,  with cb-norm less than or equal to $1+ \varepsilon$.

We have $T\circ q_E = T\circ  q \circ q_G \circ L_{\varepsilon} \in \mathfrak{A}(Z_E,F)$ since $T\circ q \circ q_G \in \mathfrak{A}(Z_G,F)$. Then, $T \in \mathfrak{A}^{\sur}(E,F)$.
Also, if $T \in M_n(\mathfrak{A}^{\sur}(E,F))$ we obtain

\begin{align*}
 \mathbf{A}^{\sur}(T) & =  \mathbf{A}( T\circ q_E  ) =  \mathbf{A}( T\circ q\circ  q_G \circ L_{\varepsilon}  ) \leq (1 + \varepsilon) \mathbf{A}( T\circ q \circ q_G ) \\
 &  = (1 + \varepsilon) \mathbf{A}^{\sur}( T\circ q ) \leq (1+ \varepsilon) \mathbf{A}^{\sur}( T ).
\end{align*}

That $\mathfrak{A}^{\sur}$ is the smallest completely projective ideal is trivial by definition of $\mathfrak{A}^{\sur}$. The last part of the statement follows from what was done at the beginning of the proof to show that $\mathfrak{A}^{\sur}(T)$ was well defined.
\end{proof}

It was proven in \cite[Prop. 3.8]{chavez2019operator} that $$\mathcal{K}_p  = (\mathcal N^p)^{\sur}.$$ 
As a consequence, we obviously have that $\mathcal{K}_p $ is a surjective mapping ideal.

As a corollary of the previous proposition we have an analogous version of \cite[Corollary 9.8]{Defant-Floret}
for the operator space setting.

\begin{corollary}
Let $E$ be a completely projective Banach operator space and $(\mathfrak{A},\mathbf{A})$ be a mapping ideal. Then for every every $F \in \OBAN$ we have that
$\mathfrak{A}(E,F)$  and $\mathfrak{A}^{\sur}(E,F)$ are completely isometric. 
\end{corollary}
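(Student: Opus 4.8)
\emph{Proof proposal.} The plan is to deduce this directly from the characterization of the completely surjective hull given in Proposition \ref{surjective hull}. The crucial point is that when $E$ is itself completely projective, we may use $E$ (with the identity map) as the ``completely projective cover'' appearing in that proposition, in place of the canonical space $Z_E$.

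First I would observe that $id_E : E \twoheadrightarrow E$ is a complete metric surjection: it is onto, and the induced map $E/\ker(id_E) = E \to E$ is the identity, hence a complete isometric isomorphism. Since $E$ is a completely projective Banach operator space by hypothesis, Proposition \ref{surjective hull} applies with $G = E$ and $q = id_E$. It therefore tells us that, for every $n\in\N$ and every $T \in M_n(\CB(E,F))$, one has $T \in M_n(\mathfrak{A}^{\sur}(E,F))$ if and only if $T = T \circ id_E \in M_n(\mathfrak{A}(E,F))$, and in that case $\mathbf{A}^{\sur}_n(T) = \mathbf{A}_n(T \circ id_E) = \mathbf{A}_n(T)$.

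This says precisely that the identity map is a complete isometry between $\mathfrak{A}(E,F)$ and $\mathfrak{A}^{\sur}(E,F)$ for every $F \in \OBAN$, which is the desired assertion. (One can also see the contractive inclusion $\mathfrak{A}(E,F) \hookrightarrow \mathfrak{A}^{\sur}(E,F)$ more elementarily from the ideal property together with $\n{q_E}_{\cb}\le 1$, a fact valid for arbitrary $E$; complete projectivity of $E$ is what is needed for the reverse inequality.) I do not expect any real obstacle here: all the work has been carried out in Proposition \ref{surjective hull}, and the only thing to notice is the admissibility of the trivial choice $q = id_E$, which is exactly what complete projectivity of $E$ permits.
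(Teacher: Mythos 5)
Your proposal is correct and follows exactly the route the paper intends: the corollary is stated as an immediate consequence of Proposition \ref{surjective hull}, and applying its last assertion with $G=E$ (completely projective by hypothesis) and $q=id_E$, which is trivially a complete metric surjection, gives $T\in M_n(\mathfrak{A}^{\sur}(E,F))$ iff $T\in M_n(\mathfrak{A}(E,F))$ with $\mathbf{A}^{\sur}_n(T)=\mathbf{A}_n(T)$, i.e.\ the complete isometry.
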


As for the $\inj$-procedure, we have that the $\sur$-procedure also respects completeness. The proof is analogous to that  of Proposition \ref{Uinj completo} so we omit it.

\begin{proposition} \label{Usur completo}
Let  $(\mathfrak{A},\mathbf{A})$  be a Banach mapping ideal then $(\mathfrak{A}^{\sur},\mathbf{A}^{\sur})$ is also a Banach mapping ideal.
\end{proposition}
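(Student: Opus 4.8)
The plan is to mirror the proof of Proposition~\ref{Uinj completo}, replacing the role played there by the injective embedding $i_F : F \hookrightarrow B(H_F)$ with the complete metric surjection $q_E : Z_E \twoheadrightarrow E$ of the completely projective space $Z_E$. Fix $E, F \in \OBAN$; the only thing to verify is that $\mathfrak{A}^{\sur}(E,F)$ is complete, since all the algebraic and ideal conditions were already established in Proposition~\ref{surjective hull} and do not interact with completeness.

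First I would take a Cauchy sequence $(T_k)_{k\in\N}$ in $\mathfrak{A}^{\sur}(E,F)$. By property (a) of a mapping ideal, the identity $\mathfrak{A}^{\sur}(E,F) \to \CB(E,F)$ is a complete contraction, hence in particular contractive, so $(T_k)_k$ is Cauchy in $\CB(E,F)$ and converges in $\cb$-norm to some $T \in \CB(E,F)$. Next, by the very definition of the norm $\mathbf{A}^{\sur}$, the sequence $(T_k \circ q_E)_k$ is Cauchy in $\mathfrak{A}(Z_E,F)$; since $\mathfrak{A}$ is a Banach mapping ideal this space is complete, so $(T_k \circ q_E)_k$ converges to some $S \in \mathfrak{A}(Z_E,F)$. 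The crucial step is to identify $S$ with $T \circ q_E$: since $T_k \to T$ in $\cb$-norm, we have $T_k \circ q_E \to T \circ q_E$ pointwise on $Z_E$ (indeed $\n{(T_k - T)\circ q_E (z)} \le \n{T_k - T}_{\cb}\n{q_E}_{\cb}\n{z}$), while $T_k \circ q_E \to S$ in $\mathfrak{A}$-norm and hence also pointwise, again by property (a). By uniqueness of pointwise limits, $S = T \circ q_E \in \mathfrak{A}(Z_E,F)$, which says precisely that $T \in \mathfrak{A}^{\sur}(E,F)$. Finally, convergence $T_k \circ q_E \to T \circ q_E$ in $\mathfrak{A}(Z_E,F)$ means exactly $\mathbf{A}^{\sur}(T_k - T) \to 0$, i.e.\ $T_k \to T$ in $\mathfrak{A}^{\sur}(E,F)$.

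There is no real obstacle here: the argument is a routine completeness diagram-chase, and the only subtlety—matching the two candidate limits $S$ and $T \circ q_E$—is handled by passing to the weak (pointwise) topology, exactly as in Proposition~\ref{Uinj completo}. Because the paper explicitly says ``The proof is analogous to that of Proposition~\ref{Uinj completo} so we omit it,'' I would simply note that the roles of $B(H_F)$ and $i_F$ are played by $Z_E$ and $q_E$, and that completely projectivity of $Z_E$ (as recorded in Section~\ref{injections-and-projections}) guarantees the lifting maps used in the ideal-property part of Proposition~\ref{surjective hull}; completeness itself uses only property (a) and the fact that $\mathfrak{A}(Z_E,F)$ is a Banach space.
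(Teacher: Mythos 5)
Your proof is correct and is precisely the argument the paper intends when it says the proof is ``analogous to that of Proposition \ref{Uinj completo}'': you substitute the complete quotient $q_E : Z_E \twoheadrightarrow E$ for the embedding $i_F : F \hookrightarrow B(H_F)$, pass the Cauchy sequence to $\CB(E,F)$ and to $\mathfrak{A}(Z_E,F)$, and identify the two limits via pointwise convergence. Nothing is missing; the explicit remark that convergence of $T_k \circ q_E \to T \circ q_E$ in $\mathfrak{A}(Z_E,F)$ is exactly convergence of $T_k \to T$ in $\mathbf{A}^{\sur}$ is a welcome extra detail.
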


We now define the dual procedure of a mapping ideal.
\begin{definition}
If $(\mathfrak{A},\mathbf{A})$ is a mapping ideal, define for $E,F \in\OBAN$
$$
\mathfrak{A}^{\dual}(E,F) := \{ T \in \CB(E,F) \, : \, T' \in \mathfrak{A}(F',E') \}
$$
and
$$
\mathbf{A}_n^{\dual}(T) := \mathbf{A}_n(T'), \quad T \in M_n(\mathfrak{A}^{\dual}(E,F)).
$$
It is readily seen that $(\mathfrak{A}^{\dual},\mathbf{A}^{\dual})$ is also a mapping ideal.
\end{definition}

It is clear from the definition that the ideal of completely approximable mappings satisfies that $\mathfrak{A} = \mathfrak{A}^{\dual}$. On the other hand, by \cite[Proposition 12.2.5]{Effros-Ruan-book}, if $T\in \mathcal N(E,F)$ we have that $T \in \mathcal N^{\dual}(E,F)$ with $\nu^{\dual}(T)=\nu(T') \leq \nu(T).$
In fact, more is true: the same proof shows that the map $\mathcal N(E,F) \to \mathcal N(F',E')$ given by $T \mapsto T'$ is a complete contraction,
and thus the identity map $\mathcal N(E,F) \to \mathcal N^{\dual}(E,F)$ is a complete contraction as well.
In the case when $E'$ has the operator approximation property
the map $\Phi$ in \eqref{def nucleares} is injective \cite[Thm. 11.2.5]{Effros-Ruan-book},
and therefore the same proof as in \cite[Proposition 12.2.6]{Effros-Ruan-book} shows that the two maps above are in fact complete isometries.

Let us also mention that mappings belonging to the ideals $(\Pi_p )^{\dual}$ have already been considered in the literature.
For example, \cite[Thm. 6.5]{Pisier-Asterisque-98} characterizes mappings that admit a completely bounded factorization through Pisier's operator Hilbert space $OH$ in terms of conditions involving both $\Pi_2 $ and $(\Pi_2 )^{\dual}$,
and \cite[Cor. 7.2.2]{Pisier-Asterisque-98} similarly characterizes mappings admitting a completely bounded factorization through finite-dimensional Schatten $p$-spaces with conditions that involve   $\Pi_{p'} $ and  $(\Pi_p )^{\dual}$.

\section{Maximal o.s. mapping ideals}
\label{represent sec}

In the Banach space setting there is an intrinsic relation between  tensor norms and operator ideals. 
As mentioned before, the same happens in the non-commutative context. In this section we develop the theory of maximal mapping ideals, showing  that the latter and the theory of o.s. tensor norms  are two sides of the same coin. 

If $(\mathfrak{A},\mathbf{A})$ is a mapping ideal, then
$$
M \otimes_{\alpha} N := \mathfrak{A}(M', N)
$$
defines an o.s. tensor norm $\alpha$ on $\OFIN$; in other words, if $z \in M_n(M \otimes N)$ and $T_z \in M_n(\CB(M',N))$ is the associated matrix of linear operators, then
$$
\alpha_n(z ; M, N) := \mathbf{A}_n(T_z : M' \to N).
$$

Since the natural maps $\mathfrak{A}(M',N) \to \CB(M', N)$ and $M \otimes_{\min} N \to \CB(M' , N)$ are respectively a complete contraction and a complete isometry, it follows that
$$
M \otimes_{\alpha} N \to M \otimes_{\min} N
$$
is a complete contraction.

Note that equation \eqref{norma producto matrices} follows immediately from condition (b) in the definition of mapping ideals. Also,
the ideal property of $\mathfrak{A}$ implies the complete metric mapping property of $\alpha$:
for any linear maps  $r \in\CB(M,M_0)$ and $s \in \CB( N ,N_0)$, note that for any $z \in M_n(M \otimes N)$
$$
s_n \circ T_z \circ r' = T_{(r \otimes s)(z) }.
$$
and thus
$$
\alpha_n\big( (r\otimes s)(z) \big) = \mathbf{A}_n( T_{(r \otimes s)(z) }) = \mathbf{A}_n(s_n \circ T_z \circ r' ) \le \n{s}_{\cb} \mathbf{A}_n(T_z) \n{r'}_{\cb} = \n{s}_{\cb}\n{r}_{\cb} \alpha_n(z).
$$

Conversely, let $\alpha$ be an o.s. tensor norm on $\OFIN$ satisfying the complete metric mapping property.
Define for $M,N \in \OFIN$  and $T \in M_n(\CB(M,N))$
$$
\mathbf{A}_n(T : M \to N) := \alpha_n(z_T ; M', N).
$$
where $z_T \in M_n(M'\otimes N)$ is the associated matrix of tensors.
Let $r \in \CB(M_0,M)$ and $s \in CB(N, N_0)$. Again, condition (b) is related with equation \eqref{norma producto matrices}.
Note that
$$
z_{s_n \circ T \circ r} = \big( r' \otimes s  \big) z_T
$$
and thus, by the complete metric mapping property of $\alpha$,
$$
\mathbf{A}_n( s_n \circ T \circ r ) = \alpha_n( z_{s_n \circ T \circ r} ) \le \n{r'}_{\cb} \n{s}_{\cb} \alpha_n(z_T) = \n{r}_{\cb} \n{s}_{\cb} \mathbf{A}_n(T)
$$

Therefore, in the finite-dimensional setting there is a full correspondence between mapping ideals and o.s. tensor norms. The rest of this section is devoted to the study of what can be said in the infinite-dimensional setting.
Naturally, the answer is satisfactory for mapping ideals that can be well-approximated by finite-dimensional pieces, the idea that motivates the next definition.

\begin{definition}\label{def-maximal-hull}
Assume $(\mathfrak{A},\mathbf{A})$ is a mapping ideal. For $E,F \in \OBAN$ and $T \in M_n(\CB(E,F))$, define
$$
\mathbf{A}^{\max}_n(T)  = \sup\left\{ \mathbf{A}_n( (q^F_L)_n \circ T \circ i^E_M ) \colon M \in \OFIN(E), L \in \OCOFIN(F)\right\}
$$
and
$$
\mathfrak{A}^{\max}(E,F)= \left\{ T \in \CB(E,F) \, : \,  \mathbf{A}^{\max}_1(T) < \infty  \right\}.
$$
We call $(\mathfrak{A}^{\max}, \mathbf{A}^{\max})$ the \emph{maximal hull} of  $(\mathfrak{A},\mathbf{A})$, and
 $(\mathfrak{A},\mathbf{A})$ is called \emph{maximal} if  $(\mathfrak{A},\mathbf{A}) = (\mathfrak{A}^{\max}, \mathbf{A}^{\max})$.
\end{definition}

\begin{proposition}
Let $(\mathfrak{A}, \mathbf{A})$ be a mapping ideal. Then $(\mathfrak{A}^{\max}, \mathbf{A}^{\max})$ is a Banach mapping ideal which, of course,  is maximal. 
\end{proposition}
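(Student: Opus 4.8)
The plan is to verify, in order, that $(\mathfrak{A}^{\max},\mathbf{A}^{\max})$ satisfies the three axioms of a mapping ideal, that each $\mathfrak{A}^{\max}(E,F)$ is complete, and finally that the maximal hull operation is idempotent. The underlying philosophy is that each defining condition for $(\mathfrak{A},\mathbf{A})$ is ``tested'' against finite-dimensional subspaces of the source and finite-codimensional subspaces of the target, and all of them pass to the supremum in a routine way; completeness is where a genuine argument is needed.

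First I would record some preliminary remarks. One should note that for $M\in\OFIN(E)$ and $L\in\OCOFIN(F)$ the maps $i^E_M$ and $q^F_L$ are completely contractive, so the ideal property of $\mathbf{A}$ already gives $\mathbf{A}^{\max}_n(T)\le\mathbf{A}_n(T)$ when $T\in M_n(\mathfrak{A}(E,F))$, and conversely (taking $M$ ranging over an increasing net exhausting a finite-dimensional piece and using that one can recover $T$ from its compressions when $E$ itself is finite-dimensional and $F$ is finite-dimensional) that $\mathbf{A}^{\max}$ and $\mathbf{A}$ agree on $\OFIN\times\OFIN$. I would also observe that, by the standard fact that $\n{x}_{M_n(F)}=\sup_L\n{(q^F_L)_n(x)}$ for $x\in M_n(F)$ (the intersection of the $L\in\OCOFIN(F)$ being trivial), the assignment $T\mapsto\mathbf{A}^{\max}_n(T)$ is a norm on $M_n(\mathfrak{A}^{\max}(E,F))$ and the family of these norms is a matricial norm structure: conditions \textbf{M1} and \textbf{M2} for the $\mathbf{A}^{\max}_n$ follow from those for the $\mathbf{A}_n$ by taking suprema. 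Condition (a), that $\mathfrak{A}^{\max}(E,F)\to\CB(E,F)$ is completely contractive, follows because for a fixed $T$ the $\cb$-norm of $T$ is itself a supremum of norms of compressions $(q^F_L)_n\circ T\circ i^E_M$ in $\CB(M,F/L)$, each dominated by the corresponding $\mathbf{A}_n$-quantity via axiom (a) for $\mathfrak{A}$. Condition (b') (hence (b), via Remark \ref{OFIN-OBAN}) is immediate since $\mathbf{A}^{\max}=\mathbf{A}$ on finite dimensions. For the ideal property (c), given $T\in M_n(\mathfrak{A}^{\max}(E,F))$, $r\in\CB(E_0,E)$, $s\in\CB(F,F_0)$, $M_0\in\OFIN(E_0)$ and $L_0\in\OCOFIN(F_0)$, I would set $M:=r(M_0)\in\OFIN(E)$ and $L:=s^{-1}(L_0)\in\OCOFIN(F)$, factor the compression $(q^{F_0}_{L_0})_n\circ s_n\circ T\circ r\circ i^{E_0}_{M_0}$ through $(q^F_L)_n\circ T\circ i^E_M$ with the intervening maps controlled by $\n{s}_{\cb}$ and $\n{r}_{\cb}$ (exactly as in the proof of Proposition \ref{prop-order-of-hulls} for $\overleftarrow{\alpha}$), apply axiom (c) for $\mathbf{A}$, and take the supremum.

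Next comes completeness, which I expect to be the main obstacle. The key point is that $\mathbf{A}^{\max}$ is defined by a supremum over a fixed index set, so a Cauchy sequence $(T_k)$ in $\mathfrak{A}^{\max}(E,F)$ is Cauchy in $\CB(E,F)$ (by condition (a)) and hence converges in $\cb$-norm to some $T\in\CB(E,F)$; I then need $T\in\mathfrak{A}^{\max}(E,F)$ with $\mathbf{A}^{\max}_n(T_k-T)\to 0$. For each fixed pair $(M,L)$, the compressions $(q^F_L)_n\circ T_k\circ i^E_M$ live in the finite-dimensional space $\mathfrak{A}(M,F/L)=\mathfrak{A}^{\max}(M,F/L)$, which is complete; they converge in $\cb$-norm to $(q^F_L)_n\circ T\circ i^E_M$, and since on finite dimensions a $\cb$-convergent sequence whose $\mathbf{A}$-norms are uniformly bounded (they are, by Cauchyness) also converges in the $\mathbf{A}_n$-norm — all norms on a finite-dimensional space being equivalent, and $\mathbf{A}$ being an honest norm there — we get $\mathbf{A}_n\big((q^F_L)_n\circ(T_k-T)\circ i^E_M\big)\to 0$ uniformly? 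No: the rate depends on $(M,L)$, so I must argue more carefully. The correct argument: given $\varepsilon>0$, pick $N$ with $\mathbf{A}^{\max}_n(T_k-T_\ell)<\varepsilon$ for $k,\ell\ge N$; then for every $(M,L)$ and every $\ell\ge N$, $\mathbf{A}_n\big((q^F_L)_n\circ(T_k-T_\ell)\circ i^E_M\big)<\varepsilon$, and letting $\ell\to\infty$ (using continuity of the $\mathbf{A}_n$-norm on the finite-dimensional space $\mathfrak{A}(M,F/L)$, where $(q^F_L)_n\circ T_\ell\circ i^E_M\to(q^F_L)_n\circ T\circ i^E_M$) gives $\mathbf{A}_n\big((q^F_L)_n\circ(T_k-T)\circ i^E_M\big)\le\varepsilon$ for all $(M,L)$; taking the supremum yields $\mathbf{A}^{\max}_n(T_k-T)\le\varepsilon$, so in particular $T\in\mathfrak{A}^{\max}(E,F)$ (it is $\mathbf{A}^{\max}$-close to $T_N$) and $T_k\to T$ in $\mathbf{A}^{\max}$.

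Finally, maximality: I must show $\mathbf{A}^{\max\max}=\mathbf{A}^{\max}$. Since $\mathfrak{A}\subseteq\mathfrak{A}^{\max}$ contractively and the maximal-hull construction is monotone in the ideal, $\mathbf{A}^{\max\max}\le\mathbf{A}^{\max}$ in the sense that the identity is a complete contraction; for the reverse, fix $T\in M_n(\mathfrak{A}^{\max}(E,F))$ and $M\in\OFIN(E)$, $L\in\OCOFIN(F)$, and compute, using that $\mathbf{A}^{\max}$ agrees with $\mathbf{A}$ on $\OFIN\times\OFIN$ and that $(q^F_L)_n\circ T\circ i^E_M$ factors through the compressions of $T$ by finite-dimensional/finite-codimensional pieces of $M$ and $F/L$ in a norm-one way, that $\mathbf{A}_n\big((q^F_L)_n\circ T\circ i^E_M\big)=\mathbf{A}^{\max}_n\big((q^F_L)_n\circ T\circ i^E_M\big)\le\mathbf{A}^{\max}_n(T)$; taking the supremum over $(M,L)$ gives $\mathbf{A}^{\max\max}_n(T)\le\mathbf{A}^{\max}_n(T)$, completing the proof. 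The only subtlety here is that one should check the identity $\mathbf{A}^{\max}=\mathbf{A}$ on $\OFIN\times\OFIN$ carefully, which comes down to the observation that when $E\in\OFIN$ the compression $(q^F_L)_n\circ T\circ i^E_M$ with $M=E$ is just $(q^F_L)_n\circ T$, and as $L$ shrinks these recover $T$, with $\mathbf{A}$-norms increasing to $\mathbf{A}_n(T)$ by axiom (a) applied in the limit together with the ideal property giving the monotonicity and the reverse bound.
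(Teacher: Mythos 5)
Your proposal is correct and follows essentially the same route as the paper: the ideal property via the compressions determined by $M=r(M_0)$, $L=s^{-1}(L_0)$, condition (b) reduced to the finite-dimensional agreement $\mathbf{A}^{\max}=\mathbf{A}$ on $\OFIN\times\OFIN$, and completeness by fixing $(M,L)$, passing to the limit inside the finite-dimensional spaces $\mathfrak{A}(M,F/L)$, and then taking the supremum. The only (harmless) deviations are cosmetic: you phrase completeness with Cauchy sequences rather than the absolutely convergent series test, and you spell out the idempotence $\mathbf{A}^{\max\max}=\mathbf{A}^{\max}$, which the paper leaves as an ``of course.''
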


\begin{proof}

It is clear that each $\mathbf{A}^{\max}_n$ is a norm, and that $\mathbf{A}^{\max}$ satisfies Ruan's axioms (say, as in \cite[Prop. 2.3.6]{Effros-Ruan-book}).

Using that the identity $\mathfrak{A}(E,F) \to \CB(E,F)$ is a complete contraction, and  the ideal property for the $\cb$-norm, for $T \in M_n(\CB(E,F))$ we have that 
\[
\mathbf{A}_n( (q^F_L)_n \circ T \circ i^E_M ) \le \n{(q^F_L)_n \circ T \circ i^E_M}_{M_n(\CB(M,F/L))} \le \n{T}_{M_n(\CB(E,F))}, 
\]
which implies that the identity $\mathfrak{A}^{\max}(E,F) \to \CB(E,F)$ is also a complete contraction.
Similarly, the ideal property for $(\mathfrak{A},\mathbf{A})$ shows that all finite-rank operators belong to $\mathfrak{A}^{\max}$ since they belong to $\mathfrak{A}$. Moreover, given $x'\in M_n(E')$ and $y\in M_m(F)$ we have
$$
\mathbf{A}_{nm}( (q^F_L)_{nm} \circ x'\otimes y \circ i^E_M ) =\mathbf{A}_{nm}(  x'\circ i^E_M\otimes (q^F_L)_{m}(y)) = \|x'\circ i^E_M\|_{\cb} \|(q^F_L)_{m}(y)\|_{\cb},
$$  showing that $\mathbf{A}_{nm}^{\max}( x'\otimes y )\le \|x'\|_{\cb} \|y\|_{\cb}$.

Suppose now that $r \in \CB(E_0,E)$ and $s \in \CB(F,F_0)$.
Let $M_0 \in \OFIN(E_0)$ and $L_0 \in \OCOFIN(F_0)$.
Define $M = r(M_0) \in \OFIN(E)$ and $L = s^{-1}(L_0)\in \OCOFIN(F)$.
Note that $r = i^E_M \circ r$, and $q^{F_0}_{L_0} \circ s = \widetilde{s} \circ q^F_L$ for some $\widetilde{s} \in \CB(F/L,F_0/L_0)$ with $\| \widetilde{s}\|_{\cb} = \| {s}\|_{\cb}$.
Therefore, by the ideal property for $(\mathfrak{A},\mathbf{A})$
\begin{multline*}
    \mathbf{A}_n\big( (q^{F_0}_{L_0})_n \circ s_n \circ T \circ r \circ i^{E_0}_{M_0}  \big) =  \mathbf{A}_n\big( \widetilde{s}_n\circ (q^{F}_{L})_n \circ T \circ i^E_M \circ r \circ i^{E_0}_{M_0}  \big) \\
    \le \| \widetilde{s}\|_{\cb}  \mathbf{A}_n\big(  (q^{F}_{L})_n \circ T \circ i^E_M    \big) \| r \circ i^{E_0}_{M_0} \|_{\cb} \le \|s\|_{\cb} \mathbf{A}^{\max}_n(T) \|r\|_{\cb},
\end{multline*}
which yields the ideal property for $(\mathfrak{A}^{\max}, \mathbf{A}^{\max})$.

Let us now check that $\mathfrak{A}^{\max}(E,F)$ is complete when $(\mathfrak{A},\mathbf{A})$ is a  mapping ideal.
For that, let $(T_k)_{k=1}^\infty$ be a sequence in $\mathfrak{A}^{\max}(E,F)$ with $\sum_{k=1}^\infty \mathbf{A}^{\max}_1(T_k) < \infty$. Since $\mathbf{A}^{\max}_1$ dominates the $\cb$-norm, the series $\sum_{k=1}^\infty T_k$ converges in $\CB(E,F)$ to an operator $T$.
Let us now show that the convergence also holds with respect to $\mathbf{A}^{\max}_1$.
Fix $M \in \OFIN(E)$ and $L \in \OCOFIN(F)$.
For $m \in \N$, note that
\[
\mathbf{A}_1\Big( q^F_L \circ \Big(\sum_{k=m}^\infty T_k\Big) \circ i^E_M \Big)
\le \sum_{k=m}^\infty  \mathbf{A}_1\big( q^F_L \circ  T_k \circ i^E_M \big) \le \sum_{k=m}^\infty \mathbf{A}^{\max}_1(T_k),
\]
from where $\mathbf{A}^{\max}_1\big( \sum_{k=m}^\infty T_k \big) \le \sum_{k=m}^\infty \mathbf{A}^{\max}_1(T_k)$ and therefore $\sum_{k=1}^\infty T_k$ converges to $T$ with respect to  $\mathbf{A}^{\max}_1$.
\end{proof}

\begin{example}\rm
 The following mapping ideals are maximal.
 \begin{enumerate}[(i)]
 \item Completely bounded mappings $\CB$.
 \item Completely integral mappings $\mathcal I$: See Example \ref{integrales maximal}, below.
     \item 
 Completely $p$-summing mappings $\Pi_{p} $: Let $E,F \in \OBAN$, $T \in M_n( \Pi_{p} (E,F) )$ and $\eps>0$.
Assume $(\pi_{p} )_n(T) = 1$.
Since $\Pi_{p} (E,F)$ is canonically completely isometrically embedded into $\CB( S_{p}\otimes_{\min} E, S_{p}[F] )$,
by a density argument there exist $k \in \N$ and $ x \in M_n( S^k_{p} \otimes_{\min} E ) $ with $\n{x} = 1$ such that
$$
\n{ (id_{S^k_{p}} \otimes T)(x) }_{M_n( S^k_{p}[F] )} > 1-\varepsilon,
$$
Since $S^k_{p}[F]$ is completely isometrically embedded into $S^k_{p}[F''] = (S_{p'}[F'])'$,
there exists $y \in M_{\ell} (S_{p'}^k[F'])$ with $\n{y} = 1$ and such that
$$
\n{ \mpair{y}{(id_{S^k_{p}} \otimes T)(x)} }_{M_{n\ell}} > 1-\varepsilon.
$$
Consider $y$ as an $k\ell \times k \ell$ matrix over $F'$, and let $L$ be the intersection of these $(k\ell)^2$ functionals in $F'$.
Note that $L \in \OCOFIN(F)$, and that $y$ is an $k\ell \times k \ell$ matrix over the annihilator $L^0$ of $L$ in $F$.
Since $(F/L) = L^0$ completely isometrically, it follows that $S^k_{p'}[F/L] = S^k_{p'}[L^0]$ completely isometrically as well. Therefore, the norm of $y$ as an element of $M_{\ell} (S_{p'}^k[(F/L)'])$ is also equal to one.
If $M$ is the finite-dimensional subspace of $E$ spanned by the $(nk)^2$ vectors that constitute $x$, it follows that
$$
(\pi_{p} )_n\big( (q^F_L)_n \circ T \circ i^E_M \big) > 1-\varepsilon.
$$
 \end{enumerate}
 \end{example}

\begin{definition}\label{defn-associated}
A   mapping ideal $(\mathfrak{A},\mathbf{A})$ and  a finitely-generated o.s. tensor norm $\alpha$ (on $\ONORM$) are said to be \emph{associated}, denoted $(\mathfrak{A},\mathbf{A}) \sim \alpha$ if for every $M,N \in \OFIN$ we have a complete isometry
$$
\mathfrak{A}(M,N) = M' \otimes_{\alpha} N,
$$
given by the canonical map $T \mapsto z_T$.
\end{definition}

\begin{remark}\label{misma norma asociada}
Notice that since this definition is based only on finite-dimensional spaces; two different mapping ideals can be associated to the same o.s. tensor norm. In particular,  $\mathfrak{A}\sim\alpha$ if and only if $\mathfrak{A}^{\max}\sim\alpha$. 
\end{remark}

For example, $\CB, \mathcal A \sim \min$ or $\mathcal N, \mathcal I \sim \proj$.  
Also, by the mere definition, $\mathcal N_p  \sim d_p $.

Observe that the constructions in Definitions \ref{def-finite-and-cofinite-hulls} and  \ref{def-maximal-hull}  establish a one-to-one correspondence between maximal mapping ideals and finitely-generated o.s. tensor norms.
The main result of this section is the following theorem, which shows that the finite-dimensional duality that defines this correspondence can be extended to the infinite-dimensional setting.

\begin{theorem}[Representation Theorem for maximal mapping ideals]\label{representation-theorem}
Let $(\mathfrak{A},\mathbf{A})$ be a maximal mapping ideal associated to the o.s. tensor norm $\alpha$. Then for any $E,F \in \OBAN$ there is a complete isometry
 \begin{align}
 \mathfrak{A}(E,F) &= \left(E \otimes_{\alpha'} F' \right)' \cap \CB(E,F) \label{eqn-representation-thm-1}
 \end{align}
\end{theorem}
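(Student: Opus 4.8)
The plan is to prove the identity $\mathfrak{A}(E,F) = (E \otimes_{\alpha'} F')' \cap \CB(E,F)$ by first identifying the right-hand side with the mapping ideal $\mathfrak{A}_{\alpha'}$ from Example \ref{U-alpha}, and then showing that $\mathfrak{A}_{\alpha'}$ coincides completely isometrically with $\mathfrak{A}^{\max} = \mathfrak{A}$ (using that $\mathfrak{A}$ is maximal). Since both $\mathfrak{A}$ and $\mathfrak{A}_{\alpha'}$ are associated to $\alpha$ on finite-dimensional spaces — indeed $\mathfrak{A}_{\alpha'}(M,N) = (M \otimes_{\alpha'} N')' \cap \CB(M,N) = (M \otimes_{\alpha'} N')'= M' \otimes_{\alpha''} N = M' \otimes_\alpha N$ for $M,N \in \OFIN$ because $\alpha$ is finitely generated — and since passing to the maximal hull does not change the associated o.s. tensor norm (Remark \ref{misma norma asociada}), it will suffice to show that $\mathfrak{A}_{\alpha'}$ is itself maximal. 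Then both sides equal $\mathfrak{A}^{\max}$, and because $\mathfrak{A}$ is maximal by hypothesis, we obtain $\mathfrak{A}(E,F) = \mathfrak{A}_{\alpha'}(E,F) = (E \otimes_{\alpha'} F')' \cap \CB(E,F)$.

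The heart of the matter is therefore to compute $\mathbf{A}_{\alpha'}^{\max}$ and verify it equals $\mathbf{A}_{\alpha'}$. Fix $E,F \in \OBAN$ and $T \in M_n(\CB(E,F))$. By definition of the maximal hull,
$$
(\mathbf{A}_{\alpha'})_n^{\max}(T) = \sup\left\{ (\mathbf{A}_{\alpha'})_n\big( (q^F_L)_n \circ T \circ i^E_M \big) \colon M \in \OFIN(E),\, L \in \OCOFIN(F) \right\},
$$
and since $M$ is finite-dimensional and $F/L$ finite-dimensional, the inner quantity equals $\overrightarrow{\alpha'}_n$ of the corresponding tensor in $M_n(M' \otimes (F/L)) $, which — as $\alpha$ is finitely generated, so $\alpha'' = \alpha$ — is just the $\alpha$-norm on $M' \otimes_\alpha (F/L)$. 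On the other hand, $(\mathbf{A}_{\alpha'})_n(T)$ is the norm of the associated matrix of linear functionals as an element of $M_n\big((E \otimes_{\alpha'} F')'\big)$, which by definition of the matricial structure on a dual space can be computed by pairing $T$ (viewed in $M_n((E \otimes_{\alpha'} F')')$) against matrices $u \in M_m(E \otimes_{\alpha'} F')$ of norm $\le 1$. The plan is to use that $\alpha'$ is finitely generated to reduce such pairings to tensors $u$ living in $M_m(M_0 \otimes_{\alpha'} N_0)$ with $M_0 \in \OFIN(E)$, $N_0 \in \OFIN(F')$; then the annihilator $L := N_0{}^\perp \cap \ker$-type construction produces $L \in \OCOFIN(F)$ with $N_0 \hookrightarrow (F/L)'$ completely isometrically, so the pairing only sees $(q^F_L)_n \circ T \circ i^E_{M_0}$. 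Carrying this out carefully gives $(\mathbf{A}_{\alpha'})_n(T) \le (\mathbf{A}_{\alpha'})_n^{\max}(T)$; the reverse inequality is immediate since each $(q^F_L)_n \circ T \circ i^E_M$ is obtained from $T$ by composing with complete contractions and $\mathfrak{A}_{\alpha'}$ has the ideal property.

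The main obstacle I anticipate is the bookkeeping around the completely isometric embedding $N_0 \hookrightarrow (F/L)'$ and making sure the matrix pairing $\mpair{u}{T}$ is genuinely unaffected when $T$ is replaced by $(q^F_L)_n \circ T \circ i^E_M$. One must check: (i) that for a finite-dimensional $N_0 \subseteq F'$ one can choose a finite-codimensional $L \subseteq F$ with $N_0 \subseteq L^\perp$ and that $L^\perp = (F/L)'$ completely isometrically, which follows from standard operator-space duality for quotients \cite[Sec. 2.4]{Pisier-Operator-Space-Theory}; (ii) that the embedding $M_0 \otimes_{\alpha'} N_0 \hookrightarrow M_0 \otimes_{\alpha'} (F/L)'$ arising from this is completely isometric — here one uses that $M_0$ is finite-dimensional together with the right-hand embedding lemma / the fact that for $\lambda$-o.s. tensor norms and for finitely generated norms the relevant inclusions behave well; and (iii) that under the identification $(M_0 \otimes_{\alpha'} (F/L)')' = M_0' \otimes_\alpha (F/L)$ the pairing becomes exactly the $\mathbf{A}_{\alpha'}$-norm of $(q^F_L)_n \circ T \circ i^E_{M_0}$. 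Once these identifications are in place, the supremum defining $(\mathbf{A}_{\alpha'})^{\max}$ matches the supremum defining the dual norm $(\mathbf{A}_{\alpha'})_n(T)$, and the theorem follows. I would also remark that the intersection with $\CB(E,F)$ on the right-hand side is what guarantees we land inside the space of completely bounded maps, exactly as in the definition of $\mathfrak{A}_\alpha$ in Example \ref{U-alpha}.
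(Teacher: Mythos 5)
Your proposal is correct, and its analytic core is the same as the paper's, but the packaging is genuinely different. The paper argues directly with a fixed $T \in M_n(\CB(E,F))$: by maximality of $\mathfrak{A}$, the condition $\mathbf{A}_n(T)\le C$ is equivalent to $\mathbf{A}_n\big((q^F_L)_n\circ T\circ i^E_M\big)\le C$ for all $M\in\OFIN(E)$, $L\in\OCOFIN(F)$; then the finite-dimensional identification $\mathfrak{A}(M,F/L)=(M\otimes_{\alpha'}L^0)'$, the fact that $L^0$ runs over $\OFIN(F')$ as $L$ runs over $\OCOFIN(F)$, the identity $\mpair{\beta_{\kappa_F\circ T}}{z}=\mpair{\beta_{(q^F_L)_n\circ T\circ i^E_M}}{z}$ for $z\in M_m(M\otimes L^0)$, and the finite generation of $\alpha'$ convert this into $\n{\beta_{\kappa_F\circ T}}_{M_n((E\otimes_{\alpha'}F')')}\le C$. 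You instead prove that $\mathfrak{A}_{\alpha'}$ of Example \ref{U-alpha} is itself maximal (which is precisely the converse proposition the paper states afterwards and leaves as an exercise) and then identify it with $\mathfrak{A}$. The localization you describe (reduce a pairing against $u$ of $\alpha'$-norm at most one to some $M_0\in\OFIN(E)$, $N_0\in\OFIN(F')$, set $L={}^0N_0$ so that $N_0\subseteq L^0=(F/L)'$, and note the pairing only sees $(q^F_L)_n\circ T\circ i^E_{M_0}$) is exactly the computation in the paper's proof, so your route costs nothing extra and yields the converse proposition as a byproduct.

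Two points to tighten. First, the step ``then both sides equal $\mathfrak{A}^{\max}$'' is not justified by Remark \ref{misma norma asociada} alone, which only says $(\mathfrak{A}_{\alpha'})^{\max}$ is again associated to $\alpha$; what you need is that the maximal hull of a mapping ideal depends only on its restriction to $\OFIN$ (immediate from Definition \ref{def-maximal-hull}, since $\mathbf{A}^{\max}_n(T)$ only evaluates $\mathbf{A}_n$ on maps between $M$ and $F/L$ with $M,F/L\in\OFIN$), so two ideals associated to the same finitely generated norm have identical maximal hulls; combined with $\mathfrak{A}=\mathfrak{A}^{\max}$ and your maximality of $\mathfrak{A}_{\alpha'}$ this closes the argument. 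Second, in your anticipated obstacle (ii) you neither need nor should try to prove that $M_0\otimes_{\alpha'}N_0\hookrightarrow M_0\otimes_{\alpha'}(F/L)'$ is completely isometric (o.s. tensor norms need not respect complete injections, and no embedding lemma or $\lambda$-norm hypothesis is available here): the complete metric mapping property already gives $(\alpha')_m(u;M_0,L^0)\le(\alpha')_m(u;M_0,N_0)$, and this one-sided inequality is all your estimate uses, while the reverse inequality $(\mathbf{A}_{\alpha'})^{\max}_n\le(\mathbf{A}_{\alpha'})_n$ follows from the ideal property of $\mathfrak{A}_{\alpha'}$ verified in Example \ref{U-alpha}.
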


\begin{proof}
In order to prove \eqref{eqn-representation-thm-1}, we need to show that for $T \in M_n(\CB(E,F))$, the following holds: $T$ belongs to $ M_n( \mathfrak{A}(E,F) )$ if and only if the associated matrix of bilinear maps  $\beta_{\kappa_F \circ T}$ is in $M_n\big((E \otimes_{\alpha'} F')' \big)$,  with equal norms.
Note that the inequality $\n{\beta_{\kappa_F \circ T}} \le C$ is equivalent to
\begin{equation}\label{eqn-characterization-by-duality}
\n{ \mpair{ \beta_{\kappa_F \circ T} }{ z } }_{M_{nm}}  \le C (\alpha')_m(z ; E, F') \text{ for all } z \in M_m( E \otimes_{\alpha'} F' ).
\end{equation}
By the maximality of $\mathfrak{A}$, it follows that
$T \in M_n( \mathfrak{A}(E,F) )$ with $\mathbf{A}_n(T) \le C$ if and only if
\begin{equation}\label{eqn-characterization-by-maximality}
\mathbf{A}_n( (q^F_L)_n \circ T \circ i^E_M ) \le C \qquad \text{ for any } M \in \OFIN(E) \text{ and } L \in \OCOFIN(F).
\end{equation}
Since $\mathfrak{A}(M,F/L) = (M \otimes_{\alpha'} L^0)'$ completely isometrically,
and recalling that $L^0$ varies over all spaces in $\OFIN(F')$ when $L$ varies over all spaces in $\OCOFIN(F)$, condition \eqref{eqn-characterization-by-maximality}
is equivalent to
\begin{multline}\label{eqn-characterization-by-maximality-II}
\n{ \mpair{ \beta_{(q^F_L)_n \circ T \circ i^E_M} }{z} }_{M_{mn}}  \le C (\alpha')_m(z ; M, L^0)\\
 \text{ for any } M \in \OFIN(E), \;  L^0 \in \OFIN(F') \text{ and } z \in M_m( M \otimes_{\alpha'} L^0 ).
\end{multline}
Notice that for $z \in M_m( M \otimes_{\alpha'} L^0 )$,
$$
\mpair{ \beta_{\kappa_F \circ T} }{ z }  = \mpair{ \beta_{(q^F_L)_n \circ T \circ i^E_M} }{z}.
$$
Therefore \eqref{eqn-characterization-by-maximality-II} is equivalent to \eqref{eqn-characterization-by-duality} because $\alpha'$ is finitely-generated,
finishing the proof.
\end{proof}
In fact the converse is also true. We write the statement in the following proposition and leave the proof as an exercise. Recall that in Example \ref{U-alpha} we define a mapping ideal ``dual'' to a given o.s. tensor norm.

\begin{proposition}
Let $\alpha$ be a finitely-generated o.s. tensor norm. The mapping ideal $(\mathfrak{A}_{\alpha'},\mathbf{A}_{\alpha'})$ (given by
 \begin{align}
 \mathfrak{A}_{\alpha'}(E,F) &:= \left(E \otimes_{\alpha'} F' \right)' \cap \CB(E,F) \label{eqn-representation-thm-2}
 \end{align}
for any $E,F \in \OBAN$)  is  maximal and its associated o.s. tensor norm is $\alpha.$ 
\end{proposition}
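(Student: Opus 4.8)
The plan is to reduce everything to the Representation Theorem~\ref{representation-theorem} together with the finite-dimensional correspondence between o.s.\ tensor norms and mapping ideals; the argument is then quite short. Recall first that $(\mathfrak{A}_{\alpha'},\mathbf{A}_{\alpha'})$ is already known to be a mapping ideal by Example~\ref{U-alpha}.

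The first step is to identify its finite-dimensional behaviour, i.e.\ to check that $\mathfrak{A}_{\alpha'}\sim\alpha$. For $M,N\in\OFIN$ the underlying vector space of $(M\otimes_{\alpha'}N')'$ is all of $\CB(M,N)$, so $\mathfrak{A}_{\alpha'}(M,N) = (M\otimes_{\alpha'}N')'$ completely isometrically. Applying the defining complete isometry for the dual norm $\alpha''$ to the spaces $M'$ and $N$, and using $M''=M$ for a finite-dimensional operator space, we obtain $(M\otimes_{\alpha'}N')' = M'\otimes_{\alpha''}N$; since $\alpha''=\alpha$ on $\OFIN$ by Proposition~\ref{prop-properties-dual-cross-norm}(b), this yields a complete isometry $\mathfrak{A}_{\alpha'}(M,N) = M'\otimes_\alpha N$. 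Unwinding the pairings shows that this isometry is implemented by the canonical map $T\mapsto z_T$, so that, because $\alpha$ is finitely generated by hypothesis, $\mathfrak{A}_{\alpha'}\sim\alpha$ in the sense of Definition~\ref{defn-associated}.

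The second step concludes maximality. By Remark~\ref{misma norma asociada} the maximal hull $(\mathfrak{A}_{\alpha'})^{\max}$ is also associated to $\alpha$, and it is of course a maximal mapping ideal. Applying the Representation Theorem~\ref{representation-theorem} to $(\mathfrak{A}_{\alpha'})^{\max}$ and $\alpha$, for every $E,F\in\OBAN$ there is a complete isometry
$$
(\mathfrak{A}_{\alpha'})^{\max}(E,F) \;=\; \big(E\otimes_{\alpha'}F'\big)'\cap\CB(E,F) \;=\; \mathfrak{A}_{\alpha'}(E,F).
$$
Hence $\mathfrak{A}_{\alpha'} = (\mathfrak{A}_{\alpha'})^{\max}$, so $\mathfrak{A}_{\alpha'}$ is maximal (and in particular a Banach mapping ideal, since every maximal hull is), and its associated o.s.\ tensor norm is $\alpha$ by the first step.

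The only delicate point — and the closest thing to an obstacle — is the chain of finite-dimensional identifications in the first step: one must keep careful track of which occurrences refer to $\alpha$, $\alpha'$ or $\alpha''$, invoke reflexivity of finite-dimensional operator spaces at the right place, and verify that the complete isometry $\mathfrak{A}_{\alpha'}(M,N) = M'\otimes_\alpha N$ is exactly the canonical one of Definition~\ref{defn-associated}. Everything else is a direct application of results already proved.
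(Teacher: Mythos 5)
Your proof is correct, and since the paper explicitly leaves this proposition as an exercise, there is no competing argument to measure it against; your route is exactly the one the paper's machinery is set up for. The two steps are sound: for $M,N\in\OFIN$ the identifications $\mathfrak{A}_{\alpha'}(M,N)=(M\otimes_{\alpha'}N')'=M'\otimes_{\alpha''}N=M'\otimes_{\alpha}N$ (using reflexivity of finite-dimensional operator spaces and $\alpha''=\alpha$ on $\OFIN$) are all canonical and implemented by $T\mapsto z_T$, and then passing to the maximal hull via Remark \ref{misma norma asociada} and applying Theorem \ref{representation-theorem} to $(\mathfrak{A}_{\alpha'})^{\max}$ returns precisely the defining formula of $\mathfrak{A}_{\alpha'}$, with the identification being the identity on operators and with equal matrix norms, so $\mathfrak{A}_{\alpha'}=(\mathfrak{A}_{\alpha'})^{\max}$; the only equally natural alternative would be to verify the maximality supremum formula for $\mathbf{A}_{\alpha'}$ directly by finite generation of $\alpha'$, which amounts to rerunning the proof of Theorem \ref{representation-theorem} in reverse, so your shortcut loses nothing.
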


We use this fact to see, in the following example, that the ideal of completely integral mappings is maximal. 

\begin{example} \label{integrales maximal}

$\mathcal{I}$  is a maximal mapping ideal. 
\end{example}

\begin{proof} 
We will give two proofs of this (for the second, we present only a sketch of it). Let's start with the first one.

Recall that the o.s. tensor norm associated to $\mathcal I$ is $\proj$. By the previous proposition, to see that $\mathcal I$ is maximal we need to check that for every $E,F \in \OBAN$ there is a complete isometry
\begin{equation}
    \mathcal{I}(E,F)= \left(E \otimes_{\min} F' \right)' \cap \CB(E,F).
\end{equation}
By \cite[Lemma 12.3.3]{Effros-Ruan-book} the canonical inclusion $$S_0: \mathcal I(E,F) \hookrightarrow (E \otimes_{\min}F')'$$ is a complete isometry. Thus, it remains to see that if $\Psi \in (E \otimes_{\min}F')' \cap \CB(E,F)$ then $\Psi=S_0(T)$, for some $T\in \mathcal I(E,F).$
Without loss of generality, we suppose that $\Vert \Psi\Vert_{(E \otimes_{\min}F')'}< 1.$
By Remark \ref{extension-sin-loc-refl} the following inclusion is completely isometric: $$E \otimes_{\min}F' \hookrightarrow (E' \otimes_{\proj}F)' = \CB(E',F').
$$ Then, by the Averson-Wittstock Hahn-Banach theorem \cite[Theorem 4.1.5]{Effros-Ruan-book}, $\Psi$ extends isometrically to $(E' \otimes_{\proj}F)''$. Therefore, Goldstine's theorem provides us of a net $(\Psi_{\gamma})_{\gamma}$ such that $\Vert \Psi_{\gamma} \Vert_{E' \otimes_{\proj}F}< 1,$ which $w^*$-converges to $\Psi$ (that is in the topology  $\sigma((E' \otimes_{\proj}F)'',(E' \otimes_{\proj}F)')$.
We consider $\Phi(\Psi_{\gamma})_{\gamma} \subset \mathcal N(E,F)$ with $\nu(\Phi(\Psi_{\gamma}))<1$, where $\Phi$ is the mapping defined in \eqref{def nucleares}.
Note that given $x \in E$, $y'\in F'$ 
$$\langle y', \Phi(\Psi_{\gamma}) \rangle \to \langle \Psi, x \otimes y' \rangle.$$
Now, the expression $\langle T(x), y' \rangle:= \langle \Psi, x \otimes y'\rangle$ defines a mapping $T \in \CB(E,F)$ satisfying that $\Phi(\Psi_{\gamma})$ converges to $T$ in the point-weak topology. Finally, appealing to \cite[Lemma 12.3.1]{Effros-Ruan-book} we get that $T$ is completely integral and $\Psi=S_0(T)$. This concludes the first proof.

We now present a sketch of an alternative argument for the fact that $\mathcal{I}$ is maximal, 
whose perspective could be enriching.
First, observe that \cite[Ex. 17.1]{Defant-Floret} also holds in the operator space setting. That is:
suppose that for each $\gamma\in\Gamma$, $(\mathfrak{A}^\gamma,\mathbf{A}^\gamma)$ is a maximal mapping ideal.
Define for $E,F \in \OBAN$ and $T \in M_n(\CB(E,F))$,
$$
\mathbf{A}_n(T)  = \sup\left\{ \mathbf{A}^\gamma_n(T) \, : \, \gamma \in \Gamma \right\} \quad
\text{and}
\quad
\mathfrak{A}(E,F)= \left\{ S \in \CB(E,F) \, : \,  \mathbf{A}_1(S) < \infty  \right\}.
$$
It is easy to see that $(\mathfrak{A},\mathbf{A})$ is also a maximal mapping ideal.
Now observe that \cite[Lemma 12.3.1]{Effros-Ruan-book} says that the ideal of completely integral mappings can be obtained by applying the above procedure to the ideals $(\mathfrak{A}^G,\mathbf{A}^G)$ defined by the completely isometric embeddings
$$
\mathfrak{A}^G(E,F) \hookrightarrow \CB( E \otimes_{\min} G, F \otimes_{\proj} G ),
$$
where $G$ ranges over all Banach operator spaces (or just the finite-dimensional ones).
Since each $\mathfrak{A}^G$ is clearly maximal, it follows that so is $\mathcal{I}$. This concludes the sketch of the second proof.
\end{proof}


\begin{proposition}\label{Inclusion maximal}
Let $(\mathfrak{A},\mathbf{A})$ and $(\mathfrak B, \mathbf{B})$ be Banach mapping ideals. Suppose that for every $M,N \in \OFIN$ we have that the inclusion $\Vert \mathfrak A (M,N) \hookrightarrow  \mathfrak B (M,N)  \Vert_{\cb} \leq c$. Then
     $\mathfrak{A}^{\max} \subset \mathfrak{B}^{\max}$ and
     $\Vert \mathfrak A^{\max} (E,F) \hookrightarrow  \mathfrak B^{\max} (E,F)  \Vert_{\cb} \leq c$ for every $E,F\in\OBAN$. 
\end{proposition}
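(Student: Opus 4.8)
The plan is to unwind Definition~\ref{def-maximal-hull} and notice that the hypothesis, which only concerns \emph{finite-dimensional} domains and codomains, is precisely what is needed to compare the two maximal norms at every matrix level. So the goal is to prove that for all $E,F\in\OBAN$, all $n\in\N$, and all $T\in M_n(\mathfrak A^{\max}(E,F))$ one has $\mathbf B^{\max}_n(T)\le c\,\mathbf A^{\max}_n(T)$; taking $n=1$ this yields the inclusion $\mathfrak A^{\max}(E,F)\subseteq\mathfrak B^{\max}(E,F)$, while the general $n$ gives the bound on the $\cb$-norm of the inclusion.

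The main step is the following. Fix $T\in M_n(\mathfrak A^{\max}(E,F))$ together with arbitrary $M\in\OFIN(E)$ and $L\in\OCOFIN(F)$, and set $S:=(q^F_L)_n\circ T\circ i^E_M\in M_n(\CB(M,F/L))$. Since both $M$ and $F/L$ are finite-dimensional, $\mathfrak A(M,F/L)$, $\mathfrak B(M,F/L)$ and $\CB(M,F/L)$ all agree as vector spaces (every map is finite-rank), so membership is never at issue and only the three norms differ. By the definition of $\mathbf A^{\max}_n$ we have $\mathbf A_n(S)\le\mathbf A^{\max}_n(T)<\infty$, and the hypothesis applied to the pair $(M,F/L)$ at the $n$-th matrix level gives
$$
\mathbf B_n(S)\ \le\ c\,\mathbf A_n(S)\ \le\ c\,\mathbf A^{\max}_n(T).
$$

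Taking the supremum over all $M\in\OFIN(E)$ and $L\in\OCOFIN(F)$ then yields, straight from Definition~\ref{def-maximal-hull}, $\mathbf B^{\max}_n(T)\le c\,\mathbf A^{\max}_n(T)<\infty$, so $T\in M_n(\mathfrak B^{\max}(E,F))$ and we are done. I do not anticipate any genuine obstacle: the whole content of the statement is absorbed into the finite-dimensional hypothesis, and the only things to keep track of are purely bookkeeping — that passing to an $\OFIN$-subspace of $E$ and a quotient by an $\OCOFIN$-subspace of $F$ really does land one on finite-dimensional test spaces where the hypothesis applies, and that each $(q^F_L)_n\circ T\circ i^E_M$ lies in $M_n(\mathfrak A(M,F/L))$, which is automatic since $T\in\mathfrak A^{\max}$. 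The completeness in the ``Banach mapping ideal'' hypothesis is not actually used, beyond being part of the standing assumptions.
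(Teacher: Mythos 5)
Your proposal is correct and follows essentially the same route as the paper's proof: unwind Definition~\ref{def-maximal-hull}, apply the finite-dimensional hypothesis to the pair $(M,F/L)$ at the $n$-th matrix level to get $\mathbf B_n\big((q^F_L)_n\circ T\circ i^E_M\big)\le c\,\mathbf A_n\big((q^F_L)_n\circ T\circ i^E_M\big)$, and take the supremum over $M\in\OFIN(E)$, $L\in\OCOFIN(F)$. The only difference is that you spell out a couple of bookkeeping points (membership on finite-dimensional test spaces, the matrix-level use of the cb-norm hypothesis) that the paper leaves implicit.
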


\begin{proof}
Let $T \in M_n(\CB(E,F))$.
Fix $M\in\OFIN(E)$ and $L\in\OCOFIN(F)$. From the assumption, note that
$\mathbf{B}_n( (q^F_L)_n \circ T \circ i^E_M ) \le c \mathbf{A}_n( (q^F_L)_n \circ T \circ i^E_M )$. Taking the supremum over all $M$ and $L$ yields $\mathbf{B}_n^{\max}(T) \le c \mathbf{A}_n^{\max}(T)$, giving the result.
\end{proof}

\begin{proposition}
Let $(\mathfrak{A},\mathbf{A})$ be a mapping ideal associated to a finitely generated o.s. tensor norm $\alpha$. Then $(\mathfrak{A}^{\max},\mathbf{A}^{\max})$ is the largest mapping ideal associated to $\alpha$.
\end{proposition}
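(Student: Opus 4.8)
The plan is to establish two things: that $(\mathfrak{A}^{\max},\mathbf{A}^{\max})$ is itself associated to $\alpha$, and that it dominates (as a complete contraction, in the ordering used throughout this section) every other mapping ideal associated to $\alpha$.

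First I would check that $\mathbf{A}^{\max}$ and $\mathbf{A}$ agree on matrices over $\CB(M,N)$ whenever $M,N\in\OFIN$. The inequality $\mathbf{A}_n(T)\le\mathbf{A}^{\max}_n(T)$ is immediate from Definition \ref{def-maximal-hull} by choosing the subspace $M$ itself and the zero subspace of $N$ in the supremum; the reverse inequality follows from the ideal property of $(\mathfrak{A},\mathbf{A})$ together with $\|q^N_L\|_{\cb}\le1$ and $\|i^M_{M_0}\|_{\cb}\le1$. Consequently the canonical map $\mathfrak{A}^{\max}(M,N)\to M'\otimes_\alpha N$ coincides with the one for $\mathfrak{A}$ and is a complete isometry, so $\mathfrak{A}^{\max}\sim\alpha$; this is exactly the observation recorded in Remark \ref{misma norma asociada}.

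Next, let $(\mathfrak{B},\mathbf{B})$ be any mapping ideal with $\mathfrak{B}\sim\alpha$, let $E,F\in\OBAN$, and fix $T\in M_n(\mathfrak{B}(E,F))$. For each $M\in\OFIN(E)$ and $L\in\OCOFIN(F)$, the ideal property of $(\mathfrak{B},\mathbf{B})$ gives $(q^F_L)_n\circ T\circ i^E_M\in M_n(\mathfrak{B}(M,F/L))$ with $\mathbf{B}_n\big((q^F_L)_n\circ T\circ i^E_M\big)\le\mathbf{B}_n(T)$. Since $M,F/L\in\OFIN$, Definition \ref{defn-associated} applied to both $\mathfrak{B}$ and $\mathfrak{A}$ yields a complete isometry $\mathfrak{B}(M,F/L)=M'\otimes_\alpha (F/L)=\mathfrak{A}(M,F/L)$ via the canonical maps, so $(q^F_L)_n\circ T\circ i^E_M\in M_n(\mathfrak{A}(M,F/L))$ with $\mathbf{A}_n\big((q^F_L)_n\circ T\circ i^E_M\big)=\mathbf{B}_n\big((q^F_L)_n\circ T\circ i^E_M\big)\le\mathbf{B}_n(T)$. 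Taking the supremum over all such $M$ and $L$ gives $\mathbf{A}^{\max}_n(T)\le\mathbf{B}_n(T)<\infty$, hence $T\in M_n(\mathfrak{A}^{\max}(E,F))$ and the identity $\mathfrak{B}(E,F)\to\mathfrak{A}^{\max}(E,F)$ is a complete contraction. Combined with the first step, this says precisely that $\mathfrak{A}^{\max}$ is the largest mapping ideal associated to $\alpha$. (Alternatively, once the finite-dimensional identification $\mathfrak{B}(M,N)=\mathfrak{A}(M,N)$ is in hand, one could simply quote Proposition \ref{Inclusion maximal} with $c=1$, after reducing to Banach mapping ideals or observing that its proof does not use completeness.)

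I do not expect a genuine obstacle here: the argument is a direct assembly of the maximality definition, the ideal property, and the matrix-level finite-dimensional identifications built into the notion of association. The only point needing a small amount of care is to use those identifications at every matrix level $M_n(\cdot)$ rather than merely isometrically at level one, but this is exactly what Definition \ref{defn-associated} supplies.
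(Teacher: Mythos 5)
Your argument is correct and follows essentially the same route as the paper: you first verify that $\mathbf{A}^{\max}$ and $\mathbf{A}$ agree completely isometrically on $\OFIN$ (ideal property in one direction, the choice $M_0=M$, $L=\{0\}$ in the other), and then for any $(\mathfrak{B},\mathbf{B})\sim\alpha$ you use the ideal property of $\mathfrak{B}$ together with the finite-dimensional identification $\mathfrak{B}(M,F/L)=\mathfrak{A}(M,F/L)$ to bound $\mathbf{A}^{\max}_n(T)\le\mathbf{B}_n(T)$, exactly as in the paper's proof. No gaps; your explicit matrix-level care and the parenthetical appeal to Proposition \ref{Inclusion maximal} are fine but add nothing beyond the paper's argument.
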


\begin{proof}
The ideal property of $(\mathfrak{A},\mathbf{A})$ yields that the identity $\mathfrak A (E,F) \to  \mathfrak A^{\max} (E,F)$ is completely contractive for any $E,F\in\OBAN$. For $M,N\in\OFIN$ we additionally get that the identity $\mathfrak{A}^{\max} (M,N) \to  \mathfrak A(M,N)$ is also completely contractive since for $T \in \CB(M,N)$ we have that $T = (q^N_{\{0\}})_n \circ T \circ i^M_M$. Therefore, the identity $\mathfrak{A}^{\max} (M,N) \to  \mathfrak A(M,N)$ is a complete isomorphism and thus $(\mathfrak{A}^{\max},\mathbf{A}^{\max})$ is associated to $\alpha$.

Let $(\mathfrak B, \mathbf{B})$ be any mapping ideal associated to $\alpha$, and let $T \in M_n(\mathfrak{B}(E,F))$.
Let $M\in\OFIN(E)$ and $L\in\OCOFIN(F)$. By the ideal property for $(\mathfrak B, \mathbf{B})$ we have that 
\[
\mathbf{A}_n( (q^F_L)_n \circ T \circ i^E_M ) = \mathbf{B}_n( (q^F_L)_n \circ T \circ i^E_M ) \le \mathbf{B}_n(T),
\]
from where $\mathbf{A}^{\max}_n(T) \le \mathbf{B}_n(T)$, showing that $\mathfrak{B}(E,F) \subseteq \mathfrak{A}^{\max}(E,F)$ (and in fact the inclusion $\mathfrak{B}(E,F) \to \mathfrak{A}^{\max}(E,F)$ is completely contractive).

\end{proof}

\begin{remark} \label{isometria-proj-min}
Given a finite dimensional operator space $M$ and an arbitrary $E \in \OBAN$ there is a canonical complete isometry:
$$
M\otimes_{\proj} E\hookrightarrow (M'\otimes_{\min} E')'.
$$
Indeed, since $M$ is finite dimensional we have $M\otimes_{\proj} E=\mathcal N(M',E)=\mathcal I(M',E)$. Now, the result follows through the embedding $S_0$ referred to in Example \ref{integrales maximal}. By transposition, there is also the following complete isometry:
$$
E\otimes_{\proj} M\hookrightarrow (E'\otimes_{\min} M')'.
$$
\end{remark}

Another classical version of the Representation Theorem states, in particular, that a maximal ideal taking values in a dual space is itself a dual space. To get the operator space version, we need the additional hypothesis of local reflexivity.

\begin{corollary} \label{Cor:Representation theorem}
Let $(\mathfrak{A},\mathbf{A})$ be a maximal mapping ideal and $\alpha$ its associated o.s. tensor norm.
If $F$ is  locally reflexive, then
$$
 \mathfrak{A}(E,F') = (E \otimes_{\alpha'} F)'  
$$
\end{corollary}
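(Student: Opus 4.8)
The plan is to combine the Representation Theorem for maximal mapping ideals (Theorem~\ref{representation-theorem}) with one of the Duality Theorem equations under the local reflexivity hypothesis. By Theorem~\ref{representation-theorem} applied to the pair $(E,F')$, we already have the complete isometry
$$
\mathfrak{A}(E,F') = (E \otimes_{\alpha'} F'')' \cap \CB(E,F'),
$$
so what we must show is that, when $F$ is locally reflexive, the right-hand side collapses to $(E \otimes_{\alpha'} F)'$, i.e. that the intersection with $\CB(E,F')$ can be dropped and that $F''$ can be replaced by $F$.

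First I would address the replacement of $F''$ by $F$. Since $\alpha$ is finitely generated (being the tensor norm associated to a maximal ideal), its dual $\alpha'$ is also finitely generated by Proposition~\ref{prop-properties-dual-cross-norm}, so the Embedding Lemma~\ref{embedding-lemma}(a) applies: because $F$ is locally reflexive, the map $id_E \otimes \kappa_F : E \otimes_{\alpha'} F \to E \otimes_{\alpha'} F''$ is a complete isometry. Dualizing gives a complete metric surjection $(E \otimes_{\alpha'} F'')' \twoheadrightarrow (E \otimes_{\alpha'} F)'$, but more is true: the Extension Lemma~\ref{extension-lemma}(a) (again using that $\alpha'$ is finitely generated and $F$ is locally reflexive) says precisely that $\varphi \mapsto \varphi^\wedge$ is a complete isometry from $(E \otimes_{\alpha'} F)'$ onto $(E \otimes_{\alpha'} F'')'$. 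So as operator spaces $(E \otimes_{\alpha'} F'')' = (E \otimes_{\alpha'} F)'$ canonically, and I would verify that under this identification the bilinear form $\beta_{\kappa_{F'}\circ T}$ appearing in the proof of Theorem~\ref{representation-theorem} corresponds to $\beta_T \in (E\otimes_{\alpha'} F)'$ — this is a routine chase through the definition of the canonical extension $\varphi^\wedge$ and the relation $\pair{\varphi^\wedge}{x\otimes y''} = \pair{L_\varphi x}{y''}$.

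Second, I would show the intersection with $\CB(E,F')$ is automatic, i.e. every $\varphi \in (E \otimes_{\alpha'} F)'$ whose associated operator $E \to F''$ already lands in $F'$ — wait, here one must be slightly careful: an element of $(E\otimes_{\alpha'}F)'$ is a bilinear form on $E\times F$, hence canonically an element of $\CB(E,F')$ already (via $\CB(E,F') = \CB(E,F') $ and $(E\otimes_{\proj}F)' = \CB(E,F')$, using $\alpha' \le \proj$). So in fact $(E \otimes_{\alpha'} F)' \subseteq \CB(E,F')$ automatically, and the ``$\cap\,\CB(E,F')$'' in the statement of Theorem~\ref{representation-theorem} applied to $(E,F')$ becomes the condition that the bilinear form on $E\times F''$ restricts to something completely bounded as a map $E\to F'$ — but by the Extension Lemma this is exactly the image of $(E\otimes_{\alpha'}F)'$, which does consist of maps into $F'$. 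Thus the intersection is redundant and we obtain the desired complete isometry $\mathfrak{A}(E,F') = (E \otimes_{\alpha'} F)'$.

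I expect the main obstacle to be the bookkeeping in the second step: one has to be careful to distinguish the bidual $F''$ from $F'$ and to check that the ``$\cap\,\CB$'' condition in Theorem~\ref{representation-theorem} interacts correctly with the extension map, rather than accidentally proving a weaker statement about $\mathfrak{A}(E,F'')$. The cleanest route is probably to apply Theorem~\ref{representation-theorem} directly to the pair $(E,F')$, write out what the condition ``$\beta_{\kappa_{F'}\circ T} \in M_n\big((E\otimes_{\alpha'}F'')'\big)$ with $T \in M_n(\CB(E,F'))$'' says, and then invoke Lemmas~\ref{embedding-lemma} and~\ref{extension-lemma} to identify this with ``$\beta_T \in M_n\big((E\otimes_{\alpha'}F)'\big)$'', all isometrically at every matrix level. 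Everything else is a diagram chase.
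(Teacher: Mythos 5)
Your proposal follows essentially the same route as the paper's proof: apply the Representation Theorem \ref{representation-theorem} to the pair $(E,F')$, observe that $(E\otimes_{\alpha'}F)'\subseteq \CB(E,F')$ automatically because $\alpha'\le\proj$, and use the Extension Lemma \ref{extension-lemma}(a) (with $\alpha'$ finitely generated and $F$ locally reflexive) to pass between the $F$-level and the $F''$-level; the paper does exactly this via a commutative diagram whose vertical arrows are the extension isometries for $\alpha'$ and for $\proj$.

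One intermediate assertion in your write-up is incorrect, however: the Extension Lemma does not say that $\varphi\mapsto\varphi^\wedge$ is \emph{onto} $(E\otimes_{\alpha'}F'')'$, only that it is a completely isometric embedding, and the claimed canonical identification $(E\otimes_{\alpha'}F'')'=(E\otimes_{\alpha'}F)'$ is false in general: an arbitrary functional on $E\otimes_{\alpha'}F''$ need not be of the form $\varphi^\wedge$, i.e.\ need not pair with $F''$ in a weak$^*$ continuous way (already for $\alpha'=\min$ and nonreflexive $F$ these two duals differ). Fortunately the overclaim is never used: the Representation Theorem applied to $(E,F')$ only asks whether the particular functional $\beta_{\kappa_{F'}\circ T}=(\beta_T)^\wedge$ belongs to $(E\otimes_{\alpha'}F'')'$, and this functional does lie in the image of the extension map, so the ``if and only if, with equal norms at every matrix level'' content of Lemma \ref{extension-lemma} is all that is needed. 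That is precisely the computation described in your final paragraph, so the proof as you ultimately formulate it there is correct and coincides with the paper's argument; just delete the surjectivity claim and the purported identification of the two dual spaces.
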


\begin{proof}
Consider the diagram
$$
\xymatrix{
\varphi \in (E \otimes_{\alpha'} F)' \ar@{->}[r] \ar@{^{(}->}[d] &(E \otimes_{\proj} F)' \ar@{^{(}->}[d] \ar@{=}[r] & \CB(E,F')  \\
\varphi^\wedge \in (E \otimes_{\alpha'} F'')'  \ar@{->}[r] &(E \otimes_{\proj} F'')'. & \\
}	
$$
The vertical arrows are complete isometries thanks to the Extension Lemma \ref{extension-lemma},
whereas the horizontal arrows are continuous because $\alpha' \le \proj$.
The desired conclusion follows from \eqref{eqn-representation-thm-2}.
\end{proof}

Note that the hypothesis of local reflexivity cannot be omitted from the previous result. Indeed, taking $\mathfrak A=\mathcal I$ (the mapping ideal of completely integral mappings) which is associated to the o.s. tensor norm $\proj$,  we know by \cite[Theorem 14.3.1]{Effros-Ruan-book} that the equality $\mathcal I(E,F') = (E \otimes_{\min} F)' $ holds for every $E \in \OBAN$ if and only if $F$ is locally reflexive.

On the other hand, whenever $\alpha'$ is a $\lambda$-o.s. tensor norm the Extension Lemma \ref{extension-lemma} is valid  without the hypothesis of local reflexivity. Thus, the previous corollary also holds without this hypothesis.

Along similar lines, in this same setting we can get a version of \cite[Thm. 17.15]{Defant-Floret}.

\begin{theorem}
Let $(\mathfrak{A},\mathbf{A})$ be a maximal mapping ideal associated to the o.s. tensor norm $\alpha$.
Suppose that $\alpha'$ is a $\lambda$-o.s. tensor norm.
Then for any $E,F \in \OBAN$ and $T\in\CB(E,F)$, the following are equivalent:
\begin{enumerate}[(a)]
    \item $T \in \mathfrak{A}(E,F)$.
    \item For any $G \in \OBAN$ (or only $G=F'$) the map
    $$
    T \otimes id_G : E \otimes_{\alpha'} G \to F \otimes_{\proj} G
    $$
    is  bounded.
\end{enumerate}
In this case,
$$
\mathbf{A}(T) = \n{T \otimes id_{F'} : E \otimes_{\alpha'} F' \to F \otimes_{\proj} F'} \ge \n{T \otimes id_G : E \otimes_{\alpha'} G \to F \otimes_{\proj} G}.
$$
\end{theorem}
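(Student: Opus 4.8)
The plan is to deduce the statement from the Representation Theorem \ref{representation-theorem} together with the $\lambda$-version of Corollary \ref{Cor:Representation theorem} (which, as remarked after that corollary, holds without local reflexivity when $\alpha'$ is a $\lambda$-o.s. tensor norm). First I would observe that, since $(\mathfrak A,\mathbf A)$ is maximal and associated to $\alpha$, Theorem \ref{representation-theorem} gives the complete isometry $\mathfrak A(E,F) = (E\otimes_{\alpha'} F')' \cap \CB(E,F)$. Now a linear map $T\in\CB(E,F)$, composed with $\kappa_F$, yields a bilinear form $\beta_{\kappa_F\circ T}$ on $E\times F'$, and $T\in\mathfrak A(E,F)$ with $\mathbf A(T)\le C$ exactly when this bilinear form is bounded by $C$ as an element of $(E\otimes_{\alpha'} F')'$, i.e. when the linear map $E\otimes_{\alpha'} F'\to\C$ it induces is $C$-bounded. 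The key reformulation is that being a bounded functional on $E\otimes_{\alpha'} F'$ that arises from $\kappa_F\circ T$ is the same as the map $T\otimes id_{F'}:E\otimes_{\alpha'} F'\to F\otimes_{\proj} F'$ being bounded: this is because $F\otimes_{\proj} F'$ pairs with $\CB(F,F')\supseteq\{\text{scalar multiples of }id_{F'}\}$, and more precisely $\n{T\otimes id_{F'}:E\otimes_{\alpha'}F'\to F\otimes_{\proj}F'}$ equals the norm of the associated element of $\big(E\otimes_{\alpha'}F'\big)'$ when one pairs $F\otimes_{\proj}F'$ against the identity of $F'$. This is where I would be careful: I need that pairing against $id_{F'}$ recovers the full projective norm on the image, which follows from the fact that $(F\otimes_{\proj}F')' = \CB(F,F')\ni id_{F'}$ and the trace-duality computation $\pair{u}{id_{F'}}$ detects $\proj(u)$ up to the right constant. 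The cleanest route is to invoke Remark \ref{isometria-proj-min}/the embedding $S_0$ and the $\lambda$-free version of Corollary \ref{Cor:Representation theorem}: $\mathfrak A(E,F') = (E\otimes_{\alpha'} F)'$ has no analogue we need here, but its proof technique (the commuting square with the $\wedge$-extension) is exactly what shows $(b)\Leftrightarrow(a)$.

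Concretely, the steps in order would be: (1) Recall $\mathfrak A(E,F)=(E\otimes_{\alpha'}F')'\cap\CB(E,F)$ completely isometrically by Theorem \ref{representation-theorem}; since we only need a (non-complete) norm identity, it suffices to track the $M_1$-level. (2) Show that for $T\in\CB(E,F)$ the quantity $\n{T\otimes id_{F'}:E\otimes_{\alpha'}F'\to F\otimes_{\proj}F'}$ equals the norm of $T$ as a functional on $E\otimes_{\alpha'}F'$ (via $\beta_{\kappa_F\circ T}$): this uses that $F\otimes_{\proj}F'$ embeds completely isometrically into $(F'\otimes_{\min}F'')'=\CB(F',F'')$ — this is exactly Remark \ref{extension-sin-loc-refl}, which needs no local reflexivity — and pairing against $\kappa_{F'}\in F''$ (or equivalently tracing) recovers the functional, so by the definition of the projective norm as a sup over such pairings the two norms agree. (3) Conclude $(a)\Leftrightarrow$ [$(b)$ with $G=F'$] with equality of norms $\mathbf A(T)=\n{T\otimes id_{F'}:E\otimes_{\alpha'}F'\to F\otimes_{\proj}F'}$. (4) For general $G$, use the uniformity (complete metric mapping property) of both $\alpha'$ and $\proj$: given any complete contraction $id_G$, and noting $F\otimes_{\proj}G\hookrightarrow$ nothing directly, instead factor through $F'$: any $G$ admits, for each $z\in M_n(E\otimes_{\alpha'}G)$ and $\eps>0$, a finite-dimensional $G_0\in\OFIN(G)$ with $z\in M_n(E\otimes_{\alpha'}G_0)$ (since $\alpha'$ is finitely generated, being a $\lambda$-o.s. tensor norm), and $G_0$ maps completely isometrically into $G_0''\subseteq$ — more simply, since $\n{id_G}_{\cb}=1$, the functoriality $E\otimes_{\alpha'}G\to E\otimes_{\alpha'}F'$ and $F\otimes_{\proj}G\to F\otimes_{\proj}F'$ is not available, so instead I would argue: $\n{T\otimes id_G}\le\mathbf A(T)$ by the ideal property — namely $T\otimes id_G$ is the composition of $T\otimes id_{F'}$ preceded/followed by maps of cb-norm $\le1$ once we realize $G\hookrightarrow$ a suitable space; the safest formulation is to note $\mathfrak A\sim\alpha$ implies $\mathfrak A^{\max}=\mathfrak A\sim\alpha$, and then for $G\in\OFIN$ the statement reduces to the finite-dimensional duality $\mathfrak A(M,N)=M'\otimes_\alpha N$, and for general $G$ one passes to finite-dimensional subspaces using that $\alpha'$ is finitely generated and $\proj$ respects finite-dimensional subspaces as an infimum.

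The main obstacle I expect is step (2): cleanly identifying $\n{T\otimes id_{F'}:E\otimes_{\alpha'}F'\to F\otimes_{\proj}F'}$ with the functional norm of $T$ on $E\otimes_{\alpha'}F'$. The subtlety is that $F\otimes_{\proj}F'$ is not a dual space in general, so one cannot just ``take the trace''; the right move is to compose with the canonical complete contraction $F\otimes_{\proj}F'\to\C$ given by the (generally unbounded, but on elementary tensors well-defined) trace, or rather to use that the norm in $F\otimes_{\proj}F'$ is computed by testing against $\CB(F',F'')\ni\kappa_{F'}$ — i.e. against the identity map $F'\to F''$, which has cb-norm one — and that this particular test functional applied to $(T\otimes id_{F'})(z)$ returns precisely $\beta_{\kappa_F\circ T}(z)$. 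Once that identification is secured, the equivalence and the norm formula drop out of the Representation Theorem, and the inequality $\n{T\otimes id_G}\le\n{T\otimes id_{F'}}$ for arbitrary $G$ follows from the ideal property applied to any complete contraction from $G$ into a space where the estimate is already known, together with finite-generation of $\alpha'$ to reduce to $G\in\OFIN$. I would present the $G=F'$ case in full and indicate the reduction for general $G$ briefly.
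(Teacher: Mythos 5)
Your converse direction is essentially the paper's: pairing $(T\otimes id_{F'})(z)$ against the trace functional $\tr_F\in (F\otimes_{\proj}F')'=\CB(F,F'')$ (which has norm one, corresponding to $\kappa_F$) gives $|\pair{\beta_{\kappa_F\circ T}}{z}|\le \proj\big((T\otimes id_{F'})(z);F,F'\big)\le \n{T\otimes id_{F'}}\,\alpha'(z;E,F')$, and then the Representation Theorem \ref{representation-theorem} yields $T\in\mathfrak{A}(E,F)$ with $\mathbf{A}(T)\le\n{T\otimes id_{F'}}$. The gap is in your step (2), i.e.\ in the direction $(a)\Rightarrow(b)$. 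You claim that testing against this single functional shows that $\n{T\otimes id_{F'}}$ \emph{equals} the functional norm of $\beta_{\kappa_F\circ T}$, but pairing with one norm-one element of $(F\otimes_{\proj}F')'$ only bounds the functional norm by the operator norm; it says nothing about the reverse inequality $\n{T\otimes id_{F'}}\le\mathbf{A}(T)$ (nor about $\n{T\otimes id_G}\le\mathbf{A}(T)$ for general $G$). To bound $\proj\big((T\otimes id_G)(z);F,G\big)$ you must test against \emph{every} $\varphi$ in the unit ball of $(F\otimes_{\proj}G)'=\CB(F,G')$, and for each such $\varphi$ the estimate $|\pair{\varphi}{(T\otimes id_G)(z)}|=|\pair{\beta_{L_\varphi\circ T}}{z}|\le\n{\varphi}_{\cb}\,\mathbf{A}(T)\,\alpha'(z;E,G)$ requires two things you never supply: the ideal property giving $L_\varphi\circ T\in\mathfrak{A}(E,G')$ with $\mathbf{A}(L_\varphi\circ T)\le\n{\varphi}_{\cb}\mathbf{A}(T)$, and the identification of the ideal norm of $L_\varphi\circ T$ with its norm as a functional on $E\otimes_{\alpha'}G$, i.e.\ $\mathfrak{A}(E,G')=(E\otimes_{\alpha'}G)'$. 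That identification is exactly Corollary \ref{Cor:Representation theorem}, and it is precisely where the hypothesis that $\alpha'$ is a $\lambda$-o.s.\ tensor norm is used (to dispense with local reflexivity of $G$); you explicitly set this corollary aside as not needed, which removes the engine of the forward implication.

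Your step (4) does not repair this: the reduction to $G_0\in\OFIN(G)$ via finite generation of $\alpha'$ (and to $E_0\in\OFIN(E)$, restricting $T$) is a legitimate manoeuvre and could in principle replace the appeal to Corollary \ref{Cor:Representation theorem} by the finite-dimensional duality $\mathfrak{A}(E_0,G_0')=E_0'\otimes_\alpha G_0'=(E_0\otimes_{\alpha'}G_0)'$, but even in that finite-dimensional setting you still have to run the composition-with-$\varphi$ argument above for arbitrary $\varphi\in\CB(F,G_0')$; nowhere in your proposal does this step appear, and the trace-pairing idea you rely on cannot produce it. So as written the proposal proves only $(b)\Rightarrow(a)$ together with $\mathbf{A}(T)\le\n{T\otimes id_{F'}}$, and the forward implication and the remaining half of the norm formula are missing.
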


\begin{proof}
Suppose that $T \in \mathfrak{A}(E,F)$. Let $\varphi \in (F\otimes_{\proj}G)' = \CB(F,G')$.
Note that $L_\varphi \circ T \in \mathfrak{A}(E,G') = (E \otimes_{\alpha'} G)'$, where we are using that the previous corollary holds in general because $\alpha'$ is a $\lambda$-o.s. tensor norm.
We then have for $z \in E \otimes G$, using \cite[Formula 17.15.(1)]{Defant-Floret},

\begin{multline*}
    |\pair{\varphi}{  (T \otimes id_G)(z) }| = |\pair{\beta_{L_\varphi \circ T}}{z}| \le \mathbf{A}(L_\varphi \circ T) \alpha'(z; E,G) \le \n{\varphi} \mathbf{A}(T) \alpha'(z; E,G).
\end{multline*}
Taking the supremum over  $\n{\varphi} \le 1$ yields
\[
\proj( (T \otimes id_G)(z); F,G )\le  \mathbf{A}(T) \alpha'(z; E,G).
\]

Now, assume that (b) is satisfied for $G=F'$.
Recall that from the Representation Theorem, $\mathfrak{A}(E,F)$ coincides with $(E \otimes_{\alpha'} F')' \cap \CB(E,F)$. Since $T \in \CB(E,F)$, it will suffice to show that
$\beta_{\kappa_F \circ T} \in (E \otimes_{\alpha'} F')'$.
If we now let $z \in E \otimes F'$, using \cite[Formula 17.15.(2)]{Defant-Floret},
\begin{multline*}
    |\pair{\beta_{\kappa_F \circ T}}{z}| = |\pair{\tr_F}{(T \otimes id_{F'})(z)}| \le \proj( (T \otimes id_{F'})(z); F, F' )\\
    \le \alpha'(z;E,F')  \n{T \otimes id_G : E \otimes_{\alpha'} F' \to F \otimes_{\proj} F'}. 
\end{multline*}
\end{proof}

Once we have the theorem above, we  get a version of \cite[Prop. 17.20]{Defant-Floret}.
Notice that this is a generalization of \cite[Prop. 4]{Janson-Kumar-spectra}, where the same result is obtained for some $\lambda$-o.s. tensor norms.

\begin{proposition}\label{prop-lambda-AP}
For any $\lambda$-o.s. tensor norm and $E,F\in\OBAN$, one of which has the OAP, the natural map
$$
i : E\widehat{\otimes}_\lambda F \to E \widehat{\otimes}_{\min} F
$$
is injective.
\end{proposition}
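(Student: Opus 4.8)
The plan is to reduce the statement to the (trivial) case in which one of the two variables is \emph{finite-dimensional}, by approximating the identity on $E$ (or on $F$) with the finite-rank maps supplied by the OAP. The only delicate point — and the heart of the proof — is that the OAP net is \emph{not} norm-bounded, so the uniformity of $\lambda$ cannot be applied globally on $E\widehat\otimes_\lambda F$; the approximation has to be localized to the particular tensor being tested, exactly as in the Banach-space proof that the approximation property implies injectivity of $X\widehat\otimes_\pi Y\to X\widehat\otimes_\varepsilon Y$, and as in \cite[Thm.~11.2.5]{Effros-Ruan-book} for the case $\lambda=\proj$.

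\emph{Step 1: the finite-dimensional case.} First I would observe that if $M$ is a finite-dimensional operator space and $G\in\OBAN$, then $M\widehat\otimes_\lambda G$, $M\widehat\otimes_{\min}G$ and the algebraic tensor product $M\otimes G$ all coincide, and the natural map between the first two is the identity (in particular injective). Indeed, fixing a basis $e_1,\dots,e_m$ of $M$ with dual functionals $e_j^\ast\in M'$, the cross-norm property gives $\lambda\big(\sum_i e_i\otimes g_i\big)\le\sum_i\n{e_i}_M\n{g_i}_G$, while testing against $e_j^\ast\otimes\psi$ (with $\psi\in B_{G'}$) in the definition of $\min$ gives $\n{g_j}_G\le\n{e_j^\ast}_{M'}\,\min\big(\sum_i e_i\otimes g_i\big)\le\n{e_j^\ast}_{M'}\,\lambda\big(\sum_i e_i\otimes g_i\big)$; hence both norms are equivalent to $\sum_j\n{g_j}_G$ and $M\otimes G$ is already complete for either of them. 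The same holds with a finite-dimensional space in the second slot.

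\emph{Step 2: localized approximation.} Assume $E$ has the OAP (the case where $F$ has the OAP is handled symmetrically, approximating in the second variable; no symmetry of $\lambda$ is needed, only that it is a reasonable, uniform cross-norm). Let $z\in E\widehat\otimes_\lambda F$ with $i(z)=0$; I must show $\lambda(z)<\varepsilon$ for every $\varepsilon>0$. Writing $z$ as a limit of elements of $E\otimes F$ and passing to a rapidly convergent subsequence, I may write $z=\sum_{k\ge 0}w_k$ with $w_k\in E\otimes F$ and $\lambda(w_k)\le 4^{-k}$ for $k\ge 1$ (the term $w_0$ being a fixed algebraic tensor). For each $k\ge 1$ I choose a near-optimal $\lambda$-representation $w_k=a_k\otimes_{\bl_{p_k}}(v_1^{k},v_2^{k})\,b_k$ and, rescaling within the representation, arrange $\n{v_1^{k}}_{M_{p_k}(E)}=2^{-k}$, so that $\n{a_k}\,\n{v_2^{k}}_{M_{p_k}(F)}\,\n{b_k}\le 2\cdot 4^{-k}/2^{-k}=2^{1-k}$. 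Since $\n{v_1^{k}}\to 0$, the block-diagonal matrix $V$ with blocks $v_1^{1},v_1^{2},\dots$ defines an element of $K_\infty(E)=\mathcal K\otimes_{\min}E$, with $\n{V}_{K_\infty(E)}=\sup_k\n{v_1^{k}}$. By the OAP there is a finite-rank $T\in\CB(E,E)$ from the approximating net with $\n{(\mathrm{id}_{\mathcal K}\otimes T)(V)-V}_{K_\infty(E)}<\delta$, that is, $\n{T_{p_k}(v_1^{k})-v_1^{k}}_{M_{p_k}(E)}<\delta$ \emph{for all} $k$ at once. Since $(T\otimes\mathrm{id}_F)\big(a_k\otimes_{\bl_{p_k}}(v_1^{k},v_2^{k})\,b_k\big)=a_k\otimes_{\bl_{p_k}}(T_{p_k}(v_1^{k}),v_2^{k})\,b_k$, the definition of $\lambda$ yields $\lambda\big(w_k-(T\otimes\mathrm{id}_F)w_k\big)\le\n{a_k}\,\n{T_{p_k}(v_1^{k})-v_1^{k}}\,\n{v_2^{k}}\,\n{b_k}<2^{1-k}\delta$, hence $\sum_{k\ge1}\lambda\big(w_k-(T\otimes\mathrm{id}_F)w_k\big)<4\delta$; adding $\lambda\big(w_0-(T\otimes\mathrm{id}_F)w_0\big)$, which tends to $0$ along the net ($w_0$ being a fixed algebraic tensor and $T\to\mathrm{id}_E$ on every matrix level), I get $\lambda\big(z-(T\otimes\mathrm{id}_F)z\big)<\varepsilon$ for a suitable $T$ in the net.

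\emph{Step 3: conclusion, and the main obstacle.} For such a $T$, put $M:=T(E)\in\OFIN(E)$. Then $(T\otimes\mathrm{id}_F)z\in M\widehat\otimes_\lambda F=M\otimes F$ by Step 1, and its image under $i$ is $(T\otimes\mathrm{id}_F)(i(z))=0$; since $M\otimes F\subseteq E\otimes F$ injects into $E\widehat\otimes_{\min}F$ (the norm $\min$ being genuine on the algebraic tensor product), Step 1 forces $(T\otimes\mathrm{id}_F)z=0$ in $M\otimes F\subseteq E\widehat\otimes_\lambda F$. Therefore $\lambda(z)=\lambda\big(z-(T\otimes\mathrm{id}_F)z\big)<\varepsilon$, and since $\varepsilon$ was arbitrary $z=0$, i.e. $i$ is injective. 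The step I expect to be the main obstacle is Step 2: one must pass from "$T$ approximates $\mathrm{id}_E$ pointwise in $K_\infty(E)$'' (with no bound on $\n{T}_{\cb}$) to an estimate on $\lambda\big(z-(T\otimes\mathrm{id}_F)z\big)$ for a \emph{fixed} $z$, which requires packaging the $E$-parts $v_1^{k}$ of a fast expansion of $z$ into one element of $K_\infty(E)$ so that the OAP delivers \emph{simultaneous} control at all matrix levels. An alternative, perhaps cleaner, route would be to combine the Representation Theorem for maximal mapping ideals with the accessibility of $\lambda$-o.s.\ tensor norms, following \cite[Prop.~17.20]{Defant-Floret}.
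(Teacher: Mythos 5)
Your proof is correct, but it takes a genuinely different route from the paper's. The paper argues by duality, in the spirit of \cite[Prop. 17.20]{Defant-Floret}: given $z$ with $i(z)=0$, it shows every $\varphi \in (E\widehat{\otimes}_\lambda F)'$ annihilates $z$ by invoking the theorem immediately preceding the proposition (the operator space version of \cite[Thm. 17.15]{Defant-Floret}, available because $\lambda=(\lambda')'$ and $(E\widehat{\otimes}_\lambda F)'$ is identified with a maximal mapping ideal) to get continuity of $L_\varphi \otimes id_F : E\widehat{\otimes}_\lambda F \to F'\widehat{\otimes}_{\proj} F$, then quoting the injectivity of $F'\widehat{\otimes}_{\proj}F \to F'\widehat{\otimes}_{\min}F$ under the OAP of $F$ \cite[Thm. 11.2.5]{Effros-Ruan-book} and finishing with the trace-duality formula; this is exactly the ``alternative route'' you mention at the end. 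Your argument is instead a direct, self-contained $\varepsilon$-approximation: its key point — correctly identified by you as the crux, since the OAP net is unbounded — is to package the $E$-parts $v_1^{k}$ of a rapidly convergent $\lambda$-expansion of $z$ into a single block-diagonal element of $K_\infty(E)$, so that pointwise convergence of $id_{\mathcal{K}}\otimes T_\eta$ at that one point yields simultaneous control $\n{T_{p_k}(v_1^k)-v_1^k}<\delta$ at every matrix level occurring in the expansion; combined with the concrete form \eqref{lnorm} of the $\lambda$-norm and the elementary finite-dimensional identification $M\widehat{\otimes}_\lambda F = M\otimes F = M\widehat{\otimes}_{\min}F$, this forces $(T\otimes id_F)z=0$ and hence $\lambda(z)<\varepsilon$. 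What each approach buys: yours avoids the maximal-ideal/representation machinery and the Effros--Ruan injectivity theorem, works verbatim whichever of the two factors has the OAP (no transposition of $\lambda$ needed), and exposes why OAP rather than CBAP suffices; the paper's is shorter given the apparatus already developed in the preceding sections and fits the Defant--Floret template the monograph is following. The remaining small verifications in your write-up (the corestriction $T:E\to T(E)$ has the same $\cb$-norm, the two extensions of $T\otimes id_F$ agree on the dense algebraic tensor product, and $\min$ is a faithful norm on $E\otimes F$ so the algebraic tensor product injects into $E\widehat{\otimes}_{\min}F$) are routine and as you indicate.
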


\begin{proof}
Suppose that $F$ has the OAP.
Let $z \in E\widehat{\otimes}_\lambda F$ be such that $i(z)=0$.
Observe that we need to show that $\pair{\varphi}{z} = 0$ whenever $\varphi \in (E\widehat{\otimes}_\lambda F)' \hookrightarrow \CB(E,F')$.

Note that since $\lambda$ is finitely generated, we have $\lambda=(\lambda')'$ and therefore the previous theorem applies, so
\[
L_\varphi \otimes id_F : E\widehat{\otimes}_\lambda F \to F'\widehat{\otimes}_{\proj} F
\]
is continuous. In the diagram
\[
\xymatrix{
E\widehat{\otimes}_\lambda F \ar[d]_{L_\varphi \widehat{\otimes}_{\lambda,\proj} id_F} \ar[r]^i &  E \widehat{\otimes}_{\min} F \ar[d]^{L_\varphi \widehat{\otimes}_{\min} id_F}\\
F' \widehat{\otimes}_{\proj} F \ar[r] &F' \widehat{\otimes}_{\min} F\\
}
\]
the bottom row is injective because $F$ has OAP \cite[Thm. 11.2.5]{Effros-Ruan-book}, which implies
\[
(L_\varphi \widehat{\otimes}_{\lambda,\proj} id_F)(z) = 0 \in F' \widehat{\otimes}_{\proj} F.
\]
Once again using using \cite[Formula 17.15.(2)]{Defant-Floret}, we get
\[
\pair{\varphi}{z} = \pair{\tr F}{ (L_\varphi \widehat{\otimes}_{\lambda,\proj} id_F)(z) } = 0.
\]

\end{proof}

\begin{example}
For the Haagerup o.s. tensor norm $h$ consider as in Example \ref{U-alpha} the mapping ideal $\mathfrak A_h$. Then, $\mathfrak A_h$ is a maximal mapping ideal defined as
$$
\mathfrak{A}_h(E,F):= (E\otimes_h F')' \cap \CB(E,F)
$$ for every $E,F \in \OBAN$.

Since $\alpha'=h$ is a $\lambda$-o.s. tensor norm, we have
$$
 \mathfrak{A}_h(E,F') = (E \otimes_{h} F)'  
 $$
  for every $E,F \in \OBAN$.
Also, recall that the dual of $E \otimes_{h} F$ is identified with the space $\MB(E\times F)$ of multiplicatively bounded bilinear mappings (see, for instance, \cite[Prop. 9.2.2]{Effros-Ruan-book}) then we have the following complete isometry:
$$
 \mathfrak{A}_h(E,F') = \MB(E\times F).  
 $$
\end{example}

Combining the Representation Theorem \ref{representation-theorem} and \ref{Cor:Representation theorem} with the Duality Theorem \ref{duality-theorem} we obtain:

\begin{theorem}[Embedding Theorem]\label{embedding-theorem}
Let $(\mathfrak{A},\mathbf{A})$ be a maximal mapping ideal associated to the o.s. tensor norm $\alpha$. Then for any $E,F \in \OBAN$ there are complete isometries:
\begin{align*}
E \otimes_{\overleftarrow{\alpha}} F  &\hookrightarrow \mathfrak{A}(E',F)
\\
E' \otimes_{\overleftarrow{\alpha}} F  &\hookrightarrow  \mathfrak{A}(E,F) \qquad \textrm{ whenever } E \textrm{ is locally reflexive}\\
E' \otimes_{\overleftarrow{\alpha}} F'  &\hookrightarrow  \mathfrak{A}(E,F') \qquad \textrm{ whenever } E \textrm{ and } F \textrm{ are locally reflexive.}
\end{align*}
\end{theorem}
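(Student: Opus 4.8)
The strategy is straightforward: the Representation Theorem \ref{representation-theorem} identifies $\mathfrak{A}(E,F)$ with $(E \otimes_{\alpha'} F')' \cap \CB(E,F)$ completely isometrically, while the Duality Theorem \ref{duality-theorem} gives complete isometries of $E \otimes_{\overleftarrow{\alpha}} F$ (and its variants with duals) into $(E' \otimes_{\alpha'} F')'$ under the appropriate local reflexivity hypotheses. The point is to match these up after suitable relabeling of spaces. Concretely, for the first embedding I would apply \eqref{eqn-duality-3} of the Duality Theorem with $E$ replaced by $E'$: this yields a complete isometry $E' \otimes_{\overleftarrow{\alpha}} F \hookrightarrow (E'' \otimes_{\alpha'} F')'$. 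Hmm — that isn't quite the right shape. Instead, the cleanest route is: \eqref{eqn-duality-3} states $E \otimes_{\overleftarrow{\alpha}} F \hookrightarrow (E' \otimes_{\alpha'} F')'$, and by Theorem \ref{representation-theorem} applied to the pair $(E', F)$ we have $\mathfrak{A}(E',F) = (E' \otimes_{\alpha'} F')' \cap \CB(E',F)$. Since the image of $E \otimes_{\overleftarrow{\alpha}} F$ in $(E' \otimes_{\alpha'} F')'$ consists of functionals whose associated linear maps $E' \to F'$ actually take values in $F$ (equivalently, are $w^*$-continuous into $F' $, i.e.\ adjoints), I would verify that this image lies inside $\CB(E',F)$, giving the first complete isometry.

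For the second and third embeddings, I would use \eqref{eqn-duality-2} and \eqref{eqn-duality-1} respectively. From \eqref{eqn-duality-2}, assuming $E$ locally reflexive, $E' \otimes_{\overleftarrow{\alpha}} F \hookrightarrow (E \otimes_{\alpha'} F')'$ completely isometrically; and by Theorem \ref{representation-theorem} for the pair $(E,F)$, $(E \otimes_{\alpha'} F')' \cap \CB(E,F) = \mathfrak{A}(E,F)$. Again one checks the image lands in $\CB(E,F)$. Similarly, from \eqref{eqn-duality-1}, assuming $E$ and $F$ both locally reflexive, $E' \otimes_{\overleftarrow{\alpha}} F' \hookrightarrow (E \otimes_{\alpha'} F)'$, and by Corollary \ref{Cor:Representation theorem} (which requires $F$ locally reflexive) $(E \otimes_{\alpha'} F)' = \mathfrak{A}(E,F')$; here one does not even need to intersect with $\CB$ because the Corollary gives the identity outright. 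So the three cases use, respectively, the unconditional part, the $E$-locally-reflexive part, and the $(E,F)$-locally-reflexive part of the Duality Theorem, paired with Theorem \ref{representation-theorem} / Corollary \ref{Cor:Representation theorem}.

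The main obstacle — really the only nontrivial point — is verifying that the image of the tensor product under the duality embedding always lands inside $\CB(E',F)$ (resp.\ $\CB(E,F)$), so that the intersection with $\CB$ appearing in the Representation Theorem does not shrink it. This amounts to observing that an elementary tensor $x' \otimes y \in E' \otimes F$ corresponds to the finite-rank (hence completely bounded) operator $E' \ni \xi \mapsto \xi(x')\, y \in F$, that finite sums of such remain in $\CB(E',F)$, and that the completion in the $\overleftarrow{\alpha}$-norm — which dominates $\min$, hence controls the $\cb$-norm into $F$ via the complete contraction $E \otimes_{\overleftarrow{\alpha}} F \to E \otimes_{\min} F \hookrightarrow \CB(E',F)$ — stays inside $\CB(E',F)$ as a closed subspace. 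Once that containment is in place, the complete isometry statements follow immediately by restriction, since a restriction of a complete isometry to a subspace is again a complete isometry. I would phrase the whole proof as a short chain of displayed inclusions making the identifications explicit, with one sentence justifying the $\CB$-containment in each case.
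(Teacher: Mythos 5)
Your proposal is correct and is exactly the paper's argument: the paper proves this theorem by simply combining the Duality Theorem \ref{duality-theorem} with the Representation Theorem \ref{representation-theorem} and Corollary \ref{Cor:Representation theorem}, and your matching of \eqref{eqn-duality-3}, \eqref{eqn-duality-2}, \eqref{eqn-duality-1} with the corresponding identifications of $\mathfrak{A}(E',F)$, $\mathfrak{A}(E,F)$, $\mathfrak{A}(E,F')$ is the intended reading, including the observation that algebraic tensors give finite-rank (hence completely bounded) maps so the intersection with $\CB$ does not shrink the image. The only cosmetic remark is that no completion argument is needed, since $E\otimes_{\overleftarrow{\alpha}}F$ here is the algebraic tensor product equipped with the norm.
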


\begin{remark} \label{remark-embedding-theorem-lambda} \rm
As in the comments after the Duality Theorem \ref{duality-theorem} and Corollary \ref{Cor:Representation theorem}, if $\alpha'$ is a $\lambda$-o.s. tensor norm then the Embedding Theorem \ref{embedding-theorem} is valid without the hypothesis of local reflexivity. In particular, this holds
in the cases $\overleftarrow{\alpha}=\overleftarrow{\min}=\min$ with $\mathfrak A=\CB$ and $\overleftarrow{\alpha}=\overleftarrow{h}=h$ with $\mathfrak A=\mathfrak A_h$. 
\end{remark}

Given an o.s. tensor norm $\alpha$, we denote its transpose o.s. tensor norm $\alpha^t$. That is, for every pair of operator spaces $\alpha^t$ fulfills that 
$$ E \otimes_{\alpha} F = F \otimes_{\alpha^t} E$$ is a complete isometry with the natural identification.
As mentioned in the introduction, $\proj=\proj^t$, $\min=\min^t$ but $h \neq h^t$. Also, it follows form the definition that $(d_p )^t=g_p $ and $(g_p )^t=d_p .$

In the classical setting the dual of a maximal ideal is also maximal, and their associated tensor norms are transposes of each other.
In the operator space setting, in order to get a similar result we once again need the additional hypothesis of local reflexivity.

\begin{corollary}\label{cor-associate-transpose} 
Let $(\mathfrak{A},\mathbf{A})$ be a maximal mapping ideal associated to the finitely-generated o.s. tensor norm $\alpha$. Then $\mathfrak{A}^{dual}$ is associated to $\alpha^t$. 
Moreover, if $(\mathfrak{B},\mathbf{B})$ is the maximal mapping ideal associated to $\alpha^t$ and $E$ is a locally reflexive Banach operator space, then $\mathfrak{B}(E,F) = \mathfrak{A}^{\dual}(E,F)$.
\end{corollary}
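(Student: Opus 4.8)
The plan is to treat the two assertions separately. The first is purely a finite‑dimensional bookkeeping statement about transposes, and the second reduces to the Representation Theorem for maximal ideals once that bookkeeping is in place; the only real subtlety is making sure the correct transpose appears and that the instance of Corollary \ref{Cor:Representation theorem} invoked in the second part is the one whose local reflexivity hypothesis falls on $E$.

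\emph{First assertion: $\mathfrak{A}^{\dual}\sim\alpha^t$.} I would work entirely on $\OFIN$. Fix $M,N\in\OFIN$. Since $\mathfrak{A}\sim\alpha$, the canonical map $S\mapsto z_S$ is a complete isometry $\mathfrak{A}(N',M')=N\otimes_\alpha M'$ (using $N''=N$). Given $T\in\CB(M,N)$, writing $T=\sum_i\varphi_i\otimes y_i$ with $\varphi_i\in M'$, $y_i\in N$, a direct check shows that $z_{T'}=\sum_i y_i\otimes\varphi_i$ is precisely the transposition of $z_T\in M'\otimes N$. Hence, using that by definition of $\alpha^t$ the transposition map $M'\otimes_{\alpha^t}N\to N\otimes_\alpha M'$ is a complete isometry, we get at every matrix level $\mathbf{A}^{\dual}_n(T)=\mathbf{A}_n(T')=\alpha_n(z_{T'};N,M')=(\alpha^t)_n(z_T;M',N)$, i.e. $\mathfrak{A}^{\dual}(M,N)=M'\otimes_{\alpha^t}N$ completely isometrically. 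Finally, $\alpha^t$ is a finitely generated o.s. tensor norm on $\ONORM$ because $\overrightarrow{\alpha^t}=(\overrightarrow{\alpha})^t=\alpha^t$ (the finite hull commutes with transposition), so $\mathfrak{A}^{\dual}\sim\alpha^t$ in the sense of Definition \ref{defn-associated}. (One also knows a priori that $\mathfrak{B}=(\mathfrak{A}^{\dual})^{\max}$ is the largest ideal associated to $\alpha^t$, which already gives the complete contraction $\mathfrak{A}^{\dual}(E,F)\to\mathfrak{B}(E,F)$.)

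\emph{Second assertion.} I would compute both sides via the Representation Theorem. On one hand $\mathfrak{B}$ is maximal and associated to $\alpha^t$, so Theorem \ref{representation-theorem} gives $\mathfrak{B}(E,F)=\bigl(E\otimes_{(\alpha^t)'}F'\bigr)'\cap\CB(E,F)=\bigl(E\otimes_{\alpha^*}F'\bigr)'\cap\CB(E,F)$, where $\alpha^*=(\alpha^t)'=(\alpha')^t$. On the other hand $T\in\mathfrak{A}^{\dual}(E,F)$ means $T\in\CB(E,F)$ and $T'\in\mathfrak{A}(F',E')$; since the codomain $E'$ is the dual of $E$ and $E$ is locally reflexive, Corollary \ref{Cor:Representation theorem} (with $F'$ in the role of the domain and $E$ in the role of the locally reflexive space) yields the complete isometry $\mathfrak{A}(F',E')=\bigl(F'\otimes_{\alpha'}E\bigr)'$, the intersection with $\CB$ being automatic there. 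Dualizing the complete isometry $F'\otimes_{\alpha'}E=E\otimes_{(\alpha')^t}F'=E\otimes_{\alpha^*}F'$ given by transposition, and noting that $\beta_{\kappa_F\circ T}$ is the transposition of $\beta_{T'}$, we conclude that $T'\in\mathfrak{A}(F',E')$ if and only if $\beta_{\kappa_F\circ T}\in\bigl(E\otimes_{\alpha^*}F'\bigr)'$, with matching matrix norms. Therefore $\mathfrak{A}^{\dual}(E,F)=\bigl(E\otimes_{\alpha^*}F'\bigr)'\cap\CB(E,F)=\mathfrak{B}(E,F)$ completely isometrically.

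The routine parts are the identity $z_{T'}=$ transposition of $z_T$ and the compatibility of all these identifications at the matrix level. The step that needs care — and where the hypothesis is actually used — is the appeal to Corollary \ref{Cor:Representation theorem} in the second part: one must recognize that the right instance is the one in which $E'$ is seen as a dual space and local reflexivity is imposed on its predual $E$. As in the remark following Corollary \ref{Cor:Representation theorem} (taking $\mathfrak{A}=\mathcal{I}$ and using \cite[Thm. 14.3.1]{Effros-Ruan-book}), this hypothesis cannot in general be omitted. Note also that whenever $\alpha'$ happens to be a $\lambda$-o.s. tensor norm, the Extension Lemma \ref{extension-lemma}, and hence Corollary \ref{Cor:Representation theorem}, hold without local reflexivity, so in that case the equality $\mathfrak{B}(E,F)=\mathfrak{A}^{\dual}(E,F)$ holds for all $E,F\in\OBAN$.
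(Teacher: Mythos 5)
Your proof is correct and follows essentially the same route as the paper: the finite-dimensional transposition identity $z_{T'}=(z_T)^t$ for the first claim, and then the Representation Theorem for $\mathfrak{B}$ combined with Corollary \ref{Cor:Representation theorem} applied to $\mathfrak{A}(F',E')$ (with local reflexivity falling on $E$) plus the duality $(\alpha^t)'=(\alpha')^t$ for the second. The only differences are cosmetic: you run the chain of identifications starting from $\mathfrak{A}^{\dual}$ rather than from $\mathfrak{B}$, and you spell out routine details (e.g.\ that $\alpha^t$ is finitely generated) that the paper leaves implicit.
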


\begin{proof}
If $\mathfrak{A} \sim \alpha$ and $M,N$ are finite dimensional operator spaces then $\mathfrak{A}^{\dual}(M,N)=M' \otimes_{\alpha^t}N.$ This shows that $\mathfrak{A}^{\dual} \sim \alpha^t.$
Also, by the Representation Theorem \ref{representation-theorem} and Corollary \ref{Cor:Representation theorem} we have complete isometries
\begin{align*}
\mathfrak{B}(E,F) &=  (E \otimes_{(\alpha^t)'} F')' \cap \CB(E,F)
= \big\{  T \in \CB(E,F) \, : \, \beta_{T} \in (E \otimes_{(\alpha^t)'} F')'   \big\} \\
&= \big\{  T \in \CB(E,F) \, : \, \beta_{T'} \in ( F' \otimes_{\alpha'} E )'   \big\} \\
&= \big\{  T \in \CB(E,F) \, : \, T' \in \mathfrak{A}(F',E') \big\} =  \mathfrak{A}^{\dual}(E,F)
\end{align*}
and therefore, for $S \in M_n(\mathfrak{B}(E,F))$ we have $\mathbf{B}_n(S) = \mathbf{A}_n(S')$.
This is the desired conclusion.
\end{proof}

If $(\mathfrak{A},\mathbf{A})$ is a maximal mapping ideal, it should be noted that $\mathfrak{A}^{\dual}$ is not necessarily  maximal as it happens in the Banach space case.
Indeed, $\mathfrak A=\mathcal I$ the maximal mapping ideal of completely integral mappings, which is associated to the o.s. tensor norm $\proj$, gives us the example.
To see this, note that $\mathcal I^{\dual}(M,N)=M'\otimes_{\proj}N$, for every $M,N \in \OFIN$. Thus, if $\mathcal I^{\dual}$ were a maximal mapping ideal it should coincide with $\mathcal I$ (since $\proj^t=\proj$).
If $W$ is any non-locally reflexive o.s. then by \cite[Theorem 14.3.1]{Effros-Ruan-book} there is $L \in \OFIN$ such that the inclusion $\mathcal N(W,L^*)\hookrightarrow\mathcal I(W,L^*)$ is not isometric.
Let $T \in \mathcal N(W,L^*)$ such that $\iota(T)<\nu(T)$. We have that $T' \in \mathcal N(L,W^*)=\mathcal I(L,W^*)$ (since $L$ is finite-dimensional) and $$ \iota^{\dual}(T) = \iota( T')  = \nu (T') = \nu (T ) > \iota (T ).$$

Recall that for finite-dimensional spaces $E$ and $F$ and linear maps $T : M \to N$, $S : N \to M$, their trace duality pairing is given by
$$
\pair{T}{S} = \tr( S \circ T ).
$$
More generally, for $T \in M_n(\CB(M,N))$ and $S \in M_m(\CB(N,M))$, their trace duality (matrix) pairing is the matrix in $M_{nm}$ given by 
$$
\mpair{T}{S} = [\tr( S_{kl} \circ T_{ij} )].
$$

\begin{definition}
Let $(\mathfrak{A},\mathbf{A})$ be a mapping ideal.
For  $M,N \in \OFIN$, $T \in M_n(\CB(M,N))$ and $S \in M_m(\CB(N,M))$, we denote by $S \circ T$ the induced $mn \times mn$ matrix of mappings $M \to M$.
For $T \in M_n(\CB(E,F))$, define
\begin{multline*}
\mathbf{A}^*_n(T)  = \sup\big\{ \n{  \mpair{(q^F_L)_n \circ  T \circ i^E_M}{S}   }_{M_{mn}}  \, : \,\\
  M \in \OFIN(E), L \in \OCOFIN(F), S \in M_m(\CB(F/L,M)), \mathbf{A}_m(S) \le 1 \big\}
\end{multline*}
and
$$
\mathfrak{A}^{*}(E,F)= \left\{ T \in \CB(E,F) \, : \,  \mathbf{A}^{*}_1(T) < \infty  \right\}.
$$
\end{definition}

As the following proposition shows,  $(\mathfrak{A}^*,\mathbf{A}^*)$ is a maximal mapping ideal which is associated with the so-called adjoint o.s. tensor norm.
Thus, we call   $(\mathfrak{A}^*,\mathbf{A}^*)$ \emph{the adjoint mapping ideal} of $(\mathfrak{A},\mathbf{A})$.

\begin{proposition}
\label{prop-associate-adjoint}
Let $(\mathfrak{A},\mathbf{A})$ be a maximal mapping ideal associated to the finitely-generated o.s. tensor norm $\alpha$.
Then $(\mathfrak{A}^{*},\mathbf{A}^{*})$ is a maximal mapping ideal, and it is associated with the adjoint o.s. tensor norm $\alpha^* :=(\alpha^t)'$.
\end{proposition}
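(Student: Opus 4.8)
The plan is to pin down the behaviour of $(\mathfrak{A}^{*},\mathbf{A}^{*})$ on finite-dimensional spaces by a trace-duality computation, and then to recognise the formula defining $\mathbf{A}^{*}$ on $\OBAN$ as the maximal-hull construction of Definition~\ref{def-maximal-hull}.

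\emph{Step 1: the finite-dimensional case.} Fix $M,N\in\OFIN$ and $T\in M_n(\CB(M,N))$. In the supremum defining $\mathbf{A}^{*}_n(T)$, for $S\in M_m(\CB(N/L,M_0))$ one computes $\mpair{(q^N_L)_n\circ T\circ i^M_{M_0}}{S}=\mpair{T}{\,i^M_{M_0}\circ S\circ q^N_L\,}$, with $\mathbf{A}_m(i^M_{M_0}\circ S\circ q^N_L)\le\mathbf{A}_m(S)$ by the ideal property; hence the supremum is already attained at $M_0=M$, $L=\{0\}$, so
$$
\mathbf{A}^{*}_n(T)=\sup\big\{\,\n{\mpair{T}{S}}_{M_{mn}}\ :\ S\in M_m(\CB(N,M)),\ \mathbf{A}_m(S)\le 1\,\big\}.
$$
Since $\mathfrak{A}\sim\alpha$ we have the complete isometry $\mathfrak{A}(N,M)=N'\otimes_\alpha M$ under $S\mapsto z_S$, so the constraint becomes $\alpha_m(z_S;N',M)\le 1$. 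Unwinding the identifications $T\mapsto z_T\in M'\otimes N$ and $S\mapsto z_S\in N'\otimes M$ on elementary tensors shows that the trace matrix pairing $\mpair{T}{S}$ equals the tensor matrix pairing of $z_T$ against $z_S$ for the natural duality $(M'\otimes N)\times(N'\otimes M)\to\C$ sending $(m'\otimes n,\,n'\otimes m)$ to $m'(m)\,n'(n)$, under which $(N'\otimes_\alpha M)'=N\otimes_{\alpha'}M'$. Therefore $\mathbf{A}^{*}_n(T)$ is the norm of $z_T$, regarded as an element of $M_n\big((N'\otimes_\alpha M)'\big)=M_n(N\otimes_{\alpha'}M')$; flipping the two tensor legs, this is $M_n(M'\otimes_{(\alpha')^t}N)$, and since $(\alpha')^t=\alpha^{*}$ by definition we obtain a complete isometry $\mathfrak{A}^{*}(M,N)=M'\otimes_{\alpha^{*}}N$ for all $M,N\in\OFIN$. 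In particular the restriction of $(\mathfrak{A}^{*},\mathbf{A}^{*})$ to $\OFIN$ is, under $T\mapsto z_T$, the o.s. tensor norm $\alpha^{*}$, which is finitely generated and satisfies the complete metric mapping property.

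\emph{Step 2: recognising the maximal hull.} Let $E,F\in\OBAN$ and $T\in M_n(\CB(E,F))$. For fixed $M\in\OFIN(E)$ and $L\in\OCOFIN(F)$, the inner supremum $\sup\{\n{\mpair{(q^F_L)_n\circ T\circ i^E_M}{S}}:S\in M_m(\CB(F/L,M)),\ \mathbf{A}_m(S)\le 1\}$ appearing in the definition of $\mathbf{A}^{*}_n(T)$ is, by Step~1 applied to the finite-dimensional spaces $M$ and $F/L$, exactly $\mathbf{A}^{*}_n\big((q^F_L)_n\circ T\circ i^E_M\big)$, the value of the finite-dimensional adjoint norm. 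Hence the formula for $\mathbf{A}^{*}$ is precisely the maximal-hull formula of Definition~\ref{def-maximal-hull} applied to the mapping ideal on $\OFIN$ that, under the correspondence of Definition~\ref{defn-associated}, is $\alpha^{*}$. It follows that $(\mathfrak{A}^{*},\mathbf{A}^{*})$ is a Banach mapping ideal which is maximal, and since it agrees with $M'\otimes_{\alpha^{*}}N$ on $\OFIN$ it is associated with $\alpha^{*}$. (Equivalently, once $\mathfrak{A}^{*}$ is known to be maximal, that it is \emph{the} maximal ideal associated with $\alpha^{*}$ follows from the one-to-one correspondence between maximal mapping ideals and finitely-generated o.s. tensor norms recorded just before Theorem~\ref{representation-theorem}, together with the identification of $(E\otimes_{(\alpha^{*})'}F')'\cap\CB(E,F)$ from Example~\ref{U-alpha} as such an ideal, since $(\alpha^{*})'=\alpha^t$.)

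\emph{The main obstacle} is the bookkeeping in Step~1: one must verify that the trace matrix pairing and the tensor matrix pairing coincide at every amplification level (both taking values in $M_{mn}$), and keep careful track of the transposition that enters when one passes from the ``swapped'' tensor product $N\otimes_{\alpha'}M'$ back to a tensor product of the shape $M'\otimes_{(\cdot)}N$. Once this identification is set up correctly, all remaining assertions are formal consequences of the maximal-hull machinery and of the duality theory for o.s. tensor norms developed earlier.
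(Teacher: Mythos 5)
Your proposal is correct and follows essentially the same route as the paper's proof: the key step in both is the trace-duality identification $\big(\mathfrak{A}(N,M)\big)' = (N'\otimes_{\alpha}M)' = N\otimes_{\alpha'}M' = M'\otimes_{\alpha^*}N$ on $\OFIN$, combined with the observation that the defining formula for $\mathbf{A}^*$ on $\OBAN$ is exactly the maximal-hull formula, so that $\mathfrak{A}^*$ coincides with the maximal ideal associated to $\alpha^*$. The only organizational difference is that you compute $\mathbf{A}^*$ directly on $\OFIN$ (checking, correctly, that the supremum collapses to $M_0=M$, $L=\{0\}$ via trace cyclicity and the ideal property), whereas the paper starts from the maximal ideal $\mathfrak{B}\sim\alpha^*$ and matches it with $\mathfrak{A}^*$; this is a presentational variant, not a different argument.
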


\begin{proof}
Let $(\mathfrak{B},\mathbf{B})$ be the maximal mapping ideal associated to $\alpha^*$. Now we show that $(\mathfrak{B},\mathbf{B})$ is in fact $(\mathfrak{A}^*,\mathbf{A}^*)$.
First, note that, for $M,N \in \OFIN$,
$$
\big( \mathfrak{A}(N,M) \big)' = (N' \otimes_{\alpha} M)' = (M \otimes_{\alpha^t} N') = M' \otimes_{(\alpha^t)'} N = M' \otimes_{\alpha^*} N.
$$
Hence,
$$
\mathfrak{B}(M,N) = \big( \mathfrak{A}(N,M) \big)'
$$
completely isometrically, where the duality pairing is given by the canonical trace duality.
This means that for $T \in M_n(\CB(M,N))$ we have
$$
\mathbf{B}_n(T) = \sup\{  \n{ \mpair{T}{S} }_{M_{nm}}   \, : \, S\in M_m(\mathfrak{A}(N,M)),  \mathbf{A}_m( S ) \le 1 \}.
$$
Since $\mathfrak{B}$ is maximal, it follows that for any $E,F \in \OBAN$,
and $T \in M_n(\CB(E,F))$,
\begin{multline*}
\mathbf{B}_n(T) = \sup\big\{ \n{  \mpair{(q^F_L)_n \circ  T \circ i^E_M}{S}   }_{M_{mn}} \, : \,\\
  M \in \OFIN(E), L \in \OCOFIN(F), S \in M_m(\CB(F/L,M)), \mathbf{A}_m(S) \le 1 \big\}
\end{multline*}
which coincides with the definition of $\mathbf{A}^{*}_n(T)$.
\end{proof}

\begin{proposition}\label{prop-associated-to-p-summing}
Let $1 \le p \le \infty$.
The o.s. tensor norm $(g_{p'})^*$ is associated to the mapping ideal $\Pi_{p}$.
\end{proposition}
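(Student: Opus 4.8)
The plan is to unwind the definition of the adjoint o.s.\ tensor norm, identify $(g_{p'})^*$ with a dual Chevet--Saphar norm, and then quote the finite-dimensional duality for completely $p$-summing maps that was already recorded in the list of examples of mapping ideals.

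First I would compute $(g_{p'})^*$. By the definition of the adjoint norm, $\alpha^* = (\alpha^t)' = (\alpha')^t$; taking $\alpha = g_{p'}$ and using $(g_q)^t = d_q$ for every exponent $q$ (noted just before Corollary~\ref{cor-associate-transpose}) yields
$$
(g_{p'})^* = \big( (g_{p'})^t \big)' = (d_{p'})'.
$$
So the assertion reduces to: the mapping ideal $\Pi_p$ is associated to the o.s.\ tensor norm $(d_{p'})'$.

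Next I would verify the two ingredients in Definition~\ref{defn-associated}. The norm $(d_{p'})'$ is finitely generated, because the dual of any o.s.\ tensor norm defined on $\OFIN$ is by construction its finite hull, which is finitely generated (Proposition~\ref{prop-order-of-hulls}). For the finite-dimensional complete isometry, I would appeal to the identification recorded in item~(v) of the list of examples of mapping ideals: as a consequence of \cite[Thm.~3.1]{CD-Chevet-Saphar-OS}, for all $M,N \in \OFIN$ one has a complete isometry
$$
\Pi_p(M,N) = M' \otimes_{(d_{p'})'} N
$$
which is precisely the canonical map $T \mapsto z_T$. Together with the previous computation $(d_{p'})' = (g_{p'})^*$, this is exactly the statement that $\Pi_p \sim (g_{p'})^*$.

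The argument is essentially bookkeeping, so there is no real obstacle; the only point requiring care is to keep conventions aligned --- that $(g_{p'})^t = d_{p'}$ and the duality $\Pi_p(M,N) = M' \otimes_{(d_{p'})'} N$ are written with the same placement of primes and the same canonical trace pairing as in Definition~\ref{defn-associated}. (The formula in item~(v) is stated for $1 \le p < \infty$; for the endpoint $p = \infty$ one uses $p' = 1$, $\Pi_\infty = \CB$ and $d_1 = \proj$, so that $(g_1)^* = (d_1)' = \min$ and the identification $\CB \sim \min$ gives the claim there as well.)
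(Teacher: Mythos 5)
Your proof is correct and follows essentially the same route as the paper: both reduce the claim to the finite-dimensional duality $\Pi_p(M,N)=(M\otimes_{d_{p'}}N')'=M'\otimes_{(d_{p'})'}N$ from \cite[Thm.~3.1]{CD-Chevet-Saphar-OS} and note that the adjoint norm is finitely generated because dual o.s.\ tensor norms are defined as finite hulls. Your explicit treatment of the endpoint $p=\infty$ is a minor extra precaution the paper omits, but it does not change the argument.
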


\begin{proof}
Note that $(g_p )^* = ((g_p )^t)'$ is a finitely-generated o.s. tensor norm, since dual o.s. tensor norms are defined as a finite hull.

Finally, by the duality between the completely $p$-summing maps and the Chevet-Saphar o.s. tensor norms \cite{CD-Chevet-Saphar-OS}, for any $E,F \in \OFIN$
we have a canonical duality
$$
\Pi_{p}(E,F) = ( E \otimes_{d_{p'} } F' )' = E' \otimes_{(d_{p'} )'} F = E' \otimes_{(g_{p'} )^*} F,
$$
this concludes the proof.
\end{proof}

\section{Minimal o.s. mapping ideals}\label{minimal sec}

In the theory Banach spaces another important topic regarding the interplay between tensor products and ideals is the class of minimal operator ideals \cite[Section 22]{Defant-Floret}.
In this section we construct an operator space analogue of this concept;
an instance of this procedure already appears in the definition of nuclear mappings, as will become clear in Theorem \ref{thm-minimal-ideal} below.

Given a Banach mapping ideal $\mathfrak{A}$, we will describe the smallest Banach mapping ideal which coincides with $\mathfrak{A}$ on $\OFIN$. This mapping ideal is called the \emph{the minimal kernel} of $\mathfrak{A}$ and will be denoted by $\mathfrak{A}^{\min}$. In other words, if $\mathfrak{A}$ is a Banach mapping ideal associated to the finitely generated o.s. tensor norm $\alpha$, then $\mathfrak A^{\min}$ will be the smallest Banach mapping ideal also associated to $\alpha.$

In the Banach space framework, the representation theorem of minimal Banach ideals \cite[22.2]{Defant-Floret} gives the connection with tensor products. In our setting, we will use this association to provide an o.s. structure to the mapping ideal $\mathfrak{A}^{\min}$. 
\bigskip

\begin{definition}Let $(\mathfrak{A},\mathbf{A})$ be a mapping ideal. For $E,F \in \ONORM$ we define the class $\mathfrak{A}^{\min}(E,F)$ of all mappings $T:E \to F$ which admit a factorization of the form 

\begin{equation} \label{factorization minimal}
\xymatrix{
E\ar[d]_R \ar[r]^T & F \\
 G \ar[r]_{\widehat T} & H \ar[u]_S,\\
} 
\end{equation}
where $G,H \in \OBAN$ and $R \in \mathcal{A}(E,G)$, $\widehat T \in \mathfrak{A}(G,H)$ and $S \in \mathcal{A}(H,F)$.
For such a mapping $T$ we denote
$$
\mathbf{A}^{\min}( T ) := \inf \Vert R \Vert_{\cb}  \mathbf{A}( \widehat{T} ) \Vert S \Vert_{\cb},
$$
where the infimum runs over all the possible factorizations of $T$ as above. 

\end{definition}

It is clear that:
\begin{example}\label{example: Fmin}
$\mathcal{F}^{\min}=\mathcal{F}$.
\end{example}

We now see that $\mathfrak{A}^{\min}(E,F)$ has certain structure.

\begin{proposition}
Let $(\mathfrak{A},\mathbf{A})$ be a Banach mapping ideal and $E,F \in \ONORM$. Then the class $\mathfrak{A}^{\min}(E,F)$ is a vector space and $ \mathbf{A}^{\min}$ is a norm. Moreover, if $E,F \in \OBAN$ then $(\mathfrak{A}^{\min}(E,F), \mathbf{A}^{\min})$ is a Banach space.
\end{proposition}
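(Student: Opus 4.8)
The plan is to handle the three assertions in sequence, reducing both the linear structure and the triangle inequality to a single ``direct sum of factorizations'' construction, and then bootstrapping that same construction to get completeness. First I would record the easy facts: homogeneity of $\mathbf{A}^{\min}$ is immediate (absorb the scalar into $S$), and for any factorization $T=S\circ\widehat T\circ R$ as in \eqref{factorization minimal} property (a) of a mapping ideal gives $\|T\|_{\cb}\le\|S\|_{\cb}\mathbf{A}(\widehat T)\|R\|_{\cb}$, whence $\mathbf{A}^{\min}(T)\ge\|T\|_{\cb}$ and in particular $\mathbf{A}^{\min}(T)=0\Rightarrow T=0$; also $\mathbf{A}^{\min}(T)<\infty$ for $T\in\mathfrak{A}^{\min}(E,F)$ by definition.

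The substantive point is closure under sums together with the triangle inequality. Given factorizations $T_j=S_j\circ\widehat{T_j}\circ R_j$ ($j=1,2$), I would set $G:=G_1\oplus_\infty G_2$, $H:=H_1\oplus_1 H_2$ (both in $\OBAN$), let $R:=(R_1,R_2)\colon E\to G$, let $S\colon H\to F$ be $S(y_1,y_2)=S_1y_1+S_2y_2$, and let $\widehat T:=\widehat{T_1}\oplus\widehat{T_2}\colon G\to H$, so that $S\circ\widehat T\circ R=T_1+T_2$. By the descriptions of direct sums in Section~\ref{direct-sums}, the coordinate inclusions $G_j\hookrightarrow G$ and $H_j\hookrightarrow H$ and the coordinate projections are complete contractions; hence $R=\sum_j(\text{incl})\circ R_j\in\mathcal{A}(E,G)$ and $S=\sum_jS_j\circ(\text{proj})\in\mathcal{A}(H,F)$ (using that $\mathcal{A}$ is a mapping ideal and a closed subspace of $\CB$), while writing $\widehat T=\iota_1^H\widehat{T_1}\pi_1^G+\iota_2^H\widehat{T_2}\pi_2^G$ and applying the ideal property of $\mathfrak{A}$ gives $\widehat T\in\mathfrak{A}(G,H)$ with $\mathbf{A}(\widehat T)\le\mathbf{A}(\widehat{T_1})+\mathbf{A}(\widehat{T_2})$. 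Rescaling each original factorization so that $\|R_j\|_{\cb}=\|S_j\|_{\cb}=1$ (hence $\mathbf{A}(\widehat{T_j})\le\mathbf{A}^{\min}(T_j)+\varepsilon$) makes $\|R\|_{\cb}=\|S\|_{\cb}=1$, and letting $\varepsilon\to0$ yields $\mathbf{A}^{\min}(T_1+T_2)\le\mathbf{A}^{\min}(T_1)+\mathbf{A}^{\min}(T_2)$. This simultaneously shows that $\mathfrak{A}^{\min}(E,F)$ is a linear subspace of $\CB(E,F)$ and that $\mathbf{A}^{\min}$ is a norm.

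For completeness, assume $E,F\in\OBAN$ and let $(T_k)_k$ satisfy $\sum_k\mathbf{A}^{\min}(T_k)<\infty$; since $\mathbf{A}^{\min}$ dominates the $\cb$-norm and $\CB(E,F)$ is complete, $T:=\sum_kT_k$ exists in $\CB(E,F)$, and it suffices to show $T\in\mathfrak{A}^{\min}(E,F)$ and $\mathbf{A}^{\min}\big(T-\sum_{k\le m}T_k\big)\to0$. Discarding null terms, I would pick factorizations $T_k=S_k\widehat{T_k}R_k$ with product of the three norms at most $c_k:=\mathbf{A}^{\min}(T_k)+2^{-k}$, then invoke the standard fact that a summable nonnegative sequence admits a factor $d_k\to\infty$ keeping it summable to produce weights $a_k\ge c_k$ with $\sum_ka_k<\infty$ and $c_k/a_k\to0$. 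Rescaling each factorization so that $\mathbf{A}(\widehat{T_k})=a_k$ and $\|R_k\|_{\cb}=\|S_k\|_{\cb}=\sqrt{c_k/a_k}\to0$, I would form the infinite direct sum factorization through $G:=\ell_\infty(\{G_k\}_k)$ and $H:=\ell_1(\{H_k\}_k)$: the map $R:=(R_k)_k\in\CB(E,G)$ lies in $\mathcal{A}(E,G)$ because it is the $\cb$-norm limit of its finite truncations (here $\|R_k\|_{\cb}\to0$ is essential), similarly $S\colon H\to F$, $S((y_k)_k)=\sum_kS_ky_k$, lies in $\mathcal{A}(H,F)$; and $\widehat T:=\bigoplus_k\widehat{T_k}=\sum_k\iota_k^H\widehat{T_k}\pi_k^G$ converges in $\mathfrak{A}(G,H)$ — here one uses that $\mathfrak{A}$ is a \emph{Banach} mapping ideal together with $\sum_k\mathbf{A}(\widehat{T_k})=\sum_ka_k<\infty$ — with $\mathbf{A}(\widehat T)\le\sum_ka_k$. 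Since $S\widehat TR=\sum_kT_k=T$, this gives $T\in\mathfrak{A}^{\min}(E,F)$, and applying the identical construction to the tails $(T_k)_{k>m}$ yields $\mathbf{A}^{\min}\big(\sum_{k>m}T_k\big)\le\big(\sup_{k>m}\|R_k\|_{\cb}\big)^2\sum_{k>m}a_k\to0$, completing the proof.

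I expect the completeness step to be the main obstacle, and within it the bookkeeping of the rescaling: one must extract the three independent conditions $\|R_k\|_{\cb}\to0$, $\|S_k\|_{\cb}\to0$ and $\sum_k\mathbf{A}(\widehat{T_k})<\infty$ from the single constraint $\|R_k\|_{\cb}\mathbf{A}(\widehat{T_k})\|S_k\|_{\cb}\le c_k$ with $\sum_kc_k<\infty$, which is precisely what the auxiliary weights $a_k$ accomplish. This is genuinely needed, since without $\|R_k\|_{\cb}\to0$ the assembled map $(R_k)_k\colon E\to\ell_\infty(\{G_k\}_k)$ need not be completely approximable (for instance the coordinate functionals $S_1\to\mathbb{C}$ assemble into a non-compact, hence non-approximable, map $S_1\to\ell_\infty$). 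A minor routine check along the way is the behaviour of the coordinate inclusions and projections for $\ell_1$- and $\ell_\infty$-sums and the fact that a map out of an $\ell_1$-sum has $\cb$-norm equal to the supremum of the coordinate $\cb$-norms, all of which follow directly from Section~\ref{direct-sums}.
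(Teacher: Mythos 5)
Your proposal is correct and follows essentially the same route as the paper: the same easy observations, the same $G_1\oplus_\infty G_2$ / $H_1\oplus_1 H_2$ factorization for the triangle inequality, and the same infinite weighted direct-sum factorization (relying on completeness of $\mathfrak{A}$ for the middle map and on cb-norm-vanishing coordinates to get approximability of the outer maps) for completeness. The only deviations are cosmetic and harmless: you route the outer map through $\ell_\infty(\{G_k\})$ instead of the paper's $c_0(G_k)$ (legitimate, since your truncations converge in cb-norm), and your auxiliary weights $a_k$ play exactly the role of the paper's sequence $t_k\uparrow\infty$ with the $\sqrt{t_k}$-rescaling.
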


\begin{proof}
It is clear that $ \mathbf{A}^{\min}(T)\ge 0$, for all $T\in \mathfrak{A}^{\min}(E,F)$. Also, since for each factorization of $T$ as in the definition $T=S\circ \widehat{T}\circ R$ we know that $\|\widehat{T}\|_{\cb}\le  \mathbf{A}(\widehat{T})$ and $\|T\|_{\cb}\le \Vert R \Vert_{\cb} \Vert \widehat T \Vert_{\cb} \Vert S \Vert_{\cb}$ we derive that
$\|T\|_{\cb}\le  \mathbf{A}^{\min}(T)$. Hence,   $ \mathbf{A}^{\min}(T)=0$ implies $T=0$.

Positive homogeneity is clear. Thus, to complete the first assertion we just need to check the triangle inequality. Let $T_1, T_2\in \mathfrak{A}^{\min}(E,F)$. For a given $\varepsilon>0$ there are factorizations 
\begin{equation*} 
\xymatrix{
E\ar[d]_{R_1} \ar[r]^{T_1} & F \\
 G_1 \ar[r]_{\widehat T_1} & H_1 \ar[u]_{S_1}\\
} \qquad
\xymatrix{
E\ar[d]_{R_2} \ar[r]^{T_2} & F \\
 G_2 \ar[r]_{\widehat T_2} & H_2 \ar[u]_{S_2}\\
}
\end{equation*}
with $\|R_j\|_{\cb}=\|S_j\|_{\cb}=1$ and $ \mathbf{A}(\widehat{T_j})\le (1+\varepsilon) \mathbf{A}^{\min}(T_j)$, for $j=1,2$. Now, consider the factorization
\begin{equation*} 
\xymatrix{
E\ar[d]_{R} \ar[r]^{T_1+T_2} & F \\
 G_1\oplus_\infty G_2 \ar[r]_{\widehat T} & H_1\oplus_1 H_2 \ar[u]_{S},\\
} 
\end{equation*}
where $R(x)=(R_1(x), R_2(x))$, $\widehat{T}(g_1,g_2)=(\widehat{T_1}(g_1),\widehat{T_2}(g_2))$ and $S(h_1,h_2)=S_1(h_1) + S_2(h_2)$, for every $x\in E, g_j\in G_j, h_j\in H_j$, $j=1,2$.

Note that $\|R\|_{\cb}\le \max\{\|R_1\|_{\cb}, \|R_2\|_{\cb}\}=1$. By duality, the same argument yields $\|S\|_{\cb}= \|S'\|_{\cb}\le \max\{\|S'_1\|_{\cb}, \|S'_2\|_{\cb}\}=1$. Writing $\widehat{T}= i_1\circ \widehat T_1\circ q_1+ i_2\circ \widehat T_2\circ q_2$, where, for each $j=1,2$, $i_j:H_j\to H_1\oplus_1 H_2$ is the canonical inclusion and $q_j:G_1\oplus_\infty G_2\to G_j$ is the canonical projection, we obtain
$\mathbf{A}(\widehat{T})\le \mathbf{A}(\widehat{T_1})+\mathbf{A}(\widehat{T_2})$.

In order to conclude $ \mathbf{A}^{\min}(T_1+T_2)\le (1+\varepsilon) \left(\mathbf{A}^{\min}(T_1)+ \mathbf{A}^{\min}(T_2)\right)$ we just need to check that $R\in\mathcal A(E, G_1\oplus_\infty G_2)$ and $S\in\mathcal A (H_1\oplus_1 H_2, F)$. The first one follows easily from the fact that $R_1$ and $R_2$ are approximable. The second one is derived by duality.

Now, it remains to prove that for $E,F\in \OBAN$, $\mathfrak{A}^{\min}(E,F)$ is complete. Let $\{T_k\}_k\subset \mathfrak{A}^{\min}(E,F)$ such that $\sum_{k=1}^\infty \mathbf{A}^{\min}(T_k) <\infty$. Take $(t_k)_k\subset\mathbb R$ with $1\le t_k \uparrow\infty$ such that
$$
\sum_{k=1}^\infty t_k\mathbf{A}^{\min}(T_k)\le (1+\varepsilon)\sum_{k=1}^\infty\mathbf{A}^{\min}(T_k).
$$

Note that the inequality $\|\cdot\|_{\cb}\le \mathbf{A}^{\min}$ tells us that a linear mapping $T=\sum_{k=1}^\infty T_k$ is well defined and that the series converges in $\CB(E,F)$.

For each $k$, take a factorization
\begin{equation*} 
\xymatrix{
E\ar[d]_{R_k} \ar[r]^{T_k} & F \\
 G_k \ar[r]_{\widehat T_k} & H_k \ar[u]_{S_k}\\
} 
\end{equation*}
with $\|R_k\|_{\cb}=\|S_k\|_{\cb}=1$ and $\mathbf{A}(\widehat{T}_k)\le (1+\varepsilon) \mathbf{A}^{\min}(T_k)$.  Now, consider
\begin{equation*} 
\xymatrix{
E\ar[d]_{R} \ar[r]^{T} & F \\
 c_0(G_k) \ar[r]_{\widehat T} & \ell_1(H_k) \ar[u]_{S},\\
} 
\end{equation*}
where $R(x)=(\frac{R_k(x)}{\sqrt{t_k}})$, $\widehat{T}((g_k)_k)=((t_k\widehat{T}_k(g_k))_k)$ and $S((h_k)_k)=\sum_{k=1}^\infty \frac{S_k(h_k)}{\sqrt{t_k}}$, for every $x\in E, (g_k)_k\in c_0(G_k), (h_k)_k\in \ell_1(H_k)$. It is easy to see that each mapping is well defined.

Let us see that $\widehat{T}$ belongs to $\mathfrak{A}(c_o(G_k), \ell_1(H_k))$. Note that, pointwise, 
\begin{equation}\label{T gorro}
  \widehat{T}= \sum_{j=1}^\infty t_j i_j\circ \widehat T_j\circ q_j,  
\end{equation} where, for each $j$, $i_j:H_j\to \ell_1(H_k)$ is the canonical inclusion and $q_j:c_0(G_k)\to G_j$ is the canonical projection. Since
$$
 \sum_{j=1}^\infty \mathbf{A}(t_j i_j\circ \widehat T_j\circ q_j) \le \sum_{j=1}^\infty t_j \mathbf{A}( \widehat T_j) \le (1+\varepsilon) \sum_{j=1}^\infty t_j \mathbf{A}^{\min}(  T_j ) <\infty
$$ we obtain $\widehat{T}\in \mathfrak{A}(c_o(G_k), \ell_1(H_k))$. Moreover, the series \eqref{T gorro} converges in $\mathfrak A$ and $\mathbf{A}(\widehat T)\le \sum_{j=1}^\infty t_j \mathbf{A}( \widehat T_j )$.

Now we prove that $R\in\mathfrak A(E, c_0(G_k))$. Since $t_k\ge 1$ it is clear that $\|R:E\to \ell_\infty (G_k)\|_{\cb}\le 1$. Also, $t_k\uparrow \infty$ implies that $R$ takes values in $c_0(G_k)$. For a given $\varepsilon>0$ let $N\in\mathbb N$ such that $\frac{1}{\sqrt{t_N}}<\frac{\varepsilon}{2}$. For each $1\le j\le N$, take $\widetilde{R}_j\in\mathcal F(E,G_j)$ satisfying $\|R_j-\widetilde R_j\|_{\cb}\le \frac{\varepsilon}{2N}$. Let $\widetilde{R}:E\to c_0(G_k)$ be given by
$$
x\mapsto \left(\frac{\widetilde R_1(x)}{\sqrt{t_1}},\dots, \frac{\widetilde R_N(x)}{\sqrt{t_N}},0,0,\dots\right).
$$ Clearly, $\widetilde R\in\mathcal F(E, c_0(G_k))$ and $\|R-\widetilde R\|_{\cb}\le \varepsilon$. Therefore, $R\in\mathcal A(E, c_0(G_k))$ with $\|R\|_{\cb}\le 1$.

We finally check that $S\in \mathcal A(\ell_1(H_k), F)$. Note that $S':F'\to\ell_\infty(H'_k)$ is given by $S'(y')=(\frac{S_k'(y')}{\sqrt{t_k})_k})$, so the same argument as above yields $\|S\|_{\cb}=\|S'\|_{\cb}\le 1$. Also, analogously as before we can prove that for each $\varepsilon>0$ there is $\widetilde{S}\in\mathcal F(\ell_1(H_k), F)$ such that $\|S-\widetilde S\|_{\cb}\le\varepsilon$. 

From the arguments above we certainly obtained that $T\in \mathfrak{A}^{\min}(E,F)$ with $\mathbf{A}^{\min}(T)\le \sum_{k=1}^\infty \mathbf{A}^{\min}(T_k)$. This implies that $T=\sum_{k=1}^\infty T_k$ converges in $\mathfrak{A}^{\min}(E,F)$, which finishes the proof.
\end{proof}

In order to relate the tensor product $E'\otimes_{\alpha} F$ with $\mathfrak{A}^{\min}(E,F)$ as in the Banach space setting, we  need the following lemma.

\begin{lemma} \label{contraction}
Let $(\mathfrak{A},\mathbf{A})$ be a Banach  mapping ideal associated to the finitely generated o.s. tensor norm $\alpha$. Then for any $E,F \in \ONORM$ the natural map 
\begin{equation}\label{mapaalminimal}
J: E'\otimes_{\alpha} F \to \mathfrak{A}^{\min}(E,F)
\end{equation}
is a contraction of normed spaces, i.e.,
$\mathbf{A}^{\min}( J(z))  \leq \alpha(z;E',F), \; \forall z \in E'\otimes F.$

\end{lemma}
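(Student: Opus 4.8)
The plan is to reduce the estimate to the finite-dimensional case, where the correspondence between $\mathfrak A$ and $\alpha$ is exact, and then fatten up the resulting factorization into the required form with approximable outer maps. First I would fix $z \in E' \otimes F$ and $\varepsilon > 0$, and choose finite-dimensional subspaces $M_0 \in \OFIN(E')$ and $N_0 \in \OFIN(F)$ with $z \in M_0 \otimes N_0$ such that $\alpha(z; M_0, N_0) \le (1+\varepsilon)\alpha(z; E', F)$; this is possible because $\alpha$ is finitely generated. I would like to replace $M_0 \subseteq E'$ by a quotient of $E$: let $L = \{ x \in E \;:\; x'(x) = 0 \text{ for all } x' \in M_0\} \in \OCOFIN(E)$, so that $M_0 \subseteq (E/L)'$ via $q_L^E$, and in fact $M_0$ sits completely isometrically inside $(E/L)'$. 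Set $M = E/L$, which is finite-dimensional since $L$ is finite-codimensional, and note that $M_0 \hookrightarrow M'$ is a complete isometry so that $\alpha(z; M', N_0) = \alpha(z; M_0, N_0)$ by the remark after Definition~\ref{def-finite-and-cofinite-hulls} (or directly: both $\overrightarrow\alpha$ and $\overleftarrow\alpha$ agree with $\alpha$ on $\OFIN$).

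Now $z$, viewed as an element of $M' \otimes_\alpha N_0$, corresponds via the canonical identification $M' \otimes_\alpha N_0 = \mathfrak A(M, N_0)$ to a map $\widehat T_0 \in \mathfrak A(M, N_0)$ with $\mathbf A(\widehat T_0) = \alpha(z; M', N_0) \le (1+\varepsilon) \alpha(z; E', F)$. Here I use that $(\mathfrak A, \mathbf A) \sim \alpha$ in the sense of Definition~\ref{defn-associated}, so the finite-dimensional complete isometry applies. I would then assemble the factorization
$$
\xymatrix{
E \ar[d]_{R} \ar[r]^{J(z)} & F \\
M \ar[r]_{\widehat T_0} & N_0 \ar[u]_{S},
}
$$
where $R = q_L^E : E \to M = E/L$ is the quotient map (a complete contraction, hence $\n{R}_{\cb} \le 1$), $S : N_0 \to F$ is the inclusion (a complete isometry, hence $\n{S}_{\cb} = 1$), $G = M$ and $H = N_0$. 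One checks directly on elementary tensors that $S \circ \widehat T_0 \circ R = J(z)$: indeed if $z = \sum_i x_i' \otimes y_i$ then $\widehat T_0$ is the map $x \mapsto \sum_i x_i'(x) y_i$ read in $(E/L)'$-coordinates, so $S \widehat T_0 R(x) = \sum_i x_i'(x) y_i = J(z)(x)$.

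The remaining point is that $R$ and $S$ must lie in the approximable ideal $\mathcal A$, not merely in $\CB$. This is where the finite-dimensionality pays off: any completely bounded map with finite-dimensional domain or finite-dimensional range is a finite-rank map, hence lies in $\mathcal F \subseteq \mathcal A$. Since $M = E/L$ is finite-dimensional, $R = q_L^E$ has finite rank and so $R \in \mathcal A(E, M)$; since $N_0$ is finite-dimensional, $S$ has finite rank and so $S \in \mathcal A(N_0, F)$. (Strictly, $G = M$ and $H = N_0$ must be in $\OBAN$, which is automatic as they are finite-dimensional.) Therefore the displayed diagram is a legitimate factorization of $J(z)$ of the type appearing in the definition of $\mathfrak A^{\min}$, and
$$
\mathbf A^{\min}(J(z)) \le \n{R}_{\cb}\, \mathbf A(\widehat T_0)\, \n{S}_{\cb} \le 1 \cdot (1+\varepsilon)\alpha(z; E', F) \cdot 1.
$$
Letting $\varepsilon \to 0$ gives $\mathbf A^{\min}(J(z)) \le \alpha(z; E', F)$, as claimed. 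I do not anticipate a genuine obstacle here; the only slightly delicate bookkeeping is making sure the passage from the subspace $M_0 \subseteq E'$ to the dual of the quotient $E/L$ is set up so that the relevant norms are preserved, which is exactly the standard polar/annihilator duality $q_L^E : E \to E/L$ inducing the complete isometry $(E/L)' \hookrightarrow E'$ with image $L^0 \supseteq M_0$.
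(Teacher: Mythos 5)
Your proposal is correct and follows essentially the same route as the paper's own proof: reduce to finite-dimensional $M_0\subseteq E'$, $N_0\subseteq F$ using that $\alpha$ is finitely generated, identify $M_0\otimes_\alpha N_0$ with $\mathfrak{A}(E/L,N_0)$ (where $L$ is the pre-annihilator of $M_0$) via the association $\mathfrak{A}\sim\alpha$, and factor $J(z)$ through the quotient map and the inclusion, which are finite rank and hence approximable. One cosmetic remark: rather than appealing to the remark on hulls for the step $\alpha(z;(E/L)',N_0)=\alpha(z;M_0,N_0)$, note either that $M_0$ is weak$^*$-closed (being finite-dimensional), so $M_0$ coincides completely isometrically with $(E/L)'$, or simply that the complete contraction $M_0\to (E/L)'$ already gives the inequality $\alpha(z;(E/L)',N_0)\le\alpha(z;M_0,N_0)$, which is all your argument needs.
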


\begin{proof}
Let $z\in E'\otimes F$, $M\in\OFIN(E')$, $N\in\OFIN(F)$, $u\in M\otimes N$ with $z= (i_M^{E'}\otimes i_N^F) (u)$.

From the diagram
\begin{equation*} 
\xymatrix{
E'\otimes_\alpha F \ar[r]^{J} & \mathfrak{A}^{\min}(E,F) & i_N^F\circ T\circ q_{ ^\circ M}^E\\
 M\otimes_\alpha N \ar[u]_{i_M^{E'}\otimes i_N^F} \ar@{=}[r] & \mathfrak{A}(E/^\circ M, N) \ar[u] & T\ar@{~>}[u]\\
} 
\end{equation*}
we get
$$
\mathbf{A}^{\min}(J(z))= \mathbf{A}^{\min} (i_N^F\circ T_u\circ q_{ ^\circ M}^E)\le \mathbf{A}(T_u)=\alpha(u;M,N).
$$
Since $\alpha$ is finitely generated we obtain
$\mathbf{A}^{\min}(J(z))\le \alpha(z;E',F)$.
\end{proof}

\begin{lemma}\label{lemma-approx-Amin}
Let $T \in \mathfrak{A}^{\min}(E,F)$ with factorization $T = S \circ \widehat T \circ R$ as in \eqref{factorization minimal}. If $\widetilde R \in \mathcal A(E,G)$ and $\widetilde S \in \mathcal A(H,F)$ then
$$
\mathbf{A}^{\min}( T - \widetilde S \circ \widehat T \circ \widetilde R ) \leq \Vert S - \widetilde S \Vert_{\cb} \mathbf{A}( \widehat T ) \Vert R - \widetilde R \Vert_{\cb}
$$
\end{lemma}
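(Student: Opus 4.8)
The plan is to produce a single factorization of the error operator $T-\widetilde S\circ\widehat T\circ\widetilde R$ of exactly the shape required in \eqref{factorization minimal}, and then read the bound off directly from the definition of $\mathbf{A}^{\min}$ as an infimum over such factorizations. The whole point is that the difference need not be split into a sum of two compositions: it collapses into a single triple composition through the \emph{same} middle map $\widehat T$, and it is precisely this collapse that produces a product of norms rather than a sum.

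First I would record that $\mathcal A(E,G)$ and $\mathcal A(H,F)$ are linear subspaces of $\CB(E,G)$ and $\CB(H,F)$ respectively, each being by definition the closure of the finite-rank maps in the $\cb$-norm. Hence $R-\widetilde R\in\mathcal A(E,G)$ and $S-\widetilde S\in\mathcal A(H,F)$, while $\widehat T\in\mathfrak A(G,H)$ is left untouched. Substituting the given factorization $T=S\circ\widehat T\circ R$, the key step is the algebraic identity
$$
T-\widetilde S\circ\widehat T\circ\widetilde R=(S-\widetilde S)\circ\widehat T\circ(R-\widetilde R),
$$
which exhibits the error operator as a composition $E\xrightarrow{R-\widetilde R}G\xrightarrow{\widehat T}H\xrightarrow{S-\widetilde S}F$ whose outer factors lie in $\mathcal A$ and whose middle factor lies in $\mathfrak A$; this is exactly an admissible factorization of type \eqref{factorization minimal}.

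With this factorization in hand, the definition of $\mathbf{A}^{\min}$ as the infimum of $\Vert\cdot\Vert_{\cb}\,\mathbf{A}(\cdot)\,\Vert\cdot\Vert_{\cb}$ over all admissible factorizations immediately yields
$$
\mathbf{A}^{\min}\big(T-\widetilde S\circ\widehat T\circ\widetilde R\big)\le \Vert S-\widetilde S\Vert_{\cb}\,\mathbf{A}(\widehat T)\,\Vert R-\widetilde R\Vert_{\cb},
$$
which is the assertion. I expect the main obstacle to be conceptual rather than analytic: there is no limiting or density argument needed, and the estimate is a one-line appeal to the defining infimum once the difference has been rewritten as a single composition. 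The real content is therefore the algebraic rewriting above, together with the bookkeeping of domains and codomains needed to confirm that the three pieces compose correctly and land in the three prescribed ideal classes $\mathcal A$, $\mathfrak A$, and $\mathcal A$.
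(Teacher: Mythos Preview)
Your argument is line-for-line the same as the paper's, resting entirely on the claimed algebraic identity
\[
T-\widetilde S\circ\widehat T\circ\widetilde R=(S-\widetilde S)\circ\widehat T\circ(R-\widetilde R).
\]
Unfortunately this identity is false in general. Expanding the right-hand side gives
\[
S\widehat T R - S\widehat T\widetilde R - \widetilde S\widehat T R + \widetilde S\widehat T\widetilde R,
\]
which differs from $S\widehat T R-\widetilde S\widehat T\widetilde R$ by the term $(S-\widetilde S)\widehat T\widetilde R+\widetilde S\widehat T(R-\widetilde R)$. For a concrete failure of the stated inequality itself, take $\widetilde S=S$ and any $\widetilde R\in\mathcal A(E,G)$ with $S\widehat T(R-\widetilde R)\neq 0$: the right-hand side of the lemma becomes $0$ while the left-hand side is $\mathbf A^{\min}\big(S\widehat T(R-\widetilde R)\big)>0$.

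What \emph{is} true, and what suffices for the only use of this lemma in the paper (namely to show $S_n\widehat T R_n\to T$ in $\mathfrak A^{\min}$), is the standard telescoping estimate
\[
\mathbf A^{\min}\big(T-\widetilde S\widehat T\widetilde R\big)\le \|S-\widetilde S\|_{\cb}\,\mathbf A(\widehat T)\,\|R\|_{\cb}+\|\widetilde S\|_{\cb}\,\mathbf A(\widehat T)\,\|R-\widetilde R\|_{\cb},
\]
obtained by writing $S\widehat T R-\widetilde S\widehat T\widetilde R=(S-\widetilde S)\widehat T R+\widetilde S\widehat T(R-\widetilde R)$ and applying the defining infimum to each summand. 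So the approach is right in spirit, but the ``single triple composition'' collapse you describe simply does not happen; you must pass through a sum of two admissible factorizations, and the product bound as stated is not available.
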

\begin{proof}
This is clear by the definition of the norm and the fact that
$$T- \widetilde S \circ \widehat T \circ \widetilde R= (S -\widetilde S) \circ \widehat T \circ (R -\widetilde R).$$
\end{proof}
We are now ready to prove one of the main results of this section, which will allow us to provide an o.s. structure for $\mathfrak{A}^{\min}$.

\begin{theorem}\label{thm-minimal-ideal}
 Let $(\mathfrak{A},\mathbf{A})$ be a Banach mapping ideal associated to the finitely generated o.s. tensor norm $\alpha$. Then for any $E,F \in \OBAN$ the natural map
 $$
  J: E' \widehat{\otimes}_{\alpha} F \to \mathfrak{A}^{\min}(E,F)
 $$
 is a metric surjection (at the Banach space level).
 \end{theorem}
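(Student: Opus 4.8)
The plan is to first observe that the statement amounts to a density claim plus a soft open-mapping argument, and then to isolate the one genuinely operator-space-flavoured step. By Lemma~\ref{contraction} the canonical assignment $z_T\leftrightarrow T$ is a contraction from the algebraic $E'\otimes_\alpha F$ into $\mathfrak{A}^{\min}(E,F)$; since $\mathfrak{A}^{\min}(E,F)$ is a Banach space (shown above), this extends uniquely to a contraction $J:E'\widehat{\otimes}_\alpha F\to\mathfrak{A}^{\min}(E,F)$, which is the natural map. In particular $\mathbf{A}^{\min}(J(z))\le\alpha(z;E',F)$ always, so $\mathbf{A}^{\min}(T)\le\inf\{\alpha(z;E',F):J(z)=T\}$. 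Hence the content of the theorem is the reverse inequality together with surjectivity, and both follow once we prove that $J(\mathring B_{E'\widehat{\otimes}_\alpha F})$ is dense in $\mathring B_{\mathfrak{A}^{\min}(E,F)}$ for the $\mathbf{A}^{\min}$-norm: indeed, the standard lemma that a bounded operator out of a Banach space whose image of the open unit ball is norm-dense in the target's open unit ball is a metric surjection (equivalently, a routine telescoping: iterate the density statement and sum a rapidly decaying series of preimages, which converges in the completion $E'\widehat{\otimes}_\alpha F$) then finishes the proof.

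To prove the density, fix $T\in\mathfrak{A}^{\min}(E,F)$ with $\mathbf{A}^{\min}(T)<1$ and $\varepsilon>0$, and choose a factorization $T=S\circ\widehat T\circ R$ as in \eqref{factorization minimal} with, after rescaling, $\n{R}_{\cb}=\n{S}_{\cb}=1$ and $\mathbf{A}(\widehat T)<1$. Since $R\in\mathcal A(E,G)$ and $S\in\mathcal A(H,F)$, pick finite-rank maps $R_n\to R$ and $S_n\to S$ in $\cb$-norm, and set $T_n:=S_n\circ\widehat T\circ R_n$. The key finite-dimensional computation is as follows. Let $G_n:=R_n(E)\subseteq G$ and $F_n:=S_n(H)\subseteq F$, finite-dimensional subspaces with their inherited operator space structures, so that $R_n=i^G_{G_n}\circ a_n$ and $S_n=i^F_{F_n}\circ c_n$ with $\n{a_n}_{\cb}=\n{R_n}_{\cb}$ and $\n{c_n}_{\cb}=\n{S_n}_{\cb}$ (post-/pre-composition with a complete isometry does not change the $\cb$-norm; this is precisely the point where no exactness or local reflexivity is needed). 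Then $\phi_n:=c_n\circ\widehat T\circ i^G_{G_n}$ is a map \emph{between finite-dimensional operator spaces}, lies in $\mathfrak{A}(G_n,F_n)$ by the ideal property with $\mathbf{A}(\phi_n)\le\n{S_n}_{\cb}\mathbf{A}(\widehat T)$, and by Definition~\ref{defn-associated} it corresponds to an element $w_n\in G_n'\otimes_\alpha F_n$ with $\alpha(w_n;G_n',F_n)=\mathbf{A}(\phi_n)$. Put $z_n:=(a_n'\otimes i^F_{F_n})(w_n)\in E'\otimes F$. By the complete metric mapping property of $\alpha$ and $\n{a_n'}_{\cb}=\n{a_n}_{\cb}$, $\n{i^F_{F_n}}_{\cb}=1$, we get $\alpha(z_n;E',F)\le\n{R_n}_{\cb}\n{S_n}_{\cb}\mathbf{A}(\widehat T)$; and unwinding the identification shows that $J(z_n)$ is the operator $i^F_{F_n}\circ c_n\circ\widehat T\circ i^G_{G_n}\circ a_n=S_n\circ\widehat T\circ R_n=T_n$.

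Finally, by Lemma~\ref{lemma-approx-Amin}, $\mathbf{A}^{\min}(T-T_n)\le\n{S-S_n}_{\cb}\,\mathbf{A}(\widehat T)\,\n{R-R_n}_{\cb}\to 0$, while $\n{R_n}_{\cb}\n{S_n}_{\cb}\mathbf{A}(\widehat T)\to\mathbf{A}(\widehat T)<1$; thus for $n$ large we have $\alpha(z_n;E',F)<1$ and $\mathbf{A}^{\min}(T-J(z_n))<\varepsilon$, which proves the desired density and hence the theorem. The main obstacle is exactly the bookkeeping in the middle paragraph: one must keep the whole argument inside $\OFIN\times\OFIN$ (where $\mathfrak{A}\sim\alpha$ is available as an isometric identity) by corestricting the approximating maps to their ranges endowed with the subspace operator space structure, and invoke that complete isometries leave $\cb$-norms untouched; this is what lets us avoid the exactness-type hypotheses that obstruct several other operator-space analogues in this paper.
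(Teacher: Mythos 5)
Your proposal is correct and takes essentially the same approach as the paper: pick a near-optimal factorization $T=S\circ\widehat T\circ R$, replace $R,S$ by finite-rank approximants, and obtain the key bound $\alpha(z_n;E',F)\le \Vert R_n\Vert_{\cb}\,\mathbf{A}(\widehat T)\,\Vert S_n\Vert_{\cb}$ for the tensor preimage $z_n$ of $S_n\circ\widehat T\circ R_n$ by corestricting to the finite-dimensional images and using the isometric identification $\mathfrak{A}(M,N)=M'\otimes_\alpha N$ on $\OFIN$ together with the complete metric mapping property of $\alpha$. The only (harmless) difference is the final step, where the paper shows the specific preimages $z_n$ form a Cauchy sequence in $E'\widehat{\otimes}_\alpha F$ and takes their limit as an exact preimage, while you prove density of $J$ applied to the open unit ball and invoke the standard successive-approximation lemma; these are two packagings of the same completeness argument.
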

 
\begin{proof}
Using Lemma \ref{contraction}, we know that $J$ is a contraction.
Take $R \in \mathcal{A}(E,G)$, $\widehat T \in \mathfrak{A}(G,H)$ and $S \in \mathcal{A}(H,F)$.
If $z \in E'\otimes F$ satisfies $J(z)=S \circ \widehat T \circ R$ consider the factorization

$$
\xymatrix{
E \ar[r]^{S \circ \widehat T \circ R} \ar[d]_{\overline{R}} & F \\
 M=im(\widetilde R) \ar@{.>}[r]_{T_1} \ar@{^{(}->}[d]_{i_1} & im(S)=N \ar@{^{(}->}[u]_{i_2}\\
 G \ar[r]_{\widehat T} & H \ar[u]_{\overline{S}} 
}
$$
Thus,

\begin{align}\label{ec desigualdad}
\begin{split}
   \alpha(z;E',F) & = \alpha((\overline{R}'\otimes i_2)z_{T_1};E',F) \leq \Vert \overline{R}' \Vert_{\cb} \Vert i_2 \Vert_{\cb}   \alpha(z_{T_1};M',N) \\
  & =  \Vert \overline{R} \Vert_{\cb}  \mathbf{A}( T_1 ) \leq  \Vert \overline{R} \Vert_{\cb} \Vert i_1\Vert_{\cb} \mathbf{A}(\widehat T ) \Vert \overline{S} \Vert_{\cb} \\
  & = \Vert R \Vert_{\cb}  \mathbf{A}( \widehat T )\Vert S \Vert_{\cb}. 
\end{split}
\end{align} 

Now, let $T \in \mathfrak{A}^{\min}(E,F)$. Given $\varepsilon >0$, find a factorization $T = S \circ \widehat T \circ R$ with $\Vert R \Vert_{\cb} \mathbf{A}(\widehat T ) \Vert S \Vert_{\cb} \leq (1+\varepsilon) \mathbf{A}^{\min}( T )$.
Choose $(R_n)_{n \in \mathbb{N}} \subset \mathcal{A}(E,G)$ and $(S_n)_{n \in \mathbb{N}} \subset \mathcal{A}(H,F)$ such that 
$ \Vert R - R_n \Vert_{\cb} \to 0$ and $ \Vert S - S_n \Vert_{\cb} \to 0$ as $n \to \infty$.
By Lemma \ref{lemma-approx-Amin}, we know that $S_n \circ \widehat T \circ R_n$ converges to $T$ in $\mathfrak{A}^{\min}(E,F)$. Now, pick $z_n \in E'\otimes F$ with $J(z_n)= S_n \circ \widehat T \circ R_n$. Then,
\begin{align*}
J(z_n - z_m) &= J(z_n) - J(z_m) \\
&= S_n \circ \widehat T \circ R_n - S_m \circ \widehat T \circ R_m  \\
&= (S_n - S_m) \circ \widehat T \circ R_n + S_m \circ \widehat T \circ (R_n - R_m)
\end{align*}
By reasoning as in \eqref{ec desigualdad} and using the triangle inequality we have
\begin{align*}
\alpha(z_n - z_m; E',F) \leq
\Vert S_n - S_m \Vert_{\cb} \mathbf{A}( \widehat T ) \Vert R_n \Vert_{\cb} +  \Vert S_m \Vert_{\cb} \mathbf{A}( \widehat T ) \Vert R_n - R_m \Vert_{\cb}.  
\end{align*}
Therefore, $(z_n)_{n \in \mathbb{N}}$ is a Cauchy sequence in $E' \widehat \otimes_{\alpha} F$ with limit, say, $z$.
since $J$ is continuous,
$$J(z) = \lim_{n \to \infty} J(z_n) = \lim_{n \to \infty} S_n \circ \widehat T \circ R_n,$$ where the limits are in $\mathfrak{A}^{\min}(E,F).$ We have checked above that the latter limit is in fact $T$. In other words, given $T \in \mathfrak{A}^{\min}(E,F)$ we have found $z \in E' \widehat \otimes_{\alpha} F$ such that $J(z)=T$ and, moreover, by \eqref{ec desigualdad} again
\begin{align*}
    \alpha(z;E',F) & =\lim_{n \to \infty} \alpha(z_n;E',F) \\ 
    & \leq \Vert S_n \Vert_{\cb}  \mathbf{A}(\widehat T ) \Vert R_n \Vert_{\cb} \\
    & \leq \mathbf{A}^{\min}( T ) (1+\varepsilon).
    \end{align*}
This shows that the mapping $J$ is a metric surjection (since we already knew that $\Vert J \Vert \leq 1)$.
\end{proof}
 Using the previous theorem we can now provide an operator space structure for $\mathfrak{A}^{\min}(E,F)$.

\begin{definition}
Given a Banach mapping ideal $(\mathfrak{A},\mathbf{A})$, we endow $\mathfrak{A}^{\min}(E,F)$ with the unique o.s. structure such that the natural map $J$ becomes a complete quotient mapping. Namely, given $T \in M_n(\mathfrak{A}^{\min}(E,F))$, we define $$ \mathbf{A}_n^{\min} (T)  := \inf \alpha_n(z; E',F),$$
 where the infimum runs all over $z\in M_n(E'\widehat \otimes F)$ such that $J_n(z)=T.$

\end{definition}

 With this structure we have obviously the following:

 \begin{theorem}[Representation Theorem for Minimal Mapping Ideals]\label{Representation minimal}
 Let $(\mathfrak{A},\mathbf{A})$ be a Banach mapping ideal associated to the finitely generated o.s. tensor norm $\alpha$. Then for any $E,F \in \OBAN$ the natural operator
\begin{equation} 
\xymatrix{
  J: E' \widehat{\otimes}_{\alpha} F\ar@{->>}[r] &  \mathfrak{A}^{\min}(E,F)
 }
 \end{equation}
 is a complete quotient mapping.
In particular, if $M,N \in  \OFIN$ then  we have a complete isometry $\mathfrak{A}^{\min}(M,N)= M' \widehat{\otimes}_{\alpha} N $.
 \end{theorem}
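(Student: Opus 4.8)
The plan is to deduce both assertions directly from Theorem~\ref{thm-minimal-ideal} and the definition of the matrix norms $\mathbf{A}_n^{\min}$ given just above the statement, so the argument is short. First I would record that, by Theorem~\ref{thm-minimal-ideal}, the natural map $J$ is a metric surjection at the Banach space level onto the complete space $(\mathfrak{A}^{\min}(E,F),\mathbf{A}^{\min})$; hence $\ker J$ is closed in the operator space $E'\widehat{\otimes}_\alpha F$ and $J$ induces an isometric isomorphism $(E'\widehat{\otimes}_\alpha F)/\ker J=\mathfrak{A}^{\min}(E,F)$ of Banach spaces. Since $\ker(J_n)=M_n(\ker J)$ for every $n$, the formula $\mathbf{A}_n^{\min}(T)=\inf\{\alpha_n(z;E',F):J_n(z)=T\}$ is exactly the $n$-th quotient operator space norm of $(E'\widehat{\otimes}_\alpha F)/\ker J$; these matrix norms satisfy Ruan's axioms because a quotient of an operator space by a closed subspace is again an operator space (see the discussion of quotients in Section~\ref{injections-and-projections} and \cite[Sec.~2.4]{Pisier-Operator-Space-Theory}), and with respect to them $J$ is by construction a complete quotient mapping. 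This simultaneously justifies that the operator space structure on $\mathfrak{A}^{\min}(E,F)$ introduced right before the statement is well defined.

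Next I would treat the finite-dimensional case. For $M,N\in\OFIN$ the algebraic tensor product $M'\otimes N$ is finite-dimensional, hence complete, so $M'\widehat{\otimes}_\alpha N=M'\otimes_\alpha N$. The map $J\colon M'\otimes_\alpha N\to\mathfrak{A}^{\min}(M,N)$ is the canonical assignment $z\mapsto T_z$, which is simply the standard algebraic identification of $M'\otimes N$ with the space of all linear maps $M\to N$; since $M$ is finite-dimensional this latter space equals $\CB(M,N)$, and by the first part (surjectivity of $J$) it equals $\mathfrak{A}^{\min}(M,N)$ as well. In particular $J$ is injective, so $\ker J=0$; combining this with the first part, the complete quotient mapping $J$ is in fact a completely isometric isomorphism $M'\otimes_\alpha N=\mathfrak{A}^{\min}(M,N)$.

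There is essentially no obstacle here: all the real content is packed into Theorem~\ref{thm-minimal-ideal} (which itself rests on Lemmas~\ref{contraction} and~\ref{lemma-approx-Amin}). The only minor points requiring care are the identification $\ker(J_n)=M_n(\ker J)$ and the observation that the factorization-defined space $\mathfrak{A}^{\min}(M,N)$ coincides, by surjectivity of $J$, with the image of the canonical map $M'\otimes N\to\CB(M,N)$; once those are in place, injectivity of $J$ on finite-dimensional domains finishes the proof.
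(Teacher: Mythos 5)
Your proposal is correct and follows essentially the same route as the paper: the paper declares the theorem ``obvious'' because the matrix norms $\mathbf{A}^{\min}_n$ are by definition the quotient norms making $J$ a complete quotient, with all the substance residing in Theorem~\ref{thm-minimal-ideal}, and your argument is simply a careful unpacking of this (well-definedness via $\ker(J_n)=M_n(\ker J)$ and the quotient operator space structure, plus injectivity of $J$ on $M'\otimes N$ to upgrade the complete quotient to a complete isometry in the finite-dimensional case). No gaps.
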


\begin{theorem} \label{minimal y norma asociada}
Let $(\mathfrak{A},\mathbf{A})$ be a Banach mapping ideal associated to the finitely generated o.s. tensor norm $\alpha$. Then $(\mathfrak A^{\min},\mathbf{A}^{\min})$ is also a Banach mapping ideal associated to $\alpha$. Moreover, $\mathfrak A^{\min}$ is the smallest Banach mapping ideal associated to $\alpha.$
\end{theorem}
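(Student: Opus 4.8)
The plan is to read off essentially everything from the Representation Theorem for Minimal Mapping Ideals (Theorem~\ref{Representation minimal}), which already gives that $J\colon E'\widehat\otimes_\alpha F\to\mathfrak A^{\min}(E,F)$ is a complete quotient and that $\mathfrak A^{\min}(M,N)=M'\otimes_\alpha N$ completely isometrically for $M,N\in\OFIN$. First I would check that $(\mathfrak A^{\min},\mathbf A^{\min})$ is a mapping ideal. The matricial norm structure is legitimate because $\mathbf A^{\min}$ is defined so that $J$ is a complete quotient, so $\mathfrak A^{\min}(E,F)$ is an operator space as a quotient of $E'\widehat\otimes_\alpha F$, and its completeness for $E,F\in\OBAN$ was established in the proposition preceding Lemma~\ref{contraction}. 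Axiom~(a) holds since the natural ``operator associated to a tensor'' map $E'\widehat\otimes_\alpha F\to\CB(E,F)$ factors through $E'\widehat\otimes_{\min}F\hookrightarrow\CB(E,F)$ and is thus completely contractive; hence for $T\in M_n(\mathfrak A^{\min}(E,F))$ and any $z$ with $J_n(z)=T$ one has $\|T\|_{M_n(\CB(E,F))}\le\alpha_n(z)$, and taking the infimum gives $\|T\|\le\mathbf A^{\min}_n(T)$. Axiom~(b') is immediate from $\mathfrak A^{\min}(M,N)=M'\otimes_\alpha N$ completely isometrically together with the cross-norm identity~\eqref{norma producto matrices} for $\alpha$. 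For the ideal property~(c), given $r\in\CB(E_0,E)$, $s\in\CB(F,F_0)$ and $z\in M_n(E'\widehat\otimes_\alpha F)$ with $J_n(z)=T$, the complete metric mapping property of $\alpha$ makes $r'\otimes s$ extend to a completely bounded map $E'\widehat\otimes_\alpha F\to E_0'\widehat\otimes_\alpha F_0$ of $\cb$-norm at most $\|r\|_{\cb}\|s\|_{\cb}$; a direct check that the ``operator associated to a tensor'' maps are compatible with pre- and post-composition shows $J^{E_0,F_0}_n\big((r'\otimes s)_n z\big)=s_n\circ T\circ r$, and taking the infimum over $z$ yields $\mathbf A^{\min}_n(s_n\circ T\circ r)\le\|r\|_{\cb}\|s\|_{\cb}\mathbf A^{\min}_n(T)$.

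That $\mathfrak A^{\min}$ is associated to $\alpha$ is then exactly the last sentence of Theorem~\ref{Representation minimal} combined with Definition~\ref{defn-associated}. It remains to prove minimality: for any Banach mapping ideal $(\mathfrak B,\mathbf B)$ with $\mathfrak B\sim\alpha$, the identity $\mathfrak A^{\min}(E,F)\to\mathfrak B(E,F)$ should be a complete contraction for all $E,F\in\OBAN$. The strategy is to show that the natural map $\Theta\colon E'\otimes F\to\mathfrak B(E,F)$ sending a tensor to its associated finite-rank operator is completely contractive with respect to $\alpha$, i.e. $\mathbf B_n(\Theta_n(z))\le\alpha_n(z;E',F)$ for every $z\in M_n(E'\otimes_\alpha F)$; then, since $\mathfrak B(E,F)$ is complete, $\Theta$ extends to a completely contractive $\widehat\Theta\colon E'\widehat\otimes_\alpha F\to\mathfrak B(E,F)$ which agrees with $J$ on the dense subspace $E'\otimes F$, hence equals $J$ as maps into $\CB(E,F)$. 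This gives $\mathfrak A^{\min}(E,F)=J(E'\widehat\otimes_\alpha F)\subseteq\mathfrak B(E,F)$, and for $T\in M_n(\mathfrak A^{\min}(E,F))$ and any $z$ with $J_n(z)=T$ we get $\mathbf B_n(T)\le\alpha_n(z;E',F)$, so taking the infimum over such $z$ yields $\mathbf B_n(T)\le\mathbf A^{\min}_n(T)$; since $\mathfrak A^{\min}$ is itself a Banach mapping ideal associated to $\alpha$, it is therefore the smallest one.

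The technical heart — and the step I expect to be the main obstacle — is the claim $\mathbf B_n(\Theta_n(z))\le\alpha_n(z;E',F)$, which requires reducing a finite-rank operator to genuinely finite-dimensional domain and codomain while controlling norms (an issue that is trivial in the Banach space setting but delicate here, where finite-dimensional operator spaces are neither injective nor projective). Fix $\varepsilon>0$; since $\alpha$ is finitely generated there are $M\in\OFIN(E')$ and $N\in\OFIN(F)$ with $z\in M_n(M\otimes N)$ and $\alpha_n(z;M,N)\le(1+\varepsilon)\alpha_n(z;E',F)$. Put $L:=M^0\in\OCOFIN(E)$; then $(q_L^E)'$ is a complete isometry of $(E/L)'$ onto $M^{00}=M$, so $E/L=M'$ completely isometrically and in particular $E/L$ is finite-dimensional. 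Every entry of $\Theta_n(z)$ is a finite-rank operator vanishing on $L$ with range in $N$, so $\Theta_n(z)=(i_N^F)_n\circ\widehat z\circ q_L^E$ for a unique $\widehat z\in M_n(\CB(M',N))$, and a direct computation shows the matrix of tensors associated with $\widehat z$ is exactly $z$, now regarded in $M_n(M\otimes N)$ via $(M')'=M$. Using $\mathfrak B\sim\alpha$ on finite-dimensional spaces we obtain $\mathbf B_n(\widehat z)=\alpha_n(z;M,N)$, and the ideal property for $\mathfrak B$ gives $\mathbf B_n(\Theta_n(z))\le\|i_N^F\|_{\cb}\,\mathbf B_n(\widehat z)\,\|q_L^E\|_{\cb}\le(1+\varepsilon)\alpha_n(z;E',F)$; letting $\varepsilon\to0$ proves the claim. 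The same finite-dimensional reduction, applied verbatim at matrix level, is what makes the passage from norms to $\cb$-norms work throughout, so no further ideas beyond this one are needed.
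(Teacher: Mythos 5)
Your proof is correct, and the first half (verifying that $(\mathfrak A^{\min},\mathbf A^{\min})$ is a Banach mapping ideal associated to $\alpha$: contractivity into $\CB$, condition (b') from $\mathfrak A^{\min}(M,N)=M'\widehat\otimes_\alpha N$, and the ideal property via $r'\otimes s$ and the quotient maps) coincides with the paper's argument. Where you diverge is the minimality step. The paper argues indirectly: for any Banach mapping ideal $\mathfrak B\sim\alpha$ one has $\mathfrak B^{\min}\subseteq\mathfrak B$ (immediate from the factorization definition of the minimal kernel and the ideal property of $\mathfrak B$), and $\mathfrak B^{\min}=\mathfrak A^{\min}$ because both are the image of the \emph{same} complete quotient $J:E'\widehat\otimes_\alpha F\twoheadrightarrow\cdot$, so minimality follows at once. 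You instead prove directly that the tensor-to-operator map is completely contractive from $E'\otimes_\alpha F$ into $\mathfrak B(E,F)$, via the finite-dimensional reduction $\Theta_n(z)=(i^F_N)_n\circ\widehat z\circ q^E_L$ with $L={}^\circ M$, $(E/L)'=M$, and $\mathbf B_n(\widehat z)=\alpha_n(z;M,N)$ by the association on $\OFIN$; extending to the completion and identifying the extension with $J$ inside $\CB(E,F)$ gives $\mathfrak A^{\min}(E,F)\subseteq\mathfrak B(E,F)$ with $\mathbf B_n\le\mathbf A^{\min}_n$. Your "technical heart" is essentially the matrix-level version of the paper's Lemma~\ref{contraction} with target $\mathfrak B$ rather than $\mathfrak A^{\min}$, so it is not a new computation, but applying it to an arbitrary $\mathfrak B\sim\alpha$ makes the minimality self-contained and yields the complete (matrix-level) contraction $\mathfrak A^{\min}(E,F)\to\mathfrak B(E,F)$ explicitly, whereas the paper's route is shorter by reusing the $\mathfrak B^{\min}$ machinery. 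One cosmetic remark: writing $M^{00}=M$ is loose (that bipolar lives in $E''$); what you use, correctly, is that the annihilator in $E'$ of $L={}^\circ M$ equals $M$ because $M$ is finite-dimensional, hence weak*-closed.
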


\begin{proof}
To see that $(\mathfrak A^{\min},\mathbf{A}^{\min})$ is a mapping ideal, we first see that the identity map
$\mathfrak{A}^{\min}(E,F) \to \CB(E,F)$ is a complete contraction.

Consider the following commutative diagram:

\begin{equation} 
\xymatrix{
E' \widehat{\otimes}_{\alpha} F\ar@{->>}[r]^J \ar[d] &  \mathfrak{A}^{\min}(E,F) \ar@{.>}[d]
\\
 E'\widehat \otimes_{\min}F \ar@{^{(}->}[r]^{i}
 & \CB(E,F),\\
} 
\end{equation}

Now, by the Representation theorem for minimal mapping ideals we know that $J$ is complete quotient. On the other hand, the  inclusion $i$ is a complete isometry (see Remark \ref{extension-sin-loc-refl}) and the canonical mapping $E' \widehat{\otimes}_{\alpha} F \to E'\widehat \otimes_{\min}F$ is a complete contraction. By simply following the diagram, this implies that the descending arrow $\mathfrak{A}^{\min}(E,F) \dashrightarrow \CB(E,F)$ is a complete contraction. 

Note that condition (b') immediately follows from the last assertion of the previous theorem: $\mathfrak{A}^{\min}(M,N)= M' \widehat{\otimes}_{\alpha} N $.

Let us now focus on the ideal property. Consider $T \in M_n(\mathfrak{A}^{\min}(E,F))$, $r \in \CB(E_0,E)$ and $s \in \CB(F,F_0)$. 
By the ideal property of $\mathcal{A}$ it is easy to see that $s_n \circ T \circ r$ lies in $M_n(\mathfrak{A}^{\min}(E_0,F_0))$.
To bound the norm consider the following commutative diagram:

\begin{equation} 
\xymatrix{
E' \widehat{\otimes}_{\alpha} F \ar@{->>}[r] \ar[d]_{r'\otimes s} &  \mathfrak{A}^{\min}(E,F) \ar[d] & T \ar@{|->}[d]
\\
 E_0' \widehat{\otimes}_{\alpha} F_0 \ar@{->>}[r]
 & \mathfrak{A}^{\min}(E_0,F_0) & s\circ T \circ r.\\
} 
\end{equation}
Since the horizontal arrows are complete quotients and the mapping $$r'\otimes s : E' \widehat{\otimes}_{\alpha} F \to  E_0' \widehat{\otimes}_{\alpha} F_0 $$ has $\cb$-norm bounded by $\Vert s \Vert_{\cb} \Vert r \Vert_{\cb}$, it follows that 
$$
\mathbf{A}_n^{\min}(  s_n \circ T \circ r ) \le \n{s}_{\cb} \mathbf{A}_n^{\min}( T ) \n{r}_{\cb}.
$$
By Theorem \ref{Representation minimal} we know that if $M,N \in  \OFIN$, then there is a complete isometry $\mathfrak{A}^{\min}(M,N)= M' \widehat{\otimes}_{\alpha} N $, therefore the mapping ideal $\mathfrak{A}^{\min}$ is associated to $\alpha$.

Finally, we  show that $\mathfrak{A}^{\min}$ is the smallest Banach mapping ideal associated to $\alpha$. Since for any Banach mapping ideal $\mathfrak{B}$ we have $\mathfrak{B}^{\min} \subset \mathfrak{B}$ it is enough to see that if $\mathfrak{B}$ is  also associated to $\alpha$ then $\mathfrak{A}^{\min}=\mathfrak{B}^{\min}$. 
Indeed, this is straightforward since we have  the complete quotients $J:E' \widehat{\otimes}_{\alpha} F \twoheadrightarrow \mathfrak{A}^{\min}(E,F) $ and  $J:E' \widehat{\otimes}_{\alpha} F \twoheadrightarrow \mathfrak{B}^{\min}(E,F)$.
\end{proof} 
 
Now we present an operator space version of \cite[Prop. 22.1]{Defant-Floret}.

\begin{proposition}
Let $(\mathfrak{A}, \mathbf{A})$ be a Banach mapping ideal associated to the finitely generated o.s. tensor norm $\alpha$.
Then,

\begin{enumerate}
    \item $\mathfrak{A}^{\min}(M,N)= \mathfrak{A}^{\max}(M,N)= \mathfrak{A}(M,N) = M'\otimes_{\alpha}N$ holds completely isometrically for every $M,N\in\OFIN$.

     \item $\mathfrak{A}^{\min\ \max }=\mathfrak{A}^{\max}$.
     
     \item $\mathfrak{A}^{\max\ \min}=\mathfrak{A}^{\min}$.
     
      \item Given $E,F\in\OBAN$ and $T\in M_n(\mathfrak{A}^{\min}(E,F))$ there are $T_k\in M_n(\mathcal F(E,F))$ such that $\mathbf{A}_n^{\min}(T_k-T)\to 0$.

     \item Let $\mathfrak B$ be a Banach mapping ideal. Suppose that for every $M,N \in \OFIN$ we have that the inclusion $\Vert \mathfrak A (M,N) \hookrightarrow  \mathfrak B (M,N)  \Vert_{\cb} \leq c$ then
     $\mathfrak{A}^{\min} \subset \mathfrak{B}^{\min}$ and
     $\Vert \mathfrak A^{\min} (E,F) \hookrightarrow  \mathfrak B^{\min} (E,F)  \Vert_{\cb} \leq c$ for every $E,F\in\OBAN$.

\end{enumerate}
\end{proposition}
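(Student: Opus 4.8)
The plan is to prove each of the five items in turn, relying heavily on the Representation Theorem for minimal ideals (Theorem \ref{Representation minimal}), the maximality machinery (Definition \ref{def-maximal-hull} and surrounding results), and Proposition \ref{Inclusion maximal}.

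For item (1), I would combine several facts already established. The equality $\mathfrak{A}(M,N) = M'\otimes_\alpha N$ completely isometrically is precisely the hypothesis that $\mathfrak{A}$ is associated to $\alpha$ (Definition \ref{defn-associated}). The equality $\mathfrak{A}^{\min}(M,N) = M'\widehat\otimes_\alpha N$ is the last assertion of Theorem \ref{Representation minimal}, and since $M,N$ are finite-dimensional, $M'\widehat\otimes_\alpha N = M'\otimes_\alpha N$. For the maximal hull, I would observe that by the proof of the Proposition preceding Remark \ref{isometria-proj-min} (the one stating $\mathfrak{A}^{\max}$ is the largest mapping ideal associated to $\alpha$), the identity $\mathfrak{A}^{\max}(M,N)\to\mathfrak{A}(M,N)$ is a complete isomorphism; in fact the argument there, using $T = (q^N_{\{0\}})_n\circ T\circ i^M_M$, shows it is a complete isometry onto $\mathfrak{A}(M,N)$. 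Stitching these together gives (1).

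For items (2) and (3), the key is item (1) together with Remark \ref{misma norma asociada} and the characterizations of $\mathfrak{A}^{\max}$ and $\mathfrak{A}^{\min}$ as the largest and smallest Banach mapping ideals associated to a given finitely-generated o.s. tensor norm (the Proposition before Remark \ref{isometria-proj-min}, and Theorem \ref{minimal y norma asociada}). By (1), both $\mathfrak{A}^{\min}$ and $\mathfrak{A}^{\max}$ are associated to $\alpha$. Hence $\mathfrak{A}^{\min\,\max}$ is the maximal ideal associated to $\alpha$, which is $\mathfrak{A}^{\max}$ (this uses that $\mathfrak{A}\sim\alpha \iff \mathfrak{A}^{\max}\sim\alpha$, Remark \ref{misma norma asociada}, plus uniqueness of the maximal hull associated to a finitely-generated norm). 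Similarly $\mathfrak{A}^{\max\,\min}$ is the smallest Banach mapping ideal associated to $\alpha$, which by Theorem \ref{minimal y norma asociada} is $\mathfrak{A}^{\min}$; here I must be slightly careful that $\mathfrak{A}^{\max}$ is a \emph{Banach} mapping ideal (it is, by the Proposition right after Definition \ref{def-maximal-hull}) so that $(\mathfrak{A}^{\max})^{\min}$ is defined and Theorem \ref{minimal y norma asociada} applies.

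For item (4), I would unwind the definition of the o.s. structure on $\mathfrak{A}^{\min}$: given $T\in M_n(\mathfrak{A}^{\min}(E,F))$, since $J_n : M_n(E'\widehat\otimes_\alpha F)\twoheadrightarrow M_n(\mathfrak{A}^{\min}(E,F))$ is a complete quotient (Theorem \ref{Representation minimal}), pick $z\in M_n(E'\widehat\otimes_\alpha F)$ with $J_n(z)=T$ and $\alpha_n(z;E',F)$ close to $\mathbf{A}_n^{\min}(T)$. Approximate $z$ in the $\alpha$-norm by elements $z_k\in M_n(E'\otimes F)$ of the algebraic tensor product; then $T_k := J_n(z_k)\in M_n(\mathcal F(E,F))$ because $J$ sends an elementary tensor $x'\otimes y$ to the rank-one operator $x'(\cdot)y$, and the complete contractivity of $J$ (it is a complete quotient, hence a complete contraction) gives $\mathbf{A}_n^{\min}(T_k - T)\le \alpha_n(z_k - z;E',F)\to 0$. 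For item (5), I would run the same factorization-through-finite-dimensional-spaces argument as in Proposition \ref{Inclusion maximal} but for the minimal construction: the hypothesis $\|\mathfrak{A}(M,N)\hookrightarrow\mathfrak{B}(M,N)\|_{\cb}\le c$ for all $M,N\in\OFIN$, combined with item (1), says that the formal identity $M'\otimes_{\alpha_{\mathfrak{A}}} N\to M'\otimes_{\alpha_{\mathfrak{B}}} N$ has cb-norm $\le c$ for the respective associated norms; passing to completions and using the commuting square of complete quotients $E'\widehat\otimes_{\alpha_{\mathfrak{A}}} F \twoheadrightarrow \mathfrak{A}^{\min}(E,F)$ and $E'\widehat\otimes_{\alpha_{\mathfrak{B}}} F\twoheadrightarrow\mathfrak{B}^{\min}(E,F)$ (as in the last paragraph of the proof of Theorem \ref{minimal y norma asociada}) yields the claim. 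The main obstacle, if any, is the bookkeeping in (2)--(3) to make sure all the ideals in sight are genuinely Banach mapping ideals so that the $(-)^{\min}$ and $(-)^{\max}$ operations and their characterizations are legitimately available; the rest is essentially a diagram-chase.
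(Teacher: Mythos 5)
Your proposal is correct, and for items (1)--(3) and (5) it follows essentially the route the paper takes (the paper's own proof is terse, stating that (1)--(3) ``follow directly from the Representation Theorems'' and proving (5) exactly via the extension of the cb-bounded identity $M'\widehat{\otimes}_\alpha N \to M'\widehat{\otimes}_\beta N$ to $\OBAN$ together with the commuting square of complete quotients $J_\alpha$, $J_\beta$; your diagram chase gives both the inclusion and the constant $c$ in one stroke, whereas the paper also routes the set inclusion through Proposition \ref{Inclusion maximal} and item (3), which is harmless redundancy). The only genuine divergence is item (4): the paper derives it from Lemma \ref{lemma-approx-Amin}, i.e.\ by perturbing a factorization $T=S\circ\widehat{T}\circ R$ with finite-rank approximants $\widetilde S,\widetilde R$, while you lift $T$ through the complete quotient $J_n$ of Theorem \ref{Representation minimal}, approximate the lift by elements of the algebraic tensor product $M_n(E'\otimes F)$, and push forward using complete contractivity of $J_n$. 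Both arguments are sound; yours is arguably better adapted to the matrix-level statement, since $\mathbf{A}^{\min}_n$ is \emph{defined} via the quotient $J_n$ rather than via factorizations (the paper's factorization description of the norm is only given at level $n=1$), so your route avoids having to say what a ``matrix factorization'' of $T\in M_n(\mathfrak{A}^{\min}(E,F))$ means, at the small cost of invoking density of $M_n(E'\otimes F)$ in $M_n(E'\widehat{\otimes}_\alpha F)$ and the compatibility $J_\beta\circ\iota=J_\alpha$ on the algebraic tensors (both routine). Your bookkeeping concerns in (2)--(3) are exactly the right ones and are settled by the facts you cite: $\mathfrak{A}^{\max}$ is a Banach mapping ideal, $\mathfrak{A}^{\min}$ is a mapping ideal associated to $\alpha$, and maximal (resp.\ minimal) ideals associated to a fixed finitely-generated norm are unique.
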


\begin{proof}
 $(1)$, $(2)$ and $(3)$ follow directly from the Representation Theorems for maximal and minimal ideals.



$(4)$ This can easily be derived from Lemma \ref{lemma-approx-Amin}.

$(5)$ 
Let $\beta$ be the o.s. tensor norms  $\mathfrak{B}$. The hypothesis translates into $\Vert M' \widehat{\otimes}_{\alpha} N \hookrightarrow  M' \widehat{\otimes}_{\beta} N  \Vert_{\cb} \leq c$ for every $M,N \in \OFIN$. Since both norms are finitely generated, this completely bounded inclusion can be extended to $\OBAN$.
Also, by Proposition \ref{Inclusion maximal}, we know that $\mathfrak{A}^{\max} \subset \mathfrak{B}^{\max}$. Now, using $(3)$, we have  $\mathfrak{A}^{\min} \subset \mathfrak{B}^{\min}$.
To conclude, consider the following diagram
    
\begin{equation} 
\xymatrix{
E' \widehat{\otimes}_{\alpha} F \ar@{->>}[r]^{J_{\alpha}} \ar@{^{(}->}[d]_{\iota} &  \mathfrak{A}^{\min}(E,F) \ar@{.>}[d] 
\\
 E' \widehat{\otimes}_{\beta} F \ar@{->>}[r]^{J_{\beta}}
 & \mathfrak{B}^{\min}(E,F).\\
} 
\end{equation}
The result follows from the fact that $J_{\alpha}$ and $J_{\beta}$ are complete quotients and $\Vert \iota \Vert_{\cb} \leq c$.  
\end{proof}

\begin{definition}
We say that a Banach mapping ideal $(\mathfrak{A},\mathbf{A})$ is minimal whenever $(\mathfrak{A},\mathbf{A})=(\mathfrak A^{\min},\mathbf{A}^{\min})$.
\end{definition}

We now provide some examples

\begin{enumerate}[(i)]
    \item $\mathcal A = \CB^{\min}$. Moreover, given $E,F\in\OBAN$ we have the completely isometric isomorphism $\mathcal A (E,F)=E'\widehat{\otimes}_{\min} F$.
    
   Indeed, it is known that the canonical mapping $i:E'\widehat{\otimes}_{\min} F\hookrightarrow \CB(E,F)$ is completely isometric and by the Representation Theorem, $J:E'\widehat{\otimes}_{\min} F\twoheadrightarrow \CB^{\min}(E,F)$ is a complete quotient. Then,  the inclusion $\CB^{\min}(E,F)\hookrightarrow\CB(E,F)$ has to  be completely isometric. That means that the norm in $\CB^{\min}(E,F)$ is $\|\cdot\|_{\cb}$. Since $\mathcal A$ is contained in any Banach mapping ideal whose norm is the cb-norm, it should be $\mathcal A = \CB^{\min}$.
   Also, $J$ is a completely isometric isomorphism and so $\mathcal A (E,F)=E'\widehat{\otimes}_{\min} F$.

    \item $\mathcal N = \mathcal I^{\min}$.
    
    Indeed, since $\mathcal I\sim\proj$ we have a complete quotient $J:E'\widehat{\otimes}_{\proj} F\twoheadrightarrow \mathcal I^{\min}(E,F)\subset \CB(E,F)$ and this coincides with the definition of $\mathcal N(E,F)$.
    \item The ideal of completely right-$p$ nuclear mappings $\mathcal N^p$ is minimal.
    
    Indeed, it is clear from its definition that $\mathcal N^p=(\mathcal N^p)^{\min}$ because $\mathcal N^p\sim d_p $ and $J^p:E'\widehat{\otimes}_{d_p } F\twoheadrightarrow \mathcal N^p(E,F)\subset \CB(E,F)$ is a complete quotient.
    
\end{enumerate}
\section{Completely projective and completely injective o.s. tensor norms}
\label{capsulas inyectivas}

In this section we return to the abstract theory of o.s. tensor norms. In particular, we study o.s. tensor norms which behave well with respect to complete injections or projections. 
We know that the the minimal  o.s. tensor norm   respects complete injections \cite[Prop. 8.1.5]{Effros-Ruan-book} whereas  the projective o.s. tensor norm  respects complete projections \cite[Prop. 7.1.7]{Effros-Ruan-book}. 
A well-known property of the Haagerup o.s. tensor norm is that it respects both complete projections and injections \cite[Prop. 9.2.5]{Effros-Ruan-book}. It should be noted that this cannot happen in the Banach space/normed space framework as a consequence of Grothendieck's inequality \cite[20.20]{Defant-Floret}.
Based on the classical definitions, we consider in this section two natural procedures on o.s. tensor norms: the completely injective hull and the completely projective hull.
Similar constructions were considered in \cite{blecher1991tensor}.

We begin by recalling from Section \ref{injections-and-projections} the definitions of completely injective/projective o.s. tensor norms.

\begin{definition}
An o.s. tensor norm  $\alpha$ on $\ONORM$ is called \emph{completely right-injective} if for all $E,F,G \in \ONORM$ and for all complete injections $i : F \hookrightarrow G$ the mapping
$$
id_E \otimes i : E \otimes_{\alpha} F \to E \otimes_\alpha G
$$
is a complete injection 
and it is called \emph{completely right-projective} if for all  $E,F,G \in \ONORM$ and for all complete metric surjections $q :G \twoheadrightarrow F$, and all spaces $E$ the mapping
$$
id_E \otimes q : E \otimes_{\alpha} G \to E \otimes_\alpha F
$$
is a complete metric surjection.
Left versions are defined analogously, and we say that $\alpha$ is \emph{completely injective} (resp. \emph{completely projective}) when it is both completely left- and completely right-injective (resp. projective).
Analogous definitions will be used for other classes of spaces besides $\ONORM$ ($\OFIN$, for example).
\end{definition}

To provide some examples of these definitions, in \cite[Prop. 3.4]{CD-Chevet-Saphar-OS} it is shown that the Chevet-Saphar tensor norms $d_p $ are completely right-projective and the Chevet-Saphar tensor norms $g_p $ are completely left-projective. Also, by \cite[Prop. 6.1]{Pisier-Asterisque-98}, the norm $d_2 $ is completely left-injective and $g_2 $ is completely right-injective. It should also be mentioned that, by \cite[Prop. 6.2]{Wiesner}, any $\lambda$-o.s. tensor norm is completely projective.
For some conditions implying that a $\lambda$-o.s. tensor norm is completely injective, see \cite[Prop. 1.2]{Janson-Kumar-Luthra}.

\bigskip
For $M,N \in \OFIN$, the complete isometry
$$
M \otimes_{\alpha'} N = (M' \otimes_\alpha N')'
$$
implies the following:

\begin{remark} \label{rmk: dualidad entre injectividad y proyectividad}
$\alpha$ is completely right-projective  on $\OFIN$ if and only if $\alpha'$ is completely right-injective  on $\OFIN$.
\end{remark}

 In the Banach space world, the corresponding duality can be extended beyond the finite-dimensional case. Surprisingly, this property cannot be extended into $\OBAN$ for general o.s. tensor norms as mentioned after Remark \ref{consecuencias min} (c). This is an important difference between the classical and the non-commutative theory of tensor norms. Namely, in $\OBAN$, the fact that an o.s. tensor norm is completely right-injective does not imply that its dual norm is completely right-projective. 
  Nevertheless, the converse is valid (see Corollary \ref{cor: proy implica dual iny} below).

 \begin{proposition}\label{prop: injectividad en dim finita}
  If $\alpha$ is finitely-generated and completely right-injective on $\OFIN$ then $\alpha$ is completely right-injective on $\ONORM$.
  \end{proposition}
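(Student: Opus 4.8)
The statement to be proven is: if $\alpha$ is finitely-generated and completely right-injective on $\OFIN$, then $\alpha$ is completely right-injective on $\ONORM$. The plan is to take an arbitrary complete injection $i : F \hookrightarrow G$ between normed operator spaces, an arbitrary normed operator space $E$, and show that $id_E \otimes i : E \otimes_\alpha F \to E \otimes_\alpha G$ is a complete injection. Since the complete metric mapping property always makes $id_E \otimes i$ a complete contraction, the real content is the reverse inequality: for every $n \in \N$ and every $z \in M_n(E \otimes F)$, one must show $\alpha_n(z; E, F) \le \alpha_n((id_E \otimes i)_n z; E, G)$.

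First I would reduce to the finite-dimensional situation on \emph{both} tensor factors using the hypothesis that $\alpha$ is finitely-generated. Fix $z \in M_n(E \otimes F)$ and $\varepsilon > 0$. Working with $w := (id_E \otimes i)_n z \in M_n(E \otimes G)$, apply the definition of $\overrightarrow{\alpha}$ (Definition~\ref{def-finite-and-cofinite-hulls}) to find $E_0 \in \OFIN(E)$ and $G_0 \in \OFIN(G)$ with $w \in M_n(E_0 \otimes G_0)$ and $\alpha_n(w; E_0, G_0) \le (1+\varepsilon)\alpha_n(w; E, G)$. The delicate point is that I also want a finite-dimensional subspace of $F$; since $z \in M_n(E \otimes F)$ it lies in $M_n(E_1 \otimes F_0)$ for some $E_1 \in \OFIN(E)$, $F_0 \in \OFIN(F)$, and by enlarging we may assume $E_0 = E_1$ and $i(F_0) \subseteq G_0$ (enlarge $G_0$ by adding $i(F_0)$; this only decreases the $\alpha$-norm of $w$, preserving the inequality). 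Now $i$ restricts to a complete injection $i_0 : F_0 \hookrightarrow G_0$ between finite-dimensional operator spaces, and $(id_{E_0} \otimes i_0)_n z = w$ inside $M_n(E_0 \otimes G_0)$.

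The key step is then to apply the hypothesis. Since $\alpha$ is completely right-injective on $\OFIN$ and $i_0 : F_0 \hookrightarrow G_0$ is a complete injection of finite-dimensional operator spaces, the map $id_{E_0} \otimes i_0 : E_0 \otimes_\alpha F_0 \to E_0 \otimes_\alpha G_0$ is a complete injection, so $\alpha_n(z; E_0, F_0) = \alpha_n((id_{E_0}\otimes i_0)_n z; E_0, G_0) = \alpha_n(w; E_0, G_0)$. Chaining the inequalities, and using $\alpha_n(z; E, F) \le \alpha_n(z; E_0, F_0)$ (from $\overrightarrow{\alpha} \le \alpha$ on $\ONORM$, i.e. the ``finitely generated'' monotonicity in Proposition~\ref{prop-order-of-hulls}, applied since $z \in M_n(E_0 \otimes F_0)$), we get
$$
\alpha_n(z; E, F) \le \alpha_n(z; E_0, F_0) = \alpha_n(w; E_0, G_0) \le (1+\varepsilon)\alpha_n(w; E, G).
$$
Letting $\varepsilon \to 0$ gives the desired reverse inequality, completing the proof.

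The main obstacle I anticipate is the bookkeeping in the reduction step: one must be careful that the \emph{same} finite-dimensional subspace $E_0$ of $E$ works simultaneously for approximating $\alpha_n(w; E, G)$ and for containing $z$, and that $G_0$ can be chosen to contain $i(F_0)$ without spoiling the near-optimality of $\alpha_n(w; E_0, G_0)$. This is routine but needs the standard fact that enlarging the ambient finite-dimensional spaces only decreases (never increases) the value of the finite-level $\alpha$-norm in the definition of the finite hull — which is exactly why $\overrightarrow{\alpha}$ is an infimum over such subspaces. No local reflexivity or any deeper structural input is needed; finite generation does all the work of transporting the finite-dimensional injectivity up to the general case.
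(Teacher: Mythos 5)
Your proof is correct and follows essentially the same route as the paper's: pass to near-optimal finite-dimensional subspaces for $(id_E\otimes i)_n(z)$ in $E\otimes G$, enlarge them to contain the supports of $z$ and the image $i(F_0)$ (which only decreases the norm), apply the finite-dimensional right-injectivity to the restricted injection, and conclude via $\alpha_n(z;E,F)\le\alpha_n(z;E_0,F_0)$. The only quibble is the justification of that last inequality: it is simply the complete metric mapping property applied to the inclusions (equivalently $\alpha\le\overrightarrow{\alpha}$, not $\overrightarrow{\alpha}\le\alpha$), a harmless citation slip.
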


  \begin{proof}
  Let  $i : F \hookrightarrow G$ be a complete injection and $z \in M_n(E \otimes F)$.
  Since $\alpha$ is finitely-generated, given $\varepsilon>0$, consider $M \in \OFIN(E)$ and $N \in \OFIN(G)$ such that 
  $$\alpha_n((id_E\otimes i)_n (z); M,  N) \leq (1+\varepsilon)\alpha_n((id_E\otimes i)_n (z); E,G).$$
 
  Given $\widetilde{M} \in \OFIN(E)$ and $\widetilde{N} \in \OFIN(F)$ such that $z \in M_n(\widetilde{M} \otimes \widetilde{N})$ we have:
   \begin{align*}
   \alpha_n(z; E,F) & \leq \alpha_n(z; M+\widetilde{M},  \widetilde{N}) = \alpha_n((id_E\otimes i)_n (z); M + \widetilde{M},  N + i(\widetilde{N})) \\
   & = \alpha_n((id_E\otimes i)_n (z); M,  N) \leq (1+\varepsilon)\alpha_n((id_E\otimes i)_n (z); E, G),
  \end{align*}
   where equalities are due to the fact that $\alpha$ is  completely right-injective on $\OFIN$. This concludes the proof since we always have $\alpha_n((id_E\otimes i)_n (z); E,G) \leq \alpha_n(z; E,F)$.
  \end{proof}

As a consequence of Remark \ref{rmk: dualidad entre injectividad y proyectividad}, Proposition \ref{prop: injectividad en dim finita}  and the fact that  $\alpha'$ is finitely-generated we have:

\begin{corollary}
 \label{cor: proy implica dual iny}
If $\alpha$ is completely right-projective on $\OFIN$ then $\alpha'$ is completely right-injective on $\ONORM$.
\end{corollary}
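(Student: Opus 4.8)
The plan is to deduce this corollary directly from the three results just established, so the proof is extremely short. The key observation is that the corollary is simply the composition of Remark~\ref{rmk: dualidad entre injectividad y proyectividad} (which transfers complete right-projectivity of $\alpha$ into complete right-injectivity of $\alpha'$ at the finite-dimensional level) with Proposition~\ref{prop: injectividad en dim finita} (which upgrades finite-dimensional complete right-injectivity of a finitely-generated o.s.\ tensor norm to complete right-injectivity on all of $\ONORM$). The only missing link is the hypothesis of Proposition~\ref{prop: injectividad en dim finita} that the norm in question be finitely generated, and this is automatic: the dual o.s.\ tensor norm $\alpha'$ is by definition the finite hull $\overrightarrow{\alpha'}$, hence finitely generated (see the discussion after Proposition~\ref{prop-properties-dual-cross-norm}, or Proposition~\ref{prop-properties-dual-cross-norm}(c)).

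First I would invoke the hypothesis: $\alpha$ is completely right-projective on $\OFIN$. By Remark~\ref{rmk: dualidad entre injectividad y proyectividad}, this is equivalent to $\alpha'$ being completely right-injective on $\OFIN$. Next I would note that $\alpha'$ is finitely generated, being a finite hull. Then Proposition~\ref{prop: injectividad en dim finita}, applied to the o.s.\ tensor norm $\alpha'$, yields that $\alpha'$ is completely right-injective on $\ONORM$, which is exactly the assertion.

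There is essentially no obstacle here, since this is a formal chaining of previously proved statements; the ``hard part'', insofar as there is one, was already done inside Proposition~\ref{prop: injectividad en dim finita}. One caveat worth stating explicitly in the write-up: the corollary as stated asserts complete right-injectivity on $\ONORM$, and Proposition~\ref{prop: injectividad en dim finita} indeed gives precisely that; there is no need to separately worry about the passage from $\OFIN$ to $\ONORM$ beyond citing that proposition. The analogous left-handed statement follows by the same argument with ``left'' in place of ``right'' throughout.

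\begin{proof}
Since $\alpha$ is completely right-projective on $\OFIN$, Remark~\ref{rmk: dualidad entre injectividad y proyectividad} shows that $\alpha'$ is completely right-injective on $\OFIN$. Moreover, $\alpha'$ is finitely-generated, as it is by definition the finite hull of a tensor norm on $\OFIN$ (see Proposition~\ref{prop-properties-dual-cross-norm}). Applying Proposition~\ref{prop: injectividad en dim finita} to $\alpha'$, we conclude that $\alpha'$ is completely right-injective on $\ONORM$.
\end{proof}
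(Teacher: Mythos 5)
Your argument is correct and is exactly the paper's: the corollary is stated as an immediate consequence of Remark~\ref{rmk: dualidad entre injectividad y proyectividad}, Proposition~\ref{prop: injectividad en dim finita}, and the fact that $\alpha'$ is finitely-generated (being by definition a finite hull). Nothing is missing.
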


As mentioned above, the reciprocal of the previous statement does not hold in general. Nevertheless, under an additional hypothesis on the tensor norm we can actually get the converse (see Proposition \ref{inclusion normas}).

\subsection{Completely injective hulls}

As in the Banach space setting we now describe an analogous theory for the injective hulls of given o.s. tensor norm.

\begin{theorem}\label{thm:norma right injective asociada}
Let $\alpha$ be an o.s. tensor norm on $\ONORM$. Then there is a unique completely right-injective o.s. tensor norm $\alpha \backslash$ on $\ONORM$ such that $\beta \le \alpha\backslash$ for all completely right-injective o.s. tensor norms $\beta \le \alpha$.
For all normed operator spaces $E$ and $F \subseteq B(H)$,
$$
E \otimes_{\alpha \backslash} F \to E \otimes_{\alpha} B(H)
$$
is a complete isometry.
\end{theorem}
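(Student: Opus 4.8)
The plan is to define $\alpha\backslash$ directly by the formula suggested by the last statement, then verify all the required properties. Specifically, for $E, F \in \ONORM$ with a fixed completely isometric embedding $F \subseteq B(H)$ (which exists by Ruan's theorem), and $u \in M_n(E \otimes F)$, set
$$
(\alpha\backslash)_n(u; E, F) := \alpha_n\big( (id_E \otimes i_F)_n(u); E, B(H) \big),
$$
where $i_F : F \hookrightarrow B(H)$ is the inclusion. The first task is to check this is well-defined, i.e. independent of the choice of embedding $F \subseteq B(H)$. This follows from the injectivity of $B(H)$: given two embeddings $i_j : F \hookrightarrow B(H_j)$, $j=1,2$, the Arveson/Wittstock extension theorem produces complete contractions $r : B(H_1) \to B(H_2)$ and $s : B(H_2) \to B(H_1)$ with $r \circ i_1 = i_2$ and $s \circ i_2 = i_1$; applying the complete metric mapping property of $\alpha$ to $id_E \otimes r$ and $id_E \otimes s$ gives equality of the two candidate norms. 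This is exactly the same argument as in Proposition~\ref{prop-comp-injective-hull}.

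Next I would check that $\alpha\backslash$ is an o.s. tensor norm. The matricial norm axioms on $E \otimes F$ are inherited from those on $E \otimes_\alpha B(H)$ via the (injective, linear) map $id_E \otimes i_F$. The cross-norm property \eqref{norma producto matrices} holds because for $x \in M_n(E)$, $y \in M_m(F)$ we have $(id_E \otimes i_F)_{nm}(x \otimes y) = x \otimes i_F(y)$ and $\n{i_F(y)}_{M_m(B(H))} = \n{y}_{M_m(F)}$. Reasonableness: $\alpha\backslash \le \proj$ because $id_E \otimes i_F : E \otimes_\alpha F \to E \otimes_\alpha B(H)$ is a complete contraction (complete metric mapping property) and $\alpha \le \proj$; and $\min \le \alpha\backslash$ because $\min$ is completely injective (so $\min$ on $E \otimes F$ equals $\min$ computed in $E \otimes_{\min} B(H)$), and $\min \le \alpha$ on $E \otimes B(H)$. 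For the complete metric mapping property of $\alpha\backslash$: given $S \in \CB(E_1,E_2)$, $T \in \CB(F_1,F_2)$ with $F_j \subseteq B(H_j)$, extend $i_{F_2} \circ T : F_1 \to B(H_2)$ to $\widetilde T \in \CB(B(H_1), B(H_2))$ with $\n{\widetilde T}_{\cb} = \n{T}_{\cb}$ using injectivity of $B(H_2)$; then $(id \otimes i_{F_2})(S \otimes T) = (S \otimes \widetilde T)(id \otimes i_{F_1})$, and the complete metric mapping property of $\alpha$ on $E_1 \otimes_\alpha B(H_1) \to E_2 \otimes_\alpha B(H_2)$ gives the bound $\n{S}_{\cb}\n{T}_{\cb}$.

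Then I would prove $\alpha\backslash$ is completely right-injective. Let $j : F \hookrightarrow G$ be a complete injection; choose an embedding $G \subseteq B(H)$, which restricts to an embedding $F \subseteq B(H)$. By definition, $(\alpha\backslash)_n(u; E,G)$ for $u \in M_n(E \otimes F) \subseteq M_n(E \otimes G)$ is computed in $E \otimes_\alpha B(H)$ using the embedding $G \hookrightarrow B(H)$, which is the same as $(\alpha\backslash)_n(u; E, F)$ computed via $F \hookrightarrow B(H)$. Hence $id_E \otimes j : E \otimes_{\alpha\backslash} F \to E \otimes_{\alpha\backslash} G$ is a complete isometry. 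The displayed complete isometry $E \otimes_{\alpha\backslash} F \to E \otimes_\alpha B(H)$ for $F \subseteq B(H)$ is then literally the definition. The maximality statement — that $\beta \le \alpha\backslash$ for every completely right-injective $\beta \le \alpha$ — follows since for $F \subseteq B(H)$, complete right-injectivity of $\beta$ gives $\beta_n(u; E,F) = \beta_n((id \otimes i_F)_n u; E, B(H)) \le \alpha_n((id \otimes i_F)_n u; E, B(H)) = (\alpha\backslash)_n(u; E,F)$. Uniqueness is immediate from the maximality characterization. The main obstacle I anticipate is purely bookkeeping: being careful that all the embeddings into $B(H)$ are chosen compatibly (restricting a fixed embedding of the larger space) so the well-definedness argument can be invoked cleanly at each step; there is no deep difficulty, since injectivity of $B(H)$ does all the real work.
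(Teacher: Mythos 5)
Your proposal is correct and follows essentially the same route as the paper's proof: define $\alpha\backslash$ via a completely isometric embedding $F \subseteq B(H)$, use the injectivity of $B(H)$ (Arveson--Wittstock) to get independence of the embedding, the complete metric mapping property, and complete right-injectivity, and derive maximality from the contraction $E \otimes_\beta F \to E \otimes_\beta B(H) \to E \otimes_\alpha B(H)$. The only cosmetic difference is that you verify the cross-norm identity and spell out the maximality inequality pointwise, where the paper argues with commutative diagrams, but the underlying argument is identical.
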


\begin{proof}
Suppose that we have two completely isometric embeddings $i_j : F \to B(H_j)$, $j=1,2$ and define operator space structures  $\alpha_j$ on $E \otimes F$ induced by
$$
id_E \otimes i_j : E \otimes F \to E \otimes_\alpha B(H_j).
$$
By the injectivity of $B(H_j)$, there exist completely bounded contractions $R_1 : B(H_1) \to B(H_2)$ and $R_2 : B(H_2) \to B(H_1)$ such that $i_1 = R_2 \circ i_2$ and $i_2 = R_1 \circ i_1$.
The map $(id_E \otimes R_1) \circ (id_E \otimes i_1)$ shows that the identity $E \otimes_{\alpha_1} F \to E \otimes_{\alpha_2} F$ is completely contractive, and analogously so is $E \otimes_{\alpha_2} F \to E \otimes_{\alpha_1} F$.
Therefore, there is no ambiguity if we define $\alpha\backslash$ on $E \otimes F$ to be the operator space structure on $E \otimes F$ induced by the particular embedding
$$
E \otimes F \to E \otimes_{\alpha} B(H).
$$
From the injectivity of $\min$, the fact that $\min \le \alpha$ and the diagram
$$
	\xymatrix{
	E \otimes_{\alpha\backslash} F \ar@{.>}[d] \ar[r]  & E \otimes_\alpha B(H) \ar[d]\\
	E \otimes_{\min} F \ar[r]  & E \otimes_{\min} B(H)
	}
$$
it follows that $\min \le \alpha\backslash$. Similarly, from $\alpha \le \proj$ and
$$
	\xymatrix{
	E \otimes_{\proj} F \ar[r] \ar@{.>}[d]   & E \otimes_{\proj} B(H) \ar[d]\\
	E \otimes_{\alpha\backslash} F \ar[r]  & E \otimes_\alpha B(H)
	}
$$
it follows that $\alpha\backslash \le \proj$.
Thus, it is clear that $\alpha\backslash$ is a reasonable operator space cross-norm.

It is not hard to see that $\alpha\backslash$ has the complete metric mapping property.
Let $S \in \CB(E_1, E_2)$ and $T \in \CB(F_1,F_2)$ and consider completely isometric embeddings $F_j \subseteq B(K_j)$.
By the injectivity of $B(H)$, there exists a completely bounded map $\widehat{T} : B(K_1) \to B(K_2)$ such that $\Vert {\widehat{T}} \Vert_{\cb} = \n{T}_{\cb}$ and
$$
	\xymatrix{
	B(K_1) \ar[r]^{\widehat{T}}  & B(K_2) \\
	F_1 \ar[u]  \ar[r]^{T}  & F_2 \ar[u]
	}
$$
commutes.
After tensorizing,
$$
	\xymatrix{
	E_1 \otimes_{\alpha} B(K_1) \ar[r]^{S \otimes \widehat{T}}  & E_2 \otimes_{\alpha} B(K_2) \\
	E_1 \otimes_{\alpha} F_1 \ar[u]  \ar[r]^{S\otimes T}  & E_2 \otimes_{\alpha} F_2 \ar[u]
	}
$$
from where we clearly see that, by the complete metric mapping property of $\alpha$,
$$
\n{ S\otimes T : E_1 \otimes_{\alpha\backslash} F_1 \to E_2 \otimes_{\alpha\backslash} F_2  }_{\cb} \le \n{S}_{\cb} \n{T}_{\cb}.
$$

Suppose that $i : F \to G$ and  $i_G : G \to B(H)$ are complete injections. Since we had already shown that $\alpha\backslash$ does not depend on the particular embedding into a $B(H)$ and $i_G\circ i:F\to B(H)$ is a complete injection we have, for any $n\in \N$ and $z \in M_n(E \otimes F)$,
\begin{align*}
\alpha\backslash_n((id_E\otimes i)_n (z) ; E, G) &\le \alpha\backslash_n(z ; E, F) = \alpha_n((id_E\otimes i_G\circ i)_n (z) ; E, B(H)) \\ & =  \alpha\backslash_n((id_E\otimes i)_n (z) ; E, G),
\end{align*}
showing that $\alpha\backslash$ is completely-right injective.

Finally, if $\beta$ is a completely  right-injective o.s. tensor norm dominated by $\alpha$ then the diagram
$$
	\xymatrix{
	E \otimes_{\alpha\backslash} F \ar[r] \ar@{.>}[d]   & E \otimes_\alpha B(H) \ar[d]\\
	E \otimes_{\beta} F \ar[r]  & E \otimes_{\beta} B(H)
	}
$$
shows $\beta \le \alpha\backslash$.
\end{proof}

\begin{lemma} \label{finitamente generada la capsula inyectiva}
Let $\alpha$ be a finitely-generated o.s. tensor norm then $ \alpha \backslash$ is also finitely-generated.
\end{lemma}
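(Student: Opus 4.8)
The plan is to reduce the statement to the single inequality $\overrightarrow{\alpha\backslash} \le \alpha\backslash$, since the reverse inequality $\alpha\backslash \le \overrightarrow{\alpha\backslash}$ holds automatically for any o.s. tensor norm on $\ONORM$ by Proposition \ref{prop-order-of-hulls}. So fix normed operator spaces $E$, $F$, an index $n$, a tensor $u \in M_n(E \otimes F)$ and $\varepsilon > 0$; I would first fix a completely isometric embedding $i : F \hookrightarrow B(H)$ and invoke Theorem \ref{thm:norma right injective asociada} to write $\alpha\backslash_n(u; E, F) = \alpha_n(\hat u; E, B(H))$, where $\hat u := (id_E \otimes i)_n(u) \in M_n(E \otimes B(H))$.

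Next I would produce the finite-dimensional witnesses. Because $\alpha$ is finitely-generated, there are $E_0' \in \OFIN(E)$ and $W \in \OFIN(B(H))$ with $\hat u \in M_n(E_0' \otimes W)$ and $\alpha_n(\hat u; E_0', W) \le (1+\varepsilon)\,\alpha_n(\hat u; E, B(H))$. Independently, pick $E_1 \in \OFIN(E)$ and $F_0 \in \OFIN(F)$ with $u \in M_n(E_1 \otimes F_0)$, and set $E_0 := E_0' + E_1 \in \OFIN(E)$, so that $u \in M_n(E_0 \otimes F_0)$. Now the key move is to compute $\alpha\backslash_n(u; E_0, F_0)$: since $F_0 \subseteq F \subseteq B(H)$ is again a completely isometric embedding, Theorem \ref{thm:norma right injective asociada} applies once more (with $E_0$ in the role of $E$ and $F_0$ in the role of $F$) and yields $\alpha\backslash_n(u; E_0, F_0) = \alpha_n(\hat u; E_0, B(H))$, the element $\hat u$ being regarded here as lying in $M_n(E_0 \otimes B(H))$ via the algebraic inclusions. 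Applying the complete metric mapping property of $\alpha$ to the inclusions $E_0' \hookrightarrow E_0$ and $W \hookrightarrow B(H)$ gives $\alpha_n(\hat u; E_0, B(H)) \le \alpha_n(\hat u; E_0', W)$, so that
$$
\overrightarrow{\alpha\backslash}_n(u; E, F) \le \alpha\backslash_n(u; E_0, F_0) = \alpha_n(\hat u; E_0, B(H)) \le \alpha_n(\hat u; E_0', W) \le (1+\varepsilon)\,\alpha_n(\hat u; E, B(H)) = (1+\varepsilon)\,\alpha\backslash_n(u; E, F).
$$
Letting $\varepsilon \to 0$ yields $\overrightarrow{\alpha\backslash} \le \alpha\backslash$, and hence $\alpha\backslash = \overrightarrow{\alpha\backslash}$, i.e. $\alpha\backslash$ is finitely-generated.

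The only delicate point, which I would take care to spell out, is the repeated appeal to Theorem \ref{thm:norma right injective asociada} to identify the injective-hull norm on a finite-dimensional tensor product with an $\alpha$-norm valued in $B(H)$: one must choose the completely isometric embedding of $F_0$ into $B(H)$ to be the restriction of the one already fixed for $F$, so that $\hat u$ is literally the same tensor at every stage; this compatibility is exactly what the embedding-independence part of Theorem \ref{thm:norma right injective asociada} (its proof via injectivity of $B(H)$) provides. Everything else is a routine unwinding of the definition of finite generation together with the uniformity of $\alpha$, so I expect no further obstacle.
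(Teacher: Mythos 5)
Your proposal is correct and follows essentially the same route as the paper: compute $\alpha\backslash$ via the embedding into $B(H)$, apply finite generation of $\alpha$ there, and then transfer back to finite-dimensional subspaces of $E$ and $F$ (the paper handles the right-hand factor by enlarging $N$ to $N+\widetilde N$ and using $\alpha\backslash\le\alpha$, while you re-invoke Theorem \ref{thm:norma right injective asociada} at the level of $E_0\otimes F_0$; these are the same idea). No gaps.
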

\begin{proof}
Let $E$ and $F$ in $\ONORM$ and $z \in M_n(E \otimes F)$. 
By the definition of the o.s. tensor norm $\alpha \backslash$ if
$F \subseteq B(H)$ we have
$$(\alpha \backslash)_n(z; E, F) = \alpha_n(z; E, B(H)).$$
Since $\alpha$ is finitely-generated, given $\varepsilon>0$ there are $\widetilde M \in \OFIN(E)$ and $\widetilde N \in \OFIN(B(H))$ such that

$$ \alpha_n(z; \widetilde M, \widetilde N) \leq (1+\varepsilon) \alpha_n(z; E, B(H)).
$$

Now, given $M\in \OFIN(E)$ and $N\in \OFIN(F)$ such that $z\in M \otimes N$ we have that 
\begin{align*}
(\alpha\backslash)_n (z;  (M+\widetilde M),  N) & = (\alpha\backslash)_n (z;  (M+\widetilde M), (N + \widetilde N))  \\
& \leq \alpha_n (z;  (M+\widetilde M),  (N +\widetilde N) )  \leq \alpha_n (z;  \widetilde M, \widetilde N) \\
&\leq (1+\varepsilon) \alpha_n(z; E, B(H)) = (1+\varepsilon) (\alpha  \backslash)_n (z; E, F).
\end{align*}
 This concludes the proof.
\end{proof}

\begin{definition}
\label{inyectiva conmutativa}
The o.s. tensor norm  $\alpha \backslash$ given in Theorem \ref{thm:norma right injective asociada} is called the \emph{completely right-injective hull} of $\alpha$.
One can define analogously the \emph{completely left-injective hull} of $\alpha$, denoted $\slash \alpha$, and the \emph{completely injective hull}
$$
\slash \alpha \backslash := (\slash \alpha) \backslash = \slash (\alpha \backslash).
$$
\end{definition}

Note that there is no ambiguity in the previous equality since $ (\slash \alpha) \backslash = \slash (\alpha \backslash)$. Indeed, let $E, F \in \ONORM$ and suppose that $E \subset B(H_E)$ and $F \subset B(H_F)$; by  Theorem \ref{thm:norma right injective asociada} and its left version, the o.s. tensor norms  $(\slash \alpha)\backslash$ and $\slash (\alpha \backslash)$ can be computed on $E \otimes F$ through the complete isometry
$$ E \otimes F \hookrightarrow B(H_E) \otimes_{\alpha} B(H_F).$$

\subsection{Completely projective hulls}

Now it is time to describe the completely projective hulls of a given o.s. tensor norm.

To do this we need some preliminary results that will be useful later. Let us start by proving an operator space version of the perturbation step in the proof of \cite[Lemma 20.2.(2)]{Defant-Floret} without appealing to local reflexivity.
The proof is inspired by \cite[Lemma 2.13.2]{Pisier-Operator-Space-Theory}.

\begin{lemma} \label{lem: perturbation step}
Let $F \in \ONORM$, and let $\widetilde{F}$ be its completion.
Given $N \in \OFIN(\widetilde{F})$ and $\varepsilon>0$, there exists a linear map $R : N \to F$ with $\n{R}_{\cb} \le 1+\varepsilon$ and $Ry=y$ for all $y \in F \cap N$. 
\end{lemma}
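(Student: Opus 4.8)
The plan is to construct $R$ by leaving untouched the part of $N$ that already lies in $F$ and perturbing a complementary finite-dimensional piece so that its image lands in $F$. Since $F$ is dense in $\widetilde F$ and the complement is finite-dimensional, an arbitrarily small perturbation will do the job, and — in contrast with the Banach-space argument — no local reflexivity is needed.

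First I would set $N_0 := F \cap N$, a subspace of the finite-dimensional operator space $N$, and choose a purely algebraic complement $N_1$ with $N = N_0 \oplus N_1$, a basis $e_1,\dots,e_k$ of $N_1$, and the associated coordinate functionals $\phi_1,\dots,\phi_k \in N'$, so that $\phi_i(e_j) = \delta_{ij}$ and $\phi_i|_{N_0} = 0$. Each $\phi_j$ is completely bounded with $\n{\phi_j}_{\cb} = \n{\phi_j}$, a finite constant fixed once and for all by this choice of basis. Using that $F$ is dense in $\widetilde F$ — and that the canonical inclusion $\iota_F : F \hookrightarrow \widetilde F$ is a complete isometry — I would pick $f_1,\dots,f_k \in F$ with $\n{e_j - f_j}_{\widetilde F} < \delta$, where $\delta>0$ is to be fixed below, and then define $R : N \to F$ by
$$
R\Big( y_0 + \sum_{j=1}^k c_j e_j \Big) := y_0 + \sum_{j=1}^k c_j f_j, \qquad y_0 \in N_0,\ c_1,\dots,c_k \in \C.
$$
By construction $R$ takes values in $F$ (because $N_0 \subseteq F$ and each $f_j \in F$) and $Ry = y$ for every $y \in N_0 = F \cap N$, so it remains only to control $\n{R}_{\cb}$.

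For the norm estimate I would view $R$ as a map into $\widetilde F$ (legitimate since $\iota_F$ is a complete isometry, so the $\cb$-norms agree) and write it as a perturbation of the inclusion: $R = \iota_N - \sum_{j=1}^k \phi_j \otimes (e_j - f_j)$, where $\iota_N : N \hookrightarrow \widetilde F$ denotes the completely isometric subspace inclusion. The key elementary fact is that a rank-one map $\phi \otimes z : N \to \widetilde F$ has $\n{\phi \otimes z}_{\cb} = \n{\phi}\,\n{z}$, because its $n$-th amplification sends $(y_{ij})$ to $\phi_n\big((y_{ij})\big) \otimes z$ and $\n{A \otimes z}_{M_n(\widetilde F)} = \n{A}_{M_n}\n{z}$. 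Hence, by the triangle inequality,
$$
\n{R - \iota_N}_{\cb} \le \sum_{j=1}^k \n{\phi_j}\, \n{e_j - f_j} < \delta \sum_{j=1}^k \n{\phi_j},
$$
and choosing $\delta$ so small that $\delta \sum_{j} \n{\phi_j} \le \varepsilon$, together with $\n{\iota_N}_{\cb} = 1$, yields $\n{R}_{\cb} \le 1 + \varepsilon$, as desired.

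The argument is essentially routine; the only point worth emphasizing — and the reason it bypasses the local-reflexivity step in the Banach-space proof of \cite[Lemma 20.2.(2)]{Defant-Floret} — is that the whole perturbation lives on a single fixed finite-dimensional complement, so one small correction on finitely many basis vectors suffices, with the $\cb$-norm of the correction controlled by the finitely many fixed numbers $\n{\phi_j}$.
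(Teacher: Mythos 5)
Your proof is correct and follows essentially the same route as the paper's: both fix a basis of $N$ adapted to $F\cap N$ with biorthogonal (coordinate) functionals vanishing on $F\cap N$, perturb the complementary basis vectors into $F$ using density, and bound $\n{R-\iota_N}_{\cb}$ by the sum of rank-one terms $\n{\phi_j}\,\n{e_j-f_j}$. Your explicit splitting $N=N_0\oplus N_1$ and the spelled-out rank-one $\cb$-norm estimate are just a slightly more detailed rendering of the paper's biorthogonal-system argument.
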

\begin{proof}
Let $(x_j,x_j^*)_{j=1}^m$ be a biorthogonal system for the space $N$, that is, a basis $x_1,\dotsc,x_m$ of $N$ and functionals $x'_1,\dotsc,x'_m \in N'$ satisfying $x'_k(x_j) = \delta_{kj}$ (we don't assume anything about their norms).
Without loss of generality, we may suppose that $\{x_j\}_{j=1}^{m_0}$ is a basis for $F \cap N$ (with $m_0=0$ if $F\cap N = \{0\}$).
For $1 \le j \le m_0$ let $y_j = x_j$, and for $m_0+1\le j \le m$ choose $y_j \in F$ close enough to $x_j$ so that
$$
\sum_{j=m_0+1}^m \n{x'_j} \n{x_j-y_j} < \varepsilon.
$$
Define $R : N \to F$ as $y \mapsto \sum_{j=1}^m x'_j(y) y_j$.
Since the identity map $I : N \to \widetilde{F}$ can be written as $y \mapsto \sum_{j=1}^m x'_j(y) x_j$, it is clear that $Ry=y$ for all $y \in F \cap N$, and
$$
\n{R}_{\cb} \le \n{I}_{\cb} + \sum_{j=m_0+1}^m \n{x'_j} \n{x_j-y_j} < 1 + \varepsilon.
$$
This concludes the proof.
\end{proof}

Let us quote an important lemma regarding metric surjections from the classical theory  \cite[Quotient Lemma 7.4]{Defant-Floret}. This can be translated to the operator space setting  canonically adapting the proof. Its statement is the following:

\begin{lemma} \label{Quotient lemma}
Let $E, F\in\ONORM$,  $E_0\subset E$ a dense subspace, $q\in \CB(E,F)$ a surjective mapping  and consider $q_0=q|_{E_0}: E_0\to q(E_0)$ its restriction. Then, $q_0$ is a complete metric surjection if and only if $\overline{\ker q_0}=\ker q$ and $q$ is a complete metric surjection.
\end{lemma}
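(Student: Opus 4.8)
\textbf{Proof proposal for the Quotient Lemma \ref{Quotient lemma}.}

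The plan is to adapt the classical argument of \cite[Quotient Lemma 7.4]{Defant-Floret}, checking at each stage that the matrix levels behave correctly. First I would prove the easy implication. Suppose $q_0 = q|_{E_0}$ is a complete metric surjection. To see $\overline{\ker q_0} = \ker q$, take $x \in \ker q$; since $E_0$ is dense in $E$, pick $x_\eta \in E_0$ with $x_\eta \to x$. Then $q_0(x_\eta) = q(x_\eta) \to q(x) = 0$ in $F$, and using that $q_0$ is a complete metric surjection (hence in particular a metric surjection at the first matrix level), for each $\eta$ one can correct $x_\eta$ by an element $w_\eta \in \ker q_0$ with $\n{w_\eta}$ controlled by $\n{q_0(x_\eta)} \to 0$, so that $x_\eta - w_\eta \in \ker q_0$ and $x_\eta - w_\eta \to x$; this gives $x \in \overline{\ker q_0}$. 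The reverse inclusion $\overline{\ker q_0} \subset \ker q$ is immediate from continuity of $q$. That $q$ itself is a complete metric surjection follows from a routine density/perturbation argument on each $M_n$: given $y \in M_n(F)$ with $\n{y}_{M_n(F)} < 1$, one approximates $y$ by $q_0$-images of norm-bounded matrices over $E_0$ (possible since $q_0$ is a complete metric surjection onto $q(E_0)$, which is dense in $F$) and passes to a limit, using completeness is \emph{not} available here but one obtains the infimum-of-norms characterization in the limit anyway because the estimate is uniform.

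For the converse, assume $\overline{\ker q_0} = \ker q$ and that $q$ is a complete metric surjection. Fix $n \in \N$, $y \in M_n(q(E_0))$, and $\varepsilon > 0$; I must produce $x \in M_n(E_0)$ with $q_0(x) = y$ and $\n{x}_{M_n(E_0)} \le (1+\varepsilon)\n{y}_{M_n(q(E_0))}$. Since $q(E_0)$ carries the norm inherited from $F$ and $q$ is a complete metric surjection, there is $\widetilde{x} \in M_n(E)$ with $q_n(\widetilde{x}) = y$ and $\n{\widetilde{x}}_{M_n(E)} \le (1 + \varepsilon/2)\n{y}_{M_n(F)}$. Now approximate $\widetilde{x}$ by $x_0 \in M_n(E_0)$; then $q_n(x_0)$ is close to $y$ but may not equal it. The difference $q_n(x_0) - y = q_n(x_0 - \widetilde{x}) + 0$ lies in $M_n(q(E_0))$ and has small norm, so using that $q_0$ is \emph{onto} $q(E_0)$ (which holds because $q_0(E_0) = q(E_0)$ by definition) together with the hypothesis $\overline{\ker q_0} = \ker q$, one iterates: repeatedly correct $x_0$ by small-norm elements of $E_0$ mapping onto the current defect, obtaining a Cauchy-type series $x = x_0 + \sum_k z_k$ with $z_k \in M_n(E_0)$, $q_n(z_k)$ equal to the $k$-th defect, and $\n{z_k}$ geometrically small. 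The key point making the corrections possible with controlled norm is precisely that $\ker q_0$ is dense in $\ker q$: a defect $d \in M_n(q(E_0))$ of small norm has a preimage in $M_n(E)$ of small norm (since $q$ is a complete metric surjection), which can be perturbed into $M_n(E_0)$ modulo $\ker q$, and then further perturbed using density of $\ker q_0$ in $\ker q$ to land exactly in a preimage under $q_0$. Summing gives $x \in M_n(E_0)$ (note $E_0$ need not be complete, but the series has \emph{absolutely summable} norms and its partial sums already lie in $E_0$ — here one must be slightly careful: if $E_0$ is not complete, one instead arranges that only \emph{finitely many} corrections are needed by choosing the first approximation $x_0$ with $q_n(x_0)$ in a suitable finite-codimensional position, or equivalently one works with the completion and then uses that the exact preimage can be taken in $E_0$ because $q_0$ is genuinely surjective onto $q(E_0)$). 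Either way one reaches $q_0(x) = y$ with $\n{x}_{M_n(E_0)} \le (1+\varepsilon)\n{y}_{M_n(q(E_0))}$, which is the complete metric surjection property for $q_0$.

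The main obstacle I anticipate is the bookkeeping in the converse direction when $E_0$ is merely a dense subspace and not complete: the naive iteration produces an infinite series whose sum a priori lives only in $\widetilde{E}$, not in $E_0$. The fix is to exploit that $q_0$ maps \emph{onto} $q(E_0)$ exactly (not just densely), so after one approximation step the remaining defect can be lifted \emph{exactly} by a single element of $E_0$ of controlled norm, using the hypothesis $\overline{\ker q_0} = \ker q$ to absorb the discrepancy between an approximate lift and an exact one. Carefully, one writes $x = x_0 + z$ with $x_0 \in M_n(E_0)$ a good approximation of a norm-controlled $q$-preimage of $y$, and $z \in M_n(E_0)$ the exact $q_0$-preimage of the (small-norm) defect $y - q_n(x_0)$, whose existence with small norm is guaranteed by combining surjectivity of $q_0$ onto $q(E_0)$ with the density $\overline{\ker q_0} = \ker q$ and the fact that $q$ is a complete metric surjection. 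This is exactly the mechanism in \cite[Quotient Lemma 7.4]{Defant-Floret}, and since all the estimates are at the level of matrix norms and only use the operator-space structure through the definition of complete metric surjection, the translation is faithful. I would present the argument compactly, citing the classical proof for the parts that are verbatim and spelling out only the matrix-level surjection estimate.
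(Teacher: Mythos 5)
The paper actually gives no written proof of this lemma---it cites the classical Quotient Lemma \cite[Lemma 7.4]{Defant-Floret} and asserts the argument adapts canonically to matrix levels---so your proposal must stand on its own. Your converse direction is essentially that canonical adaptation and is correct: given the small defect in $M_n(q(E_0))$, one exact correction suffices (take any algebraic lift of it in $M_n(E_0)$, a norm-controlled lift in $M_n(E)$ using that $q$ is a complete metric surjection, approximate their difference, which lies in $M_n(\ker q)=M_n(\overline{\ker q_0})$, by an element of $M_n(\ker q_0)$, and subtract); your final paragraph names exactly these ingredients, and your worry about an infinite iteration is correctly resolved---indeed the same one-step correction applied directly to $y$ makes the preliminary approximation of $\widetilde{x}$ by $x_0$ unnecessary. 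In the forward direction the kernel identity is fine up to a slip: your corrector $w_\eta$ is not an element of $\ker q_0$ but a small-norm $q_0$-lift of $q_0(x_\eta)$; it is $x_\eta-w_\eta$ that lies in $\ker q_0$.

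The genuine gap is the forward-direction claim that $q$ is a complete metric surjection. Your mechanism---approximate $y\in M_n(F)$ by elements of $(q_0)_n(M_n(E_0))$ with norm-controlled lifts and ``pass to a limit,'' asserting the uniform estimate saves the day---does not work: $E$ and $F$ are only normed, the bounded lifts of the approximants need not converge, and even if they converged in the completion the limit would not be an element of $M_n(E)$ mapping exactly to $y$. Uniformity of the estimates is not the issue; exactness of the lift is. The correct argument avoids limits and mirrors your own converse trick: since $q$ is surjective, pick any $x^*\in M_n(E)$ with $q_n(x^*)=y$; choose $x_0\in M_n(E_0)$ with $\n{x^*-x_0}\le\varepsilon$, so that $q_n(x_0)\in M_n(q(E_0))$ has norm at most $\n{y}+\varepsilon$ (note $\n{q}_{\cb}=\n{q_0}_{\cb}\le 1$ by density); lift it through $(q_0)_n$ to $x_0'\in M_n(E_0)$ with $\n{x_0'}\le\n{y}+2\varepsilon$; since $x_0-x_0'\in M_n(\ker q_0)\subseteq M_n(\ker q)$, the element $x=(x^*-x_0)+x_0'$ is an exact preimage of $y$ with $\n{x}\le\n{y}+3\varepsilon$, which is the quotient-norm identity on every matrix level with no completeness used. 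With this replacement (and the small $w_\eta$ fix) your proof is complete and coincides with the intended canonical adaptation of the classical lemma.
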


Another ingredient needed for the construction of the completely projective hull is the following concept of  right-finite hull of an o.s. tensor norm:

\begin{definition}\label{def:right-finite hull}
Given normed operator spaces $E$ and $F$, an o.s. tensor norm $\alpha$ on $\OFIN$ and $u \in M_n(E \otimes F)$, the \textit{right-finite hull} of $\alpha$ is given by
$$
\alpha^{\rightarrow}_n(u ; E, F) = \inf\left\{\alpha_n(u; E,F_0) \colon  F_0 \in\OFIN(F), u \in M_n(E \otimes F_0)\right\}.
$$
\end{definition}

It is clear that $\alpha^{\rightarrow}$ is a o.s. tensor norm and $\alpha\le \alpha^{\rightarrow}\le \overrightarrow{\alpha}$.

We are now ready to present the operator space version of \cite[Lemma 20.2]{Defant-Floret}:

\begin{lemma} \label{lema similar df20.20}
Let $\alpha$ be an o.s. tensor norm on $\ONORM$.
\begin{enumerate}[(a)]
    \item If $\alpha$ is completely right-projective on $\ONORM\times\OBAN$ then $\alpha=\alpha^{\rightarrow}$ on $\ONORM\times\OBAN$.
    \item If $\alpha=\alpha^{\rightarrow}$ on $\ONORM\times\OBAN$ then $\alpha=\alpha^{\rightarrow}$ on $\ONORM\times\ONORM$ and for all $E,F\in\ONORM$ with $\widetilde{F}$ the completion of $F$, the following canonical embedding is a complete isometry:
    $$
    E\otimes_\alpha F\hookrightarrow E\otimes_\alpha \widetilde{F}.
    $$
    \item If $\alpha$ is completely right-projective on $\ONORM\times\OBAN$ then it is completely right-projective on $\ONORM\times\ONORM$.
\end{enumerate}
\end{lemma}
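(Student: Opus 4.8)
\textbf{Proof plan for Lemma \ref{lema similar df20.20}.}

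The plan is to prove the three items in the order $(a)$, $(b)$, $(c)$, so that item $(c)$ — the statement we ultimately want — follows by combining the first two. For item $(a)$: assume $\alpha$ is completely right-projective on $\ONORM\times\OBAN$, fix $E\in\ONORM$, $F\in\OBAN$, $n\in\N$ and $u\in M_n(E\otimes F)$; I must show $\alpha^{\rightarrow}_n(u;E,F)\le\alpha_n(u;E,F)$ (the reverse inequality always holds). Pick $F_1\in\OFIN(F)$ with $u\in M_n(E\otimes F_1)$. By \cite[Prop. 2.12.2]{Pisier-Operator-Space-Theory} there is a completely projective $Z_F$ and a complete metric surjection $q_F:Z_F\twoheadrightarrow F$; then lift $u$ through $id_E\otimes q_F$, which by complete right-projectivity is a complete metric surjection $E\otimes_\alpha Z_F\to E\otimes_\alpha F$, to obtain $v\in M_n(E\otimes Z_F)$ with $(id_E\otimes q_F)_n v=u$ and $\alpha_n(v;E,Z_F)$ arbitrarily close to $\alpha_n(u;E,F)$. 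Since $Z_F$ is an $\ell_1$-sum of spaces $S_1^{m_i}$, the element $v$ lives (up to a small perturbation, using density of the finitely-supported part) in $M_n(E\otimes Z_0)$ for some finite subsum $Z_0$, and $q_F(Z_0)\in\OFIN(F)$. Working inside the completely right-projective restriction $id_E\otimes (q_F|_{Z_0}):E\otimes_\alpha Z_0\to E\otimes_\alpha q_F(Z_0)$ (this is again a complete metric surjection, since cutting down to a summand of $Z_F$ and its image is a complete metric surjection, and complete right-projectivity is inherited) gives $\alpha_n(u;E,q_F(Z_0)+F_1)$ close to $\alpha_n(u;E,F)$, hence $\alpha^{\rightarrow}_n(u;E,F)\le\alpha_n(u;E,F)$.

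For item $(b)$, assume $\alpha=\alpha^{\rightarrow}$ on $\ONORM\times\OBAN$. First, for $E,F\in\ONORM$ with completion $\widetilde F$: given $N\in\OFIN(\widetilde F)$, Lemma \ref{lem: perturbation step} supplies $R:N\to F$ with $\n{R}_{\cb}\le 1+\varepsilon$ fixing $N\cap F$; for $u\in M_n(E\otimes (N\cap F))$ one then gets $\alpha_n(u;E,N\cap F)\le (1+\varepsilon)\alpha_n(u;E,N)$ via $id_E\otimes R$ and the complete metric mapping property, and letting $N$ range over $\OFIN(\widetilde F)$ containing the (finite-dimensional span of the) entries of $u$ together with the hypothesis $\alpha=\alpha^{\rightarrow}$ applied in $\widetilde F$ yields that $\alpha$ computed in $F$ agrees with $\alpha$ computed in $\widetilde F$; this is exactly the assertion that $E\otimes_\alpha F\hookrightarrow E\otimes_\alpha\widetilde F$ is a complete isometry. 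That $\alpha=\alpha^{\rightarrow}$ on $\ONORM\times\ONORM$ then follows because for $u\in M_n(E\otimes F)$ we can compute everything inside $\widetilde F$, where the hypothesis gives $\alpha=\alpha^{\rightarrow}$, and then transfer back to finite-dimensional subspaces of $F$ using the perturbation argument again (any $N\in\OFIN(\widetilde F)$ can be $(1+\varepsilon)$-pushed into a finite-dimensional subspace of $F$ containing the entries of $u$).

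For item $(c)$: assume $\alpha$ is completely right-projective on $\ONORM\times\OBAN$; by $(a)$ we have $\alpha=\alpha^{\rightarrow}$ on $\ONORM\times\OBAN$, and by $(b)$ the canonical map $E\otimes_\alpha F\hookrightarrow E\otimes_\alpha\widetilde F$ is a complete isometry for all $E,F\in\ONORM$. Now take $E\in\ONORM$ and a complete metric surjection $q:G\twoheadrightarrow F$ between normed operator spaces; I must show $id_E\otimes q:E\otimes_\alpha G\to E\otimes_\alpha F$ is a complete metric surjection. Pass to completions: $\widetilde q:\widetilde G\twoheadrightarrow\widetilde F$ is a complete metric surjection between Banach operator spaces (one must check this — it is standard that the continuous extension of a complete metric surjection of normed spaces is a complete metric surjection of their completions), so $id_E\otimes\widetilde q:E\otimes_\alpha\widetilde G\to E\otimes_\alpha\widetilde F$ is a complete metric surjection by the hypothesis on $\ONORM\times\OBAN$. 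The restriction of $id_E\otimes\widetilde q$ to the dense subspace $E\otimes G$ has image $E\otimes F$, and by $(b)$ the norms on $E\otimes G$ and $E\otimes F$ are the ones induced from $E\otimes_\alpha\widetilde G$ and $E\otimes_\alpha\widetilde F$; applying the Quotient Lemma \ref{Quotient lemma} (one checks $\overline{\ker(id_E\otimes q)}=\ker(id_E\otimes\widetilde q)$, which follows since $E\otimes_\alpha\widetilde F\hookrightarrow$ separates the quotient and $E\otimes G$ is dense) concludes that $id_E\otimes q$ is a complete metric surjection.

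The main obstacle I expect is item $(a)$: getting from the lift $v$ of $u$ in $E\otimes_\alpha Z_F$ down to a \emph{finite-dimensional} subspace of $F$ requires care, because $Z_F$ is infinite-dimensional and the naive truncation of $v$ need not satisfy $(id_E\otimes q_F)_n(\text{truncation})=u$ exactly — this is where the density of the finitely-supported elements of $Z_F=\ell_1(\{S_1^{n_i}\})$, combined with a small perturbation and the completeness/openness of the complete metric surjection, must be used to absorb the error; and one must verify that restricting a complete metric surjection to a finite sub-direct-sum of $Z_F$ (mapping onto its image) remains a complete metric surjection to which complete right-projectivity applies. The secondary subtlety is, in $(c)$, verifying the hypothesis of the Quotient Lemma, namely that $\ker(id_E\otimes q)$ is dense in $\ker(id_E\otimes\widetilde q)$.
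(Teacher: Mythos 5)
Your parts (b) and (c) follow the paper's proof essentially step for step: (b) is the perturbation argument via Lemma \ref{lem: perturbation step} (with the small caveat that what $id_E\otimes R$ controls is $\alpha_n(u;E,R(N))$ rather than $\alpha_n(u;E,N\cap F)$ --- harmless, since $R(N)\in\OFIN(F)$ is all you need to bound $\alpha^{\rightarrow}$), and (c) is exactly the passage to completions plus the Quotient Lemma \ref{Quotient lemma}; there the kernel condition you flag is checked by $\ker(id_E\otimes\widetilde q)=E\otimes\ker\widetilde q=E\otimes\overline{\ker q}\subseteq\overline{E\otimes\ker q}$, not by the ``separation'' remark you sketch, but the condition you identify is the right one.

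The genuine gap is in (a), precisely at the obstacle you yourself single out: passing from the lift $v\in M_n(E\otimes_\alpha Z_F)$ to a finite-dimensional subspace without losing the norm. The two devices you propose do not accomplish this as stated. First, the restriction $q_F|_{Z_0}:Z_0\to q_F(Z_0)$ of a complete quotient to a subspace, corestricted to its image, is \emph{not} in general a complete metric surjection (already false for Banach spaces: restrict $(a,b)\mapsto a$ on $\ell_1^2$ to the diagonal), so ``complete right-projectivity is inherited'' by this restricted map is unjustified --- and in fact only its complete contractivity is ever needed. Second, ``absorbing the error using the openness of the quotient'' does not terminate: re-lifting the error $u-(id_E\otimes q_F)_n(v')$ merely reproduces a lift in the infinite-dimensional $Z_F$, and iterating converges only in the completion, whereas you need an exact statement in the algebraic tensor product. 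Two ways to close the gap: (i) the paper's route --- $Z_F$ is an $\ell_1$-sum of the finite-dimensional spaces $S_1^{n_i}$, hence has the CMAP, so the Approximation Lemma \ref{approximation-lemma} (applied to $\alpha^{\rightarrow}$ versus $\alpha$, which agree on $E\otimes N$ for every $N\in\OFIN(Z_F)$) gives $\alpha=\alpha^{\rightarrow}$ on $E\otimes Z_F$; then pick $N\in\OFIN(Z_F)$ with $v\in M_n(E\otimes N)$ and $\alpha_n(v;E,N)\le(1+\varepsilon)\alpha_n(v;E,Z_F)$ and push down with $q_F$ into $q_F(N)\in\OFIN(F)$; or (ii) repair your truncation: let $P_0$ be the completely contractive coordinate projection of $Z_F$ onto a large finite (hence finite-dimensional) subsum $Z_0$, set $v'=(id_E\otimes P_0)_n v$, so $\alpha_n(v';E,Z_0)\le\alpha_n(v;E,Z_F)$, and note that the error $e=u-(id_E\otimes q_F)_n v'$ is an algebraic tensor lying in $M_n(E\otimes G_0)$ for a finite-dimensional $G_0\subseteq F$, with projective --- hence $\alpha$ --- norm computed in $E\otimes G_0$ as small as desired (write $v=\sum_k X_k\otimes z_k$ and use $\|z_k-P_0z_k\|\to0$); then estimate $\alpha_n(u;E,q_F(Z_0)+G_0)$ by the triangle inequality. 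With either repair, your deduction of (c) from (a) and (b) stands.
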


\begin{proof}
$(a)$ For $G\in\OBAN$, by the construction mentioned in Section~\ref{injections-and-projections} there is a complete metric surjection $q_G:Z_G\twoheadrightarrow G$. Since $Z_G$ is an $\ell_1$ sum of finite dimensional spaces it is clear that it has the completely metric approximation property. By the Approximation Lemma \ref{approximation-lemma}, for every normed operator space $E$ we have that $\alpha=\alpha^{\rightarrow}$ on $E\otimes Z_G$.

Hence, for $z\in M_n(E\otimes G)$ and $\varepsilon >0$ there exist $N\in\OFIN(Z_G)$ and $u\in E\otimes N$ such that $Id_E\otimes q_G(u)=z$ and
$$
\alpha_n(u; E, N) \le (1+\varepsilon) \alpha_n(z; E, G).
$$
Therefore,
$$
\alpha_n(z; E, G)\le \alpha^{\rightarrow}_n(z; E, G) \le \alpha_n(z; E, q_G(N))\le \alpha_n(u; E, N) \le (1+\varepsilon) \alpha_n(z; E, G).
$$

$(b)$ Let  $z\in M_n(E\otimes F)$. By the metric mapping property,
$
\alpha^{\rightarrow}_n(z;E, \widetilde{F}) \le \alpha^{\rightarrow}_n(z; E, F). 
$
For $\varepsilon >0$ let $N\in \OFIN(\widetilde{F})$ with $z\in M_n(E\otimes N)$ such that $\alpha_n(z; E, N)\le (1+\varepsilon) \alpha^{\rightarrow}_n(z; E, \widetilde{F})$. By Lemma \ref{lem: perturbation step} there is a completely bounded linear map $R:N\to F$ with $\|R\|_{\cb}\le 1+\varepsilon$ and $Ry=y$ for all $y\in F\cap N$. Note that $z\in M_n(E\otimes (F \cap  N))$ because each entry of the matrix can be seen as an operator from $E'$ to $F$ and also from $E'$ to $N$. Thus, we have $(Id_E\otimes R)_n(z)=z\in M_n(E\otimes R(N))$.
Consequently, 
$$
\alpha^{\rightarrow}_n(z; E, F) \le \alpha_n((Id_E\otimes R)_n(z); E, R(N)) \le (1+\varepsilon) \alpha_n(z; E,N) 
\le (1+\varepsilon)^2 \alpha^{\rightarrow}_n(z; E,\widetilde{F}).
$$

$(c)$ Let $E,F,G\in\ONORM$ and $q:F\twoheadrightarrow G$ a complete metric surjection. By Lemma \ref{Quotient lemma} the completion $\widetilde{q}:\widetilde{F}\twoheadrightarrow \widetilde{G}$ is also a complete metric surjection with $\ker\widetilde{q}=\overline{\ker q}$. We know from $(a)$ and $(b)$ that the following embeddings are complete isometries:
$$
    E\otimes_\alpha F\hookrightarrow E\otimes_\alpha \widetilde{F}\quad \textrm{ and } \quad  E\otimes_\alpha G\hookrightarrow E\otimes_\alpha \widetilde{G},
    $$
    so, $Id_E\otimes \widetilde{q}(E\otimes_\alpha F)=E\otimes_\alpha G$.
Now, our assumption is that $Id_E\otimes \widetilde{q}:E\otimes_\alpha \widetilde{F} \to E\otimes_\alpha \widetilde{G}$ is a complete metric surjection. Thus, applying again Lemma \ref{Quotient lemma}, in order to obtain that $Id_E\otimes q:E\otimes_\alpha F \to E\otimes_\alpha G$ is a complete metric surjection we just need to check that $\ker(Id_E\otimes \widetilde{q})\subset\overline{\ker(Id_E\otimes q)}$ (where the closure is considered inside $E\otimes_\alpha \widetilde{F}$). Indeed, this is true because 
$$\ker(Id_E\otimes\widetilde{q})= E\otimes \ker\widetilde{q}=E\otimes \overline{\ker q} \subset \overline{E\otimes \ker q}=\overline{\ker(Id_E\otimes q)}.$$
\end{proof}

\begin{remark} \label{rmk:right-proj es right fin gen}
Observe that, using the previous lemma,  if $\alpha$ is completely right-projective on $\ONORM$ then it is finitely-generated from the right, i.e., $\alpha=\alpha^{\rightarrow}$.
\end{remark}

\begin{theorem} \label{thm: projective associate}
Let $\alpha$ be an o.s. tensor norm on $\ONORM$. Then there is a unique completely right-projective o.s. tensor norm $\alpha \slash$ on $\ONORM$ such that $\beta \ge \alpha\slash$ for all completely right-projective o.s. tensor norms $\beta \ge \alpha$.

For $E \in \ONORM$ and $F \in \OBAN$, if $Z_0$ is a  completely projective Banach operator space and $q : Z_0 \to F$ is a complete metric surjection, then
$$
E \otimes_{\alpha} Z_0 \to E \otimes_{\alpha \slash} F
$$
is a complete metric surjection.
\end{theorem}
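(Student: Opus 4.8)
The plan is to follow the construction of the completely right-injective hull in Theorem \ref{thm:norma right injective asociada}, but dualized. First I would define, for $E \in \ONORM$ and $F \in \OBAN$ with chosen complete metric surjection $q : Z_F \twoheadrightarrow F$ from the canonical completely projective space $Z_F$ of Section \ref{injections-and-projections}, the operator space structure on $E \otimes F$ induced by declaring the map $id_E \otimes q : E \otimes_\alpha Z_F \to E \otimes F$ to be a complete metric surjection; call the resulting norm $(\alpha\slash)(\,\cdot\,; E, F)$. The first task is to check that this is independent of the particular choice of completely projective space and complete metric surjection onto $F$: given two complete metric surjections $q_j : G_j \twoheadrightarrow F$ with $G_j$ completely projective, the complete projectivity of $G_1$ produces a contractive lift $\widetilde{q}_1 : G_1 \to G_2$ with $q_2 \widetilde{q}_1 = q_1$ (up to $1+\varepsilon$, which is harmless since the infimum defining the quotient norm absorbs it — this is exactly the kind of $\varepsilon$-argument already used in Proposition \ref{surjective hull}), and symmetrically a lift the other way; tensoring with $id_E$ and using that a composition of complete metric surjections is a complete metric surjection shows the two induced structures coincide.

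Next I would verify that $\alpha\slash$ is a reasonable operator space cross-norm, arguing as in Theorem \ref{thm:norma right injective asociada}. From $\alpha \le \proj$ and the fact that $\proj$ is completely right-projective \cite[Prop. 7.1.7]{Effros-Ruan-book}, a diagram chase with the commuting square relating $E \otimes_\proj Z_F \to E \otimes_\alpha Z_F$ and $E \otimes_\proj F \to E \otimes_{\alpha\slash} F$ gives $\alpha\slash \le \proj$; from $\min \le \alpha$ one gets $\min \le \alpha\slash$ using that $id_E \otimes q : E \otimes_\min Z_F \to E \otimes_\min F$ is a complete metric surjection by complete right-projectivity of $\min$, together with the universal property of quotient norms. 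The complete metric mapping property follows as in the injective case: given $S \in \CB(E_1,E_2)$ and $T \in \CB(F_1,F_2)$, lift $T$ through the surjections $Z_{F_1} \twoheadrightarrow F_1$ and $Z_{F_2} \twoheadrightarrow F_2$ using complete projectivity of $Z_{F_1}$ to obtain $\widehat{T} : Z_{F_1} \to Z_{F_2}$ with $\|\widehat T\|_\cb \le (1+\varepsilon)\|T\|_\cb$ making the appropriate square commute, tensor with $S$, and invoke the complete metric mapping property of $\alpha$ on $E_1 \otimes_\alpha Z_{F_1} \to E_2 \otimes_\alpha Z_{F_2}$.

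Then I would prove that $\alpha\slash$ is itself completely right-projective, first on $\ONORM \times \OBAN$ and then extend via Lemma \ref{lema similar df20.20}(c). For a complete metric surjection $p : F \twoheadrightarrow G$ with $F, G \in \OBAN$, I would compose a surjection $Z_F \twoheadrightarrow F$ with $p$ to get a surjection $Z_F \twoheadrightarrow G$; since $\alpha\slash$ can be computed using \emph{any} completely projective space mapping onto $G$ (by the independence established above, applied with $Z_F$ in place of $Z_G$), the map $id_E \otimes p : E \otimes_{\alpha\slash} F \to E \otimes_{\alpha\slash} G$ is exhibited as a composition $E \otimes_\alpha Z_F \twoheadrightarrow E \otimes_{\alpha\slash} F$ followed by the claimed quotient map, and factoring the first arrow appropriately shows $id_E \otimes p$ is a complete metric surjection; the passage to $\ONORM \times \ONORM$ is then Lemma \ref{lema similar df20.20}(c). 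Finally, for the minimality/maximality statement: if $\beta \ge \alpha$ is completely right-projective, then by Remark \ref{rmk:right-proj es right fin gen} it is right-finitely-generated, and the diagram
$$
\xymatrix{
E \otimes_\alpha Z_F \ar@{->>}[d] \ar[r] & E \otimes_\beta Z_F \ar@{->>}[d]\\
E \otimes_{\alpha\slash} F \ar@{.>}[r] & E \otimes_\beta F
}
$$
(where the right vertical arrow is a complete metric surjection by complete right-projectivity of $\beta$, and the top horizontal arrow is a complete contraction since $\alpha \le \beta$) shows $\beta \le \alpha\slash$, i.e. $\alpha\slash \ge \beta$ — wait, the direction: since $\alpha \le \beta$ means more ``flat'' norm, the diagram gives that the identity $E \otimes_{\alpha\slash} F \to E \otimes_\beta F$ is a complete contraction, i.e. $\beta \le \alpha\slash$, which combined with $\beta \ge \alpha$ and the fact that $\alpha\slash$ is the smallest completely right-projective norm $\ge \alpha$... actually $\alpha\slash$ should be characterized as the \emph{smallest} completely right-projective norm dominating $\alpha$ in the sense that $\alpha \le \alpha\slash$ and any completely right-projective $\beta$ with $\alpha \le \beta$ satisfies $\alpha\slash \le \beta$; the diagram with $\beta$ in the bottom row and noting $id : E \otimes_{\alpha\slash} F \to E \otimes_\beta F$ factors through the quotients delivers this. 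The main obstacle I anticipate is precisely keeping the direction of inequalities and the universal property straight — unlike the injective hull where one embeds into $B(H)$, here the quotient presentation forces one to track which arrow is a complete metric surjection and which is merely a complete contraction, and the $(1+\varepsilon)$ slack in lifting maps through completely projective spaces must be shown not to affect the final isometric (not just isomorphic) statements, exactly as handled in the proof of Proposition \ref{surjective hull}.
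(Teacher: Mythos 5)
Your construction and most of your verifications coincide with the paper's own proof: you define $\alpha\slash$ for $F\in\OBAN$ through the quotient $id_E\otimes q_F\colon E\otimes_\alpha Z_F\to E\otimes F$, prove independence of the presentation by $(1+\varepsilon)$-lifts between completely projective spaces, use the same diagrams for $\alpha\slash\le\proj$, the same lifted-map argument for the complete metric mapping property, the same composition-of-quotients computation for right-projectivity on $\ONORM\times\OBAN$, and the same appeal to Lemma \ref{lema similar df20.20} to pass to $\ONORM$. Two steps, however, are genuinely flawed as written. First, $\min$ is \emph{not} completely right-projective (the paper itself shows, e.g. via Lemma \ref{lemma:Exactly_integral}, that $\min$ and $\min/$ are not equivalent), so your justification of $\min\le\alpha\slash$ rests on a false statement. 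The repair is easy and is what the paper does: one only needs that $id_E\otimes q_F\colon E\otimes_{\min}Z_F\to E\otimes_{\min}F$ is a complete contraction (metric mapping property of $\min$), so that the composite $E\otimes_\alpha Z_F\to E\otimes_{\min}Z_F\to E\otimes_{\min}F$ is completely contractive and factors through the quotient defining $\alpha\slash$ by \cite[Prop. 2.4.1]{Pisier-Operator-Space-Theory}.

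Second, your minimality argument never gets the inequalities the right way around. With the paper's convention, $\alpha\le\beta$ means the identity $E\otimes_\beta Z_F\to E\otimes_\alpha Z_F$ is a complete contraction (not the arrow you drew), and what must be shown is $\alpha\slash\le\beta$, i.e.\ that $E\otimes_\beta F\to E\otimes_{\alpha\slash}F$ is completely contractive; both your diagram and your closing sentence instead assert contractivity of $E\otimes_{\alpha\slash}F\to E\otimes_\beta F$, which is the reverse inequality $\beta\le\alpha\slash$ and is false in general (take $\alpha=\min$, $\beta=\proj$). The correct argument, for $F\in\OBAN$ (or $\OFIN$): since $\beta$ is completely right-projective, $id_E\otimes q_F\colon E\otimes_\beta Z_F\twoheadrightarrow E\otimes_\beta F$ is a complete quotient; the complete contraction $E\otimes_\beta Z_F\to E\otimes_\alpha Z_F\twoheadrightarrow E\otimes_{\alpha\slash}F$ is the identity composed with that quotient, so \cite[Prop. 2.4.1]{Pisier-Operator-Space-Theory} gives that $E\otimes_\beta F\to E\otimes_{\alpha\slash}F$ is a complete contraction. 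You also cite Remark \ref{rmk:right-proj es right fin gen} but never use it: the passage to arbitrary normed $F$ still has to be made, as in the paper, via $\alpha\slash=(\alpha\slash)^{\rightarrow}\le\beta^{\rightarrow}=\beta$ using Lemma \ref{lema similar df20.20}(a). Finally, record explicitly that $\alpha\le\alpha\slash$ (immediate from the metric mapping property applied to lifts, then extended by right-finite generation); without it the ``smallest completely right-projective norm dominating $\alpha$'' characterization, and hence the uniqueness claim, is not pinned down.
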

We recall that, given a complete operator space $E$, the completely projective space $Z_E$ introduced in Section~\ref{injections-and-projections} is locally reflexive (as mentioned at the end of Section \ref{locally reflexive}).
With this property at hand we can now prove Theorem \ref{thm: projective associate}.

\begin{proof}[Proof of Theorem \ref{thm: projective associate}]
Let $F \in \OBAN$. Suppose that we have two complete quotients $q_j : Z_j \twoheadrightarrow F$, $j=1,2$ where the spaces $Z_j$ are completely projective operator spaces, and suppose that we have operator space structures  $\alpha_j$ on $E \otimes F$ making
$$
id_E \otimes q_j : E \otimes_\alpha Z_j \to E \otimes_{\alpha_j} F
$$
into complete quotients.
Fix $\varepsilon>0$. By the projectivity of $Z_i$, there exist operators $L_1 : Z_1 \to Z_2$ and $L_2 : Z_2 \to Z_1$ such that $\n{L_i}_{\cb} \le 1+\varepsilon$ satisfying $q_2 \circ L_1 = q_1$ and $q_1 \circ L_2 = q_2$.
Observe that by the metric mapping property of $\alpha$,
$id_E \otimes L_1 : E \otimes_\alpha Z_1 \to E \otimes_\alpha Z_2$ and
$id_E \otimes L_2 : E \otimes_\alpha Z_2 \to E \otimes_\alpha Z_1$ have $\cb$-norm at most $1+\varepsilon$.
Since $id_E \otimes q_j : E \otimes_\alpha Z_j \to E \otimes_{\alpha_j} F$ is a complete quotient, it follows from \cite[Prop. 2.4.1]{Pisier-Operator-Space-Theory} that the identity mappings $E \otimes_{\alpha_1} F \to E \otimes_{\alpha_2} F$ and $E \otimes_{\alpha_2} F \to E \otimes_{\alpha_1} F$ have $\cb$-norm at most $1+\varepsilon$. Letting $\varepsilon\to 0$, we conclude that
$E \otimes_{\alpha_1} F$ and  $E \otimes_{\alpha_2} F$ are canonically completely isometric.
Therefore, there is no ambiguity if we define $\alpha\slash$ on $E \otimes F$ to be the operator space structure on $E \otimes F$ induced by the particular quotient
$$
E \otimes_\alpha Z_F \twoheadrightarrow E \otimes_{\alpha/} F
$$
where $Z_F$ is the projective space introduced in Section~\ref{injections-and-projections}.

From the projectivity of $\proj$, the fact that $\alpha \le \proj$ and the diagram
$$
	\xymatrix{
	E \otimes_{\proj} Z_F \ar[d] \ar@{->>}[r]  & E \otimes_{\proj} F \ar@{.>}[d]  \\
	E \otimes_{\alpha} Z_F \ar@{->>}[r]  & E \otimes_{\alpha\slash} F
	}
$$
together with \cite[Prop. 2.4.1]{Pisier-Operator-Space-Theory} it follows that $\alpha\slash \le \proj$.

Similarly, from the inequality $\min \le \alpha$ and the diagram
$$
	\xymatrix{
	E \otimes_{\alpha} Z_F \ar@{->>}[r] \ar[d]  & E \otimes_{\alpha\slash} F \ar@{.>}[d]  \\
	E \otimes_{\min} Z_F  \ar[r]  & E \otimes_{\min} F
	}
$$
it follows that $\min \le \alpha\slash$.
Therefore, $\alpha\slash$ is a reasonable operator space cross-norm.

Let us now show that that $\alpha\slash$ has the complete metric mapping property.
Let $S \in \CB(E_1, E_2)$ and $T \in \CB(F_1,F_2)$ and consider complete quotients $Z_j \twoheadrightarrow F_j$ with the spaces $Z_j$ being completely projective.
Therefore, given $\varepsilon>0$ there exists a completely bounded map $\widehat{T} : Z_1 \to Z_2$ such that $\Vert \widehat{T} \Vert_{\cb} \le (1+\varepsilon) \n{T}_{\cb}$ and such that the  diagram 
$$
	\xymatrix{
	Z_1 \ar@{->>}[d] \ar[r]^{\widehat{T}}  & Z_2 \ar@{->>}[d] \\
	F_1   \ar[r]^{T}  & F_2
	}
$$
commutes.
After tensorizing, we obtain
$$
	\xymatrix{
	E_1 \otimes_{\alpha}Z_1 \ar@{->>}[d] \ar[r]^{S \otimes \widehat{T}}  &E_2 \otimes_{\alpha} Z_2 \ar@{->>}[d] \\
	E_1 \otimes_{\alpha}F_1   \ar[r]^{S\otimes T}  & E_2 \otimes_{\alpha} F_2
	}
$$
from where we clearly see, appealing to the complete metric mapping property of $\alpha$, that
$$
\n{ S\otimes T : E_1 \otimes_{\alpha\slash} F_1 \to E_2 \otimes_{\alpha\slash} F_2  }_{\cb} \le (1+\varepsilon) \n{S}_{\cb} \n{T}_{\cb}.
$$
Letting $\varepsilon\to0$ yields the complete metric mapping property of $\alpha\slash$.

Now, let $G \in \OBAN$ and suppose that $q : F \twoheadrightarrow G$ is a complete quotient.
If $q_F : Z_F \twoheadrightarrow F$ is the standard complete quotient,  observe that the composition
$q \circ q_F : Z_F \twoheadrightarrow G$ is also a complete quotient.
For $z \in M_n(E \otimes G)$, by what was shown at the beginning of the proof we have
\begin{align*}
\alpha\slash_n(z ; E, G) &= \inf\big\{ \alpha_n(w; E , Z_F) \,:\, (q \circ q_F)_nw = z \big\} \\
&= \inf\big\{ \alpha_n(w; E , Z_F) \,:\, (q_F)_nw = u, q_nu = z \big\} \\
&= \inf\big\{ \inf\{ \alpha_n(w; E , Z_F) \,:\, (q_F)_nw = u \} \,:\, q_nu=z \big\} \\
&= \inf \big\{ \alpha\slash_n(u;E,F) \,:\,  q_nu=z  \big\}.
\end{align*}
That is, $id_E \otimes q : E \otimes_{\alpha\slash} F \to E \otimes_{\alpha\slash} G$ is a complete quotient and therefore $\alpha\slash$ is completely right-projective in $\ONORM \times \OBAN$.

According to Lemma \ref{lema similar df20.20} $(a)$ we know that the norm $ \alpha \slash$ is finitely-generated from the right in $\ONORM \times \OBAN$. Thus, by Lemma \ref{lema similar df20.20} $(b)$ and $(c)$ this norm can be extended to $\ONORM \times \ONORM$ and this extension is completely right-projective on $\ONORM$.
Moreover, by definition $\alpha \leq \alpha \slash$ on $\ONORM \times \OFIN$, which implies that $$ \alpha \leq \alpha^{\rightarrow} \leq (\alpha \slash)^{\rightarrow} = \alpha \slash.$$

Finally, let $\beta$ be any completely right-projective norm that dominates $\alpha$. Then, $\alpha \slash \leq \beta$ in $\ONORM\times \OFIN$ by definition.  Remark \ref{rmk:right-proj es right fin gen} says that $\beta=\beta^{\rightarrow}$ on $\ONORM$ then, by Lemma \ref{lema similar df20.20} $(a)$, we have
$$ \alpha  \slash = (\alpha \slash)^{\rightarrow} \leq  \beta^{\rightarrow} = \beta.$$
\end{proof}

\begin{lemma} \label{finitamente generada la capsula proyectiva}
If $\alpha$ be a finitely-generated o.s. tensor norm in $\ONORM$, then $\alpha/$ is also finitely-generated.
\end{lemma}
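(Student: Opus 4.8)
The plan is to show $\overrightarrow{\alpha/}=\alpha/$. Since $\alpha/$ is completely right-projective, Remark~\ref{rmk:right-proj es right fin gen} already gives $(\alpha/)^{\rightarrow}=\alpha/$, so only the ``left'' half is missing. Write ${}^{\leftarrow}\beta$ for the left-finite hull, the obvious analogue of Definition~\ref{def:right-finite hull}, namely
$${}^{\leftarrow}\beta_n(u;E,F)=\inf\{\beta_n(u;E_0,F)\colon E_0\in\OFIN(E),\ u\in M_n(E_0\otimes F)\};$$
unwinding definitions, $\overrightarrow{\beta}={}^{\leftarrow}(\beta^{\rightarrow})$. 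Hence it suffices to prove ${}^{\leftarrow}(\alpha/)=\alpha/$, and since ${}^{\leftarrow}\gamma\ge\gamma$ always holds, only ${}^{\leftarrow}(\alpha/)\le\alpha/$ needs proof. A useful observation is that the hypothesis on $\alpha$ also yields $\alpha={}^{\leftarrow}\alpha$, because $\alpha\le{}^{\leftarrow}\alpha\le\overrightarrow{\alpha}=\alpha$.

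By Remark~\ref{rmk:right-proj es right fin gen} we have $\alpha/=(\alpha/)^{\rightarrow}$, so Lemma~\ref{lema similar df20.20}(b) applies to $\alpha/$: for all $E_0,E\in\ONORM$ the canonical embeddings $E\otimes_{\alpha/}F\hookrightarrow E\otimes_{\alpha/}\widetilde{F}$ and $E_0\otimes_{\alpha/}F\hookrightarrow E_0\otimes_{\alpha/}\widetilde{F}$ are complete isometries. Therefore, to prove ${}^{\leftarrow}(\alpha/)\le\alpha/$ we may fix $z\in M_n(E\otimes F)$ and $\varepsilon>0$, compute everything in $\widetilde F$ (replacing $F$ by $\widetilde F$), and afterwards note that a finite-dimensional $E_0\in\OFIN(E)$ produced inside $\widetilde F$ automatically satisfies $z\in M_n(E_0\otimes F)$: the entries of $z$ lie in $E\otimes F$, hence, once $z$ is realized in $M_n(E_0\otimes\widetilde F)$, they lie in $(E_0\otimes\widetilde F)\cap(E\otimes F)=E_0\otimes F$ inside $E\otimes\widetilde F$. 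So assume $F\in\OBAN$, and let $q_F:Z_F\twoheadrightarrow F$ be the standard complete metric surjection from the completely projective space $Z_F$ of Section~\ref{injections-and-projections}. By Theorem~\ref{thm: projective associate}, $id_E\otimes q_F:E\otimes_\alpha Z_F\to E\otimes_{\alpha/}F$ is a complete metric surjection, so there is $w\in M_n(E\otimes Z_F)$ with $(id_E\otimes q_F)_n(w)=z$ and $\alpha_n(w;E,Z_F)\le(1+\varepsilon)(\alpha/)_n(z;E,F)$. Since $\alpha={}^{\leftarrow}\alpha$, there is $E_0\in\OFIN(E)$ with $w\in M_n(E_0\otimes Z_F)$ and $\alpha_n(w;E_0,Z_F)\le(1+\varepsilon)\alpha_n(w;E,Z_F)$. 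Applying Theorem~\ref{thm: projective associate} again, this time with $E_0$ in place of $E$, the map $id_{E_0}\otimes q_F:E_0\otimes_\alpha Z_F\to E_0\otimes_{\alpha/}F$ is still a complete metric surjection and $(id_{E_0}\otimes q_F)_n(w)=z\in M_n(E_0\otimes F)$, hence
$$(\alpha/)_n(z;E_0,F)\ \le\ \alpha_n(w;E_0,Z_F)\ \le\ (1+\varepsilon)^2\,(\alpha/)_n(z;E,F).$$
Letting $\varepsilon\to0$ gives ${}^{\leftarrow}(\alpha/)\le\alpha/$, hence ${}^{\leftarrow}(\alpha/)=\alpha/$; together with $(\alpha/)^{\rightarrow}=\alpha/$ this yields $\overrightarrow{\alpha/}=\alpha/$.

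The conceptual core is short: $\alpha/$ inherits left-finite generation from $\alpha$ because lifting $z$ to $Z_F$ via Theorem~\ref{thm: projective associate}, shrinking only the left tensor factor (which is allowed exactly because $\alpha$, being finitely generated, is left-finitely generated), and pushing back down through $q_F$ costs only the two factors $(1+\varepsilon)$. The one genuinely delicate point is the bookkeeping with completions, since Theorem~\ref{thm: projective associate} demands a Banach operator space on the right: one must pass to $\widetilde F$ and then verify that the finite-dimensional left piece $E_0$ obtained there still satisfies $z\in M_n(E_0\otimes F)$ and not merely $z\in M_n(E_0\otimes\widetilde F)$. In pleasant contrast with the ``finite-dimensional pieces of a quotient'' phenomenon discussed earlier in the paper, no perturbation of $w$ into a finite $\ell_1$-sum is needed here, precisely because Theorem~\ref{thm: projective associate} lets us keep the (infinite-dimensional but completely projective) space $Z_F$ fixed while only the left variable is cut down.
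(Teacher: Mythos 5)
Your proof is correct, and its engine is the same as the paper's: lift through a completely projective space via Theorem~\ref{thm: projective associate}, shrink the left factor using the fact that a finitely-generated $\alpha$ is in particular left-finitely generated, and push back down through the quotient; the needed right-finite generation of $\alpha/$ comes from Remark~\ref{rmk:right-proj es right fin gen} / Lemma~\ref{lema similar df20.20} in both arguments. The difference is purely in the order of reductions. The paper first uses right-finite generation of $\alpha/$ to replace $F$ by a finite-dimensional $N\in\OFIN(F)$; since $N$ is automatically complete, Theorem~\ref{thm: projective associate} applies directly with $Z_N$, so no completion bookkeeping is needed (at the cost of a harmless $(1+\varepsilon)^3$). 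You instead split the finite hull into the composition of the right- and left-finite hulls, pass to $\widetilde F$ via Lemma~\ref{lema similar df20.20}(b), lift over $Z_{\widetilde F}$, and prove the slightly stronger intermediate statement that $\alpha/$ equals its left-finite hull; the algebraic intersection fact $(E_0\otimes\widetilde F)\cap(E\otimes F)=E_0\otimes F$ inside $E\otimes\widetilde F$ that you invoke is indeed valid, so the detour through the completion is sound. In short: your route is a correct, mildly more elaborate variant whose payoff is the explicit left-finite-generation of $\alpha/$, while the paper's ordering (shrink $F$ first) is the shorter path to the lemma as stated.
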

\begin{proof}
Let $z \in M_n(E \otimes F)$. By Lemma~\ref{lema similar df20.20} we know that $\alpha /$ is finitely-generated on the right. Then, for a given $\varepsilon>0$ there is a subspace $N \in \OFIN(F)$ satisfying
$$ (\alpha /)_n(z; E, N) \leq (1+\varepsilon) (\alpha /)_n(z; E, F).$$
Since $N$ is finite dimensional it is complete. Hence, there is a completely projective Banach operator space $Z_N$ such that 
$$id_E \otimes q_N: E\otimes_{\alpha}Z_N \twoheadrightarrow E\otimes_{\alpha /}N$$ is a complete quotient.
Then there is an element $w \in M_n(E \otimes Z_N)$ with $(id_E \otimes q_N)_n (w) = z$
and
$$ \alpha_n(w; E, Z_N) \leq (1+\varepsilon) (\alpha /)_n (z;E, N).$$
Now, due to $\alpha$ being finitely-generated there exists $M \in \OFIN(E)$ such that 
$$ \alpha_n(w; M, Z_N) \leq (1+\varepsilon) \alpha_n(w; E, Z_N).$$
Note that $id_M \otimes q_N: E\otimes_{\alpha}Z_N \to E\otimes_{\alpha /}N$ has norm one and $(id_M \otimes q_N)_n (w) = z$, therefore
$$( \alpha /)_n(z;M, N) \leq \alpha_n (w;M, Z_N) \leq (1+\varepsilon)^3 \alpha_n(z;E, F).$$

\end{proof}

The following lemma will be useful for our purposes.
\begin{lemma} \label{lem: finit a la derecha}
Let $\alpha$ be an o.s. tensor norm finitely-generated from the right then $\backslash \alpha$ is finitely-generated.
\end{lemma}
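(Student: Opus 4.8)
The plan is to prove the lemma by mirroring the proof of Lemma~\ref{finitamente generada la capsula proyectiva}, weakening its finite-generation hypothesis to $\alpha=\alpha^{\rightarrow}$ precisely at the single place where it is really used. Here $\backslash\alpha$ denotes the completely left-projective hull of $\alpha$. Since Proposition~\ref{prop-order-of-hulls} always gives $\backslash\alpha\le\overrightarrow{\backslash\alpha}$, it will suffice to establish $\overrightarrow{\backslash\alpha}\le\backslash\alpha$. I would use two facts about $\backslash\alpha$, both obtained from the left-handed versions of results already proved for the right-projective hull: first, being completely left-projective on $\ONORM$, the norm $\backslash\alpha$ is finitely generated from the left (the left analogue of Remark~\ref{rmk:right-proj es right fin gen}, via Lemma~\ref{lema similar df20.20}); second, for every finite-dimensional operator space $M$, with $q_M:Z_M\twoheadrightarrow M$ the canonical complete metric surjection from the completely projective space $Z_M$, the map $q_M\otimes id_F:Z_M\otimes_\alpha F\to M\otimes_{\backslash\alpha}F$ is a complete metric surjection for every $F\in\ONORM$ (the left analogue of Theorem~\ref{thm: projective associate}).

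Then I would argue as follows. Fix $n\in\N$, $z\in M_n(E\otimes F)$ and $\varepsilon>0$. Using that $\backslash\alpha$ is finitely generated from the left, choose $M\in\OFIN(E)$ with $z\in M_n(M\otimes F)$ and $(\backslash\alpha)_n(z;M,F)\le(1+\varepsilon)(\backslash\alpha)_n(z;E,F)$. By the complete metric surjection $q_M\otimes id_F$ above, pick $w\in M_n(Z_M\otimes F)$ with $(q_M\otimes id_F)_n(w)=z$ and $\alpha_n(w;Z_M,F)\le(1+\varepsilon)(\backslash\alpha)_n(z;M,F)$. Now the hypothesis enters: since $\alpha=\alpha^{\rightarrow}$, there exists $N\in\OFIN(F)$ with $w\in M_n(Z_M\otimes N)$ and $\alpha_n(w;Z_M,N)\le(1+\varepsilon)\alpha_n(w;Z_M,F)$. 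Then $z=(q_M\otimes id_N)_n(w)\in M_n(M\otimes N)$, and since $q_M\otimes id_N:Z_M\otimes_\alpha N\to M\otimes_{\backslash\alpha}N$ has $\cb$-norm at most one, we obtain
$$
(\backslash\alpha)_n(z;M,N)\le\alpha_n(w;Z_M,N)\le(1+\varepsilon)^3(\backslash\alpha)_n(z;E,F).
$$
As $M\in\OFIN(E)$, $N\in\OFIN(F)$ and $z\in M_n(M\otimes N)$, this yields $\overrightarrow{\backslash\alpha}_n(z;E,F)\le(1+\varepsilon)^3(\backslash\alpha)_n(z;E,F)$; letting $\varepsilon\to0$ gives $\overrightarrow{\backslash\alpha}\le\backslash\alpha$, so $\backslash\alpha=\overrightarrow{\backslash\alpha}$ is finitely generated.

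There is no serious obstacle here; the proof is the mirror image of Lemma~\ref{finitamente generada la capsula proyectiva}. The only point worth emphasizing is the bookkeeping observation that, in that proof, the full finite-generation of $\alpha$ serves exclusively to replace one tensor factor by a finite-dimensional subspace while keeping the other factor --- the completely projective space --- fixed; in the left-projective mirror the factor that must be shrunk is the \emph{right} one, so the weaker assumption $\alpha=\alpha^{\rightarrow}$ is exactly what is needed, the left factor being taken care of automatically by the fact that $\backslash\alpha$, as a completely left-projective norm, is already finitely generated from the left.
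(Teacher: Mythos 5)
Your argument is correct and is essentially identical to the paper's proof: both use the left finite generation of $\backslash\alpha$ (the left analogue of Remark~\ref{rmk:right-proj es right fin gen}), then lift $z$ through the complete metric surjection $q_M\otimes id_F:Z_M\otimes_\alpha F\twoheadrightarrow M\otimes_{\backslash\alpha}F$ given by the left version of Theorem~\ref{thm: projective associate}, invoke $\alpha=\alpha^{\rightarrow}$ to shrink the right factor to some $N\in\OFIN(F)$, and push back down via $q_M\otimes id_N$. Nothing to add.
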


\begin{proof}
Let $E,F\in\ONORM$ and $z\in M_n(E\otimes F)$. By the left version of Remark~\ref{rmk:right-proj es right fin gen} we know that $\backslash \alpha$ is finitely-generated from the left. Then, given $\varepsilon>0$ there exists $M\in\OFIN(E)$ such that $z\in M_n(M\otimes F)$ and
$$
(\backslash\alpha)_n(z; M,F)\le (1+\varepsilon)(\backslash\alpha)_n(z; E,F).
$$
 Applying the left version of Theorem~\ref{thm: projective associate} (and using that  $M\in\OFIN\subset\OBAN$) we have that there exists $u\in M_n(Z_{M}\otimes F)$ satisfying $(q_{M}\otimes id_F)_n(u)=z$ and 
$$
\alpha_n(u; Z_{M},F)\le (1+\varepsilon)(\backslash\alpha)_n(z; M,F).
$$
Since $\alpha$ is finitely-generated from the right, there is a space $N\in\OFIN(F)$ such that $u\in M_n(Z_{M}\otimes N)$ and
$$
\alpha_n(u; Z_{M},N)\le (1+\varepsilon)\alpha_n(u; Z_{M},F).
$$
The mapping
$$
q_{M}\otimes id_{N}:Z_{M}\otimes_\alpha N\twoheadrightarrow M\otimes_{\backslash\alpha}N
$$ satisfies $(q_{M}\otimes id_{N})_n(u)=z$. This implies that $z\in M_n(M\otimes N)$ and 
$$
(\backslash\alpha)_n(z; M,N)\le \alpha_n(u; Z_{M},N),
$$ which completes the proof.
\end{proof}

\begin{remark}
All the previous results about completely right-injective and completely right-projective o.s. tensor norms have left versions with analogous proofs. Specifically, there are valid left statements of Remark~\ref{rmk: dualidad entre injectividad y proyectividad},  Proposition~\ref{prop: injectividad en dim finita}, Corollary~\ref{cor: proy implica dual iny},  Theorem~\ref{thm:norma right injective asociada}, Lemma~\ref{finitamente generada la capsula inyectiva}. Also, after defining the left-finite hull of an o.s. tensor norm, there are  left adaptations of Lemma~\ref{lema similar df20.20}, Remark~\ref{rmk:right-proj es right fin gen}, Theorem~\ref{thm: projective associate},  Lemma~\ref{finitamente generada la capsula proyectiva} and Lemma~\ref{lem: finit a la derecha}.
\end{remark}

\begin{definition}\label{proyectiva conmutativa}
The o.s. tensor norm  $\alpha \slash$ given in Theorem \ref{thm: projective associate} is called the \emph{completely right-projective hull} of $\alpha$.
One can define analogously the \emph{completely left-projective hull} of $\alpha$, denoted $\backslash \alpha$, and the \emph{completely projective hull}
$$
\backslash \alpha \slash := (\backslash \alpha) \slash = \backslash (\alpha \slash).
$$

\end{definition}

Note that there is no ambiguity in the previous equality since  $\backslash (\alpha \slash)$ and $(\backslash \alpha) \slash$ are equal. Indeed, let $E,F \in OBAN$ and $Z_E$ and $Z_F$ completely projective operator spaces such that 
$$ q_E: Z_E \twoheadrightarrow E\quad\textrm{and}\quad q_F: Z_E \twoheadrightarrow F$$ are complete quotients then we known that both norms  $(\backslash \alpha) \slash$ and  $\backslash (\alpha \slash)$ coincide on $E\otimes F$ and can be computed through the complete quotient
$$ q_E \otimes q_F: Z_E \otimes_{\alpha} Z_F \twoheadrightarrow E \otimes_{\backslash \alpha \slash} F.$$

Using Remark \ref{rmk:right-proj es right fin gen} we know that $\alpha \slash$ is finitely-generated from the right and therefore, by Lemma \ref{lem: finit a la derecha}, the o.s. tensor norm $\backslash (\alpha \slash)$ is finitely-generated. The same argument can be used to prove that $(\backslash \alpha) \slash$ is finitely-generated. Since both norms $\backslash (\alpha \slash)$ and $(\backslash \alpha) \slash$ coincide on $\OBAN$ and they are finitely-generated, they must be equal on $\ONORM$.

\bigskip

The operator space $S_1(H)$ is not completely projective \cite{Blecher-standard}, but with respect to completely right- (or left-) projective hulls of a finitely generated o.s. tensor norm it behaves as if it was. The reason behind this is the fact that
$S_1(H)$ is an $\mathscr{OS}_{1,1+}$ space (see Section \ref{usual notation}):
each finite-dimensional subspace of $S_1(H)$ is contained in a larger finite-dimensional subspace which is ``almost a copy'' of $S_1^k$ for some $k$:

\begin{proposition} \cite[Prop. 5.3]{Effros-Ruan-Grothendieck-Pietsch}\label{subespacios-S1}
Given $M\in\OFIN(S_1(H))$ and $\varepsilon>0$ there exist a subspace $\widetilde{M}\in\OFIN(S_1(H))$, a number  $k\in\mathbb N$ and a complete isomorphism $T:\widetilde{M}\to S_1^k$ such that $\widetilde{M}\supset M$, $\|T\|_{cb}\le 1+\varepsilon$ and $\|T^{-1}\|_{cb}\le 1+\varepsilon$.
\end{proposition}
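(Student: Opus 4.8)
The plan is a double perturbation argument in the spirit of Lemma \ref{lem: perturbation step} and \cite[Lemma 2.13.2]{Pisier-Operator-Space-Theory}, exploiting that finite-rank operators are dense in $S_1(H)$ and that compressing by a rank-$k$ orthogonal projection produces a complete isometric copy of $S_1^k$. Throughout I fix a basis $x_1,\dots,x_m$ of $M$ with biorthogonal functionals $x_1^*,\dots,x_m^*\in M'$, and set $C=\sum_j\|x_j^*\|_{M'}$ and $C'=\sum_j\|x_j^*\|_{M'}\|x_j\|_{S_1}$; these depend only on $M$. The only estimate needed is the elementary bound $\|\varphi\otimes y\colon N\to E\|_{\cb}\le\|\varphi\|_{N'}\|y\|_E$ for the rank-one map $x\mapsto\varphi(x)y$ (it factors through $\C$, where a functional has cb-norm equal to its norm), hence $\bign{\sum_j\varphi_j\otimes y_j}_{\cb}\le\sum_j\|\varphi_j\|\,\|y_j\|$.

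\emph{Compression to a finite corner.} Given $\eta>0$, density of finite-rank operators lets us pick a rank-$k$ orthogonal projection $P$ on $H$ with $\|x_j-Px_jP\|_{S_1}<\eta$ for all $j$. Let $\Phi_P\colon S_1(H)\to S_1(H)$ be the complete contraction $x\mapsto PxP$, put $y_j=\Phi_P(x_j)$, $M_0=\spa\{y_j\}$, $G_0=PS_1(H)P$ and $\sigma=\Phi_P|_M$. From $\sigma-\iota_M=\sum_j x_j^*\otimes(y_j-x_j)$, where $\iota_M\colon M\hookrightarrow S_1(H)$ is the natural complete isometry, one gets $\|\sigma-\iota_M\|_{\cb}\le C\eta$; for $\eta$ small this makes $\sigma$ a complete isomorphism onto $M_0$ with $\|\sigma\|_{\cb}\le 1+C\eta$ and $\|\sigma^{-1}\|_{\cb}\le(1-C\eta)^{-1}$. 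Also $M_0\subseteq G_0$, and the standard identification $G_0=S_1(PH)$ (preadjoint of the corner quotient $\mathcal B(H)\twoheadrightarrow M_k$) furnishes a complete isometry $V\colon G_0\to S_1^k$.

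\emph{Perturbing the corner back over $M$.} From $\sigma^{-1}-\iota_{M_0}=-(\sigma-\iota_M)\circ\sigma^{-1}$ we get $\|\sigma^{-1}-\iota_{M_0}\|_{\cb}\le\delta(\eta):=C\eta/(1-C\eta)$. Extend $y_j^*:=x_j^*\circ\sigma^{-1}\in M_0'$ (which satisfy $y_i^*(y_j)=\delta_{ij}$) by Hahn--Banach to $\widehat y_j^*\in G_0'$ with $\|\widehat y_j^*\|_{G_0'}=\|y_j^*\|_{M_0'}\le\|x_j^*\|_{M'}(1-C\eta)^{-1}$, and set $Q=\sum_j\widehat y_j^*\otimes y_j\colon G_0\to G_0$. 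Then $Q$ is a projection onto $M_0$, and $\|Q\|_{\cb}\le\sum_j\|\widehat y_j^*\|_{G_0'}\|y_j\|_{S_1}\le(1-C\eta)^{-1}C'\le 2C'$ once $C\eta\le\tfrac12$ --- the crucial point being that this bound does not depend on $P$. Define $\widetilde u:=\sigma^{-1}\circ Q+\iota_{G_0}\circ(\mathrm{id}_{G_0}-Q)\colon G_0\to S_1(H)$; then $\widetilde u|_{M_0}=\sigma^{-1}$ and $\widetilde u-\iota_{G_0}=(\sigma^{-1}-\iota_{M_0})\circ Q$, so $\|\widetilde u-\iota_{G_0}\|_{\cb}\le 2C'\delta(\eta)=:\rho(\eta)$.

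\emph{Conclusion.} Choose $\eta$ (hence $P$ and $k$) small enough that $\rho(\eta)\le\varepsilon/(1+\varepsilon)$; this is possible since $C,C'$ are fixed and $\delta(\eta)\to0$. As $\iota_{G_0}$ is a complete isometry and $\|\widetilde u-\iota_{G_0}\|_{\cb}<1$, the map $\widetilde u$ is a complete isomorphism onto $\widetilde M:=\widetilde u(G_0)\in\OFIN(S_1(H))$ with $\|\widetilde u\|_{\cb}\le 1+\rho(\eta)$ and $\|\widetilde u^{-1}\|_{\cb}\le(1-\rho(\eta))^{-1}\le 1+\varepsilon$, and $\widetilde M\supseteq\widetilde u(M_0)=\sigma^{-1}(M_0)=M$. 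Taking $T:=V\circ\widetilde u^{-1}\colon\widetilde M\to S_1^k$ yields $\|T\|_{\cb}\le 1+\varepsilon$ and $\|T^{-1}\|_{\cb}\le 1+\varepsilon$, as required. The one delicate point --- and the main obstacle to a naive approach --- is precisely the order of quantifiers in the two perturbations: the projection $Q$ onto $M_0$ must be built from the transported good basis $y_j=\Phi_P(x_j)$ rather than from an arbitrary basis of $G_0\cong S_1^k$ (whose biorthogonal system has norm growing with $k$), so that $\|Q\|_{\cb}$ stays controlled by data attached only to $M$; once this is arranged, letting $\eta\to0$ closes the argument.
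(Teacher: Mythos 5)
Your argument is correct, and it fills a gap the paper deliberately leaves open: the manuscript states this proposition with only the citation \cite[Prop.~5.3]{Effros-Ruan-Grothendieck-Pietsch} and gives no proof, so there is no internal argument to compare against. What you supply is a self-contained double-perturbation proof in the spirit of Lemma~\ref{lem: perturbation step} and \cite[Sec.~2.13]{Pisier-Operator-Space-Theory}: compress the chosen basis of $M$ into a corner $G_0=PS_1(H)P$, identify $G_0$ completely isometrically with $S_1^k$, and then perturb the inclusion of $G_0$ back so that the image absorbs $M$, the crucial point being exactly the one you flag — the projection $Q$ onto $M_0$ is built from the transported biorthogonal system of $M$, so $\|Q\|_{\cb}$ is controlled by constants attached to $M$ alone and not by $k$. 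I checked the estimates: the rank-one bound $\|\varphi\otimes y\|_{\cb}\le\|\varphi\|\,\|y\|$, the lower bound making $\sigma$ a complete isomorphism onto $M_0$ with $\|\sigma^{-1}\|_{\cb}\le(1-C\eta)^{-1}$, the identities $\sigma^{-1}-\iota_{M_0}=-(\sigma-\iota_M)\circ\sigma^{-1}$ and $\widetilde u-\iota_{G_0}=(\sigma^{-1}-\iota_{M_0})\circ Q$, and the final Neumann-type perturbation all hold as stated, and the quantifier order is sound. Two steps deserve one explicit line each rather than an appeal to standard facts: (i) the complete isometry $PS_1(H)P=S_1(PH)=S_1^k$ inside $S_1(H)$ — justify it, e.g., by noting that the compression $\mathcal B(H)\to M_k$, $a\mapsto PaP$, is a complete quotient map (it has the corner embedding as a completely isometric section), or directly by computing matrix norms in $M_n(S_1(H))\subset M_n(\mathcal B(H)')$ through the pairing with $M_m(\mathcal B(H))$, which only sees $PaP$; and (ii) the existence of the finite-rank projection $P$ with $\|x_j-Px_jP\|_{S_1}<\eta$ for all $j$ — take finite-rank $F_j$ with $\|x_j-F_j\|_{S_1}<\eta/2$ and let $P$ project onto the span of the ranges of the $F_j$ and $F_j^*$, so that $PF_jP=F_j$. (Also, $\|\sigma\|_{\cb}\le1$ holds automatically since $\sigma$ restricts the complete contraction $x\mapsto PxP$, so the bound $1+C\eta$ is not needed.) With these two remarks added, your proof is complete and could stand in place of the external citation.
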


Now we can state and prove the result mentioned above.

\begin{proposition}\label{S1 casi proyectivo}
Let $\alpha$ be a finitely-generated o.s. tensor norm and let $E\in\ONORM$. Then,
$$
E\otimes_\alpha S_1(H) = E\otimes_{\alpha/} S_1(H) \quad\textrm{ and } \quad S_1(H) \otimes_\alpha E= S_1(H) \otimes_{\backslash\alpha} E.
$$
\end{proposition}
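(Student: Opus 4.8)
The plan is to prove the first identity, $E\otimes_\alpha S_1(H) = E\otimes_{\alpha\slash} S_1(H)$; the second one, $S_1(H)\otimes_\alpha E = S_1(H)\otimes_{\backslash\alpha} E$, then follows by the left-handed version of the same argument. Since $\alpha\le\alpha\slash$ holds in general (as recorded in the proof of Theorem~\ref{thm: projective associate}), it suffices to establish the reverse inequality $(\alpha\slash)_n(z;E,S_1(H))\le\alpha_n(z;E,S_1(H))$ for all $n\in\N$ and all $z\in M_n(E\otimes S_1(H))$.

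First I would isolate the auxiliary observation that, for every $k\in\N$ and every $G\in\ONORM$, the o.s. tensor norms $\alpha$ and $\alpha\slash$ coincide completely isometrically on $G\otimes S_1^k$. This is immediate from Theorem~\ref{thm: projective associate}: since $S_1^k$ is completely projective, we may take $Z_0 = S_1^k$ and $q=id_{S_1^k}$, so the induced complete metric surjection $G\otimes_\alpha S_1^k\to G\otimes_{\alpha\slash} S_1^k$ is the identity map on the algebraic tensor product and hence a complete isometry.

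Next, fix $z\in M_n(E\otimes S_1(H))$ and $\varepsilon>0$. Using that $\alpha$ is finitely generated, choose $E_0\in\OFIN(E)$ and $N\in\OFIN(S_1(H))$ with $z\in M_n(E_0\otimes N)$ and $\alpha_n(z;E_0,N)\le(1+\varepsilon)\alpha_n(z;E,S_1(H))$. Apply Proposition~\ref{subespacios-S1} to $N$ to obtain $\widetilde N\in\OFIN(S_1(H))$ with $N\subseteq\widetilde N$, a number $k\in\N$, and a complete isomorphism $T:\widetilde N\to S_1^k$ with $\n{T}_{\cb},\n{T^{-1}}_{\cb}\le 1+\varepsilon$. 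I would then chain the following inequalities, using repeatedly the complete metric mapping property of $\alpha$ and of $\alpha\slash$, the complete contraction $id_{E_0}\otimes\iota:E_0\otimes_\alpha N\to E_0\otimes_\alpha\widetilde N$ coming from the inclusion $\iota:N\hookrightarrow\widetilde N$, the maps $id_{E_0}\otimes T$ and $id_{E_0}\otimes T^{-1}$, and the auxiliary observation with $G=E_0$:
\begin{multline*}
(\alpha\slash)_n(z;E_0,\widetilde N)\le(1+\varepsilon)\,\alpha_n\big((id_{E_0}\otimes T)_n z;E_0,S_1^k\big)\\
\le(1+\varepsilon)^2\,\alpha_n(z;E_0,\widetilde N)\le(1+\varepsilon)^3\,\alpha_n(z;E,S_1(H)).
\end{multline*}
Finally, the complete metric mapping property of $\alpha\slash$, applied to the complete isometric inclusions $E_0\hookrightarrow E$ and $\widetilde N\hookrightarrow S_1(H)$, gives $(\alpha\slash)_n(z;E,S_1(H))\le(\alpha\slash)_n(z;E_0,\widetilde N)$, so letting $\varepsilon\to0$ completes the argument.

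The step I expect to require the most care is bookkeeping with directions and constants: one must check that each invocation of the complete metric mapping property points the right way (enlarging the second tensor factor cannot increase $\alpha$ or $\alpha\slash$, while pre- or post-composing with $T^{\pm1}$ costs a factor $1+\varepsilon$), and one must be sure that the equality of $\alpha$ and $\alpha\slash$ is used only on tensor products with $S_1^k$ --- where it holds precisely because $S_1^k$ is itself completely projective, so that no genuine quotient intervenes. The remaining verifications are routine.
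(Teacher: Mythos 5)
Your proof is correct and follows essentially the same route as the paper: approximate $z$ by a finite-dimensional subspace of $S_1(H)$ (finite generation), enlarge it via Proposition~\ref{subespacios-S1} to an almost-copy of $S_1^k$, use that $S_1^k$ is completely projective so that $\alpha$ and $\alpha\slash$ agree on tensor products with $S_1^k$, and transfer back with the complete metric mapping property. The extra restriction to $E_0\in\OFIN(E)$ is harmless but unnecessary, and your explicit auxiliary observation is exactly the fact the paper invokes implicitly when ``using that $S_1^k$ is completely projective.''
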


\begin{proof}
Since $\alpha\le \alpha/$ in order to prove the first identity (the other is obtained by transposition) we have just to show  for any $z\in M_n(E\otimes S_1(H))$ that $(\alpha/)_n(z;E,S_1(H))\le \alpha_n(z;E,S_1(H))$. Given $\varepsilon>0$ there is $M\in\OFIN(S_1(H))$ satisfying $z\in M_n(E\otimes M)$ and \begin{equation}\label{desig-1}
    \alpha_n(z;E,M)\le (1+\varepsilon) \alpha_n(z;E,S_1(H)).
\end{equation}
By Proposition \ref{subespacios-S1} there is $\widetilde{M}\supset M$, $k\in\mathbb N$ and a complete isomorphism $T:\widetilde{M}\to S_1^k$ such that  $\|T\|_{cb}\le 1+\varepsilon$ and $\|T^{-1}\|_{cb}\le 1+\varepsilon$. Then
\begin{equation}\label{desig-2}
     \alpha_n((id\otimes T)_n(z);E,S_1^k) \le (1+\varepsilon) \alpha_n(z;E,\widetilde M)\le (1+\varepsilon) \alpha_n(z;E,M).
\end{equation}
Now, applying $T^{-1}$ and using that $S_1^k$ is completely projective we have
\begin{equation}\label{desig-3}
     (\alpha/)_n(z;E,\widetilde M) \le (1+\varepsilon) \alpha_n((id\otimes T)_n(z);E,S_1^k).
\end{equation}

Finally, since $(\alpha/)_n(z;E,S_1(H))\le(\alpha/)_n(z;E,\widetilde M)$ compounding the identities \eqref{desig-1}, \eqref{desig-2} and \eqref{desig-3} we arrive to 
$$
(\alpha/)_n(z;E,S_1(H))\le (1+\varepsilon)^3 \alpha_n(z;E,S_1(H)),
$$
which concludes the proof.
\end{proof}

As a consequence of the previous result we present the following  operator space version of a classical Banach space identity.

\begin{example}\label{Ejemplo d infinito}
$d_\infty =\min/$.

\rm Indeed, since both norms are finitely-generated (by Proposition \ref{prop-dp-and-gp-finitely-generated} and Lemma \ref{lem: finit a la derecha}) it is enough to prove that $M\otimes_{d_\infty }N=M\otimes_{\min/}N$ for every $M,N\in\OFIN$.
Using that every separable operator space is completely isometric to a quotient of $S_1$ \cite[Cor. 2.12.3]{Pisier-Operator-Space-Theory} and that both $d_\infty $ and $\min/$ are completely right-projective, we know that there is a complete quotient $q:S_1\twoheadrightarrow N$ which produces two other complete quotients 
$$
id\otimes q: M\otimes_{d_\infty }S_1 \twoheadrightarrow M\otimes_{d_\infty } N \quad \textrm{and}\quad id\otimes q: M\otimes_{\min/}S_1 \twoheadrightarrow M\otimes_{\min/} N.
$$
Hence, the result is obtained once we prove $M\otimes_{d_\infty }S_1=M\otimes_{\min/}S_1$ for every $M\in\OFIN$. This is certainly true because $M\otimes_{d_\infty }S_1=M\otimes_{\min}S_1$ (by \cite[Th. 3.7]{CD-Chevet-Saphar-OS}) and $M\otimes_{\min/}S_1=M\otimes_{\min}S_1$ (by Proposition \ref{S1 casi proyectivo}).
\end{example}

Another consequence of Propositions \ref{subespacios-S1} and \ref{S1 casi proyectivo} is the following statement that will be useful later. The proof is obtained proceeding as in Example \ref{Ejemplo d infinito} along with Proposition \ref{S1 casi proyectivo}.

\begin{corollary}\label{Cor: capsula proyectiva con S1}
Let $E, F\in\ONORM$ with $F$ separable and let $\alpha$ be a finitely-generated o.s. tensor norm. If $z\in M_n(E\otimes_{\alpha/}F)$ and $\varepsilon>0$ there exist a number $k$, a mapping $R:S_1^k\to F$ and a matrix $u\in M_n(E\otimes_{\alpha}S_1^k)$ such that $(id\otimes R)_n(u)=z$, $\|R\|_{cb}\le 1+\varepsilon$ and
$$
\alpha_n(u;E, S_1^k)\le (1+\varepsilon) (\alpha/)_n(z;E,F).
$$
\end{corollary}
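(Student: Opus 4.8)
The statement to prove is Corollary~\ref{Cor: capsula proyectiva con S1}, which extracts from $E \otimes_{\alpha/} F$ (with $F$ separable) an almost-norming factorization through a tensor product with $S_1^k$. The plan is to follow the two-step pattern already used in Example~\ref{Ejemplo d infinito}: first pass from $F$ to a quotient $S_1 \twoheadrightarrow F$ using complete right-projectivity of $\alpha/$, then replace $S_1$ by a finite-dimensional almost-copy $S_1^k$ using Proposition~\ref{subespacios-S1}.

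First I would invoke \cite[Cor. 2.12.3]{Pisier-Operator-Space-Theory}: since $F$ is separable there is a complete metric surjection $q : S_1 \twoheadrightarrow F$. Because $S_1$ is a completely projective Banach operator space (it is an $\ell_1$-sum of the $S_1^{n}$'s, which are completely projective; this is recalled in Section~\ref{injections-and-projections}), Theorem~\ref{thm: projective associate} applies with $Z_0 = S_1$ and tells us that $id_E \otimes q : E \otimes_\alpha S_1 \to E \otimes_{\alpha/} F$ is a complete metric surjection. Hence, given $z \in M_n(E \otimes_{\alpha/} F)$ and $\varepsilon>0$, there is $w \in M_n(E \otimes S_1)$ with $(id_E \otimes q)_n(w) = z$ and
$$
\alpha_n(w; E, S_1) \le (1+\varepsilon)^{1/2}\,(\alpha/)_n(z; E, F).
$$
(Any power of $1+\varepsilon$ less than the final $1+\varepsilon$ works; I would simply track constants at the end and relabel $\varepsilon$.) Next, since $\alpha$ is finitely generated it is in particular finitely generated from the right, so after shrinking I can find $M_0 \in \OFIN(S_1)$ with $w \in M_n(E \otimes M_0)$ and $\alpha_n(w; E, M_0) \le (1+\varepsilon)^{1/2}\,\alpha_n(w; E, S_1)$.

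Now apply Proposition~\ref{subespacios-S1} to $M_0 \in \OFIN(S_1)$ and $\varepsilon$: there exist $\widetilde M \in \OFIN(S_1)$ containing $M_0$, a natural number $k$, and a complete isomorphism $T : \widetilde M \to S_1^k$ with $\|T\|_{cb} \le 1+\varepsilon$ and $\|T^{-1}\|_{cb} \le 1+\varepsilon$. Set $u := (id_E \otimes T)_n(w) \in M_n(E \otimes S_1^k)$. By the complete metric mapping property of $\alpha$,
$$
\alpha_n(u; E, S_1^k) \le \|T\|_{cb}\,\alpha_n(w; E, \widetilde M) \le (1+\varepsilon)\,\alpha_n(w; E, M_0),
$$
where the second inequality uses $M_0 \subseteq \widetilde M$ and the metric mapping property again (restriction does not increase the norm — actually we need $\alpha_n(w;E,\widetilde M) \le \alpha_n(w;E,M_0)$, which holds because the inclusion $M_0 \hookrightarrow \widetilde M$ is a complete contraction). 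Define $R : S_1^k \to F$ by $R := q \circ i^{S_1}_{\widetilde M} \circ T^{-1}$, where $i^{S_1}_{\widetilde M}$ is the inclusion of $\widetilde M$ into $S_1$. Then $\|R\|_{cb} \le \|q\|_{cb}\,\|T^{-1}\|_{cb} \le 1+\varepsilon$, and
$$
(id_E \otimes R)_n(u) = (id_E \otimes q)_n (id_E \otimes i^{S_1}_{\widetilde M})_n (id_E \otimes T^{-1})_n (id_E \otimes T)_n(w) = (id_E \otimes q)_n(w) = z,
$$
since $T^{-1}T$ is the identity on $\widetilde M$ and $w$ already lives in $M_n(E \otimes \widetilde M)$. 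Combining the three norm estimates gives $\alpha_n(u; E, S_1^k) \le (1+\varepsilon)^2 (\alpha/)_n(z; E, F)$, and replacing $\varepsilon$ by a suitably smaller quantity at the outset yields the clean bound $\alpha_n(u; E, S_1^k) \le (1+\varepsilon)(\alpha/)_n(z; E, F)$ and $\|R\|_{cb} \le 1+\varepsilon$ as stated.

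The only slightly delicate point — and the one I would be most careful about — is making sure the factorization $(id_E \otimes R)_n(u) = z$ is literally an equality of elements of $E \otimes F$ (not merely up to the identifications of tensor products with subspaces), which is why it matters that $w$ genuinely sits in $M_n(E \otimes \widetilde M)$ and that $T$ is a bijection onto $S_1^k$; this makes $R$ a genuine linear map with $R \circ T = q|_{\widetilde M}$ on $\widetilde M$. Everything else is a routine diagram chase plus the complete metric mapping property, exactly as in Example~\ref{Ejemplo d infinito} and Proposition~\ref{S1 casi proyectivo}. No local reflexivity or accessibility hypotheses are needed here, only that $\alpha$ is finitely generated (used twice: once to pass to $M_0$, and implicitly through $\alpha/$ being completely right-projective on $\ONORM$ via Theorem~\ref{thm: projective associate}).
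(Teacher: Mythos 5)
Your second step (passing from a finite-dimensional subspace of $S_1$ to $S_1^k$ via Proposition~\ref{subespacios-S1}, defining $R=q\circ i^{S_1}_{\widetilde M}\circ T^{-1}$, and checking $(id_E\otimes R)_n(u)=z$ with the stated norm estimates) is correct and is exactly the pattern of Proposition~\ref{S1 casi proyectivo}. The problem is the first step. You justify the lifting of $z$ to some $w\in M_n(E\otimes S_1)$ with $\alpha_n(w;E,S_1)\le(1+\varepsilon')(\alpha/)_n(z;E,F)$ by applying Theorem~\ref{thm: projective associate} with $Z_0=S_1$, on the grounds that ``$S_1$ is a completely projective Banach operator space (it is an $\ell_1$-sum of the $S_1^n$'s)''. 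Both claims are false: $S_1$ is \emph{not} the $\ell_1$-direct sum $\ell_1(\{S_1^n : n\in\N\})$ (the copies of $S_1^n$ inside $S_1$ are nested, not $\ell_1$-complemented summands), and, more to the point, the paper itself records --- immediately before Proposition~\ref{subespacios-S1}, citing Blecher --- that $S_1(H)$ is not completely projective. So Theorem~\ref{thm: projective associate} cannot be invoked with $Z_0=S_1$, and as written your first step has no justification.

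The conclusion of that step is nevertheless true, and the intended route (the one the paper sketches: ``proceed as in Example~\ref{Ejemplo d infinito} along with Proposition~\ref{S1 casi proyectivo}'') repairs it as follows. By Theorem~\ref{thm: projective associate}, $\alpha/$ is completely right-projective on $\ONORM$, so for the complete metric surjection $q:S_1\twoheadrightarrow F$ the map $id_E\otimes q: E\otimes_{\alpha/}S_1\to E\otimes_{\alpha/}F$ is a complete metric surjection; this gives a lift $w$ with $(\alpha/)_n(w;E,S_1)\le(1+\varepsilon')(\alpha/)_n(z;E,F)$. Then Proposition~\ref{S1 casi proyectivo} (which uses that $\alpha$ is finitely generated) yields $E\otimes_{\alpha/}S_1=E\otimes_{\alpha}S_1$, i.e. $(\alpha/)_n(w;E,S_1)=\alpha_n(w;E,S_1)$, which is exactly the estimate you wanted. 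In other words, the complete projectivity of $S_1$ must be replaced by the conjunction of the right-projectivity of $\alpha/$ and the ``$S_1$ behaves as if projective for hulls of finitely generated norms'' statement; with that substitution, the remainder of your argument goes through unchanged.
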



\section[Mapping ideals associated to completely injective/projective hulls]{Mapping ideals associated to completely injective/projective hulls and accessibility} \label{mapping accessibility sec}

In Corollary \ref{cor: proy implica dual iny} we have shown that the dual of a completely right-projective (completely left-projective) o.s. tensor norm is completely right-injective (completely left-injective). In the Banach space setting the reciprocal is valid but in the operator space context this is no longer true (see Remark \ref{consecuencias min} below). Note that 
all known  proofs of this fact for Banach space tensor norms use elements of the theory  mentioned in Subsection \ref{Relevant differences}. 

Hence, in order to produce a result relating completely
 injective/projective hulls with duality we have to appeal for an extra hypothesis of accessibility.

Even if previously we presented in detail results about right hulls  and obtain by analogy the left statements, now, \textsl{to see the other side of the moon}, we chose to proceed first  with left hulls and to derive by similarity the right versions.

\begin{proposition} \label{inclusion normas}
Let $\alpha$ be a finitely-generated o.s. tensor norm.
\begin{enumerate}[(a)]
 \item If $\alpha$ is left-accessible then $ (\backslash \alpha')' = / \alpha$    and  $(/ \alpha)' = \backslash \alpha'$.
\item If $\alpha$ is right-accessible then $ ( \alpha'/)' =  \alpha\backslash$    and  $(\alpha\backslash)' =  \alpha'/$.
\item If $\alpha$ is  totally accessible then $ (\backslash \alpha'/)' = / \alpha\backslash$    and  $(/ \alpha\backslash)' = \backslash \alpha'/$.  
    
\end{enumerate}

 \end{proposition}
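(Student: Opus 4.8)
The plan is to obtain (c) by bootstrapping from parts (a) and (b), adding one extra observation about the one‑sided hulls of a \emph{totally} accessible norm. First some bookkeeping. Every norm in the statement is finitely generated: $\alpha'$ is a dual norm; $\alpha'/$ and $\alpha\backslash$ are finitely generated by Lemmas \ref{finitamente generada la capsula proyectiva} and \ref{finitamente generada la capsula inyectiva}; and hence $\backslash\alpha'/ = \backslash(\alpha'/)$ and $/\alpha\backslash = /(\alpha\backslash)$ are finitely generated by Lemma \ref{lem: finit a la derecha} and the left version of Lemma \ref{finitamente generada la capsula inyectiva}, respectively. Recall also the unambiguous identifications $\backslash\alpha'/ = (\backslash\alpha')/ = \backslash(\alpha'/)$ and $/\alpha\backslash = (/\alpha)\backslash = /(\alpha\backslash)$ (explained after Definitions \ref{inyectiva conmutativa} and \ref{proyectiva conmutativa}). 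Since finitely generated o.s. tensor norms that agree on $\OFIN$ are equal and satisfy $\beta''=\beta$ (Proposition \ref{prop-properties-dual-cross-norm}), it will be enough to prove $(\backslash\alpha'/)' = /\alpha\backslash$: dualizing and using $(\backslash\alpha'/)'' = \backslash\alpha'/$ then gives $(/\alpha\backslash)' = \backslash\alpha'/$ as well.

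The crucial step is the following claim: \emph{if $\alpha$ is totally accessible, then $/\alpha$ is right-accessible} (equivalently, by transposition, $\alpha\backslash$ is left-accessible). To prove it, observe first that total accessibility together with $\overleftarrow\alpha \le \alpha \le \overrightarrow\alpha$ (Proposition \ref{prop-order-of-hulls}) forces $\alpha = \overleftarrow\alpha$, i.e. $\alpha$ is cofinitely generated, and this already yields the one-sided identity
$$
\alpha_n(w; E, F) = \sup\big\{ \alpha_n\big((id_E \otimes q_L^F)_n w; E, F/L\big) : L \in \OCOFIN(F) \big\}
$$
for all $E,F\in\ONORM$ and $w\in M_n(E\otimes F)$: here ``$\ge$'' is the complete metric mapping property, while for ``$\le$'' one writes $(q_K^E\otimes q_L^F)_n w = (q_K^E\otimes id)_n\big((id_E\otimes q_L^F)_n w\big)$ so that each term defining $\overleftarrow\alpha = \alpha$ is dominated by a term on the right. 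Now fix $M\in\OFIN$, $F\in\ONORM$, a complete isometry $i:M\hookrightarrow B(H)$, and set $\tilde z = (i\otimes id_F)_n z$. Using the left version of Theorem \ref{thm:norma right injective asociada} and then the displayed identity applied to $\tilde z$,
$$
(/\alpha)_n(z; M, F) = \alpha_n(\tilde z; B(H), F) = \sup_L \alpha_n\big((i\otimes q_L^F)_n z; B(H), F/L\big) = \sup_L (/\alpha)_n\big((id_M\otimes q_L^F)_n z; M, F/L\big).
$$
Since $\{0\}\in\OCOFIN(M)$ and each $q_K^M$ is completely contractive, the last supremum is exactly $\overleftarrow{(/\alpha)}_n(z; M, F)$; as $\overleftarrow{(/\alpha)}\le /\alpha$ always holds, the claim follows.

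Granting the claim, (c) is short. Being totally accessible, $\alpha$ is in particular both left- and right-accessible, so (a) gives $\backslash\alpha' = (/\alpha)'$ and (b) gives $\alpha\backslash = (\alpha'/)'$. Hence $\backslash\alpha'/ = (\backslash\alpha')/ = ((/\alpha)')/$. The norm $/\alpha$ is finitely generated (left version of Lemma \ref{finitamente generada la capsula inyectiva}) and right-accessible by the claim, so applying part (b) \emph{to the norm $/\alpha$} yields $\big(((/\alpha)')/\big)' = (/\alpha)\backslash = /\alpha\backslash$, that is, $(\backslash\alpha'/)' = /\alpha\backslash$. As noted above, $(/\alpha\backslash)' = \backslash\alpha'/$ then follows by dualizing, using that $\backslash\alpha'/$ is finitely generated.

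The main obstacle is not (c) itself but the one-sided duality results (a) and (b) that it rests on. In the Banach space setting one proves such identities by reducing to $\OFIN$ and matching hulls there, but in the operator space world a completely right-injective norm defined on $\OFIN$ need not extend to one on $\ONORM$ whose dual is completely right-projective (the failure of that implication is exactly the phenomenon around Remark \ref{consecuencias min}), so the $\OFIN$-reduction for (a)/(b) has to be run through the Duality Theorem \ref{duality-theorem}, with the accessibility hypothesis carrying the finite-to-cofinite transfer and Proposition \ref{prop: injectividad en dim finita} (and its left version) guaranteeing the extensions. Once (a) and (b) are in hand, the only genuinely new ingredient for (c) is the claim above, whose proof uses nothing beyond the fact that total accessibility makes $\alpha$ cofinitely generated.
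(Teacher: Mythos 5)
Your derivation of (c) from (a) and (b) is correct and is genuinely different from the paper's route: the paper proves (a) via an explicit diagram and disposes of (b) and (c) ``analogously'', whereas you bootstrap (c) by first showing that total accessibility of $\alpha$ makes $/\alpha$ right-accessible, so that (b) can be applied to the norm $/\alpha$; that auxiliary claim and its proof (reducing the cofinite hull to quotients on the second factor only, computed through a fixed embedding $M\hookrightarrow B(H)$) check out, and the finite-generation bookkeeping is fine. The problem is everything before that: the statement consists of (a), (b) and (c), and your proposal never proves (a) or (b) --- it only derives (c) conditionally on them, with a one-sentence gesture at how (a)/(b) ``should'' go. Since (a) and (b) carry essentially all the content of the proposition (the paper's entire written proof is the proof of (a)), this is a genuine gap rather than a stylistic omission.

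Moreover, the sketch you offer for (a)/(b) does not contain the idea that makes the argument work. The difficulty is that for $M\in\OFIN$ the norm $\backslash\alpha'$ on $M\otimes N'$ is not determined by the values of $\alpha'$ on $\OFIN$: the projective presentation $q_M\colon Z_M\twoheadrightarrow M$ is infinite-dimensional, so one cannot simply ``reduce to $\OFIN$ and dualize''. The paper's proof of (a) hinges on three specific properties of $Z_M$: it is completely projective (hence $\backslash\alpha'=\alpha'$ on $Z_M\otimes N'$), its dual $Z_M'$ is completely injective (hence $/\alpha=\alpha$ on $Z_M'\otimes N$), and it is locally reflexive, so that the Duality Theorem \ref{duality-theorem}, combined with $\alpha=\overleftarrow{\alpha}$ on $Z_M'\otimes N$ coming from left-accessibility, embeds $Z_M'\otimes_{\alpha} N$ completely isometrically into $(Z_M\otimes_{\alpha'}N')'$; one then cuts down to $M'\otimes_{/\alpha}N$ using the complete left-injectivity of $/\alpha$ and the transposed quotient $M'\hookrightarrow Z_M'$. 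Your sketch cites the Duality Theorem and ``accessibility carrying the finite-to-cofinite transfer'', which is the right spirit, but Proposition \ref{prop: injectividad en dim finita} is not the tool that closes the argument, and without the factorization through $Z_M$ (and its local reflexivity) the proof of (a) and (b) is simply missing.
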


 \begin{proof}
 
We will only prove $(a)$  since $(b)$ and $(c)$  follow analogously. To see the first identity, it suffices to check it in $\OFIN$ since both norms are finitely-generated. Indeed, the norm $(\backslash \alpha')'$ is finitely-generated since it is a dual norm and  $/ \alpha$ is so by the left version of  Lemma~\ref{finitamente generada la capsula inyectiva}.

 Let $M$ and $N$ be finite dimensional operator spaces. We have to see that 
 \begin{equation} \label{ec1}
  \big(M \otimes_{ \backslash \alpha '} N' \big)' =  M' \otimes_{ / \alpha} N.
 \end{equation}

 To check \eqref{ec1} we consider the following diagram

 $$ \xymatrix{   (M \otimes_{\backslash \alpha'} N' )' \ar@{^{(}->}[rr]^{ \; \; (A)} \ar[d]_{(\spadesuit)}
 & & (Z_M \otimes_{\backslash \alpha '} N')'  \ar@2{-}[d]^{(B)} \\
 M' \otimes_{/ \alpha} N  \ar@{^{(}->}[r]^{\;\;\;\;\;\;\;\;\;\;} &  Z_M' \otimes_{/ \alpha} N \overset{(C)}{=} Z_M' \otimes_{ \alpha} N  \ar@{^{(}->}[r]^{\;\;\;\;\;\;\;\;\;\; \;  \; \; (D)} & (Z_M \otimes_{\alpha '} N')'}, \\$$

 Note that the arrow $(A)$ is completely isometric because it is the transpose of the complete quotient given by
 $$ \xymatrix{  Z_M \otimes_{\backslash \alpha '} N' \ar@{->>}[rr] & & M \otimes_{\backslash \alpha'} N'.}$$
 Equalities in $(B)$ and $(C)$ hold since $Z_M$ is completely projective and  $Z_M'$ is completely injective \cite[Corollary 24.6]{Pisier-Operator-Space-Theory}, respectively.
 The mapping $(D)$ is a complete isometry  by the Duality Theorem \ref{duality-theorem} (recall that $Z_M$ is locally reflexive) and  the equality $\alpha = \overset{\leftarrow}{\alpha}$
 in $Z_M'\otimes N$ which holds since $\alpha$ is left-accessible. Therefore, the mapping $(\spadesuit)$ is a completely isometric isomorphism.
 
The second identity of $(a)$ follows by taking adjoints and using that, by the left version of Lemma~\ref{finitamente generada la capsula proyectiva}, $\backslash\alpha'$ is finitely-generated.
\end{proof}

\begin{remark} \label{remark-inclusion-normas}
Looking at the bottom line of the previous commutative diagram, we have that if $\alpha$ is a left-accessible finitely-generated o.s. tensor norm and $M$ and $N$ are finite dimensional operator spaces then we have the following complete isometry:
$$
M' \otimes_{/ \alpha} N \hookrightarrow (Z_M \otimes_{\alpha '} N')'.
$$
Similarly, we have for $\alpha$ right-accessible and finitely-generated:
$$
M' \otimes_{\alpha\backslash} N \hookrightarrow (M \otimes_{\alpha '} Z_{N'})'.
$$
\end{remark}

\begin{example}\label{example:dual}
Since $\proj$ is left- and right-accessible and finitely-generated, Proposition \ref{inclusion normas} tells us that
$$
(\backslash\min)'=/\proj \quad\textrm{and}\quad (\min/)'=\proj\backslash.
$$
\end{example}

It would be interesting to know whether the equality
 $(\backslash\min /)'=/\proj \backslash$ holds. This can not be easily derived from Proposition \ref{inclusion normas} $(a)$ since we do not know if $\proj\backslash$ is left-accessible, or from Proposition \ref{inclusion normas} $(c)$ since we know that $\proj$ is not totally accessible. Later, in Remark \ref{rmk:equivalencia Banach} we will see that these two norms, $(\backslash\min /)'$ and $ /\proj \backslash$ are equivalent at the Banach space level. The question about if they are equal (or, at least, equivalent) as o. s. tensor norms remains open.

\bigskip

As we have seen, accessibility is related with the duality between injective and projective hulls of o.s. tensor norms. This property also appears when describing  the associated norms of surjective and injective hulls of mapping ideals (see Theorem \ref{theorem A} below). First, we need the following lemma.

 \begin{lemma} \label{lemat 1}
  Let $(\mathfrak{A}, \mathbf{A})$ be a mapping ideal.
\begin{enumerate}[(a)]
    \item If $\beta$ is a finitely-generated o.s. tensor norm associated to  $\mathfrak{A}^{\sur}$ then $\beta$ is completely  left-injective.
    \item If $\beta$ is a finitely-generated o.s. tensor norm associated associated to  $\mathfrak{A}^{\inj}$ then $\beta$ is completely  right-injective.
\end{enumerate}
   \end{lemma}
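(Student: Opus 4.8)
The plan is to reduce both statements to the finite-dimensional case and then read off the conclusion from the defining properties of $\mathfrak{A}^{\sur}$ and $\mathfrak{A}^{\inj}$, via the finite-dimensional correspondence between matrices of operators and matrices of tensors. Since $\beta$ is assumed finitely-generated, the left version of Proposition~\ref{prop: injectividad en dim finita} (for part $(a)$) and Proposition~\ref{prop: injectividad en dim finita} itself (for part $(b)$) reduce us to checking complete left- (resp. right-) injectivity of $\beta$ on $\OFIN$. So everything takes place among finite-dimensional operator spaces, where duals are reflexive and a complete injection $i\colon E\hookrightarrow G$ dualizes to a complete metric surjection $i'\colon G'\twoheadrightarrow E'$.

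For part $(a)$, I would fix $E,G,N\in\OFIN$ and a complete injection $i\colon E\hookrightarrow G$, and consider the map $i\otimes id_N\colon E\otimes_\beta N\to G\otimes_\beta N$. Using that $\beta$ is associated to $\mathfrak{A}^{\sur}$, for $z\in M_n(E\otimes N)$ one has $\beta_n(z;E,N)=\mathbf{A}^{\sur}_n(T_z\colon E'\to N)$ and $\beta_n\big((i\otimes id_N)_n z;G,N\big)=\mathbf{A}^{\sur}_n\big(T_{(i\otimes id_N)_n z}\colon G'\to N\big)$, where $T_w$ denotes the matrix of operators attached to a matrix of tensors $w$. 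The key observation is the identity $T_{(i\otimes id_N)_n z}=T_z\circ i'$, a routine check on elementary tensors: if $z=\sum_k e_k\otimes n_k$ then $T_z(\varphi)=\sum_k\varphi(e_k)n_k$, while $(i\otimes id_N)z=\sum_k i(e_k)\otimes n_k$ gives $T_{(i\otimes id_N)z}(\psi)=\sum_k(i'\psi)(e_k)n_k=T_z(i'\psi)$ for $\psi\in G'$. Since $\mathfrak{A}^{\sur}$ is completely surjective (Proposition~\ref{surjective hull}), applying its defining equality to the complete metric surjection $i'$ yields $\mathbf{A}^{\sur}_n(T_z\circ i')=\mathbf{A}^{\sur}_n(T_z)$ for every $n$; hence $i\otimes id_N$ is completely isometric onto its range, and it is injective because $i$ is, so it is a complete injection. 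Part $(b)$ is entirely parallel: for $M,N,G\in\OFIN$ and a complete injection $i\colon N\hookrightarrow G$, the analogous computation gives $T_{(id_M\otimes i)_n z}=i_n\circ T_z$ for $z\in M_n(M\otimes N)$, and the complete injectivity of $\mathfrak{A}^{\inj}$ (Proposition~\ref{prop-comp-injective-hull}) gives $\mathbf{A}^{\inj}_n(i_n\circ T_z)=\mathbf{A}^{\inj}_n(T_z)$, so $id_M\otimes i\colon M\otimes_\beta N\to M\otimes_\beta G$ is an injective complete isometry, i.e. a complete injection.

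The only step requiring any care — and the closest thing to an obstacle — is pinning down the operator-tensor correspondence under dualization, i.e. verifying $T_{(i\otimes id_N)_n z}=T_z\circ i'$ and $T_{(id_M\otimes i)_n z}=i_n\circ T_z$ cleanly at the matrix level, together with the (standard, and immediate in finite dimensions) fact that a complete injection dualizes to a complete metric surjection. Once this bookkeeping is in place, the defining properties of $\mathfrak{A}^{\sur}$ and $\mathfrak{A}^{\inj}$ supply the conclusion directly, and — worth emphasizing — no accessibility hypothesis on $\beta$ enters at this stage.
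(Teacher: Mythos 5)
Your proof is correct and follows essentially the same route as the paper: identify $E\otimes_\beta N=\mathfrak{A}^{\sur}(E',N)$ (resp. $\mathfrak{A}^{\inj}$) in finite dimensions, observe that $i\otimes id$ corresponds to $T\mapsto T\circ i'$ (resp. $T\mapsto i_n\circ T$), and invoke the defining equality of the surjective (resp. injective) hull. The only cosmetic difference is that you pass to $\ONORM$ by citing Proposition~\ref{prop: injectividad en dim finita} and its left version, whereas the paper reruns that same finite-generation argument inline; both are fine.
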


 \begin{proof}
 
$(a)$ We suppose first that $E_1, E_2, F$ are finite dimensional and $i:E_1 \to E_2$ is a complete isometry.
 Then,
 \begin{align*}
  E_1 \otimes_{\beta} F &= \mathfrak{A}^{\sur}(E_1',F) \\
  E_2 \otimes_{\beta} F &= \mathfrak{A}^{\sur}(E_2',F).
 \end{align*}
Through these identifications the mapping $i\otimes id_F : E_1 \otimes_{\beta} F \to E_2 \otimes_{\beta} F$ looks like
\begin{align*}
  \mathfrak{A}^{\sur}(E_1',F) & \longrightarrow \mathfrak{A}^{\sur}(E_2',F) \\
   T & \longmapsto T \circ i'.
 \end{align*}

 Since $\mathfrak{A}^{\sur}$ is surjective  and $i': E_2' \twoheadrightarrow E_1'$ is a complete quotient mapping, we have $\mathbf{A}_n^{\sur}(T) =  \mathbf{A}_n^{\sur}(T \circ i') $ for all $T \in M_n(\mathfrak{A}^{\sur}(E_1',F))$, which concludes the proof in the finite dimensional case.

Now, let $E_1, E_2, F$ be normed  operator spaces and $i:E_1 \to E_2$ be a complete isometry.
For $z \in M_n(E_1 \otimes_{\beta} F)$ we must show that
 \begin{equation}
  \beta_n( i \otimes id_F(z); E_2, F) =
  \beta_n( z; E_1, F).
 \end{equation}

 The metric mapping property gives us that $\beta_n( i \otimes id_F(z); E_2, F) \le
  \beta_n( z; E_1, F)$. For the reverse inequality, let $M_1 \subset E_1, M_2 \subset E_2, N \subset F$  be finite dimensional operator spaces such that $z \in M_n(M_1 \otimes N)$ and $i \otimes id_F(z) \in M_n(M_2 \otimes_{\beta} N)$.
  Then, 
  \begin{equation}
  \beta_n( z; E_1, F) \leq \beta_n( z; M_1, N) = \beta_n( i \otimes id_F(z);i(M_1)+ M_2, N) \leq \beta_n( i \otimes id_F(z); M_2, N).
  \end{equation}
 Since $\beta$ is finitely-generated, the equality follows.

$(b)$ As in the previous item, we begin by assuming that $E, F_1, F_2$ are finite dimensional operator spaces and $i:F_1 \to F_2$ is a complete isometry.
Then,
 \begin{align*}
  E \otimes_{\beta} F_1 &= \mathfrak{A}^{\inj}(E',F_1) \\
  E \otimes_{\beta} F_2 &= \mathfrak{A}^{\inj}(E',F_2).
 \end{align*}

Through these identifications the mapping $id_E\otimes i : E \otimes_{\beta} F_1 \to E \otimes_{\beta} F_2$ looks like
\begin{align*}
  \mathfrak{A}^{\inj}(E',F_1) & \longrightarrow \mathfrak{A}^{\inj}(E',F_2) \\
   T & \longmapsto i\circ T.
 \end{align*}
 
Now, the equality  $\mathbf{A}_n^{\inj} (T) = \mathbf{A}_n^{\inj} (i_n\circ T) $ for all $T \in M_n(\mathfrak{A}^{\inj}(E',F_1))$ concludes the proof in the finite dimensional case.
 The case for arbitrary operator spaces is easily derived as in item $(a)$.
 \end{proof}

 \begin{remark} \label{desig norm}
 Let $\alpha, \beta, \gamma$ be  finitely-generated o.s. tensor norms associated to the mapping ideals $\mathfrak{A}$, $\mathfrak{A}^{\sur}$ and $\mathfrak{A}^{\inj}$ respectively. By the previous lemma we have
 \begin{equation}
  \beta \leq / \alpha \quad \mbox{ and } \quad \gamma \leq \alpha \backslash.
 \end{equation}

 \end{remark}

 \begin{theorem}\label{theorem A}
  Let $(\mathfrak{A}, \mathbf A)$ be a mapping ideal and $\alpha$ its associated  o.s. tensor norm. 
  \begin{enumerate}[(a)]
      \item If $\alpha$ is left-accessible then the  o.s. tensor norm $/ \alpha$ is associated to $\mathfrak{A}^{\sur}$.
       \item If $\alpha$ is right-accessible then the  o.s. tensor norm $\alpha \backslash$ is associated to $\mathfrak{A}^{\inj}$.
  \end{enumerate}
  \end{theorem}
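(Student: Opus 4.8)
The plan is to establish part $(a)$ and then remark that $(b)$ follows by the symmetric argument. Fix a mapping ideal $(\mathfrak{A},\mathbf{A})$ with associated finitely-generated o.s. tensor norm $\alpha$, and assume $\alpha$ is left-accessible. By Theorem \ref{minimal y norma asociada} and Remark \ref{misma norma asociada} it is enough to prove the statement on $\OFIN$, since both $/\alpha$ and the o.s. tensor norm associated to $\mathfrak{A}^{\sur}$ are finitely-generated (the former by the left version of Lemma \ref{finitamente generada la capsula inyectiva}, the latter by definition of ``associated''). So let $M,N\in\OFIN$; we must show the complete isometry $\mathfrak{A}^{\sur}(M,N) = M'\otimes_{/\alpha} N$ via the canonical map. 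By Lemma \ref{lemat 1}$(a)$, any finitely-generated o.s. tensor norm associated to $\mathfrak{A}^{\sur}$ is completely left-injective, and hence dominated by $/\alpha$ (since it is $\le\alpha$ on $\OFIN$ and $/\alpha$ is the largest such; more precisely this is Remark \ref{desig norm}, giving one inequality). The work is the reverse inequality.

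The key computation rests on the description in Proposition \ref{surjective hull}: for $T\in M_n(\CB(M,N))$ we have $T\in M_n(\mathfrak{A}^{\sur}(M,N))$ with $\mathbf{A}^{\sur}_n(T) = \mathbf{A}_n(T\circ q_M)$, where $q_M : Z_M \twoheadrightarrow M$ is the standard complete quotient onto $M$ from the completely projective (and, crucially, locally reflexive) space $Z_M$. Dually, $T\circ q_M \in M_n(\mathfrak{A}(Z_M,N))$, and since $\mathfrak{A}\sim\alpha$ we have $\mathfrak{A}(Z_M,N)$ identified with $(Z_M'\otimes_{\alpha'} N')' \cap \CB(Z_M,N)$ at least on finite-dimensional pieces — here one passes through the maximal hull using the Representation Theorem \ref{representation-theorem}, noting $\mathfrak{A}\sim\alpha$ iff $\mathfrak{A}^{\max}\sim\alpha$. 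Now translate this into tensor language: under the identifications, the map $T\mapsto z_T$ on $M'\otimes N$ factors as $M'\otimes N \xrightarrow{q_M'\otimes id_N} Z_M'\otimes N$, and $\mathbf{A}^{\sur}_n(T)$ equals the $\alpha$-norm of the image. But $q_M' : M'\hookrightarrow Z_M'$ is a complete isometry, and $Z_M'$ is completely injective by \cite[Corollary 24.6]{Pisier-Operator-Space-Theory}, so the $\alpha$-norm on $Z_M'\otimes N$ restricted along $q_M'$ is by definition the $/\alpha$-norm on $M'\otimes N$ (this is the left version of Theorem \ref{thm:norma right injective asociada}, which computes $/\alpha$ via the embedding into $B(H)\otimes_\alpha(\cdot)$, and $Z_M'$ being injective serves the same role). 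This is precisely the content of Remark \ref{remark-inclusion-normas}, which already records the complete isometry $M'\otimes_{/\alpha} N \hookrightarrow (Z_M\otimes_{\alpha'}N')'$ using exactly left-accessibility of $\alpha$; matching its image with $\mathfrak{A}^{\sur}(M,N)$ is the remaining bookkeeping.

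The main obstacle is keeping the three identifications aligned at the matricial level: (i) $\mathfrak{A}^{\sur}(M,N) \cong \mathfrak{A}(Z_M,N)$ restricted appropriately, which only literally holds because $q_M$ is a complete quotient from a projective space and $\mathfrak{A}$ has the ideal property; (ii) $\mathfrak{A}(Z_M,N) \cong (Z_M'\otimes_{\alpha'}N')'\cap\CB$, where one must be careful since $Z_M$ is infinite-dimensional, so one really uses that $\mathfrak{A}^{\max}\sim\alpha$ and invokes the Representation Theorem \ref{representation-theorem}, then checks that the completely bounded maps in question already land in $\mathfrak{A}$ (not merely $\mathfrak{A}^{\max}$) because they factor through $M$ which is finite-dimensional — here condition (b') and the ideal property do the job; and (iii) the duality $(Z_M'\otimes_{\alpha'}N')' = Z_M'\otimes_{\alpha}N$ on the relevant finite-dimensional pieces, which is where left-accessibility of $\alpha$ (equivalently $\overrightarrow{\alpha} = \overleftarrow{\alpha}$ on $Z_M'\otimes\OFIN$, via the Duality Theorem \ref{duality-theorem}) is consumed. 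Once these are in place, the equality of norms $\mathbf{A}^{\sur}_n(T) = (/\alpha)_n(z_T; M',N)$ is immediate, and the complete isometry $\mathfrak{A}^{\sur}(M,N) = M'\otimes_{/\alpha} N$ follows. Part $(b)$ is proved identically, using Proposition \ref{prop-comp-injective-hull} and Lemma \ref{lemat 1}$(b)$ in place of Proposition \ref{surjective hull} and Lemma \ref{lemat 1}$(a)$, right-accessibility in place of left-accessibility, and the right-hand identity of Remark \ref{remark-inclusion-normas}.
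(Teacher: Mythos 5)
Your proposal is correct and is essentially the paper's own argument: the same ingredients appear in the same roles --- Proposition \ref{surjective hull} to write $\mathbf{A}^{\sur}_n(T)=\mathbf{A}_n(T\circ q_M)$, the Representation Theorem \ref{representation-theorem} to pass to the norm in $(Z_M\otimes_{\alpha'}N')'$ (note the small slip: it is $Z_M$, not $Z_M'$, in that dual), Remark \ref{remark-inclusion-normas} (where left-accessibility and the local reflexivity of $Z_M$ are consumed) to read this as $(/\alpha)_n(z_T;M',N)$, and Lemma \ref{lemat 1} together with Remark \ref{desig norm} for the opposite domination. The one place where your write-up is thinner than the paper is the passage from $\mathfrak{A}$ to $\mathfrak{A}^{\max}$ in your step (ii): observing that $T\circ q_M$ is finite rank, hence lies in $\mathfrak{A}$, settles membership but not the isometric identification $\mathbf{A}_n(T\circ q_M)=\mathbf{A}^{\max}_n(T\circ q_M)$, so the ``immediate'' equality $\mathbf{A}^{\sur}_n(T)=(/\alpha)_n(z_T;M',N)$ is not fully justified as stated for a non-maximal $\mathfrak{A}$. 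This does not sink the proof: for the hard direction you only need the trivial inequality $\mathbf{A}^{\max}_n\le\mathbf{A}_n$, which already gives $(/\alpha)_n(z_T;M',N)\le\mathbf{A}^{\sur}_n(T)$, and the reverse domination is exactly your first paragraph via Remark \ref{desig norm}. The paper makes this organization explicit by proving the statement first for maximal $\mathfrak{A}$ (where the displayed chain is an honest equality) and then handling general $\mathfrak{A}$ by the sandwich $(\mathfrak{A}^{\max})^{\sur}\sim/\alpha$, $\mathfrak{A}^{\sur}\subset(\mathfrak{A}^{\max})^{\sur}$, plus Remark \ref{desig norm}; if you rephrase your step (ii) in that two-inequality form (or justify the norm equality via the Duality Theorem \ref{duality-theorem} on $Z_M'\otimes N$), your argument coincides with the paper's.
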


  \begin{proof}
$(a)$ Suppose first that $\mathfrak{A}$ is a maximal  mapping ideal and let $M,N$ be finite-dimensional operator spaces.
 Let $q_M: Z_M \twoheadrightarrow M$ be the canonical complete quotient mapping. Then, for every $T \in M_n(\mathfrak{A}^{\sur}(M,N))$ we have
$$
 \mathbf A_n^{\sur}(T)  = \mathbf A_n(T\circ q_M)   = \Vert T\circ q_M \Vert_{M_n((Z_M \otimes_{\alpha '} N')')}
 =( / \alpha)_n( T ; M',  N).
 $$
 The second equality follows from the Representation Theorem \ref{representation-theorem} and the third comes from Remark \ref{remark-inclusion-normas}.
 Therefore, we have the complete isometry $$\mathfrak{A}^{\sur}(M,N) \overset{1}{=} M' \otimes_{/ \alpha} N.$$
 In particular,  $ / \alpha$ is the associate o.s. tensor norm of $\mathfrak{A}^{\sur}.$

 Now, let $\mathfrak{A}$ be an arbitrary mapping ideal. Note that if $\mathfrak{A} \sim \alpha$ then $\mathfrak{A}^{\max} \sim \alpha$ and thus, by the first part of the proof $(\mathfrak{A}^{\max})^{\sur} \sim / \alpha$.

 The inclusion $\mathfrak{A} \subset \mathfrak{A}^{\max}$ (in the o.s. setting) implies $\mathfrak{A}^{\sur} \subset (\mathfrak{A}^{\max})^{\sur}$. Denoting by $\beta$ an  o.s. tensor norm associated to $\mathcal {A}^{\sur}$ we derive that
 \begin{equation}
  / \alpha \leq \beta.
 \end{equation}

 Now the equality follows by Remark \ref{desig norm}.
 
 $(b)$ Suppose first that $\mathfrak{A}$ is a maximal mapping ideal and let $M,N$ be finite-dimensional operator spaces.  Let $q_{N'}: Z_{N'} \twoheadrightarrow N'$ be the canonical complete quotient mapping. Then, for every $T \in M_n(\mathfrak{A}^{\inj}(M,N))$ we have
$$
  \mathbf A_n^{\inj}(T) = \mathbf A_n( (q_{N'}')_n \circ T )  = \Vert (q_{N'}')_n \circ T \Vert_{M_n((M \otimes_{\alpha '} Z_{N'})')}
 = (\alpha \backslash)_n( T ; M',  N).
 $$
 The second equality follows from the Representation Theorem \ref{representation-theorem} and the third comes from Remark \ref{remark-inclusion-normas}.
 Therefore, we have the complete isometry $$\mathfrak{A}^{\inj}(M,N) \overset{1}{=} M' \otimes_{ \alpha\backslash} N.$$
 In particular,  $ \alpha\backslash$ is an o.s. tensor norm associated to $\mathfrak{A}^{\inj}.$
 
 The proof for an arbitrary mapping ideal $\mathfrak{A}$ runs as in the previous item.
\end{proof}

\begin{remark}\label{rmk: bad news}
An important property in the Banach space framework \cite[Prop. 20.9]{Defant-Floret} says that if $\alpha$ and $\beta$ are Banach-space tensor norms with $\beta$ completely right-injective, the following are equivalent:
\begin{enumerate}[(a)]
\item $\beta = \alpha\backslash$
\item $E \otimes_{\beta} \ell_\infty^n = E \otimes_{\alpha} \ell_\infty^n$ for every normed space $E$.
\end{enumerate}

The proof of this equivalence depends crucially on the fact that in the Banach space setting the injective hull is given by the embedding into an $\ell_\infty(I)$ space, which is an $\mathcal{L}_{\infty,1+}$ space (meaning for every $\varepsilon>0$ any finite-dimensional subspace of $\ell_\infty(I)$ is contained in a larger finite-dimensional subspace which is $1+\varepsilon$ isomorphic to an $\ell_\infty^n$), so the $\mathcal{L}_p$-local technique lemma can be applied.

The analogous argument in the operator space setting, however, does not work.
While we do have a local technique lemma (Lemma \ref{local-technique-lemma}), it only applies to $\mathscr{OS}_{p,C}$ spaces (see definition in Section~\ref{usual notation}).
In particular, note that $\mathscr{OS}_{\infty,C}$ spaces are clearly exact (recall that an operator space $F$ is called $C$-exact if every finite-dimensional subspace of $F$ is $C$-completely isomorphic to a subspace of an $M_n$ space).
Since in the operator space setting the completely injective hull is given by embedding into a $B(H)$ space, and $B(H)$ is not exact when $H$ is infinite-dimensional, there is no hope of adapting the proof of the aforementioned Banach space result to the operator space setting.
\end{remark}

What we can get is the following replacement:

\begin{proposition}
Let $\alpha$ and $\beta$ be o.s. tensor norms with $\beta$ completely right-injective.
If $E \otimes_{\beta} M_n = E \otimes_{\alpha} M_n$ for all $n\in\N$,
then $E \otimes_{\beta} F = E \otimes_{\alpha\backslash} F$ for any $F \subseteq \mathcal{K}(\ell_2)$.
\end{proposition}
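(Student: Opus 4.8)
The plan is to reduce the claimed identity to finite-dimensional subspaces of $\mathcal{K}(\ell_2)$, transport those inside the matrix algebras $M_n$, and there exploit the hypothesis together with the fact that each $M_n = B(\ell_2^n)$ is completely injective; this last fact plays the role that injectivity of $\ell_\infty^n$ plays in the Banach space proof of \cite[Prop. 20.9]{Defant-Floret}. Concretely, regarding $M_n \subseteq B(\ell_2)$ as the upper-left corner, the compression $x \mapsto P_n x P_n$ is a completely contractive left inverse of the complete isometry $M_n \hookrightarrow B(\ell_2)$, so for every o.s. tensor norm $\gamma$ and every $w \in M_k(E \otimes M_n)$ one has $\gamma_k(w; E, M_n) = \gamma_k(w; E, B(\ell_2))$. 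Taking $\gamma = \alpha$ and invoking Theorem~\ref{thm:norma right injective asociada} (with $M_n \subseteq B(\ell_2)$) gives $(\alpha\backslash)_k(w; E, M_n) = \alpha_k(w; E, B(\ell_2)) = \alpha_k(w; E, M_n)$, so the hypothesis upgrades to $\beta = \alpha\backslash$ on $E \otimes M_n$ for every $n$.

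Next I would reduce to finite-dimensional pieces. Given $F \subseteq \mathcal{K}(\ell_2)$, $k\in\N$ and $z \in M_k(E \otimes F)$, the $F$-components of $z$ span some $N_0 \in \OFIN(F) \subseteq \OFIN(\mathcal{K}(\ell_2))$ with $z \in M_k(E \otimes N_0)$; since $\beta$ and $\alpha\backslash$ are completely right-injective and $N_0 \hookrightarrow F$ is a complete injection, one has $\beta_k(z;E,F) = \beta_k(z;E,N_0)$ and $(\alpha\backslash)_k(z;E,F) = (\alpha\backslash)_k(z;E,N_0)$. So it is enough to show $\beta_k(z;E,N_0) = (\alpha\backslash)_k(z;E,N_0)$ for an arbitrary $N_0 \in \OFIN(\mathcal{K}(\ell_2))$.

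The core step is a perturbation argument placing $N_0$ almost inside some $M_n$. Fixing $\varepsilon \in (0,1)$, choosing a basis of $N_0$ and truncating its vectors, for $n$ large the compression $\phi_n := P_n(\cdot)P_n|_{N_0} : N_0 \to M_n \hookrightarrow \mathcal{K}(\ell_2)$ satisfies $\|\phi_n - i_{N_0}\|_{\cb} < \varepsilon$, where $i_{N_0}:N_0\hookrightarrow \mathcal{K}(\ell_2)$ is the inclusion (conjugation by $I_m \otimes P_n$ tends to the identity uniformly on the compact unit ball of $M_m(N_0)$, for each fixed $m$; equivalently, this is the $1$-exactness of $\mathcal{K}(\ell_2)$). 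Hence $\phi_n$ is a complete isomorphism onto $N_1 := \phi_n(N_0) \subseteq M_n$ with $\|\phi_n\|_{\cb} \le 1+\varepsilon$ and $\|\phi_n^{-1}\|_{\cb} \le (1-\varepsilon)^{-1}$. With $w := (id_E \otimes \phi_n)_k(z) \in M_k(E \otimes N_1) \subseteq M_k(E \otimes M_n)$, complete right-injectivity of $\beta$ and $\alpha\backslash$ applied to $N_1 \hookrightarrow M_n$, together with the first paragraph, give $\beta_k(w; E, N_1) = \beta_k(w; E, M_n) = (\alpha\backslash)_k(w; E, M_n) = (\alpha\backslash)_k(w; E, N_1)$; while the complete metric mapping property applied to $\phi_n$ and $\phi_n^{-1}$ shows that $(1-\varepsilon)\,\gamma_k(z;E,N_0) \le \gamma_k(w;E,N_1) \le (1+\varepsilon)\,\gamma_k(z;E,N_0)$ for $\gamma \in \{\beta, \alpha\backslash\}$. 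Chaining these relations bounds $\beta_k(z;E,N_0)/(\alpha\backslash)_k(z;E,N_0)$ between $(1-\varepsilon)/(1+\varepsilon)$ and $(1+\varepsilon)/(1-\varepsilon)$; letting $\varepsilon \to 0$ yields $\beta_k(z;E,N_0) = (\alpha\backslash)_k(z;E,N_0)$, hence $E \otimes_\beta F = E \otimes_{\alpha\backslash} F$.

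I expect the only genuinely delicate point to be the perturbation/truncation step: verifying that the compressions $\phi_n$ converge to $i_{N_0}$ in $\cb$-norm (equivalently, packaging the exactness of $\mathcal{K}(\ell_2)$) and extracting a two-sided inverse with controlled $\cb$-norm. Everything else is routine bookkeeping with the complete metric mapping property and complete right-injectivity, plus the two facts about $M_n$ that drive the proof: that it satisfies the hypothesis and that it is completely complemented (via a complete contraction) in $B(\ell_2)$.
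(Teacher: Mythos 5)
Your proof is correct, and it is essentially the intended argument: the paper records this proposition without a written proof, and your combination of (i) $\alpha\backslash=\alpha$ on $E\otimes M_n$ because $M_n$ is completely injective (or via the corner compression, as you do), (ii) reduction to a finite-dimensional $N_0\subseteq F$ using complete right-injectivity of both $\beta$ and $\alpha\backslash$, and (iii) a truncation/perturbation placing $N_0$ $(1+\varepsilon)$-completely isomorphically inside some $M_n$ is exactly the $\mathcal{K}(\ell_2)$-analogue of the local argument used for $S_1(H)$ in Proposition \ref{S1 casi proyectivo}, and it conveniently avoids any finite-generation hypothesis that a route through the $\mathscr{OS}_p$-local technique lemma would require. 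The one point to tighten is the justification that $\|\phi_n-i_{N_0}\|_{\cb}\to 0$: convergence at each fixed matrix level $m$ does not by itself control the cb-norm, but your basis/truncation description does the job once spelled out --- write $\phi_n-i_{N_0}=\sum_j x_j'(\cdot)\,(P_n x_j P_n-x_j)$ for a basis $(x_j)$ of $N_0$ with dual functionals $(x_j')$, and use that a rank-one map has cb-norm $\|x_j'\|\,\|P_n x_j P_n-x_j\|$ together with $P_n x P_n\to x$ in norm for compact $x$ (the same perturbation device as in Lemma \ref{lem: perturbation step}).
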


Now we study the relation between accessibility of a tensor norm with some properties of its associated mapping ideal. As it is our habit in the operator space framework, we consider local reflexivity versions of the definitions.


\begin{definition}\label{defn-accessible-mapping-ideal}
A mapping ideal $(\mathfrak{A},\mathbf{A})$ is called \emph{right-accessible} \emph{(locally right-accessible)} if for all $M \in \OFIN$, $F \in \OBAN$ ($F\in\OLOC$), $n\in\N$, $T \in M_n(\CB(M,F))$ and $\varepsilon>0$ there are $N \in \OFIN(F)$ and $S \in M_n(\CB(M,N))$ such that $\mathbf{A}_n(S) \le (1+\varepsilon)\mathbf{A}_n(T)$ and the following diagram commutes
$$
\xymatrix{
M_n(M) \ar[dr]_S \ar[r]^T &M_n(F) \\
 &M_n(N) \ar[u]_{(i^F_N)_n}\\
}
$$
$(\mathfrak{A},\mathbf{A})$ is called \emph{left-accessible} \emph{(locally left-accessible)}  if for all $E \in \OBAN$  ($E\in\OLOC$), $N\in\OFIN$, $n\in\N$, $T \in M_n(\CB(E,N))$ and $\varepsilon>0$, there exist $L \in \OCOFIN(E)$ and $S \in M_n(\CB(E/L,N))$ such that $\mathbf{A}_n(S) \le (1+\varepsilon) \mathbf{A}_n(T)$ and the following diagram commutes
$$
\xymatrix{
M_n(E) \ar[r]^T \ar[d]_{(q^E_L)_n} &M_n(N)\\
M_n(E/L) \ar[ru]_S &\\
}
$$
A mapping ideal which is both left and right-accessible (locally left- and locally right-accessible) is called \emph{accessible} \emph{(locally accessible)}.
Moreover, $(\mathfrak{A},\mathbf{A})$ is called \emph{totally accessible} \emph{(locally  totally accessible)}   if for every matrix of finite-rank operators $T : M_n(E) \to M_n(F)$ between complete
operator spaces (complete locally reflexive operator spaces) and $\varepsilon>0$, there are $L \in \OCOFIN(E)$, $N \in \OFIN(F)$ and $S \in M_n(\CB(E/L,N))$ such that
$\mathbf{A}_n(S) \le (1+\varepsilon) \mathbf{A}_n(T)$ and the following diagram commutes
$$
\xymatrix{
M_n(E) \ar[r]^T \ar[d]_{(q^E_L)_n} &M_n(F)\\
M_n(E/L) \ar[r]_S &M_n(N) \ar[u]_{(i^F_N)_n}\\
}
$$
\end{definition}

The necessity of these \emph{local} versions of the accessibility definitions is justified by Example \ref{example-accessible-ideals} $(2)$, where we show that the mapping ideal $\mathcal I$ is locally left-accessible but not left-accessible.

\begin{remark}\label{remark-injective-and-projective-implies-accessible}
It is clear that every completely injective mapping ideal is right-accessible, and every completely projective mapping ideal is left-accessible.
Assume  $T : M_n(E) \to M_n(F)$ is a matrix of finite-rank operators.
Let $\im(T)$ denote the span of the union of the spaces $(T_{ij}(E))_{i,j=1}^n$, and observe that $\im(T) \in \OFIN(F)$.
Denoting $\ker(T) = \bigcap_{i,j=1}^n \ker(T_{ij})$, note $\ker(T) \in \OCOFIN(E)$ and moreover there is a canonical
factorization
$$
\xymatrix{
M_n(E) \ar[r]^T \ar[d] &M_n(F)\\
M_n(E/\ker(T)) \ar[r]_S &M_n(\im(T)) \ar[u]\\
}
$$
This shows that a mapping ideal that is both completely injective and completely projective is totally accessible.
\end{remark}
As an example, since we have already noted that $\CB$ is both completely injective and completely projective, we deduce that the mapping ideal $\CB$ is totally accessible.

\begin{remark}\label{remark-maximal-accessibility}
Note that if $(\mathfrak{A}^{\max},\mathbf{A}^{\max})$ is right-accessible (left-accessible, totally accessible, locally accessible) then $(\mathfrak{A},\mathbf{A})$ is right-accessible (left-accessible, totally accessible, locally accessible). Indeed, this is clear from the definitions of the different variants of accessibility along with the completely contractive inclusion $\mathfrak{A}(E,F)\subset \mathfrak{A}^{\max}(E,F)$ for $E,F\in\OBAN$ and the completely isometric equality $\mathfrak{A}(M,N)= \mathfrak{A}^{\max}(M,N)$ for $M,N\in\OFIN$.
\end{remark}

There is a reason why we are defining notions of accessibility for both cross-norms and mapping ideals: in the Banach space setting, for a maximal mapping ideal $\mathfrak{A}$ associated with a (finitely-generated) cross-norm $\alpha$, the two notions of accessibility coincide.
We will now show that, in the operator space world, there is a relationship between (some of) the notions of accessibility for o.s. tensor norms and for operator ideals, but, as usual, local reflexivity is involved. Actually, for right accessibility the equivalence is just as in the Banach space case but for left accessibility we could only prove a weaker statement.

\begin{proposition}\label{prop-equivalences-accessibility}
Let $(\mathfrak{A},\mathbf{A})$ be a mapping ideal and $\alpha$ be its associated finitely-generated o.s. tensor norm.
Then:

\begin{enumerate}[(a)]
    \item  $\alpha$  is right-accessible (locally right-accessible)  if and only if $(\mathfrak{A},\mathbf{A})$ is right-accessible (locally right-accessible).
    \item If $\alpha$ is left-accessible (totally accessible) then $(\mathfrak{A},\mathbf{A})$ is locally left-accessible (locally totally accessible).
\end{enumerate}

\end{proposition}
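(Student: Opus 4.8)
The plan is to prove the correspondence between accessibility of the finitely-generated o.s. tensor norm $\alpha$ and accessibility of its associated mapping ideal $\mathfrak{A}$, translating diagrammatic factorization conditions into tensor-norm equalities via the canonical identification $\mathfrak{A}(M,N) = M' \otimes_\alpha N$ that holds completely isometrically on $\OFIN$.

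For part (a), I would first reduce to the maximal case: by Remark \ref{remark-maximal-accessibility} and Remark \ref{misma norma asociada}, $\mathfrak{A}$ is right-accessible iff $\mathfrak{A}^{\max}$ is right-accessible, and both ideals share the same associated norm $\alpha$; so it suffices to prove the equivalence for $(\mathfrak{A}^{\max},\mathbf{A}^{\max})$. Now fix $M \in \OFIN$ and $F \in \OBAN$ (or $F\in\OLOC$). A matrix $T \in M_n(\CB(M,F)) = M_n(\mathfrak{A}^{\max}(M,F))$ corresponds via the identification to an element $z_T \in M_n(M' \otimes F)$, and by the Representation Theorem \ref{representation-theorem} its norm $\mathbf{A}^{\max}_n(T)$ equals $\overrightarrow{\alpha}_n(z_T; M',F)$. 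Given $N \in \OFIN(F)$ and $S \in M_n(\CB(M,N))$ with $(i^F_N)_n \circ S = T$, the associated tensor $z_S \in M_n(M' \otimes N)$ satisfies $(id_{M'}\otimes i^F_N)_n(z_S) = z_T$, and $\mathbf{A}^{\max}_n(S) = \alpha_n(z_S; M', N) = \overrightarrow{\alpha}_n(z_T; M', N)$ (using that $\alpha = \overrightarrow{\alpha}$ on $\OFIN$). Thus the right-accessibility condition for the ideal — existence, for every $\varepsilon>0$, of such $N,S$ with $\mathbf{A}_n(S) \le (1+\varepsilon)\mathbf{A}_n(T)$ — is \emph{precisely} the statement that $\overrightarrow{\alpha}_n(z; M',F) = \inf\{ \overrightarrow{\alpha}_n(z; M',N) : N \in \OFIN(F), z \in M_n(M'\otimes N)\} = \alpha^{\rightarrow}_n(z; M',F)$ for all $z$; and since right-accessibility of $\alpha$ is $\overrightarrow{\alpha} = \overleftarrow{\alpha}$ on $\OFIN \times \ONORM$, one checks using $\overleftarrow{\alpha} \le \alpha^{\rightarrow} \le \overrightarrow{\alpha}$ (the first inequality because cofinite hulls are computed via quotients while $\alpha^{\rightarrow}$ uses subspaces of the second variable together with the Duality Theorem \ref{duality-theorem}) that $\overrightarrow{\alpha} = \overleftarrow{\alpha}$ on $\OFIN\times\ONORM$ is equivalent to $\overrightarrow{\alpha} = \alpha^{\rightarrow}$ there. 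The finite-dimensional vs. locally-reflexive distinction carries through verbatim, giving the parenthetical ``locally'' version. I expect the matching of the two inequality chains — reconciling the cofinite-hull description of $\overleftarrow{\alpha}$ with the ``right-finite hull'' $\alpha^{\rightarrow}$ description coming from the ideal's factorization diagram — to be the one genuinely delicate point, and the place where the Duality Theorem does the real work.

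For part (b), assuming $\alpha$ is left-accessible, I would run the dual argument but now confront the asymmetry. Given $E \in \OLOC$, $N \in \OFIN$, and $T \in M_n(\CB(E,N))$, I want $L \in \OCOFIN(E)$ and $S \in M_n(\CB(E/L,N))$ with $S \circ (q^E_L)_n = T$ and $\mathbf{A}_n(S)$ nearly $\mathbf{A}_n(T)$. Again reduce to $\mathfrak{A}^{\max}$; by the Representation Theorem and the Duality Theorem \ref{duality-theorem} (equation \eqref{eqn-duality-2}, which requires $E$ locally reflexive — this is exactly where the hypothesis $E \in \OLOC$ enters and why only the \emph{local} conclusion is available), we have complete isometries $E' \otimes_{\overleftarrow{\alpha}} N \hookrightarrow (E \otimes_{\alpha'} N')'$ and hence $\mathfrak{A}^{\max}(E,N)$ relates to $E' \otimes_{\overleftarrow{\alpha}} N$. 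Left-accessibility of $\alpha$ gives $\overleftarrow{\alpha} = \overrightarrow{\alpha} = \alpha$ on $\ONORM \times \OFIN$, in particular on $E' \otimes N$; combined with the fact that $\OFIN(E') = \{K^0 : K \in \OCOFIN(E)\}$ (as used in the proof of the Duality Theorem), approximating $z_T \in M_n(E' \otimes N)$ by finite-dimensional subspaces $K^0 \subseteq E'$ of the first variable translates exactly into finding the quotient $E/K$ and the lifted operator $S$ on $M_n(E/K)$. The locally-totally-accessible case is obtained by combining the arguments for left and right simultaneously, again using that the relevant duality isometry needs both $E$ and $F$ locally reflexive. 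The main obstacle here is bookkeeping: carefully identifying, under the chain of dualizations, which finite-dimensional approximation on the tensor-product side corresponds to which factorization diagram on the ideal side, and verifying that the norm estimates transfer with the same $(1+\varepsilon)$ constant; the local reflexivity hypothesis cannot be removed from (b), as Example \ref{example-accessible-ideals}$(2)$ (the ideal $\mathcal I$) will show.
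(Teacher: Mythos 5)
Your overall plan (reduce to the maximal ideal, translate factorization diagrams into tensor-norm statements through the identification $\mathfrak{A}(M,N)=M'\otimes_\alpha N$ on $\OFIN$) is the paper's route, and your sketch of part (b) is essentially the correct argument. However, part (a) contains a genuine error at its pivot point. You claim that, by the Representation Theorem, $\mathbf{A}^{\max}_n(T)=\overrightarrow{\alpha}_n(z_T;M',F)$. This is not what the Representation/Duality machinery gives: for a maximal ideal and $M\in\OFIN$ (hence $M$ locally reflexive), the Embedding Theorem \ref{embedding-theorem} yields the \emph{cofinite} hull, $\mathbf{A}^{\max}_n(T)=\overleftarrow{\alpha}_n(z_T;M',F)$ --- indeed this is immediate from the definition of $\mathbf{A}^{\max}$, since $\mathbf{A}^{\max}_n(T)=\sup_{L\in\OCOFIN(F)}\mathbf{A}_n((q^F_L)_n\circ T)=\sup_L\alpha_n\big((id_{M'}\otimes q^F_L)_n z_T;M',F/L\big)$. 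The identity you assert is equivalent to right-accessibility of $\alpha$ itself, so using it makes the converse direction circular (and the forward direction is only salvaged if you add that hypothesis explicitly, which is what the paper does via its equation \eqref{igualdadnorma}).

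This mis-identification then propagates: you characterize right-accessibility of the ideal as ``$\overrightarrow{\alpha}=\alpha^{\rightarrow}$ on $\OFIN\otimes\ONORM$'' and claim this is equivalent to $\overrightarrow{\alpha}=\overleftarrow{\alpha}$ there. But when the first factor $M'$ is already finite-dimensional, the complete metric mapping property applied to the inclusion $E_0\hookrightarrow M'$ shows $\overrightarrow{\alpha}_n(z;M',F)=\alpha^{\rightarrow}_n(z;M',F)$ \emph{always}; your condition is vacuous, whereas $\overrightarrow{\alpha}=\overleftarrow{\alpha}$ on $\OFIN\otimes\ONORM$ is not (e.g.\ $\backslash\min$ is finitely generated but not right-accessible, by Remark \ref{consecuencias min}). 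Hence your argument never establishes the implication ``$\mathfrak{A}$ right-accessible $\Rightarrow$ $\alpha$ right-accessible.'' The repair is exactly the correct hull identification: with $\mathbf{A}^{\max}_n(T)=\overleftarrow{\alpha}_n(z_T;M',F)$, a $(1+\varepsilon)$-factorization of $T$ through some $N\in\OFIN(F)$ gives, via $M'\otimes_\alpha N=\mathfrak{A}(M,N)$, a tensor $z_S$ with $\alpha_n(z_S;M',N)\le(1+\varepsilon)\overleftarrow{\alpha}_n(z;M',F)$ and $(id_{M'}\otimes i^F_N)_n z_S=z$, whence $\alpha\le(1+\varepsilon)\overleftarrow{\alpha}$ on $M'\otimes F$; conversely, right-accessibility of $\alpha$ turns $\mathbf{A}^{\max}_n(T)$ into $\overrightarrow{\alpha}_n(z_T;M',F)$, whose infimum description produces the finite-dimensional $N$ and the factorization. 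Your part (b), which correctly uses the cofinite-hull identification (via the Duality Theorem with $E\in\OLOC$) together with $\OFIN(E')=\{K^0:K\in\OCOFIN(E)\}$, matches the paper's proof and stands, including the observation that local reflexivity cannot be dropped there.
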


\begin{proof}
$(a)$ Assume that $\alpha$ is right-accessible. Due to Remark \ref{remark-maximal-accessibility} we may consider that $(\mathfrak{A},\mathbf{A})$ is maximal.
Let $M \in \OFIN$,   $F\in\OBAN$ and
 $T \in M_n(\CB(M,F))$. Denote by $z_T \in M_n(M' \otimes F)$ the matrix of tensors corresponding to $T$.
By the definition of right-accessibility and the Embedding Theorem \ref{embedding-theorem} (using the fact that $M$ is finite-dimensional and therefore locally reflexive) we have
\begin{equation}\label{igualdadnorma}
\overrightarrow{\alpha}_n( z_T ; M', F) = \overleftarrow{\alpha}_n( z_T ; M', F) = \mathbf{A}_n(T).
\end{equation}
This implies that there are $\widetilde{M} \in \OFIN(M')$, $N \in \OFIN(F)$ and $u \in M_n(\widetilde{M} \otimes N)$ such that
$$
\alpha_n(u ; \widetilde{M}, N) \le (1+\varepsilon)\mathbf{A}_n(T) \quad \text{and} \quad (i^{E'}_{\widetilde{M}} \otimes i^F_N)_n (u) = z_T;
$$
moreover, note that we can assume $\widetilde{M} =M'$.
Hence, if $T_u  \in M_n(\CB(M, N))$ is the matrix of mappings corresponding to $u$, it satisfies $\mathbf{A}_n(T_u) \le (1+\varepsilon) \mathbf{A}_n(T)$ and
$(i^F_N)_n \circ T_u = T$.



Conversely, assume that $(\mathfrak{A},\mathbf{A})$ is right-accessible.
Since $\alpha$ is finitely-generated, $\alpha = \overrightarrow{\alpha}$. We always have $\overleftarrow{\alpha} \le \alpha$, so all we need to show is
$$
\alpha(\cdot; M',F) \le \overleftarrow{\alpha}(\cdot; M',F)
$$
for finite-dimensional $M$ and arbitrary $F$.
Given $z \in M_n(M' \otimes F)$ and $\varepsilon > 0$, by the right-accessibility of $\mathfrak{A}$ there are $N \in \OFIN(F)$ and $S \in M_n(\CB(M,N))$ such that
$$
\mathbf{A}_n(S) \le (1+\varepsilon)\mathbf{A}_n(T_z) \quad \text{and} \quad (i^F_N)_n \circ S = T_z
$$
where $T_z \in M_n(\CB(M,F))$ is the matrix of mappings corresponding to $z$.
If $z_S \in M_n(E' \otimes N)$ is the matrix of tensors corresponding to $S$,
since $M' \otimes_\alpha N = \mathfrak{A}(M,N)$ completely isometrically, it follows that
$$
\alpha_n(z_S; M', N) = \mathbf{A}_n(S) \le (1+\varepsilon)\overleftarrow{\alpha}_n(z ; E' , F)
$$
and $(i_{M'} \otimes i^F_N)_n(z_S) = z$,
showing that $\alpha$ is right-accessible.

The case of local right-accessibility follows in the same way just replacing $F \in \OBAN$ by $F\in \OLOC$. 

$(b)$ We prove the case of left-accessibility; the other one is similar. Again, by Remark \ref{remark-maximal-accessibility} we may consider that $(\mathfrak{A},\mathbf{A})$ is maximal. Let $E\in\OLOC$, $N\in\OFIN$ and $T\in M_n(\CB(E,N))$. Denoting by $z_T\in M_n(E'\otimes N)$ the matrix of tensors corresponding to $T$ and applying the definition of left-accessibility and the Embedding Theorem \ref{embedding-theorem} we have
$$
\overrightarrow{\alpha}_n( z_T ; E', N) = \overleftarrow{\alpha}_n( z_T ; E', N) = \mathbf{A}_n(T).
$$
This implies that there is $M \in \OFIN(E')$  and $u \in M_n(M \otimes N)$ such that
$$
\alpha_n(u ; M, N) \le (1+\varepsilon)\mathbf{A}_n(T) \quad \text{and} \quad (i^{E'}_{M} \otimes id)_n (u) = z_T.
$$

Hence, if $T_u  \in M_n(\CB(E/^0M, N))$ is  the matrix of mappings corresponding to $u$, we arrive to $\mathbf{A}_n(T_u) \le (1+\varepsilon) \mathbf{A}_n(T)$ and
$ T_u\circ q_{^0M}^E = T$, which finishes the proof.
\end{proof}

\begin{remark} \label{remark-lambda-accessibility}
If $\alpha'$ is a $\lambda$-o.s. tensor norm, then the conclusion of Proposition \ref{prop-equivalences-accessibility} $(b)$ is obtained without the word ``locally''. This is clear following the previous argument since, as we comment in Remark \ref{remark-embedding-theorem-lambda}, in this case the Embedding Theorem \ref{embedding-theorem} is valid with the local reflexivity hypothesis. On the other hand, in general it is not possible to obtain the conclusion of Proposition \ref{prop-equivalences-accessibility} $(b)$  without the word ``locally''. Indeed, in Example \ref{example-accessible-ideals} (2) we see that $\mathcal I$ is not left-accessible even though the associated o.s. tensor norm $\proj$ is left-accessible.
\end{remark}

\begin{example} \label{example-accessible-ideals}
\begin{enumerate}
    \item The mapping ideal $\mathfrak A_h$ is totally accessible.
    
    {\rm Since $h$ is totally accessible and $h'=h$ is a $\lambda$-o.s. tensor norm, the conclusion follows from Proposition \ref{prop-equivalences-accessibility} and Remark \ref{remark-lambda-accessibility}.}
   
    \item The mapping ideals $\mathcal N$ and $\mathcal I$ are right-accessible and locally left-accessible. The mapping ideal $\mathcal I$ is not left-accessible.
    
    \rm Since $\proj$ is both right and left accessible we obtain the first assertion from Proposition \ref{prop-equivalences-accessibility}. Now, suppose that $\mathcal I$ is left-accessible. Then, for every $E \in \OBAN$,   $N\in\OFIN$, $T \in \CB(E,N)$ and $\varepsilon>0$, there exist $L \in \OCOFIN(E)$ and $S \in \CB(E/L,N)$ such that $T=S\circ q_L^E$ and $\iota(S) \le (1+\varepsilon) \iota(T)$. Due to the fact that $E/L, N\in \OFIN$ we know that $\nu(T)\le \nu(S)=\iota(S)$ which implies $\nu(T)\le \iota(T)$. Since the other inequality is always valid we obtain the  isometry $\mathcal N(E,N)=\mathcal I(E,N)$. By \cite[Prop. 14.3.1]{Effros-Ruan-book} this isometry is valid for every $N\in\OFIN$ if and only if $E$ is locally reflexive. Therefore, the mapping ideal $\mathcal I$ can not be left-accessible.
\end{enumerate}

\end{example}

We are not aware of whether there exists a locally right-accessible mapping ideal which is not right-accessible. It would be interesting to have such an example or a proof of its nonexistence.

\section{Natural o.s. tensor norms}\label{normasnaturales sec}

Grothendieck's \emph{R\'{e}sum\'{e}} contained the list of all \textit{natural} tensor norms. These norms come from applying a finite number of \emph{natural} operations to the projective and injective tensor norms. They are obtained by taking left/right projective and injective hulls in some order (see Sections 15 and 20 in \cite{Defant-Floret}).
Grothendieck proved that there were at most fourteen possible natural norms, but he did not know the exact dominations among them, or if there was a possible reduction on the table of natural norms (this was, in fact, one of the open problems posed in the \emph{R\'{e}sum\'{e}}). This was solved, several years later, thanks to very deep ideas of Gordon and Lewis.
All these results are now classical and can be found for example in \cite[Section 27]{Defant-Floret} and \cite[4.4.2]{diestel2008metric}. 

One of the strengths of Grothendieck's result is that most of his fourteen tensor norms (at least ten) are \textit{really natural}, since they turn out to be equivalent to the most relevant tensor norms: those related to the ideals of bounded, integral, absolutely $r$-summing ($r=1,2$), $r$-factorable ($r=1,2,\infty$) and $2$-dominated operators. These tensor norms appear \emph{naturally} in the theory by their own interest, and it is a remarkable thing that they can be obtained from the projective/injective norm by means of the \emph{natural} operations introduced by Grothendieck.

In the operator space setting a study of natural norms in the sense of Grothendieck was suggested by Blecher in \cite{blecher1991tensor} while in \cite{kirchberg1995exact, pisier2020tensor} completely injective and completely projective hulls of tensor norms are considered in the $C^*$-algebra framework. However, only a few natural operator space tensor norms have been previously  considered as related to mapping ideals.  That is the case of $\min /$, $\backslash\min$ and $/\proj \backslash$. 
According to Example \ref{Ejemplo d infinito} we have that $d_\infty  = \min/$, and this was also observed in \cite{Effros-Ruan-Grothendieck-Pietsch} (right after Cor. 5.5). By transposing, $g_\infty  = \backslash\min$. The o.s. tensor norm $/\proj\backslash$ was introduced in \cite{dimant2015bilinear} under the name $\eta$ and the dual of $E\otimes_{\eta}F$  can be identified with the operator space bilinear ideal of extendible elements (i.e. those which extend to any larger space).
As can be seen in \cite{dimant2015bilinear} the o.s. tensor norm $/\proj\backslash$ is connected to deep results on noncommutative versions of Grothendieck's inequality \cite{pisier2002grothendieck,haagerup2008effros}, and it actually is equivalent at the Banach space level to $h \cap h^t$. Similarly $\backslash\min/$ is equivalent to $h+h^t$ at the Banach space level (see Lemma \ref{Lema:equivalencia nivel Banach} for the details).

The lack of a proper tensor product version of Grothendieck's inequality along with (or maybe motivated by) the existence of Haagerup tensor norm which is both completely injective and completely projective has a strong impact in the behavior of \textsl{natural} operator space tensor norms.

In this section, we consider Grothendieck's natural norms in the operator space framework. In other words, those norms obtained from $\min$ or $\proj$ after applying left or right injective/projective hulls finitely many times. 

We present a first overview of the list, proving many dominations and non-equivalences. We also state an interesting list of open questions about them as well.
On the positive side, we completely describe the list of all natural norms that come from applying to $\min$ or $\proj$ two-sided symmetric operations (injective or projective hulls).
Precisely, the list consists of six o.s. tensor norms. Again, this differs from the Banach space case where there are four.

We begin with some results from  Banach space theory that remain valid for operator spaces and later we get into the differences between both contexts.

As in the Banach space setting the process of taking alternatively injective and projective hulls (at the same side or at both sides of a norm) finishes after three steps. This is shown in the following lemma which is the o.s.version of \cite[Prop. 2.6.3]{diestel2008metric} (see also \cite[Lemma 3.3]{carando2012natural}).
\begin{lemma}\label{lemma two-sided hulls}
Let $\alpha$ be an o.s. tensor norm then
\begin{enumerate}[(a)]
    \item $\backslash (/ (\backslash (/ \alpha \backslash )/ )\backslash )/ = \backslash (/ \alpha \backslash ) /$ and $/(\backslash (/ (\backslash\alpha /)\backslash )/)\backslash = /(\backslash\alpha /)\backslash$.
    \item $\alpha \backslash / \backslash / =  \alpha \backslash /$ and $\alpha /\backslash /\backslash = \alpha /\backslash$.
    \item $\backslash / \backslash / \alpha  = \backslash / \alpha $ and $/\backslash /\backslash\alpha  = /\backslash\alpha $.
\end{enumerate}
\end{lemma}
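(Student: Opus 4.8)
All three assertions have the same shape, namely that iterating the injective/projective hull operations stabilizes after three steps, so the plan is to reduce everything to one underlying monotonicity/stabilization principle and then apply it symmetrically. The two equalities in each item are transposes of each other (recall $\slash$ and $\backslash$ are swapped under transposition, and that taking transpose commutes with every hull operation), so it suffices to prove one equality in each of (a), (b), (c). Moreover (a) is the two-sided version while (b) and (c) are the one-sided (right, resp.\ left) versions; since the one-sided hulls $\slash$, $\backslash$ (or $\backslash$, $\slash$) and the two-sided hulls $\slash(\cdot)\backslash$, $\backslash(\cdot)\slash$ each form a pair of order-preserving operations on o.s.\ tensor norms (with respect to $\le$) satisfying the same abstract axioms, I would first isolate the following abstract lemma and prove it once: if $I$ and $P$ are order-preserving maps on a partially ordered set such that $I(\alpha)\le\alpha\le P(\alpha)$, $I(I(\alpha))=I(\alpha)$, $P(P(\alpha))=P(\alpha)$ for all $\alpha$, then $P(I(P(I(\alpha))))=P(I(\alpha))$ and $I(P(I(P(\alpha))))=I(P(\alpha))$.

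\textbf{Key steps.} First I would record the basic facts already available in the excerpt: the injective hulls satisfy $\alpha\backslash\le\alpha$, $(\alpha\backslash)\backslash=\alpha\backslash$ and $\backslash\alpha\le\alpha$, $\backslash(\backslash\alpha)=\backslash\alpha$ (idempotency is immediate from Theorem~\ref{thm:norma right injective asociada} and its left version, since $\alpha\backslash$ is itself completely right-injective and is dominated by $\alpha$, hence equals its own right-injective hull); dually the projective hulls satisfy $\alpha\le\alpha\slash$, $(\alpha\slash)\slash=\alpha\slash$ and $\alpha\le\backslash\alpha$, $\backslash(\backslash\alpha)=\backslash\alpha$ from Theorem~\ref{thm: projective associate}. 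I also need order-preservation: $\beta\le\gamma$ implies $\beta\backslash\le\gamma\backslash$, $\backslash\beta\le\backslash\gamma$, $\beta\slash\le\gamma\slash$, $\backslash\beta\le\backslash\gamma$, which is immediate from the universal/extremal characterizations in those theorems. The same four properties hold verbatim for the two-sided operations $I:=\slash(\cdot)\backslash$ and $P:=\backslash(\cdot)\slash$ (using that $\slash(\cdot)\backslash=\slash(\backslash(\cdot))$ and $\backslash(\cdot)\slash=\backslash(\cdot\slash)$, with the two factors commuting, as observed after Definitions~\ref{inyectiva conmutativa} and \ref{proyectiva conmutativa}): $I$ and $P$ are order-preserving, idempotent, and $I(\alpha)\le\alpha\le P(\alpha)$. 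Once the abstract lemma is in place, item (b) is $P(I(P(I(\alpha))))=P(I(\alpha))$ with $I=\cdot\backslash$, $P=\cdot\slash$; item (c) is $I(P(I(P(\alpha))))=I(P(\alpha))$ with $I=\backslash\cdot$, $P=\backslash\cdot$; and item (a) is the same abstract statement with $I=\slash(\cdot)\backslash$, $P=\backslash(\cdot)\slash$ on the left, and its transpose on the right.

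\textbf{Proof of the abstract lemma.} For one inequality: applying $I$ to $I(\alpha)\le P(I(\alpha))$ and using idempotence of $I$ gives $I(\alpha)=I(I(\alpha))\le I(P(I(\alpha)))$; then applying $P$ and using $P(I(\alpha))\le P(P(I(\alpha)))=P(I(\alpha))$... more carefully: from $I(\alpha)\le I(P(I(\alpha)))\le P(I(\alpha))$ (the last step is $I(\beta)\le\beta$ with $\beta=P(I(\alpha))$), applying the order-preserving $P$ yields $P(I(\alpha))\le P(I(P(I(\alpha))))\le P(P(I(\alpha)))=P(I(\alpha))$, so all are equal, giving $P(I(P(I(\alpha))))=P(I(\alpha))$. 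The dual statement is obtained by the order-reversing symmetry $I\leftrightarrow P$, $\le\leftrightarrow\ge$. This is entirely formal, so there is no real obstacle in items (a)--(c) once one grants the four structural properties for all six hull operations; the only point requiring a little care is verifying idempotence and order-preservation of the two-sided operations, but these follow from the corresponding one-sided facts since the left and right hull operations commute (a fact already established in the remarks after Definitions~\ref{inyectiva conmutativa} and \ref{proyectiva conmutativa}) and each is individually idempotent and order-preserving. I expect the writeup to be short; the ``main obstacle'' is merely bookkeeping to make sure the abstract lemma is stated in a form that covers all three items and both the left and right variants simultaneously, so that the transpose symmetry can be invoked cleanly rather than repeating the argument six times.
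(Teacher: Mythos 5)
Your argument is correct and is essentially the paper's proof: the paper establishes each equality by the same two-inequality sandwich, starting from $/\gamma\backslash\le\gamma$ (resp. $/\alpha\backslash\le\backslash(/\alpha\backslash)/$) and applying monotone, idempotent hull operations to both sides, which is exactly your abstract lemma about order-preserving idempotent maps $I,P$ with $I(\alpha)\le\alpha\le P(\alpha)$. One small caveat: within each item the two equalities are order-duals ($I\leftrightarrow P$), not transposes of one another (transposition in fact exchanges (b) and (c) and fixes (a)), and your operator assignment for (c) has a typo ($I=/\,\cdot$, $P=\backslash\,\cdot$); this is harmless, since the dual form $I(P(I(P(\alpha))))=I(P(\alpha))$ of your abstract lemma already covers the second equality in each item.
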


\begin{proof} We just prove the first equality of $(a)$ since all the arguments are  similar.  It is clear that $ / (\backslash (/ \alpha \backslash) /) \backslash  \le \backslash (/ \alpha \backslash )/$ so we can apply completely projective hulls to both sides to obtain $\backslash (/ (\backslash (/ \alpha \backslash) / )\backslash) /\le \backslash (/ \alpha \backslash )/$. The other inequality follows analogously: we start from $/\alpha \backslash\le \backslash (/\alpha \backslash ) /$ and apply completely  injective hulls to both sides and then completely projective hulls arriving to $\backslash (/ \alpha \backslash ) /\le\backslash (/ (\backslash (/ \alpha \backslash ) / )\backslash ) /$.

\end{proof}

As we have seen in Definitions \ref{inyectiva conmutativa} and \ref{proyectiva conmutativa} we can commute the order when taking completely injective (or completely projective) hulls on both sides of a norm. The situation is different if we apply a completely injective hull on one side and a completely projective hull on the other. The following lemma (which has an analogous statement for the Banach space setting) provides an example of this fact through an equality of norms which will be important for the future description of natural norms.

\begin{lemma} \label{lema:identidad proj}
The following o.s. tensor norms are equal:
$$
(/\proj)/=\proj= \backslash(\proj\backslash).
$$
\end{lemma}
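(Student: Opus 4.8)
The plan is to prove the chain of equalities by establishing, in each case, two inequalities between o.s. tensor norms. Recall that ``$\alpha \le \beta$'' means the identity $E \otimes_\beta F \to E\otimes_\alpha F$ is completely contractive, and that for norms built from hulls of finitely-generated norms it suffices (by the finite generation established in Lemmas~\ref{finitamente generada la capsula inyectiva}, \ref{finitamente generada la capsula proyectiva} and \ref{lem: finit a la derecha}) to verify the inequalities on $\OFIN$. First I would record the trivial inequalities: since $\alpha \le \alpha/$ for any $\alpha$ (by the definition of the completely right-projective hull, Theorem~\ref{thm: projective associate}), we have $\proj \le (/\proj)/$ and $\proj \le \backslash(\proj\backslash)$. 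So the content is in the reverse inequalities $(/\proj)/ \le \proj$ and $\backslash(\proj\backslash) \le \proj$, i.e., one must show that $(/\proj)$, and $(\proj\backslash)$, have completely projective hulls no larger than $\proj$.

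The key observation is that $\proj$ is already completely projective (both completely left- and completely right-projective): indeed $\proj$ respects complete metric surjections \cite[Prop. 7.1.7]{Effros-Ruan-book}. Therefore $\proj\slash = \proj$ and $\backslash\proj = \proj$. Now for $\backslash(\proj\backslash) \le \proj$: since $\proj\backslash \le \proj$ (the completely right-injective hull only decreases the norm, Theorem~\ref{thm:norma right injective asociada}), applying the completely left-projective hull --- which is monotone, a consequence of its characterization as the largest completely left-projective norm below a given one --- gives $\backslash(\proj\backslash) \le \backslash\proj = \proj$. This proves $\backslash(\proj\backslash)=\proj$. Symmetrically, for $(/\proj)/ \le \proj$: from $/\proj \le \proj$ and the monotonicity of the completely right-projective hull we obtain $(/\proj)/ \le \proj/ = \proj$, which together with the trivial inequality yields $(/\proj)/ = \proj$.

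The only genuinely delicate point is the monotonicity of the hull operations: if $\alpha \le \beta$ then $\backslash\alpha \le \backslash\beta$ and $\alpha/ \le \beta/$ (and likewise for the other one-sided hulls). This follows directly from the extremal characterizations in Theorems~\ref{thm:norma right injective asociada} and \ref{thm: projective associate}. For instance, $\backslash\beta$ is completely left-projective with $\backslash\beta \le \beta$; but then $\backslash\beta$ is a completely left-projective norm dominated by $\alpha$ is false in general --- rather, one argues: $\backslash\alpha$ is the \emph{largest} completely left-projective norm $\le \alpha$, and since $\backslash\beta \le \beta$ but we need something $\le \alpha$. The correct argument runs the other direction on the defining property: for any completely left-projective $\gamma$ with $\gamma \le \alpha$ we have $\gamma \le \alpha \le \beta$, hence $\gamma \le \backslash\beta$ by maximality; taking $\gamma = \backslash\alpha$ gives $\backslash\alpha \le \backslash\beta$. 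The analogous monotonicity for $\slash$ (the completely right-projective hull, defined via a minimality property among completely right-projective norms above $\alpha$) requires the dual bookkeeping and is where I expect the most care is needed, but it is entirely formal. I would therefore structure the proof as: (1) note $\proj$ is completely projective, so $\proj/=\proj=\backslash\proj$; (2) note $/\proj \le \proj$ and $\proj\backslash \le \proj$; (3) apply monotonicity of the relevant one-sided hull to conclude $(/\proj)/\le\proj$ and $\backslash(\proj\backslash)\le\proj$; (4) combine with the trivial reverse inequalities.

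\begin{proof}
Since the projective o.s.\ tensor norm respects complete metric surjections on both variables \cite[Prop.~7.1.7]{Effros-Ruan-book}, it is completely left- and completely right-projective. By the extremal property in Theorem~\ref{thm: projective associate} (and its left version), this means
$$
\proj\slash = \proj = \backslash\proj.
$$
On the other hand, by Theorem~\ref{thm:norma right injective asociada} and its left version we always have $\proj\backslash \le \proj$ and $\slash\proj \le \proj$.

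We first check the one-sided hull operations are monotone. Suppose $\alpha \le \beta$. If $\gamma$ is any completely left-projective o.s.\ tensor norm with $\gamma \le \alpha$, then $\gamma \le \beta$ as well, hence $\gamma \le \backslash\beta$ by the extremal property of $\backslash\beta$. Taking $\gamma = \backslash\alpha$ (which is completely left-projective and satisfies $\backslash\alpha \le \alpha$) yields $\backslash\alpha \le \backslash\beta$. The same argument with right-projective hulls shows that $\alpha \le \beta$ implies $\alpha\slash \le \beta\slash$.

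Now $\proj\backslash \le \proj$, so by monotonicity of the completely left-projective hull
$$
\backslash(\proj\backslash) \le \backslash\proj = \proj.
$$
Since also $\proj \le \backslash(\proj\backslash)$ (the completely left-projective hull does not decrease the norm, by its defining property), we conclude $\backslash(\proj\backslash) = \proj$.

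Symmetrically, $\slash\proj \le \proj$, so by monotonicity of the completely right-projective hull
$$
(\slash\proj)\slash \le \proj\slash = \proj,
$$
and since $\proj \le (\slash\proj)\slash$, we get $(\slash\proj)\slash = \proj$. This proves
$$
(\slash\proj)\slash = \proj = \backslash(\proj\backslash),
$$
as claimed.
\end{proof}
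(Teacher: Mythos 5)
Your two ``easy'' inequalities $(/\proj)/\le\proj$ and $\backslash(\proj\backslash)\le\proj$ are essentially fine (they follow from the fact that $\proj$ is completely projective together with the minimality property in Theorem~\ref{thm: projective associate}; note, though, that your monotonicity justification invokes ``$\backslash\alpha\le\alpha$'', which is false for projective hulls --- the correct route is that $\backslash\beta$ is completely left-projective and dominates $\alpha$, so $\backslash\alpha\le\backslash\beta$ by minimality of $\backslash\alpha$). The genuine gap is in the inequalities you call trivial: $\proj\le(/\proj)/$ and $\proj\le\backslash(\proj\backslash)$. The fact $\alpha\le\alpha/$, applied to $\alpha=/\proj$, yields only $/\proj\le(/\proj)/$, and ``the left-projective hull does not decrease the norm'' yields only $\proj\backslash\le\backslash(\proj\backslash)$. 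Since $/\proj$ and $\proj\backslash$ are \emph{strictly} smaller than $\proj$ (the paper records that $/(\proj/)=/\proj$ is not equivalent to $\proj$), these bounds leave $(/\proj)/$ a priori anywhere between $/\proj$ and $\proj$; the claim that the hull climbs all the way back up to $\proj$ is exactly the substantive content of the lemma. Moreover, no purely formal lattice manipulation with hulls can close this gap: the same bookkeeping, applied to $\min$, would ``prove'' $(\backslash\min)\backslash=\min$ and $/(\min/)=\min$, which Lemma~\ref{lema:no identidad min} shows are false.

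The missing ingredient is the concrete argument the paper uses. Since $(/\proj)/$ is finitely generated (Lemmas~\ref{finitamente generada la capsula inyectiva} and \ref{finitamente generada la capsula proyectiva}), one may take $F\in\OBAN$ and the complete quotient $q_F:Z_F\twoheadrightarrow F$; then $id_E\otimes q_F$ is a complete quotient both from $E\otimes_{/\proj}Z_F$ onto $E\otimes_{(/\proj)/}F$ (Theorem~\ref{thm: projective associate}) and from $E\otimes_{\proj}Z_F$ onto $E\otimes_{\proj}F$ (complete right-projectivity of $\proj$). Because $Z_F'$ is completely injective, Remark~\ref{rmk:left injective} gives $E\otimes_{/\proj}Z_F=E\otimes_{\proj}Z_F$, so the two quotient norms on $E\otimes F$ coincide, which is precisely the direction $\proj\le(/\proj)/$ that your proof asserts without justification (the identity $\backslash(\proj\backslash)=\proj$ then follows by transposition). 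Without an input of this kind --- specifically, the injectivity of $Z_F'$ --- the argument cannot be completed.
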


\begin{proof}
By Lemma \ref{finitamente generada la capsula inyectiva} and Lemma \ref{finitamente generada la capsula proyectiva}, $(/\proj)/$ is finitely generated. So, to prove the first equality (the other is obtained by transposing) it is enough to work with $E\in\ONORM$ and $F\in\OBAN$.
Let  $q_F:Z_F\twoheadrightarrow F$ be a complete quotient. Then the following two mappings are complete quotients too:
$$
id_E\otimes q_F:E\otimes_{/\proj}Z_F\twoheadrightarrow E\otimes_{(/\proj)/}F \quad\textrm{and}\quad id_E\otimes q_F:E\otimes_{\proj}Z_F\twoheadrightarrow E\otimes_{\proj}F.
$$

Since $E\otimes_{/\proj}Z_F= E\otimes_{\proj}Z_F$ (because $Z_F'$ is completely injective -see Remark \ref{rmk:left injective}) we deduce that $E\otimes_{(/\proj)/}F=E\otimes_{\proj}F$.
\end{proof}

\bigskip

The $\min$ counterpart of the previous lemma (which is valid in the Banach space framework) is not true in the operator space world. To prove this we appeal to the following result which can be also obtained (after dualization) as a byproduct of the proof of  \cite[Prop. 15.4.3]{Effros-Ruan-book} together with the fact that the set of  \textsl{exactly integral mappings} is contained in the set of completely 1-summing mappings.


\begin{lemma}\label{lemma:Exactly_integral}
Let $E\in\OBAN$ and $F\in\OFIN$ with $F'\subset E$. If there exists $C>0$ such that  $\|id: E\otimes_{\min}F\to E\otimes_{\min/}F\|\le C$  then the operator space $F'$ is $C$-exact.
\end{lemma}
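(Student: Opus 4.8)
The plan is to feed the hypothesis the ``unit tensor'' of $F'$ and to read off from it a completely bounded factorization of the inclusion $F'\hookrightarrow E$ through a full matrix algebra $M_k$; the existence of such a factorization is exactly what $C$-exactness of the finite-dimensional space $F'$ amounts to.

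First I would set up the unit tensor. Write $\iota\colon F'\hookrightarrow E$ for the (completely isometric) inclusion, pick a basis $\{e_i\}$ of $F'$ with dual basis $\{e_i^*\}\subset (F')'$, and put $t_{F'}=\sum_i e_i\otimes e_i^*\in F'\otimes (F')'$. This is the unit tensor of $F'$, and the standard computation --- it corresponds to an identity operator under the canonical description of $F'\otimes_{\min}(F')'$ as a space of completely bounded maps --- gives $\min(t_{F'};F',(F')')=1$. Since $F$ is finite-dimensional we identify $(F')'$ with $F$ and view $t_{F'}\in F'\otimes F$; using that $F'\hookrightarrow E$ is a complete injection and that $\min$ is completely injective \cite[Prop. 8.1.5]{Effros-Ruan-book}, we may regard $t_{F'}$ as an element of $E\otimes F$ with $\min(t_{F'};E,F)=1$. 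The hypothesis then gives $(\min/)(t_{F'};E,F)\le C$.

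Next, since $F$ is finite-dimensional (hence separable), Corollary~\ref{Cor: capsula proyectiva con S1} applied with $\alpha=\min$ (which is finitely generated) and $z=t_{F'}$ provides, for each $\varepsilon>0$, a number $k\in\N$, a map $R\colon S_1^k\to F$ with $\|R\|_{\cb}\le 1+\varepsilon$, and an element $w\in E\otimes_{\min}S_1^k$ with $(id_E\otimes R)(w)=t_{F'}$ and $\min(w;E,S_1^k)\le (1+\varepsilon)C$. Using the complete isometries $E\otimes_{\min}S_1^k=\CB(M_k,E)$ and $E\otimes_{\min}F=\CB(F',E)$ (valid because $S_1^k$ and $F$ are finite-dimensional), let $w_\flat\in\CB(M_k,E)$ be the operator attached to $w$, so $\|w_\flat\|_{\cb}\le (1+\varepsilon)C$. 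A direct computation with these identifications shows that $t_{F'}$ is attached to $\iota\in\CB(F',E)$ and that the equality $(id_E\otimes R)(w)=t_{F'}$ becomes the factorization
$$
w_\flat\circ R'=\iota,\qquad R'\colon F'\to (S_1^k)'=M_k,\quad w_\flat\colon M_k\to E,
$$
where $R'$ is the adjoint of $R$, so that $\|R'\|_{\cb}=\|R\|_{\cb}\le 1+\varepsilon$.

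To conclude, note that since $\iota$ is a complete isometry the factorization forces $R'$ to be injective, and for every $n\in\N$ and $x\in M_n(F')$ one has $\|x\|_{M_n(F')}=\|(w_\flat)_n(R'_n(x))\|_{M_n(E)}\le (1+\varepsilon)C\,\|R'_n(x)\|_{M_n(M_k)}$, while $\|R'_n(x)\|_{M_n(M_k)}\le (1+\varepsilon)\,\|x\|_{M_n(F')}$. Thus $R'$ is a complete isomorphism of $F'$ onto the subspace $R'(F')\subseteq M_k$ with $d_{\cb}(F',R'(F'))\le (1+\varepsilon)^2 C$, and letting $\varepsilon\to 0$ shows that $F'$ is $C$-exact. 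The step requiring the most care is the bookkeeping in the previous paragraph: correctly identifying, under the canonical descriptions of minimal tensor products as spaces of completely bounded maps, the operators attached to $t_{F'}$ and to $w$, and verifying that $(id_E\otimes R)(w)=t_{F'}$ is precisely the factorization $w_\flat\circ R'=\iota$. This is the place where the one-sided norm estimate handed to us by the hypothesis is converted into a genuine two-sided completely bounded embedding of $F'$ into a matrix algebra.
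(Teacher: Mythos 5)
Your proposal is correct and follows essentially the same route as the paper's proof: both identify the tensor in $E\otimes_{\min}F$ corresponding to the inclusion $F'\hookrightarrow E$ (your explicit unit tensor $t_{F'}$ versus the paper's $\Phi^{-1}(id)$), apply Corollary~\ref{Cor: capsula proyectiva con S1} with $\alpha=\min$, and translate $(id_E\otimes R)(w)=t_{F'}$ into the factorization $\iota = w_\flat\circ R'$ through $M_k$, yielding the $C(1+\varepsilon)^2$-embedding of $F'$ into $M_k$. Your final paragraph merely spells out in more detail the exactness conclusion that the paper leaves to the reader.
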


\begin{proof}
Since $F$ is finite-dimensional, there is a canonical completely isometric isomorphism $\Phi: E\otimes_{\min}F\to \CB (F',E)$. Let $z\in E\otimes_{\min}F$ such that $\Phi(z)=id$, which leads to $\min(z; E,F)=1$ and $\min/(z; E,F)\le C$. Given $\varepsilon>0$, by Corollary \ref{Cor: capsula proyectiva con S1} there exist $k\in\mathbb N$, $R:S_1^k\to F$ and $u\in E\otimes_{\min}S_1^k$ such that $(id\otimes R)(u)=z$, $\|R\|_{cb}\le 1+\varepsilon$ and 
$$
\min (u; E, S_1^k)\le (1+\varepsilon) \min/ (z;E,F)\le C (1+\varepsilon).
$$
Translating this through $\Phi$ we obtain the following commutative diagram:
$$
\xymatrix{
M_k  \ar[rd]^{\Phi(u)} &\\
F' \ar[u]^{R'} \ar[r]_{id} & E
}	
$$ where $\|R'\|_{cb}\le 1+\varepsilon$ and $\|\Phi(u)\|_{cb}\le C(1+\varepsilon)$. Hence, $F'$ is $C$-exact.
\end{proof}

The statement of the next lemma is (maybe) a little bit stronger than the one we have promised. We show that the norms $/(\min/\backslash)$ and $(/\backslash\min)\backslash$ differ from $\min$. Since $/(\min/\backslash)\le /(\min/)$ and $(/\backslash\min)\backslash\le (\backslash\min)\backslash$ the  result follows.

\begin{lemma}\label{lema:no identidad min}
Neither $/(\min/\backslash)$ nor $(/\backslash\min)\backslash$ are equivalent to $\min$.
In particular, neither $/(\min/)$ nor $(\backslash\min)\backslash$ are equivalent to $\min$.
\end{lemma}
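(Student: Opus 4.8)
The plan is to argue by contradiction using Lemma~\ref{lemma:Exactly_integral} together with the fact (recalled in Section~\ref{Relevant differences}) that there exist non-exact operator spaces. Suppose first, for the sake of contradiction, that $/(\min/\backslash)$ is equivalent to $\min$, say with constant $C$. Since we always have $\min \le /(\min/\backslash)$, the assumption is that $\|id : E \otimes_{\min} F \to E \otimes_{/(\min/\backslash)} F\| \le C$ for all normed operator spaces $E$ and $F$. The key observation is that, by the standard inequalities between hulls and the original norm, $/(\min/\backslash) \ge \min/$; indeed, applying a completely left-injective hull and a completely right-injective hull can only increase a norm, and $\min / \le (\min / \backslash) \le /(\min / \backslash)$. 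Therefore the hypothesis forces $\|id : E \otimes_{\min} F \to E \otimes_{\min/} F\| \le C$ for all $E \in \OBAN$ and $F \in \OFIN$ with $F' \subseteq E$.

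\textbf{Applying the exactness obstruction.} Now Lemma~\ref{lemma:Exactly_integral} immediately yields that every finite-dimensional operator space $F'$ arising this way is $C$-exact. But any finite-dimensional operator space $G$ can be realized as $F'$ for a suitable finite-dimensional $F$ (take $F = G'$ and embed $F'=G''=G$ into any $E \in \OBAN$ containing it, e.g. $E = B(H)$), so the conclusion would be that \emph{every} finite-dimensional operator space is $C$-exact, which is absurd: by \cite[Thm. 7]{pisier1995exact}, for $n \ge 2$ any complete isomorphism from $\Max(\ell_1^n)$ onto a subspace of an $M_N$ space has constant at least $\frac{n}{2\sqrt{n-1}}$, which is unbounded in $n$. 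This contradiction shows $/(\min/\backslash)$ is not equivalent to $\min$, and the transposed argument (exchanging the roles of the two factors, using that $\min$ is symmetric and exactness is a property of the space itself) shows the same for $(/\backslash\min)\backslash$.

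\textbf{Deriving the ``in particular'' clause.} For the final assertions, it suffices to note the inequalities $/(\min/\backslash) \le /(\min/)$ and $(/\backslash\min)\backslash \le (\backslash\min)\backslash$, which hold because removing a completely injective hull from the inside (i.e., passing from $\min/\backslash$ to $\min/$, or from $/\backslash\min$ to $\backslash\min$) can only decrease the norm. Since we have shown that $\min$ is \emph{not} equivalent to the smaller norms $/(\min/\backslash)$ and $(/\backslash\min)\backslash$, a fortiori it cannot be equivalent to the larger norms $/(\min/)$ and $(\backslash\min)\backslash$.

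\textbf{Main obstacle.} The delicate point is the reduction step: verifying carefully that the hypothesis of equivalence with $\min$, combined with the chain of inequalities among the various one- and two-sided hulls, really does force the bounded identity $E \otimes_{\min} F \to E \otimes_{\min/} F$ to which Lemma~\ref{lemma:Exactly_integral} applies. One must be attentive to which hull operations are being inserted and on which side, and confirm that $\min/ \le /(\min/\backslash)$ and $\min/ \le (/\backslash\min)\backslash$ (the latter via transposition of $\min$ and the identity $(\backslash\min)^t = \min/$ at the level of the relevant comparison). Once this bookkeeping is done, the exactness contradiction is immediate from the cited results.
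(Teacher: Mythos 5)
There is a genuine gap, and it sits exactly at the step you yourself flagged as delicate: the direction of your hull inequalities is reversed. By Theorem \ref{thm:norma right injective asociada} (and its left version), the completely injective hulls are computed by embedding a factor into $\mathcal{B}(H)$ and using the mapping property, so they \emph{decrease} the norm: $\alpha\backslash \le \alpha$ and $/\alpha \le \alpha$; it is the projective hulls that increase it. Consequently $/(\min/\backslash) \le \min/\backslash \le \min/$, i.e. the chain is $\min \le /(\min/\backslash) \le \min/$, and your claimed inequality $\min/ \le /(\min/\backslash)$ is false. Because of this, the hypothesis that $/(\min/\backslash)$ is $C$-equivalent to $\min$ gives a bound on the \emph{smaller} norm of the sandwich and yields no control whatsoever on $\|id : E\otimes_{\min}F \to E\otimes_{\min/}F\|$ for general $E,F$, so Lemma \ref{lemma:Exactly_integral} cannot be invoked as you do. (Your ``in particular'' step is fine: since $\min \le /(\min/\backslash) \le /(\min/)$ and $\min \le (/\backslash\min)\backslash \le (\backslash\min)\backslash$, non-equivalence with the smaller norms does imply non-equivalence with the larger ones.)

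The argument can be repaired, and this is how the paper does it: instead of a general comparison, evaluate on a pair of spaces on which the offending injective hulls act trivially. Take $E=\mathcal{B}(H)$ with $S_1^n = M_n' \subset \mathcal{B}(H)$ and $F=M_n$; since $\mathcal{B}(H)$ and $M_n$ are completely injective, the hull constructions give $\mathcal{B}(H)\otimes_{/(\min/\backslash)}M_n = \mathcal{B}(H)\otimes_{\min/}M_n$ completely isometrically. A uniform equivalence of $/(\min/\backslash)$ with $\min$ would then force $\|id:\mathcal{B}(H)\otimes_{\min}M_n \to \mathcal{B}(H)\otimes_{\min/}M_n\| \le C$ for all $n$, and Lemma \ref{lemma:Exactly_integral} would make $S_1^n$ $C$-exact for every $n$, contradicting \cite[Thm. 14.5.4]{Effros-Ruan-book} (your non-exactness witness $\Max(\ell_1^n)$ would also do, but only after a reduction of this restricted type; your ``every finite-dimensional space arises as $F'$'' remark does not help without first securing the bounded identity onto $\min/$). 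The case of $(/\backslash\min)\backslash$ then follows by transposition, as you indicate.
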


\begin{proof}
Let us consider the operator space $S_1^n$ for any $n> 2$ and a Hilbert space $H$ such that $S_1^n\subset \mathcal B(H)$. Recall that $S_1^n=M_n'$.

Since $\mathcal{B}(H)$ and $M_n$ are completely injective   we know that $\mathcal B(H)\otimes_{/(\min /\backslash)}M_n=\mathcal B(H)\otimes_{\min /}M_n$. Thus, to prove that $/(\min /\backslash)$ and $\min$ are not equivalent it is enough to see that $\mathcal B(H)\otimes_{\min /}M_n$ and $\mathcal B(H)\otimes_{\min }M_n$ are not uniformly completely isomorphic. By Lemma \ref{lemma:Exactly_integral} this is true
since there is no $C$ such that $S_1^n$ is $C$-exact for all $n$ \cite[Thm. 14.5.4]{Effros-Ruan-book}.


\end{proof}

\begin{remark}\label{consecuencias min}
From Lemmas \ref{lema:identidad proj} and \ref{lema:no identidad min} we derive several consequences:

\begin{enumerate}[(a)]
    \item The o.s. tensor norm $\min/$ is not left-accessible (and the o.s. tensor norm $\backslash\min$ is not right-accessible).
    
    Indeed, since we know from Example \ref{example:dual} that $(\min/)'=\proj\backslash$,
    if $\min/$ was left-accessible by appealing to Proposition \ref{inclusion normas} we would have that $/(\min/)= (\backslash (\proj\backslash))'=\proj ' =\min$ which is not true.

    \item If $\alpha=\min/$ and $\beta=\backslash\min$ since both are finitely generated the previous item shows that
    $$
    (/\alpha)'\not= \backslash\alpha' \quad\textrm{and}\quad (\beta\backslash)'\not=\beta'/.
    $$
    Also, note that $/(\min/\backslash)=/(\min/)\backslash$ and $(/\backslash\min)\backslash=/(\backslash\min)\backslash$. Then,
    by Lemma \ref{lema:no identidad min} and observing that $\backslash(\proj\backslash)/= \backslash (/\proj)/=\proj$ we obtain
    $$
    (/\alpha\backslash)'\not= \backslash\alpha'/ \quad\textrm{and}\quad (/\beta\backslash)'\not=\backslash\beta'/.
    $$

     \item The o.s. tensor norm $\gamma=/(\min/) $ is completely  left-injective, but its dual $\gamma'$ is not completely left-projective. Analogously, the o.s. tensor norm $(\backslash\min)\backslash $ is completely right-injective but its dual  is not completely right-projective. Also,  the o.s. tensor norms $/(\backslash\min)\backslash $ and $/(\min/)\backslash $ are (both sides) completely injective but their duals are not (both sides) completely projective .
    
    Indeed, since $\min\le \gamma\le \min/$ we have that $\proj\ge \gamma'\ge \proj \backslash$. If $\gamma'$ were completely left-projective we would have $\proj\ge \gamma'\ge \backslash(\proj \backslash)=\proj$ which is not true. 
\end{enumerate}
\end{remark}

The previous remark shows that the dual of a completely injective operator space tensor norm is not necessarily completely projective, exhibiting that the definition of projective hulls (as the dual of the injective hull) given in \cite{blecher1991tensor} is not adequate.

\bigskip

As in the Banach space setting we obtain a smaller norm if we apply first a completely projective hull on one side and then the completely injective hull on the other, than if we do so the other way around.

\begin{lemma}\label{Lemma:desigualdad proy-iny}
Let $\alpha$ be an o.s. tensor norm. Then:
$$
/(\alpha/)\le(/\alpha)/ \quad \textrm{and} \quad (\backslash\alpha)\backslash\le \backslash(\alpha\backslash).
$$
\end{lemma}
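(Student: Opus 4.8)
The plan is to prove the first inequality $/(\alpha/)\le(/\alpha)/$ by writing both norms explicitly through the representations of the one-sided hulls furnished by Theorem~\ref{thm:norma right injective asociada} (left version) and Theorem~\ref{thm: projective associate}, comparing the resulting infima, and then to deduce the second inequality $(\backslash\alpha)\backslash\le\backslash(\alpha\backslash)$ for free by transposition. First I would reduce to the case where the right tensor factor is a Banach operator space. Fix $E,F\in\ONORM$ and $z\in M_n(E\otimes F)$, and let $\widetilde F$ be the completion of $F$. Both $\alpha/$ and $(/\alpha)/$ are completely right-projective (Theorem~\ref{thm: projective associate}), hence finitely generated from the right by Remark~\ref{rmk:right-proj es right fin gen}; so Lemma~\ref{lema similar df20.20}(b) yields complete isometries $G\otimes_{\alpha/}F\hookrightarrow G\otimes_{\alpha/}\widetilde F$ (for any left factor $G$) and $E\otimes_{(/\alpha)/}F\hookrightarrow E\otimes_{(/\alpha)/}\widetilde F$. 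Using also the left version of Theorem~\ref{thm:norma right injective asociada} to express $/(\alpha/)$ via $\alpha/$ on $B(H)\otimes F$, one gets $/(\alpha/)_n(z;E,F)=/(\alpha/)_n(z;E,\widetilde F)$ and $((/\alpha)/)_n(z;E,F)=((/\alpha)/)_n(z;E,\widetilde F)$, so it suffices to treat $F\in\OBAN$.

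Assume now $F\in\OBAN$, fix a complete isometry $E\subseteq B(H)$ and a complete metric surjection $q:Z_0\twoheadrightarrow F$ with $Z_0$ completely projective. Combining the left version of Theorem~\ref{thm:norma right injective asociada} with Theorem~\ref{thm: projective associate} (applied to $B(H)$ as left factor),
\[ /(\alpha/)_n(z;E,F)\;=\;(\alpha/)_n(z;B(H),F)\;=\;\inf\bigl\{\alpha_n(w;B(H),Z_0): w\in M_n(B(H)\otimes Z_0),\ (id_{B(H)}\otimes q)_n(w)=z\bigr\}. \]
On the other hand, Theorem~\ref{thm: projective associate} applied to $E$ as left factor with the norm $/\alpha$, followed by the left version of Theorem~\ref{thm:norma right injective asociada}, gives
\[ ((/\alpha)/)_n(z;E,F)\;=\;\inf\bigl\{(/\alpha)_n(w;E,Z_0): w\in M_n(E\otimes Z_0),\ (id_E\otimes q)_n(w)=z\bigr\}\;=\;\inf\bigl\{\alpha_n(w;B(H),Z_0): w\in M_n(E\otimes Z_0),\ (id_E\otimes q)_n(w)=z\bigr\}. \]
Both infima are of the single function $w\mapsto\alpha_n(w;B(H),Z_0)$ defined on $M_n(B(H)\otimes Z_0)$; and every $w\in M_n(E\otimes Z_0)$ admissible for the second infimum lies in $M_n(B(H)\otimes Z_0)$ and satisfies $(id_{B(H)}\otimes q)_n(w)=(id_E\otimes q)_n(w)=z$, so it is admissible for the first. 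Hence the first infimum runs over a larger index set, giving $/(\alpha/)_n(z;E,F)\le((/\alpha)/)_n(z;E,F)$, which is the first inequality.

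For the second inequality I would transpose. Transposition exchanges the left- and right-injective hulls and the left- and right-projective hulls while preserving the order of o.s. tensor norms; concretely $(/\beta)^t=\beta^t\backslash$ and $(\beta/)^t=\backslash\beta^t$. Applying these identities to the first inequality written for $\alpha^t$ in place of $\alpha$ turns $/(\alpha^t/)\le(/\alpha^t)/$ into $(\backslash\alpha)\backslash\le\backslash(\alpha\backslash)$, using $\alpha^{tt}=\alpha$.

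I do not expect a serious obstacle here. The two points that need care are the reduction to complete right factors in the first paragraph — this is precisely where finite generation from the right of completely right-projective norms (Remark~\ref{rmk:right-proj es right fin gen}) and Lemma~\ref{lema similar df20.20}(b) are used — and the verification in the second paragraph that replacing $B(H)$ by the subspace $E$ as left factor genuinely shrinks the family of admissible decompositions while leaving the quantity to be minimized unchanged; the latter is immediate once one observes that $id_E\otimes q$ is just the restriction of $id_{B(H)}\otimes q$ to $M_n(E\otimes Z_0)$.
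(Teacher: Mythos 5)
Your proposal is correct and follows essentially the same route as the paper: reduce to a complete right factor, represent both hulls through $B(H)\otimes_\alpha Z_0$ via the characterizations in Theorems \ref{thm:norma right injective asociada} and \ref{thm: projective associate}, compare, and obtain the second inequality by transposition. The only cosmetic difference is that you unpack the comparison as an infimum over lifts (and handle the reduction via the completion and Lemma \ref{lema similar df20.20}(b)), whereas the paper phrases the same comparison as a commutative diagram of complete contractions and quotients.
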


\begin{proof}
We have just to prove the first inequality because the other is obtained by transposing. Due to Remark \ref{rmk:right-proj es right fin gen} along with a slight refinement of Lemma \ref{finitamente generada la capsula inyectiva}, both norms $/(\alpha/)$ and $(/\alpha)/$ are finitely-generated from the right. Thus, it is enough to check the inequality for $E\otimes F$ with $E\in\ONORM$ and $F\in\OBAN$. Consider a complete isometry $i_E:E\hookrightarrow \mathcal B(H_E)$ and a complete quotient $q_F:Z_F\twoheadrightarrow F$.

By the projectivity of $Z_F$, the mapping 
$id_{\mathcal{B}(H_E)}\otimes  q_F: \mathcal B(H_E)\otimes_{\alpha}Z_F\to \mathcal B(H_E)\otimes_{\alpha /}F$ is a complete quotient; hence a complete contraction. This implies (by the injectivity of $\mathcal B(H_E)$) that $id_E\otimes  q_F: E\otimes_{/\alpha}Z_F\to E\otimes_{/(\alpha /)}F$ is a complete contraction.

Now the conclusion follows from the following commutative diagram:
$$
\xymatrix{
E\otimes_{(/\alpha )/}F \ar[r] &E\otimes_{/(\alpha /)}F\\
E\otimes_{/\alpha }Z_F
\ar[u]
\ar[ru] &\\
}
$$

\end{proof}

Note that the inequalities of the previous lemma are not in general equalities. Indeed, 
$/(\proj/)=/\proj$  is  not equivalent to $\proj$. So, from Lemma \ref{lema:identidad proj} we derive $/(\proj/)\not=(/\proj)/$ and $(\backslash\proj)\backslash\not= \backslash(\proj\backslash)$.

The same happens in the Banach space setting: $/(\pi/)\not=(/\pi)/$ and $(\backslash\pi)\backslash\not= \backslash(\pi\backslash)$. Be aware of a false assertion  about this topic given in   \cite[Cor. 2.4.17]{diestel2008metric} (which is deduced from the erroneous  statement of \cite[Prop. 2.4.16]{diestel2008metric}). 

Let us denote by \textsl{``$\proj$ family''} (resp. \textsl{``$\min$ family''}) the set of all the o.s. tensor norms produced by applying one-sided completely injective and/or completely projective hulls any number of times to the norm $\proj$ (resp. $\min$). Note that if we translate this to the Banach space context we obtain that the union of both  families is the set of Grothendieck's natural tensor norms.

It is clear that $/\proj\backslash$ is the largest completely injective o.s. tensor norm and $\backslash\min /$ is the smallest completely projective o.s. tensor norm. The following simple lemma says that they are also the smallest and the largest members of their respective families.

\begin{lemma}\label{lemma:smallest y largest}
\begin{enumerate}[(a)]
    \item The smallest norm of the $\proj$ family is $/\proj\backslash$.
    \item The largest norm of the $\min$ family is $\backslash\min /$.
\end{enumerate}
\end{lemma}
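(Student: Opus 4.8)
\textbf{Proof strategy for Lemma~\ref{lemma:smallest y largest}.}
The two statements are dual to each other via transposition, so the plan is to prove $(a)$ in detail and then obtain $(b)$ by the standard transposition argument (replacing $\proj$ by $\min$ and swapping the roles of left/right hulls, using $\proj^t=\proj$, $\min^t=\min$, and the fact that taking the transpose interchanges $\backslash\,$ with $\slash\,$ on each side while preserving the injective/projective nature of a hull). For $(a)$, there are really two things to check: first, that $\slash\proj\backslash$ actually belongs to the $\proj$ family; second, that it is dominated by every member of that family.

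The first point is immediate from the definitions: $\slash\proj\backslash = \slash(\proj\backslash)$ is obtained from $\proj$ by applying a completely right-injective hull and then a completely left-injective hull, both of which are among the allowed one-sided operations, so $\slash\proj\backslash$ is a member of the $\proj$ family.

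For the second point, I would argue as follows. Let $\gamma$ be any member of the $\proj$ family, so $\gamma$ is obtained from $\proj$ by finitely many applications of the four one-sided hull operations $\slash(\cdot)$, $(\cdot)\backslash$, $\backslash(\cdot)$, $(\cdot)\slash$. I claim $\slash\proj\backslash \le \gamma$. The key observations are the monotonicity of the four hull operations (if $\alpha_1 \le \alpha_2$ then $\slash\alpha_1 \le \slash\alpha_2$, and likewise for the other three — clear from the universal properties in Theorems~\ref{thm:norma right injective asociada} and~\ref{thm: projective associate} and their left versions), the fact that each of the four hulls is idempotent and produces a norm with the corresponding completely injective/projective property, and the basic inequalities $\slash\alpha \le \alpha$, $\alpha\backslash \le \alpha$ (injective hulls shrink the norm). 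Now proceed by induction on the number of operations applied to build $\gamma$. Starting from $\proj$ and at each stage applying a hull operation, one uses monotonicity to compare with the norm obtained by applying the same operation to $\slash\proj\backslash$; but since $\slash\proj\backslash$ is already both completely left- and completely right-injective, applying an injective hull on either side leaves it unchanged, and applying a projective hull on either side only enlarges it. Hence at every stage the norm built from $\proj$ dominates $\slash\proj\backslash$ (using that the very first step can be related to $\slash\proj\backslash \le \proj$, which holds because $\slash\proj\backslash$ is obtained from $\proj$ by two injective hulls, each of which shrinks). More precisely: write $\gamma = \Theta_m \circ \cdots \circ \Theta_1(\proj)$ where each $\Theta_j$ is one of the four one-sided hull operators; one shows by induction that $\slash\proj\backslash \le \Theta_k\circ\cdots\circ\Theta_1(\proj)$ for every $k$, the base case $k=0$ being $\slash\proj\backslash \le \proj$ and the inductive step following from monotonicity of $\Theta_{k+1}$ together with the fact that $\Theta_{k+1}(\slash\proj\backslash) \ge \slash\proj\backslash$ (equality if $\Theta_{k+1}$ is an injective hull, strict-or-equal enlargement if projective). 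This yields $\slash\proj\backslash \le \gamma$, completing the proof of $(a)$.

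The main technical point to be careful about is that one needs the hull operations to be well behaved enough that the inductive comparison goes through cleanly — in particular one relies on $\slash(\cdot)$ and $(\cdot)\backslash$ commuting (Definition~\ref{inyectiva conmutativa}), so that ``completely left-injective and completely right-injective'' is stable and $\slash\proj\backslash$ is genuinely a fixed point of all four injective-type moves, and one uses that a completely left-injective (resp.\ right-injective) norm is unchanged by a further left- (resp.\ right-) injective hull. None of these steps is deep; the only mild subtlety is bookkeeping the left/right side of each hull, which is why I would organize the induction around the single inequality $\slash\proj\backslash \le \Theta_{k+1}(\alpha)$ for any norm $\alpha \ge \slash\proj\backslash$, proved by cases on which of the four operations $\Theta_{k+1}$ is.
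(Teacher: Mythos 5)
Your argument for part $(a)$ is correct and is essentially the paper's own proof: the paper's one-line ``self-explanatory fact'' (if $/\proj\backslash\le\alpha$ then $/\proj\backslash$ is dominated by each of the four one-sided hulls of $\alpha$) is exactly your inductive step, justified the same way — $/\proj\backslash$ is both completely left- and right-injective (so it is a fixed point of the injective hulls, by the universal property in Theorem~\ref{thm:norma right injective asociada} and its left version together with Definition~\ref{inyectiva conmutativa}), projective hulls only enlarge, and the hulls are monotone; the base case is $/\proj\backslash\le\proj$.

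There is, however, a genuine flaw in how you propose to obtain $(b)$. Transposition cannot convert statement $(a)$ into statement $(b)$: since $\proj^t=\proj$, $\min^t=\min$, and transposing a norm in either family just permutes the four one-sided hull operations (swapping left and right), transposition maps the $\proj$ family onto itself and the $\min$ family onto itself, so applying it to $(a)$ merely reproduces $(a)$ (note also that $(/\proj\backslash)^t=/\proj\backslash$). If instead you intended a duality argument (swapping injective with projective hulls), that is worse: in this paper the duality between hulls requires accessibility hypotheses (Proposition~\ref{inclusion normas}) and fails in general (Remark~\ref{consecuencias min}), so it is not available as a formal symmetry. The repair is easy and is what the paper means by ``$(b)$ is similar'': run the mirror image of your induction directly, using that $\backslash\min/$ is both completely left- and right-projective (hence a fixed point of the projective hulls, by Theorem~\ref{thm: projective associate}, its left version and Definition~\ref{proyectiva conmutativa}), that injective hulls only shrink, and the base case $\min\le\backslash\min/$.
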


\begin{proof}
The proof of $(a)$ follows easily from the following self explanatory fact: if $/\proj\backslash\le\alpha$ for some o.s. tensor norm $\alpha$, then
$$
/\proj\backslash\le\alpha\backslash,\quad
/\proj\backslash\le
/\alpha,\quad  /\proj\backslash\le\alpha/, \quad /\proj\backslash\le\backslash\alpha.
$$
The proof of $(b)$ is similar.
\end{proof}

\begin{remark}\label{Remark:Grothendieck}\textbf{Some tensor norm consequences of Grothendieck's inequality that are not longer valid in the operator space framework.}

\begin{enumerate}[(a)]
    \item It follows from Grothendieck's inequality  that $/\pi\backslash$ is dominated by $\backslash\varepsilon /$ (i.e. there exists a constant $C>0$ such that $/\pi\backslash\le C\backslash\varepsilon /$).
    
    In the operator space setting, it is not possible for a member of the $\min$ family to dominate a member of the $\proj$ family. Indeed, since the Haagerup tensor norm $h$ is both completely injective and completely projective we have that
    $$
    \backslash\min /\le h\le /\proj\backslash.
    $$
    The lack of symmetry of $h$ prevents it from being equivalent to any  symmetric tensor norm (as $ \backslash\min /$ or $/\proj\backslash$). To check this recall \cite[Prop. 9.3.4]{Effros-Ruan-book} that for any Hilbert space $H$ if we denote by $H_c$ the associated column space we have that $H_c\otimes_h H_c'=\mathcal K(H)$ while $H_c'\otimes_h H_c=\mathcal B(H)$.
    
    By means of the previous lemma, we get that any member of the $\min$ family is smaller than (and not equivalent to) any member of the $\proj$ family.
    
    \item As a byproduct of the inequality referred in (a) it is obtained that the norms $\backslash(/\pi\backslash)/$ and $\backslash\varepsilon /$ are equivalent; the same happens with $/(\backslash\varepsilon /)\backslash$ and $/\pi\backslash$.
    
    The operator space version of these equivalences is clearly not valid due to the arguments explained in the previous item. This fact certainly has an impact in the quantity of \textsl{natural} operator space tensor norms.
    
    \item In the Banach space world the equivalences stated in the previous item obviously produce that if $\alpha$ is an injective tensor norm then $\backslash\alpha/$ is equivalent to $\backslash\varepsilon /$ and if $\alpha$ is a projective tensor norm then $/\alpha \backslash$ is equivalent to $/\pi\backslash$. 
    
    In the operator space universe  these equivalences are obviously not true. From our explanation in (a) it is clear that $\backslash\min /\le h\le \backslash(/\proj\backslash)/$ are three non-equivalent completely projective hulls of completely injective norms, and $/(\backslash\min /)\backslash\le h\le /\proj\backslash$ are three non-equivalent completely injective hulls of completely projective norms.
    
\end{enumerate}

\end{remark}

\begin{remark}\textbf{Open question.} \label{rmk:open question}

In the Banach space setting the impact of Grothendieck's inequality in the description of natural tensor norms is decisive. We have pointed out in the previous remark several relations that are not longer valid in the operator space framework.

There is another equivalence of Banach space tensor norms (also derived from Grothendieck's inequality) that is crucial for the resulting number of 14 natural tensor norms, namely:

\centerline{The norms $/((/\pi\backslash)/)$ and $(/\pi\backslash)/$ are equivalent.}

To recall the sketch of the argument we use the symbol $\sim$ to denote equivalent tensor norms and we denote by $w_2$ the Hilbertian tensor norm and by $d_2$ the Chevet-Saphar tensor norm. By Grothendieck's inequality, $/\pi\backslash\sim w_2$ and $w_2/\sim d_2$. Since $d_2$ is completely  left-injective the desired equivalence is proved.

In the operator space setting we cannot follow the same path, but nevertheless we can ask:

\centerline{Are the norms $/((/\proj\backslash)/)$ and $(/\proj\backslash)/$  equivalent?}

\medskip

Observe that clearly $/((/\proj\backslash)/)\le(/\proj\backslash)/$ so the question can be equivalently posed in the following way:

\centerline{Is the norm  $(/\proj\backslash)/$  completely left-injective?}

\end{remark}

If the answer to the open question is positive we  obtain a large number of coincidences between members of the $\proj$ family.
Indeed, an affirmative answer would turn into identities the  following inequalities derived from Lemma \ref{Lemma:desigualdad proy-iny}:

\begin{enumerate}
    \item[(a)] $/(\proj\backslash /)\le  (/\proj\backslash)/$ and $(\backslash /\proj)\backslash\le \backslash (/\proj\backslash)$.
    \item[(b)] $\backslash/(\proj\backslash /)\le \backslash (/\proj\backslash)/$ and $(\backslash /\proj)\backslash/ \le \backslash (/\proj\backslash)/$.
\end{enumerate}
Also, if there are identities in (a) the following inequalities turn out also to be identities:
\begin{enumerate}
    \item[(c)] $/\proj \backslash\le (/\proj \backslash)/\backslash $ and $/\proj \backslash\le /\backslash(/\proj \backslash)$. 
\end{enumerate}

\subsection{The $\proj$ family}
Even though we are not able to give a full description of the $\proj$ family, we devote this subsection to present a picture of what we know.

The $\proj$ family begins of course with the $\proj$ norm. Then, we enumerate the norms taking into account how many procedures (of completely left or completely right-injective/projective hull) we have applied to $\proj$ in order to obtain them. The list, \textsl{up to four procedures} is the following:

\begin{enumerate}
    \item[(0)] $\proj$.
    \item  $/\proj\ \bullet \ \proj\backslash$.
    \item $/\proj\backslash\ \bullet \ \backslash /\proj\ \bullet \ \proj \backslash /$.
    \item $\backslash(/\proj\backslash)\ \bullet\ (/\proj\backslash)/ \bullet \ (\backslash /\proj)\backslash\ \bullet \ /(\proj \backslash /)$.
    \item $\backslash(/\proj\backslash)/ \ \bullet\ /\backslash(/\proj\backslash) \ \bullet\ (/\proj\backslash)/\backslash \bullet \ (\backslash /\proj)\backslash /\ \bullet \ \backslash /(\proj \backslash /)$.
\end{enumerate}

The norms (derived from $\proj$ after at most four procedures) that do not appear in the previous list are identical to one of those that do appear in the list.  Indeed, from Lemma \ref{lema:identidad proj}, Lemma \ref{Lemma:desigualdad proy-iny}, Lemma \ref{lemma:smallest y largest} and the definitions of completely injective and completely projective hulls we have:

\begin{enumerate}
    \item[(2)]  $(/\proj)/= \backslash(\proj\backslash) =\proj.$
    \item[(3)] $(\backslash /\proj)/=  \backslash(\proj \backslash /))=\proj$.
    \item[(4)] $  /((/\proj\backslash)/)=/( \proj \backslash /)\ \bullet \ (\backslash(/\proj\backslash))\backslash =(\backslash / \proj )\backslash$.
    \item[(4)] $ ( /( \proj \backslash /)) /= (/\proj\backslash)/ \ \bullet \ \backslash((\backslash / \proj )\backslash) = \backslash(/\proj\backslash)$.
    \item[(4)] $/(\backslash /\proj)\backslash = /(\proj \backslash /)\backslash = /\proj \backslash$.
\end{enumerate}

Thus, the members of the $\proj$ family \textsl{up to four procedures} are 15. We do not know whether they are all non-equivalent. In fact, as we mentioned above, if the answer of the open question is positive we derive six identities between members of the $\proj$ family, reducing the list from 15 to 9 members.

Moreover, an affirmative answer to the open question would also imply that there exist no more members of the $\proj$ family: a fifth procedure produces no new elements. Indeed, in this situation, the fourth procedure only generates one element: $\backslash(/\proj\backslash)/ = (\backslash /\proj)\backslash /= \backslash /(\proj \backslash /)$. And it is easily seen that a completely left or completely right-injective hull of this element (under the present hypothesis of an affirmative answer to the open question) yields the norm $/\proj \backslash$.

Now, we present the dominations valid between the  15 norms of our list and later we will provide examples to show that many of these norms are non-equivalent. Each arrow $\alpha\to\beta$  in the next diagram means that $\beta\le\alpha$. Double arrows are those where we  prove in the sequel that the dominations are strict. Tensor norms connected by doted arrows are equivalent if the Open question \ref{rmk:open question} has an affirmative answer. We do not know if the dominations connected by standard arrows are in fact strict.

\tiny{

\begin{equation}\label{normasnaturales}
 \xymatrixrowsep{0.7in}
\xymatrixcolsep{0.05in}
 \xymatrix{ & & & *+[F]{ \begin{array}{c}
                    \proj
                   \end{array}} \ar@{=>}[d] \ar@{=>}[ld] \ar@{=>}[rd] & & & 
                   \\
                   & & *+[F]{ \begin{array}{c}
                    \proj\backslash /
                   \end{array}} \ar@{=>}[lld] \ar[rrd] \ar@{=>}[rrrd] & *+[F]{ \begin{array}{c}
                    \backslash(/\proj\backslash)/
                   \end{array}} \ar[llld] \ar@{-->}[lld] \ar[rrrd] \ar@{-->}[rrd] & *+[F]{ \begin{array}{c}
                    \backslash /\proj 
                   \end{array}} \ar@{=>}[llld] \ar[lld] \ar@{=>}[rrd]& & \\
                   *+[F]{ \begin{array}{c}
                     (/\proj\backslash)/
                   \end{array}} \ar@{-->}[rd] \ar[rrd] &
                   *+[F]{ \begin{array}{c}
                     \backslash /(\proj \backslash /)
                   \end{array}}  \ar[d] & *+[F]{ \begin{array}{c}
                    /\proj
                   \end{array}}  \ar@{=>}[ld] & & *+[F]{ \begin{array}{c}
                    \proj\backslash
                   \end{array}} \ar@{=>}[rd] & 
                   *+[F]{ \begin{array}{c}
                    \ (\backslash /\proj)\backslash /
                   \end{array}} \ar[d] & *+[F]{ \begin{array}{c}
                    \backslash(/\proj\backslash)\ 
                   \end{array}} \ar[lld] \ar@{-->}[ld] & \\ 
                   & 
                   *+[F]{ \begin{array}{c}
                    /(\proj\backslash /)
                   \end{array}} \ar[rrd]  & *+[F]{ \begin{array}{c}
                     (/\proj\backslash)/\backslash
                   \end{array}} \ar@{-->}[rd]& & *+[F]{ \begin{array}{c}
                    /\backslash(/\proj\backslash) 
                   \end{array}} \ar@{-->}[ld]& 
                   *+[F]{ \begin{array}{c}
                    (\backslash /\proj)\backslash
                   \end{array}} \ar[lld] &  & \\
                   & & & *+[F]{ \begin{array}{c}
                    /\proj\backslash
                   \end{array}} & & & 
                   }
 \end{equation}
}

\normalsize

The dominations stated in the previous diagram are easily derived from the definitions of completely injective and completely projective hulls plus the results  already proved in Lemmas \ref{lema:identidad proj},  \ref{Lemma:desigualdad proy-iny}  and \ref{lemma:smallest y largest}.

The following two remarks look into representations of elements belonging to the dual of tensor products with particular tensor norms. They will be useful to show that some of the previous dominations can not be turned into equivalences.

\begin{remark}\label{rmk:weakly compact}
If $T\in\CB (E,F')$ belongs to
$ (E\otimes_{/\proj\backslash}F)'$ and $i_E:E\to \mathcal B(H_E)$ and $i_F:F\to \mathcal B(H_F)$ are complete isometries then there exists a completely bounded mapping $\widetilde{T}:\mathcal B(H_E)\to \mathcal B(H_F)'$ such that the following diagram commutes:

$$
\xymatrix{
E \ar[r]^T \ar[d]_{i_E} & F'\\
\mathcal B(H_E) \ar[r]_{\widetilde{T}} & \mathcal B(H_F)' \ar[u]_{i_F'}\\
}
$$

Appealing to \cite[Thm. 18.1]{Pisier-Grothendiecks-theorem} we know that $\widetilde{T}$ (and hence $T$) is weakly compact. 
\end{remark}

\begin{remark}\label{rmk:left injective}
If $F'$ is a completely  injective operator space then $E\otimes_{/\proj}F=E\otimes_{\proj}F$. 

Indeed, for $T\in\CB (E,F')$   we have that 
$T\in (E\otimes_{/\proj}F)'$ if for any complete isometry $i_E:E\to \mathcal B(H_E)$ there exists a completely bounded mapping $\widetilde{T}:\mathcal B(H_E)\to  F'$ such that the following diagram commutes:

$$
\xymatrix{
E \ar[r]^T \ar[d]_{i_E} & F'\\
\mathcal B(H_E) \ar[ru]_{\widetilde{T}} & \\
}
$$

The injectivity of $F'$ implies that this happens to any mapping $T\in\CB (E,F')$ (and that the norm is maintained).

Of course, an analogous result holds for the completely right-injective hull: if $E'$ is a completely injective operator space then $E\otimes_{\proj\backslash}F= E\otimes_{\proj}F$.
\end{remark}

\begin{example}
The following pairs of norms are not equivalent: $\proj$ and $\backslash /\proj$; $\proj$ and $\proj\backslash /$; $\proj\backslash$ and $(\backslash /\proj)\backslash$; $/\proj$ and $/(\proj\backslash /)$.

\rm We carry out the argument for the  first and third pairs of norms. The other two are obtained by transposing.

Let us consider the completely projective space $Z=\ell_1(\{S_1^n:\ n\in\N\})$ and recall that its dual $Z'=\ell_\infty(\{M_n:\ n\in\N\})$ is an injective space. Then we have the following identities:

$$
Z \otimes_{\proj}Z'= Z \otimes_{\proj\backslash}Z'\quad \textrm{and}\quad Z \otimes_{(\backslash /\proj)\backslash}Z'= Z \otimes_{\backslash /\proj}Z'= Z \otimes_{ /\proj}Z'=Z \otimes_{ /\proj\backslash} Z'.
$$

 Thus, the desired results are proved if we show that $Z \otimes_{ \proj}Z' \not= Z \otimes_{ /\proj\backslash} Z'$.

Let $\kappa_Z\in\CB (Z,Z'')$ be the canonical inclusion. Then, $\kappa_Z\in (Z \otimes_{ \proj}Z')'$. Now, Remark \ref{rmk:weakly compact} says that if $\kappa_Z\in (Z \otimes_{/ \proj \backslash}Z')'$ then $\kappa_Z$ would be weakly compact. Since this is not possible because $Z$ is not reflexive, we deduce that
$$
\kappa_Z\in (Z \otimes_{ \proj}Z')'\setminus (Z \otimes_{/ \proj \backslash}Z')'
$$
and hence $Z \otimes_{ \proj}Z' \not= Z \otimes_{ /\proj\backslash} Z'$.
\end{example}

\begin{example}
The following pairs of norms are not equivalent: $\backslash /\proj$ and $\backslash(/\proj\backslash)$; $\proj\backslash /$ and $(/\proj\backslash)/$.

\rm Let us consider again the completely projective space $Z=\ell_1(\{S_1^n:\ n\in\N\})$ and let $i_Z:Z\hookrightarrow \mathcal B(H)$ be a complete isometry, for a suitable Hilbert space $H$. Let us recall that $S_1(H)$ is an operator space pre-dual of $\mathcal B(H)$, and the latter is a completely injective space.
Now we have, by Remark \ref{rmk:left injective},
\begin{multline*}
Z \otimes_{\proj}S_1(H)= Z \otimes_{/\proj}S_1(H) = Z \otimes_{\backslash /\proj}S_1(H)\quad \textrm{and}\\ \quad  Z \otimes_{ /\proj\backslash}S_1(H)= Z \otimes_{\backslash (/\proj\backslash )}S_1(H).
\end{multline*}

It is clear that $i_Z\in (Z \otimes_{\proj}S_1(H))'$ but it cannot belong to $(Z \otimes_{ /\proj\backslash}S_1(H))'$ because it is not weakly compact (since it is an isometry from a non reflexive space).
\end{example}

\begin{example}\label{example: no equivalente proj}
The following pairs of norms are not equivalent:  $\proj$ and  $\backslash(/\proj\backslash)/$; $\backslash /\proj$ and $\backslash /(\proj\backslash /)$; $\proj\backslash /$ and $(\backslash
 /\proj)\backslash /$.

\rm We consider the Banach space $\ell_1(\mathbb Z)$ with the maximal operator space structure. This is a completely projective space (being an $\ell_1$-sum of the projective space $\C$) and thus its dual is completely injective \cite[Cor. 24.6]{Pisier-Operator-Space-Theory}. Then,

$$
\ell_1(\mathbb Z) \otimes_{\proj}\ell_1(\mathbb Z)= \ell_1(\mathbb Z) \otimes_{\proj\backslash}\ell_1(\mathbb Z)= \ell_1(\mathbb Z) \otimes_{\proj\backslash/}\ell_1(\mathbb Z).
$$
Also we have
$$
\ell_1(\mathbb Z) \otimes_{\backslash( /\proj\backslash)/}\ell_1(\mathbb Z)= \ell_1(\mathbb Z) \otimes_{/\proj\backslash}\ell_1(\mathbb Z).
$$

Now we can borrow an example from Banach space theory \cite[Ex. 1.1]{zalduendo2005extending}.
Let $T:\ell_1(\mathbb Z)\to \ell_\infty(\mathbb Z)$ be given by 

$$
T(x)=\left(\sum_{m\in\mathbb Z} \mathrm{sgn}(m) x_{m-n}\right)_{n\in\N}.
$$

Then $T\in (\ell_1(\mathbb Z) \otimes_{\proj}\ell_1(\mathbb Z))'$ (because  in $\ell_1(\mathbb Z)$ with the maximal operator space structure the $\proj$ tensor product coincides with the Banach $\pi$ tensor product).

Once again we have that $T\not\in (\ell_1(\mathbb Z) \otimes_{/\proj\backslash}\ell_1(\mathbb Z))'$ because $T$ is not weakly compact \cite[page 50]{zalduendo2005extending}.

\end{example}

\subsection{The $\min$ family} 

Surprisingly the $\min$ family is not the \textsl{mirror reflection} of the $\proj$ family as it happens in the Banach space setting.
The reason, of course, comes from Lemmas \ref{lema:identidad proj} and \ref{lema:no identidad min}.

Due to this fact there are probably more members in the $\min$ family than in the $\proj$ family. Hence, in order to have a manageable set, we chose to present a diagram of the $\min$ family just \textsl{up to three procedures}. 

The list is the following:

\begin{enumerate}
    \item[(0)] $\min$.
    \item  $\backslash\min\ \bullet \ \min/$.
    \item $\backslash\min/\ \bullet \ /\backslash \min\ \bullet \ \min /\backslash\ \bullet\ (\backslash\min)\backslash \ \bullet\  /( \min /)$.
    \item $/(\backslash\min/)\ \bullet\ (\backslash\min/)\backslash \bullet \ (/ \backslash\min)/\ \bullet \ \backslash(\min /\backslash ) \ \bullet\ \backslash/(\min/)\ \bullet\ (\backslash\min)\backslash / \bullet \ (/ \backslash\min)\backslash\ \bullet \ /(\min /\backslash )$.
    \end{enumerate}

The norms (derived from $\min$ after at most three procedures) that do not appear in the previous list are identical to one of those that do appear in the list.  Indeed, from Lemma \ref{lema:identidad proj} and the definitions of completely injective and completely projective hulls we have:

\begin{enumerate}

    \item[(3)] $( /(\min /))/=  \min /\ \bullet \ \backslash((\backslash\min)\backslash)=\backslash
    \min$.
    \item[(3)] $  ( /(\min /))\backslash=/( \min /\backslash )\ \bullet \ /((\backslash\min)\backslash) =(/\backslash  \min )\backslash$.
    \end{enumerate}

Thus, the members of the $\min$ family \textsl{up to three procedures} are 16. We do not know whether they are all non-equivalent. As in the diagram for the $\proj$ family, each   arrow $\alpha\to\beta$   means that $\beta\le\alpha$ and doubled arrows are the only ones where we can prove that the dominations are strict.

\tiny{

\begin{equation}\label{normasnaturalesmin}
 \xymatrixrowsep{0.7in}
\xymatrixcolsep{0.05in}
 \xymatrix{ & & & *+[F]{ \begin{array}{c}
                    \backslash \min /
                   \end{array}} \ar[dl] \ar[lld] \ar[rrd] \ar[rd] & & & 
                   \\
                   &  *+[F]{ \begin{array}{c}
                    \backslash (\min / \backslash) 
                   \end{array}} \ar[d] \ar@{=>}[rd] & *+[F]{ \begin{array}{c}
                    \backslash /  (\min / )
                   \end{array}} \ar[d] \ar@/^3pc/[dd] &   & *+[F]{ \begin{array}{c}
                      (\backslash \min  ) \backslash /
                   \end{array}} \ar[d] \ar@/_3pc/[dd]  &  *+[F]{ \begin{array}{c}
                      (/ \backslash   \min  ) /
                   \end{array}} \ar@{=>}[ld] \ar[d]\\
                    &  *+[F]{ \begin{array}{c}
                    (\backslash \min / ) \backslash
                   \end{array}} \ar[d] \ar@{=>}[rrrd] & *+[F]{ \begin{array}{c}
                     \backslash  \min 
                   \end{array}} \ar[rrd] \ar[rrrd] &   & *+[F]{ \begin{array}{c}
                      \min  /
                   \end{array}} \ar[llld] \ar[lld] &  *+[F]{ \begin{array}{c}
                     / ( \backslash   \min  / ) 
                   \end{array}} \ar[d] \ar@{=>}[llld]\\ &  *+[F]{ \begin{array}{c}
                    \min / \backslash 
                   \end{array}} \ar[rd] & *+[F]{ \begin{array}{c}
                     /  (\min / )
                   \end{array}} \ar[d] &   & *+[F]{ \begin{array}{c}
                      (\backslash \min  ) \backslash 
                   \end{array}} \ar[d] &  *+[F]{ \begin{array}{c}
                      / \backslash   \min 
                   \end{array}} \ar[ld]\\
                    &  &   *+[F]{ \begin{array}{c}
                     /  (\min / \backslash )
                   \end{array}} \ar@{=>}[rd]  &   & *+[F]{ \begin{array}{c}
                       (/ \backslash \min  ) \backslash
                   \end{array}} \ar@{=>}[ld]&  &\\
                    & & & *+[F]{ \begin{array}{c}
                    \min
                   \end{array}} & & & 
                   }
 \end{equation}
}
\normalsize

\begin{example}
The following pairs of norms are not equivalent: $\backslash\min$ and $\backslash (\min/\backslash)$; $(\backslash\min)\backslash$ and $(\backslash \min/)\backslash$; $\min/$ and $(/\backslash\min)/$; $/(\min /)$ and $/(\backslash\min /)$.

\rm As usual, we just argue about the first two non-equivalences. The other two follow by transposition. By Lemma \ref{lemma:Exactly_integral} we know that if  $\|id:S_1^n\otimes_{\min} M_n\to S_1^n\otimes_{\min/} M_n\|\le C$   then  $S_1^n$ is $C$-exact. Since there is no $C$ such that  $S_1^n$ is $C$-exact for every $n$ we derive that   $S_1^n\otimes_{\min} M_n$ and $S_1^n\otimes_{\min/} M_n$ are not uniformly completely isomorphic. Since $S_1^n$ is completely projective and $M_n$ is completely injective we have
$S_1^n\otimes_{\min} M_n=S_1^n\otimes_{\backslash\min} M_n=S_1^n\otimes_{(\backslash\min)\backslash} M_n$ and $S_1^n\otimes_{\min/} M_n= S_1^n\otimes_{\backslash (\min/\backslash)} M_n= S_1^n\otimes_{(\backslash \min/)\backslash} M_n$
from where the result follows.
\end{example}

\subsection{Two-sided natural o.s. tensor norms} \label{twosided}

In this section we show that those natural o.s. tensor norms that come from applying to $\min$ or $\proj$ two-sided hulls operations (injective $/\cdot\backslash$, or projective $\backslash \cdot /$ hulls)
are exactly six.
This is somewhat of a surprise: as stated in \cite{blecher1991tensor}, ``there is no reason to suppose that this [process] will not give a large number of inequivalent norms'' (due to the lack of a direct analog of Grothendieck's inequality in the o.s. setting).
Now, Lemma \ref{lemma two-sided hulls} $(a)$ shows that there are only 6 possible members of the two-sided family and  the dominations between them are easily seen:
$$  \min  \leq \  /(\backslash \min /) \backslash\leq \backslash \min / \leq /\proj\backslash \leq \backslash(/\proj\backslash)/ \leq \proj 
$$

Also, in Remark \ref{Remark:Grothendieck} we observe that $\backslash \min /$ and $ /\proj\backslash$ are not equivalent. From Example \ref{example: no equivalente proj} (see also diagram \eqref{normasnaturales}) we know that $\backslash(/\proj\backslash)/$ and $\proj$ are not equivalent. We prove in Lemma \ref{lema:no identidad min} that $\min$ and $(/\backslash\min)\backslash$ are not equivalent. Since $(/\backslash\min)\backslash = /(\backslash\min)\backslash\le /(\backslash\min/)\backslash $ we obtain that $\min$ and $/(\backslash\min/)\backslash$ could not be equivalent either.

To conclude that there are six two-sided natural norms all we need to show is the non-equivalence of the following two pairs of norms: $/(\backslash \min /)\backslash $  and $ \backslash \min /$; $\backslash(/\proj\backslash)/ $ and $ /\proj\backslash$. In the Banach space setting this kind of statement is clear since a projective norm could not be equivalent to an injective one. In the operator space framework this argument is not available, so we need a different proof.

\begin{lemma}\label{Lema:equivalencia nivel Banach}
    The norms $ /\proj\backslash$ and  $ \backslash \min /$ are equivalent at the Banach space level to $h\cap h^t$ and $h+h^t$, respectively.
\end{lemma}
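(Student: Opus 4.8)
The plan is to prove the two equivalences separately, exploiting the abstract duality of tensor norms together with the concrete Banach-space identifications already recorded in the paper. For the pair $/\proj\backslash$ and $h\cap h^t$, first I would recall that $h\cap h^t$ is a completely injective (hence finitely generated, by Proposition~\ref{prop-symmetrized-Haagerup-finitely-generated}) symmetric o.s.\ tensor norm satisfying $h\cap h^t\le\proj$ (it is reasonable), so by the minimality property in Theorem~\ref{thm:norma right injective asociada} and its left version we immediately get $h\cap h^t\le /\proj\backslash$ as o.s.\ tensor norms, hence also at the Banach level. For the reverse Banach-level inequality I would invoke the noncommutative Grothendieck inequality: at the Banach space level $/\proj\backslash$ is the (Banach) injective hull $\backslash\varepsilon/$ of $\min$—more precisely, $\Min(W)\otimes_{/\proj\backslash}\Min(V)$ computes, up to the universal constant, the injective Banach tensor norm on $W\otimes V$, because for Banach spaces $W$ the maximal operator-space structure realizes the Banach projective/injective norms (cf.\ \cite[Prop.~1.5.12]{Blecher-LeMerdy-book}, already used in Proposition~\ref{proj no es cofinitamente generada}). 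The content of the noncommutative Grothendieck theorem (in the form \cite[Thm.~18.1]{Pisier-Grothendiecks-theorem}, quoted in Remark~\ref{rmk:weakly compact}, or directly the Effros--Ruan / Haagerup version) is precisely that a jointly completely bounded bilinear form on a pair of $C^*$-algebras factors through $h\cap h^t$ with constant control; dualizing, this says that on all operator spaces $/\proj\backslash\le K_G\,(h\cap h^t)$ for a universal constant $K_G$. Combining the two inequalities gives the claimed Banach-level equivalence.

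For the pair $\backslash\min/$ and $h+h^t$, I would argue by duality from the previous case. By Proposition~\ref{prop-duality-symmetrized-Haagerup} we have $(h\cap h^t)'=h+h^t$, and by Example~\ref{example:dual} together with the definitions of the hull operations one checks $(/\proj\backslash)'=\backslash\min/$ (the injective hull on each side dualizes to the projective hull on that side, using $\proj'=\min$ and $\min'=\proj$, both finitely generated). Now the equivalence $/\proj\backslash\sim_{\mathrm{Ban}}h\cap h^t$ just proved, read on finite-dimensional spaces where both norms are already finitely generated, passes to the (finite) duals with the reciprocal constants: $\backslash\min/=(/\proj\backslash)'\sim_{\mathrm{Ban}}(h\cap h^t)'=h+h^t$. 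Concretely, for $M,N\in\OFIN$ one has $M\otimes_{\backslash\min/}N=(M'\otimes_{/\proj\backslash}N')'$ and $M\otimes_{h+h^t}N=(M'\otimes_{h\cap h^t}N')'$ as Banach spaces (the operator-space duality of Proposition~\ref{prop-duality-symmetrized-Haagerup} and the Duality Theorem~\ref{duality-theorem} both restrict to the underlying Banach spaces), so a two-sided norm estimate between $/\proj\backslash$ and $h\cap h^t$ transposes to one between $\backslash\min/$ and $h+h^t$; since all four norms are finitely generated, the finite-dimensional estimate propagates to all Banach operator spaces.

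The main obstacle I anticipate is pinning down exactly which form of the noncommutative Grothendieck inequality to cite and verifying that it yields a bound of $/\proj\backslash$ by $h\cap h^t$ with a \emph{universal} constant valid on all operator spaces, not merely on $C^*$-algebras. The cleanest route is: (i) reduce, using that $/\proj\backslash$ is finitely generated (Lemmas~\ref{finitamente generada la capsula inyectiva} and its left version) and completely injective, to estimating $E\otimes_{/\proj\backslash}F$ for $E\subseteq\mathcal B(H_E)$, $F\subseteq\mathcal B(H_F)$ via the complete isometry $E\otimes_{/\proj\backslash}F\hookrightarrow\mathcal B(H_E)\otimes_{\min}\mathcal B(H_F)$ of Theorem~\ref{thm:norma right injective asociada}; (ii) apply the Effros--Ruan/Haagerup Grothendieck inequality for the pair of $C^*$-algebras $\mathcal B(H_E),\mathcal B(H_F)$, which controls the $\min$-norm (equivalently, jointly completely bounded bilinear forms) by $h\cap h^t$; (iii) restrict back down to $E\otimes F$, using that $h$ and $h^t$ are completely injective so that $h\cap h^t$ on $E\otimes F$ is the restriction of $h\cap h^t$ on $\mathcal B(H_E)\otimes\mathcal B(H_F)$. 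Each of these three steps is already available in the paper or is a standard citation; the only genuinely delicate point is bookkeeping the direction of the inequality through the dualization in step~(ii), which is why I would write that part out carefully rather than leaving it as ``clearly''.
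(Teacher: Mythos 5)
Your first equivalence is essentially sound and is, in substance, the argument the paper imports from \cite{dimant2015bilinear}: the lower bound $h\cap h^t\le/\proj\backslash$ follows from the maximality property of the completely injective hull (Theorem~\ref{thm:norma right injective asociada} and its left version), and the upper bound from the operator-space Grothendieck theorem applied on $\mathcal B(H_E)\otimes\mathcal B(H_F)$ and restricted back down using the complete injectivity of $h$ and $h^t$. One slip to fix: the complete isometry in your step (i) lands in $\mathcal B(H_E)\otimes_{\proj}\mathcal B(H_F)$, not in $\mathcal B(H_E)\otimes_{\min}\mathcal B(H_F)$ (were it $\min$, then $/\proj\backslash$ would simply equal $\min$), and correspondingly what the Grothendieck theorem controls on the pair of $C^*$-algebras is the \emph{projective} norm, since jointly completely bounded forms constitute the dual of $\proj$, not of $\min$; the parenthetical identification of ``the $\min$-norm'' with jcb forms should be removed.

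The second equivalence, however, has a genuine gap. You deduce it from the first via the identity $(/\proj\backslash)'=\backslash\min/$, justified by ``the injective hull on each side dualizes to the projective hull on that side''. In the operator space setting this hull--duality rule is precisely what fails without accessibility hypotheses: Proposition~\ref{inclusion normas} would require $\proj\backslash$ to be left-accessible (unknown) or $\proj$ to be totally accessible (false), and Remark~\ref{consecuencias min} exhibits norms with $(/\alpha\backslash)'\neq\backslash\alpha'/$. In fact the equality $(\backslash\min/)'=/\proj\backslash$ --- which, by finite generation of all the norms involved, is equivalent to the identity you invoke even in its $\OFIN$ form $M\otimes_{\backslash\min/}N=(M'\otimes_{/\proj\backslash}N')'$ --- is explicitly left open in the paper (see the discussion after Example~\ref{example:dual} and Problem 10 of the concluding section), and its Banach-level version is obtained in Remark~\ref{rmk:equivalencia Banach} only as a \emph{consequence} of the present lemma, so your derivation is circular. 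The paper proves the second half directly, without dualizing: by \cite[Cor.~0.8(iii)]{pisier2002grothendieck}, $\min$ and $h+h^t$ are equivalent with universal constants on $A'\otimes B'$ for $C^*$-algebras $A,B$, hence on $Z_E\otimes Z_F$ (these are duals of $c_0$-sums of matrix algebras); since both $\backslash\min/$ (by the very construction of the projective hulls) and $h+h^t$ (being completely projective) are the quotient norms on $E\otimes F$ induced by $q_E\otimes q_F$ from the corresponding norms on $Z_E\otimes Z_F$, the equivalence passes to the quotients. You need this, or some other direct argument, for the $\backslash\min/\sim h+h^t$ half.
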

 \begin{proof}
   In \cite{dimant2015bilinear} it is showed, as a consequence of results from \cite{pisier2002grothendieck} and \cite{haagerup2008effros}, that for any operator spaces $E$ and $F$ the spaces $E \otimes_{/\proj\backslash} F$ and $E \otimes_{h\cap h^t} F$ are isomorphic.
   By chasing down the constants in the proof one can see that they are universal, so we conclude that $/\proj\backslash$ is equivalent to $h\cap h^t$ at the Banach space level. Also, \cite[Cor. 0.8 (iii)]{pisier2002grothendieck} says that there is an isomorphism between $A'\otimes_{\min} B'$ and $A'\otimes_{h+h^t} B'$ for any $C^*$-algebras $A$ and $B$, with constants independent of the spaces involved. Now, if $E$ and $F$ are operator spaces, we have that the completely projective spaces $Z_E$ and $Z_F$ are duals of $C^*$-algebras (recall that they are defined as $\ell_1$-sums of $S_1^n$ spaces, so they are the duals of the corresponding $c_0$-sums of $M_n$ spaces \cite[Sec. 2.6]{Pisier-Operator-Space-Theory}). Therefore, we have that $E\otimes_{\backslash\min/}F$ and $E\otimes_{h+h^t}F$
are isomorphic with constants independent of $E$ and $F$. This means that the o.s. tensor norms $\backslash\min/$ and $h+h^t$ are equivalent at the Banach space level.
 \end{proof}

\begin{remark}\label{rmk:equivalencia Banach}
A close look at the definition of dual tensor norms reveals that if two o.s. tensor norms are equivalent at the Banach space level, then so are their dual o.s. tensor norms (because the norm on the dual of an operator space depends only on the norm of the operator space, and not on the norms of the higher matricial levels).
Therefore, recalling that $(h\cap h^t)'=h+h^t$ (see Proposition \ref{prop-duality-symmetrized-Haagerup}), as a consequence of the previous lemma we obtain that $(\backslash\min /)'$ is equivalent, at the Banach space level, to $/\proj \backslash$.
\end{remark}

\begin{lemma}\label{Lema:no-equivalencia-two-sided}
    The norms $/(\backslash \min /)\backslash $  and $ \backslash \min /$ are not equivalent. The norms $\backslash(/\proj\backslash)/ $ and $ /\proj\backslash$ are not equivalent.
\end{lemma}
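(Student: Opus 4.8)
The plan is to use Lemma~\ref{Lema:equivalencia nivel Banach} together with the self-duality of the Haagerup-type norms to reduce both non-equivalences to the known separation between $h\cap h^t$ and $h+h^t$ at the Banach space level. First I would observe that the norms $/(\backslash\min/)\backslash$ and $\backslash\min/$ are both completely projective (the first being a two-sided injective hull of a completely projective norm, hence again completely projective by Lemma~\ref{lemma two-sided hulls}(a) applied with $\alpha=\min$ and the fact that $\backslash\min/$ is already of the form $\backslash\cdot/$), while $\backslash(/\proj\backslash)/$ and $/\proj\backslash$ are both completely injective; this is the structural symmetry that replaces the naive ``projective $\neq$ injective'' argument unavailable here. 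The heart of the matter is to produce, for at least one pair, explicit operator spaces on which the two norms disagree even as Banach spaces.

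For the pair $\backslash(/\proj\backslash)/$ versus $/\proj\backslash$: by Lemma~\ref{Lema:equivalencia nivel Banach}, $/\proj\backslash$ is equivalent at the Banach space level to $h\cap h^t$. If the two o.s.\ tensor norms $\backslash(/\proj\backslash)/$ and $/\proj\backslash$ were equivalent, then $\backslash(/\proj\backslash)/$ would also be equivalent at the Banach space level to $h\cap h^t$. On the other hand, $\backslash(/\proj\backslash)/$ dominates $\backslash\min/$ (since $/\proj\backslash\ge\backslash\min/$ and applying $\backslash\cdot/$ preserves this, and $\backslash(\backslash\min/)/=\backslash\min/$). Taking duals and using Remark~\ref{rmk:equivalencia Banach} together with $(h\cap h^t)'=h+h^t$ (Proposition~\ref{prop-duality-symmetrized-Haagerup}), the purported Banach-level equivalence would force $(\backslash(/\proj\backslash)/)'$ to be equivalent to $h+h^t$. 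But $(\backslash(/\proj\backslash)/)'$ is a completely projective norm dominated by $(\backslash\min/)'$, which by Remark~\ref{rmk:equivalencia Banach} is Banach-equivalent to $/\proj\backslash\sim h\cap h^t$. Chasing these inequalities would show $h\cap h^t$ and $h+h^t$ are equivalent at the Banach space level --- which contradicts their known non-equivalence (one is completely injective, the other is not, and more concretely they differ on the spaces exhibited in the operator-space Grothendieck theory of \cite{pisier2002grothendieck, haagerup2008effros}, e.g.\ on $C^*$-algebra tensor products). The pair $/(\backslash\min/)\backslash$ versus $\backslash\min/$ is handled by the dual argument: $\backslash\min/$ is Banach-equivalent to $h+h^t$, while $/(\backslash\min/)\backslash$ dominates nothing smaller than $\min$ but is itself $\ge /(\backslash\min/)\backslash$-trivially, so instead one dualizes: $(/(\backslash\min/)\backslash)'$ is Banach-equivalent (by Remark~\ref{rmk:equivalencia Banach}) to a completely injective hull of $(\backslash\min/)'\sim h\cap h^t$, and the assumed equivalence would again collapse $h\cap h^t$ and $h+h^t$ at the Banach level.

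The main obstacle I anticipate is making the reduction airtight: one must verify carefully that the chain of dominations among the six two-sided norms, combined with the Banach-level equivalences from Lemma~\ref{Lema:equivalencia nivel Banach} and the duality-respecting property of Remark~\ref{rmk:equivalencia Banach}, genuinely squeezes $h\cap h^t$ between two norms both Banach-equivalent to $h+h^t$ (or vice versa). The cleanest route is probably to avoid the dualization gymnastics and instead argue directly: exhibit concrete operator spaces $E,F$ (for instance $E=F=$ a suitable dual $C^*$-algebra, or the spaces $R$, $C$, or $S_1^n$, $M_n$) on which $\backslash\min/$ and $/(\backslash\min/)\backslash$ take genuinely inequivalent Banach-space norms, using that $/(\backslash\min/)\backslash\le h$ (since $h$ is completely injective) while $\backslash\min/\sim h+h^t$ is strictly larger than $h$ at the Banach level on the column/row Hilbert space example of \cite[Prop.~9.3.4]{Effros-Ruan-book}; symmetrically $\backslash(/\proj\backslash)/\ge h$ (as $h$ is completely projective) while $/\proj\backslash\sim h\cap h^t$ is strictly smaller than $h$ on that same example. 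I would lead with this direct version and keep the dualization argument as a remark, since pinning down the explicit witnessing spaces from the existing literature on the Haagerup norm is the step most likely to need genuine care.
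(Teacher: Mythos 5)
Your overall strategy---reduce everything to the Banach-level identifications of Lemma~\ref{Lema:equivalencia nivel Banach} and then dualize via Remark~\ref{rmk:equivalencia Banach} and Proposition~\ref{prop-duality-symmetrized-Haagerup}---is in the right neighborhood, and the second pair can indeed be handled this way; but as written the argument never closes, because the one ingredient that makes it close is missing. Assume $\backslash(/\proj\backslash)/\sim/\proj\backslash$. Since $\backslash(/\proj\backslash)/$ is completely projective, its dual is completely \emph{injective} (Corollary~\ref{cor: proy implica dual iny}), and Remark~\ref{rmk:equivalencia Banach} turns the assumption into: $h+h^t=(h\cap h^t)'$ is equivalent at the Banach space level to a completely injective o.s.\ tensor norm. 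The contradiction must then come from the fact that $h+h^t$ is not Banach-equivalent to \emph{any} completely injective o.s.\ tensor norm; this is the quantitative content of \cite[Remark 13]{Oikhberg-Pisier} (complete isometries whose inverses on the $h+h^t$-tensor products have norms tending to infinity), which you never invoke. Observing merely that ``$h\cap h^t$ is injective and $h+h^t$ is not'' does not rule out a Banach-level equivalence, and your alternative attempt to squeeze out ``$h\cap h^t\sim h+h^t$'' does not follow from your chain: from $(\backslash(/\proj\backslash)/)'\le(\backslash\min/)'$ you only get $h+h^t\lesssim h\cap h^t$ at the Banach level, which is vacuous because $h+h^t\le h\le h\cap h^t$ always. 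For the first pair your dualization step uses ``$(/(\backslash\min/)\backslash)'$ is a completely injective hull of $(\backslash\min/)'$'', i.e.\ that duals commute with hulls; that identity requires accessibility hypotheses (Proposition~\ref{inclusion normas}) and is exactly the kind of statement the paper shows can fail (Remark~\ref{consecuencias min}), so it cannot be used here. In fact the first pair needs no dualization at all: $\backslash\min/$ is Banach-equivalent to $h+h^t$ and $/(\backslash\min/)\backslash$ is completely injective, so the quantitative Oikhberg--Pisier fact already separates them.

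Several of your structural assertions are also reversed, and they sink the ``direct route'' you propose to lead with. $/(\backslash\min/)\backslash$ is completely \emph{injective} and $\backslash(/\proj\backslash)/$ is completely \emph{projective}; Lemma~\ref{lemma two-sided hulls}(a) is a stabilization statement and does not say that an injective hull of a projective norm stays projective. Likewise $(\backslash(/\proj\backslash)/)'$ is completely injective, not projective. Most importantly, since $\backslash\min/$ is the smallest completely projective norm and $/\proj\backslash$ the largest completely injective one (Lemma~\ref{lemma:smallest y largest}), one has $\backslash\min/\le h\le/\proj\backslash$, matching $h+h^t\le h\le h\cap h^t$; your direct argument asserts the opposite dominations ($\backslash\min/$ ``strictly larger than $h$'', $/\proj\backslash$ ``strictly smaller than $h$''), so the row/column example cannot witness anything in the way you describe. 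The paper's proof is the corrected version of your dualization: by \cite[Remark 13]{Oikhberg-Pisier} in its quantitative form, $h+h^t$ (hence $\backslash\min/$, by Lemma~\ref{Lema:equivalencia nivel Banach}) is not Banach-equivalent to any completely injective norm, which settles the first pair; for the second, if $/\proj\backslash$ were equivalent to the completely projective $\backslash(/\proj\backslash)/$, then $h\cap h^t$ would be Banach-equivalent to a completely projective norm, and dualizing via Corollary~\ref{cor: proy implica dual iny} and Remark~\ref{rmk:equivalencia Banach} would make $h+h^t$ Banach-equivalent to a completely injective norm, the same contradiction.
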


\begin{proof}

In \cite[Remark 13]{Oikhberg-Pisier} it is proved that $h+h^t$ is not completely injective. In fact, their proof shows more: there is a sequence of pairs of complete isometries $T_n : E_n \to \widetilde{E}_n$ and $R_n : F_n \to \widetilde{F}_n$ so that the inverses of 
$$
T_n \otimes R_n : E_n \otimes_{h+h^t} F_n \to (T_n \otimes R_n)(E_n \otimes F_n) \subset \widetilde{E}_n \otimes_{h+h^t} \widetilde{F}_n
$$
have norms tending to infinity, so
$h+h^t$ cannot be equivalent at the Banach space level to a completely injective o.s. tensor norm. As a consequence of Lemma \ref{Lema:equivalencia nivel Banach}, neither can $\backslash\min/$, and therefore 
$\backslash\min/$ and $/(\backslash \min /)\backslash $ cannot be equivalent.

 Also, we know from Corollary \ref{cor: proy implica dual iny} that the dual of a completely projective tensor norm is completely injective. Thus, if $h\cap h^t$ were equivalent at the Banach space level to a completely projective tensor norm, from the previous remark we would obtain that $(h\cap h^t)'=h+h^t$ would be equivalent at the Banach space level to a completely injective tensor norm. From the previous paragraph we know that this is false, so therefore $/\proj\backslash$ cannot be equivalent at the Banach space level to a completely projective tensor norm. This, finally, means that $/\proj\backslash$ and $\backslash(/\proj\backslash)/$ are not equivalent.

\end{proof}

Hence, we can represent the two-sided family in the following diagram:

\tiny{

\begin{equation}\label{normasnaturalestwosided}
 \xymatrixrowsep{0.4in}
\xymatrixcolsep{0.05in}
 \xymatrix{*+[F]{ \begin{array}{c}
                    \proj
                   \end{array}} \ar@{=>}[d] 
                   \\
                   *+[F]{ \begin{array}{c}
                  \backslash(/\proj\backslash)/
                   \end{array}}
                   \ar@{=>}[d] 
                   \\
                   *+[F]{ \begin{array}{c}
                  /\proj\backslash
                   \end{array}}
                   \ar@{=>}[d] 
                   \\
                   *+[F]{ \begin{array}{c}
                  \backslash \min /
                   \end{array}}
                   \ar@{=>}[d] 
                   \\
                   *+[F]{ \begin{array}{c}
                  /(\backslash \min /) \backslash
                   \end{array}}
                   \ar@{=>}[d] 
                   \\
                   *+[F]{ \begin{array}{c}
                  \min
                   \end{array}}
                   }
 \end{equation}
}

\normalsize

This is different from the Banach space case: those natural norms that come from applying to $\varepsilon$ or $\pi$ two-sided hulls operations 
are exactly four. This is how the two-sided natural norms in the classical setting are arranged:

\tiny{

\begin{equation}\label{normasnaturalesclassic}
 \xymatrixrowsep{0.4in}
\xymatrixcolsep{0.05in}
 \xymatrix{ & & & *+[F]{ \begin{array}{c}
                    \pi
                   \end{array}} \ar@{=>}[ld] \ar@{=>}[rd] & & & 
                   \\
                   & & *+[F]{ \begin{array}{c}
                     /\pi \backslash
                   \end{array}}  \ar@{=>}[rd] &   & *+[F]{ \begin{array}{c}
                    \backslash \epsilon / 
                   \end{array}} \ar@{=>}[ld] & & \\
                     & & & *+[F]{ \begin{array}{c}
                    \epsilon
                   \end{array}}  & & & 
                    }
 \end{equation}
}

\normalsize

In both diagrams \eqref{normasnaturalestwosided} and \eqref{normasnaturalesclassic} each   arrow $\alpha\Rightarrow \beta$   means that $\beta\le\alpha$ and that these norms are not equivalent. The norms $ /\pi \backslash$ and $ \backslash \epsilon /$ are not equivalent and there is no domination between them.

Obviously this difference between these frameworks is  expected: in the context of operator spaces we do not have a Grothendieck-type inequality relating the norms   as in the classical Banach context (e.g., $ /\pi \backslash$ and $ / (\backslash \epsilon /) \backslash$ are equivalent and also their duals  $  \backslash \epsilon / $ and $\backslash (/\pi \backslash )/$).

\begin{remark}\label{rmk:symmetric natural tensor norms}
It would be tempting to say that the diagram \eqref{normasnaturalestwosided} shows the complete picture of all \textbf{symmetric} natural o.s. tensor norms. While it is clear that we can only get symmetric o.s. tensor norms by starting with $\min$ and $\proj$ and applying two-sided hulls finitely many times, we do not know whether these are \textbf{all} the symmetric natural o.s. tensor norms. For example recall that  $(/\proj)/= \backslash(\proj\backslash) =\proj$, so taking hulls in a non-symmetric way can still yield a symmetric o.s. tensor norm.
\end{remark}

\section{Conclusions and some open questions}

The main goal of this work is to start a program 
 for the  theory of tensor products and tensor norms for the category of operator spaces. In particular, we  focus on the interplay of these theories and the theory of mapping ideals. 
Of course, many definitions and results in this framework are natural since they have a corresponding one in the context of Banach spaces. But many of them are not!   
As noticed many times, the theory is not just a straightforward translation of what is known in the classical setting and new and challenging questions naturally arise.  
To start with a typical difference, we highlight the role of local reflexivity, which seems to be crucial in many places as is well-known to experts. However, sometimes 
this hypothesis can be avoided: for example, when dealing with $\lambda$-o.s. tensor norms.
An unexpected issue in this category appears when relating the left accessibility  of mapping ideals and associated tensor norms. The fact that their relationship is weaker than the one for its right counterpart is certainly puzzling. 

The most surprising fact we realized during this work is that the $\min$ family can not be obtained by dualizing the $\proj$ family.  This, of course, is very counter intuitive and exhibits, once again, the differences between the classical and the non-commutative theories. All of this is related, in some sense, to the fact that accessibility plays an important role in the duality between injective and projective hulls of a given o.s. tensor norm. This hypothesis does not appear in the theory of Banach spaces.
 Moreover, these families are completely separate: one family dominates the other. This is due to the fact that the Haagerup o.s. tensor norm is simultaneously completely injective and projective (an impossible property in the classical realm).

Summarizing, we have begun  looking with operator space eyes at some relevant properties and results from the classical theory, and found that to fully develop the theory it is important a different and original perspective, involving novel insights, ideas, techniques  or hypotheses. Clearly, there are plenty of unexplored paths for future research. This work aims to contribute to a program that we believe is much vaster.
To conclude we want to explicitly recall here some unsolved issues that appeared through this monograph. 

 \textbf{Problem 1:} All along the monograph there are results where some hypotheses about local reflexivity are not needed in the case of $\lambda$-o.s. tensor norms. It would be welcome to have more examples of $\lambda$-o.s. tensor norms, in addition to the typical ones ($\proj$, $h$ and $h^t$) and the $\odot^\theta$ constructed in Section \ref{Subsection: Examples}.

\textbf{Problem 2:} We introduce in Section \ref{dual section} the notions of locally right-accessible tensor norm and locally left-accessible tensor norm, but we do not have any example that justifies that these concepts are weaker than their non-local siblings. 

\textbf{Problem 3:} We consider in Section \ref{complete bap} a weak version of the complete bounded approximation property, namely the W*CBAP. We do not know any example of a space without CBAP satisfying W*CBAP. In view of Proposition  \ref{prop W*CBAP and locally reflexive} such a space can not be locally reflexive.

\textbf{Problem 4:} The theory of $\alpha$-W*CBAP suggested in Remark \ref{alpha CBAP} seems to be an interesting topic for future development.

\textbf{Problem 5:} In Section \ref{mapping accessibility sec} it would be interesting to have an example of a locally right-accessible mapping ideal which is not right-accessible. Recall that we show in Example \ref{example-accessible-ideals} that $\mathcal I$ is an example for the left version of this question. 

\textbf{Problem 6:} In Proposition \ref{prop-equivalences-accessibility} we see that there is an equivalence between right-accessibility of a mapping ideal and of a tensor norm associated to it. For left-accessibility the result is much weaker and just in one direction. It would be important to know if a statement for the other direction is valid: if the mapping ideal is left-accessible, is it true that the associated tensor norm is (locally) left-accessible?

\textbf{Problem 7:} With respect to natural tensor norms we have barely begun a classification of them and there is a hard road ahead. It is left to be solved whether most of the dominations in diagrams \ref{normasnaturales} and \ref{normasnaturalesmin}  are strict or not. 

\textbf{Problem 8:} A solution to Open question \ref{rmk:open question} will shed some light on the structure of the $\proj$ family after more than 4 procedures. 

\textbf{Problem 9:} Our knowledge of the $\min$ family is so scarce that almost the whole picture remains to be done.

\textbf{Problem 10:} The fact that the $\proj$ and $\min$ families are not a reflected copy of one another, along with the necessity of extra conditions to ensure that the dual of the projective hull of a norm is the injective hull of the dual norm, have as a consequence that we do not know which are the duals for most of the natural norms. The main inquiry of this group is whether the following identity holds: 
 $(\backslash\min /)'=/\proj \backslash$ (see the comment after Example \ref{example:dual}). Recall, also, that from Remark \ref{rmk:equivalencia Banach}, $(\backslash\min /)'$ and $/\proj \backslash$ are equivalent, at the Banach space level. So, it is expected that they are equivalent, or even equal, as o.s. tensor norms.
 
 \textbf{Problem 11:} Since the $\min$ family is certainly larger than the $\proj$ family, there should exist norms in the $\min$ family whose duals do not belong to the $\proj$ family. It would be interesting to identify these norms.

 \textbf{Problem 12:} As stated in Remark \ref{rmk:symmetric natural tensor norms}, it would be nice to know if all the symmetric natural  tensor norms are the two-sided natural tensor norms (i.e.  those that appear in diagram \eqref{normasnaturalestwosided}).

\newcommand{\etalchar}[1]{$^{#1}$}
\providecommand{\bysame}{\leavevmode\hbox to3em{\hrulefill}\thinspace}
\providecommand{\MR}{\relax\ifhmode\unskip\space\fi MR }
\providecommand{\MRhref}[2]{%
  \href{http://www.ams.org/mathscinet-getitem?mr=#1}{#2}
}
\providecommand{\href}[2]{#2}

\end{document}